\DeclareMathAlphabet{\mathpzc}{OT1}{pzc}{m}{it}
\tikzset{
	dot/.style={circle,draw,fill,inner sep=1pt},
	arrow/.style={->,thick,shorten <=2pt,shorten >=2pt},
	every label/.append style = {font = \small}
}
\newtheorem{theorem}{Theorem}[section]
\newtheorem{prop}[theorem]{Proposition}
\newtheorem{cor}[theorem]{Corollary}
\newtheorem{lemma}[theorem]{Lemma}
\newtheorem{defnprop}[theorem]{Definition/Proposition}
\newtheorem{theoremabc}{Theorem}
\theoremstyle{definition}
\newtheorem{warning}[theorem]{Warning}
\newtheorem{defn}[theorem]{Definition}
\newtheorem{ex}[theorem]{Example}
\newtheorem{remark}[theorem]{Remark}
\newtheorem{notation}[theorem]{Notation}
\newtheorem{construction}[theorem]{Construction}
\DeclareFontFamily{U}{min}{}
\DeclareFontShape{U}{min}{m}{n}{<-> udmj30}{}
\newcommand\cA{\mathscr A} 
\newcommand\cC{\mathscr C} 
\newcommand\cD{\mathscr D}
\newcommand\cE{\mathscr E} 
\newcommand\cF{\mathscr F}
\newcommand\cG{\mathscr G}
\newcommand\cI{\mathscr I}
\newcommand\cL{\mathscr L}
\newcommand\cO{\mathscr O}
\newcommand\cS{\mathscr S}
\newcommand\cV{\mathscr V}
\newcommand\rB{\mathrm B}
\newcommand\bD{\mathbb D}
\newcommand\bS{\mathbb S}
\newcommand\NN{\mathbb N} \newcommand\bN\NN
\newcommand\bZ{\mathbb Z}
\newcommand\id{\mathrm{id}}
\newcommand\stab{\mathrm{Stab}}
\newcommand\mor{\mathrm{Hom}}
\newcommand\dk{\mathrm{DK}}
\newcommand\chn{\mathrm{Ch}}
\newcommand\ima{\mathrm{Im}}
\newcommand\tot{\mathrm{tot}}
\newcommand\inj{\mathrm{inj}}
\newcommand\Stn{\mathrm{Stein}}
\newcommand\stn{\mathpzc{Stein}}
\newcommand\op{\mathrm{op}}
\newcommand\cat{\mathrm{Cat}}
\newcommand\ctree{\mathpzc{Tree}}
\newcommand\pfib{\mathrm{PFib}}
\newcommand\eq{=\joinrel=}
\newcommand\ccat{\mathpzc{Cat}}
\newcommand\arr{\mathrm{Ar}}
\newcommand\sq{\mathrm{Sq}}
\newcommand\colim{\mathrm{colim}}
\newcommand\bydef{\overset{\mathrm{def}}{=}}
\newcommand\spc{\mathrm{Sp}}
\newcommand\lax{\mathrm{lax}}
\newcommand\psh{\mathrm{PSh}}
\newcommand\prc{\mathrm{PrCat}}
\newcommand\twar{\mathrm{TwAr}}
\newcommand\obj{\mathrm{Ob}}
\newcommand\act{\mathrm{act}}
\newcommand\Act{\mathrm{Act}}
\newcommand\inrt{\mathrm{int}}
\newcommand\el{\mathrm{el}}
\newcommand\seg{\mathrm{Seg}}
\newcommand\cok{\mathrm{CoKer}}
\newcommand\pt{\mathrm{pt}}
\newcommand{\xinert}[1]{\overset{#1}{\rightarrowtail}}
\newcommand{\lxinert}[1]{\overset{#1}{\leftarrowtail}}
\newcommand{\xactive}[1]{\overset{#1}{\twoheadrightarrow}}
\newcommand{\lxactive}[1]{\overset{#1}{\twoheadleftarrow}}
\title{deformation theory for $(\infty,n)$-categories}
\author{Roman Kositsyn}
\date{April 2025}
\begin{document}
	
	\begin{abstract}
		For an $(\infty,n)$-category $\cE$ we define an $(\infty,1)$ category $\twar(\cE)$ and provide an isomorphism between the stabilization of the overcategory of $\cE$ in $\cat_{(\infty,n)}$ and the $\infty$-category of spectrum-valued functors on $\twar(\cE)$. We use this to develop the deformation theory of $(\infty,n)$-categories and apply it to given an $\infty$-categorical characterization of lax-idempotent monads.
	\end{abstract}
	\maketitle
	\tableofcontents
	
	\section{Introduction}
	Assume we have a (discrete) $n$-category $C$, then we can associate to it a chain complex 
	\[C_n\xrightarrow{\partial}C_{n-1}\xrightarrow{\partial }...\xrightarrow{\partial}C_0\]
	of length $n$ such that $C_i$ is the factor of the free abelian group on the set of $i$-morphisms modulo relations
	\begin{align}\label{eq:rel_intro}
		[f*_k g] = [f] + [g]\\
		[\id_t] = 0,
	\end{align}
	where $[f]$ denotes the basis element corresponding to the morphism $f$, $*_k$ for $k<i$ denotes the operation of $k$-composition (so $f$ and $g$ in \eqref{eq:rel_intro} are assumed to be $k$-composable) and $\id_{t}$ is the identity $i$-morphism on some $j$-morphism $t$ for $j<i$ and $\partial[f]\bydef [s(f)] - [t(f)]$, where $s$ and $t$ denote the source and target of the morphism respectively. This is easily seen to be an isomorphism invariant of $C$, however possibly not a very good one: denote by $\widetilde{C}$ the groupoid obtained by inverting all $i$-morphisms in $C$ for all $i$, then $C_k = \widetilde{C}_k$ for all $k$: indeed, a general $i$-morphism in $\widetilde{C}$ is a formal composition of morphisms in $C$ and their inverses, note that by \eqref{eq:rel_intro} we have
	\[0 = [\id_{s(f)}] = [f^{-1}*_i f] = [f^{-1}] + [f]\Rightarrow [f^{-1}]=[f],\]
	which easily implies the claim.\par 
	This construction is the 1-categorical avatar of the stabilization construction from $\infty$-categories. More specifically, it is possible to show that $\stab(\seg_n)\cong \chn_n(\spc)$, where $\seg_n$ is the $\infty$-category of $n$-fold Segal spaces and $\chn_n(\spc)$ denotes the (appropriately defined) stable $\infty$-category of chain complexes in spectra of length $n$. Given an $n$-fold Segal space $\cE$ (for example, an ordinary $n$-category) we can associate to it $\Sigma^\infty_\seg\cE\in\chn_n(\spc)$. If we consider the full subcategory $\cat_n\hookrightarrow\seg_n$ on \textit{complete} Segal spaces, then its stabilization and given by $\spc$ and moreover the completion of a Segal space corresponds to the totalization of the corresponding complex. Just as in the discrete case, this construction only depends on the homotopy type of $\cE$; in fact, we can say slightly more: the following diagram
	\[\begin{tikzcd}[sep=huge]
		{\seg_n} & {\chn_n(\spc)} \\
		\cS & \spc
		\arrow["{\Sigma^\infty_\seg}", from=1-1, to=1-2]
		\arrow["{|-|}"', from=1-1, to=2-1]
		\arrow["{\tot(-)}", from=1-2, to=2-2]
		\arrow["{\Sigma^\infty}"', from=2-1, to=2-2]
	\end{tikzcd}\]
	commutes up to homotopy, where the left vertical map is the functor of geometric realization and the bottom map associates to a space $X\in\cS$ its suspension spectrum.\par 
	Consider again the case of ordinary $n$-categories, but assume now that we have a functor $F:C\rightarrow D$ to some ordinary $n$-category $D$ that we will view as fixed. In that case for any $k$-morphism $f$ in $D$ we can consider the complex $C^f_n\xrightarrow{\partial}...\xrightarrow{\partial}C^f_k$ where each terms is obtained by imposing relations \eqref{eq:rel_intro} on the free abelian group on the set of morphisms lying over $f$. Note that for every $j$-composable pair $(f,g)$ the operation of composition defines an operation
	\[m:C_i^f\times C_i^g\rightarrow C_i^{F*_j g},\]
	which endows $C^f_i$ with the structure of the \textit{local system of abelian groups} in the sense of \cite[Definition 3.5.10.]{lurie2008classification}. The $\infty$-categorical counterpart of this observation is \Cref{prop:seg} which identifies the stabilization of $\seg_{n,/\cE}$ for $\cE\in\cat_n$ with spectrum-values presheaves on $\Theta_{n,/\cE}$ that satisfy the Segal condition.\par 
	The bulk of this work is concerned with providing a simpler description of $\stab(\cat_{n,/\cE})$, before moving on to it let us first consider the results already present in the literature: the case $\dim(\cE)=1$ was considered in \cite{harpaz2018abstract} where it was shown that $\stab(\cat_{/\cE})\cong \mor_\cat(\twar(\cE),\spc)$, where $\twar$ denotes the ordinary ($\infty$-categorical) twisted arrows category. The case $\dim(\cE)=2$ was treated in \cite{nuiten2019quillen} and is somewhat more difficult to present: in that case we also have $\stab(\cat_{2,/\cE})\cong \mor_\cat(\twar_2(\cE),\spc)$, where $\twar_2(\cE)$ is an $\infty$-category that can be described informally as having objects given by 2-morphisms in $\cE$ and morphisms by geometric realizations of $\infty$-categories with objects  
		\begin{equation}\label{eq:2d_twar_intro}
		\begin{tikzcd}[sep=huge]
			x & z & w & y
			\arrow[""{name=0, anchor=center, inner sep=0}, "{a_1}"{description}, curve={height=12pt}, from=1-1, to=1-2]
			\arrow[""{name=1, anchor=center, inner sep=0}, "{a_0}"{description}, curve={height=-12pt}, from=1-1, to=1-2]
			\arrow[""{name=2, anchor=center, inner sep=0}, "f"{description}, curve={height=-30pt}, from=1-1, to=1-4]
			\arrow[""{name=3, anchor=center, inner sep=0}, "g"{description}, curve={height=30pt}, from=1-1, to=1-4]
			\arrow[""{name=4, anchor=center, inner sep=0}, "{b_1}"{description}, curve={height=12pt}, from=1-2, to=1-3]
			\arrow[""{name=5, anchor=center, inner sep=0}, "{b_0}"{description}, curve={height=-12pt}, from=1-2, to=1-3]
			\arrow[""{name=6, anchor=center, inner sep=0}, "{c_1}"{description}, curve={height=12pt}, from=1-3, to=1-4]
			\arrow[""{name=7, anchor=center, inner sep=0}, "{c_0}"{description}, curve={height=-12pt}, from=1-3, to=1-4]
			\arrow["\alpha"{description}, shorten <=3pt, shorten >=3pt, Rightarrow, from=1, to=0]
			\arrow["\epsilon"', shorten <=2pt, shorten >=2pt, Rightarrow, from=2, to=5]
			\arrow["\beta"{description}, shorten <=3pt, shorten >=3pt, Rightarrow, from=5, to=4]
			\arrow["\eta", shorten <=2pt, shorten >=2pt, Rightarrow, from=4, to=3]
			\arrow["\gamma"{description}, shorten <=3pt, shorten >=3pt, Rightarrow, from=7, to=6]
		\end{tikzcd}
		\end{equation}
	such that the source of the above morphism is $\beta$ and the target is $\epsilon*_1(\alpha*_) \beta*_\gamma)*_1 \eta$, which we will denote by $(\epsilon|\alpha,\beta|\eta)$, and with morphisms given by isomorphisms $\alpha\cong \alpha'_1 *_1\alpha'*_1\alpha'_0$ and $\gamma\cong \gamma'_1 *_1\gamma'*_1\gamma'_0$ such that the source of this morphism is given by $(\epsilon*_1(\alpha'_0*_0 b_0*_0\gamma'_0)|\alpha',\beta'|(\alpha'_1*_0 b_1*_0\gamma'_1)*_1\eta)$ and the target by $(\epsilon|\alpha,\beta|\eta)$. The main result of the present work is 
	\begin{theoremabc}\label{thm:A}
		For $\cE\in\cat_n$ there is an isomorphism
		\[\stab(\cat_{n,/\cE})\cong \mor_\cat(\twar(\cE),\spc),\]
		where $\twar(\cE)$ is the $\infty$-category described in \Cref{constr:twar_final}.
	\end{theoremabc}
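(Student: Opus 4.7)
\medskip

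\noindent\textbf{Proof plan.} My overall strategy is to bootstrap \Cref{prop:seg} — which identifies $\stab(\seg_{n,/\cE})$ with the full subcategory of $\mor_\cat(\Theta_{n,/\cE}^{\op}, \spc)$ spanned by Segal presheaves — to the complete case by exploiting the completion adjunction. Since $\cE$ is complete, the inclusion $\cat_{n,/\cE}\hookrightarrow \seg_{n,/\cE}$ admits a left adjoint, namely total-space completion, and is therefore reflective. Because stabilization is a right adjoint, this reflection lifts to a reflective inclusion $\stab(\cat_{n,/\cE})\hookrightarrow \stab(\seg_{n,/\cE})$. Combined with \Cref{prop:seg}, the problem becomes one of identifying the reflective subcategory of \emph{complete} Segal spectrum presheaves on $\Theta_{n,/\cE}$ with the functor category $\mor_\cat(\twar(\cE),\spc)$.

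For the second step I would build a natural comparison functor $p\colon \Theta_{n,/\cE}\to \twar(\cE)$ that records only the composite top-dimensional cell of a pasting diagram over $\cE$, together with its factorization data. For $n=1$ this is the classical map $\Delta_{/\cE}\to \twar(\cE)$ sending $\sigma\colon[k]\to\cE$ to the long edge $\sigma(0)\to\sigma(k)$, with factorization morphisms recording the intermediate vertices; for $n=2$ it should send a 2-pasting diagram to its composite 2-cell with morphisms given by the factorization data depicted in \eqref{eq:2d_twar_intro}. The content of \Cref{constr:twar_final} should be arranged precisely so that such a $p$ exists naturally, and so that morphisms in $\twar(\cE)$ are generated by the same inert-active decompositions that govern $\Theta_n$.

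The core verification then breaks into three claims: (i) $p$ exhibits $\twar(\cE)$ as the localization of $\Theta_{n,/\cE}$ at the class $W$ of morphisms that are contracted by completion; (ii) a Segal spectrum presheaf on $\Theta_{n,/\cE}$ inverts the morphisms in $W$ if and only if it lies in the essential image of $\stab(\cat_{n,/\cE})$; and (iii) pullback along $p$ lands automatically in Segal presheaves, because the Segal condition reduces a presheaf to its values on cells (which are the objects of $\twar(\cE)$) and the morphisms of $\twar(\cE)$ are designed to encode the remaining composition coherences. Claims (i) and (iii) are essentially combinatorial consequences of the construction; claim (ii) is the bridge between Segal completeness for $\cE$ and functoriality on $\twar(\cE)$.

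I expect the main obstacle to be claim (ii), or equivalently the precise matching of $\twar(\cE)$ with the localization appearing in (i). The difficulty is that for $n\geq 2$ the morphisms of $\twar(\cE)$ must themselves be described as geometric realizations of auxiliary $\infty$-categories of factorizations (as in the $n=2$ description above the statement of \Cref{thm:A}), and one must verify that these realizations exactly compute the relations forced by inverting the $(i{+}1)$-morphisms of $\cE$ for every $i<n$. The natural approach is induction on $n$: the inductive hypothesis handles the hom-$(\infty,n{-}1)$-categories of $\cE$, and the base case $n=1$ is \cite{harpaz2018abstract}. The Segal decomposition of $\Theta_n$ as wreath products over $\Delta$ should make the inductive step manageable, once the construction of $\twar$ is set up to be compatible with this decomposition.
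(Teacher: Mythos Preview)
Your proposal diverges from the paper's route and contains a genuine gap in claim (iii).

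\textbf{The gap.} You assert that pullback along $p\colon \Theta_{n,/\cE}\to\twar(\cE)$ ``lands automatically in Segal presheaves, because the Segal condition reduces a presheaf to its values on cells.'' But the Segal condition is a \emph{limit} condition, not a condition of inverting a class of morphisms; a localization functor $p$ can never impose it via $p^*$. What is true is that a Segal presheaf is determined by its restriction to elementary cells, but reconstructing it from that restriction requires a \emph{right Kan extension}, not a pullback, and then one must check that this Kan extension inverts your class $W$ --- which is a different statement from (i). In effect you are trying to collapse two unrelated mechanisms (localization for completeness, Kan extension for Segal) into a single localization, and there is no reason these should coincide. Already for $\cE=\theta\in\Theta_n$ this fails: $\twar(\theta)\cong\Theta^{\inrt}_{n,/\theta}$, which is a \emph{subcategory} of $\Theta_{n,/\theta}$, not a localization of it, and the passage from Segal spectrum presheaves on $\Theta^{\inj}_{n,/\theta}$ to all spectrum presheaves on $\Theta^{\inrt,\op}_{n,/\theta}$ is a Verdier duality (\Cref{prop:seg_inj}, using \cite{aoki2023posets}), not a localization.

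\textbf{What the paper actually does.} The argument is not global but local-to-global. First one computes $\stab(\cat_{n,/\theta})$ for each $\theta\in\Theta_n$: \Cref{prop:seg_dk} and \Cref{prop:theta_comp} use a Dold--Kan correspondence for $\Theta_{n,/\theta}$ (Appendix) together with \Cref{lem:theta_inv} to reduce to $\seg_\spc(\Theta^{\inj}_{n,/\theta})$, and then \Cref{prop:seg_inj} applies Verdier duality to identify this with $\psh_\spc(\Theta^{\inrt,\op}_{n,/\theta})$. This local computation is then globalized via \Cref{lem:globn}, which expresses $\stab(\cat_{n,/\cE})$ as $\lim_{\theta\to\cE}\stab(\cat_{n,/\theta})$. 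On the other side, the paper introduces an auxiliary model $\twar_\theta(\cE)$ (\Cref{constr:theta_twar}) specifically designed so that $\twar_\theta(\cE)\cong\colim_{\theta\to\cE}\twar_\theta(\theta)$ (\Cref{lem:twar_theta1}) and $\twar_\theta(\theta)\cong\Theta^{\inrt}_{n,/\theta}$ (\Cref{lem:theta_twar}); this yields \Cref{prop:main_theta_stab}. Finally, one proves $\twar(\cE)\cong\twar_\theta(\cE)$ (\Cref{cor:twar_iso}), which is itself nontrivial: it is reduced via \Cref{lem:twar_col1} and \Cref{cor:stein_inrt} to the computation $\twar(x)\cong\stn^{\inrt}_{n,/x}$ for strong Steiner complexes $x$ (\Cref{prop:twar_stein}), carried out using the combinatorics of the objects $D_n(q_1,\dots,q_n)$ from \Cref{constr:D_obj}. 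Your proposed induction on $n$ via wreath products plays no role; the reductions are along $\Theta_{n,/\cE}$ and along the Steiner complex formalism instead.
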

	Unfortunately, the complexity of $\twar(\cE)$ grows with dimension of $\cE$, but we have given a rough description of it in \Cref{ex:lowD}.\par 
	Observe that already in dimension 2 the diagram \eqref{eq:2d_twar_intro} defining a morphism in $\twar(\cE)$ is not corepresentable by an object of $\Theta_2$. It is a more general form of pasting diagram, in order to handle those we use the formalism of Steiner complexes introduced (under a different name) in \cite{steiner2004omega}. The actual definition of Steiner complexes is rather technical and reserved for \Cref{sect:stein}, however the general idea is that (strong) Steiner complexes are complexes $C_n\xrightarrow{\partial}...\xrightarrow{\partial} C_0\xrightarrow{e}\bZ$ of free abelian groups $C_i\cong \bZ^{\oplus m_i}$ with augmentation $e$ such that $e(b_0)=1$ for every basis element $b_0$ on $C_0$ satisfying an additional condition that the basis is "strongly loop-free". If the conditions are satisfied we can associate to it an $n$-category $C^*$ such that the basis elements of $C_i$ correspond to "elementary $i$-cells" and every $k$-morphism in it can be uniquely presented as a composition of those elementary cells We use this formalism in order to ultimately define $\twar(\cE)$ in \Cref{sect:twar}, however we also obtain an important result concerning them:
 	\begin{theoremabc}\label{thm:B}
 		Denote by $Stn_n$ the category of strong Steiner complexes and by $x$ its object, then considered as an $(\infty,n)$-category it is a free category on its elementary cells.
 	\end{theoremabc}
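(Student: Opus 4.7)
The plan is to realize $x$ in $\cat_n$ as an iterated pushout of elementary globular cells along their boundaries, and then deduce freeness by combining Steiner's unique decomposition theorem with strong loop-freeness.

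First, recall Steiner's theorem for strong Steiner complexes: every $k$-morphism of the associated strict $n$-category $x^*$ admits a unique presentation as an iterated pasting of the basis elements $b_\alpha$ via the strict $*_j$-operations. Choosing a total ordering of the cells compatible with the strong loop-free partial order produces a filtration $\emptyset = y_0^* \subset y_1^* \subset \cdots \subset y_N^* = x^*$ in which $y_i^*$ is obtained from $y_{i-1}^*$ by attaching a single elementary $k_i$-cell $b_i$ along its globular boundary. Strictly, this is encoded by a pushout square
\[
\begin{tikzcd}
\partial[b_i] \ar[r] \ar[d] & y_{i-1}^* \ar[d]\\
{[b_i]} \ar[r] & y_i^*
\end{tikzcd}
\]
in which $[b_i]$ and $\partial[b_i]$ lie in $\Theta_n$ and represent the free globular $k_i$-cell and its boundary.

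Second, I would show by induction on $i$ that each such square remains a pushout after passing to $\cat_n$. The inductive step splits into (a) verifying that the attaching map $\partial[b_i] \to y_{i-1}$ is a monomorphism in $\cat_n$, and (b) verifying that a pushout in $\cat_n$ of a cell $[b_i] \in \Theta_n$ along such a monomorphism is computed by the underlying strict pushout. Claim (b) follows from general properties of $\cat_n$ as a localization of Segal presheaves on $\Theta_n$: the pushout can be checked on restrictions to globular cells, where it agrees with the strict computation. Assembling these pushouts over $i$ would identify $x$ in $\cat_n$ with the colimit of its cellular attaching diagram, which is exactly the freeness statement.

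The main obstacle is claim (a), the faithfulness of the attaching map, and this is where I expect the real work to be. Here I would argue by induction on $n$: the mapping $(\infty,n-1)$-categories of $y_{i-1}$ between pairs of objects are themselves associated to strong Steiner subcomplexes of $x$, strong loop-freeness being inherited by the hom-data, so by the inductive hypothesis they are free $(\infty,n-1)$-categories on their elementary cells. Faithfulness of $\partial[b_i] \to y_{i-1}$ then reduces to the injectivity, on strings of elementary cells, of the corresponding strict map, which is precisely what strong loop-freeness ensures. The delicate point is to package this hom-reduction cleanly — in particular to verify that hom-subcomplexes of a strong Steiner complex are again strong Steiner and that the cellular filtration is respected — which will require a careful combinatorial analysis of the basis and of the partial order underlying strong loop-freeness.
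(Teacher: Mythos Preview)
Your approach is genuinely different from the paper's, and the real gap is at claim (b), not (a).

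The paper does not attempt to show directly that cell-attaching pushouts are preserved by the inclusion into $\cat_n$. Instead it argues deformation-theoretically: writing $X \bydef \colim_{(e \xinert{} x) \in \stn^\el_{n,/x}} e$ in $\cat_n$ with comparison map $f\colon X \to x$, it invokes a criterion (\Cref{prop:def}, resting on Postnikov-tower convergence for $\cat_n$ from \cite{harpaz2020k}) saying that $f$ is an equivalence iff $\tau_{\leq n+1} f$ is an equivalence and the relative cotangent complex $L_f$ vanishes. The first condition is the discrete case, already established in \cite{ara2023categorical}. The second reduces, via the identification $\stab(\cat_{n,/x}) \cong \mor_\cat(\stn^\inrt_{n,/x}, \spc)$ obtained earlier (\Cref{prop:main_theta_stab} together with \Cref{prop:stn_inrt}), to the contractibility of $\stn^\el_{n,/y}$ for every $y \xinert{} x$, which is \Cref{prop:stn_cont}. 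Preservation of active/inert pushouts by $\stn_n \hookrightarrow \cat_n$ (\Cref{cor:stein_push}) is then deduced as a \emph{corollary} of freeness, not an input to it.

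You flag (a) as the main obstacle, but (b) is where your argument actually breaks. The claim that such pushouts ``can be checked on restrictions to globular cells'' because $\cat_n$ is a localization of Segal presheaves is not valid: the localization functor does not preserve pushouts, and the paper explicitly warns (just after \Cref{lem:theta_push}) that even active/inert pushouts in $\Theta_n$ are in general not preserved by $\Theta_n \hookrightarrow \cat_n$ for $n>1$. Without an independent proof of (b), your induction on $i$ does not close: you cannot conclude that the $\infty$-categorical $y_i$ agrees with the strict $y_i^*$, so at step $i+1$ you cannot identify the hom-$(\infty,n-1)$-categories of $y_i$ with those of a strong Steiner complex, and the reduction to dimension $n-1$ in (a) is never available. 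The deformation-theoretic route sidesteps this entirely by never computing any individual pushout in $\cat_n$.
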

 	A more precise version of this statement can be found at the end of \Cref{sect:stein}. Note that previously \Cref{thm:B} has been proved for discrete $n$-categories in \cite{ara2023categorical}, while an $\infty$-categorical version of it was also obtained in \cite{campion2023infty}.\par 
 	The purpose for calculating the stabilization of the overcategories for us is deformation theory. A general formalism for deformation theories has been developed in \cite[Section 12]{lurie2018spectral}, we will recall the basic definitions of it. First, given any presentable category $\cC$, a morphism $f:x\rightarrow y$ in $\cC$, an object $M\in\stab(\cC_{/y})$ and $g':\Sigma^\infty x\rightarrow M$ we can consider the pullback 
 	\[\begin{tikzcd}[sep=huge]
 		{x'} & y \\
 		x & {\Omega^\infty M}
 		\arrow[from=1-1, to=1-2]
 		\arrow[from=1-1, to=2-1]
 		\arrow["\ulcorner"{anchor=center, pos=0.125}, draw=none, from=1-1, to=2-2]
 		\arrow["0", from=1-2, to=2-2]
 		\arrow["g"', from=2-1, to=2-2]
 	\end{tikzcd},\]
 	where $g$ is obtained from $g'$ using the adjunction $\Sigma^\infty\dashV \Omega^\infty$ and the right vertical arrow is the zero section. We will call such an object $x'$ a \textit{small extension} of $x$ and we will call the objects of $\cC_{/y}$ that can be obtained by iterated small extensions from the terminal object $\id_y$ \textit{Artinian objects}. The category $\cat_n$ is well-suited for deformation theory since by the main result of \cite{harpaz2020k} every object $\cE\in\cat_n$ can be viewed as an Artinian object of $\cat_{n,/\theta_{\leq n+1}\cE}$, where $\theta_{\leq n+1}: \cat_n\rightarrow \cat_{(n+1,n)}$ is the left adjoint to the inclusion of $(n+1,n)$-categories into $(\infty,n)$-categories. Formal deformation theory then allows us to reduce the study of objects in $\cat_n$ to the study of discrete categories in $\cat_{(n+1,n)}$ and their deformation theory. More specifically, denote $L_\cE\bydef\Sigma^\infty(*)\in\stab(\cat_{n,/\cE})$, with this notation we have
 	 \begin{theoremabc}\label{thm:C}
 	 	Assume that $f:\cE\rightarrow \cD$ in $\cat_n$ is such that 
 	 	\[\cok(f_! L_\cE\rightarrow L_\cD)\cong 0\]
 	 	and $\tau_{\leq n+1}f$ induces a monomorphism
 	 	\[\tau_{\leq n+1}f^*:\mor_{\cat_{(n+1,n)}}(\tau_{\leq n+1}\cD,A)\rightarrow \mor_{\cat_{(n+1,n)}}(\tau_{\leq n+1}\cE,A)\]
 	 	for any $A\in\cat_{(n+1,n)}$, then $f^*$ is also a monomorphism and moreover we have a pullback square
 	 	\begin{equation}
 	 		\begin{tikzcd}[sep=huge]
 	 			{\mor_{\cat_n}(\cD,\cA)} & {\mor_{\cat_{(n+1,n)}}(\tau_{\leq n+1}\cD,\tau_{\leq n+1}\cA)} \\
 	 			{\mor_{\cat_n}(\cE,\cA)} & {\mor_{\cat_{(n+1,n)}}(\tau_{\leq n+1}\cE,\tau_{\leq n+1}\cA)}
 	 			\arrow["{\tau_{\leq n+1}}", from=1-1, to=1-2]
 	 			\arrow["{f^*}"', from=1-1, to=2-1]
 	 			\arrow["\ulcorner"{anchor=center, pos=0.125, rotate=45}, draw=none, from=1-1, to=2-2]
 	 			\arrow["{\tau_{\leq n+1}f^*}", from=1-2, to=2-2]
 	 			\arrow["{\tau_{\leq n+1}}"', from=2-1, to=2-2]
 	 		\end{tikzcd}
 	 	\end{equation}
 	 	for any $\cA\in\cat_n$.
 	 \end{theoremabc}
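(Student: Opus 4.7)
The plan is to combine the deformation-theoretic machinery recalled just before the statement with \Cref{thm:A} and the main result of \cite{harpaz2020k}. By the latter, any $\cA\in\cat_n$ is an Artinian object of $\cat_{n,/y}$ where $y:=\tau_{\leq n+1}\cA$, and the Artinian resolution can be chosen so that every intermediate stage $\cA_i$ still satisfies $\tau_{\leq n+1}\cA_i = y$. I will prove by induction along such a resolution that the pullback square in the statement is Cartesian for each $\cA_i$. The base case $\cA_0 = y$ is immediate: both horizontal arrows of the square are equivalences by the adjunction between $\tau_{\leq n+1}$ and the fully faithful inclusion $\cat_{(n+1,n)}\hookrightarrow\cat_n$. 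Once the square is established to be Cartesian for $\cA$, its left vertical $f^*$ is a base change of the right vertical $\tau_{\leq n+1}f^*$, and since monomorphisms of spaces are preserved under pullback, the monomorphism conclusion follows automatically.

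The crucial input for the inductive step is a Cartesian-square lemma for $\Omega^\infty M$. Given any $y\in\cat_n$ and any $M\in\stab(\cat_{n,/y})$, the cotangent hypothesis --- equivalently, $f_!L_\cE\xrightarrow{\sim}L_\cD$ in $\stab(\cat_{n,/\cD})$ --- implies that
\[\begin{tikzcd}
	\mor_{\cat_n}(\cD,\Omega^\infty M) & \mor_{\cat_n}(\cD,y) \\
	\mor_{\cat_n}(\cE,\Omega^\infty M) & \mor_{\cat_n}(\cE,y)
	\arrow[from=1-1, to=1-2]
	\arrow["{f^*}"', from=1-1, to=2-1]
	\arrow["{f^*}", from=1-2, to=2-2]
	\arrow[from=2-1, to=2-2]
\end{tikzcd}\]
is Cartesian. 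To see this, fix $k:\cD\to y$; the fiber of the left vertical arrow over $k$ is identified with $\mor_{\stab(\cat_{n,/\cD})}(L_\cD,k^*M)$, where $k^*M\in\stab(\cat_{n,/\cD})$ is the base change of $M$. The corresponding fiber of the right vertical arrow over $k\circ f$ equals $\mor_{\stab(\cat_{n,/\cE})}(L_\cE,f^*k^*M) = \mor(f_!L_\cE,k^*M)$ by the $f_!\dashv f^*$ adjunction, and the cotangent hypothesis turns the natural comparison into an equivalence.

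For the inductive step proper, a small extension $\cA' = \cA\times_{\Omega^\infty M}y$ yields, upon applying $\mor_{\cat_n}(X,-)$, the pullback identity $\mor_{\cat_n}(X,\cA') \simeq \mor_{\cat_n}(X,\cA)\times_{\mor_{\cat_n}(X,\Omega^\infty M)}\mor_{\cat_n}(X,y)$ for $X\in\{\cE,\cD\}$. Since $\tau_{\leq n+1}\cA' = y$, the target square for $\cA'$ has its right column equal to $\mor_{\cat_n}(-,y)$. Combining the two small-extension identities with the inductive hypothesis for $\cA$ and the Cartesian-square lemma above, a routine manipulation of iterated pullbacks shows that both $\mor_{\cat_n}(\cD,\cA')$ and $\mor_{\cat_n}(\cE,\cA')\times_{\mor_{\cat_n}(\cE,y)}\mor_{\cat_n}(\cD,y)$ reduce to $\mor_{\cat_n}(\cE,\cA)\times_{\mor_{\cat_n}(\cE,\Omega^\infty M)}\mor_{\cat_n}(\cD,y)$, which closes the induction.

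The most delicate point will be the rigorous formulation of the Cartesian-square lemma: in particular, the compatibility of the formation of $\stab(\cat_{n,/-})$ with base change along $k:\cD\to y$ and the naturality of the resulting fiber identifications, so that the pointwise equivalence of fibers actually globalizes to a Cartesian square. I expect this to follow from the presentability of $\cat_n$ and the standard formalism of tangent $\infty$-categories, but it is the technical heart of the argument.
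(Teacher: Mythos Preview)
Your proposal is correct and follows essentially the same route as the paper's proof of \Cref{prop:main_prop}: induction along the Postnikov tower of $\cA$ over $y=\tau_{\leq n+1}\cA$, using $f_!L_\cE\simeq L_\cD$ to show that the square for $\Omega^\infty M$ is Cartesian at each stage, followed by pullback-pasting. The only differences are presentational---you isolate the $\Omega^\infty M$ step as a separate ``Cartesian-square lemma'' and derive the monomorphism claim at the end from the pullback square, whereas the paper folds everything into one large pasting diagram and carries the monomorphism assertion through the induction.
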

 	 Finally, in the last section we provide an application of this theorem: we denote by $\rB\Delta^\act_\lax$ the tricategory associated to the monoidal bicategory $\Delta^\act_\lax$ which is a faithful subcategory of $\cat$ containing finite ordinals $[m]$ and endpoint-preserving functors between them. There is a natural morphism $\cI:\rB \Delta^\act\hookrightarrow \rB\Delta^\act_\lax$ from the underlying bicategory of $\rB\Delta^\act_\lax$ which is a "walking comonad". The tricategory $\rB\Delta^\act_\lax$ is supposed to be the "walking lax-idempotent comonad", and we prove it in the last section using the deformation theory of \Cref{thm:C}. More specifically, we prove 
 	 \begin{theoremabc}\label{thm:D}
 	 	For $\cE\in\cat_3$ the space of morphisms $F:\rB\Delta^\act_\lax\rightarrow\cE$ is isomorphic to the subspace of $\rB\Delta^\act\rightarrow\cE$ for which the image of the 2-morphism $\delta^2_1:[1]\xactive{}[2]$ is left adjoint to $\sigma_0^1:[2]\xactive{}[1]$.
 	 \end{theoremabc}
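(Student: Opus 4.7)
The plan is to apply \Cref{thm:C} to the canonical inclusion $\cI:\rB\Delta^\act\hookrightarrow\rB\Delta^\act_\lax$ in $\cat_3$ (with $\cA=\cE$), which will express $\mor_{\cat_3}(\rB\Delta^\act_\lax,\cE)$ as a pullback involving the bicategorical mapping space and its $\tau_{\leq 4}$-truncated analogue. The two hypotheses to verify are the vanishing of $\cok(\cI_! L_{\rB\Delta^\act}\rightarrow L_{\rB\Delta^\act_\lax})$ and the fact that $\tau_{\leq 4}\cI^*$ induces a monomorphism on mapping spaces into any target in $\cat_{(4,3)}$.

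For the cotangent complex condition, I would use \Cref{thm:A} to identify $\stab(\cat_{3,/\rB\Delta^\act_\lax})$ with $\mor_\cat(\twar(\rB\Delta^\act_\lax),\spc)$. The only cells of $\rB\Delta^\act_\lax$ not already present in $\rB\Delta^\act$ are the non-identity 3-cells corresponding to natural transformations $\alpha:f\Rightarrow g$ of endpoint-preserving functors $f,g:[m]\rightarrow[n]$. Since between any two parallel 2-cells in $\Delta^\act_\lax$ there is at most one such natural transformation (determined by the pointwise inequality $f(i)\le g(i)$), these 3-cells are property-like. Using the Steiner complex formalism from \Cref{thm:B}, each such $\alpha$ corresponds to a pasting diagram whose source and target already lie in $\twar(\rB\Delta^\act)$, so its class in $L_{\rB\Delta^\act_\lax}$ can be exhibited as a formal combination of cells in the image of $\cI_! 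L_{\rB\Delta^\act}$, forcing the cokernel to vanish.

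For the truncated mapping condition, $\rB\Delta^\act_\lax$ is already a $(3,3)$-category and hence $(4,3)$-truncated, so a morphism $\rB\Delta^\act_\lax\rightarrow\tau_{\leq 4}\cE$ in $\cat_{(4,3)}$ amounts to a bicategorical functor $\rB\Delta^\act\rightarrow\tau_{\leq 4}\cE$ together with a choice of 3-cells witnessing the lax structure. A classical identification in the theory of lax-idempotent comonads says that this extra data amounts precisely to a choice of left adjoint $\delta^2_1\dashv\sigma^1_0$; since adjunction data in a tricategory is $(-1)$-truncated, this identifies $\tau_{\leq 4}\cI^*$ with the inclusion of the adjunction-preserving subspace. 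Combining with the pullback square of \Cref{thm:C} then yields \Cref{thm:D}.

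The main obstacle is the cotangent complex computation: while the conceptual reason for vanishing is clear (the new 3-cells are property-like and carry no genuine stable information), making this precise requires a combinatorial analysis of $\twar(\rB\Delta^\act_\lax)$ via the Steiner description, identifying each natural transformation $\alpha$ with an explicit formal sum in the cellular chain complex pulled back from $\twar(\rB\Delta^\act)$. A secondary difficulty is showing that the adjunction data fully determines a tricategorical functor out of $\rB\Delta^\act_\lax$, which reduces to unfolding the description of $\Delta^\act_\lax$ as the sub-2-category of $\cat$ generated by $\Delta^\act$ together with the unit and counit of $\delta^2_1\dashv\sigma^1_0$.
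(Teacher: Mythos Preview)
Your overall strategy---applying \Cref{thm:C} to the inclusion $\cI:\rB\Delta^\act\hookrightarrow\rB\Delta^\act_\lax$ and reducing to a classical statement for $(4,3)$-categories---matches the paper exactly, and your treatment of the truncated hypothesis is essentially the same (the paper simply cites Kock).

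The gap is in your cotangent complex argument. Recall that $L_\cD$ is by definition the \emph{constant} functor $\bS$ on $\twar(\cD)$; there is no ``class of a cell in $L_{\rB\Delta^\act_\lax}$'' to express as a formal sum, and \Cref{thm:B} gives no mechanism for doing so. Showing $\cok(\cI_! L_{\rB\Delta^\act}\to L_{\rB\Delta^\act_\lax})\cong 0$ amounts to showing that for every object $\alpha\in\twar(\rB\Delta^\act_\lax)$ the comma category $\twar(\cI)_{/\alpha}$ is contractible, i.e.\ that $\twar(\cI)$ is coinitial. Your argument that the new 3-cells are ``property-like'' is a heuristic for why this should hold, but it is not a proof: one still has to analyze the morphism spaces of $\twar(\rB\Delta^\act_\lax)$, which by \Cref{ex:lowD} are geometric realizations of fairly intricate double categories.

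The paper handles this differently and more directly: it proves (\Cref{prop:twar_mon}) that $\twar(\rB\Delta^\act_\lax)$ is a \emph{singleton}. This is the substantial step, resting on a combinatorial lemma (\Cref{lem:cont}) showing that certain posets of factorization diagrams in $\Delta^\act$ are contractible. Once the target is a point, $\twar(\cI)$ is coinitial iff $|\twar(\rB\Delta^\act)|$ is contractible, which follows from $|\twar(\cD)|\cong|\cD|$ (\Cref{lem:twar_cont}) and the contractibility of $|\rB\Delta^\act|$. Your proposal is missing an argument of comparable strength; the Steiner formalism by itself does not supply it.
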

 	 \subsection*{Notation and conventions.} In this paper we will call $(\infty,n)$-categories simply $n$-categories, whenever a certain result only holds for ordinary (discrete) $n$-categories we will specifically mention it. We will use \cite{joyal2007quasi} to identify $n$-categories with $n$-fold Segal spaces, we will denote by $\cat_n$ the category of $n$-categories and by $\seg_n$ the category of $n$-fold Segal spaces. We will use the term "space" as a synonym for $\infty$-groupoid and denote by $\cS$ the category of spaces. For $\cC$ and $\cD$ in $\cat$ we will denote $\psh_\cD(\cC)\bydef \mor_\cat(\cC^\op,\cD)$, so in particular the ordinary category of presheaves is denoted as $\psh_\cS(\cC)$. We will denote by $\Theta_n$ the disk category of \cite{joyal1997disks} and identify $\seg_n$ with a subcategory of $\psh_\cS(\Theta_n)$ on functors satisfying the Segal condition. Finally, we will make light use of basic notions and some basic results of \cite{chu2019homotopy} on algebraic patterns without further mention. 
  	\section{Stabilization of $\cat_{n,/\theta}$}\label{sect:stab}
	Our goal in this section is to calculate the stabilization of $\cat_{n,/\theta}$ for $\theta\in\Theta_n$. This is important since by \Cref{lem:globn} we can (at least in theory) express $\stab(\cat_{n,/\cE})$ for any $\cE\in\cat_n$ in terms of $\stab(\cat_{n,/\theta})$, and this property will be used numerous times in what follows to provide a more explicit definition of this stabilization.\par 
	Our argument will consist of two parts -- first we will use the Dold-Kan correspondence for $\Theta_{n,/\theta}$ explored in the appendix to identify $\stab(\cat_{n,/\theta})$ with $\stab(\seg^\inj_{n,/\theta})$ and then further employ the Verdier duality of \cite{aoki2023posets} to identify this latter category with $\psh_\spc(\Theta^{\inrt,\op}_{n,/\theta})$. 
	\begin{prop}\label{prop:seg}
		For an algebraic pattern $\cO$ the stabilization of $\seg_\cO(\cS_*)$ is isomorphic to $\seg_\cO(\spc)$.
	\end{prop}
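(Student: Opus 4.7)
The plan is to present $\seg_\cO(\cS_*)$ as a reflective subcategory of pointed presheaves $\psh_{\cS_*}(\cO)$ cut out by a limit condition, and then transport this description through stabilization. Recall that for any presentable $\infty$-category $\cE$, stabilization is computed as the limit $\stab(\cE) \simeq \lim(\ldots \xrightarrow{\Omega} \cE_* \xrightarrow{\Omega} \cE_*)$ of pointed objects under iterated loop functors, and moreover $\stab(\cE_*) \simeq \stab(\cE)$ since the underlying $\cE_*$ is already pointed. Applied to $\cE = \seg_\cO(\cS)$, this reduces the problem to understanding $\Omega$ and limits inside $\seg_\cO(\cS_*)$.

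The key structural observation is that the Segal condition on a functor $F:\cO\to\cE$ (in the sense of Chu--Haugseng) is a small limit condition on the values of $F$: it asks that for each inert cover, certain comparison maps $F(x)\to \lim_\alpha F(\alpha)$ are equivalences. Consequently, for any limit-preserving functor $G:\cE\to \cE'$, postcomposition with $G$ sends $\seg_\cO(\cE)$ into $\seg_\cO(\cE')$, and moreover $\seg_\cO(-)$ commutes with limits of presentable categories along limit-preserving functors. In particular, $\Omega^\infty:\spc \to \cS_*$ induces $\seg_\cO(\spc)\to \seg_\cO(\cS_*)$, and iterating gives a natural comparison $\seg_\cO(\spc) \to \lim_\Omega \seg_\cO(\cS_*) \simeq \stab(\seg_\cO(\cS_*))$.

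To see that this comparison is an equivalence, I would embed the picture into the ambient presheaf setting. The localization $L:\psh_{\cS_*}(\cO)\to \seg_\cO(\cS_*)$ is a reflective accessible localization, and since Segal equivalences are generated by maps between compact objects and the condition itself is a limit condition, the inclusion survives stabilization. Thus we obtain a reflective inclusion $\stab(\seg_\cO(\cS_*))\hookrightarrow \stab(\psh_{\cS_*}(\cO)) \simeq \psh_\spc(\cO)$, and its essential image consists of those spectrum-valued presheaves $F$ such that each $\Omega^\infty\Sigma^n F$ is Segal. Since the Segal condition is a limit condition and $\{\Omega^\infty \Sigma^n\}_{n\ge 0}$ is jointly conservative, this is equivalent to $F$ itself satisfying the Segal condition, identifying the essential image with $\seg_\cO(\spc)$.

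The main obstacle I expect is the commutation of the Segal localization with stabilization, i.e.\ verifying that $L$ is left-exact enough (or that its kernel is generated by suitable stable maps) for the stabilized adjunction to remain a reflective inclusion with the correct essential image. This should follow from the fact that the Segal condition is a small limit condition and that $\Omega^\infty$ commutes with the relevant limits, together with the standard yoga of stabilizing presentable adjunctions, but writing it out cleanly requires a careful invocation of the machinery of \cite{chu2019homotopy}.
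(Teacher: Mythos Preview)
Your approach is correct. The argument is in fact complete after your second paragraph: since the inclusion $\seg_\cO(\cS_*)\hookrightarrow \mor_\cat(\cO,\cS_*)$ creates limits, loops in $\seg_\cO(\cS_*)$ are computed objectwise, and since $\seg_\cO(-)$ commutes with limits of complete categories along limit-preserving functors (because $\mor_\cat(\cO,-)$ does, limits in an inverse limit of such categories are computed pointwise, and the projections are jointly conservative), one obtains directly
\[
\stab(\seg_\cO(\cS_*)) \simeq \lim_{\Omega} \seg_\cO(\cS_*) \simeq \seg_\cO\bigl(\lim_{\Omega} \cS_*\bigr) \simeq \seg_\cO(\spc).
\]
The obstacle you flag in your last paragraph---whether the reflector $L$ survives stabilization---does not arise in this formulation, since you use only the inclusion and never $L$; your third paragraph (testing via $\Omega^\infty\Sigma^n$) is therefore redundant.

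The paper takes a dual route. Instead of the inverse-limit description of $\stab$, it presents $\seg_\cO(\cS_*)$ as a \emph{pushout} in $\mathrm{PrCat}$ of $\mor_\cat(\cO^\el,\cS_*)$ and $\mor_\cat(\cO,\cS_*)$ along $\mor_\cat(\cO^\inrt,\cS_*)$, and then uses that stabilization is the smashing localization $\spc\otimes(-)$, which preserves colimits because $\mathrm{PrCat}$ is closed monoidal. This sidesteps any direct analysis of $\Omega$ or of the Segal condition as a limit condition. Your argument is more elementary in that it needs neither the smashing description of $\stab$ nor the closed monoidal structure on $\mathrm{PrCat}$; the paper's is shorter once that machinery is available.
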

	\begin{proof}
		Indeed, by definition we have the following pullback square
		\[\begin{tikzcd}[sep=huge]
			{\seg_\cO(\cS_*)} & {\mor_\cat(\cO,\cS_*)} \\
			{\mor_\cat(\cO^\el,\cS_*)} & {\mor_\cat(\cO^\inrt,\cS_*)}
			\arrow[from=1-1, to=2-1]
			\arrow[from=1-1, to=1-2]
			\arrow["{j^*}", from=1-2, to=2-2]
			\arrow["{i_*}"', from=2-1, to=2-2]
		\end{tikzcd},\]
		where $j:\cO^\inrt\hookrightarrow\cO$ and $i:\cO^\el\hookrightarrow\cO^\inrt$ are natural inclusions. This can equivalently be described as the pushout 
		\[\begin{tikzcd}[sep=huge]
			{\seg_\cO(\cS_*)} & {\mor_\cat(\cO,\cS_*)} \\
			{\mor_\cat(\cO^\el,\cS_*)} & {\mor_\cat(\cO^\inrt,\cS_*)}
			\arrow[from=2-1, to=1-1]
			\arrow[from=1-2, to=1-1]
			\arrow["{j_!}"', from=2-2, to=1-2]
			\arrow["{i^*}", from=2-2, to=2-1]
		\end{tikzcd}\]
		in the category $\mathrm{PrCat}$ of presentable categories and left adjoint functors. Since the stabilization of $\cV$ is given by $\spc\otimes \cV$ (since stabilization is a smashing localization, see \cite{gepner2016universality}) and $\otimes_{\mathrm{PrCat}}$ preserves colimits (since $\mathrm{PrCat}$ is closed monoidal) we get the pushout square 
		\[\begin{tikzcd}[sep=huge]
			{\mathrm{Stab}(\seg_\cO(\cS))} & {\mor_\cat(\cO,\spc)} \\
			{\mor_\cat(\cO^\el,\spc)} & {\mor_\cat(\cO^\inrt,\spc)}
			\arrow[from=2-1, to=1-1]
			\arrow[from=1-2, to=1-1]
			\arrow["{j_!}"', from=2-2, to=1-2]
			\arrow["{i^*}", from=2-2, to=2-1]
		\end{tikzcd},\]
		which by definition means that $\mathrm{Stab}(\seg_\cO(\cS))\cong \seg_\cO(\spc)$.
	\end{proof}

	\begin{construction}\label{constr:str_T}
		It follows from \Cref{prop:seg} that for $\cE\in \seg_n$ the stabilization of $\seg_{n,/\cE}$ can be identified with the category of functors $\Theta^\op_{n,/\cE}\rightarrow\spc$ satisfying the Segal condition, here we will introduce a stratification $T_k$ of that category.\par
		For $k\leq n$ denote by $T_{\leq k}(\cE)$ the full subcategory of $\seg_{\ccat_{n,/\cE}}(\spc)$ on functors $\cF$ for which $\cF(c_p\xrightarrow{f}\cE)\cong 0$ for $p>k$, this is a full subcategory and the forgetful functor from $\stab(\cat_{n,/\cE})$ to $T_{\leq k}(\cE)$ preserves limits and colimits, hence $T_{\leq k}(\cE)$ defines a stratification. 
	\end{construction}
	\begin{lemma}\label{lem:globn}
		For $\cE\in\seg_n$ there is an equivalence
		\[\stab(\seg_{n,/\cE})\cong \underset{f:\theta\rightarrow\cE}{\lim}\stab(\seg_{n,/\theta}).\]
		Assuming $\cE$ is complete, we also have an equivalence
		\[\stab(\cat_{n,/\cE})\cong \underset{f:\theta\rightarrow\cE}{\lim}\stab(\cat_{n,/\theta}).\]
	\end{lemma}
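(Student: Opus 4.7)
The strategy is to apply \Cref{prop:seg} to reduce both equivalences to a colimit-to-limit formula at the level of algebraic patterns. First, I identify $\seg_{n,/\cE}$ with $\seg_{\Theta_{n,/\cE}}(\cS)$, where $\Theta_{n,/\cE}$ is the category of elements of $\cE$ equipped with the algebraic pattern structure pulled back from $\Theta_n$: inert morphisms and elementary objects are those whose images in $\Theta_n$ are inert, respectively elementary. Under this identification, \Cref{prop:seg} applied to $\Theta_{n,/\cE}$ and to each $\Theta_{n,/\theta}$ yields
\[
\stab(\seg_{n,/\cE})\cong \seg_{\Theta_{n,/\cE}}(\spc), \qquad \stab(\seg_{n,/\theta})\cong \seg_{\Theta_{n,/\theta}}(\spc).
\]

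Next, I would verify that $\Theta_{n,/\cE}\cong \colim_{\theta\to\cE}\Theta_{n,/\theta}$ in the $\infty$-category of algebraic patterns. On underlying $\infty$-categories this is the standard identification of the category of elements of a presheaf with a colimit of slice categories of representables; compatibility with the pattern structure is automatic since inertness and elementarity are detected pointwise from $\Theta_n$. I would then use that the assignment $\cO\mapsto \seg_\cO(\spc)$ sends pattern colimits to limits in $\prc$, which is immediate from the pullback-square description of Segal presheaves in the proof of \Cref{prop:seg} together with the fact that $\mor_\cat(-,\spc)$ converts colimits of $\infty$-categories into limits. Combining these observations yields the first equivalence.

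For the complete case, I would use that every representable $\theta\in\Theta_n$ is already a complete Segal space and that, for $\cE$ complete, the inclusion $\cat_{n,/\cE}\hookrightarrow \seg_{n,/\cE}$ is a left-exact reflective localization, with reflector given by fibrewise Segal completion. Since a left-exact reflective localization of a presentable category induces a reflective localization on stabilizations, it suffices to check that the local condition picking out $\stab(\cat_{n,/\cE})$ inside $\stab(\seg_{n,/\cE})$ is detected slicewise over $\theta\to \cE$. This is the main point I expect to be delicate: one needs to show that an object of $\stab(\seg_{n,/\cE})$ lies in the reflective subcategory coming from $\cat_{n,/\cE}$ if and only if its image in each $\stab(\seg_{n,/\theta})$ does, which amounts to the fact that relative completeness can be tested pointwise on representables.
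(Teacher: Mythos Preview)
Your approach for the first equivalence is close in spirit to the paper's but reverses the order of operations, and the justification you give has a gap. The paper first establishes
\[
\seg_{n,/\cE}\cong \underset{f:\theta\to\cE}{\lim}\seg_{n,/\theta}
\]
at the level of unstabilized Segal spaces (the Segal condition is checked object-by-object in $\Theta_n$, hence is preserved under the identification $\psh_\cS(\Theta_{n,/\cE})\cong \lim_\theta \psh_\cS(\Theta_{n,/\theta})$), and only then stabilizes, using that the limit along right adjoints is a colimit in $\prc$ and that $(-)\otimes\spc$ preserves such colimits. Your route instead stabilizes first via \Cref{prop:seg} and then tries to pass the colimit of patterns through $\seg_{(-)}(\spc)$. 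The argument you sketch for this step---``immediate from the pullback-square description together with $\mor_\cat(-,\spc)$ turning colimits into limits''---does not work as stated: in the pullback square of \Cref{prop:seg} one leg is $i_*$, a right Kan extension, not a restriction, so you cannot treat all three corners as $\mor_\cat(C,\spc)$ for varying $C$. The underlying fact (the Segal condition is local over $\Theta_{n,/\cE}$) is correct, but it is exactly what the paper checks \emph{before} stabilizing, which is why its ordering is cleaner.

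For the complete case your proposed route is unnecessarily delicate. The paper does not compare stable localizations at all: it simply observes that since $\cE$ is complete, an object of $\seg_{n,/\cE}$ lies in $\cat_{n,/\cE}$ if and only if its fibers over identity morphisms are complete (identities being the only invertible morphisms in $\cE$), and this is again a pointwise condition, giving
\[
\cat_{n,/\cE}\cong \underset{f:\theta\to\cE}{\lim}\cat_{n,/\theta}
\]
directly. The stabilization step is then identical to the first part. This bypasses entirely the question of whether the completeness localization is detected slicewise after stabilization, which you correctly flag as the subtle point of your approach.
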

	\begin{proof}
		Denote by $\widetilde{\cE}$ the left fibration over $\Theta_n$ corresponding to $\cE$, then $\seg_{n,/\cE}$ is a subcategory of $\psh_\cS(\widetilde\cE)$ on functors satisfying the Segal condition. Note that $\widetilde{\cE}\cong \underset{f:\theta\rightarrow\cE}{\colim}\theta$, so 
		\[\psh_\cS(\widetilde{\cE})\cong \underset{f:\theta\rightarrow\cE}{\lim}\psh_\cS(\widetilde{\theta}).\]
		This equivalence preserves the Segal condition and so defines an isomorphism
		\begin{equation}\label{eq:cat_lim}
			\seg_{n,/\cE}\cong\underset{f:\theta\rightarrow\cE}{\lim}\seg_{n,/\theta}.
		\end{equation}
		Note that the limit in \eqref{eq:cat_lim} is taken over pullback morphisms $g^*:\seg_{n,/\theta}\rightarrow\seg_{n,/\theta'}$ for $g:\theta\rightarrow\theta'$, they are right adjoint functors (with left adjoint $g_!$) between presentable categories, so \eqref{eq:cat_lim} can be viewed as a colimit diagram in $\prc$. The claim now follows since stabilization is given by tensoring with $\spc$ in $\prc$ and the monoidal structure in $\prc$ is closed, so $\otimes$ preserves colimit in both variables.\par
		To prove the second claim note that, since $\cE$ is assumed to be complete, an object $F:\cD\rightarrow\cE$ belongs to $\cat_{n,/\cE}$ if and only if its fibers over identity morphisms are complete (since those are the only invertible morphisms in $\cE$). It follows that we have
		\begin{equation}\label{eq:cat_lim_comp}
			\cat_{n,/\cE}\cong\underset{f:\theta\rightarrow\cE}{\lim}\cat_{n,/\theta},
		\end{equation}
		and the claim follows by the same argument.
	\end{proof}
	\begin{prop}\label{prop:seg_dk}
		Under the equivalence of \Cref{cor:dk_theta} the objects of $\stab(\seg_{n,/\theta})$ correspond to functors $\cF:\widetilde{\Theta}_{n,/\theta}\rightarrow\spc$ satisfying the following conditions:
		\begin{enumerate}[label=\alph*]
			\item\label{it:con1} the restriction of $\cF$ to $\Theta^\inj_{n,/\theta}$ satisfies the Segal condition;
			\item\label{it:con2} denote by $\widehat{\Theta}_{n,/\theta}$ the full subcategory of $\widetilde{\Theta}_{n,/\theta}$ on morphisms $\theta_f\xactive{s}\theta_j\xrightarrow{j}\theta$ with surjective $s$ and injective $j$ such that $s^{-1}(\ima(i_e))\cong [0]$ for all except possibly one $i_e:c_k\xinert{}\theta_j$, in the latter case the sole non-trivial fiber is isomorphic to $c_p$ for some $p$ with $k+p\leq n$, then $\cF(\theta_g)\cong 0$ unless $\theta_g\in \widehat{\Theta}_{n,/\theta}$;
			\item \label{it:con3} given an object $\theta_f\in \widehat{\Theta}_{n,/\theta}$ such that $c_p\xinert{i_f}\theta_f$ is the sole elementary cell sent to a cell of lower dimension by $f$, a morphism $\theta_f\xrightarrow{h}\theta_g$ is sent to an isomorphism by $\cF$ if the restriction of $h$ along $i_f$ is an identity morphism.
		\end{enumerate} 
	\end{prop}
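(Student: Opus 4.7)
My plan is to combine \Cref{prop:seg} with the Dold-Kan correspondence of \Cref{cor:dk_theta}. By \Cref{prop:seg} applied to the pattern $\Theta_{n,/\theta}$, the stabilization $\stab(\seg_{n,/\theta})$ is identified with the full subcategory of $\mor_\cat(\Theta^\op_{n,/\theta}, \spc)$ on functors satisfying the Segal condition, and \Cref{cor:dk_theta} transports this into $\mor_\cat(\widetilde\Theta^\op_{n,/\theta}, \spc)$. The proof then splits into two tasks: translate the Segal condition through the correspondence, and pin down the essential image of the Dold-Kan functor at the level of spectrum-valued presheaves.

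For condition (a), I would use that the Segal condition on a presheaf over $\Theta_{n,/\theta}$ is a limit indexed by the inert subcategory and hence depends only on the restriction to $\Theta^\inrt_{n,/\theta}$. Since the Dold-Kan equivalence is by construction compatible with the inert subcategory---which is unchanged in passing from $\Theta_{n,/\theta}$ to $\widetilde\Theta_{n,/\theta}$---the Segal condition transports directly to the Segal condition on $\cF|_{\Theta^\inj_{n,/\theta}}$.

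Conditions (b) and (c) should come out as the intrinsic characterization of the essential image of Dold-Kan. The objects of $\widehat\Theta_{n,/\theta}$ are precisely the ``non-degenerate'' active-inert factorizations into $\theta$: those for which the active part $s$ restricts to an identity on every elementary cell except at most one, whose fiber is a globe $c_p$ witnessing the non-degeneracy. I would verify, by induction on $n$ and on cellular complexity, that any active morphism in $\Theta_n$ not satisfying this fiber condition factors further through a proper collapse making the corresponding Dold-Kan cofiber contractible; this simultaneously forces the vanishing in (b) and the invariance isomorphism in (c), since morphisms that act trivially on the distinguished globe admit the required degenerate refinement.

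The main obstacle will be the combinatorial analysis of surjections $\theta_f \xactive{s}\theta_j$ via the fibers $s^{-1}(\ima(i_e))$, specifically showing that any factorization with more than one non-trivial fiber (or a non-globular non-trivial fiber) admits a refinement by a further genuine degeneracy. Once this classification is in place, conditions (b) and (c) follow directly from the normalization properties of the Dold-Kan correspondence established in the appendix.
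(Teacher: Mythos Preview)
Your framing of conditions (b) and (c) as ``the intrinsic characterization of the essential image of Dold--Kan'' is a genuine conceptual error. The Dold--Kan correspondence of \Cref{cor:dk_theta} is an \emph{equivalence}, so its essential image is all of $\psh_\spc(\widetilde\Theta_{n,/\theta})$; there is nothing to characterize. Conditions (b) and (c) are not normalization artifacts --- they are the translation of the \emph{Segal condition} through the equivalence, and your proposed argument (factoring through ``proper collapses'') does not engage with the Segal condition at all. The vanishing in (b) is a statement about what happens to a Segal object under $\dk$, not a statement about which $\widetilde\theta_f$ are zero.

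The paper's proof supplies the missing ingredient: the stratification $T_{\leq k}(\theta)$ of \Cref{constr:str_T}. Reducing to $\cF\in T_k(\theta)$ forces the explicit decomposition
\[
\cF(\theta_f)\cong \bigoplus_{c_k\xinert{i}\theta_f}\cF(f\circ i),
\]
and this formula is what makes the kernel defining $\dk\cF(\theta_f)$ computable. For $\theta_f\in\widehat\Theta_{n,/\theta}$ one gets a single summand surviving; for $\theta_f\notin\widehat\Theta_{n,/\theta}$ the paper shows the relevant diagram is a right Kan extension from a smaller subcategory, forcing the kernel to vanish. Your ``induction on $n$ and cellular complexity'' has no substitute for this reduction and no clear inductive hypothesis. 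You also omit the converse direction entirely (that functors satisfying (a)--(c) map under $\dk'$ to Segal objects), which the paper handles by a parallel stratification argument.

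A smaller point: in your treatment of (a) you conflate $\Theta^\inrt_{n,/\theta}$ with $\Theta^\inj_{n,/\theta}$. The reason (a) holds is that for an \emph{injective} object $\theta_j\xrightarrow{j}\theta$ there are no non-identity surjections out of it in $E_n(\theta)$, so $\dk\cF(\theta_j)=\cF(\theta_j)$; the Segal condition then restricts verbatim. This is correct in spirit but not for the reason you give.
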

	\begin{proof}
		For the duration of the proof we will denote by $X$ the subcategory of $\psh_\spc(\widetilde{\Theta}_{n,/\theta})$ on $\cF$ satisfying the conditions of the proposition. Assume $\cF\in \psh_\spc(\Theta_{n,/\theta})$ satisfies the Segal condition, we need to show that the corresponding object $\dk(\cF)\in X$. We have seen in \Cref{constr:str_T} that $\stab(\cat_{n,/\theta})$ admits a stratification $T_{\leq k}(\theta)$, by \cite[Theorem A]{ayala2019stratified} we can express $\cF$ as a finite colimit of object lying in the fibers $T_k(\theta)$, since all of the conditions are stable under finite colimits we may assume that $\cF\in T_k(\theta)$, so that 
		\[\cF(\theta_f\xrightarrow{f}\theta)\cong \bigoplus_{c_k\xinert{i}\theta_f}\cF(f\circ i).\]
		Condition \ref{it:con1} is obvious, to prove \ref{it:con2} first assume $\theta_f\in \widehat{\Theta}_{n,/\theta}$ and denote by $\theta_f\xactive{s}\theta_j\xrightarrow{j}\theta$ its surjective/injective factorization (which exists by \Cref{prop:fact_inj}). If $f$ is injective then $\dk\cF(\theta_f)\cong \cF(\theta_f)$, so assume that it is not and moreover that $c_k\xinert{i_e}\theta_f$ is the sole non-trivial fiber of $s$. By definition we have
		\begin{equation}\label{eq:dk0}
			\dk\cF(\theta_f)\cong \ker(\cF(\theta_f)\rightarrow\cF(\theta_j))\cong \ker(\bigoplus_{c_k\xinert{i}\theta_f}\cF(f\circ i)\rightarrow\bigoplus_{c_k\xinert{i}\theta_f, \; i\neq i_e}\cF(f\circ i))\cong \cF(f\circ i_e).
		\end{equation}
		Finally, if $\dim(\ima(i_e))\neq k$, then the preceding calculation shows that $\dk\cF(\theta_f)\cong 0$.\par
		Now assume that $\theta_f$ does not lie in $\widehat{\Theta}_{n,/\theta}$, denote by $\theta_f\xactive{s}\theta_j\xrightarrow{j}\theta$ its active/inert factorization. By definition
		\begin{equation}\label{eq:dk_eq}
			\dk\cF(\theta_f)\cong \ker(\cF(\theta_f)\rightarrow\underset{(\theta_f\xrightarrow{e}\theta_g)\in E_n(\theta)_{\neq, \theta_f/}}{\lim}\cF(\theta_f)),
		\end{equation}
		where the limit is taken over all non-identity morphisms in $E^\lor_n(\theta)$ in the notation of \Cref{constr:dk_fact}. Denote by $\widetilde{\cF}:E_{n}(\theta)_{\theta_f/}\rightarrow\spc$ the functor sending sending $\theta_f\xrightarrow{e}\theta_g$ to $\cF(\theta_g)$ and by $i:C\hookrightarrow E_n(\theta)_{\theta_f/}$ the full subcategory on morphisms with target in $\widehat{\Theta}_{n,/\theta}$, we claim that $\widetilde{\cF}\cong i_* i^*\widetilde{\cF}$. Indeed, first observe that that $C\cong \{c_k\xinert{i_e}\theta_f,\;\dim(f\circ i)<k\}^\triangleright$, where the cone point is given by $\theta_f\xactive{s}\theta_j$ and to the element $i_e$ corresponds a surjective morphism $\theta_f\xactive{s_e}\theta_{f_e}$ sending all elementary cells in the fibers of $s$ to identities except for $i_e$. It follows that we need to prove that for any $\theta_f\xrightarrow{e}\theta_g$ 
		\begin{equation}\label{eq:dk_eq2}
			\cF(\theta_g)\cong \cF(\theta_{f_{e,0}})\times_{\cF(\theta_j)}...\times_{\cF(\theta_{j})}\cF(\theta_{e_m}),
		\end{equation}
		where $e_0,... e_m$ is an ordering of the subset of $\{c_k\xinert{i_e}\theta_f,\;\dim(f\circ i)<k\}$ containing morphisms that factor through $\theta_g$. Using the fact that $\cF\in T_k(\theta)$ \eqref{eq:dk_eq2} can be rewritten as
		\[\cF(\theta_j)\oplus \bigoplus_{0\leq t\leq m}\cF(f\circ i_{e_t})\cong (\cF(\theta_j)\oplus \cF(f\circ i_{e,0}))\times_{\cF(\theta_j)}...\times_{\cF(\theta_j)}(\cF(\theta_j)\oplus \cF(f\circ i_{e,m})),\]
		which is obvious. It now follows by transitivity of right Kan extension that both terms in the right-hand side of \eqref{eq:dk_eq} are isomorphic to $i_*i^*\widetilde{\cF}(\id_{\theta_f})$, hence $\dk\cF(\theta_f)\cong 0$. The remaining condition \ref{it:con3} now follows from \eqref{eq:dk0} since the value $\dk\cF(\theta_f)$ clearly only depends on $f\circ i_e$, hence the morphisms of the kind described in \ref{it:con3} induce isomorphisms as required.\par
		We now need to prove for any $\cF\in X$ we have $\dk'\cF\in \stab(\cat_{n,/\theta})$. Similarly to \Cref{constr:str_T} we can define a stratification $T'_{\leq k}(\theta)$ on $X$ such that $\cF\in T_{\leq k}(\theta)$ if and only if $\cF(\theta_f)\cong 0$ if $\dim(\theta_f)>k$, then as before it suffices to assume $\cF\in T'_k(\theta)$ for some $k$. By definition it follows that $\cF(\theta_j)\cong \bigoplus_{c_k\xinert{i}\theta_j}\cF(j\circ i)$ for injective $j$ and for non-injective $\theta_f\in \widehat{\Theta}_{n,/\theta}$ with a corresponding morphism $i_e:c_p\xinert{}\theta_f$ we have $\cF(\theta_f)\cong 0$ unless $k=p$. It now follows from this and \eqref{eq:dk_sum} that
		\[\dk'\cF(\theta_f)\cong \bigoplus_{c_k\xinert{i}\theta_f} \dk\cF(f\circ i),\]
		which obviously satisfies the Segal condition.
	\end{proof}
	To state the next result we will need some notation that will not be used elsewhere.
	\begin{notation}
		Given $\theta'\in\Theta_k$ and $m\geq 0$ denote by $\Sigma^m_\theta \theta'\in \Theta_{k+m}$ inductively by setting $\Sigma^0_\theta\theta'\bydef \theta'$ and $\Sigma^m_\theta\theta'$ to be the category with two objects $\{0,1\}$ such that $\mor_{\Sigma^{m}_\theta \theta'}(0,1) = \Sigma^{m-1}_\theta \theta'$. It is easy to see from this description that any morphism $f:\theta'\rightarrow \theta''$ induces a morphism $\Sigma^m_\theta f:\Sigma^m_\theta\theta'\rightarrow \Sigma^m_\theta \theta''$ making $\Sigma^m_\theta$ a functor.
	\end{notation}
	\begin{lemma}\label{lem:exp_dk}
		The category $\widehat{\Theta}_{n,/\theta}$ admits the following explicit description: it is a Cartesian fibration over $\Theta^\inj_{n,/\theta}$ whose fiber over $\theta_j\xrightarrow{j}\theta$ is the pointed category $K_j$ with objects given by $*_j$ as well as objects of the form $(c_l\xinert{i}\theta_j, k)$, where $1\leq k \leq n-l$, the morphisms in the fibers are of the following types:
		\begin{enumerate}
			\item\label{it:m1} for any $(i,k)$ a morphism $d_k:(i,k)\rightarrow (i,k+1)$ as well as a morphism $d_0:*_j\rightarrow (i,0)$ for all $i$;
			\item\label{it:m2} for any pair $(i,k)$, $(i',s)$ with $i\neq i'$ such that there is a factorization $c_l\xinert{i_0}c_r\xinert{i'}\theta_j$ of $c_l\xinert{i}\theta_j$ and $s+r = k+l$ a morphism $\gamma^{s,k}_{i_0}:(i',s)\rightarrow (i,k)$;
			\item\label{it:m3} for $(i,k)$ identify $\ima(i)$ with an object $x$ in $\theta^j_i\bydef \mor_{\theta_j}(i\circ i_-, i\circ i_+)$, where $i_\pm:c_{l-1}\xinert{} c_l$ are the inert inclusions (or with an object of $\theta_j$ if $l=0$), denote $\theta_i^{j,+}\bydef \mor_{\theta_i^j}(x,x+1)$ and $\theta_i^{j,-}\bydef \mor_{\theta_i^j}(x-1,x)$ then for any $k>1$ and any surjection $s_\pm:\theta_i^{j,\pm}\xactive{}c_{k-1}$ we have a morphism $\delta_i^{s_\pm}: *_j\rightarrow (i,k)$
		\end{enumerate}
		as well as their compositions. For any $(i,k)$ we have a relation
		\begin{equation}\label{eq:d}
			d_{k+1}\circ d_k\cong 0.
		\end{equation}
		Additionally, for $c_l\xinert{i_0}c_r$ as in the case \ref{it:m2} we have 
		\begin{equation}\label{eq:d1}
			d_{s}\circ \gamma_{i_0}^{s,k}\cong \gamma^{s,k+1}_{i_0}\circ d_k.
		\end{equation}
		For a composable pair $c_l\xinert{i_0}c_r\xinert{i'_0} c_p$ with $k+l=s+r=p+q$ we have
		\begin{equation}\label{eq:gamma}
			\gamma_{i'_0\circ i_0}^{q,k}\cong \gamma_{i'_0}^{q,s}\circ \gamma^{s,k}_{i_0}.
		\end{equation}
		Finally, for a pair $(c_l\xinert{i}\theta_j,k)$ and $(c_r\xinert{i'}\theta_j,s)$, a factorization $c_l\xinert{i_0}c_r\xinert{i'}\theta_j$ of $i$ and a surjection $s_\pm:\theta_{i'}^{j,\pm}\xactive{}c_{s-1}$ we have 
		\begin{equation}\label{eq:delta}
			\gamma^{s,k}_{i_0}\circ \delta_{i'}^{s_\pm}\cong \delta_i^{s'_\pm},
		\end{equation}
		where $s'_\pm$ is the composition
		\begin{equation}\label{eq:rel2}
			\theta_{j,\pm}^{i'}\xactive{s_0}\Sigma_\theta^{r-l} \theta^i_{j,\pm}\xactive{\Sigma_\theta^{r-l}s_\pm} c_{s+r-l-1}\cong c_{l-1},
		\end{equation}
		where the first morphism is the natural section to the inclusion $\Sigma_\theta^{r-l} \theta^i_{j,\pm}\hookrightarrow \theta^{i'}_{j,\pm}$, the second is induced by $s_\pm$ and the last equivalence follows since $s+r\geq k+l$.\par
		For any composable pair of injective morphisms $\theta_{j\circ v}\xrightarrow{v}\theta_j$ the functor $v^*:C_j\rightarrow C_{j\circ v}$ sends $*_j$ to $*_{j\circ v}$, sends $(i,k)$ to $0$ if $\ima(i)\nsubseteq \ima(v)$ and to the unique $(c_l\xinert{i'}\theta_{j\circ v}, k)$ such that $\ima(i)\subseteq \ima(v\circ i')$ otherwise.
	\end{lemma}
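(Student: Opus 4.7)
The plan is to prove the lemma in three steps: first to establish the Cartesian fibration structure of $\pi:\widehat{\Theta}_{n,/\theta}\to \Theta^\inj_{n,/\theta}$, then to identify the objects of its fibers, and finally to classify the morphisms in each fiber and verify the stated relations.

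For the Cartesian fibration, given an injective morphism $v:\theta_{j\circ v}\to \theta_j$ over $\theta$ and an object $(\theta_f\xactive{s}\theta_j)$ of $\widehat{\Theta}_{n,/\theta}$, I would construct the Cartesian lift by restricting $s$ to the preimage of $\ima(v)\subseteq \theta_j$, viewed as a surjection onto $\theta_{j\circ v}$. If the unique non-trivial fiber of $s$ lies over some cell $c_l\xinert{i}\theta_j$ with $\ima(i)\subseteq \ima(v)$, this restriction retains that non-trivial fiber over the unique cell $i'$ of $\theta_{j\circ v}$ with $\ima(v\circ i')=\ima(i)$; otherwise the restriction collapses to the identity $*_{j\circ v}$. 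Cartesianness is immediate from the pointwise nature of the restriction, and the result matches the description of $v^*:K_j\to K_{j\circ v}$ given at the end of the lemma.

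For the fibers, unpacking the definition of $\widehat{\Theta}_{n,/\theta}$ shows that an object of $K_j$ is a surjection $\theta_f\xactive{}\theta_j$ whose fibers over elementary cells are all trivial with at most one exception. The identity surjection yields the basepoint $*_j$, and any other object is determined by an elementary cell $c_l\xinert{i}\theta_j$ together with the globe $c_k$ of dimension $1\le k\le n-l$ sitting above it; these are exactly the objects $(i,k)$.

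The bulk of the work is the classification of morphisms inside each fiber $K_j$. A morphism $\theta_f\to\theta_g$ over $\id_{\theta_j}$ must be compatible with the surjections to $\theta_j$, and a case analysis of how the non-trivial fiber can transform yields the three generating families: the $d_k$'s come from the canonical inert inclusions $c_k\hookrightarrow c_{k+1}$ enlarging the dimension of the non-trivial fiber; the $\gamma^{s,k}_{i_0}$'s appear whenever an inert factorization $c_l\xinert{i_0}c_r\xinert{i'}\theta_j$ exists and let us move the non-trivial fiber from $i'$ down to $i$ by absorbing $i_0$ into the active part; and the $\delta^{s_\pm}_i$'s correspond to non-trivial fibers created by collapsing a hom-globe $\theta^{j,\pm}_i$ along a surjection $s_\pm$. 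The relation $d_{k+1}\circ d_k\cong 0$ follows because two successive boundary inclusions push the fiber outside the non-trivial locus, forcing the composite to factor through $*_j$; relations \eqref{eq:d1} and \eqref{eq:gamma} are routine naturality and functoriality checks for $\Sigma^m_\theta$ and inert factorizations respectively. The main obstacle will be \eqref{eq:delta}--\eqref{eq:rel2}, which compares a $\delta$-collapse at the larger cell $i'$ with the $\delta$-collapse at the smaller cell $i$ obtained after applying $\gamma^{s,k}_{i_0}$; I would prove this by induction on $r-l$, reducing to the case of an elementary codimension-one inclusion where the compatibility between the section $\theta^{i'}_{j,\pm}\xactive{}\Sigma^{r-l}_\theta\theta^i_{j,\pm}$ and the collapse $s_\pm$ can be verified directly in $\Theta_n$.
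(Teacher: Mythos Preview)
Your proposal has a genuine gap: it does not engage with the definition of morphisms in $\widehat{\Theta}_{n,/\theta}$. Recall that $\widehat{\Theta}_{n,/\theta}$ is a full subcategory of the \emph{pointed} category $\widetilde{\Theta}_{n,/\theta}$ from \Cref{not:dk}, whose morphisms are not arbitrary morphisms over $\theta$ but only those lying in the class $M_n(\theta)$ of \Cref{constr:dk_fact}, with the convention that a composition is declared zero whenever it leaves $M_n(\theta)$. Your case analysis treats morphisms in the fiber $K_j$ as general morphisms in $\Theta_{n,/\theta_j}$ compatible with the surjections to $\theta_j$; without the $M_n$ constraint this would produce far more morphisms than the lemma claims, and the relation $d_{k+1}\circ d_k\cong 0$ would simply be false.

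The paper's proof is entirely driven by this constraint. Its first and central step is to show that in $M_n([0])$ the only non-trivial morphisms $c_l\to c_k$ occur for $l=k$ or $l=k-1$ and are then unique (equal to $i_+:c_l\xinert{}c_{l+1}$); this is argued inductively using conditions \ref{it:u2} and \ref{it:u3}. The existence and uniqueness of the $d_k$, the shape of the $\gamma$ and $\delta$ morphisms, and crucially the relation $d_{k+1}\circ d_k\cong 0$ all follow from this: the composition $c_k\to c_{k+1}\to c_{k+2}$ lies outside $M_n([0])$ and is therefore zero by the composition rule in $\widetilde{\Theta}_{n,/\theta}$, not because any fiber is ``pushed outside the non-trivial locus''. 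Your explanations of the generating morphisms (``canonical inert inclusions enlarging the dimension'', ``absorbing $i_0$ into the active part'') and of the relation \eqref{eq:d} do not connect to this mechanism, and the inductive scheme you sketch for \eqref{eq:delta} has no analogue in the paper, which instead reads the relation off directly once the $M_n$ analysis is in place. For the Cartesian fibration, the paper defers to a separate pullback lemma (\Cref{lem:theta_pull}); your restriction argument is in the right spirit but would also need to verify that the constructed lifts are morphisms in $M_n(\theta)$.
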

	\begin{proof}
		We first claim that there are no non-trivial morphisms in $M_n([0])$ between $c_l$ and $c_k$ unless $l=k$ or $l=k-1$, in which case those are unique. We first claim that there are no non-trivial morphisms $[0]\cong c_0\xrightarrow{m} c_k$ for $k>1$: indeed, by condition \ref{it:u2} we must have $m(0)=1$ and also that such morphism lies in $M_n([0])$ only if $k\leq 1$. For general $l$ we again use \ref{it:u2} and the injectivity of $m$ to conclude $m(0)=0$ and $m(1)=1$, in which case the morphism is uniquely defined by the functor of morphism categories $c_{l-1}\cong \mor_{c_l}(0,1)\rightarrow c_{k-1}$, at which point we conclude by induction. In fact, it is easy to see that such morphism is in fact given by $i_+:c_{l}\xinert{} c_{l+1}$.\par
		To relate this to our situation we first need to relate the objects of $\widehat{\Theta}_{n,/\theta}$ to the objects described in the statement of the lemma: to $*_j$ corresponds the object $\theta_j\xrightarrow{j}\theta$, to a pair $(i,k)$ corresponds an object $\theta_{f_{(i,k)}}\xactive{s_{(i,k)}}\theta_j\xrightarrow{j}\theta$, where $s_{(i,k)}$ is the surjective morphism with the sole non-trivial fiber $s_{(i,k)}^{-1}(\ima(i))\cong c_k$. Note also that any morphism $\theta_{f_0}\xrightarrow{g}\theta_{f_1}$ induces a commutative diagram
		\begin{equation}\label{eq:inj_diag}
			\begin{tikzcd}[sep=huge]
				{\theta_{f_0}} && {\theta_{f_1}} \\
				{\theta_{j_0}} && {\theta_{j_1}} \\
				& \theta
				\arrow["g", from=1-1, to=1-3]
				\arrow["{s_0}"', from=1-1, to=2-1]
				\arrow["{s_1}", from=1-3, to=2-3]
				\arrow["{g_\inj}", from=2-1, to=2-3]
				\arrow["{j_0}"', from=2-1, to=3-2]
				\arrow["{j_1}", from=2-3, to=3-2]
			\end{tikzcd}
		\end{equation}
		upon taking the surjective/injective factorization of \Cref{prop:fact_inj}, the morphisms in the fiber $K_j$ correspond to morphisms $g$ as above with $g_\inj\cong \id$.\par
		Any morphism from $(i,l)$ to $(i,k)$ in $K_j$ then must induce a morphism of fibers $c_l\rightarrow c_k$, and we have seen above that there is a unique such non-identity morphism for $l=k-1$. This also gives the relation \eqref{eq:d} since the composition of such morphism is necessarily 0 as there are no non-zero morphisms from $c_k$ to $c_{k+1}$.\par
		Assume now that we have a pair of objects $(c_l\xinert{i}\theta_j,k)$ and $(c_r\xinert{i'}\theta_j,s)$ with distinct $i$ and $i'$, denote by $i'_1:c_{r+s}\xinert{}\theta_{f_{i',s}}$ the inclusion of the cell such that $s_{(i',s)}\circ i_0\cong i'$ and similarly denote $i_1:c_{k+l}\xinert{} \theta_{f_{(i,k)}}$ the corresponding morphism for $i$. A morphism $(i',s)\xrightarrow{m} (i,k)$ is uniquely determined by the image of the cell $c_{r+s}\xinert{i'_1}\theta_{f_{i',s}}$, since $m$ must also be injective this image must necessarily contain the image of $i_1$. Assume first that $l=0$ and the inclusion $c_l\cong [0]\xinert{i_0}c_{r}$ is the inclusion of the minimal element $i_-$, denote by $\theta'\xinert{u'}\theta_{f_{(i,k)}}$ the inclusion of a category with 3 objects such that $\mor_{\theta'}(0,1)\cong c_{k+l-1}$ and $\mor_{\theta'}(1,2)\cong c_{r-1}$ such that the composition $c_{k+l}\xinert{}\theta'\xinert{u'}\theta_{f_{(i,k)}}$ is isomorphic to $i_1$ and the composition $c_r\xinert{}\theta'\xinert{u'}\theta_{f_{(i,k)}}$ to $i'$, in this case the image of $c_{r+s}$ must necessarily factor as
		\[c_{r+s}\xrightarrow{w}\theta'\xinert{u'}\theta_{f_{(i,k)}},\]
		where $w$ is a morphism in $M_{r+s}([0])$, moreover the restriction to morphism categories $w_1:c_{r+s-1}\rightarrow\mor_{\theta'}(1,2)\cong c_{r-1}$ must be the unique surjection. Since $w\in M_{r+s}([0])$ we see by \ref{it:u3} that $w_0:c_{r+s-1}\rightarrow\mor_{\theta'}(0,1)\cong c_{k+l-1}$ must be of the form $q\circ s_1$ for a surjective $s_1$ and $q\in M_{r+s-1}([0])$. Since $w$ must also be injective, $s_1$ should be an identity morphism, so $w$ is uniquely determined by $q:c_{r+s-1}\rightarrow c_{k+l-1}$ in $M_{r+s-1}([0])$. We have seen above that such a morphism exists if and only if either $r+s = k+l$ or $r+s = k+l-1$, the morphism $\gamma^{s,k}_{i_0}$ corresponds to the former case, while the latter is given by $d_{k-1}\circ \gamma^{s,k-1}_{i_0}$. Finally, if $i_0$ is (say) $c_l\xinert{i_-} c_r$, then we can consider the morphisms between $(i',s)$ and $(i,k)$ as morphisms between $(i',s-l)$ and $(i,k-l)$ over $\mor_{\theta_j}(c_{l-1}\xinert{i_-}c_l\xinert{i}\theta_j, c_{l-1}\xinert{i_+}c_l\xinert{i}\theta_j)$ and apply our previous considerations. The relations \eqref{eq:d1} and \eqref{eq:gamma} now follow by construction.\par
		For \ref{it:m3} by passing to $\mor_{\theta_j}(c_{l-1}\xinert{i_-}c_l\xinert{i}\theta_j, c_{l-1}\xinert{i_+}c_l\xinert{i}\theta_j)$ we may again assume that $l=0$, so $i$ has the form $i:[0]\xinert{}\theta_j$. The morphism $*_j\rightarrow (i,k)$ is uniquely determined by its restriction to $\theta_i^{j,\pm}$ since its restrictions to all other cells are identities, the image of one of $\theta^{j,-}_i$ and $\theta^{j,+}_i$ must contain the image of $i_1:c_{k+l}\xinert{}\theta_j$, assume it is $\theta^{j,+}_i$. In this case the restriction of the morphism to $\theta^{j,+}_i$ must factor as 
		\[\theta_i^{j,+}\xrightarrow{w'}\theta''\xinert{u''}\theta_{f_{(i,k)}},\]
		where $w'\in M_n([0])$ and $\theta''$ is the category with three objects such that $\mor_{\theta''}(0,1)\cong \theta_i^{j,+}$ and $\mor_{\theta''}(1,2)\cong c_{k-1}$, moreover the restriction to morphism categories $w_0:\theta_i^{j,+}\rightarrow\theta_i^{j,+}$ must be identity. It follows that the morphism is uniquely determined by $w_1:\theta_i^{j,+}\rightarrow c_{k-1}$, this needs to be of the form $v\circ s_+$ for some surjective $s_+$ and $v\in M_{k-1}([0])$. We have seen above that there are 2 possible morphisms in $M_{k-1}([0])$ with this target, the morphism $\delta_i^{s_+}$ corresponds to $v=\id$, morphisms corresponding to the non-identity morphism $v$ are given by $d_{k-1}\circ \delta_i^{s'_+}$ for some $s'_+:\theta_i^{j,+}\xactive{}c_{k-2}$. The relation \eqref{eq:delta} now follows by construction. This concludes the description of $K_j$, it remains to prove that $\widehat{\Theta}_{n,/\theta}$ is a Cartesian fibration over $\Theta^\inj_{n,/\theta}$. however this follows immediately from \Cref{lem:theta_pull} below.
	\end{proof}
	\begin{lemma}\label{lem:theta_pull}
		Using the notation of \Cref{lem:exp_dk}, assume we have $\theta_{j_1}\in \Theta^\inj_{n,/\theta}$, an object $\theta_f\in K_{j_1}$ and an injective morphism $\theta_{j_0}\xrightarrow{j}\theta_j$, then there exists a pullback 
		\[\begin{tikzcd}[sep=huge]
			{j^*\theta_f} & {\theta_f} \\
			{\theta_{j_0}} & {\theta_{j_1}} & \theta
			\arrow[from=1-1, to=1-2]
			\arrow[from=1-1, to=2-1]
			\arrow["\lrcorner"{anchor=center, pos=0.125}, draw=none, from=1-1, to=2-2]
			\arrow["s", two heads, from=1-2, to=2-2]
			\arrow["f", from=1-2, to=2-3]
			\arrow["j", from=2-1, to=2-2]
			\arrow["{j_1}", from=2-2, to=2-3]
		\end{tikzcd}\]
		such that $j^* \theta_f\in K_{j_0}$.
	\end{lemma}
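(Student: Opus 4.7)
The plan is to construct $j^*\theta_f$ explicitly from the combinatorial data of $\theta_f$ and then verify the universal property directly. By \Cref{lem:exp_dk}, any object of $K_{j_1}$ is either the basepoint $*_{j_1}$ (corresponding to $s = \id$) or a pair $(c_l \xinert{i_e} \theta_{j_1}, k)$ with $1 \leq k \leq n - l$, in which case the associated surjection $s \colon \theta_f \xactive{} \theta_{j_1}$ has a single non-trivial fiber isomorphic to $c_{l+k}$ sitting over $\ima(i_e)$, while every other cell of $\theta_{j_1}$ is covered by an identity fiber.

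First I would dispose of the trivial case $\theta_f = *_{j_1}$ by setting $j^*\theta_f := \theta_{j_0}$, which evidently lies in $K_{j_0}$ as $*_{j_0}$. For $\theta_f$ corresponding to $(c_l \xinert{i_e} \theta_{j_1}, k)$, the construction splits according to whether $\ima(i_e) \subseteq \ima(j)$. If so, then since $j$ is inert (hence the inclusion of a sub-cell-complex) there is a unique factorization $c_l \xinert{i'_e} \theta_{j_0} \xinert{j} \theta_{j_1}$ of $i_e$, and I would define $j^*\theta_f$ to be the object of $K_{j_0}$ corresponding to $(i'_e, k)$ together with its tautological surjection onto $\theta_{j_0}$. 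Otherwise I would simply set $j^*\theta_f := *_{j_0}$. In either case the horizontal arrow $j^*\theta_f \to \theta_f$ is the one that is the identity on the distinguished non-trivial fiber $c_{l+k}$ when present, and restricts $j$ cell-by-cell on the remaining identity fibers; commutativity of the square with respect to $\theta$ is immediate from $j_1 \circ j$ being the bottom composite.

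To finish, I would verify the universal property using the surjective/injective factorization of \Cref{prop:fact_inj}. Given a test object $\theta_g$ equipped with compatible maps $\theta_g \to \theta_f$ and $\theta_g \to \theta_{j_0}$, the composite $\theta_g \to \theta_f \xactive{s} \theta_{j_1}$ necessarily lands in $\ima(j)$ by compatibility with the bottom square, so factoring it through its surjective/injective decomposition yields the required unique lift to $j^*\theta_f$. The main obstacle lies in the sub-case $\ima(i_e) \subseteq \ima(j)$: one must argue that the universal lift does not enlarge the non-trivial fiber $c_{l+k}$ or pick up cells outside $\ima(j)$, and that the Cartesian structure claimed at the end of \Cref{lem:exp_dk} matches the description of $v^*$ given there. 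Both points reduce to the local analysis of morphisms in $M_n([0])$ already carried out in the proof of \Cref{lem:exp_dk}, so the verification amounts to matching names with that enumeration rather than introducing fresh combinatorial content.
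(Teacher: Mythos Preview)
Your construction has the right target but contains a substantive error in its justification: you write ``since $j$ is inert (hence the inclusion of a sub-cell-complex)'', but the hypothesis only gives that $j$ is \emph{injective} in the sense of \Cref{def:inj}. These notions are distinct --- for instance $[1]\xactive{}[2]$ is injective but not inert --- so your one-line factorization argument does not apply. It is in fact true that an elementary cell $c_l\xinert{i_e}\theta_{j_1}$ lying in $\ima(j)$ lifts to an elementary cell $c_l\xinert{i'_e}\theta_{j_0}$, but this requires an argument; it is not a formal consequence of $j$ being a sub-cell-complex inclusion, because it is not one. Your verification of the universal property is likewise too coarse: saying that surjective/injective factorization ``yields the required unique lift'' does not explain why the lift lands in the specific object $(i'_e,k)$ rather than some larger object of $K_{j_0}$.

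The paper's proof supplies exactly this missing content by induction on $l$. After the same preliminary reductions you make (disposing of $*_{j_1}$ and of the case $\ima(i)\nsubseteq\ima(j)$), and after first passing to $\theta\cong[0]$ via the observation that the forgetful functor $\Theta_{n,/\theta}\to\Theta_n$ preserves pullbacks, it observes that for $l>0$ the data of a map $\theta'\to\theta_{f_{(i,k)}}$ compatible with a given $\theta'\to\theta_{j_0}$ is equivalent to the analogous data on the relevant hom-$(n-1)$-categories. This reduces the pullback problem over $(c_l\xinert{i}\theta_{j_1},k)$ to the same problem over $(c_{l-1}\xinert{i'}\theta^m_{j_1},k)$ one dimension down. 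Iterating reaches $l=0$, where $i$ is just an object $x\in\theta_{j_1}$, injectivity of $j$ on objects gives the unique preimage $y$, and $j^*(i,k)=(\{y\},k)$ by direct inspection. Your closing remark that the obstacle ``reduces to the local analysis \ldots already carried out in the proof of \Cref{lem:exp_dk}'' is not accurate: that analysis enumerates morphisms within a fixed fiber $K_j$, whereas the pullback requires the dimensional induction just described.
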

	\begin{proof}
		Note first that the forgetful functor $\Theta_{n,/\theta}\rightarrow\Theta_n$ preserves pullbacks, so we may assume $\theta\cong [0]$. The problem is trivial for the object $*_j\in K_{j_1}$ corresponding to the identity morphism, so we may assume that $\theta_f\cong \theta_{f_{(i,k)}}$ for some $(c_l\xinert{i}\theta_{j_1}, k)$. Additionally, we may assume that $\ima(i)\subset \ima(j)$, since otherwise the pullback in question is once again an identity. We claim that we may further assume $l=0$. Indeed, assume that $l>0$ and that the image of $i$ lies in some $\theta^m_{j_0}\bydef \mor_{\theta_{j_1}}(m,m+1)$, denote $i'$ the induced morphism $c_{l-1}\xinert{}\theta^m_{j_1}$; note that in this case $\theta_{j_1}$ and $\theta_{f_{(i,k)}}$ have the same set of objects, so we also denote $\theta^m_f\bydef \mor_{\theta_{f_{(i,k)}}}(m,m+1)$ so that we have a surjective morphism $s_m:\theta^m_f\xactive{}\theta^m_{j_1}$ with a sole non-trivial fiber over the image of $i'$. Since $\ima(i)\subset \ima(j)$, there is an object $p\in \theta_{j_0}$ such that $j(p)\leq m<m+1\leq j(p+1)$. Assume we have a morphism $\theta'\xrightarrow{g}\theta_{j_0}$ such that there is an object $q\in \theta'$ such that $g(q)\leq p<p+1\leq g(q+1)$, which is necessary for factoring the morphism $j\circ g$ through $f_{(i,k)}$. Providing a morphism $h:\theta'\rightarrow \theta_{f_{(i,k)}}$ such that $s\circ h\cong j\circ g$ is then equivalent to providing a commutative diagram
		\[\begin{tikzcd}[sep=huge]
			{\theta'_q} & {\theta^m_f} \\
			{\theta^p_{j_0}} & {\theta^m_{j_1}}
			\arrow[from=1-1, to=1-2]
			\arrow["{g_q^p}"', from=1-1, to=2-1]
			\arrow["{s_m}", two heads, from=1-2, to=2-2]
			\arrow["{j_p^m}", from=2-1, to=2-2]
		\end{tikzcd},\]
		where $g_q^p:\theta'_q\bydef \mor_{\theta'}(q,q+1)\rightarrow \theta^p_{j_0}\bydef \mor_{\theta_{j_0}}(p,p+1)$ and $j_p^m:\theta^p_{j_0}\rightarrow \theta^m_{j_1}$ are the morphisms induced by $g$ and $j$ respectively on morphism categories. It thus suffices to find a pullback $j_p^{m,*} \theta_f^m$, which is the same kind of problem as the original one, but with $l$ replaced by $(l-1)$, continuing in this manner we may assume $l=0$. In this case assume that the image of $i$ is the object $x\in \theta_{j_1}$ and $y\in \theta_{j_0}$ is the unique object such that $j(y)=x$, denote $i':[0]\xinert{\{y\}}\theta_{j_0}$ the corresponding inert inclusion. Then it is easy to see that $j^*(i,k)\cong (i', k)$, concluding the proof. 
	\end{proof}
	So far we were working with the stabilization of the larger category $\seg_{n,/\theta}$ of Segal spaces over $\theta$, we will now identify the stabilization of a smaller category $\cat_{n,/\theta}$ of \textit{complete} Segal spaces. For that we will first need the lemma below, which compares the images of a walking $i$-morphism $c_i$ and a walking invertible $i$-morphism $d_i$. 
	\begin{lemma}\label{lem:theta_inv}
		Denote by $d_i$ for $0\leq i\leq n$ the $n$-category obtained from the elementary cell $c_i$ by inverting all morphisms, then we have
		\[\Sigma^\infty c_i\cong \Sigma^\infty d_i\]
		in $\stab(\seg_n)$.
	\end{lemma}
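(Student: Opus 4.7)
The plan is to prove $\Sigma^\infty c_i \cong \Sigma^\infty d_i$ by induction on $i$. The base case $i = 0$ is immediate since $c_0 = d_0 = [0]$.

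For the inductive step, I would realise $d_i$ as a sequential colimit built over $c_i$ in $\seg_n$, where each stage of the tower either freely adjoins a formal inverse cell $f^{-1}$ for some $k$-morphism $f$ of a previously constructed stage (with $k \leq i$), or adjoins a higher witness cell certifying that a composite $f *_j f^{-1}$ equals an identity up to equivalence. Since $\Sigma^\infty_\seg$ is a left adjoint it preserves colimits, so it suffices to check that each transition map in this tower becomes an equivalence after $\Sigma^\infty_\seg$.

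For each transition I would invoke the identification $\stab(\seg_n) \cong \seg_n(\spc)$ from \Cref{prop:seg} together with the Dold-Kan description of \Cref{prop:seg_dk}, filtered through the stratification $T_{\leq k}$ of \Cref{constr:str_T}. The key step is then the spectrum-valued analogue of the discrete relation recalled in the introduction: adjoining a formal inverse $f^{-1}$ contributes a generator satisfying $[f^{-1}] + [f] = [\id] = 0$ in the Segal-spectrum presheaf corresponding to the stabilization, so by stability the additive negative $-[f]$ already provides a canonical representative and no new generators arise. The inductive hypothesis handles the witness cells, whose inverted ingredients live in dimensions strictly less than $i$ and are therefore already $\Sigma^\infty_\seg$-equivalent to their non-inverted counterparts.

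The step I expect to be the main obstacle is the precise sequential colimit presentation of $d_i$. In $\seg_n$, formally inverting a single $k$-cell requires an entire tower of higher coherence data rather than one pushout, and organising this tower so that each stage lands cleanly in the stratum $T_{\leq k}$ and admits the Dold-Kan analysis of \Cref{prop:seg_dk} involves nontrivial bookkeeping. Once this organisation is in place, however, the verification of each transition reduces cell-by-cell to the basic identity $[f^{-1}] = -[f]$ transported along stability, yielding the required equivalence $\Sigma^\infty c_i \cong \Sigma^\infty d_i$.
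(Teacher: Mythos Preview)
Your approach is quite different from the paper's and leaves unresolved precisely the step you yourself flag as the main obstacle: an explicit sequential-colimit presentation of $d_i$ over $c_i$, together with the verification that each transition becomes an equivalence after $\Sigma^\infty_\seg$. Inverting morphisms in an $(\infty,n)$-category is a localization, and localizations are hard to present as transfinite compositions of elementary pushouts in a way that interacts cleanly with the stratification $T_{\leq k}$; the ``bookkeeping'' you anticipate is substantial and is not supplied. There is also a mismatch in your induction: the hypothesis on $i$ gives $\Sigma^\infty c_{i'} \cong \Sigma^\infty d_{i'}$ for $i' < i$, but the witness cells certifying $f*_j f^{-1}\simeq \id$ live in dimensions \emph{above} that of the morphism being inverted, not strictly below $i$, so it is not clear how the inductive hypothesis is meant to apply to them.

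The paper avoids all of this. Chaining \Cref{prop:seg_dk} with two further Dold--Kan equivalences (one due to Walde, one due to Lurie), the paper produces an identification $\stab(\seg_n) \cong \mor_\cat([n],\spc)$ under which $\Sigma^\infty \cE$ corresponds to the string
\[
\Sigma^\infty |i_0^*\cE| \to \Sigma^\infty |i_1^*\cE| \to \cdots \to \Sigma^\infty |\cE|
\]
of suspension spectra of geometric realizations of skeleta, where $i_k:\Theta_k\hookrightarrow\Theta_n$ is the natural inclusion. The lemma then follows immediately from $|i_k^* c_i| \cong |i_k^* d_i|$ for all $k$, since inverting morphisms does not change the underlying homotopy type. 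No colimit presentation of $d_i$ is needed; the entire argument collapses to the observation that $\Sigma^\infty_\seg$ factors through levelwise geometric realization.
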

	\begin{proof}
		Using \Cref{prop:seg_dk} we see that
		\begin{equation}\label{eq:seg_chn}
			\stab(\seg_n)\cong \chn_{\leq n+1}(\spc),
		\end{equation}
		where $\chn_{\leq n+1}(\spc)$ denotes the category of chain complexes of spectra of length $(n+1)$. Using the Dold-Kan correspondence of \cite{walde2022homotopy} we also get an isomorphism
		\begin{equation}\label{eq:dk_wals}
			\chn_{\geq 0}(\spc)\cong \psh_\spc(\Delta^\op),
		\end{equation}
		where $\chn_{\geq 0}(\spc)$ denotes the category of chain complexes of spectra concentrated in non-negative degrees with $\mor_\cat(\Delta^\op,\spc)$. Using the Dold-Kan correspondence of \cite[Theorem 1.2.3.7.]{lurie2009higher} we may further identify 
		\begin{equation}\label{eq:dk_lur}
			\mor_\cat(\Delta^\op,\spc)\cong \mor_\cat(\bN, \spc).
		\end{equation}
		Under this equivalence to a functor $\cF:\Delta^\op\rightarrow \spc$ corresponds a string
		\[\underset{\Delta^\op_{\leq 0}}{\colim}j_0^*\cF\rightarrow\underset{\Delta^\op_{\leq 1}}{\colim}j_1^*\cF\rightarrow...\rightarrow\underset{\Delta^\op_{\leq k}}{\colim}j_k^*\cF\rightarrow...\]
		where $j_k:\Delta^\op_{\leq k}\hookrightarrow\Delta^\op$ denotes the inclusion of the full subcategory on $[l]$ with $l\leq k$. Restricting \eqref{eq:dk_lur} to the subcategory $\chn_{\leq n+1}(\spc)$ we obtain an isomorphism
		\begin{equation}\label{eq:dk_lur_n}
			\stab(\seg_n)\cong \mor_\cat([n], \spc).
		\end{equation}
		Finally, note that we have a commutative diagram 
		\[\begin{tikzcd}[sep=huge]
			{\psh_\cS(\Delta^\op_{\leq k})} & {\psh_\spc(\Delta^\op_{\leq k})} \\
			\cS & \spc
			\arrow["{\Sigma^\infty}"{description}, from=1-1, to=1-2]
			\arrow["\colim"{description}, from=1-1, to=2-1]
			\arrow["\colim"{description}, from=1-2, to=2-2]
			\arrow["{\Sigma^\infty}"{description}, from=2-1, to=2-2]
		\end{tikzcd}\]
		(which commutes since the corresponding diagram of right adjoints obviously does), so it follows that for $\cE\in\seg_n$ the object $\Sigma^\infty\cE\in \stab(\seg_n)$ corresponds to the string
		\[\Sigma^\infty|i_0^*\cE|\rightarrow\Sigma^\infty|i_1^*\cE|\rightarrow...\rightarrow\Sigma^\infty|\cE|,\]
		where $i_k:\Theta_k\hookrightarrow \Theta_n$ denotes the natural inclusion. The claim now follows since $|i^*_k c_n|\cong |i^*_k d_n|$ for all $k$.
	\end{proof}
	The above result essentially shows that stabilization does not make a difference between a category and a groupoid obtained from it by inverting all morphisms; this greatly limits the phenomena it can detect, but from our perspective it also simplifies the description of $\stab(\cat_{n,/\theta})$.
	\begin{prop}\label{prop:theta_comp}
		There is an isomorphism
		\[\stab(\cat_{n,/\theta})\cong \seg_\spc(\Theta^\inj_{n,/\theta}),\]
		where $\seg_\spc(\Theta^\inj_{n,/\theta})\hookrightarrow\psh_\spc(\Theta^\inj_{n,/\theta})$ denotes the subcategory of functors satisfying the Segal condition.
	\end{prop}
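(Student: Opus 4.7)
The plan is to realize $\stab(\cat_{n,/\theta})$ as a reflective localization of $\stab(\seg_{n,/\theta})$ and then pin down the local objects using \Cref{prop:seg_dk}, \Cref{lem:exp_dk}, and the key input \Cref{lem:theta_inv}.

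First, I would observe that $\cat_n\hookrightarrow \seg_n$ is a reflective localization with reflector the completion functor; since every $\theta\in\Theta_n$ is already complete, this restricts to a reflective localization $\cat_{n,/\theta}\hookrightarrow \seg_{n,/\theta}$. Because $\prc$ is closed monoidal and stabilization is smashing (tensoring with $\spc$ in $\prc$), this descends to a reflective localization $\stab(\cat_{n,/\theta})\hookrightarrow \stab(\seg_{n,/\theta})$. Thus an object of $\stab(\seg_{n,/\theta})$ lies in the essential image iff it is local with respect to the stabilized completion maps.

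Second, I would identify the local objects. Under the identification of \Cref{prop:seg_dk}, an object of $\stab(\seg_{n,/\theta})$ is a functor $\cF$ concentrated on $\widehat{\Theta}_{n,/\theta}$ satisfying conditions \emph{(a)}--\emph{(c)}. The claim is that $\cF$ is local exactly when $\cF(i,k)\cong 0$ for every $j\in\Theta^\inj_{n,/\theta}$ and every non-basepoint $(i,k)\in K_j$ with $k\ge 1$. Necessity uses \Cref{lem:theta_inv}: the object $\theta_{f_{(i,k)}}$ is constructed by replacing a cell $c_l\xinert{i}\theta_j$ via a surjection $c_{l+k}\xactive{}c_l$, and the associated completion morphism becomes an equivalence in the stabilization, forcing the vanishing. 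Sufficiency is proved by inducting over the stratification $T_{\leq k}(\theta)$ of \Cref{constr:str_T}: the generating morphisms $d_k$, $\gamma^{s,k}_{i_0}$, and $\delta_i^{s_\pm}$ of \Cref{lem:exp_dk} either land in lower-dimensional data (where induction applies) or factor through trivialized objects, so the unit of the localization is an equivalence on $\cF$.

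Third, I would restrict to basepoints. A local $\cF$ is determined by its restriction to the full subcategory $\{*_j\}_{j}\subset \widetilde{\Theta}_{n,/\theta}$ of basepoints of the fibers $K_j$; by \Cref{lem:exp_dk} (combined with \Cref{lem:theta_pull} for functoriality) this subcategory is canonically identified with $\Theta^\inj_{n,/\theta}$. Condition \emph{(a)} of \Cref{prop:seg_dk} translates to the Segal condition on $\Theta^\inj_{n,/\theta}$, and conversely any such Segal functor extends uniquely by zero on the non-basepoints of each fiber to a local object satisfying \emph{(a)}--\emph{(c)} plus the vanishing above. This yields the desired isomorphism $\stab(\cat_{n,/\theta})\cong \seg_\spc(\Theta^\inj_{n,/\theta})$.

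The main obstacle will be proving sufficiency of the vanishing characterization of local objects: concretely, once $\cF$ vanishes on all $(i,k)$ with $k\ge 1$, one must verify that the unit $\cF\to L\cF$ is an equivalence. This requires a careful analysis of how the composition relations \eqref{eq:d}--\eqref{eq:delta} of \Cref{lem:exp_dk} interact with the completion localization, and will likely proceed by induction on $n$ together with the fiberwise Cartesian structure of \Cref{lem:theta_pull}, using \Cref{lem:theta_inv} as the base case at each step.
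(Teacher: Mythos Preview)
Your overall strategy is correct and matches the paper's in its essential ingredients: both arguments pass through \Cref{prop:seg_dk} to work on the Dold--Kan side and both use \Cref{lem:theta_inv} as the crucial input. However, the paper's route to the vanishing claim is considerably shorter and sidesteps the obstacle you flag at the end.

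The paper does not analyze the unit of the completion localization or induct over the stratification. Instead it uses a fiberwise reduction: an $n$-fold Segal space over $\theta$ is complete if and only if for every $j:c_l\to\theta$ the fiber $f^{-1}(j)$ (an $(n-l)$-fold Segal space) is complete. Under the Dold--Kan description of \Cref{prop:seg_dk} and \Cref{lem:exp_dk}, the data attached to a fixed $j$ is exactly the chain complex
\[
\cF(c_l\eq c_l,\,n-l)\xrightarrow{d_{n-l}}\cdots\xrightarrow{d_0}\cF(*_j),
\]
and the completeness condition on that fiber becomes the condition that this complex lies in $\stab(\cat_{n-l})\hookrightarrow\stab(\seg_{n-l})\cong\chn_{n-l}(\spc)$. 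The paper then computes $\stab(\cat_m)$ directly: in $\seg_\spc(\Theta_m)$ the completeness condition reads $\cF(d_k)\cong\cF(c_0)$, and \Cref{lem:theta_inv} gives $\cF(c_k)\cong\cF(d_k)$, so $\cF$ is constant and $\stab(\cat_m)\cong\spc$. This single computation yields both necessity and sufficiency of the vanishing $\cF(i,k)\cong 0$ for $k\ge 1$ at once, because ``the chain complex is in $\spc\subset\chn_m(\spc)$'' is exactly the condition that all nonzero-degree terms vanish.

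In short, your induction on the stratification and the analysis of the relations \eqref{eq:d}--\eqref{eq:delta} are unnecessary: once you invoke the fiberwise characterization of completeness and compute the absolute case $\stab(\cat_m)\cong\spc$ via \Cref{lem:theta_inv}, the identification with $\seg_\spc(\Theta^\inj_{n,/\theta})$ drops out immediately as $\cF\cong i_{\inj,!}i_\inj^*\cF$.
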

	\begin{proof}
		Note that an $n$-fold Segal space $\cE\xrightarrow{f} \theta$ is complete if and only if for any $j:c_l\rightarrow \theta$ the $(n-l)$-fold Segal space $f^{-1}(j)$ is complete. It follows from this and \Cref{prop:seg_dk} that $\stab(\cat_{n,/\theta})\hookrightarrow\stab(\seg_{n,/\theta})$ corresponds to $\cF:\widehat{\Theta}^{\op}_{n,/\theta}\rightarrow\spc$ such that for all $j:c_l\rightarrow\theta$ the chain complex
		\[\cF(c_l\eq c_l, n-l)\xrightarrow{d_{n-l}}...\xrightarrow{d_0}\cF(*_j)\]
		(in the notation of \Cref{lem:exp_dk}) lies in $\stab(\cat_n)\hookrightarrow\stab(\seg_n)\cong \chn_n$. The subcategory $\stab(\cat_n)\hookrightarrow \stab(\seg_n)$ corresponds to $\cF\in\seg_\spc(\Theta_n)$ for which $\cF(d_k)\cong \cF(c_0)$ for all $k$, however by \Cref{lem:theta_inv} this implies $\cF(c_k)\cong \cF(c_0)$, meaning that $\cF$ is a constant functor, so that $\stab(\cat_n)\cong \spc$. It follows from this and \ref{it:con1} that $\cF\cong i_{\inj,!}i_\inj^*\cF$, where $i_\inj:\Theta^\inj_{n,/\theta}\hookrightarrow \widehat{\Theta}_{n,/\theta}$ is the natural inclusion, and that the restriction $i_\inj^*\cF$ satisfies the Segal condition.
	\end{proof}
	\begin{lemma}\label{lem:theta_push}
		$\Theta_n$ admits pushouts of active morphisms along inert morphisms.
	\end{lemma}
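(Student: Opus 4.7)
My plan is to prove this by induction on $n$, using Berger's wreath product description $\Theta_n \simeq \Delta \wr \Theta_{n-1}$, under which an object is written $[m;\theta_1,\ldots,\theta_m]$ with each $\theta_j \in \Theta_{n-1}$, and a morphism is inert (resp.\ active) iff its underlying $\Delta$-morphism and each of its $\Theta_{n-1}$-fibers are inert (resp.\ active).

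The base case $n=1$ is $\Theta_1 = \Delta$. An inert morphism $i:[k]\to[m]$ is a convex inclusion $j\mapsto i_0+j$, while an active $a:[k]\to[l]$ is endpoint-preserving, meaning $a(0)=0$ and $a(k)=l$. Given such a span $[m] \lxinert{i} [k] \xactive{a} [l]$, I define the pushout as $[m-k+l]$ with the structure map $[m]\to[m-k+l]$ equal to the identity on $\{0,\ldots,i_0\}$, to $j\mapsto i_0+a(j-i_0)$ on $\{i_0,\ldots,i_0+k\}$, and to $j\mapsto j+l-k$ on $\{i_0+k,\ldots,m\}$, while $[l]\to[m-k+l]$ is $j\mapsto i_0+j$. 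These pieces agree at the seams and assemble into a monotone map; the universal property follows by case-splitting any cocone $g:[m]\to[n]$, $h:[l]\to[n]$ into the three regions above and checking uniqueness.

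For the inductive step, consider a span $[m;\theta_\bullet] \lxinert{(\phi,i_\bullet)} [k;\theta'_\bullet] \xactive{(\psi,a_\bullet)} [l;\theta''_\bullet]$ in $\Theta_n$. I first form the pushout $[p]=[m-k+l]$ in $\Delta$ by the base case, and then construct $[p;\widetilde\theta_\bullet]$ by the following recipe: fibers outside the ``replaced'' subinterval $\{i_0,\ldots,i_0+l\}$ are copied from $\theta_\bullet$, while fibers inside that subinterval are copied from $\theta''_\bullet$. When $\psi$ is non-injective and some $\theta'_j$ maps to a wreath-product join of several $\theta''_\bullet$, the corresponding new fiber is instead built as the pushout in $\Theta_{n-1}$ of the induced span of an inert and an active morphism, which exists by the inductive hypothesis. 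The universal property of $[p;\widetilde\theta_\bullet]$ is verified by unwinding the wreath-product hom: any cocone on the original span is equivalent data to a cocone in $\Delta$ (resolved by the base case) together with, on each edge, a compatible cocone in $\Theta_{n-1}$ (resolved inductively).

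The chief difficulty lies in the combinatorial bookkeeping at the seams and in the non-injective active case: when $\psi(j)-\psi(j-1)\neq 1$ the fiber $\theta'_j$ maps to a wreath-join on the right, and one must check that the induced fiber pushout in $\Theta_{n-1}$ is well-defined relative to this join decomposition. The saving grace is that $\phi$ being inert forces $\phi(j)-\phi(j-1)=1$ on its image, so each $\theta'_j$ includes inertly into a single $\theta_{\phi(j)}$, localizing the pushout problem to one slice at a time and allowing the inductive hypothesis to apply directly.
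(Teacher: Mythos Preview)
Your approach—induction on $n$ via the wreath-product description—is exactly the paper's, and your base case matches.

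However, your inductive construction of the fibers has a gap. You write that fibers inside the replaced subinterval $\{i_0,\ldots,i_0+l\}$ are ``copied from $\theta''_\bullet$'', with a pushout in $\Theta_{n-1}$ needed only in a special case tied to $\psi$ being non-injective. This is not correct: \emph{every} fiber inside the replaced region must be built as a pushout in $\Theta_{n-1}$. Concretely, for each $p$ with $\psi(j-1)<p\leq\psi(j)$, the new fiber at position $i_0+p$ is the pushout of the span
\[
\theta_{\phi(j)} \lxinert{\,i_j\,} \theta'_j \xactive{\,a_j^p\,} \theta''_p,
\]
which exists by the inductive hypothesis. ``Copying from $\theta''_\bullet$'' would only be correct when each inert fiber map $i_j:\theta'_j\xinert{}\theta_{\phi(j)}$ is an identity, but inert morphisms in $\Theta_n$ can and generally do have non-identity inert fiber components. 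For a concrete failure: with $n=2$, the span $[1;[2]]\lxinert{}[1;[1]]\xactive{}[1;[3]]$ (identity on the $\Delta$-level, inert $[1]\xinert{}[2]$ and active $[1]\xactive{}[3]$ on fibers) has pushout $[1;[4]]$, not $[1;[3]]$.

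The condition you give—``$\psi$ non-injective and some $\theta'_j$ maps to a wreath-product join''—is also internally inconsistent: non-injectivity of $\psi$ collapses edges (so the corresponding $\theta'_j$ contributes no fiber map at all), whereas mapping to a join of several $\theta''_\bullet$ happens when $\psi(j)-\psi(j-1)>1$. Neither of these is what triggers the need for a fiber pushout; that pushout is always required. Your final paragraph correctly isolates the key point—inertness of $\phi$ forces $\phi(j)-\phi(j-1)=1$, localizing the problem to one slice at a time—but this observation should be applied to \emph{every} inside slice, not just to a special subclass.
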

	\begin{proof}
		We will prove the claim by induction on $n$ starting with the case of $\Theta_1\cong \Delta$. We claim that, given an inert morphism $[l]\xinert{i}[n]$ and an active morphism $[l]\xactive{a}[m]$, we can construct the pushout as follows:
		\[\begin{tikzcd}[sep=huge]
			{[l]} & {[n]} \\
			{[m]} & {[n+m-l]}
			\arrow["i", tail, from=1-1, to=1-2]
			\arrow["a"', two heads, from=1-1, to=2-1]
			\arrow["{a'}", two heads, from=1-2, to=2-2]
			\arrow["{i'}"', tail, from=2-1, to=2-2]
			\arrow["\urcorner"{anchor=center, pos=0.125, rotate=180}, draw=none,       from=2-2, to=1-1]
		\end{tikzcd},\]
		where $i'$ sends $[m]$ to the subinterval $[i(0), i(0)+m]$ and 
		\[
		a'(j) = 
		\begin{cases}
			j & \text{for }j\leq i(0))\\    
			i(0) + a(j - i(0)) & \text{for }i(0)< j\leq i(l)\\
			i(0) + m + (j - i(l)) & \text{for }j > i(l)
		\end{cases}.
		\]
		Indeed, given a pair of morphisms $f:[n]\rightarrow[s]$ and $g:[m]\rightarrow[s]$ which agree on the image of $[l]$ we can form 
		$h:[n + m - l]\rightarrow[s]$
		by setting $h(i'(j)) = g(j)$ and $h(a'(k)) = f(k)$, it is easy to see that this definition is consistent since $f$ and $g$ agree on the image of $i$ and that such morphism is unique since all objects of $[n + m - l]$ lie in the image of $i'$ or $a'$.\par
		Now, assume we have proved the claim for $\Theta_{N-1}$, assume we are given morphisms $\theta'\xinert{i}\theta$ and $\theta'\xactive{a}\theta''$ in $\Theta_N$. The object $\theta$ is given by the set of points $\{0,1,...,n\}$ and objects $\theta_i\in\Theta_{N-1}$ for $0\leq i\leq n-1$ such that $\mor_\theta(i,i+1)\cong\theta_i$, the inert morphism $i$ is given by an inert morphism $[m]\xinert{\overline{i}}[n]$ and inert morphisms $\theta'_s\xinert{i_s} \theta_{\overline{i}(s)}$, similarly the active morphism $a$ is given by an active morphism $[l]\xactive{\overline{a}}[m]$ together with active morphisms $\theta'_s\xactive{a_k}\theta''_k$ for $\overline{a}(s)\leq k < \overline{a}(s+1)$. We define the pushout as the object $\theta'''$ as follows: define the underlying interval to be $[n + m - l]$, define $\theta'''_j$ to be $\theta_j$ for $j < \overline{i}(0)$, for $\overline{i}(0) + \overline{a}(s)\leq j < \overline{i}(0) + \overline{a}(s + 1)$ to be given by the pushout 
		\begin{equation}\label{eq:theta_push}
			\begin{tikzcd}[sep=huge]
				{\theta'_s} & {\theta_{\overline{i}(s)}} \\
				{\theta''_{j - i(0)}} & {\theta'''_j}
				\arrow["{i_s}", tail, from=1-1, to=1-2]
				\arrow["{a_{j - i(0)}}"', two heads, from=1-1, to=2-1]
				\arrow["{a'_j}", two heads, from=1-2, to=2-2]
				\arrow["{i'_j}"', tail, from=2-1, to=2-2]
				\arrow["\ulcorner"{anchor=center, pos=0.125, rotate=180}, draw=none,       from=2-2, to=1-1]
			\end{tikzcd}
		\end{equation}   
		in $\Theta_{N-1}$, which was assumed to exist by the inductive hypothesis, and for $j\geq \overline{i}(0) + \overline{a}(l)$ by $\theta_{j + m - \overline{a}(l)}$, then $\theta'''$ admits obvious morphisms $i':\theta''\xinert{}\theta'''$ and $a':\theta\xactive{}\theta'''$ with components given by the appropriate $a'_k$ or $i'_k$ in the notation of \eqref{eq:theta_push}. To prove that this is indeed a pushout note that for $\zeta\in\Theta_N$ with the underlying interval $[s]$ a pair of morphisms $f:\theta\rightarrow\zeta$ and $g:\theta''\rightarrow\zeta$ that agree on the image of $\theta'$ are given by morphisms $\overline{f}:[n]\rightarrow[s]$ and $\overline{g}:[m]\rightarrow[s]$ together with morphisms $f^k_j:\theta_j\rightarrow \zeta_k$ for $\overline{f}(j-1)\leq k < \overline{f}(j)$ and $g^k_j:\theta''_j\rightarrow\zeta_k$ for $\overline{g}(j-1)\leq k < \overline{g}(j)$ such that the following diagrams
		\[\begin{tikzcd}[sep=huge]
			{\theta'_j} & {\theta_{\overline{i}(j)}} \\
			{\theta''_{q}} & {\zeta_k}
			\arrow["{i_j}", tail, from=1-1, to=1-2]
			\arrow["{a_{q}}"', two heads, from=1-1, to=2-1]
			\arrow["{f^k_{i(j)}}", from=1-2, to=2-2]
			\arrow["{g^k_{q}}"', from=2-1, to=2-2]
			\arrow["\ulcorner"{anchor=center, pos=0.125, rotate=180}, draw=none,       from=2-2, to=1-1]
		\end{tikzcd}\]
		commute for $\overline{a}(j)\leq q < \overline{a}(j+1)$ and $g(q)\leq k < g(q+1)$. The fact that this induces a unique morphism $h:\theta'''\rightarrow\zeta$ now follows from the definition of $\theta'''$ and the universal property of pushouts \eqref{eq:theta_push}.
	\end{proof}
	\begin{warning}
		The pushouts described in \cref{lem:theta_push} are not in general preserved by the inclusion $\Theta_n\hookrightarrow\cat_n$ for $n > 1$.
	\end{warning}
	\begin{lemma}\label{lem:act}
		For $q\leq l\leq n$ and any inert morphism $c_q\xinert{i} c_l\cong \theta_f$ over $\theta\in \Theta_n$ the induced morphism
		\[\Theta^\act_{n,/\theta,\theta_f/}\xrightarrow{p_f}\Theta^\act_{n,/\theta,\theta_{f\circ i}/}\]
		is a coCartesian fibration, moreover all the fibers $p^{-1}_f(a)$ over $a:c_q\xactive{}\theta_g$ contain initial objects and the transition functors $p_f^{-1}(a)\rightarrow p_f^{-1}(a')$ preserve those initial objects.
	\end{lemma}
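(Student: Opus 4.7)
The plan is to leverage Lemma \ref{lem:theta_push} on pushouts of active along inert morphisms in $\Theta_n$. First, I would unpack $p_f$ concretely: an object $(\tilde{a}\colon \theta_f \xactive{} \theta_h, v\colon \theta_h \to \theta)$ of the source produces a composite $\tilde{a}\circ i\colon c_q \to \theta_h$ whose (active, inert) factorization $\tilde{a}\circ i = j_a\circ a$, with $a\colon c_q \xactive{} \theta_g$ active and $j_a\colon \theta_g\xinert{}\theta_h$ inert, gives the base image $(a, v\circ j_a)$. A source morphism is an active map over $\theta$ which, by functoriality of the factorization system, lands over a base morphism.

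The key construction is that of coCartesian lifts. Given a base morphism $w\colon (a, v\circ j_a) \to (a', v')$---an active $w\colon\theta_g \xactive{} \theta_{g'}$ with $w\circ a = a'$ and $v'\circ w = v\circ j_a$---I form the pushout of $w$ along $j_a$ via Lemma \ref{lem:theta_push}, producing an active $\tilde w\colon \theta_h \xactive{} \theta_{h'}$ and inert $j_{a'}\colon \theta_{g'} \xinert{} \theta_{h'}$. Then $\tilde w\circ\tilde a$ is an active lift over $\theta$ with $(\tilde w\circ\tilde a)\circ i = j_{a'}\circ a'$, so $p_f(\tilde w\circ\tilde a) = (a', v')$, where the map $\theta_{h'}\to \theta$ is supplied by the pushout universal property applied to $v$ and $v'$. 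The universal property of this pushout square translates directly into the coCartesian universal property of $\tilde w$ viewed as a morphism $(\tilde a, v)\to(\tilde w\circ\tilde a, v'')$ in the source.

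For initial objects of fibers, given $(a\colon c_q \xactive{} \theta_g, v'\colon \theta_g\to\theta)$ in the base, I take the pushout of $a$ along $i\colon c_q\xinert{} c_l$, yielding an active $\tilde a_0\colon c_l \xactive{} \theta_{h_0}$, an inert $j_0\colon \theta_g \xinert{} \theta_{h_0}$, and a compatible map $\theta_{h_0}\to\theta$ induced from $f$ and $v'$. This object lies in $p_f^{-1}(a, v')$, and its initiality is precisely the pushout universal property. That transition functors preserve initial objects then follows from pasting of pushouts: the coCartesian lift of $w$ at the initial object of the fiber over $a$ is the pushout of $w$ along $j_0$, and by pasting, the outer rectangle computes the pushout of $w\circ a = a'$ along $i$, which is the initial object of $p_f^{-1}(a')$.

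The main obstacle is verifying that maps induced by the pushout universal property are in fact active---a priori Lemma \ref{lem:theta_push} produces pushouts in all of $\Theta_n$ without any activity guarantee for maps induced from outside. However, inspecting the explicit construction in that lemma shows that any such induced map is determined by its restrictions along the active leg ($a'$ or $\tilde a_0$) and the inert leg; when the data on the active leg is itself active, the induced map sends the endpoints of the pushout---which are inherited from the cell $c_l$ via the active map---to endpoints of the target, and is therefore active. This endpoint bookkeeping is the essential routine calculation underpinning all four parts of the argument.
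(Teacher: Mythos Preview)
Your proposal is correct and follows essentially the same route as the paper: both use the pushouts of \Cref{lem:theta_push} to construct coCartesian lifts and initial objects, with the universal property of the pushout supplying the required factorizations. The only cosmetic difference is that the paper obtains the initial object of $p_f^{-1}(a)$ as the coCartesian transport $a_!(\id_{c_l})$ of the evident initial object $\id_{c_l}\in p_f^{-1}(\id_{c_q})$ rather than building it directly, and it does not explicitly flag the activity of induced maps---a point you rightly note and dispatch by inspection.
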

	\begin{proof}
		Given an active morphism $c_l\xactive{a}\theta_g$ which restricts to $c_q\xactive{a'}\theta_{g\circ i'}$ and a further active morphism $\theta_{g\circ i'}\xactive{a_0}\theta_{h'}$ we can construct the following commutative diagram
		\[\begin{tikzcd}[sep=huge]
			{c_q} & {\theta_{g\circ i'}} & {\theta_{h'}} \\
			{c_l} & {\theta_g} & {\theta_h} \\
			&&& \theta
			\arrow["{a'}", two heads, from=1-1, to=1-2]
			\arrow["i"', tail, from=1-1, to=2-1]
			\arrow["{a_0}", two heads, from=1-2, to=1-3]
			\arrow["{i'}"', tail, from=1-2, to=2-2]
			\arrow[tail, from=1-3, to=2-3]
			\arrow["{h'}"{description}, curve={height=-12pt}, from=1-3, to=3-4]
			\arrow["a"', two heads, from=2-1, to=2-2]
			\arrow[two heads, from=2-2, to=2-3]
			\arrow["g"{description}, curve={height=12pt}, from=2-2, to=3-4]
			\arrow["\ulcorner"{anchor=center, pos=0.125, rotate=180}, draw=none, from=2-3, to=1-2]
			\arrow["h"{description}, from=2-3, to=3-4]
		\end{tikzcd},\]
		where the left square is a factorization square and the right square is a pushout square, which exists by \Cref{lem:theta_push}. This defines a functor $a_{0,!}:p_f^{-1}(a')\rightarrow p_f^{-1}(a_0\circ a')$ and a morphism $a\rightarrow a_{0,!}(a)$ and the universal property of the pushout proves that this morphism is coCartesian.\par
		Observe that $p_f^{-1}(\id_{c_q})$ admits an initial object given by $\id_{c_l}$ and also that any object $c_q\xactive{a}\theta_g$ admits a unique morphism from $\id_{c_q}$, it follows that it suffices to prove that that $a_!(\id_{c_l})$ is the initial object of $p_f^{-1}(a)$, however note that for any $c_l\xactive{a'}\theta_h$ in $p^{-1}_f (a)$ we get a unique commutative diagram 
		\[\begin{tikzcd}[sep=huge]
			{c_i} & {\theta_g} \\
			{c_l} & {a_{!}(\id_{c_l})} \\
			&& {\theta_h}
			\arrow["a", two heads, from=1-1, to=1-2]
			\arrow["i"', tail, from=1-1, to=2-1]
			\arrow[tail, from=1-2, to=2-2]
			\arrow[curve={height=-12pt}, tail, from=1-2, to=3-3]
			\arrow[two heads, from=2-1, to=2-2]
			\arrow["{a'}"{description}, curve={height=12pt}, two heads, from=2-1, to=3-3]
			\arrow["\ulcorner"{anchor=center, pos=0.125, rotate=180}, draw=none, from=2-2, to=1-1]
			\arrow["{a_0}"{description}, from=2-2, to=3-3]
		\end{tikzcd}\]
		by the universal property of the pushout.
	\end{proof}
	\begin{lemma}\label{lem:C_cont}
		Fix $c_l\xrightarrow{u}\theta$, $\theta_g\xrightarrow{g}\theta$ and $c_l\xinert{j}\theta_g$ such that $\ima(u)\xinert{}\ima(g\circ j)$, denote by $C(g,u,j)$ the category with objects given by spans $\theta_g\xrightarrow{h}\theta_f\lxinert{i_0}\theta_u\cong c_l$ such that $c_l\xinert{i_0}\theta_f$ factors through $\ima(h\circ j)\xinert{}\theta_f$ and morphisms by commutative diagrams
		\[\begin{tikzcd}[sep=huge]
			& {\theta_f} \\
			{\theta_g} && {c_l} \\
			& {\theta_{f'}}
			\arrow["s"{description}, from=1-2, to=3-2]
			\arrow["h", from=2-1, to=1-2]
			\arrow["{h'}"', from=2-1, to=3-2]
			\arrow["{i_0}"', tail, from=2-3, to=1-2]
			\arrow["{i'_0}", tail, from=2-3, to=3-2]
		\end{tikzcd},\]
		then $C(g,u,j)$ is contractible. Similarly, the category $C'(g,u,j)$ defined as above but inside the full subcategory $\Theta^\inj_{l/\theta}$ is also contractible.
	\end{lemma}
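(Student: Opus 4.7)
The plan is to show that the span $(\theta_g \xrightarrow{\id_{\theta_g}} \theta_g \lxinert{j} c_l)$ is a terminal object of $C(g,u,j)$, from which contractibility follows immediately. This span is visibly an object of $C(g,u,j)$: since $j$ is inert, the image $\ima(\id_{\theta_g}\circ j) = \ima(j)$ is $c_l$ with inert inclusion $j$ itself, and $i_0 = j$ factors trivially through its own image.

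Given an arbitrary object $(h\colon \theta_g \to \theta_f \lxinert{i_0} c_l) \in C(g,u,j)$, I would construct the unique morphism to the proposed terminal object, namely a map $s\colon \theta_f \to \theta_g$ satisfying $s \circ h = \id_{\theta_g}$ and $s \circ i_0 = j$. The defining condition -- that $i_0$ factors through $\ima(h \circ j) \xinert{} \theta_f$ -- is precisely what ensures such $s$ exists: using the pushout structure of $\Theta_n$ from \Cref{lem:theta_push}, one can express $\theta_f$ as a pushout obtained by gluing $\theta_g$ (along $h$) with additional cells attached over $\ima(h\circ j)$ in a way compatible with $i_0$. The retraction $s$ then arises from the universal property as the canonical collapse of these additional cells onto the corresponding endpoints of $j$ inside $\theta_g$, with uniqueness built into the universal property.

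Naturality of the assignment $(h, i_0, \theta_f) \mapsto s$ -- which upgrades the above to terminality of $(\id_{\theta_g}, j, \theta_g)$ -- is automatic from the uniqueness of $s$: for any morphism $t\colon (h, i_0, \theta_f) \to (h', i'_0, \theta_{f'})$ in $C(g,u,j)$, both $s$ and $s'\circ t$ satisfy the equations defining the retraction for $(h, i_0)$, so they coincide. For $C'(g,u,j)$ the situation is dual: the retraction $s$ constructed above is not in general injective, but in the injective setting the rigidity of morphisms combined with the condition on $\ima(h \circ j)$ instead forces $i_0 = h \circ j$, so the span $(\id_{\theta_g}, j, \theta_g)$ becomes the \emph{initial} object of $C'(g,u,j)$ via the map $s = h$, again yielding contractibility. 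The main obstacle in both cases is the explicit pushout decomposition of $\theta_f$ underlying the construction of $s$; once this is in hand via \Cref{lem:theta_push} together with the active/inert factorization of $h$, contractibility follows formally.
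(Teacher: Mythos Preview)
Your proposed terminal object $(\id_{\theta_g}, j, \theta_g)$ is not in general an object of $C(g,u,j)$. For this span to live over $\theta$ with the prescribed structure maps you need $g\circ j = u$, but the hypothesis only gives the inert inclusion $\ima(u)\xinert{}\ima(g\circ j)$, which is strictly weaker whenever $u$ (or $g\circ j$) fails to be injective. Concretely, take $n=1$, $\theta=[1]$, $g=\id_{[1]}$, $j=\id_{[1]}$, and $u:[1]\to[1]$ the constant map at $0$: then $\ima(u)=\{0\}\xinert{}[1]=\ima(g\circ j)$ holds, yet $g\circ j=\id\neq u$, so your candidate is not an object.

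Even when $g\circ j=u$, the retraction $s$ you need does not exist. Take $\theta=[0]$, $\theta_g=[2]$, $j=\{0,1\}:[1]\xinert{}[2]$, and the object $(h,i_0,[1])$ with $h:[2]\to[1]$ given by $0\mapsto 0,\,1\mapsto 1,\,2\mapsto 1$ and $i_0=\id_{[1]}$ (one checks $\ima(h\circ j)=[1]$, so this is an object). A morphism to $(\id_{[2]},j)$ would be $s:[1]\to[2]$ with $s\circ h=\id_{[2]}$, forcing $s(1)=1$ and $s(1)=2$ simultaneously. Your appeal to \Cref{lem:theta_push} does not help: that lemma produces pushouts of actives along inerts, but $h$ here is neither, and there is no reason an arbitrary $\theta_f$ decomposes as $\theta_g$ glued with extra cells.

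Your argument for $C'(g,u,j)$ (with $g,u$ injective) is actually correct, though you do not justify the key step: one needs that the only active endomorphism of $c_l$ is the identity, so that the active/inert factorization of the injective map $h\circ j:c_l\to\theta_f$ has trivial active part, whence $h\circ j$ is inert and equals $i_0$. This makes $(\id_{\theta_g},j)$ genuinely initial in $C'$. The paper instead proves contractibility of $C$ first---reducing to the subcategory $C^{\act}$ of spans with $h$ active via a right adjoint, then decomposing $C^{\act}$ along the Segal covering of $\theta_g$ and constructing an initial object by a double induction on $l$ and on the number of cells of $\theta_g$---and only afterwards deduces $C'$ via a left adjoint to the inclusion $C'\hookrightarrow C$. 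So your $C'$ argument is a legitimate shortcut for that half of the lemma, but the $C$ half requires the paper's inductive machinery.
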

	\begin{proof}
		Assume first that $g$ and $u$ are injective, then we claim that the natural inclusion $C'(g,u,j)\hookrightarrow C(g,u,j)$ admits a left adjoint. Indeed, it is explicitly given by sending a span $(h,i_0)$ to $(s\circ h, s\circ i_0)$ in the diagram
		\[\begin{tikzcd}[sep=huge]
			& {\theta_f} \\
			{\theta_g} & {\theta_w} & {c_l} \\
			& \theta
			\arrow["s", two heads, from=1-2, to=2-2]
			\arrow["h", from=2-1, to=1-2]
			\arrow[from=2-1, to=2-2]
			\arrow["g"', from=2-1, to=3-2]
			\arrow["w"', from=2-2, to=3-2]
			\arrow["{i_0}"', tail, from=2-3, to=1-2]
			\arrow[tail, from=2-3, to=2-2]
			\arrow["u", from=2-3, to=3-2]
		\end{tikzcd},\]
		where $f\cong w\circ s$ is the surjective/injective factorization of \Cref{prop:fact_inj} (note that $s\circ i_0$ is still inert since $u$ was assumed to be injective). In particular, it follows that $C(g,u,j)$ and $C'(g,u,j)$ are homotopy equivalent, so it suffices to show contractibility of one of them.\par
		We begin with the case of $C(g,u,j)$, denote by $C^\act(g,u,j)$ the full subcategory of $C(g,u,j)$ on spans of the form $\theta_g\xactive{a}\theta_f\lxinert{i_0}c_l$, we claim that its inclusion admits a right adjoint. Indeed, by the existence of active/inert factorization we can extend any span $\theta_g\xrightarrow{h}\theta_f\lxinert{i_0}c_l$ to a commutative diagram
		\[\begin{tikzcd}[sep=huge]
			& {\theta_f} \\
			{\theta_g} && {c_l} \\
			& {\theta_{f'}}
			\arrow["i"{description}, tail, from=1-2, to=3-2]
			\arrow["a", two heads, from=2-1, to=1-2]
			\arrow["h"', from=2-1, to=3-2]
			\arrow["{j_0}"', dotted, from=2-3, to=1-2]
			\arrow["{i_0}", tail, from=2-3, to=3-2]
		\end{tikzcd},\]
		note also that the dotted line $j_0$ exists because the image of $i_0$ lies in the image of $h$ by construction, then it is easy to see that $(a,j_0)$ constitutes the required right adjoint. In particular, it follows that $C(g,u,j)$ and $C^\act(g,u,j)$ are homotopy equivalent, so it suffices to show contractibility of $C^\act(g,u,j)$.\par
		Recall from \cite[Lemma 9.14.]{chu2019homotopy} that, since $\Theta_n$ is an extendable algebraic pattern by \cite[Example 8.15]{chu2019homotopy} and \cite[Corollary 9.17.]{chu2019homotopy}, we have an equivalence
		\begin{equation}\label{eq:act_seg}
			(\Theta^\act_{l,/\theta})_{f/}\cong \underset{c_i\xinert{i}\theta_f}{\lim}(\Theta^\act_{l,/\theta})_{f\circ i/}
		\end{equation}
		for any $\theta_f\xrightarrow{f}\theta$ in $\Theta_{l,/\theta}$. Using \eqref{eq:act_seg} and the definition of $C^\act$ we see that
		\begin{equation}\label{eq:act_C}
			C(g,u,j)^\act\cong \underset{c_i\xinert{i}\theta_g}{\lim} C^\act_i,
		\end{equation}
		where $C^\act_i\cong (\Theta^\act_{l,/\theta})_{g\circ i/}$ for $i\neq j$ and $C^\act_j\cong C^\act(g\circ j,u,\id)$. Our goal is to prove that \eqref{eq:act_C} is contractible, we will in fact prove that it admits an initial object. We will do so by induction on the number of cells in $\theta_g$, starting with the minimal case of $\theta_g\cong c_l$.\par
		We will in fact prove a slightly more general claim with $\theta$ replaced by $\prod_{t\in T}\theta_t$ with $\theta_t\in \Theta_n$ and $T$ a finite set. This claim  will also be demonstrated by induction, this time on $l$, with the case $l=0$ being trivial. Assume it has been proven for $(l-1)$, denote 
		\[\mor_{\prod_t \theta_t}(u(0),u(1))\bydef \prod_{m\in M}\theta'_m\]
		for some $\theta'_m\in \Theta_{n-1}$ and $\theta'_f\bydef \mor_{\theta_f}(i_0(0), i_0(1))$. The morphism $g$ then induces a morphism 
		\begin{align*}
			g':c_{l-1}&\rightarrow\mor_{\prod_t \theta_t}(g(0), g(1))\cong \mor_{\prod_t \theta_t}(g(0), u(0))\times\mor_{\prod_t \theta_t}(u(0), u(1))\times\mor_{\prod_t \theta_t}(u(1), g(1))\rightarrow\\
			&\rightarrow\mor_{\prod_t \theta_t}(u(0), u(1))\cong \prod_m \theta'_m,
		\end{align*}
		where the first morphism is induced by the action of $g$ on morphism $(n-1)$-categories, the first isomorphism follows from the definition of $\theta_t$, the second morphism is just the projection to the middle term and the last isomorphism follows by definition. Similarly, $c_l\xinert{i_0}\theta_f$ induces $c_{l-1}\xinert{i'_0}\theta'_f$, also denote $u'$ the composition $c_{l-1}\xinert{i'_0}\theta'_f\xrightarrow{f}\prod_m \theta'_m$. This gives us an object $c_{l-1}\xactive{}\theta'_f\lxinert{i'_0}c_{l-1}$ in $C^\act(g', u',\id)$, by inductive assumption this category admits an initial object, denote it by $c_{l-1}\xactive{a'_r} \theta'_r\xinert{i'_r}c_{l-1}$ and denote $\theta'_r\xactive{a'_f}\theta'_f$ the unique morphism in $C^\act(g', u',\id)$. Finally, define $\theta_r$ to have objects $\{0,1,2,3\}$ such that $\mor_{\theta_r}(0,1)\cong \mor_{\theta_r}(2,3)\bydef c_{l-1}$ and $\mor_{\theta_r}(1,2)\bydef \theta'_r$, we define the morphism $\theta_r\xactive{a_f}\theta_f$ by sending $1<2$ to $i_0(0)\leq i_0(1)$ (and endpoints to endpoints), defining the restriction to $[0,1]$ (resp. to $[2,3]$) to be the unique active morphism $c_{l-1}\xactive \mor_{\theta_f}(0, i_0(0))$ (resp. $c_{l-1}\xactive \mor_{\theta_f}(i_0(1), a_f(3))$) and the restriction to $[1,2]$ to be given by the unique morphism $a'_f:\theta'_r\xactive{}\theta'_f$ defined above, it is clear that such a morphism is unique.\par
		Now we need to prove the claim in the general case, assume that $\theta_g$ is obtained by gluing a $k$-cell to $\theta_{g_0}$ along a boundary $s$-cell, i.e. assume we have a pushout
		\[\begin{tikzcd}[sep=huge]
			{c_s} & {c_k} \\
			{\theta_{g_0}} & {\theta_g}
			\arrow["{j_1}"', tail, from=1-1, to=1-2]
			\arrow["{j_0}", tail, from=1-1, to=2-1]
			\arrow["j'_0"',tail, from=1-2, to=2-2]
			\arrow["j'_1"',tail, from=2-1, to=2-2]
			\arrow["\ulcorner"{anchor=center, pos=0.125, rotate=180}, draw=none, from=2-2, to=1-1]
		\end{tikzcd}\]
		It follows from this and \eqref{eq:act_C} that we have a pullback diagram
		\[\begin{tikzcd}[sep=huge]
			{C(g,u,i_0)} & {(\Theta^\act_{l,/\theta})_{g\circ j'_0}} \\
			{C(g_0, u, i_0)} & {(\Theta^\act_{l,/\theta})_{g_0\circ j_0}}
			\arrow[from=1-1, to=1-2]
			\arrow[from=1-1, to=2-1]
			\arrow["\ulcorner"{anchor=center, pos=0.125}, draw=none, from=1-1, to=2-2]
			\arrow[from=1-2, to=2-2]
			\arrow[from=2-1, to=2-2]
		\end{tikzcd}.\]
		By induction, $C(g_0, u, i_0)$ admits an initial object, by \Cref{lem:act} the right vertical morphism in the diagram is a coCartesian fibration whose corresponding functor $(\Theta^\act_{l,/\theta})_{g_0\circ j_0}\rightarrow\cat$ lands in the subcategory of categories with initial objects and morphisms preserving them. It follows that its pullback $C(g,u,i_0)\rightarrow C(g_0,u,i_0)$ satisfies the same property, it is easy to see from this that it admits an initial object.
	\end{proof}
	\begin{construction}\label{constr:seg_aux}
		Denote by $C'$ the total category of the coCartesian fibration over $\Theta^\inj_{n,/\theta}$ sending $\theta_{j_0}\xrightarrow{j_0}\theta$ to $\Theta^{\inrt,\op}_{k,/\theta_{j_0}}$ and a morphism $j:\theta_{j_0}\rightarrow \theta_{j_1}$ to the functor $j_!$ sending $\theta_0\xinert{i_0}\theta_{j_0}$ to $\theta_1\xinert{i_1}\theta_{j_1}$ appearing in the factorization square 
		\[\begin{tikzcd}[sep=huge]
			{\theta_0} && {\theta_1} \\
			{\theta_{j_0}} && {\theta_{j_1}} \\
			& \theta
			\arrow["a", two heads, from=1-1, to=1-3]
			\arrow["{i_0}"', tail, from=1-1, to=2-1]
			\arrow["{i_1}", tail, from=1-3, to=2-3]
			\arrow["j", from=2-1, to=2-3]
			\arrow["{j_0}"', from=2-1, to=3-2]
			\arrow["{j_1}", from=2-3, to=3-2]
		\end{tikzcd}.\]
		Denote by $C'_\el$ the full subcategory of $C'$ on objects of the form $c_l\xinert{}\theta_j$ for some $l\leq k$.
	\end{construction}
	\begin{lemma}\label{lem:inv0}
		Denote by $S_\act$ the subcategory of $\twar(\Theta^\inj_{n,/\theta})$ on morphisms of the form 
		\[\begin{tikzcd}[sep=huge]
			{\theta_{j\circ v}} & {\theta_{j\circ v'}} \\
			{\theta_j} & {\theta_j} \\
			& \theta
			\arrow["a", two heads, from=1-1, to=1-2]
			\arrow["v"', from=1-1, to=2-1]
			\arrow["{v'}", from=1-2, to=2-2]
			\arrow["j"', from=2-1, to=3-2]
			\arrow[equal, from=2-2, to=2-1]
			\arrow["j", from=2-2, to=3-2]
		\end{tikzcd}\]
		with $a$ active, then 
		\[\twar(\Theta^\inj_{n,/\theta})[S_\act^{-1}]\cong C',\]
		where the category $C'$ is defined in \Cref{constr:seg_aux}.
	\end{lemma}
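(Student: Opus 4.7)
The plan is to realize $C'$ as the localization by producing an explicit functor $F : \twar(\Theta^\inj_{n,/\theta}) \to C'$ that annihilates $S_\act$, together with an explicit quasi-inverse.

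The functor $F$ is defined using the active/inert factorization in $\Theta_n$, which exists and is essentially unique because $\Theta_n$ is an extendable algebraic pattern. Given an object $v : \theta_{j_0} \to \theta_{j_1}$ of $\twar(\Theta^\inj_{n,/\theta})$, write $v = v_i \circ v_a$ with $v_a : \theta_{j_0} \xactive{} \theta_{\mathrm{mid}}$ and $v_i : \theta_{\mathrm{mid}} \xinert{} \theta_{j_1}$, and set $F(v) := (\theta_{j_1}, v_i)$, viewed as an object of $C'$ over $\theta_{j_1}$. Functoriality on morphisms follows from the uniqueness of active/inert factorizations: a commuting square in $\twar$ produces via factorization a triple $(j,\iota,a,\gamma)$ describing a morphism in $C'$, matched to the coCartesian transport $j_!$ of \Cref{constr:seg_aux}. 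For an $S_\act$-morphism $v = v' \circ a_0$ with $a_0$ active, the composite $v'_a \circ a_0$ is still active, so uniqueness yields $v_i = v'_i$ and $F$ sends the morphism to an identity; hence $F$ descends to $\widetilde F : \twar(\Theta^\inj_{n,/\theta})[S_\act^{-1}] \to C'$.

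To show $\widetilde F$ is an equivalence I would construct a quasi-inverse $G : C' \to \twar(\Theta^\inj_{n,/\theta})[S_\act^{-1}]$. On objects, $G(\theta_j, i) := i$ viewed in $\twar$ via the inclusion $\Theta^\inrt \hookrightarrow \Theta^\inj$. A morphism of $C'$ from $(\theta_{j_0}, i_0)$ to $(\theta_{j_1}, i_1)$ consists of $j : \theta_{j_0} \to \theta_{j_1}$, a factorization $j \circ i_0 = \iota \circ a$ with $a$ active and $\iota$ inert, and an inert $\gamma : \theta_1 \xinert{} \theta_{\mathrm{mid}}$ with $\iota \circ \gamma = i_1$. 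I translate this into a canonical zigzag in $\twar$ whose middle arrow is the $S_\act$-equivalence $\iota \circ a \to \iota$ given by $a$, connected on the outside to $i_0$ via the commuting triangle $j \circ i_0 = \iota \circ a$ and to $i_1$ via $\gamma$. Since only the middle arrow needs inversion, this yields a well-defined morphism of $\twar[S_\act^{-1}]$. The check $F \circ G \simeq \id_{C'}$ is immediate because an inert arrow is its own inert part, while $G \circ F \simeq \id$ follows from the canonical $S_\act$-morphism $v \to v_i$ supplied by the factorization $v = v_i \circ v_a$.

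The main obstacle is the functoriality of $G$ on composition: given two composable morphisms in $C'$, the zigzag representing their composite must agree, up to $S_\act$-invertible data, with the concatenation of the two individual zigzags. This reduces to the compatibility of active/inert factorization with composition in $\Theta^\inj_{n,/\theta}$ and the explicit form of $j_!$ in \Cref{constr:seg_aux}; both are controlled by the algebraic pattern structure of $\Theta_n$. A secondary bookkeeping point is fixing the twisted-arrow convention in force (which determines whether the base projection of $F$ is covariant or contravariant in the target coordinate $\theta_{j_1}$); this may require inserting an opposite on the fiber or base, but does not affect the structure of the argument.
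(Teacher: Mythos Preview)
Your core idea is the same as the paper's: use the active/inert factorization in $\Theta_n$ to send an arrow $v$ to its inert part, and use the inclusion of inert arrows as the inverse. Where you diverge is in the execution. The paper observes that $\twar(\Theta^\inj_{n,/\theta})$ is the total category of a coCartesian fibration over $\Theta^\inj_{n,/\theta}$ with fiber $\Theta^{\inj,\op}_{n,/\theta_j}$ and that every morphism of $S_\act$ lies in a fiber; since pushforward preserves $S_\act$, the localization is again coCartesian with fibers $\Theta^{\inj,\op}_{n,/\theta_j}[S_{j,\act}^{-1}]$. This reduces the problem to the fiberwise statement that $\Theta^{\inrt,\op}_{n,/\theta_j}\hookrightarrow \Theta^{\inj,\op}_{n,/\theta_j}$ becomes an equivalence after inverting active maps, and that is handled by a one-line adjunction argument: the inclusion has a right adjoint $p_\inrt$ (send $v$ to its inert part) whose counit components are exactly the active maps in $S_{j,\act}$.

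This organization buys you precisely what you flag as your ``main obstacle'': the functoriality of $G$ on compositions and the associated $\infty$-categorical coherence for zigzags never need to be checked, because the adjunction $i_\inrt \dashv p_\inrt$ is already a natural transformation of functors $\Theta^\inj_{n,/\theta}\to\cat$, and an adjunction with (co)unit in the localizing class exhibits the localization directly. Your approach would work, but carrying out the zigzag construction of $G$ rigorously at the $\infty$-level is nontrivial and unnecessary here. The cleaner route is to note the fiberwise adjunction and invoke the standard fact that a reflective inclusion with counit in $S$ identifies the small category with the $S$-localization.
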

	\begin{proof}
		We can view $\twar(\Theta^\inj_{n,/\theta})$ as the total category of a coCartesian fibration over $\Theta^\inj_{n,/\theta}$ with fiber $\Theta^{\inj,\op}_{n,/\theta_j}$ over $\theta_j\xrightarrow{j}\theta$ and with pushforward functors $v_!: \Theta^{\inj,\op}_{n,/\theta_{j\circ v}}\rightarrow \Theta^{\inj,\op}_{n,/\theta_{j}}$ given by postcomposition with $v$. Note that all morphisms in $S_\act$ lie in some fiber $\Theta^{\inj,\op}_{n,/\theta_j}$, denote by $S_{j,\act}$ the intersection of $S_\act$ with the fiber over $j$, then it is immediate that the pushforward functors $v_!$ take morphisms in $S_{j\circ v,\act}$ to morphisms in $S_{j,\act}$. It follows that $\twar(\Theta^\inj_{n,/\theta})$ is isomorphic to a coCartesian fibration with fibers $\Theta^{\inj,\op}_{n,/\theta_j}[S^{-1}_{j,\act}]$, so it suffices to produce an equivalence between functors 
		\[F_0:j\mapsto \Theta^{\inj,\op}_{n,/\theta_j}[S^{-1}_{j,\act}]\]
		and 
		\[F_1:j\mapsto \Theta^{\inrt,\op}_{n,/\theta_j}.\]
		For this note that there is a functor $p_\inrt:\Theta^{\inj,\op}_{n,/\theta_j}\rightarrow \Theta^{\inrt,\op}_{n,/\theta}$ that sends $\theta_j\xrightarrow{j}\theta$ to $\theta_i\xinert{i}\theta$, where $\theta_j\xactive{a}\theta_i\xinert{i}\theta$ is the active/inert decomposition of $j$. Postcomposing with the natural morphism $\Theta^{\inj,\op}_{n,/\theta}\rightarrow \Theta^{\inj,\op}_{n,/\theta}[S_{j,\act}^{-1}]$ defines a natural transformation between $F_0$ and $F_1$, it remains to show that each of its components is an isomorphism, for that note that $p_\inrt$ is the right adjoint to the natural inclusion $\Theta^{\inrt,\op}_{n,/\theta}\xhookrightarrow{i_\inrt}\Theta^{\inj,\op}_{n,/\theta}$ and the natural transformation $i_\inrt\circ p_\inrt\rightarrow \id$ has components given by morphisms $\theta_j\xactive{a}\theta_i$ from the active/inert factorization, and so becomes an isomorphism after inverting $S_{j,\act}$.
	\end{proof}
	\begin{lemma}\label{lem:inv1}
		Denote by $S$ the subcategory of coCartesian morphisms in $C'$ and set $S_\el \bydef S\bigcap C'_\el$, then we have
		\[\psh_\spc(C'_\el[S_\el^{-1}])\cong \psh_\spc(\Theta^{\inrt,\op}_{k,/\theta})\]
	\end{lemma}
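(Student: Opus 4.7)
The plan is to exhibit a direct equivalence of $\infty$-categories $C'_\el[S_\el^{-1}]\simeq \Theta^{\inrt,\op}_{k,/\theta}$, from which the equivalence of presheaf categories follows by applying $\psh_\spc(-)$.

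First, I would define a functor $p:C'_\el\to \Theta^{\inrt,\op}_{k,/\theta}$ sending an object $(c_l\xinert{i}\theta_j\xrightarrow{j}\theta)$ to the composite $c_l\xinert{j\circ i}\theta$, and a morphism $(v,w):(j_0,i_0)\to(j_1,i_1)$ (consisting of an injective $v:\theta_{j_0}\to\theta_{j_1}$ over $\theta$ together with an inert $w:c_{l'}\xinert{}c_l$ in the fiber such that $v\circ i_0\circ w = i_1$) to $w$. To see $p$ is well-defined, observe that the composite $j\circ i$ is automatically inert: it is injective, so its active/inert factorization has injective active part, which must be an identity by induction on $l$ (the only injective active morphism out of $c_l$ is the identity, since on underlying intervals it must fix both endpoints and injectively cover nothing else, and on morphism categories we reduce to $c_{l-1}$). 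Hence $p$ lands in the inert subcategory, and functoriality is a direct check using $j_1\circ v=j_0$.

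Second, I would verify that $p$ inverts $S_\el$: a coCartesian morphism over $v:\theta_{j_0}\to\theta_{j_1}$ starting at $(j_0,i_0)$ is the lift $(v,\id_{c_l}):(j_0,i_0)\to (j_1,v\circ i_0)$, and both endpoints are sent by $p$ to $j_0\circ i_0=j_1\circ v\circ i_0$ with morphism $\id$. I would then define a section $q:\Theta^{\inrt,\op}_{k,/\theta}\to C'_\el$ by $u\mapsto (\id_\theta, u)$, giving $p\circ q=\id$ on the nose. The key input is a natural transformation $\alpha:\id_{C'_\el}\Rightarrow q\circ p$ whose component at $(j,i)$ is $(j,\id_{c_l}):(j,i)\to (\id_\theta, j\circ i)$; by construction this is precisely the coCartesian lift of $j:\theta_j\to\theta$ at $(j,i)$, and so lies in $S_\el$. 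Naturality against a morphism $(v,w):(j_0,i_0)\to(j_1,i_1)$ reduces to checking that both composites equal $(j_0,w)$, which follows from the composition rule $(f,\phi)\circ (g,\psi)=(f\circ g, \phi\circ f_!(\psi))$ in the straightening of $C'$ together with $(j_1)_!(w)=w$ (which in turn uses the identity-of-active-part fact above).

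Since the components of $\alpha$ are inverted in $C'_\el[S_\el^{-1}]$, the induced $\bar p$ and $q$ are mutually inverse, establishing $C'_\el[S_\el^{-1}]\simeq \Theta^{\inrt,\op}_{k,/\theta}$; applying $\psh_\spc(-)$ yields the claim. The main technical obstacle is the control of $j_!$ on elementary cells, handled by the elementary lemma that injective active morphisms out of $c_l$ are identities; without this, neither the target of $p$ nor the simple form $(v,\id)$ of coCartesian lifts would be available, and both the definition of $p$ and the construction of $\alpha$ would require correction terms.
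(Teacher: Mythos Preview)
Your argument has a genuine gap at its foundation: the claim that ``the only injective active morphism out of $c_l$ is the identity'' is false already for $l=1$. In $\Theta_1=\Delta$ the unique active morphism $[1]\xactive{}[m]$ (sending $0\mapsto 0$, $1\mapsto m$) is injective for every $m\geq 1$, so there are infinitely many non-identity injective active morphisms out of $c_1$. Concretely, take $\theta=[2]$, $\theta_j=[1]$, $j:[1]\to[2]$ given by $0\mapsto 0,\;1\mapsto 2$ (which is injective but active), and $i=\id_{[1]}$. Then $(j,i)\in C'_\el$, yet $j\circ i=j$ is not inert, so your functor $p$ does not land in $\Theta^{\inrt,\op}_{k,/\theta}$.

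The same example shows your description of coCartesian lifts is incorrect. By \Cref{constr:seg_aux} the pushforward $j_!$ sends $i_0:c_l\xinert{}\theta_{j_0}$ to the \emph{inert part} of $j\circ i_0$, whose source is $\ima(j\circ i_0)$ rather than $c_l$. In the example, the coCartesian lift of $j$ starting at $(j,\id_{[1]})$ lands at $(\id_{[2]},\id_{[2]})$, which is not in $C'_\el$ since $[2]$ is not a cell. Hence your natural transformation $\alpha$ with components given by coCartesian lifts over $j$ does not stay inside $C'_\el$, and the one-step reflection argument collapses.

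The paper avoids this by defining $F(i,j)=\ima(j\circ i)$ (the inert part) rather than $j\circ i$, and then proving the equivalence at the level of presheaf categories: one shows $F^*F_!\cong\id$ via a cofinality argument that reduces to the contractibility of the categories $C'(g,u,j)$ established in \Cref{lem:C_cont}, and then identifies this localization with inverting $S_\el$ by exhibiting $F^*F_! h_{(i,j)}$ as a colimit of representables along morphisms in $S_\el$. The passage through presheaves and the nontrivial contractibility input are precisely what compensate for the failure of coCartesian lifts to remain elementary.
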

	\begin{proof}
		We will in fact prove that $\psh_\cS(C'_\el[S_\el^{-1}])\cong \psh_\cS(\Theta^{\inrt,\op}_{k,/\theta})$. We can represent a morphism in $C'_\el$ by the following commutative diagram
		\begin{equation}\label{eq:mor_diag}
			\begin{tikzcd}[sep=huge]
				{c_{l_0}} & {\ima(j\circ i_0)} & {c_{l_1}} \\
				{\theta_{j_0}} && {\theta_{j_1}} \\
				& \theta
				\arrow["a", two heads, from=1-1, to=1-2]
				\arrow["{i_0}"', tail, from=1-1, to=2-1]
				\arrow["{i'}"', tail, from=1-2, to=2-3]
				\arrow["{i''}"', tail, from=1-3, to=1-2]
				\arrow["{i_1}", tail, from=1-3, to=2-3]
				\arrow["j", from=2-1, to=2-3]
				\arrow["{j_0}"', from=2-1, to=3-2]
				\arrow["{j_1}", from=2-3, to=3-2]
			\end{tikzcd}.
		\end{equation}
		Note that such a diagram in particular induces an inclusion $\ima(j_1\circ i_1)\xinert{i_j}\ima(j_0\circ i_0)$, we define a functor $F:C'_\el\rightarrow \Theta^{\inrt,\op}_{k,/\theta}$ by sending $(i_0,j_0)$ to $\ima(j_0\circ i_0)$ and a morphism $j$ as in \eqref{eq:mor_diag} to $i_j$, we claim that this morphism establishes $\psh_\spc(\Theta^{\inrt,\op}_{k,/\theta})$ as a localization of $\psh_\spc(C'_\el)$. To prove that it suffices to show that $F^* F_!\cong \id$ as an endofunctor of $\psh_\cS(\Theta^{\inrt,\op}_{t,/\theta})$, which in turn follows if we prove that $F_!F^* h_i\rightarrow h_i$ is an isomorphism for any representable presheaf $i$. By untangling the definitions we see that $F_!F^* h_i(i')$ is the geometric realization of a category $B$ with objects given by factorizations $i\xinert{}F((i_0,j_0))\xinert{}i'$ and morphisms by morphisms $(i_0,j_0)\xrightarrow{h}(i_1,j_1)$ making the obvious diagram commute, we need to prove that $B$ is contractible. Denote by $A$ the full subcategory of $B$ on factorizations of the form $i\eq F((i,j))\xinert{}i'$, we claim that it is cofinal in $B$. For that we must prove that for any $(i_0, j_0)$ in $B$ the category $(i_0,j_0)/A$ is contractible, we first assume that $(i_0, j_0)$ is of the form $(c_l\eq c_l, c_l\xrightarrow{j_1}\theta)$ such that $\ima(i)\subseteq \ima(j_1)$. Denote by $c_k\xrightarrow{j_1}\theta$ the unique injective morphism such that $\ima(j_1) = \ima(i)$, then an object of $(\id, j)/A$ is given by a cospan $\theta_{j_1}\cong c_k\xinert{\widetilde{i}}\theta_{j'}\lxactive{a}c_l\cong \theta_{j_0}$ over $\theta$ and morphisms are induced by morphisms $\theta_{j'}\rightarrow \theta_{j''}$ making the diagram commute. In other words, it is isomorphic to the category $C'(j_0, j_1,\id)$ of \Cref{lem:C_cont}, which is contractible by the conclusion of the lemma. In the general case an object of $(j_0,i_0)/A$ is given by a diagram \eqref{eq:mor_diag}, note that it admits a natural forgetful functor $G:(j_0,i_0)/A\rightarrow (j_0\circ i_0,\id)/A$ obtained by taking the top row of \eqref{eq:mor_diag}, we also claim that $G$ admits a left adjoint. Indeed, assume we have a cospan $ c_k\xinert{\widetilde{i}}\theta_{j'}\lxactive{a}c_l$, then we can form a diagram 
		\begin{equation}\label{eq:diag_push}
			\begin{tikzcd}[sep=huge]
				{c_l} & {\theta_{j'}} & {c_k} \\
				{\theta_{j_0}} & {\theta_{j''}} & {\theta_{j''}} \\
				& \theta
				\arrow["a", two heads, from=1-1, to=1-2]
				\arrow["{i_0}"', tail, from=1-1, to=2-1]
				\arrow["{\widehat{i}}"{description}, tail, from=1-2, to=2-2]
				\arrow["{\widetilde{i}}"', tail, from=1-3, to=1-2]
				\arrow["{\widehat{i}\circ \widetilde{i}}", from=1-3, to=2-3]
				\arrow[two heads, from=2-1, to=2-2]
				\arrow["{j_0}"', from=2-1, to=3-2]
				\arrow["\lrcorner"{anchor=center, pos=0.125, rotate=180}, draw=none, from=2-2,     to=1-1]
				\arrow["{j''}"{description}, from=2-2, to=3-2]
				\arrow[equal, from=2-3, to=2-2]
				\arrow["{j''}", from=2-3, to=3-2]
			\end{tikzcd},
		\end{equation}
		where the left square is a pushout diagram, which exists by \Cref{lem:theta_push}. That this is a left adjoint follows from the universal property of the pushout, so in particular it induces a homotopy equivalence of categories, which concludes the proof of our claim since we have already shown that $(j_0\circ i_0,\id)/A$ is contractible. We now need to show that $A$ is contractible, however this follows since it contains an initial object given by $(\id, c_k\widetilde{j}\theta)$, where $\widetilde{j}$ is a morphism such that $\ima(\widetilde{i})=\ima(i)$.\par
		We have thus established $\psh_\cS(\Theta^{\inrt,\op}_{/\theta})$ as a localization of $\psh_\cS(C'_\el)$, it remains to prove that it is precisely the localization described in the proposition. Note that every object of the form $F^*\cF$ for $\cF\in \psh_\cS(\Theta^{\inrt,\op}_{/\theta})$ sends morphisms in $S_\el$ to isomorphisms (since $F$ sends them to identity), so $\psh_\cS(\Theta^{\inrt,\op}_{n,/\theta})\hookrightarrow\psh_\cS(C'_\el[S^{-1}_\el]$, to prove the converse we must show that for $\cG\in \psh_\cS(C'_\el[S_\el^{-1}])$ we have
		\begin{equation}\label{eq:iso_s}
			\cG(h_j)\cong \cG(F^*F_! h_{(i,j)})
		\end{equation}
		for any representable presheaf $h_{(i,j)}$. By construction we have
		\[F^*F_! h_{(i,j)}(i',j')\cong \mor_{\Theta^\inrt_{/\theta}}(\ima(j\circ i), \ima(j'\circ i')).\]
		Note that $\cG(c_l\xinert{i}\theta_j,j)\cong \cG(c_l\eq c_l, j\circ i)$. We claim that 
		\begin{equation}\label{eq:form_F}
			F^* F_! h_{(i,j)}\cong \underset{(\id_{c_l},j\circ i)\xrightarrow{s}(i',j')}{\colim}h_{(i',j')},
		\end{equation}
		where the colimit is taken over morphisms $s\in S_\el$. Note that this claim immediately implies \eqref{eq:iso_s} since $\cG$ sends all $s\in S_\el$ to isomorphisms so 
		\[\cG(\underset{(\id_{c_l},j\circ i)\xrightarrow{s}(i',j')}{\colim}h_{(i',j')})\cong \cG(\id_{c_l},j\circ i)\cong \cG(i,j).\]
		However note that the value of the right-hand side of \eqref{eq:form_F} on $(i',j')$ is isomorphic to $(i',j')/A$ in the notation from earlier in the proof, and we have shown that this category is either empty or contractible and the latter holds if and only if $\ima(j\circ i)\subseteq \ima(i',j')$, which means it is isomorphic to $F^*F_! h_{(i,j)}$.
	\end{proof} 
	\begin{lemma}\label{lem:kan}
		Denote $\cI:C'_\el\hookrightarrow C'$ the natural inclusion, then for $\cF\in \psh_\spc(C'_\el)$ and $\theta_{j\circ i}\xinert{i}\theta_j$ in $C'$ we have
		\[\cI_!\cF(i,j)\cong \underset{e\xinert{i_0}\theta_{j\circ i}}{\colim}\cF(i\circ i_0,j).\]
	\end{lemma}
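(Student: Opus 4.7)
The plan is to apply the pointwise formula for the left Kan extension along the fully faithful inclusion $\cI:C'_\el\hookrightarrow C'$,
\[\cI_!\cF(i,j)\cong \underset{(x,\phi)\in \cI/(i,j)}{\colim}\cF(x),\]
and then to exhibit a cofinal functor from the index category of inert elementary inclusions $\{e\xinert{i_0}\theta_{j\circ i}\}$ into the comma category $\cI/(i,j)$.

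I would first unpack $\cI/(i,j)$ using the coCartesian fibration structure $C'\to\Theta^\inj_{n,/\theta}$ from \Cref{constr:seg_aux}. An object consists of an injective morphism $v:\theta_{j_0}\to\theta_j$ in $\Theta^\inj_{n,/\theta}$, an elementary inert inclusion $c_l\xinert{i'_0}\theta_{j_0}$, and a morphism in the fiber $\Theta^{\inrt,\op}_{n,/\theta_j}$ from the coCartesian pushforward $v_!(c_l,i'_0)$ to $(\theta_{j\circ i},i)$. Using the active-inert factorization $v\circ i'_0\cong i_1\circ a$ with $a:c_l\xactive{}e_1$ active and $i_1:e_1\xinert{}\theta_j$ inert, we have $v_!(c_l,i'_0)\cong (e_1,i_1)$, and the fiber morphism amounts to an inert $i'_1:\theta_{j\circ i}\xinert{}e_1$ over $\theta_j$ with $i_1\circ i'_1\cong i$.

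Next I would define a natural functor $\Psi$ from $\{e\xinert{i_0}\theta_{j\circ i}\}$ into $\cI/(i,j)$ by sending $i_0:c_l\xinert{}\theta_{j\circ i}$ to the object with $v=\id_{\theta_j}$, $i'_0=i\circ i_0:c_l\xinert{}\theta_j$, trivial factorization $a=\id_{c_l}$, $e_1=c_l$, and fiber morphism $i'_1=i_0$. The composition with $\cF\circ\cI$ sends this to $\cF(i\circ i_0,j)$, so once $\Psi$ is shown to be cofinal, the identification of colimits is immediate. To prove cofinality, I would construct for each object $(v,i'_0,a,i'_1)\in\cI/(i,j)$ an initial object of the slice $\Psi/(v,i'_0,a,i'_1)$: given the active $a:c_l\xactive{}e_1$ and the inert $i'_1:\theta_{j\circ i}\xinert{}e_1$, the preimage of the image of $i'_1$ under $a$ defines an inert subcell $c_{l'}\xinert{}c_l$, and the induced inert inclusion $c_{l'}\xinert{}\theta_{j\circ i}$ provides the candidate initial object.

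The main obstacle will be the explicit construction and verification of this initial object, which essentially relies on the pushout construction of \Cref{lem:theta_push} in a manner parallel to the cofinality arguments in \Cref{lem:act} and \Cref{lem:C_cont}. One must carefully exploit the fact that the active morphism $a$ restricts to active morphisms on preimages of elementary subcells of $e_1$, and that injectivity of $v$ controls how the source $c_l$ lifts along $v$ into $\theta_{j\circ i}$. Once the initial object is constructed and shown to satisfy the relevant naturality, the cofinality of $\Psi$ follows, and the colimit defining $\cI_!\cF(i,j)$ reduces to the stated colimit $\underset{e\xinert{i_0}\theta_{j\circ i}}{\colim}\cF(i\circ i_0,j)$.
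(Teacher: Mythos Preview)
Your overall strategy is the same as the paper's: restrict the comma category to the full subcategory where the base component is the identity, identify this subcategory with $\{e\xinert{i_0}\theta_{j\circ i}\}$, and prove cofinality by exhibiting initial objects in the relevant slices. However, you have the comma category reversed. For a presheaf $\cF\in\psh_\spc(C'_\el)$ the pointwise formula for $\cI_!\cF$ indexes over $(i,j)/\cI$, not $\cI/(i,j)$: an object is a morphism \emph{from} $(i,j)$ \emph{to} an elementary object, so the base component $v$ goes out of $\theta_j$ to some $\theta_{j'}$, and the remaining datum is an inert $e\xinert{i'_0}\ima(v\circ i)$ --- exactly the paper's diagram \eqref{eq:mor_kan}. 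Your functor $\Psi$ in fact only typechecks in this correct direction: you set ``$i'_1=i_0$'', but $i_0:c_l\xinert{}\theta_{j\circ i}$ points the wrong way to serve as the fiber component of a morphism $(c_l,i\circ i_0,j)\to(\theta_{j\circ i},i,j)$ in $C'$. So your description of the comma category and your construction of $\Psi$ are mutually inconsistent; the paper's direction is the one that makes both work.

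Once the variance is fixed, the construction of the initial object in each slice is much simpler than what you propose. Your plan via preimages under an active $a:c_l\xactive{}e_1$ and the machinery of \Cref{lem:theta_push}, \Cref{lem:act}, \Cref{lem:C_cont} is not clearly well-defined: the preimage $a^{-1}(\ima(i'_1))$ need not be an inert subcell of $c_l$, and even if it were there is no evident inert map from it into $\theta_{j\circ i}$. The paper's argument is a one-liner: given $e\xinert{i'_0}\ima(v\circ i)$, there is a unique inert $e_0\xinert{\widehat{i}}\theta_{j\circ i}$ of \emph{minimal dimension} such that $i'_0$ factors through $\ima(v\circ i\circ\widehat{i})\xinert{}\ima(v\circ i)$, and this $\widehat{i}$ is the required initial object --- no pushouts or auxiliary cofinality lemmas are needed.
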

	\begin{proof}
		By definition the value $\cI_!\cF(i,j)$ is given by the colimit of $\cF(\widetilde{i},j')$ over morphisms 
		\begin{equation}\label{eq:mor_kan}
			\begin{tikzcd}[sep=huge]
				{\theta_{j\circ i}} & {\ima(v\circ i)} & e \\
				{\theta_{j}} && {\theta_{j'}} \\
				& \theta
				\arrow[two heads, from=1-1, to=1-2]
				\arrow["i"', tail, from=1-1, to=2-1]
				\arrow[tail, from=1-2, to=2-3]
				\arrow["{i'_0}"', tail, from=1-3, to=1-2]
				\arrow["{\widetilde{i}}", tail, from=1-3, to=2-3]
				\arrow["v", from=2-1, to=2-3]
				\arrow["j"', from=2-1, to=3-2]
				\arrow["{j'}", from=2-3, to=3-2]
			\end{tikzcd}.
		\end{equation}  
		Denote by $A$ the full subcategory of $((i,j)/\cI)$ on diagrams \eqref{eq:mor_kan} in which $v\cong \id$, to prove our claim it then suffices to show that $A$ is a cofinal subcategory. Denote by $v$ the morphism represented by \eqref{eq:mor_kan}, we need to prove that $(v/A)$ is contractible. The objects in that category are given by diagrams 
		\begin{equation}\label{eq:mor_kan2}
			\begin{tikzcd}[sep=huge]
				{e_0} && {\ima(v\circ i\circ \widehat{i})} \\
				& {\theta_{j\circ i}} & {\ima(v\circ i)} & e \\
				\\
				& {\theta_{j}} && {\theta_{j'}} \\
				&& \theta
				\arrow[two heads, from=1-1, to=1-3]
				\arrow["{\widehat{i}}"', from=1-1, to=2-2]
				\arrow[tail, from=1-3, to=2-3]
				\arrow[two heads, from=2-2, to=2-3]
				\arrow["i"', tail, from=2-2, to=4-2]
				\arrow[tail, from=2-3, to=4-4]
				\arrow["{i'_1}"', tail, from=2-4, to=1-3]
				\arrow["{i'_0}", tail, from=2-4, to=2-3]
				\arrow["{\widetilde{i}}", tail, from=2-4, to=4-4]
				\arrow["v", from=4-2, to=4-4]
				\arrow["j"', from=4-2, to=5-3]
				\arrow["{j'}", from=4-4, to=5-3]
			\end{tikzcd}
		\end{equation}
		with morphisms given by morphisms $e_0\xinert{\widehat{i}'}e'_0$ making the obvious diagram commute. We need to prove that this category is contractible ,however note that for any inert morphism $e\xinert{i'_0}\ima(v\circ i)$ there is a unique inert morphism $e_0\xinert{\widehat{i}}\theta_{j\circ i}$ of minimal dimension such that $i'_0$ factors through $\ima(v\circ i\circ \widehat{i})\xinert{}\ima(v\circ i)$, and this defines an initial object of this category.
	\end{proof}
	\begin{construction}\label{constr:D}
		Denote by $M_\theta$ the category $\twar(\Theta^\inrt_{n,/\theta})$ and by $M^\el_{/\theta}$ the subcategory of $M_\theta$ on arrows between elementary objects, denote by $p_t:M\rightarrow \Theta^\inrt_{n,/\theta}$ and $p_s:M\rightarrow \Theta^{\inrt,\op}_{n/\theta}$ the projection to target and source respectively, denote $\bD^\theta\bydef p_{t,*}p_s^*:\psh_\spc(\Theta^{\inrt,\op}_{n,/\theta})\rightarrow \psh_\spc(\Theta^\inrt_{n,/\theta})$. Also denote $M'\bydef M_\theta^\op$, $p
		_t:M'\rightarrow \Theta^{\inrt,\op}_{n,/\theta}$ and $p_s':M'\rightarrow \Theta^\inrt_{n,/\theta}$ the corresponding projections and $\bD^{\theta,'}\bydef p_s^{',*}p'_{t,!}$. Finally, denote $\bD^\theta_\el$ and $\bD^{',\theta}_\el$ the restrictions to the subcategories of elementary objects.
	\end{construction}
	\begin{lemma}\label{lem:verd}
		In the notation of \Cref{constr:D} we have a commutative diagram
		\begin{equation}\label{eq:diag_d}
			\begin{tikzcd}[sep=huge]
				{\psh_\spc(\Theta^{\el,\op}_{n,/\theta})} & {\psh_\spc(\Theta^{\inrt,\op}_{n,/\theta})} \\
				{\psh_\spc(\Theta^{\el}_{n,/\theta})} & {\psh_\spc(\Theta^{\inrt}_{n,/\theta})} \\
				{\psh_\spc(\Theta^{\el,\op}_{n,/\theta})} & {\psh_\spc(\Theta^{\inrt,\op}_{n,/\theta})}
				\arrow["{\cI_!}", from=1-1, to=1-2]
				\arrow["{\bD_\el}"', from=1-1, to=2-1]
				\arrow["\bD", from=1-2, to=2-2]
				\arrow["{\cI_*}", from=2-1, to=2-2]
				\arrow["{\bD'_\el}"', from=2-1, to=3-1]
				\arrow["{\bD'}"', from=2-2, to=3-2]
				\arrow["{\cI_!}", from=3-1, to=3-2]
			\end{tikzcd},
		\end{equation}
		where $\cI$ denotes the natural inclusion, moreover the left vertical map in the diagram is an isomorphism with inverse $\bD'_\el$.
	\end{lemma}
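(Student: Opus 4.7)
The lemma combines three assertions: the commutativity of the upper and lower squares and the invertibility of $\bD_\el$ with inverse $\bD'_\el$. My plan is to derive the commutativity statements from Beck--Chevalley-style base change along the twisted arrow projections, and the invertibility from the Verdier duality for finite direct categories established in \cite{aoki2023posets}.

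For the top square $\bD\circ \cI_!\cong \cI_*\circ \bD_\el$, I would first unpack $\bD = p_{t,*}p_s^*$ explicitly, so that $\bD\cF(\beta)$ is a limit over inert chains $\alpha\xinert{}\gamma\xinert{}\beta$ of $\cF(\alpha)$. With this formula in hand, both composites in the square evaluate to iterated (co)limit expressions over twisted arrow slices, and the desired identification reduces to the cofinality of the inclusion of elementary twisted arrow chains into all inert ones over a fixed $\beta$. This cofinality is a contractibility statement of the same flavor as \Cref{lem:C_cont}: an arbitrary inert chain can be refined to one supported on elementary sub-cells of $\beta$, and the comma category of such refinements is contractible since it admits an initial object obtained by factoring through elementary cells. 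The bottom square is handled by the formally dual argument, with $M'$ replacing $M_\theta$ and the roles of left and right Kan extension exchanged.

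For the invertibility of $\bD_\el$, the key point is that $\Theta^\el_{n,/\theta}$ is a finite direct category: every non-identity morphism between distinct elementary cells is a boundary inclusion $c_{k-1}\xinert{i_\pm}c_k$ that strictly lowers dimension. The Verdier self-duality for such categories supplied by \cite{aoki2023posets} is implemented exactly by the twisted arrow span that defines $\bD_\el$ and $\bD'_\el$ and identifies these two functors as mutual inverses. I expect the principal technical obstacle to lie in the cofinality verification in the first step: since $M_\theta$ is not literally a fibre product along $\cI$, the base change is not formal, and it must be established by a direct contractibility argument analogous to \Cref{lem:C_cont} and \Cref{lem:kan}.
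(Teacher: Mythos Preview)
Your overall architecture matches the paper's: a coinitiality argument for the squares, then \cite{aoki2023posets} for invertibility. Two steps need correction.

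For the top square, the relevant comma category over an inert $\theta_{i'}\xinert{}\theta_i$ is $\Theta^\el_{n,/\theta_{i'}}$, and whenever $\theta_{i'}$ has more than one elementary cell this category has \emph{no} initial object, so your proposed argument fails. The paper obtains contractibility from the geometric fact $|\Theta^\el_{n,/\theta_{i'}}|\cong |\theta_{i'}|\cong *$; the comparison with \Cref{lem:C_cont} is misleading, since that lemma really does produce an initial object but for a quite different indexing category. Once coinitiality is in hand the paper concludes $\bD\cong \cI_*\cI^*\bD$, whence $\bD\cI_!\cong\cI_*\cI^*\bD\cI_!\cong\cI_*\bD_\el$, and the bottom square is dual.

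For the invertibility, Aoki's Theorem~A does not apply to an arbitrary finite direct category: one must verify that $\Theta^\el_{n,/\theta}$ is a \emph{Verdier poset}, i.e.\ that $|(\Theta^\el_{n,/\theta})_{<(c_k\xinert{}\theta)}|\cong S^{k-1}$, which the paper checks via the identification with $|\Theta^\el_{/\partial c_k}|$. More seriously, even once $\bD_\el$ is known to be an equivalence, it does not follow formally that $\bD'_\el$ is its inverse; the paper explicitly flags this and supplies a separate inductive argument, using Aoki's Theorem~B on the Gorenstein poset $P=\Theta^\el_{/c_n}\setminus\{\id_{c_n}\}$ to convert the limit cone defining $\bD_\el\cF(c_n)$ into a colimit cone, which then unwinds against the inductive hypothesis. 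Your plan elides both the sphere hypothesis and this entire second half of the invertibility proof.
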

	\begin{proof}
		By untangling the definitions we see that for $\cF:\Theta^\inrt_{n,/\theta}\rightarrow\spc$ we have
		\begin{equation}
			\bD\cF(\theta_i\xinert{i}\theta)\cong  \underset{(\theta_{i'}\xinert{}\theta_i)\in \Theta^\inrt_{n,/\theta_i}}{\lim}\cF(\theta_{i'}).
		\end{equation}
		We claim that the inclusion $\cI_{\theta_i}:\Theta^\el_{n,/\theta_i}\hookrightarrow\Theta^{\inrt}_{n,/\theta_i}$ is coinitial: indeed, for a given $\theta_{i'}\xinert{}\theta_i$ we have $\cI/\theta_{i'}\cong \Theta^\el_{n,/\theta_{i'}}$, and the geometric realization of the latter category is isomorphic to the geometric realization of $\theta_{i'}$, which is contractible. It follows that $\bD\cong \cI_* \cI^* \bD$, so we have
		\[\bD\cI_!\cong \cI_*\cI^*\bD\cI_!\cong \cI_*\bD_\el,\]
		meaning that the top square in the diagram \eqref{eq:diag_d} commutes, the commutativity of the bottom square follows by a dual argument.\par
		To prove the last claim observe that $\bD^\theta_\el$ and $\bD^\theta$ described in \Cref{constr:D} are in fact the Verdier duality functors of \cite{aoki2023posets}. Note that for $c_k\xinert{i}\theta$ we have an isomorphism
		\[|(\Theta^\el_{n/\theta})_{<i}|\cong |\Theta^\el_{/\partial c_k}|\cong S^{k-1},\]
		so this is a Verdier poset in the notation of loc. cit., it now follows from \cite[Theorem A]{aoki2023posets} that $\bD_\el$ is an isomorphism.\par
		This does not yet prove that $\bD'_\el$ is its inverse, we will prove $\bD'_\el \bD_\el \cF(c_k\xinert{}\theta)\cong \cF(c_k\xinert{}\theta)$ by induction on $k$, the case $k=0$ being trivial (since $\Theta^\inrt_{/c_0}\cong \{\id_{c_0}\}$). Assume we have proved the claim for $k<n$, denote $P\bydef \Theta^\el_{/c_n}\setminus\{\id_{c_n}\}$, since $P^\triangleright \cong \Theta^\el_{/c_n}$ is Verdier, it follows that $P$ is Gorenstein. By definition we have
		\begin{equation}\label{eq:D_cone}
			\bD_\el\cF(c_n\xinert{i}\theta)\cong \underset{(c_k\xinert{i'}c_n)\in\Theta^\el_{n,/c_n}}{\lim}\cF(c_k\xinert{i\circ i'}\theta).
		\end{equation}
		We can view \eqref{eq:D_cone} as a limit cone $\Theta^{\el,\triangleleft}_{n,/c_n}\cong P^{\triangleleft\triangleright}\rightarrow \spc$, by \cite[Theorem B]{aoki2023posets} it is also a colimit cocone, meaning that we have
		\begin{equation}\label{eq:verd1}
			\cF(c_n\xinert{i}\theta)\cong \underset{\binom{\bot}{c_k\xinert{i'}c_n}\in P^\triangleleft}{\colim}\binom{\bD_\el\cF(c_n\xinert{i}\theta)}{\cF(c_k\xinert{i\circ i'}\theta)},
		\end{equation}
		where the notation $\binom{\bot}{c_k\xinert{i'}c_n}\in P^\triangleleft$ suggests that the elements of $P^\triangleleft$ are either of the form $\bot$ or $(c_k\xinert{i'}c_n)\in P$ and in the expression $\binom{\bD\cF(c_n\xinert{i}\theta)}{\cF(c_k\xinert{i\circ i'}\theta)}$ the top element shows the value of the colimit diagram on $\bot$, while the bottom -- on elements of $P$. Using our inductive assumption we also have for $k<n$
		\begin{equation}\label{eq:verd2}
			\cF(c_k\xinert{u}\theta)\cong \bD'_\el \bD_\el\cF(c_k\xinert{u}\theta)\cong \underset{(c_q\xinert{u'}c_k)\in\Theta^{\el,\op}_{n,/c_k}}{\colim}\;\bD_\el \cF(c_q\xinert{u\circ u'}\theta).
		\end{equation}
		Denote by $D$ the coCartesian fibration over $P$ with fiber over $c_k\xinert{i}c_n$ given by $\Theta^{\el,\op}_{/c_k}$. In other words, it is a full subcategory of $\twar(\Theta^\el_{n/c_n})$ on $c_q\xinert{i_0}c_k$ with $k<n$. Denote by $p_0:D\rightarrow P^\op$ the projection to the source of the arrow. Putting everything together we get
		\begin{align*}
			\cF(c_n\xinert{i}\theta)&\cong \underset{\binom{\bot}{c_q\xinert{i''}c_k\xinert{i'}c_n}\in D^\triangleleft}{\colim}\binom{\bD_\el\cF(c_n\xinert{i}\theta)}{\bD_\el\cF(c_q\xinert{i\circ i'\circ i''}\theta)}\\
			&\cong \underset{\binom{\bot}{c_q\xinert{i_0}c_n}\in (P^\op)^\triangleleft}{\colim}\binom{\bD_\el\cF(c_n\xinert{i}\theta)}{\underset{c_q\xinert{i'}c_k\xinert{i''}c_n,\; i'\circ i''\cong i_0}{\colim}\bD_\el\cF(c_q\xinert{i\circ i_0}\theta)}\\
			&\cong \underset{\binom{\top}{c_q\xinert{i_0}c_n}\in (P^\triangleright)^\op}{\colim}\binom{\bD_\el\cF(c_n\xinert{i}\theta)}{\bD_\el\cF(c_q\xinert{i\circ i_0}\theta)}\\
			&\cong \underset{(c_q\xinert{i_0}c_n)\in\Theta^{\el,\op}_{n,/c_n}}{\colim}\bD_\el\cF(c_q\xinert{i\circ i_0}\theta)\cong \bD'_\el \bD_\el\cF(c_n\xinert{i}\theta),
		\end{align*}
		where the first isomorphism follows by substituting \eqref{eq:verd2} into \eqref{eq:verd1}, the second by computing the colimit over $D^\triangleright$ as a composition of left Kan extension along $p_0$ and colimit over $(P^\op)^\triangleleft$, the third since the colimit over $\{c_q\xinert{i'}c_k\xinert{i''}c_n,\; i'\circ i''\cong i_0\}$ is a cotensor with a contractible category and $(P^\op)^\triangleright\cong (P^\triangleleft)^\op$, the fourth by using $(P^\triangleleft)^\op\cong \Theta^{\el,\op}_{/c_n}$ and the last isomorphism follows by definition.
	\end{proof}
	\begin{prop}\label{prop:seg_inj}
		There is an equivalence
		\[\seg_\spc(\Theta^\inj_{n,/\theta})\cong\psh_\spc(\Theta^{\inrt,\op}_{n,/\theta}).\]
	\end{prop}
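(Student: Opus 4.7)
The proof will chain together the preceding lemmas to assemble the desired equivalence. The plan is to pass from $\seg_\spc(\Theta^\inj_{n,/\theta})$ to $\psh_\spc(\Theta^{\inrt,\op}_{n,/\theta})$ via three intermediate identifications: first to $\psh_\spc(\Theta^{\el,\op}_{n,/\theta})$ by restricting to elementary objects, then to $\psh_\spc(\Theta^\el_{n,/\theta})$ via Verdier duality, and finally to $\psh_\spc(\Theta^{\inrt,\op}_{n,/\theta})$ by exploiting the localization description of the auxiliary category $C'$.

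For the first identification, I would argue that restriction induces an equivalence $\seg_\spc(\Theta^\inj_{n,/\theta}) \xrightarrow{\cong} \psh_\spc(\Theta^{\el,\op}_{n,/\theta})$. A Segal presheaf $\cF$ is determined on inert objects of $\Theta^\inj_{n,/\theta}$ by its elementary values via the right Kan extension built into the Segal condition, and the structure maps induced by non-inert injective morphisms (for example, an active injective morphism like $\{0,2\}\hookrightarrow\{0,1,2\}$ in $\Theta^\inj_{1,/[2]}$) are forced by functoriality combined with Segal: such a morphism factors through the pullbacks that compute the values of $\cF$ on non-elementary objects. The extendability of the pattern (as used in \Cref{lem:C_cont} via \cite[Lemma 9.14]{chu2019homotopy}) ensures this reconstruction yields a genuine equivalence, not merely a fully faithful inclusion.

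Next, \Cref{lem:verd} supplies the Verdier duality isomorphism $\bD_\el:\psh_\spc(\Theta^{\el,\op}_{n,/\theta})\xrightarrow{\cong}\psh_\spc(\Theta^\el_{n,/\theta})$. It then remains to identify $\psh_\spc(\Theta^\el_{n,/\theta})$ with $\psh_\spc(\Theta^{\inrt,\op}_{n,/\theta})$. For this step I would invoke the auxiliary category $C'$ of \Cref{constr:seg_aux} together with \Cref{lem:inv1}, which identifies $\psh_\spc(C'_\el[S_\el^{-1}])$ with $\psh_\spc(\Theta^{\inrt,\op}_{n,/\theta})$. The Kan extension formula of \Cref{lem:kan} for $\cI_!:C'_\el\hookrightarrow C'$, combined with the commutativity of the diagram in \Cref{lem:verd} (which transports $\cI_!$ across Verdier duality to $\cI_*$), lets the three identifications compose into the desired equivalence.

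I expect the main obstacle to lie in the first step: making precise the claim that Segal plus functoriality forces all the data along non-inert injective morphisms, so that restriction to $\Theta^{\el,\op}_{n,/\theta}$ is actually essentially surjective. A secondary difficulty is verifying the compatibility of Verdier duality with the localization/Kan-extension procedure producing $\psh_\spc(\Theta^{\inrt,\op}_{n,/\theta})$; in other words, that the composite of $\bD_\el$ with the $C'$-localization is genuinely an equivalence between $\psh_\spc(\Theta^\el_{n,/\theta})$ and $\psh_\spc(\Theta^{\inrt,\op}_{n,/\theta})$ rather than landing in a proper subcategory. This will require carefully matching the representables on both sides, using that a representable $h_{(i,j)}\in\psh_\spc(\Theta^{\inrt,\op}_{n,/\theta})$ is presented, via \Cref{lem:inv1}, as a colimit of elementary representables along morphisms in $S_\el$, precisely matching what \Cref{lem:kan} and \Cref{lem:verd} produce from the elementary input.
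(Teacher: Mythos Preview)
Your proposed decomposition has a genuine gap in the first step: the claim that restriction induces an equivalence
\[
\seg_\spc(\Theta^\inj_{n,/\theta})\xrightarrow{\cong}\psh_\spc(\Theta^{\el,\op}_{n,/\theta})
\]
is false. A Segal presheaf on $\Theta^\inj_{n,/\theta}$ carries free data at every object $\theta_j\xrightarrow{j}\theta$ whose source $\theta_j$ is elementary, \emph{regardless of whether the structure map $j$ is inert}. For instance with $n=1$ and $\theta=[2]$, the object $[1]\xrightarrow{\{0,2\}}[2]$ has an active (non-inert) structure map, so it does not lie in $\Theta^\el_{1,/[2]}$; yet $[1]$ is already elementary, so the Segal condition imposes no constraint on $\cF([1]_{\{0,2\}})$. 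Counting generators one finds six independent pieces of data in $\seg_\spc(\Theta^\inj_{1,/[2]})$ (matching $|\Theta^\inrt_{1,/[2]}|=6$), while $\Theta^\el_{1,/[2]}$ has only five objects. Restriction therefore loses the ``composition'' datum, and no appeal to extendability of the pattern repairs this: extendability controls how active morphisms interact with Segal limits, not whether the elementary restriction is essentially surjective.

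The paper does use \Cref{lem:verd}, \Cref{lem:inv0}, \Cref{lem:inv1} and \Cref{lem:kan}, but not as a sequential composite of three equivalences. Instead it builds the functor $F$ directly via the twisted arrow category $\twar(\Theta^\inj_{n,/\theta})$: for $\cF\in\seg_\spc(\Theta^\inj_{n,/\theta})$ one sets $\cF'\bydef p_!q^*\cF$ on a suitable fibration $D\to\twar(\Theta^\inj_{n,/\theta})$, and the key computation is that $\cF'(\theta_{j\circ u}\to\theta_j)\cong\bD'\cF^j(\ima(u)\xinert{}\theta_j)$, i.e.\ Verdier duality is applied \emph{fiberwise over each $\theta_j$} rather than once globally. \Cref{lem:inv0} and \Cref{lem:inv1} then show $\cF'$ descends through the localizations to land in $\psh_\spc(\Theta^{\inrt,\op}_{n,/\theta})$, and an explicit inverse $G$ is produced by the dual construction. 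The point is that the non-inert structure maps $j$ are exactly what parametrize the fibers over which $\bD'$ is applied; they cannot be discarded before the duality step. Your third step is also underspecified: $C'_\el[S_\el^{-1}]$ is not $\Theta^\el_{n,/\theta}$, so \Cref{lem:inv1} does not give a direct identification of $\psh_\spc(\Theta^\el_{n,/\theta})$ with $\psh_\spc(\Theta^{\inrt,\op}_{n,/\theta})$.
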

	\begin{proof}
		We will first define a functor
		\[F:\seg_\spc(\Theta^\inj_{n,/\theta})\rightarrow \psh_\spc(\Theta^\inrt_{n,/\theta}).\]
		We denote by $p:D\rightarrow \twar(\Theta^\inj_{n,/\theta})$ the Cartesian fibration whose fiber over $\theta_{j\circ u}\xrightarrow{u} \theta_j$ is $\Theta^{\inrt,\op}_{n,/\theta_{j\circ u}}$ and such that the Cartesian morphism over the morphism 
		\[\begin{tikzcd}[sep=huge]
			& {\theta_{j'\circ u'\circ i'}} \\
			{\theta_{j\circ u}} & {\theta_{j'\circ u'}} \\
			{\theta_j} & {\theta_{j'}} \\
			& \theta
			\arrow["{i'}", tail, from=1-2, to=2-2]
			\arrow["u"', from=2-1, to=3-1]
			\arrow["{v'}"', from=2-2, to=2-1]
			\arrow["{u'}"', from=2-2, to=3-2]
			\arrow["v", from=3-1, to=3-2]
			\arrow["j"', from=3-1, to=4-2]
			\arrow["{j'}", from=3-2, to=4-2]
		\end{tikzcd}\]
		with target $(\theta_{j\circ u'\circ i'}\xinert{i'}\theta_{j'\circ u'})$ is given by 
		\[
		\begin{tikzcd}[sep=huge]
			{\ima(v'\circ i')} & {\theta_{j'\circ u'\circ i'}} \\
			{\theta_{j\circ u}} & {\theta_{j'\circ u'}} \\
			{\theta_j} & {\theta_{j'}} \\
			& \theta
			\arrow["{i''}", tail, from=1-1, to=2-1]
			\arrow["a"', two heads, from=1-2, to=1-1]
			\arrow["{i'}", tail, from=1-2, to=2-2]
			\arrow["u"', from=2-1, to=3-1]
			\arrow["{v'}"', from=2-2, to=2-1]
			\arrow["{u'}"', from=2-2, to=3-2]
			\arrow["v", from=3-1, to=3-2]
			\arrow["j"', from=3-1, to=4-2]
			\arrow["{j'}", from=3-2, to=4-2]
		\end{tikzcd}.
		\]
		In particular, a general morphism in $D$ is given by a diagram 
		\begin{equation}\label{eq:mor_d}
			\begin{tikzcd}[sep=huge]
				{\theta_{j\circ u\circ i}} & {\ima(v'\circ i')} & {\theta_{j'\circ u'\circ i'}} \\
				& {\theta_{j\circ u}} & {\theta_{j'\circ u'}} \\
				& {\theta_j} & {\theta_{j'}} \\
				&& \theta
				\arrow["{\widetilde{i}}", from=1-1, to=1-2]
				\arrow["i"', tail, from=1-1, to=2-2]
				\arrow["{i''}", tail, from=1-2, to=2-2]
				\arrow["a"', two heads, from=1-3, to=1-2]
				\arrow["{i'}", tail, from=1-3, to=2-3]
				\arrow["u"', from=2-2, to=3-2]
				\arrow["{v'}"', from=2-3, to=2-2]
				\arrow["{u'}"', from=2-3, to=3-3]
				\arrow["v", from=3-2, to=3-3]
				\arrow["j"', from=3-2, to=4-3]
				\arrow["{j'}", from=3-3, to=4-3]
			\end{tikzcd}
		\end{equation}
		and it has source $\theta_{j\circ u\circ i}$ and target $\theta_{j'\circ u'\circ i'}$. We define $q:D\rightarrow \Theta^\inj_{n,/\theta}$ by sending $\theta_{j\circ u\circ i}\xinert{i}\theta_{j\circ u}$ to $\ima(u\circ i)\xinert{}\theta_j\xrightarrow{j}\theta$ and a morphism \eqref{eq:mor_d} to the composition
		\[\ima(u\circ i)\xinert{\widetilde{i}}\ima(u\circ i'')\cong \ima(u\circ i''\circ a)\cong \ima(u\circ v'\circ i')\xactive{a_v}\ima(v\circ u\circ v'\circ i')\cong \ima(u'\circ i'),\]
		where the first isomorphism follows since $a$ is active and the rest by commutativity of \eqref{eq:mor_d} and the morphism $a_v$ is induced by $v$. We first define $\cF'\bydef p_!q^*\cF$. Explicitly, we have
		\begin{equation}\label{eq:val_prel}
			\cF'(\theta_{j\circ u}\xrightarrow{u}\theta_j)\cong \underset{(\theta_{j\circ u\circ i}\xinert{i}\theta_{j\circ u})\in \Theta^\inrt_{n,/\theta_{j\circ u}}}{\colim}\cF(\ima(u\circ i)\xrightarrow{j|_{\ima(u\circ i)}}\theta).
		\end{equation}
		Denote by $\cF^j$ the restriction of $\cF$ to $\Theta^\inrt_{n,/\theta_{j}}$, then it is easy to see from \eqref{eq:val_prel} and definition of $\bD'$ that 
		\begin{equation}\label{eq:val_d}
			\cF'(\theta_{j\circ u}\xrightarrow{u}\theta_j)\cong \bD'\cF^j(\ima(u)\xinert{}\theta_j).
		\end{equation}
		In particular, \eqref{eq:val_d} implies that
		\[\cF'(\theta_{j\circ u}\xrightarrow{u}\theta_j)\cong \cF'(\ima(u)\xinert{}\theta_j),\]
		meaning that $\cF'$ factors through $\twar(\Theta^\inj_{n,/\theta})[S_\act^{-1}]\cong C'$, where the isomorphism follows from \Cref{lem:inv0}.\par 
		Note that since $\cF\in \seg_\spc(\Theta^\inj_{n,/\theta})$, $\cF^j$ is the right Kan extension of its restriction to $\Theta^\el_{n,/\theta_j}$, so it follows from the first claim of \Cref{lem:verd} and \eqref{eq:val_d} that $\cF'$ viewed as an object of $\psh_\spc(\Theta^{\inrt,\op}_{n,/\theta_j})$ is the left Kan extension of its restriction to $\Theta^{\el,\op}_{n,/\theta_j}$. In other words, we have
		\begin{equation}\label{eq:left_kan}
			\cF'(\ima(u)\xinert{i_0}\theta_j)\cong\underset{e\xinert{i}\ima(u)}{\colim}\;\cF'(e\xinert{i_0\circ i}\theta_j).
		\end{equation}
		It follows from \eqref{eq:left_kan} and \Cref{lem:kan} that $\cF'$ is the left Kan extension of its restriction to $C'_\el$, so it can be identified with an object $\cF''\in \psh_\spc(C'_\el)$. Finally, it follows once again from \eqref{eq:left_kan} that $\cF''$ inverts $S_\el$ (in the notation of \Cref{lem:inv1}), hence factors through $\psh_\spc(C'_\el[S_\el^{-1}])\cong \psh_\spc(\Theta^{\inrt,\op}_{n,/\theta})$, where the isomorphism follows from \Cref{lem:inv1}; we define $F(\cF)$ to be the image of $\cF'$ in $\psh_\spc(\Theta^{\inrt,\op}_{n,/\theta})$.\par
		Denote by $p':C'\rightarrow \Theta^\inj_{n,/\theta}$ the natural projection and by $q':C'\rightarrow \Theta^{\inrt,\op}_{n,/\theta}$ the morphism that sends $\theta_{j\circ i}\xinert{i}\theta_j\xrightarrow{j}\theta$ to $\ima(j\circ i)$ and a morphism
		\begin{equation}\label{eq:mor_c'}
			\begin{tikzcd}[sep=huge]
				{\theta_{j\circ i}} & {\ima(v\circ i)} & {\theta_{j'\circ i'}} \\
				{\theta_j} && {\theta_{j'}} \\
				& \theta
				\arrow["a", two heads, from=1-1, to=1-2]
				\arrow["i"', from=1-1, to=2-1]
				\arrow["{i_0}"', tail, from=1-2, to=2-3]
				\arrow["{\widetilde{i}}"', from=1-3, to=1-2]
				\arrow["{i'}", from=1-3, to=2-3]
				\arrow["v"', from=2-1, to=2-3]
				\arrow["j"', from=2-1, to=3-2]
				\arrow["{j'}", from=2-3, to=3-2]
			\end{tikzcd}
		\end{equation}
		to the composition
		\[\ima(j\circ i)\cong \ima(j'\circ v\circ i)\cong \ima(j'\circ i_0\circ a)\xleftarrow{\widetilde{i}}\ima(j'\circ i').\]
		We define 
		\[G:\psh_\spc(\Theta^{\inrt,\op}_{n,/\theta})\rightarrow \seg_\spc(\Theta^\inj_{n,/\theta})\]
		by $G\bydef p'_*q^{',*}$. By definition we have
		\begin{equation}\label{eq:G}
			G\cG(\theta_j\xrightarrow{j}\theta)\cong \underset{\theta_{j\circ i}\xinert{i}\theta_j}{\lim}\cG(\ima(j\circ i))
		\end{equation}
		for $\cG\in\psh_\spc(\Theta^{\inrt,\op}_{n,/\theta})$. Denote by $\cG^j:\Theta^{\inrt,\op}_{n,/\theta_j}\rightarrow\spc$ the functor sending $\theta_{j\circ i}\xinert{i}\theta_j$ to $\cG(\ima(j\circ i))$, then it follows from \eqref{eq:G} that 
		\begin{equation}\label{eq:G_d}
			G\cG(\theta_j)\cong \bD\cG^j(\theta_j\eq \theta_j).
		\end{equation}
		It follows from the proof of \Cref{lem:verd} that $\bD$ factors through $\psh_\spc(\Theta^\el_{n,/\theta_j})\xrightarrow{\cI_*}\psh_\spc(\Theta^\inrt_{n,/\theta_j})$, so in particular $G\cG$ does indeed belong to $\seg_\spc(\Theta^\inj_{n,/\theta})$.\par
		Finally, to show that $F$ and $G$ are inverse to one another we can use \eqref{eq:G_d} and \eqref{eq:val_d} to show that
		\[G\circ F\cF(\theta_j\xrightarrow{j}\theta)\cong \bD\bD'\cF^j(\theta_j\eq \theta_j)\cong \cF(\theta_j\xrightarrow{j}\theta),\]
		where the last isomorphism follows from the second claim of \Cref{lem:verd} using that $\cF^j$ lies in the image of $\cI_*$, the proof of $G\circ F\cong \id$ is the same, this time using $\bD'\bD\cong \id$.
	\end{proof}
	\begin{theorem}\label{thm:main_theta}
		There is an equivalence
		\[\stab(\cat_{n,/\theta})\cong \psh_\spc(\Theta^{\inrt,\op}_{n,/\theta}),\]
		moreover for any $\cE\in \cat_n$ we have
		\[\stab(\cat_{n,/\cE})\cong \underset{f:\theta\rightarrow\cE}{\lim}\psh_\spc(\Theta^{\inrt,\op}_{n,/\theta}).\]
	\end{theorem}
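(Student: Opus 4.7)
The proof is essentially a synthesis of the three main results established earlier in the section, so my plan is to combine them in the right order with minimal additional work.

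For the first equivalence, I would chain together \Cref{prop:theta_comp} and \Cref{prop:seg_inj}. Specifically, \Cref{prop:theta_comp} identifies $\stab(\cat_{n,/\theta})$ with the full subcategory $\seg_\spc(\Theta^\inj_{n,/\theta}) \hookrightarrow \psh_\spc(\Theta^\inj_{n,/\theta})$ of presheaves satisfying the Segal condition, and \Cref{prop:seg_inj} then provides the equivalence $\seg_\spc(\Theta^\inj_{n,/\theta}) \cong \psh_\spc(\Theta^{\inrt,\op}_{n,/\theta})$. Composing these two isomorphisms gives the desired statement for $\theta \in \Theta_n$.

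For the second equivalence, I would apply the second statement of \Cref{lem:globn} (using that $\cE \in \cat_n$ is complete) to obtain
\[\stab(\cat_{n,/\cE}) \cong \underset{f:\theta\rightarrow\cE}{\lim} \stab(\cat_{n,/\theta}),\]
and then substitute the first part of the present theorem into the right-hand side to replace each $\stab(\cat_{n,/\theta})$ by $\psh_\spc(\Theta^{\inrt,\op}_{n,/\theta})$.

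The only thing to verify carefully is that the equivalences of the first part are natural in $\theta$, so that passing to the limit over $\Theta_{n,/\cE}$ is well-defined. This is essentially automatic since each of the constructions in \Cref{prop:theta_comp} and \Cref{prop:seg_inj} is built from pullback, pushforward, and Verdier-duality functors that are natural in $\theta$, and the identifications in \Cref{lem:inv0} and \Cref{lem:inv1} are likewise constructed functorially. There is no genuine obstacle here — the work has already been done in the preceding propositions, and this theorem is essentially their packaging.
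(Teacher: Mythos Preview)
Your proposal is correct and matches the paper's proof essentially verbatim: the paper's argument is literally ``Combine \Cref{prop:theta_comp} and \Cref{prop:seg_inj} for the first claim and use the second claim of \Cref{lem:globn} for the second.'' Your additional remark about naturality in $\theta$ is a reasonable observation (and is indeed addressed more carefully later in \Cref{prop:main_theta_stab}), but the paper does not treat it at this point.
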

	\begin{proof}
		Combine \Cref{prop:theta_comp} and \Cref{prop:seg_inj} for the first claim and use the second claim of \Cref{lem:globn} for the second.
	\end{proof}
	\section{Steiner complexes}\label{sect:stein}
	For a collection $\{f_1,...,f_l\}$ of 1-morphisms in a category, there is an essentially unique way in which they might be composable -- namely, if the target of one always coincides with the source of another. This is no longer the case for higher categories -- objects of $\Theta_n$ represent ways in which morphisms might be composed, however they no longer cover all the options as for example the orientals of \cite{street1987algebra} or the lax cubes of \cite{manin1989arrangements} do not belong to $\Theta_n$. These more general arrangements of morphisms are called \textit{pasting schemes}, and there have been numerous formalisms aimed at making this vague idea precise -- see \cite{forest2019unifying} for a review of some of those formalisms.\par 
	Our work will also make use of pasting schemes outside of $\Theta_n$ in the definition of the twisted arrows category in \Cref{sect:twar}, so we will also need to pick a formalism to describe them. We will employ the augmented directed chain complexes with unital loop-free basis of \cite{steiner2004omega}, which we will simply call \textit{Steiner complexes} following \cite{ara2023categorical}. Our goal in this section is to properly introduce this notion and prove some of its basic properties, culminating with \Cref{prop:stn_sat}. Admittedly, we will prove more that is necessary for the applications in \Cref{sect:twar}, however this will also help us prove one of the main results of the paper -- \Cref{thm:stein}.
	\begin{defn}\label{def:stein}
		Recall from \cite{steiner2004omega} that an augmented directed complex (\textit{ADC} for short) is a chain complex (in the classical sense) $(...\rightarrow K_n\xrightarrow{\partial_{n-1}}K_{n-1}\xrightarrow{\partial_{n-2}}...\xrightarrow{\partial_0}K_0)$ of abelian groups and an augmentation $e:K_0\rightarrow\bZ$ such that $e\circ \partial_0\cong 0$, we also require that each $K_n$ admits a distinguished commutative submonoid $K_n^+$ such that $K_n$ is the group completion of $K^+_n$. A morphism $f:K\rightarrow L$ of ADCs is by definition a morphism of underlying chain complexes such that $f_j(K_j^+)\subset L^+_j$. We will call the \textit{dimension} of $K$ denoted $\dim K$ the maximum $n$ such that $K_n\neq 0$. \par
		A \textit{basis} for an ADC is the data of elements $P_n\subset K_n^+$ for all $n$ such that $K_n^+$ is a free commutative monoid on $P_n$. Given a basis $P_n$, we can write any element $u\in K_n$ as $u=\sum _{g\in P_n} u_g *g$ with $u_g \in \bZ$, this induces a partial order on $K_n$ where  we declare $u\geq v$ just in case $u_g\geq v_g$ for $g\in P_n$. We can then write $\partial_{n-1}u= \partial^+ u - \partial ^- u$, where $\partial^\pm u\geq 0$. We also denote 
		\[u\wedge v\bydef \sum_{g\in P_n}\min(u_g, v_g) g.\]
		For $u\in K_n$ we denote $[u]=([u]^-_0,[u]^+_0,...,[u]^-_{n-1}, [u]^+_{n-1},[u]_n)$ the sequence of elements such that $[u]_n\bydef u$ and inductively $[u]^\pm_k\bydef \partial^\pm[u]^\pm_{k+1}$ and call it an \textit{atom} associated to $u$. We denote by $<_i$ the transitive closure of a relation on $P\bydef \bigcup_n P_n$ such that for $i<\min(l,k)$, $u\in P_l$ and $v\in P_k$ we have $u<_j v$ if $[u]^+_i\wedge [v]^-_i> 0$. We also write $u<_\bN v$ if either $u\leq\partial^-v$ or $v\leq\partial^+u$.\par
		Finally, we call a basis $P$ \textit{unital} if for all $u\in P$ we have $e([u]^\pm_0)=1$ and \textit{loop-free} if $<_j$ is non-reflexive for $i\in \bN$, i.e. if its transitive closure is a partial order. We will call a basis \textit{strongly loop-free} if $<_\bN$ is not reflexive (note that by \cite[Propositin 3.7.]{steiner2004omega} every strongly loop-free basis is loop-free). Following \cite{ara2023categorical} we will call an ADC with a strongly loop-free unital basis a \textit{strong Steiner complex} and denote by $\Stn$ the category of Steiner complexes and morphisms of ADCs between them and by $\Stn_n$ for $n\in \bN$ the subcategory of complexes of dimension $\leq n$. 
	\end{defn}
	\begin{construction}\label{constr:stein}
		We associate to an ADC $K$ the strict $\omega$-category $K^*$ such that its set of $n$-morphisms (called \textit{cells}) is the set of sequences $(X_0^-, X_0^+,..., X^-_{n-1}, X^+_{n-1}, X_n)$ such that $X_i^\pm\geq 0$,
		\begin{equation}\label{eq:unital}
			e(X_0^\pm)=1
		\end{equation}
		and
		\begin{equation}\label{eq:boundary}
			\partial_i X^+_{i+1}\cong \partial_i X^-_{i+1}= X^+_i - X^-_i,
		\end{equation}
		we will write $u\in X$ for a basis element $u$ if it appears in the decomposition of $X_n$ with non-zero coefficient. In particular, atoms $[u]$ associated to $u\in P_n$ define an $n$-morphism in $K^*$. The $i$-composition $X*_i Y$ of $X$ and $Y$ is defined to be the cell $Z$ such that $Z_{j, \pm}\bydef X_{j,\pm}+ Y_{j,\pm}$ for $j>i$, $Z_{i,-}\bydef X_{i,-}$, $Z_{i,+}\bydef Y_{i,+}$ and $Z_{j,\pm}\bydef X_{j,\pm}=Y_{j,\pm}$ for $j<i$. It is proved in \cite{steiner2004omega} that those morphisms do form an $\omega$-category, and moreover, if $K$ had a unital loop-free basis, then $K^*$ is a free $\omega$-category in an appropriate sense. 
	\end{construction}
	\begin{remark}\label{rem:theta}
		By \cite{steiner2006simple} we can associate to $\theta\in \Theta_n$ an object of $\stn_n$ with $l$-dimensional basis elements given by inert morphisms $c_l\xinert{}\theta$, the main result of that work is that its basis is strongly loop-free and the set of morphisms of $n$-categories between $\theta$ and $\theta'$ is equivalent to the set of morphisms of the corresponding ADCs with basis. It follows that we can identify $\Theta_n$ with a full subcategory of $\stn_n$ in this manner, which we will do from now on without further mention. Note that under this identification for $x\in\Stn_n$ the $l$-morphisms of the $n$-category $x^*$ correspond to morphisms $c_l\xrightarrow{f}x$ of ADCs. 
	\end{remark}
	\begin{notation}
		Given $x\in\Stn_n$ and $k\leq n$ we will denote by $x_k$ the set of basis elements of $x$ of dimension $k$. For $k<n$ and $\sigma\in\{-,+\}$ we will denote by $d_k^\sigma x\in\Stn_k$ the subcomplex of $x$ generated by the basis elements of dimension $k$ which are minimal (resp. maximal) with respect to $\leq_{k}$ if $\sigma = -$ (resp. $\sigma = +$), it is easy to see that such a complex indeed lies in $\Stn_k$. 
	\end{notation}
	\begin{prop}\label{prop:unit}
		Assume that we have a morphism $f:c_l\rightarrow x$ for some $x\in\Stn$ and $(a,b)$ is a pair of elements of $x_l$ such that $a\leq f(\id_{c_l})$ and $b\leq f(\id_{c_l})$, then $\partial^\epsilon_{l-1}a \bigcap \partial^\epsilon_{l-1} b = \varnothing$. In particular, if
		\begin{equation}\label{eq:unit}
			f(\id_{c_l}) = \sum_{i\in\cI} n_i*b_i
		\end{equation}
		with $n_i\in \bN$, then all $n_i=1$.
	\end{prop}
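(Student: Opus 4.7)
The plan is to prove the proposition by induction on $l$, with the main input being strong loop-freeness. The base case $l=0$ is immediate: $e(X_0)=1$ and $X_0\geq 0$ force $X_0:=f(\id_{c_0})$ to be a single basis $0$-cell. One elementary consequence of strong loop-freeness that I would record first is that for any basis element $u$ the supports of $\partial^+u$ and $\partial^-u$ are disjoint: a shared basis element $c$ would yield simultaneously $u<_\bN c$ (via $c\leq\partial^+u$) and $c<_\bN u$ (via $c\leq\partial^-u$), producing a $<_\bN$-cycle.

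For the inductive step I would restrict $f$ along each inert inclusion $c_k\xinert{} c_l$ with $k<l$ to obtain a morphism $c_k\to x$ whose top value is $X_k^\pm$; the inductive hypothesis then says each $X_k^\pm$ is a sum of distinct basis $k$-cells with unit coefficients, whence $[X_{l-1}^\pm]_c\in\{0,1\}$ and hence $[\partial X_l]_c=[X_{l-1}^+]_c-[X_{l-1}^-]_c\in\{-1,0,1\}$ for every basis $(l-1)$-cell $c$. Both assertions of the proposition (including the ``in particular'' about multiplicities) will be treated uniformly by considering pairs $(a,b)$ of basis $l$-cells with $a+b\leq X_l$; the case $a=b$ records multiplicity $\geq 2$ and the case $a\neq b$ is the distinctness clause. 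Suppose for contradiction some such pair satisfies $c\leq\partial^\epsilon a\wedge\partial^\epsilon b$ for some basis $(l-1)$-cell $c$; by symmetry take $\epsilon=+$. Then $[\partial^+X_l]_c\geq 2$, the constraint above forces $[\partial^-X_l]_c\geq 1$, so some basis $l$-cell $d\leq X_l$ has $c\leq\partial^-d$, and the $\partial^\pm$-disjointness shown at the start rules out $d\in\{a,b\}$. In the partial order induced by $<_\bN$ this yields $a,b\prec d$.

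To extract a contradiction I would run a secondary induction on the total multiplicity $|X_l|:=\sum_i n_i$. Choosing $a$ to be a $<_\bN$-maximal basis $l$-cell appearing in $X_l$, the same chain of implications forces $n_a=1$, $\partial^+a\leq X_{l-1}^+$ elementwise, and $\partial^+a$ disjoint from $\partial^+b'$ for every other $b'\leq X_l$, since otherwise one produces a $d\leq X_l$ with $a\prec d$, contradicting maximality. A dual argument with a $<_\bN$-minimal cell yields the analogous statements for $\partial^-$. One then removes $a$ to form a reduced cell with $X'_l:=X_l-a$ and suitably modified lower chains, applies the secondary induction, and combines the result with the maximality/minimality data for $a$. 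The principal obstacle lies in this last step: the naive candidate $X^{'+}_{l-1}:=X_{l-1}^+-\partial^+a+\partial^-a$ can acquire a coefficient $2$ on some basis $(l-1)$-cell whenever $\partial^-a$ overlaps $X_{l-1}^+$, violating the unit-coefficient hypothesis for the smaller cell. Handling this requires either a more delicate choice of $X^{'\pm}_{l-1}$, or an additional argument showing that such an overlap would already produce a $\partial^+$-violation among cells strictly $<_\bN$-below $a$, which can then be disposed of by downward iteration of the same maximality argument.
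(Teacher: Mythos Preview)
The paper does not argue the first claim directly: it cites \cite[Lemma~3.4.5]{forest2019unifying} and deduces the second claim in one line. Your self-contained inductive approach is therefore a genuinely different route, and with two clarifications it goes through.

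The obstacle you flag at the end is illusory. Once you have $n_a=1$ and $\partial^+a\le X_{l-1}^+$ from the $<_\bN$-maximality of $a$, the tuple $X'$ with $X'_l=X_l-a$, $X_{l-1}^{'+}=X_{l-1}^+-\partial^+a+\partial^-a$, $X_{l-1}^{'-}=X_{l-1}^-$ and all lower terms unchanged is non-negative and satisfies \eqref{eq:boundary} (using $\partial\partial^+a=\partial\partial^-a$), hence is a morphism $c_l\to x$. Its restriction along $c_{l-1}\xinert{}c_l$ is then a morphism $c_{l-1}\to x$, to which your \emph{outer} induction on $l$ applies directly and forces $X_{l-1}^{'+}$ to have unit coefficients. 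A coefficient $2$ is therefore impossible, and the secondary induction runs on $X'$ with no further adjustment of the $(l-1)$-chains.

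Two further points. First, the inequality you write as $[\partial^+X_l]_c\ge 2$ is not what you mean: $\partial^+X_l$ is the positive part of $\partial X_l$ after cancellation. The correct statement is $\sum_e n_e[\partial^+e]_c\ge 2$; then $[\partial X_l]_c\in\{-1,0,1\}$ forces $\sum_e n_e[\partial^-e]_c\ge 1$, producing your $d$. Second, the phrase ``maximality/minimality data for $a$'' is muddled: removing a maximal $a$ and running the secondary induction settles only the $\epsilon=+$ claim for all pairs. The $\epsilon=-$ claim is then obtained by the genuinely dual pass (remove a $<_\bN$-minimal cell instead), not by combining both signs in a single removal.
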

	\begin{proof}
		The first claim follows from \cite[Lemma 3.4.5.]{forest2019unifying}, the second follows from the first since if for some $i$ we have $n_i>1$, then the pair $(b_i,b_i)$ contradicts the first claim.
	\end{proof}
	\begin{remark}\label{rem:nozero}
		Note that for a strong Steiner complex $x$ and $b\in x_k$ we have $\partial^-b>0$ and $\partial^+b>0$: indeed, assume that $\partial^- b=0$, then 
		\[0= \partial\circ \partial b= \partial(\partial^+b) = \partial^+\partial^+b - \partial^-\partial^+b,\]
		hence we have $\partial^+\partial^+b = \partial^-\partial^+b$. It follows that the induced morphism $c_{k-1}\xrightarrow{f}x$ sending $[\id_{c_{k-1}}]$ to $\partial^+ b$ defines an endomorphism, which contradicts \Cref{prop:unit}.
	\end{remark}
	\begin{defn}\label{def:act}
		Given a morphism $f:x\rightarrow y$ in $\Stn_n$, it follows from \Cref{prop:unit} that the image of any basis element $c_j\xinert{b}x$ in $y$ can be identified with a collection of basis elements $\{c^b_0,...,c^b_k\}$ of $y$. We will call the morphism $f$ \textit{active} if for every basis element $c$ of $y$ there is a basis element $b$ of $x$ such that $c= c^b_j$ for some $j$ and \textit{inert} if $f$ takes basis elements to basis elements of the same dimension. Finally, we will call $x\in \Stn_n$ \textit{elementary} if there exists a basis element $b\in x_m$ for some $m\leq n$ such that every other basis element belongs to some $d^\pm_l b$. We will denote by $\stn_n\hookrightarrow\Stn_n$ the full subcategory containing objects $x$ that admit an active morphism $a:c_n\xactive{}x$ (which is necessarily unique by \Cref{prop:unit} again). Given $x\in \Stn_n$ we will denote by $\stn^\inrt_{n,/x}$ the full subcategory of $\stn_{n,/x}$ on inert morphisms $y\xinert{i}x$ and by $\stn^\el_{n,/x}$ a further subcategory containing $e\xinert{}x$ with $e$ elementary (note that such morphisms may be identified with basis elements in $x$). 
	\end{defn}
	\begin{remark}\label{rem:k_bound}
		If $e\in\stn^\el_n$ with $\dim(e)=n$, then for every $k<n$ we have $(d^-_k e)_k\cap(d^+_k e)_k=\varnothing$. Indeed, if $c\in(d^-_k e)_k\cap(d^+_k e)_k$ and $e_n$ denotes the unique basis element of $e$ in dimension $n$, then we would have $c\leq_\bN e_n\leq_\bN c$, which contradicts the fact that the basis of $e$ was assumed to be strongly loop-free.
	\end{remark}
	\begin{defn}\label{def:jact}
		For $x\in\stn^c_n$ and $0\leq j\leq n-1$ we will call a morphism $c_n\xrightarrow{f}x$ $j$\textit{-active} if the image of $i^\pm_j:c_j\xinert{} c_n$ is $d^\pm_j x$, additionally we define every morphism to be \textit{(-1)-active}.
	\end{defn}
	\begin{lemma}\label{lem:n-1act}
		A morphism $f:c_n\rightarrow x$ for $x\in\stn_n^c$ is $(n-1)$-active in the sense of \Cref{def:jact} if and only if it is active in the sense of \Cref{def:act}.
	\end{lemma}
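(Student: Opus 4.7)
The plan is to show that both ``active'' and ``$(n-1)$-active'' force $f$ to coincide with the canonical active morphism $f_0 : c_n \xactive{} x$ whose existence is built into the definition of $\stn_n^c$ and which is unique by the parenthetical in \Cref{def:act}. The argument factors through a single key claim (Step 1), after which both directions are short.

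Step 1 (the key step): verify that $f_0$ is $(n-1)$-active. By \Cref{prop:unit}, every coefficient of $f_0(\id_{c_n})$ lies in $\{0,1\}$; since $\id_{c_n}$ is the only $n$-dimensional basis element of $c_n$, activity forces $f_0(\id_{c_n}) = \sum_{b \in x_n} b$. Denoting the source/target $(n-1)$-cells of $c_n$ by $s$ and $t$, the boundary relation $\partial f_0(\id_{c_n}) = f_0(t) - f_0(s)$ rewrites as
\[
\sum_{b \in x_n}\bigl(\partial^+ b - \partial^- b\bigr) \;=\; f_0(t) - f_0(s).
\]
Activity also forces every $(n-1)$-basis element of $x$ to appear in $f_0(s)$ or $f_0(t)$ (as $s,t$ are the only $(n-1)$-basis elements of $c_n$), and \Cref{prop:unit} bounds the coefficients of these chains by $1$; matching positive and negative parts then identifies $f_0(s)$ with the sum of the minimal $(n-1)$-basis elements of $x$ and $f_0(t)$ with the sum of the maximal ones. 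Since $d^\pm_{n-1}(x)$ is by definition the subcomplex generated by these minimal/maximal basis elements, the image subcomplex of $f_0\circ i^\pm_{n-1}$ is precisely $d^\pm_{n-1}(x)$.

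Step 2 ($\Leftarrow$): any active $f$ equals $f_0$ by uniqueness, so $f$ is $(n-1)$-active by Step 1. Step 3 ($\Rightarrow$): suppose $f$ is $(n-1)$-active. Since the image subcomplex of $f\circ i^-_{n-1}$ equals $d^-_{n-1}(x)$ and $f(s)$ has $\{0,1\}$-coefficients by \Cref{prop:unit}, the support of $f(s)$ in dimension $n-1$ must equal the set of minimal $(n-1)$-basis elements, forcing $f(s) = f_0(s)$, and similarly $f(t) = f_0(t)$. Hence $Y := f_0(\id_{c_n}) - f(\id_{c_n})$ is a positive $n$-chain with $\partial Y = 0$. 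Writing $Y = \sum_{i \in S} b_i$ and assuming $S \neq \varnothing$, the equation $\sum_{i \in S}\partial^+ b_i = \sum_{i \in S}\partial^- b_i$ combined with \Cref{rem:nozero} provides, for any $b_i \in S$, some $(n-1)$-basis element $c \in \partial^+ b_i$ which must also lie in $\partial^- b_j$ for some $j \in S$, yielding $b_i <_{n-1} b_j$. Iterating inside the finite set $S$ produces a $<_{n-1}$-cycle, contradicting (strong) loop-freeness; so $Y = 0$ and $f = f_0$ is active.

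The main obstacle is Step 1, specifically the identification of $f_0(s)$ and $f_0(t)$ with the sums over minimal and maximal $(n-1)$-basis elements. The subtle point is ruling out ``internal'' $(n-1)$-basis elements (those lying in both some $\partial^- b_i$ and some $\partial^+ b_j$ but absent from $f_0(s)$ and $f_0(t)$): this uses in an essential way that \Cref{prop:unit} forces all coefficients to be $0$ or $1$, so that any overlap in the boundaries of distinct $b_i$ would leave an $(n-1)$-basis element unreached by the canonical active morphism, which is impossible. The final cycle argument in Step 3 is a direct adaptation of the reasoning in \Cref{rem:nozero}.
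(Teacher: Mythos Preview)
Your Step~3 cycle argument is sound and, once decoupled from Step~1, gives a clean alternative to the paper's chain argument for the direction $(n-1)$-active $\Rightarrow$ active: from $(n-1)$-activity alone one reads off $f(s)=\sum_{p_c=0}c$ and $f(t)=\sum_{m_c=0}c$ (where $p_c,m_c$ count appearances of $c$ in $\partial^+$ and $\partial^-$ of $n$-cells), whence $\partial f(\id_{c_n})=\partial\sum_b b$, so $Y:=\sum_b b-f(\id_{c_n})$ is a nonnegative $n$-cycle and the loop-freeness argument forces $Y=0$. The paper instead walks a maximal $<_{n-1}$-chain out of a hypothetical missing $n$-cell and uses \Cref{prop:unit} to absorb it into the image; your argument is arguably more transparent.

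The gap is in Step~1, and it propagates to Step~2. Your key claim --- that activity forces every $(n-1)$-basis element of $x$ to appear in $f_0(s)$ or $f_0(t)$ --- is false. In \Cref{def:act} the ``image'' of a basis element $b$ is the subcomplex generated by $f(b)$, so an internal $(n-1)$-cell $c$ (one with $p_c=m_c=1$) already lies in the image of the top cell $\id_{c_n}$ and activity imposes no constraint on whether $c\in f_0(s)$. Concretely, for $x=[2]\in\stn_1$ the object $1$ is internal and is absent from both $f_0(s)=\{0\}$ and $f_0(t)=\{2\}$; your final paragraph gets this backwards by asserting such a $c$ would be ``unreached''. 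After matching positive and negative parts of $\partial\sum_b b=f_0(t)-f_0(s)$ you know the \emph{symmetric difference} of $\mathrm{supp}\,f_0(s)$ and $\mathrm{supp}\,f_0(t)$, but nothing about their intersection: both $(\alpha_c,\beta_c)=(0,0)$ and $(1,1)$ are compatible with the boundary equation when $p_c=m_c$. Showing that the canonical active morphism picks $(\alpha_c,\beta_c)=(0,0)$ for internal $c$ (and $(1,1)$ for isolated $c$) requires a further argument --- for instance via the augmentation constraint in dimension~$0$ propagated upward, or by appealing directly to Steiner's description of the source/target of the maximal cell in $x^*$. Without this, your proof of the direction active $\Rightarrow$ $(n-1)$-active is incomplete. (The paper's own proof addresses only the other direction and leaves this one implicit.)
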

	\begin{proof}
		Assume that there is some basis element $b_0$ of dimension $n$ that is not in the image of $f$. Consider some chain $b_0 <_{n-1}b_1 <_{n-1}...<_{n-1}b_m$ of maximal length, since it is maximal we must have $d^+_{n-1}b_m\leq d^+_{n-1} x$. Since $f$ was assumed $(n-1)$-active, there must be some basis element $e_0$ in the image of $f$ such that $d^+_{n-1}b_m\cap d^+_{n-1}e_0\neq \varnothing$, it now follows from \Cref{prop:unit} that $b_m = e_0$. By construction $d^+_{n-1}b_{m-1}\leq d^-_{m-1}e_0$, since it clearly does not lie in $d^-_{n-1}x$ there must be some $e_1$ in the image of $f$ such that $d^+_{n-1} e_1$ intersects $d^+_{n-1}b_{m-1}$, repeating the same argument we get $b_{m-1} = d_1$, iterating this process we eventually prove that $b_0$ lie in the image of $f$.
	\end{proof}
	\begin{lemma}\label{lem:jact}
		Given a $k$-active $f:c_n\rightarrow x$ morphism for $k<(n-1)$ with $x\in\stn_n^c$, there exists a $(k+1)$-active morphism $f^\act_{k+1}:C^{k+1}_n(1)\rightarrow x$ in the notation of \Cref{constr:C_delta} such that
		\begin{equation}\label{eq:comp_act}
			f^\act_{k+1}\circ i_{k+1}\cong f.
		\end{equation}
	\end{lemma}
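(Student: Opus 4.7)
The strategy is to construct $f^\act_{k+1}$ explicitly as an extension of $f$ that absorbs the basis elements of $d^\pm_{k+1} x$ which are missing from $f(i^\pm_{k+1})$. Since $f$ is $k$-active, the images $f(i^\pm_k) = d^\pm_k x$ coincide with the canonical $k$-source and $k$-target of $x$, so both $f(i^\pm_{k+1}) \in x_{k+1}^+$ and the fundamental chains $d^\pm_{k+1} x \in x_{k+1}^+$ are $(k+1)$-cells in $x^*$ whose $k$-boundaries agree with $d^\pm_k x$. The differences $d^\pm_{k+1} x - f(i^\pm_{k+1})$ are therefore chains whose $k$-boundaries vanish, and by \Cref{prop:unit} together with \Cref{rem:k_bound} they are non-negative sums of distinct basis elements of $x_{k+1}$ not appearing in $f(i^\pm_{k+1})$.

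The complex $C^{k+1}_n(1)$ of \Cref{constr:C_delta} provides precisely the pasting diagram needed to accommodate these missing cells: it contains $c_n$ via $i_{k+1}$ and adjoins additional $(k+1)$-basis cells on the $k$-source and $k$-target sides, arranged so that the $(k+1)$-source of $C^{k+1}_n(1)$ composes the collar cells with the $(k+1)$-source of $c_n$, and symmetrically for the target. The plan is to define $f^\act_{k+1}$ to agree with $f$ on the sub-complex $c_n\xinert{i_{k+1}}C^{k+1}_n(1)$, and to send each collar $(k+1)$-basis element to the corresponding missing basis element of $d^-_{k+1} x$ or $d^+_{k+1} x$. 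The compatibility $f^\act_{k+1}\circ i_{k+1}\cong f$ is then immediate.

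The key verification is that this assignment respects the ADC structure, i.e. that boundaries are preserved on the added generators. For a basis element $b\in (d^\pm_{k+1} x)_{k+1}$, the decomposition $\partial_k b = \partial^+_k b - \partial^-_k b$ involves only $k$-basis elements lying in $d^\pm_k x$ by \Cref{rem:k_bound} and the minimal/maximal characterization of $d^\pm_{k+1}$. Because $f$ is $k$-active, these $k$-boundaries are already captured faithfully by $f$, so the boundary relations in $C^{k+1}_n(1)$ involving the collar cells are sent to valid relations in $x$.

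The main obstacle is checking that the resulting morphism is genuinely $(k+1)$-active, i.e. that $f^\act_{k+1}(i^\pm_{k+1})$ equals $d^\pm_{k+1} x$. By construction of $C^{k+1}_n(1)$, the image $i^\pm_{k+1}$ in $(C^{k+1}_n(1))_{k+1}^+$ is the sum of the $(k+1)$-source (resp.\ target) of the central $n$-cell together with the collar $(k+1)$-basis elements; applying $f^\act_{k+1}$ gives $f(i^\pm_{k+1}) + \bigl(d^\pm_{k+1} x - f(i^\pm_{k+1})\bigr) = d^\pm_{k+1} x$, as required. The most delicate point is thus in justifying that $C^{k+1}_n(1)$ has exactly the right combinatorial structure to accommodate an arbitrary such missing chain while remaining a strong Steiner complex; this is a property that must be read off from \Cref{constr:C_delta}, and strong loop-freeness of the new basis follows from strong loop-freeness of $x$ via the injection of the collar cells into $x$.
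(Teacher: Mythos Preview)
Your proposal rests on a misreading of the structure of $C^{k+1}_n(1)$. You describe it as $c_n$ with ``additional $(k+1)$-basis cells on the $k$-source and $k$-target sides'', and your plan is to send these putative collar $(k+1)$-cells to the missing basis elements of $(d^\pm_{k+1}x)_{k+1}$. But $C^{k+1}_n(1)$ is not built that way: it consists of three \emph{$n$-cells} $\{-,*,+\}$ that are $(k+1)$-composable (see the description just before \Cref{constr:D_obj}), with the middle cell being the image of $i_{k+1}$. The extra generators you must assign values to are $n$-dimensional, and a morphism of ADCs must send them to $n$-chains in $x_n$, not to $(k+1)$-chains. In particular, $d^\pm_{k+1}C^{k+1}_n(1)$ is just $c_{k+1}$---a single $(k+1)$-basis element---so your final computation ``$f(i^\pm_{k+1}) + (d^\pm_{k+1}x - f(i^\pm_{k+1}))$'' has no place to occur.

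The genuine content of the lemma, and what the paper's proof supplies, is the construction of two $n$-morphisms $y^-,y^+$ in $x^*$ to serve as images of the collar $n$-cells. These cannot be read off from any chain-level difference; the paper builds $y^\epsilon_n$ as the sum of all $n$-basis elements $b$ lying on the $\epsilon$-side of $\ima(f)$ with respect to the partial order $\leq_{k+1}$, and then must verify the full tower of boundary equations \eqref{eq:boundary} for the resulting cell. That verification---especially showing $y^{\epsilon,-\epsilon}_{k+1}=(d^\epsilon_{k+1}x')_{k+1}$ and $y^{\epsilon,\epsilon}_{k+1}=(d^\epsilon_{k+1}x)_{k+1}$---is where $(k+1)$-activeness actually gets established, and it uses $k$-activeness of $f$ together with \Cref{lem:n-1act}. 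None of this is addressed in your outline; identifying which $n$-cells of $x$ to use, and proving they form a coherent morphism, is the whole proof.
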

	\begin{proof}
		Recall the order $\leq_p$ for $0\leq p\leq n-1$ from \Cref{def:stein}, define $x'\xinert{i'}x$ to be the image of $f$. For $\epsilon\in \{-,+\}$ define $S^\epsilon_n\subset x_n$ to contain elements $b$ such that there is $b'\in (d^\epsilon_{k+1}x')_{k+1}$ for which $b\geq_{k+1} b'$ if $\epsilon=+$ and $b\leq_{k+1}b'$ if $\epsilon=-$, for $k+1\leq q<n$ and $\epsilon'\in\{-,+\}$ define $S^{\epsilon,\epsilon'}_q\subset x_q$ to contain elements $b_0$ such that there is $b_0'\in (d^\epsilon_{k+1}x')_{k+1}$ for which $b_0\geq_{k+1} b_0'$ if $\epsilon=+$ and $b_0\leq_{k+1}b_0'$ if $\epsilon=-$ and such that $b_0$ is maximal (resp. minimal) with respect to $\leq_{q}$ if $\epsilon'=+$ (resp. $\epsilon'=-$). Now define 
		\[y^\epsilon_n\bydef \sum_{b\in S^\epsilon_n} b,\]
		for $k+1\leq q<n$ define 
		\[(y^\epsilon)^{\epsilon'}_q\bydef \sum_{b\in S^{\epsilon,\epsilon'}_q} b\]
		and for $q\leq k$ define
		\[(y^\epsilon)^{\epsilon'}_q\bydef \sum_{b\in (d^{\epsilon'}_qx')_q} b.\]
		We need to prove that $(y_n,y_{n-1}^+, y_{n-1}^-,...,y_0^+, y_0^-)$ defines a morphism in $x^*$. First note that for $k\geq 0$ \eqref{eq:unital} holds because $e(d^\pm_0x')=1$ and if $k=-1$ because $e(d^\pm_0x)=1$, so it remain to prove \eqref{eq:boundary}, i.e. that
		\begin{equation}\label{eq:y_bound}
			\partial y^{\epsilon,\epsilon'}_q = y^{\epsilon,+}_{q-1} - y^{\epsilon,-}_{q-1}
		\end{equation}
		. For $q\leq k$ it holds because it holds for $x'$, for $q>k+1$ we can write
		\begin{equation}\label{eq:sum1}
			\partial y^{\epsilon,\epsilon'}_q = \sum_{b\in S^{\epsilon,\epsilon'}_q}(\partial^+b - \partial^- b),
		\end{equation}
		on the other hand  we have
		\begin{equation}\label{eq:sum2}
			(y^{\epsilon,+}_{q-1} - y^{\epsilon,-}_{q-1}) =\sum_{b_0\in S^{\epsilon,+}_{q-1}}b_0 - \sum_{b_1\in S^{\epsilon,-}_{q-1}}b_1 =\sum_{b_0\in S^{\epsilon,+}_{q-1}\setminus S^{\epsilon,-}_{q-1}}b_0 - \sum_{b_1\in S^{\epsilon,-}_{q-1}\setminus S^{\epsilon,+}_{q-1}}b_1.
		\end{equation}
		Observe that the only terms that do not cancel out in \eqref{eq:sum1} are the ones that lie in $S^{\epsilon,\pm}_{q-1}$ while the terms in \eqref{eq:sum2} correspond to those elements of $S^{\epsilon,\pm}_{q-1}$ that lie in $\partial^\pm_{q-1}b$ for some $b\in x_q$, we claim that we may assume that $b$ is either maximal or minimal with respect to $\leq_{q}$, so that it lies in $S^{\epsilon,\pm}_q$. Indeed, assume that we have (say) $t\leq \partial^-_{q-1}b$ for $t\in S^{\epsilon,-}_{q-1}$ and $b\leq_q b'$ for some basis element $b'$, we may assume that $b\leq \partial^-_{q}b'$, then since $t$ was assumed to be minimal with respect to $(q-1)$ it follows that $t\leq \partial^-_{q-1}b'$, hence that there exists some $b''\leq \partial^+_q b'$ such that $t\leq \partial^-_{q-1}b'$, iterating this argument we can make it so that $b'\in S^{\epsilon,+}_q$. An obvious variation of the same argument also proves that we may pick $b\in S^{\epsilon,-}_q$ and that the claim also holds for $t\in S^{\epsilon,+}_{q-1}$. Finally, it remains to prove \eqref{eq:boundary} for $q=k+1$, note that we can assume $k\geq 0$. We will in fact show that $y^{\epsilon,-\epsilon}_{k+1} = (d^\epsilon_{k+1} x')_{k+1}$ and $y^{\epsilon,\epsilon}_{k+1} = (d^\epsilon_{k+1} x)_{k+1}$, this will prove the claim since $f$ was assumed to be $k$-active. Note that the first of those claims follows directly from the definition of $S^{\epsilon,\epsilon'}_{k+1}$, so it suffices to prove $d^\pm_{k+1}y^{\pm} = d^\epsilon_{k+1} x$. For this note that $d_{k+1}^\pm y^\pm$ defines a composable subset of $d_{k+1}^\pm x$ that contains $d_k^\sigma d_{k+1}^\pm x' = d^\sigma_kx$ since $f$ was assumed to be $k$-active, hence it must coincide with $d^\pm_{k+1}x$ by \Cref{lem:n-1act}.
	\end{proof}
	\begin{cor}\label{cor:D_ext}
		Any morphism $f:c_n\rightarrow x$ for $x\in\stn_n$ extends to an active morphism $a':D_n(1,0,...,0)\xactive{} x$ in the notation of \Cref{constr:D_obj} such that the composition 
		\[c_n\xinert{D([0]\xinert{\{1\}}[1],\id_{[0]},...,\id_{[0]})}D_n(1,...,0)\xactive{a'}x\]
		is equal to $f$.
	\end{cor}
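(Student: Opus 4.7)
The plan is to obtain the desired active extension by iterating Lemma~\ref{lem:jact}. By Definition~\ref{def:jact}, the given morphism $f\colon c_n\to x$ is vacuously $(-1)$-active, so one may apply Lemma~\ref{lem:jact} with $k=-1$ to obtain a $0$-active extension $f_0\colon C^0_n(1)\to x$ satisfying $f_0\circ i_0\cong f$. Iterating for $k=0,1,\ldots,n-2$ produces a chain of successive extensions, each $(k+1)$-active, terminating in an $(n-1)$-active morphism out of $C^{n-1}_n(1)$ into $x$. By Lemma~\ref{lem:n-1act}, $(n-1)$-activity coincides with activity in the sense of Definition~\ref{def:act}, so this final morphism is the desired $a'$.

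It then remains to identify $C^{n-1}_n(1)$ with $D_n(1,0,\ldots,0)$ from Construction~\ref{constr:D_obj} and to check that the composite inclusion $c_n\hookrightarrow C^0_n(1)\hookrightarrow\cdots\hookrightarrow C^{n-1}_n(1)$ matches the inclusion $D([0]\xinert{\{1\}}[1],\id_{[0]},\ldots,\id_{[0]})$ of the statement; the compatibility $a'\circ D([0]\xinert{\{1\}}[1],\id_{[0]},\ldots,\id_{[0]})\cong f$ then follows by transitivity from the intermediate identities $f^\act_{k+1}\circ i_{k+1}\cong f$ supplied at each step of Lemma~\ref{lem:jact}.

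The main subtlety is that Lemma~\ref{lem:jact} as literally stated starts from a morphism out of $c_n$, whereas already after the first application the source is $C^0_n(1)$. To iterate, either one slightly strengthens the lemma by allowing an arbitrary elementary $k$-active source (an inspection of its proof shows that the added cells $y^\pm_q$ depend only on the image subcomplex $x'\subset x$ of the current morphism, so this generalisation is essentially free), or one rephrases each step as extending the subcomplex of $x$ already swept out. I expect the explicit comparison between the iterated $C^{k}_n(1)$ and the single object $D_n(1,0,\ldots,0)$ to be a bookkeeping exercise dictated by Construction~\ref{constr:D_obj}; the structural content of the corollary is already carried by the interplay of Lemma~\ref{lem:jact} and Lemma~\ref{lem:n-1act}.
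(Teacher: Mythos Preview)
Your overall strategy---iterate Lemma~\ref{lem:jact} starting from the vacuous $(-1)$-activity of $f$, then invoke Lemma~\ref{lem:n-1act} to conclude activity---is exactly the paper's approach. However, the way you assemble the pieces is incorrect, and the error is not merely bookkeeping.

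There is no chain of inclusions $c_n\hookrightarrow C^0_n(1)\hookrightarrow\cdots\hookrightarrow C^{n-1}_n(1)$, and the identification $C^{n-1}_n(1)\cong D_n(1,0,\ldots,0)$ is false: already for $n=2$ the object $C^1_2(1)$ has three $2$-cells while $D_2(1,0)$ has five. The objects $C^k_n(1)$ for different $k$ are genuinely different shapes (three $n$-cells composable along their $k$-boundaries), not nested in one another. Consequently your ``subtlety'' paragraph misdiagnoses the issue: the problem is not that Lemma~\ref{lem:jact} needs a stronger hypothesis, but that its successive outputs do not compose into a single map out of $C^{n-1}_n(1)$.

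The paper's resolution is as follows. After obtaining $f_k\colon C^k_n(1)\to x$ which is $k$-active, one composes with the \emph{active} morphism $c_n\twoheadrightarrow C^k_n(1)$ to get a $k$-active morphism $c_n\to x$, to which Lemma~\ref{lem:jact} applies directly---no strengthening needed. The resulting family $\{f_k\}_{0\le k\le n-1}$ together with the intermediate morphisms $c_n\to x$ forms a cocone on the zigzag diagram
\[
c_n\xinert{i_0^*}C^0_n(1)\twoheadleftarrow c_n\xinert{i_1^*}C^1_n(1)\twoheadleftarrow\cdots\xinert{i_{n-1}^*}C^{n-1}_n(1)\twoheadleftarrow c_n,
\]
indexed by $\Delta^\el_{/[n]}$, and the colimit of this diagram is $D_n(1,0,\ldots,0)$ by construction. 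The cocone therefore induces the desired $a'$; its activity follows because $f_{n-1}$ is active (by Lemma~\ref{lem:n-1act}) and the structure map $C^{n-1}_n(1)\twoheadrightarrow D_n(1,0,\ldots,0)$ is active.
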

	\begin{proof}
		Consider the functor $G:\Delta^\el_{/[n]}\rightarrow\cat$ corresponding to the diagram
		\[\begin{tikzcd}[sep=huge]
			& {C_n^0(1)} && {C_n^1(1)} && {C_n^{n-1}(1)} \\
			{c_n} && {c_n} && {c_n} && {c_n}
			\arrow["{i_0^*}"', tail, from=2-1, to=1-2]
			\arrow[two heads, from=2-3, to=1-2]
			\arrow["{i_1^*}"', tail, from=2-3, to=1-4]
			\arrow["{...}"{description}, two heads, from=2-5, to=1-4]
			\arrow["{i_{n-1}^*}"', tail, from=2-5, to=1-6]
			\arrow[two heads, from=2-7, to=1-6]
		\end{tikzcd},\]
		then inductively using \Cref{lem:jact} for $-1\leq k\leq n-2$ we can construct morphisms $f_k:C^{k+1}_n(1)\rightarrow x$ such that each $f_k$ is $(k+1)$-active and together they define a cocone on $G$ with vertex $x$. The claim now follows since the colimit of $G$ is by construction isomorphic to $D_n(1,0,...,0)$, so the cocone defines a morphism $a':D_n(1,...,0)\rightarrow x$, the fact that it is active follows since both $f_{n-1}:C^{n-1}_n(1)\rightarrow x$ and $C^{n-1}_n\xactive{}D_n(1,...,0)$ are active (the former by \Cref{lem:n-1act}, the latter by construction).
	\end{proof}
	\begin{prop}\label{prop:stn_cont}
		For $\cC\in\cat$ denote 
		\[\psh_\omega(\cC)\bydef \mor_\cat(\cC^\op,\{\varnothing,*\})\hookrightarrow\psh_\cS(\cC),\]
		where the inclusion identifies $\psh_\Omega(\cC)$ with the subcategory of (-1)-truncated objects. Every $(y\xinert{i}x)\in\stn^\inrt_{n,/x}$ for $x\in\Stn_n$ defines an element $h_y\in\psh_\Omega(\stn^{\el}_{n,/x})$ sending $e\xinert{i_e}x$ to $\mor_{\stn^\inrt_{n,/x}}(i_e,i)$, then $h_y$ belongs to the full subcategory $\pfib(\stn^\el_{n,/x})\hookrightarrow \psh_\Omega(\stn^\el_{n,/x})$ generated by pushout of representables. In particular, if $x\in\stn_n$, then $\stn^\el_{n,/x}$ is contractible.
	\end{prop}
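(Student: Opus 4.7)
The plan is to prove the first claim by induction on the number of basis elements of $y$, at each stage removing a maximal-dimensional basis element via a pushout, and then to deduce contractibility by passing to colimits.

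If $y$ has only one basis element, then by \Cref{rem:nozero} it must lie in dimension zero---otherwise its boundary would be a non-trivial sum of basis elements of strictly lower dimension, contradicting that $y$ is a subcomplex. In that case $y$ is itself elementary, so $h_y$ is representable. For the inductive step, if $y$ happens to be elementary then again $h_y$ is representable; otherwise pick a basis element $b\in y$ of maximal dimension, set $y_0\bydef y\setminus\{b\}$, let $e_b\subseteq y$ denote the elementary subcomplex with top basis element $b$, and set $z\bydef y_0\cap e_b$. Because $b$ has maximal dimension, no boundary of any basis element of $y$ involves $b$, so $y_0$ is genuinely a subcomplex; all three of $y_0$, $z$, and $e_b$ inherit a strong loop-free unital basis, each has strictly fewer basis elements than $y$, and hence by the inductive hypothesis $h_{y_0}$ and $h_z$ lie in $\pfib$ while $h_{e_b}$ is representable.

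The core step is to establish the pointwise identity $h_y\cong h_{y_0}\cup_{h_z} h_{e_b}$ and to interpret it as a pushout. Given an elementary subcomplex $e'\xinert{}x$ with top basis element $b'$ and $e'\subseteq y$, either $b'=b$, in which case $e'=e_b$ since elementary complexes are determined by their top cell, or $b'\neq b$; in the latter case every basis element of $e'$ belongs to the atom $[b']$, which lives in dimensions at most $\dim(b')\leq\dim(b)$, and the only element of dimension $\dim(b)$ that $[b']$ can contain is $b'$ itself, so $b\notin[b']$ and hence $e'\subseteq y_0$. This gives $h_y(e')=h_{y_0}(e')\vee h_{e_b}(e')$; combined with the tautological identity $h_z=h_{y_0}\wedge h_{e_b}$ coming from $z$ being the intersection of $y_0$ and $e_b$ as subcomplexes, and with the topos-theoretic fact that a union of two subobjects along their intersection is a pushout, this gives a genuine pushout square in $\psh_\cS$ (and therefore in $\psh_\Omega$), so $h_y\in\pfib$.

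For the contractibility statement, specializing the first claim to $y=x$ shows that the terminal presheaf $*$ lies in $\pfib(\stn^\el_{n,/x})$, since every elementary $e'\xinert{}x$ trivially satisfies $e'\subseteq x$. Because the pushouts we have constructed are pushouts in $\psh_\cS$, they are preserved by the colimit functor, which sends any representable to a point; an easy induction on the pushout presentation of $*$ then yields $|\stn^\el_{n,/x}|\cong\colim *\cong *$. The one delicate point is the identification $z=y_0\cap e_b$ at the level of subobjects, since this is precisely what upgrades the pointwise union computed in $\psh_\Omega$ to an honest pushout in $\psh_\cS$; without this the colimit argument for contractibility would fail.
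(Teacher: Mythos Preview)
Your argument has a genuine gap in the inductive step. The proposition is stated for $y\in\stn^\inrt_{n,/x}$, which means $y\in\stn_n$, i.e.\ $y$ admits an active morphism from $c_n$. But neither $y_0=y\setminus\{b\}$ nor $z=\partial e_b$ need lie in $\stn_n$: already for $y=[2]$ and $b$ either of the two $1$-cells, $y_0$ is an interval together with an isolated point, and $z=\partial c_1$ is two disjoint points. Neither admits an active morphism from any $c_n$, so your inductive hypothesis does not apply to them. You cannot repair this by silently strengthening the claim to all strong Steiner subcomplexes of $x$: for $z=\partial c_1$ the presheaf $h_z$ is the coproduct $h_{*_0}\sqcup h_{*_1}$, and this is \emph{not} in $\pfib$, because forming it as a pushout would require the initial presheaf as the base, and the initial presheaf is not obtainable from representables by pushouts. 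The same obstruction breaks your contractibility argument, since $p_!(h_{\partial c_1})\cong *\sqcup *$ is not contractible.

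The paper avoids this by choosing $b_l$ to be $\leq_\bN$-maximal among the top-dimensional basis elements and invoking \cite[Proposition~5.1]{steiner2004omega}. That result gives a decomposition $y\cong y'*_r[b_l]$ as an $r$-composite of cells, where $y'$ is again a cell (hence lies in $\stn_n$) and the gluing is along the \emph{elementary} face $d^-_r[b_l]$, whose presheaf is representable. Both features are essential: $y'\in\stn_n$ keeps the induction inside the class where the hypothesis is available, and the representability of $h_{d^-_r[b_l]}$ guarantees that each pushout is over something with $p_!\cong *$, so the contractibility argument goes through. Picking $b$ merely of maximal dimension, as you do, loses both of these properties.
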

	\begin{proof}
		We will prove this by induction on $\dim(y)$: if $\dim(y) = 0$, then by \eqref{eq:unital} we have $y\cong*$, so that $h_y$ is representable. Assume we have proved the claim in dimensions $<k$ and $y_k = \{b_1,...,b_l\}$, we will complete the proof by induction on $l$. If $l=0$, then $\dim(y) <k$ and the claim follows by induction, so assume we have proved the claim for $(l-1)$. We may assume that $b_l$ is maximal among $b_i$ with respect to $\leq_{\bN}$, denote by $y'$ the subcomplex of $y$ generated by all the basis elements except $b_l$ and by $[b_l]$ the subcomplex generated by $b_l$, then it follows from \cite[Proposition 5.1.]{steiner2004omega} that $y'\in \stn_n$ and moreover that we have
		\[h_y\cong h_{y'}\coprod_{h_{d^-_{r}[b_l]}} h_{[b_l]}\]
		for some $r<k$, which concludes the proof since $h_{y'}\in \pfib(\stn^\el_{n,/x})$ by induction. The last claim now follows since 
		\[|\stn^\el_{n,/x}|\cong p_!(*),\]
		where $p:\stn^\el_{n,/x}\rightarrow\pt$ is the unique morphism to the final object of $\cat$ and $*\in\psh_\cS(\stn^\el_{n,/x})$ denotes the terminal presheaf. By definition $x\in\stn_n$ if and only if $*\in\stn^\inrt_{/x}$, which implies that it is an iterated pushout of representables, which in particular means that $p_!(*)\cong *$ since $p_!(h_y)\cong *$ by construction and pushout is a contractible colimit. 
	\end{proof}
	\begin{lemma}\label{lem:delta_push}
		Assume we have a cospan $[n]\overset{a_0}{\twoheadleftarrow}[l]\xactive{a_1}[m]$ of active injective morphisms in $\Delta$ such that for every elementary interval $(k<k+1)$ in $[l]$ either the restriction $a_i|_{(k<k+1)}:[1]\xactive{}[q_k]$ is given by the identity morphism for some $i\in \{0,1\}$, then we have a pushout diagram
		\begin{equation}\label{eq:delta_push}
			\begin{tikzcd}[sep=huge]
				{[l]} & {[n]} \\
				{[m]} & {[n+m-l]}
				\arrow["{a_0}", two heads, from=1-1, to=1-2]
				\arrow["{a_1}"', two heads, from=1-1, to=2-1]
				\arrow[two heads, from=1-2, to=2-2]
				\arrow[two heads, from=2-1, to=2-2]
				\arrow["\ulcorner"{anchor=center, pos=0.125, rotate=180}, draw=none, from=2-2, to=1-1]
			\end{tikzcd}
		\end{equation}
		such that additionally $[n+m-l]$ can be identified with the set 
		\[X\bydef (\obj([n])\coprod \obj([m]))/\sim,\]
		where $\sim$ denotes the equivalence relation that identifies $a_0(i)$ with $a_1(i)$ for $i\in [l]$ such that $x_0\leq x_1$ for $(x_0,x_1)\in X^{\times 2}$ if and only if either both $x_i$ lie in $\obj([n])$ or $\obj([m])$ and $x_0\leq x_1$ or there exists $q\in [l]$ such that $x_0\leq a_k(q)$ and $a_t(q)\leq x_1$ for $k\neq t$.
	\end{lemma}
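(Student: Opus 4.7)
The plan is to exhibit the candidate pushout as the totally ordered set $X$ described in the statement and then verify the universal property directly. First I would show that the binary relation defined on $X$ is a total order. Reflexivity and compatibility with $\sim$ are immediate from the formulation; antisymmetry and transitivity reduce to the injectivity and monotonicity of $a_0, a_1$ by straightforward case analysis on which sides the elements come from. The substantive step is totality, and this is the only point where the elementary-interval hypothesis is used. Given $x \in \obj([n])$ and $y \in \obj([m])$, let $q$ be the minimal index with $x \leq a_0(q)$, so $a_0(q-1) < x \leq a_0(q)$. If $a_1(q) \leq y$, then $q$ witnesses $x \leq y$. Otherwise $y < a_1(q)$, and the hypothesis provides some $i \in \{0,1\}$ with $a_i|_{(q-1,q)} = \id$. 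If $i = 0$, then $a_0(q) = a_0(q-1) + 1$ forces $x = a_0(q)$, so $q$ itself witnesses $y \leq x$. If $i = 1$, then $a_1(q) = a_1(q-1) + 1$ forces $y \leq a_1(q-1)$, so $q - 1$ witnesses $y \leq x$.

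Once $X$ is totally ordered, I would match its cardinality with $[n + m - l]$. Stratifying $X$ by the $l + 1$ gluing classes $[a_0(k)] = [a_1(k)]$ together with the intermediate points strictly between consecutive gluing classes, the hypothesis forces one of the two sides to contribute nothing on each step, so step $k$ contributes $(a_0(k+1) - a_0(k)) + (a_1(k+1) - a_1(k)) - 2$ intermediate elements. Summing telescopes to $n + m - 2l$, and adding the $l + 1$ gluing classes yields $n + m - l + 1$, giving the required order-isomorphism $X \cong [n + m - l]$.

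Finally, I would verify the universal property. The maps $a'_0, a'_1$ are induced by the evident inclusions $\obj([n]), \obj([m]) \hookrightarrow \obj([n]) \sqcup \obj([m]) \twoheadrightarrow X$; they are monotone by construction of the order and active since $a_0, a_1$ are. Given a compatible pair $f: [n] \to [s]$ and $g: [m] \to [s]$ with $f \circ a_0 = g \circ a_1$, set $h([x]) = f(x)$ if $x \in \obj([n])$ and $h([x]) = g(x)$ if $x \in \obj([m])$; the compatibility makes this well-defined on the quotient. Monotonicity is clear when both elements lie on the same side, and for $x_0 \in \obj([n]), x_1 \in \obj([m])$ with bridge $q$ one computes $f(x_0) \leq f(a_0(q)) = g(a_1(q)) \leq g(x_1)$; uniqueness is immediate because every class in $X$ lies in the image of $a'_0$ or $a'_1$. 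The main obstacle is the totality argument, which is where the elementary-interval hypothesis is essentially used; all other steps are routine bookkeeping.
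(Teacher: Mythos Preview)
Your proof is correct and takes a genuinely different route from the paper's. The paper argues by induction on the number of elementary intervals on which $a_0$ is non-trivial: it peels off the maximal such interval by factoring $a_0 = a'_0 \circ a''_0$, builds a $2\times 3$ diagram, and then invokes the pushout-pasting lemma together with the Segal decomposition $[p]\cong [1]\coprod_{[0]}\cdots\coprod_{[0]}[1]$ to reduce to the elementary active-along-inert square; the explicit description of the order on $X$ is dismissed as ``by inspection.'' You instead construct $X$ directly, with the elementary-interval hypothesis entering precisely at the totality step, and then verify the universal property by hand. Your argument is more elementary and self-contained, and it makes the order structure on $X$ the main output rather than an afterthought---which is exactly what the paper actually uses downstream in the linearity argument for $\leq_\bN$. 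The paper's route is more modular, recycling the Segal and pasting machinery already in play. Two small remarks: the boundary case $q=0$ in your totality argument deserves a word (it is trivial since then $x=a_0(0)$), and your cardinality count does not in fact need the hypothesis---injectivity of $a_0,a_1$ already gives $|X|=(n+1)+(m+1)-(l+1)$---though invoking it there does no harm.
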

	\begin{proof}
		We will first prove that \eqref{eq:delta_push} is a pushout, we will do so by induction on the number of elementary segments $(k<k+1)$ for which $a_0|_{(k<k+1)}$ does not equal identity. If this number is 0, then $a_0=\id$ and the claim is vacuous. In the general case denote by $(t<t+1)$ the maximal (with respect to the linear order on $[l]$) subinterval for which the restriction of $a_0$ is non-trivial, denote this restriction by $a_t:[1]\xactive{}[v]$ note that we can decompose $a_0\cong a_0'\circ a_0''$ such that $a_0''|_{(k<k+1)} = a_0|_{(k<k+1)}$ if $k\neq t$ and $a_0'|_{(k<k+1)} = a_0|_{(k<k+1)}$ if $k=t$ and is identity otherwise. We can then construct the following diagram
		\begin{equation}\label{eq:push_diag}
			\begin{tikzcd}[sep=huge]
				& {[1]} & {[v]} \\
				{[l]} & {[n']} & {[n]} \\
				{[m]} & {[n'+m-l]} & {[n+m-l]}
				\arrow["{a_t}", two heads, from=1-2, to=1-3]
				\arrow["{i_t}"', tail, from=1-2, to=2-2]
				\arrow["{i'_t}", tail, from=1-3, to=2-3]
				\arrow["{a''_0}", two heads, from=2-1, to=2-2]
				\arrow["{a_1}"', two heads, from=2-1, to=3-1]
				\arrow["{a_0'}", two heads, from=2-2, to=2-3]
				\arrow["{a'_1}", two heads, from=2-2, to=3-2]
				\arrow["{a''_1}", two heads, from=2-3, to=3-3]
				\arrow[two heads, from=3-1, to=3-2]
				\arrow["\ulcorner"{anchor=center, pos=0.125, rotate=180}, draw=none, from=3-2, to=2-1]
				\arrow[two heads, from=3-2, to=3-3]
			\end{tikzcd},
		\end{equation}  
		where $i_t$ is the inclusion of the subinterval $(t<t+1)$. Note that the leftmost square in \eqref{eq:push_diag} is a pushout by induction, we need to prove that the bottom right square is a pushout. By \cite[Lemma 4.4.2.1.]{lurie2009higher} it suffices to prove that the top right square and the outer right rectangle are pushouts. Note that both $a'_1\circ i_t$ and $a''_1\circ i'_t$ are inert by our condition on $a_1$, hence both squares in question are of the form
		\[\begin{tikzcd}[sep=huge]
			{[1]} & {[v]} \\
			{[p]} & {[p+v-1]}
			\arrow[two heads, from=1-1, to=1-2]
			\arrow[tail, from=1-1, to=2-1]
			\arrow[tail, from=1-2, to=2-2]
			\arrow[two heads, from=2-1, to=2-2]
		\end{tikzcd},\]
		so it remains to prove that all such squares are pushouts. First, note that by Segal condition we have
		\begin{equation}\label{eq:col_delta}
			[n]\cong \underset{([e]\xinert{i}[n])}{\colim}[e]\cong [1]\coprod_{[0]}[1]\coprod_{[0]}...\coprod_{[0]}[1].
		\end{equation}
		Applying \eqref{eq:col_delta} we see that both $[p+v-1]$ and
		\[[t-1]\coprod_{[0]}[v]\coprod_{[0]}[p-t]\]
		are isomorphic. This concludes the proof of the first part of the claim, the second easily follows by inspection.
	\end{proof}
	\begin{lemma}\label{lem:lin}
		The partial order $\leq_\bN$ on $\stn^\el_{n,/x}$ is linear if $x\in\stn_n$.
	\end{lemma}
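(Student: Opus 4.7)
The plan is to induct on the number of basis elements of $x$, following the decomposition strategy used in the proof of \Cref{prop:stn_cont}. The base case $|B(x)| = 1$ forces $x \cong c_0$ by \Cref{rem:nozero} (any positive-dimensional basis element would carry nontrivial boundary, introducing further basis elements), so linearity is trivial.

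For the inductive step, I will first establish that every $\leq_\bN$-maximal basis element of $x$ must be zero-dimensional: any $b \in x_k$ with $k > 0$ satisfies $\partial^+ b > 0$ by \Cref{rem:nozero}, so picking any basis element $c \leq \partial^+ b$ yields $b <_\bN c$, contradicting maximality. Combined with the existence (and uniqueness) of the active morphism $a : c_n \xactive{} x$, this forces the unique $\leq_\bN$-maximum of $x$ to be $a(1_{c_n}) = d^+_0 x$ and, dually, the unique $\leq_\bN$-minimum to be $a(0_{c_n}) = d^-_0 x$.

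I will then pick a $\leq_\bN$-maximal basis element $b$ among the top-dimensional basis elements $x_k$ (with $k = \dim x$) and invoke Proposition~5.1 of \cite{steiner2004omega}, as in the proof of \Cref{prop:stn_cont}, to write $x = x' \sqcup_{d^-_r[b]} [b]$ with $x' = x \setminus \{b\} \in \stn_n$. The inductive hypothesis gives a linear order on $x'$, and it remains to insert $b$ into this order. For $c \in x' \cap d^-_r[b]$ the relation $c <_\bN b$ holds directly from the boundary, while for $c \in x' \setminus d^-_r[b]$ I will use the composable chain structure of the top-dimensional basis elements of $x$: by $(n-1)$-activity of $a$ (\Cref{lem:n-1act}) and \Cref{prop:unit}, the chain $a(\id_{c_n}) = \alpha_1 + \cdots + \alpha_m$ is a composable sum with $\partial^+\alpha_i = \partial^-\alpha_{i+1}$ after suitable ordering, so that for any basis element $c'$ of the shared boundary one has $\alpha_i <_\bN c' <_\bN \alpha_{i+1}$, yielding $\alpha_1 \leq_\bN \cdots \leq_\bN \alpha_m = b$.

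The main obstacle will be verifying that $b$ is comparable to every element of $x' \setminus d^-_r[b]$, since such elements need not appear directly in the boundary of $b$. Resolving this will require carefully tracking how the active morphism restricts along the pushout decomposition and how the shared $(n-1)$-boundaries between consecutive top-dimensional basis elements propagate $\leq_\bN$-comparisons across the gluing, using the uniqueness of the global $\leq_\bN$-maximum and minimum to force a consistent placement of $b$ in the extended order.
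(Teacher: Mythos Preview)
Your inductive setup via the pushout $x \cong x' \sqcup_{d^-_r[b]}[b]$ matches the paper's, but the proposal has a genuine gap in the ``insertion'' step. You speak of inserting $b$ into the linear order on $x'$, yet the new basis elements contributed by $[b]$ are all of $[b]\setminus d^-_r[b]$, not just $b$ itself: when $r < k-1$ this includes many lower-dimensional cells of $d^+_{r}[b],\dots,d^+_{k-1}[b]$ that are absent from $d^-_r[b]$. Each of these must be shown comparable to every element of $x'$, and your composable-chain argument only touches the top-dimensional cells $\alpha_1,\dots,\alpha_m$ and their immediate $(k-1)$-boundaries; it says nothing about an arbitrary $c \in x'$ of low dimension lying outside those boundaries, nor about the new low-dimensional cells of $[b]$.

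The paper closes this gap in two moves you are missing. First, it reduces to $r = n-1$ by enlarging $x'$ using \Cref{cor:D_ext}, so that the only cells to be merged are those of $[b_l]$ sharing a full $(n-1)$-boundary with $x'$. Second, and crucially, it observes that any $\leq_\bN$-comparison between $z \in x'$ and $y \in [b_l]$ must pass through some $w \in d^-_{n-1}[b_l]$, so linearity of $\leq_\bN$ on $x$ reduces to a purely order-theoretic statement about merging two linear orders $[n]$ and $[m]$ along a common linearly ordered subset $[l]$. This is the content of \Cref{lem:delta_push}, whose hypotheses (that for each consecutive pair in $[l]$, at most one of the two inclusions spreads them apart) are then verified by a delicate analysis of which cells of $d^-_{n-1}[b_l]$ can become non-consecutive in $[b_l]$ versus in $x'$. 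Your final paragraph gestures at ``propagating comparisons across the gluing'' but does not supply this mechanism; without the reduction to $r=n-1$ and the $\Delta$-pushout lemma, the argument does not go through.
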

	\begin{proof}
		We will prove the claim by induction on $\dim(x)$: if $\dim(x) = 0$, then $x$ is a singleton by \eqref{eq:unital}, so in particular the claim is true. Assume we have proved the claim for dimensions $<k$ and $x_k = \{b_1,...,b_l\}$, we will prove the claim by induction on $l$: of $l=0$, then $\dim(x)<k$ and it holds by induction. Assume now that $x\in\stn^\el_n$ and denote by $b$ its unique basis element. Denote by $A$ the subset of $\stn^\el_{/x}$ containing all basis elements in $d^-_{n-1}x$ that do not appear in $d^+_{n-1}x$, set $B\bydef \{b\}$ and denote by $C$ the subset containing all basis elements in $d^+_{n-1}x$ that do not appear in $d^-_{n-1}x$, it follows from \Cref{rem:k_bound} that the underlying set of $\stn^\el_{n,/x}$ decomposes as a disjoint union:
		\[|\stn^\el_{n,/x}| = A\coprod B\coprod C.\]
		By construction we have $a\leq_{\bN}b$ for all $a\in A$ and $b\leq_{\bN}c$ for all $c\in C$, so it remains to prove that all the sets $A$, $B$ and $C$ are linearly ordered. This claim is trivial for the singleton $B$, to prove it for $A$ note that it is a subset of $d^-_{n-1}x$, which is linearly ordered by $\leq_{\bN}$ by induction on dimension, and that $a\leq_\bN a'$ as elements of $\stn^\el_{n,/d^-_{n-1}x}$ implies that also $a\leq_\bN a'$ in $\stn^\el_{n,/x}$. The same argument also applies for $C$, which concludes the proof.\par
		In the general case we again assume that $b_l$ is maximal with respect to $<_\bN$, denote by $x'$ the subcomplex generated by $\{b_1,...,b_{l-1}\}$ and by $[b_l]\in\stn^\el_{n}$ the subcomplex generated by $b_l$, so that we have
		\[x\cong x'\coprod_{d^-_r [b_l]}[b_l]\]
		for some $r<n$. We first claim that it suffices to treat the case $r=(n-1)$. Indeed, using \Cref{cor:D_ext} we can extend $c_{r}\xactive{}d^-_r [b_l]\xinert{}d^+_rx$ to an active morphism morphism $D_r(1,...,0)\xactive{a'} d^+_rx$. Denote by $X$ the pushout 
		\[\begin{tikzcd}[sep=huge]
			{c_r} & {c_n} \\
			{D_r(1,0,...,0)} & X
			\arrow["{i^+_r}", tail, from=1-1, to=1-2]
			\arrow[two heads, from=1-1, to=2-1]
			\arrow[two heads, from=1-2, to=2-2]
			\arrow["{i_D}",tail, from=2-1, to=2-2]
			\arrow["\ulcorner"{anchor=center, pos=0.125, rotate=180}, draw=none, from=2-2, to=1-1]
		\end{tikzcd}\]
		in $\stn_n$, so that the unique active morphism $c_n\xactive{}x'$ and the morphism $a'$ described above now define a morphism $X\xactive{a''}x'$. Next, define $X'$ to be the pushout
		\[\begin{tikzcd}[sep=huge]
			{c_r} & {D_r(1,...,0)} & X \\
			{c_{n-1}} && {X'}
			\arrow["{D([0]\xinert{\{1\}}[1],...,\id_{[0]})}"', tail, from=1-1, to=1-2]
			\arrow["{i^-_r}"', tail, from=1-1, to=2-1]
			\arrow["{i_D}"', tail, from=1-2, to=1-3]
			\arrow[from=1-3, to=2-3]
			\arrow[from=2-1, to=2-3]
		\end{tikzcd},\]
		then there is a morphism $X'\xrightarrow{g}x$ whose restriction to $X$ is given by $X\xactive{a''}x'\xinert{}x$ and to $c_{n-1}$ by the composition $c_{n-1}\xactive{a_0}d^-_{n-1}[b_l]\xinert{}x$, where $a_0$ exists by induction. Denote by $x'_0$ the image of $g$, then it $x'_0\in\stn_n$ since $X'$ admits an active morphism from $c_n$ and $x'_0$ shares a common $(n-1)$-boundary with $[b_l]$. It follows from these considerations that we may assume $x'=x'_0$ and $r=(n-1)$.\par
		Note that all of the categories $\stn^\el_{n,/x'}$, $\stn^\el_{n,/d^-_r[b_l]}$ and $\stn^\el_{n,/[b_l]}$ are linearly ordered either by induction on $l$ or $k$ or by previous considerations. We will first treat the case $n=1$: note that $*\bydef d^+_0x'\cong d^-_0[b_l]$ is a singleton and moreover the image of the unique element of $*_0$ in $\stn^\el_{n,/x'}$ (resp. $\stn^\el_{n,/[b_l]}$) is a maximal (resp. minimal) element, from which it follows that $\stn^\el_{n,/x}$ is a linearly ordered set in which each element $z\neq *$ in the image of $\stn^\el_{n,/[b_l]}$ is greater that each element $y$ in the image of $\stn^\el_{n,/x'}$. Assume now that $n>1$, in that case both $*_-\bydef d^-_0[b_l]$ and $*_+\bydef d^+_0[b_l]$ lie in $d^-_{n-1}[b_l]$, note that $*_-$ (resp. $*_+$) is the minimal (resp. maximal) element of $\stn^\el_{n,/[b_l]}$. It follows that if $z< *_-$ (resp. $z> *_+$) for $z\in\stn^\el_{n,/x}$, then $z\leq y$ (resp. $z\geq y$) in $\stn^\el_{n,/x}$ for all $y$ in the image of $\stn^\el_{n,/[b_l]}$, so we may assume $z\in[*_-, *_+]$. Denote $[l]\bydef \stn^\el_{n,/ d^-_{n-1}[b_l]}$, $[m]\bydef \stn^\el_{n,/[b_l]}$ and by $[n]$ the subinterval $[*_-, *_+]$ of $\stn^\el_{n,/x'}$, in that case we may identify $[l]$ with an ordered subset of both $[m]$ and $[l]$ that contains the endpoints, in other words we have a cospan $[n]\overset{a_0}{\twoheadleftarrow}[l]\xactive{a_1}[m]$ of active injective morphisms. We will prove that it satisfies the conditions of \Cref{lem:delta_push}, before doing that we will demonstrate that this suffices to conclude the proof. Note that for $z\in\stn^\el_{n,/x'}$ and $y\in\stn^\el_{n,/[b_l]}$ we have $z\leq_\bN y$ in $\stn^\el_{n,/x}$ if and only if there exists an $w\in \stn^\el_{n,/d^-_r[b_l]}$ such that $z\leq_\bN w\leq_\bN y$ and similarly for $y\leq_\bN z$. It follows from this and the second claim of \Cref{lem:delta_push} that (if the conditions of the lemma is satisfies) the order $\leq_\bN$ on $\stn^\el_{n,/x}$ coincides with the linear order on $[n+m-l]$.\par
		Finally, we will prove that the conditions of \Cref{lem:delta_push} are satisfied. Note that for any pair of consecutive elements $(y<y')$ for $\leq_\bN$ we have $\dim(y')=\dim(y)\pm1$. It follows from our previous description of the order on $[b_l]$ that a pair of consecutive elements $(z<z')\in[l]$ remain consecutive in $[m]$ unless $z\in A$ and $z'\in C$, where 
		\[A\bydef \coprod_{0\leq k\leq n-1} A_k,\; A_{n-1}\bydef (d^-_{n-1}b_l)_{n-1},\;A_k\bydef (\bigcup_{a\in A_{k+1}} (d^\pm_{k}a)_k\setminus (d^+_k(b_l))_k)\]
		and similarly
		\[C\bydef \coprod_{0\leq k\leq n-2} C_k,\; C_{n-1}\bydef (d^+_{n-2}b_l)_{n-2},\;C_k\bydef (\bigcup_{c\in C_{k+1}} (d^\pm_{k}a)_k\setminus (d^-_k(b_l))_k).\]
		We claim that this is only possible if $\dim(z) = n-1$ and $\dim(z')=n-2$: assume that $\dim(z)=k<n-1$, we then claim that there exists $y\in (d^-_{n-1}[b_l])_{n-1}$ such that $z\leq_\bN y$. Indeed, it follows from \Cref{rem:k_bound} that $z$ belongs to the boundary of some $y'\in(d^-_{n-1}[b_l])_{n-1}$, assume first that $z$ belongs to $d^+_{q}y$ for some $q$, so that in particular $z\geq_\bN y'$, in that case since $z$ does not belong to $d^+_q[b_l]$ by construction there must be another element $y'\leq_qy''$ such that $z$ appears in $d^-_q y''$. Iterating this procedure for all values of $q$ gives us a basis element $y$ containing $z$ in its boundary and such that $z$ does not appear in $d^+_q y$ for any $q$, meaning that $z\leq_\bN y$. But then note that since $z'\in C$ it must belong to $d^+_{n-2}y$, meaning that $y\leq_\bN z'$, which contradicts our assumption that $z$ and $z'$ are consecutive.\par
		So assume now that we have consecutive $z\leq_\bN z'$ with $z\in C$, $z'\in D$, $\dim(z) = n-1$ and $\dim(z')=n-2$, to finish the proof it remain to show that they remain consecutive in $\stn^\el_{n,/x'}$. Assume the contrary, pick the maximal chain $z\leq_\bN z_1\leq_\bN...\leq_\bN z_p\leq_\bN z'$, then we must have $\dim(z_p) = n-1>\dim(z')$ since otherwise it would lie in $d^-_{n-1}[b_l]$. It follows that $z$ and $z_p$ are a pair of basis elements of dimension $(n-1)$ for which $d^+_{n-2}z\cap d^+_{n-2}z_p\neq \varnothing$ (as it must contain $z'$). We claim that this is only possible if $z_p\leq_{n-1}z$. Note that if this holds, then in particular we have $z\leq_\bN z_p\leq_\bN z$, where the first inequality holds by assumption and the second since $\leq_\bN$ refines all $\leq_q$, hence $z=z_p$ because the basis of $x$ is strongly loop-free -- a contradiction. It now remains to show that $z_p\leq_{n-1}z$. Since it does not belong to $d^+_{n-1}x'$, there must be $y\in x'_n$ such that $z_p\in (d^-_{n-1}y)_{n-1}$, we claim that $z'\geq_{n-2} y$: indeed, since $z'\in (d^+_{n-2}z_p)_{n-2}$ and $z_p\in (d^-_{n-1}y)_{n-1}$ we must have $z'\in (d^-_{n-1}y)_{n-2}$, but also $z'\in(d^+_{n-1}y)_{n-2}$ since it belongs to $d^+_{n-1}x'$ by construction, which together implies that $z'\in (d^+_{n-2}y)_{n-2}$. Now we may choose some $z^{(0)}_p\in (d^+_{n-1}y)_{n-1}$ such that $z'\in (d^+_{n-2}z^{(0)}_p)_{n-2}$, applying the same reasoning to $z^{(0)}_p$ we may construct a maximal string $z_p\leq_{n-1}z^{(0)}_p\leq_{n-1}...\leq_{n-1}z^{(s)}_p$ such that $z'\in (d^+_{n-2}z^{(i)}_p)_{n-2}$ for all $i$. In that case $z^{(s)}_p$ must lie in $(d^+_{n-1}x')_{n-1}$, so that $z$ and $z^{(s)}_p$ are both the basis elements in $d^+_{n-1}x'$ for which $z'\in (d^+_{n-2}z^{(s)}_p)_{n-2}\cap(d^+_{n-2}z)_{n-2}$, which contradicts \Cref{prop:unit}.
	\end{proof}
	\begin{lemma}\label{lem:seq}
		Given a morphism $f:x\rightarrow y$ in $\stn_n$ and a basis element $b\in y_l$, the set $f^{-1}(b)$ containing all basis elements $c$ for which $f(c)=b$ is either empty or a segment for $\leq_\bN$.
	\end{lemma}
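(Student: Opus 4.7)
The plan is to combine the linearity of $\leq_{\bN}$ on basis elements of $x$ (\Cref{lem:lin}) with the fact that morphisms in $\Stn_{n}$ preserve positive and negative boundaries: $f(\partial^{\pm} c) = \partial^{\pm} f(c)$ for every basis element $c$ of $x$. This property holds because strong loop-freeness of $y$ prevents the positive and negative parts of $f(\partial c)$ from sharing basis summands, ruling out cancellation; equivalently, the corresponding $\omega$-functor from \Cref{constr:stein} preserves sources and targets at every dimension.

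Given $c, c' \in f^{-1}(b)$ with $c <_{\bN} c'$ and an intermediate element $\widetilde{c} \in x_{l}$ with $c <_{\bN} \widetilde{c} <_{\bN} c'$, I will prove $f(\widetilde{c}) = b$ by tracing through a maximal chain of consecutive basis elements of $x$ connecting $c$ to $c'$ through $\widetilde{c}$. From the proof of \Cref{lem:lin}, consecutive elements in such a chain have dimensions differing by $\pm 1$, so the chain zig-zags between dimension $l$ and dimension $l \pm 1$. For each consecutive pair $u_{i} <_{\bN} u_{i+1}$, boundary-preservation together with \Cref{prop:unit} (coefficients lie in $\{0,1\}$) implies that each basis summand of the lower-dimensional image is $\leq \partial^{\epsilon}$ of some basis summand of the higher-dimensional image in $y$, producing a $<_{\bN}$-relation among summands in $y$.

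Because both endpoints $f(c)$ and $f(c')$ equal the single basis element $b$, propagating these summand relations along the chain traps every basis summand appearing in an intermediate $f(u_{i})$ into being $\leq_{\bN}$-sandwiched between $b$ and $b$ in $y$; by the linearity of $\leq_{\bN}$ on $y$ (\Cref{lem:lin} applied to $y$) and the non-reflexivity of $<_{\bN}$ from strong loop-freeness, any such sandwiched summand must equal $b$ itself. Combined with the $0/1$ coefficient bound, this forces $f(\widetilde{c}) = b$. The hardest part will be bookkeeping the zig-zag propagation of summands through dimension changes: when the chain rises to dimension $l+1$ and drops back to $l$, one must show that the common higher-dimensional summand connecting the two dim-$l$ sides is well-defined, which follows from \Cref{rem:k_bound} (the disjointness of source and target cells of an elementary complex) applied to the relevant elementary $(l+1)$-basis elements of $y$.
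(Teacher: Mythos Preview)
Your overall strategy—trace the $\leq_\bN$-chain from $c$ to $c'$, push it forward to $y$, and invoke strong loop-freeness to force a collapse to $b$—matches the paper's. The execution, however, rests on a claim that fails.

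The identity $f(\partial^{\pm} c) = \partial^{\pm} f(c)$ is false for general morphisms in $\stn_n$. Take the degeneracy $s \colon [2] \to [1]$ in $\stn_1$ sending $0,1 \mapsto 0$ and $2 \mapsto 1$: for the basis element $c = e_{01}$ one has $s(\partial^{-} c) = s(v_0) = v_0$ while $s(c) = 0$ and hence $\partial^{-} s(c) = 0$. Strong loop-freeness of $y$ does not prevent $f(\partial^{+} c)$ and $f(\partial^{-} c)$ from sharing a summand (here both equal $v_0$), and although the $\omega$-functor $f^{\ast}$ of \Cref{constr:stein} does preserve sources and targets of \emph{cells}, that is a statement about the full tuple $[c]_k^{\pm}$, not about $\partial^{\pm}$ of the top element $f(c)$ alone—the two diverge precisely when $f$ collapses a cell to an identity. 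Your downstream propagation then also breaks: in the same example the intermediate chain element $u_1 = e_{01}$ has $f(u_1) = 0$ with no summands at all, so the summand $v_0$ of $f(u_0)$ has nothing in $f(u_1)$ to attach to, and the string of $\leq_\bN$-relations you need in $y$ cannot be continued past this point.

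The paper avoids this by replacing the bare elements $f(c_i)$ with the image \emph{subcomplexes} $b_i \bydef \ima(f|_{[c_i]}) \subset y$, which remain nontrivial even when $f(c_i)=0$ since they always contain the images of the $0$-cells of the atom $[c_i]$. From $c_i \leq_\bN c_{i+1}$ one deduces that one of $b_i, b_{i+1}$ includes inertly into a boundary of the other, so the intersections $b_i \cap b_{i-1}$ are nonempty; the paper then takes the $\leq_\bN$-minimal element $m_i$ of each such intersection to assemble the chain $b \leq_\bN m_0 \leq_\bN \dots \leq_\bN m_s \leq_\bN b$ in $y$ and closes the loop there.
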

	\begin{proof}
		Assume that $f^{-1}(b)$ is non-empty and take a pair of elements $(c,d)$ such that $f(c) = f(d)=b$. In that case by \Cref{lem:lin} we may assume $c\leq_\bN d$, denote by $c\leq_\bN c_0\leq_{\bN}c_1\leq_\bN...\leq_\bN c_s\leq_\bN d$ the segment containing all the elements $t$ such that $c\leq_\bN t\leq_\bN d$, we claim that $f(c_i)=b$ for all $i$. Assume the contrary, denote $b_i\bydef \ima(f|_{[c_i]})$, so that $b_i$ is the subcomplex of $y$ generated by the image of $c_i$. Note that since $c_i\leq_\bN c_{i+1}$ we either have $b_i\xinert{} d_l^- b_{i+1}$ or $b_{i+1}\xinert{} d^+_l b_i$ for some $l$. Note also that all $b_i\in\stn^\el_n$, hence are also linearly ordered by $\leq_\bN$, denote by $m_i$ the minima element (with respect to $\leq_\bN$) of $b_i\cap b_{i-1}$ (where we set $b_{-1}\bydef b$), then we have a string $b\leq_\bN m_0\leq_\bN m_1\leq_\bN...\leq_{\bN} m_s\leq_\bN b$ in $\stn^\el_{n,/y}$, this defines a loop, hence since $y$ was assumed to be a strong Steiner complex this string must be trivia, meaning that $f(c_i)=b_i=b$ for all $i$.
	\end{proof}
	\begin{prop}\label{prop:stein_fact}
		$\stn_n$ admits an active/inert factorization.
	\end{prop}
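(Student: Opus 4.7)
The plan is to construct the active/inert factorization of a morphism $f:x\rightarrow y$ in $\stn_n$ explicitly, by taking the ``image'' of $f$. Using the notation of \Cref{def:act}, for each basis element $b\in x_l$ the element $f(b)\in y_l$ decomposes as a sum $\sum_j c^b_j$ of distinct basis elements of $y$ (by \Cref{prop:unit} applied to the morphism $c_l\rightarrow y$ obtained by composing $c_l\xinert{b}x$ with $f$). Let $z\subseteq y$ be the sub-ADC whose basis in dimension $l$ is $\{c^b_j:b\in x_l\}$, with augmentation, differential and positive submonoids inherited from $y$.

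First I would verify that $z$ is closed under $\partial$, which is the main point of the argument. Fix $c=c^b_j\in z_l$; I want every basis element of $y$ appearing in $\partial c$ to already lie in $z$. Since $f$ is a chain map, $\partial f(b)=f(\partial b)$, and the right-hand side is a $\bZ$-linear combination of $f(b')$ for $(l-1)$-dimensional basis elements $b'$ of $x$ appearing in $\partial b$; in particular each such $f(b')$ lies in $z_{l-1}$. On the left-hand side, \Cref{prop:unit} applied to the morphism $c_l\rightarrow y$ induced by $b$ shows that for $j\neq j'$ the sets $\partial^\epsilon c^b_j$ and $\partial^\epsilon c^b_{j'}$ are disjoint, so no cancellation can occur in $\sum_{j'}(\partial^+ - \partial^-)c^b_{j'}=\partial f(b)$. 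Consequently every basis element of $y$ appearing in $\partial c^b_j$ must already appear in $\partial f(b)=f(\partial b)$, and therefore belongs to $z_{l-1}$.

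Once $z$ is established as a sub-ADC, the remaining structural checks are immediate: the unital condition for $z$ holds because $z_0\subseteq y_0$ and the augmentation is unchanged; strong loop-freeness of the basis of $z$ is inherited from $y$ because the relation $\leq_{\bN}$ on the basis of $z$ is simply the restriction of $\leq_{\bN}$ from the basis of $y$. Hence $z\in\stn_n$. The inclusion $i:z\hookrightarrow y$ sends basis elements to basis elements of the same dimension, so is inert in the sense of \Cref{def:act}; and $f$ factors through $i$ as a morphism $a:x\rightarrow z$ sending $b\mapsto\sum_j c^b_j$. Every basis element of $z$ equals some $c^b_j$ by construction, so $a$ is active.

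For uniqueness, in any factorization $x\xactive{a'}z'\xinert{i'}y$ activeness of $a'$ forces $i'(z')$ to be generated by the basis elements $\{c^b_j\}$, which equals $z$; inertness of $i'$ then identifies $z'$ with $z$ canonically. The only real obstacle in this proof is the closure of $z$ under the boundary map, and this is precisely what \Cref{prop:unit} is designed to handle, via the no-cancellation observation above.
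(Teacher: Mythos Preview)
Your no-cancellation argument does not work. \Cref{prop:unit} gives $\partial^\epsilon c^b_j\cap\partial^\epsilon c^b_{j'}=\varnothing$ for each fixed sign $\epsilon$ and $j\neq j'$, but it says nothing about $\partial^+ c^b_j\cap\partial^- c^b_{j'}$, and that overlap does cause cancellation in $\sum_{j'}(\partial^+-\partial^-)c^b_{j'}$. Take $n=1$, $x=c_1$, $y=[2]$, and let $f$ send $0\mapsto 0$, $1\mapsto 2$, $e\mapsto e_{01}+e_{12}$: then $1=\partial^+e_{01}=\partial^-e_{12}$ cancels in $\partial f(e)=2-0$, and your $z$ has basis $\{0,2,e_{01},e_{12}\}$, which is not closed under $\partial$ since $\partial e_{01}=1-0$ involves $1\notin z_0$. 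Note moreover that under your reading of \Cref{def:act} (the $c^b_j$ being only the summands of $f(b)$) no morphism $c_1\to[2]$ whatsoever is active, so $[2]$ would not lie in $\stn_1$, contradicting \Cref{rem:theta}. The resolution is that the ``image of $c_j\xinert{b}x$ in $y$'' in \Cref{def:act} must comprise the full basis of the smallest sub-Steiner-complex of $y$ through which the composite $c_j\to y$ factors; the paper's $y'$ is then the union of these subcomplexes and is closed under $\partial$ automatically, so no closure argument is needed.

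Two smaller points. You conclude $z\in\stn_n$ from unitality and strong loop-freeness alone, but membership in $\stn_n$ (rather than $\Stn_n$) also demands an active morphism $c_n\xactive{}z$; the paper supplies this as the composite $c_n\xactive{}x\xactive{a}y'$. For uniqueness the paper takes a different route from your direct argument: it shows that any morphism which is simultaneously active and inert is an isomorphism, and for injectivity on the basis this step uses \Cref{lem:seq}.
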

	\begin{proof}
		Assume we have a morphism $f:x\rightarrow y$, denote by $y'$ the subcomplex of $y$ containing only the basis elements appearing in the decomposition of the images of the basis elements of $x$, then we have a natural inert morphism $y'\xinert{i}y$ and $f$ factors as $i\circ a$ for some active morphism $a:x\xactive{}y'$. We claim that $y'\in\stn_n$: it admits a unital basis by construction, and it is strongly loop-free since any loop for $\leq_\bN$ in $y'$ induces a loop in $y$, and hence must be trivial since $y\in \Stn_n$. Finally, if there is an active morphism $c_n\xactive{a_0}x$, then there is an active morphism $c_n\xactive{a\circ a_0}y'$.\par
		It is easy to see that such a factorization is also functorial in the sense that the following diagrams commute
		\[\begin{tikzcd}[sep=huge]
			{x''} & {y''} \\
			{x'} & {y'} & w & {w'} & {w''} \\
			x & y & z & {z'} & {z''}
			\arrow[two heads, from=1-1, to=1-2]
			\arrow["i"', tail, from=1-1, to=2-1]
			\arrow[tail, from=1-2, to=2-2]
			\arrow[two heads, from=2-1, to=2-2]
			\arrow["{i'}"', tail, from=2-1, to=3-1]
			\arrow[tail, from=2-2, to=3-2]
			\arrow[two heads, from=2-3, to=2-4]
			\arrow["j"', tail, from=2-3, to=3-3]
			\arrow[two heads, from=2-4, to=2-5]
			\arrow[tail, from=2-4, to=3-4]
			\arrow[tail, from=2-5, to=3-5]
			\arrow["a"', two heads, from=3-1, to=3-2]
			\arrow["{a'}"', two heads, from=3-3, to=3-4]
			\arrow["{a''}"', two heads, from=3-4, to=3-5]
		\end{tikzcd}\]
		for any composable strings $x''\xinert{i}x'\xinert{i'}x''\xactive{a}y$ and $w\xinert{j}z\xactive{a'}z'\xactive{a''}z''$, where each square in the diagrams is obtained by factorization described above. To prove the claim it remains to show that the only morphisms that are both inert and active are isomorphisms, so assume that $f:x\rightarrow y$ is such a morphism. Since $f$ is inert it sends basis elements to basis elements and since it is active every basis element in $y$ is in the image of at least one basis element of $x$. It follows that for any $b\in y_l$ the preimage $f^{-1}(b)$ is a finite set of basis elements of dimension $\dim(b)$, however by \Cref{lem:seq} it must be a segment for $\leq_\bN$, which is only possible if $f^{-1}(b)$ is a singleton. Since this holds for all such $b$, $f$ must be an isomorphism.
	\end{proof}
	\begin{remark}
		Note that the conclusion of \Cref{prop:stein_fact} does not hold in the bigger category $\Stn_n$: we can still define the functorial factorization $x\xactive{a}y'\xinert{i}y$ for any $f:x\rightarrow y$, but there could be morphisms that are both inert and active, for example the functor $F$ between categories shown below
		\[\begin{tikzcd}[sep=huge]
			& a \\
			0 & {} && 0 & 1 \\
			& b
			\arrow[from=2-1, to=1-2]
			\arrow[from=2-1, to=3-2]
			\arrow["F", shorten <=10pt, shorten >=14pt, from=2-2, to=2-4]
			\arrow[from=2-4, to=2-5]
		\end{tikzcd}\]
		that takes 0 to 0 and both $a$ and $b$ to 1 is both active and inert (naturally, the source of $F$ does not belong to $\stn_1$).
	\end{remark}
	\begin{lemma}\label{lem:lin_cont}
		Every subcategory of $\stn^\el_{n,/x}$ for $x\in\stn_n$ which is a segment for $\le_\bN$ is contractible.
	\end{lemma}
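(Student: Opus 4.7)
The plan is to induct on the pair $(\dim x, \#x_{\dim x})$ with the lexicographic order. The base case $\dim x = 0$ is trivial, since $\stn^\el_{n,/x}$ is then a single point.

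For the inductive step I first treat the elementary case $x=[b]$. Every elementary subcomplex of $[b]$ embeds into $[b]$ itself, so $b$ is a terminal object of the poset $\stn^\el_{n,/[b]}$. If $b\in S$, then $b$ remains terminal in the full subcategory $S$, so $|S|$ is contractible. If $b\notin S$, then $b$ separates the linear order of \Cref{lem:lin} and $S$ lies entirely on one side; assume WLOG $S\subseteq \{v : v<_\bN b\}$. Chasing the definition of $<_\bN$, using that $[b]$ has no $(n+1)$-cells and invoking \Cref{rem:k_bound} to prevent chains from crossing back through $d^+_{n-1}[b]$, one sees that any such $v$ lies in $d^{-}_{n-1}[b]$. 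Since both restrictions of $<_\bN$ give linear refinements of the same base relation on $\stn^\el_{n-1,/d^-_{n-1}[b]}$, they coincide, so $S$ is a segment there and the inductive hypothesis applies (the dimension dropped). The opposite subcase is symmetric.

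Now assume $\#x_n = l\geq 2$. Following the proof of \Cref{lem:lin}, pick $b_l$ maximal for $\leq_\bN$ and decompose $x\cong x'\cup_y [b_l]$ with $y = d^-_{n-1}[b_l]$ (reducing to $r=n-1$ as done there). By \Cref{lem:delta_push} the linear order on $\stn^\el_{n,/x}$ is the amalgamation of the linear orders on $\stn^\el_{n,/x'}$ and $\stn^\el_{n,/[b_l]}$ along their common segment $\stn^\el_{n,/y}$, and every elementary subcomplex of $x$ lies in $x'$ or in $[b_l]$ (check by inspecting the top cell). Set $S_1 = S\cap \stn^\el_{n,/x'}$, $S_2 = S\cap \stn^\el_{n,/[b_l]}$, $S_0 = S\cap \stn^\el_{n,/y}$; each is a segment in its respective category, and if one of the $S_i$ is empty we reduce to the previous cases. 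The crucial claim is that any chain $e_0\subseteq \ldots\subseteq e_k$ in $S$ stays entirely in $S_1$ or entirely in $S_2$: if $e_i\in S_1\setminus S_0$ and $e_j\in S_2\setminus S_0$ for some $i,j$, then (WLOG $i<j$) $e_i\subseteq e_j\subseteq [b_l]$ combined with $e_i\subseteq x'$ forces $e_i\subseteq x'\cap [b_l] = y$, contradicting $e_i\notin S_0$. Hence $N(S) = N(S_1)\cup_{N(S_0)} N(S_2)$ as a strict pushout of simplicial sets, and $|S|$ is the pushout of $|S_1|\leftarrow |S_0|\to |S_2|$ along cofibrations. By the inductive hypothesis each piece is contractible ($|S_1|$: fewer top cells for $x'$; $|S_2|$: elementary case above; $|S_0|$: dimension $n-1$, using $y\in\stn_{n-1}$), so $|S|$ is contractible.

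The main technical obstacle is the multi-cell case: verifying the "no cross-chain" claim and the identification of $S_0$ as a segment in $\stn^\el_{n-1,/y}$, both of which rely on reading off the amalgamation of \Cref{lem:delta_push} at the level of elementary subcomplexes and on the compatibility of the $<_\bN$-linear refinements. The elementary case is essentially a terminal-object argument combined with a dimension reduction.
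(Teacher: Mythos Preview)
Your approach differs from the paper's and the core idea works, but two of your justifications need fixing.

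The paper does not induct on $(\dim x, \#x_{\dim x})$: it inducts on the \emph{length of the segment} $S$. It picks an element $b_r\in S$ of maximal dimension, lets $S_\pm$ be the parts of $S$ on either side of $b_r$ and $S'_\pm$ the intersection of $S_\pm$ with the corresponding boundary of $[b_r]$, and expresses $S$ as an iterated pushout of $S_-$, $(S'_-)^\triangleright$, $*$, $(S'_+)^\triangleleft$, $S_+$. The cone pieces are trivially contractible, $S_\pm$ are strictly shorter segments in $x$, and $S'_\pm$ are segments in the lower-dimensional boundaries of $[b_r]$, all handled by the length induction. No decomposition of $x$ is ever used; your route is more structural but buys nothing extra here.

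On your argument: the ``reduction to $r=n-1$'' in the proof of \Cref{lem:lin} replaces $x$ by a strictly smaller complex $x'_0\cup[b_l]$, so it does not decompose the original $x$, and your appeal to \Cref{lem:delta_push} (only established after that reduction) is unjustified. This is not fatal --- your cross-chain and nerve-pushout arguments go through verbatim for the unreduced decomposition $x\cong x'\cup_{d^-_r[b_l]}[b_l]$ with the original $r<n$, and $\dim d^-_r[b_l]<n$ still lets the induction on $S_0$ run --- but you should drop both references. Separately, $\stn^\el_{n,/y}$ is not in general an interval in $\stn^\el_{n,/x'}$ (for two vertically composable $2$-cells the closure of the shared $1$-cell contains both endpoint objects, which are not consecutive in $x'$); so ``common segment'' is the wrong phrase, though what you actually need --- that each $S_i$ is a segment in its own ambient category --- does follow from your observation that the intrinsic and restricted linear orders must coincide.
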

	\begin{proof}
		We will prove the claim by induction on the length of the segment: segments of length 1 are singletons, so the claim is trivial for them. Given a segment $b_0\leq_\bN b_1\leq_\bN...\leq_\bN b_m$ denote by $S$ the corresponding full subcategory of $\stn^\el_{n,/x}$. Pick some element $b_r$ of maximal dimension $l$ among $b_i$, so that $\dim(b_{r-1})< l=\dim(b_r)$ and $\dim(b_{r+1})<l$, denote by $S_-$ (resp. $S_+$) the subcategory of $S$ containing $b_i$ with $i<r$ (resp. $i>r$) and denote $S'_\pm\bydef S_\pm\cap d^\pm_l[b_r]$, in that case $S$ is isomorphic to the colimit of
		\begin{equation}\label{eq:S_colim}
			\begin{tikzcd}[sep=huge]
				& {S'_-} && {*} && {S'_+} \\
				{S_-} && {(S'_-)^\triangleright} && {(S'_+)^\triangleleft} && {S_+}
				\arrow[from=1-2, to=2-1]
				\arrow[from=1-2, to=2-3]
				\arrow[from=1-4, to=2-3]
				\arrow[from=1-4, to=2-5]
				\arrow[from=1-6, to=2-5]
				\arrow[from=1-6, to=2-7]
			\end{tikzcd},
		\end{equation}
		so it remains to show that all the terms in \eqref{eq:S_colim} are contractible. This is obvious for $*$, $(S'_-)^\triangleright$ and $(S'_+)^\triangleleft$ and follows by induction on length for $S_\pm$, so it remains to show that $S'_\pm$ are contractible, however note that they define segments in $d^\pm_l[b_r]\in\stn_n$, so they are again contractible by induction.
	\end{proof}
	\begin{prop}\label{prop:stn_sat}
		$\stn_n$ is a saturated algebraic pattern in the sense of \cite{chu2019homotopy}.
	\end{prop}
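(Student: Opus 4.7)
The plan is to apply the criterion for saturation of an algebraic pattern from \cite[Section 8]{chu2019homotopy}: with the pattern structure on $\stn_n$ given by the inert/active classes of \Cref{def:act} and the elementary objects $\stn^\el_n$, and with the factorization system already established in \Cref{prop:stein_fact}, saturation reduces to the contractibility, for every active morphism $a:x\xactive{}y$ in $\stn_n$ and every elementary inert $e\xinert{i}y$, of the $\infty$-category $F(a,i)$ of factorizations through elementary objects: objects are triples $(e', a', j)$ with $e'\in\stn^\el_n$, $a':e'\xactive{} x$ active, $j:e'\xinert{}e$ inert, and $a\circ a'\cong i\circ j$; morphisms are inert maps between elementary objects compatible with all data.

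First I would unpack $F(a,i)$ combinatorially. Since $e$ is elementary, it has a unique top-dimensional basis element $b$ of some dimension $l\leq n$, and all other basis elements of $e$ lie in $d^\pm_{l-1}[b]$. Any object $(e',a',j)$ of $F(a,i)$ is then essentially determined by a basis element $c\in x$ with $a(c)=i(b)$: namely, the top-dimensional basis element of $e'$ must map via $a'$ to such a $c$, and this together with the compatibility determines the rest. Thus $F(a,i)$ should be identified with (the nerve of) the set $a^{-1}(i(b))$ of basis elements $c$ of $x$ mapping to $i(b)$.

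Second I would invoke the combinatorial lemmas to conclude contractibility. By \Cref{lem:seq} applied to $a$ and $b$, the set $a^{-1}(i(b))$ is either empty or a segment of $\stn^\el_{n,/x}$ for the partial order $\leq_\bN$; by activity of $a$ it is nonempty. By \Cref{lem:lin} this order is in fact linear, so $a^{-1}(i(b))$ is a segment in the linear sense, and by \Cref{lem:lin_cont} every such segment is contractible.

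The main technical obstacle is rigorously identifying $F(a,i)$ with the poset $a^{-1}(i(b))$. Following the strategy of \Cref{lem:C_cont}, I would exhibit the full subcategory $F^{\el}(a,i)\hookrightarrow F(a,i)$ spanned by those $(e',a',j)$ for which $e'\cong [c]$ (the elementary subcomplex generated by a single top cell $c\in x$) as cofinal, by constructing a right adjoint that sends a general $(e',a',j)$ to the subcomplex of $x$ generated by the image of the top cell of $e'$; this uses active/inert factorization (\Cref{prop:stein_fact}) together with \Cref{prop:unit} to ensure the assignment is well-defined. Once this cofinality is in place, $F^\el(a,i)$ is a poset isomorphic to $a^{-1}(i(b))$ with the induced order, which is the contractible segment above, and saturation follows.
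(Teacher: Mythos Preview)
Your proposal has a substantive gap and a couple of technical errors.

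\textbf{Missing conditions.} Saturation in the sense of \cite{chu2019homotopy} does not reduce to a single fiber-contractibility statement. The paper's proof verifies four separate claims: a Segal-type condition on active morphisms, $\Act_{\stn_n}(x)\cong\lim_{e\xinert{}x}\Act_{\stn_n}(e)$; contractibility of the fibers of the comparison $C\to\stn^\el_{n,/y}$; existence of an active morphism $e\xactive{}x$ from an elementary object for every $x$; and the colimit decomposition $x\cong\colim_{e\xinert{}x}e$. Your outline addresses only the second of these, so even if that part were correct you would not have proved saturation.

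\textbf{Wrong variance in the factorization category.} In your $F(a,i)$ you take $a':e'\xactive{}x$ active. An active morphism out of an elementary object forces the target to be covered by a single top cell, so for general $x$ no such $a'$ exists and $F(a,i)$ as you defined it is typically empty. The correct category, matching the paper's condition (sat2), has objects $(e_0\xinert{i_0}x,\; e\xinert{}a_!e_0)$ with $i_0$ \emph{inert}, and the fiber over $e\xinert{}y$ consists of those elementary $e_0\xinert{}x$ whose image $a_!e_0$ in $y$ contains $e$.

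\textbf{Incomplete case analysis.} Even with the variance corrected, identifying the fiber with $a^{-1}(i(b))$ is not always right. Activity of $a$ guarantees that every basis element $b$ of $y$ appears in the subcomplex $[a(c)]$ generated by the image of some $c$, but not that $b$ lies in the decomposition of $a(c)$ itself. When $\dim b<\dim e_0$ for every relevant $e_0$, the set $a^{-1}(b)$ is empty while the fiber $C_b$ is not. The paper handles this case separately by observing that there is a minimal $c$ with $b\in[a(c)]$, giving an initial object of $C_b$; only in the case $a^{-1}(b)\neq\varnothing$ does one pass to the coinitial subcategory $a^{-1}(b)$ and invoke \Cref{lem:seq} and \Cref{lem:lin_cont} as you propose.
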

	\begin{proof}
		$\stn_n$ admits a factorization system by \Cref{prop:stein_fact} and the notion of elementary objects, so it suffices to prove the following claims:
		\begin{enumerate}
			\item\label{it:sat1} denote by $\Act_{\stn_n}(x)$ the set of active morphisms $x\xactive{a}y$, then 
			\[\Act_{\stn_n}(x)\cong\underset{(e\xinert{i}x)\in\stn^\el_{n,/x}}{\lim}\Act_{\stn_n}(e);\]
			\item\label{it:sat2} given an active morphism $a:x\xactive{}y$ denote by $C$ the category of pairs $(e_0\xinert{i_0}x,e\xinert{i}a_!e_0)$, where $a_!e_0$ appears in the factorization square
			\begin{equation}\label{eq:fact_sq}
				\begin{tikzcd}[sep=huge]
					{e_0} & {a_!e_0} \\
					x & y
					\arrow["{i^*a}",two heads, from=1-1, to=1-2]
					\arrow["{i_0}"', tail, from=1-1, to=2-1]
					\arrow["{a_! i_0}", tail, from=1-2, to=2-2]
					\arrow["a", two heads, from=2-1, to=2-2]
				\end{tikzcd},
			\end{equation}
			then the fibers of the natural morphism $m:C\rightarrow \stn^\el_{n,/y}$ sending the pair above to the composition $e\xinert{i}a_!e_0\xinert{a_!i_0}y$ are contractible;
			\item\label{it:sat3} every $x\in\stn_n$ admits an active morphism $a:e\xactive{}x$ with source in $\stn^\el_n$;
			\item\label{it:sat4} for every $x\in\stn_n$ we have
			\[x\cong \underset{(e\xinert{i}x)\in\stn^\el_{n,/x}}{\colim}e.\]
		\end{enumerate}
		We will now prove the claim in order:
		\begin{enumerate}
			\item Note that there is a morphism 
			\[F:\Act_{\stn_n}(x)\rightarrow\underset{(e\xinert{i}x)\in\stn^\el_{n,/x}}{\lim}\Act_{\stn_n}(e)\]
			sending $x\xactive{a}y$ to the system of $i^*a: e\xactive{}a_!e$ for $i:e\xinert{}x$ in the notation of \eqref{eq:fact_sq}, we need to define an inverse to $F$. Assume that we have a compatible system $a_e:e\xactive{}y_e$ over all $i_e:e\xinert{}x$, then define the complex $y$ such that the basis of $y_k$ consists of pairs $(c,b)$, where $c$ is a basis element of $x$ corresponding to some $i:[c]\xinert{}x$ and $b\in (y_{[c]})_k$, modulo an equivalence relation that identifies $(c',b)$ with $(c'', i_0(b))$ for any $i:[c']\xinert{}[c'']$, where $i_0:y_{[c']}\xinert{}y_{[c'']}$ is the morphism induced by $i$. We define the structure of an ADC on $y$ by
			\[\partial(c,b)\bydef (c,\partial b).\]
			It is clear that this defines an ADC with unital basis, moreover it clearly admits an active morphism from $x$ that sends $c$ as above to $\sum_{b\in (y_{[c]})_{\dim(c)}} (c,b)$, so it remains to prove the basis is strongly loop-free. Assume now that we have a loop $(c_0,n_0)\leq_\bN(c_1,b_1)\leq_\bN...\leq_\bN (c_0,b_0)$, then by construction $c_0\leq_\bN c_1\leq_\bN...\leq_\bN c_0$ is a loop in $x$, hence it must be trivial since $x\in\stn_n$, which means that the original loop is entirely contained within some $y_{[c]}$, so it too must be trivial since $y_{[c]}\in\stn_n$.
			\item Given $i:e\xinert{}y$ corresponding to the basis element $b\in y_l$ denote by $C_b$ the fiber of $m$ over it, by definition it consists of $e_0\xinert{i_0}x$ such that $i$ factors through $a_! e_0$. Note that $C_b$ is non-empty since $a$ was assumed to be active, assume first that $a^{-1}=\varnothing$, this means that there is a basis element $c\in x_k$ such that $b$ belongs to the subcomplex $[a(c)]$ generated by elements in the image of $c$. There exists a minimal such $c$ in $x$, which defines the initial object of $C_b$, which proves that it is contractible. Assume now that $a^{-1}(b)$ is non-empty, then $a^{-1}(b)$ is coinitial in $C_b$, hence $|C_b|\cong |a^{-1}(b)|$, but the latter is a segment for $\leq_\bN$ by \Cref{lem:seq} and those are contractible by \Cref{lem:lin_cont}.
			\item This follows immediately from the definition since every $x\in\stn_n$ admits an active morphism $c_n\xactive{a}x$.
			\item This can either be seen directly or deduced from the freeness property of \cite{ara2023categorical}.
		\end{enumerate}
	\end{proof}
	\begin{cor}\label{cor:rel_seg}
		Given an active morphism $a:x\xactive{}y$ in $\stn_n$ and $i:e\xinert{}x$ in $\Theta^\el_{n,/x}$, denote by $a_!e\xinert{}y$ its image under $a$, then we have an equivalence
		\begin{equation}\label{eq:rel_seg}
			y\cong \underset{(e\xinert{i}x)\in\stn^\el_{n,/x}}{\colim}a_!e
		\end{equation}
		in $\stn_n$.
	\end{cor}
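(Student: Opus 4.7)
The plan is to derive \eqref{eq:rel_seg} from the saturation of $\stn_n$ proved in \ref{prop:stn_sat} by comparing both sides to a single colimit over the auxiliary category $C$ of pairs $(e_0 \xinert{i_0} x,\, e \xinert{i} a_! e_0)$ introduced in part (\ref{it:sat2}) of that proposition. Let $H : C \to \stn_n$ be the functor $(e_0, e) \mapsto e$; the goal is to show that $\colim_C H$ computes both $y$ and the right-hand side of \eqref{eq:rel_seg}.

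For the right-hand side, consider the projection $p : C \to \stn^\el_{n,/x}$, $(e_0, e) \mapsto e_0$. This is a coCartesian fibration whose fiber over $e_0$ is $\stn^\el_{n,/a_! e_0}$: a coCartesian lift of an inert inclusion $e_0 \xinert{} e_0'$ is obtained by postcomposing $e \xinert{} a_! e_0$ with the natural inert inclusion $a_! e_0 \xinert{} a_! e_0'$ inside $y$. Since $a_! e_0$ admits an active morphism $c_{\dim e_0} \xactive{} e_0 \xactive{} a_! e_0$, we have $a_! e_0 \in \stn_n$, so part (\ref{it:sat4}) of \ref{prop:stn_sat} applied to each fiber yields $\colim_{e \xinert{} a_! e_0} e \cong a_! e_0$. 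The standard Fubini formula for colimits over a coCartesian fibration then gives
\[\colim_C H \;\cong\; \underset{(e_0 \xinert{} x) \in \stn^\el_{n,/x}}{\colim}\; a_! e_0.\]

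For the left-hand side, part (\ref{it:sat4}) applied to $y$ gives $y \cong \colim_{(e' \xinert{} y) \in \stn^\el_{n,/y}} e'$, so it suffices to show that the other projection $m : C \to \stn^\el_{n,/y}$, $(e_0, e) \mapsto (e \xinert{} y)$, is cofinal; this will imply $\colim_C H \cong y$ and close the argument.

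The main obstacle is exactly this cofinality of $m$. Part (\ref{it:sat2}) of \ref{prop:stn_sat} supplies contractibility of every fiber $m^{-1}(e')$, so the remaining task is to promote fibrewise contractibility to contractibility of the relevant slice categories. I plan to do this by exhibiting $m$ as a Cartesian fibration: given an inert inclusion $e' \xinert{} e''$ in $\stn^\el_{n,/y}$ and a preimage $(e_0, e'')$ of $e''$ under $m$, the Cartesian lift $(e_0', e')$ is obtained by letting $e_0' \xinert{} e_0$ be the minimal inert subcomplex whose image under $a$ contains the basis element of $e'$. Existence and uniqueness of such an $e_0'$ follow from the inert/active factorization of \ref{prop:stein_fact} together with the linear-order property \ref{lem:lin}, which together ensure that the collection of inert subcomplexes of $e_0$ whose image under $a$ dominates a given cell is closed under intersection. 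Once $m$ is a Cartesian fibration with contractible fibers, its slices are contractible, hence $m$ is cofinal, completing the proof.
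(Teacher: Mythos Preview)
Your overall strategy coincides with the paper's: expand the right-hand side via \eqref{it:sat4} to a colimit over the auxiliary category $C$, then contract to $y$ using \eqref{it:sat2} and \eqref{it:sat4} again. The paper states the second step tersely (``it follows from \eqref{it:sat2}\ldots''); you are right that what is really being used is cofinality of $m:C\to\stn^\el_{n,/y}$, and that this can be obtained by exhibiting $m$ as a Cartesian fibration with contractible fibers.

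The gap is in your description of the Cartesian lift. Given $e'\xinert{}e''$ in $\stn^\el_{n,/y}$ and $(e_0,e'')\in C$, the Cartesian lift is simply $(e_0,e')\to(e_0,e'')$, with $\id_{e_0}$ on the first coordinate: since $e'\xinert{}e''\xinert{}a_!e_0$ we have $(e_0,e')\in C$, and terminality is immediate because any lift $(e_0'',e^*)\to(e_0,e'')$ over $e^*\xinert{}e'\xinert{}e''$ already carries the required map $e_0''\xinert{}e_0$. Your proposed lift using the \emph{minimal} $e_0'\xinert{}e_0$ with $e'\xinert{}a_!e_0'$ is not Cartesian. For instance take $x=c_1$, $y=[2]$, and $a$ the unique active morphism; for the inclusion $\{0\}\xinert{}[0,1]$ and the object $(c_1,[0,1])$ your recipe yields $(\{0\},\{0\})$, but the legitimate lift $(c_1,\{0\})\to(c_1,[0,1])$ does not factor through $(\{0\},\{0\})$ since there is no inert morphism $c_1\xinert{}\{0\}$. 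The minimal $e_0'$ is precisely the \emph{initial} object of the fiber $m^{-1}(e')$ (this is how the paper proves contractibility of the fiber in \eqref{it:sat2}), which is the opposite of what a Cartesian lift requires. Once you replace your lift by $(e_0,e')$ the argument goes through, and no appeal to \Cref{prop:stein_fact} or \Cref{lem:lin} is needed.
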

	\begin{proof}
		It follows from \eqref{it:sat4} that for every $z\in\stn_n$ we have
		\begin{equation}\label{eq:theta_seg}
			z\cong \underset{(e\xinert{i}z)\in\stn^\el_{n,/z}}{\colim}e.
		\end{equation}
		Applying this to the right-hand side of \eqref{eq:rel_seg} we get
		\begin{equation}\label{eq:seg_string}
			\underset{e\xinert{i}x}{\colim}\;a_!e\cong \underset{e\xinert{i}x}{\colim}\underset{e'\xinert{i'}a_!e}{\colim}\;e'\cong \underset{e'\xinert{i'}a_!e\xinert{}y}{\colim}\;e',
		\end{equation}
		where the colimit on the right is taken over the category $C$ of pairs $(e\xinert{i}x, e'\xinert{i'}a_!e)$. It then follows from \eqref{it:sat2} that the colimit in \eqref{eq:seg_string} can be identified with $\underset{e\xinert{}y}{\colim}\;y$, which is isomorphic to $y$ by \eqref{it:sat4}.
	\end{proof}
	\begin{cor}\label{cor:rel_seg_theta}
		Given an active morphism $a:\theta\xactive{}\theta'$ and $i:e\xinert{}\theta$ in $\Theta^\el_{n,/\theta}$, denote by $\ima_a(i)\xinert{}\theta'$ its image under $a$, then we have an equivalence
		\begin{equation}\label{eq:rel_seg_theta}
			\theta'\cong \underset{e\xinert{i}\theta}{\colim}\;\ima_a(i)
		\end{equation}
		in $\cat_n$.
	\end{cor}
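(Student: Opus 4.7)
The plan is to transport the conclusion of \Cref{cor:rel_seg} from $\stn_n$ to $\cat_n$ via the identification $\Theta_n \hookrightarrow \stn_n$ of \Cref{rem:theta}. Under this inclusion, an active morphism $a:\theta\xactive{}\theta'$ in $\Theta_n$ corresponds to an active morphism of Steiner complexes, and for $i:e\xinert{}\theta$ in $\Theta^\el_{n,/\theta}$ the Steiner-theoretic image $a_!e$ agrees with $\ima_a(i)\in\Theta_n$ from the active/inert factorization of $e\xinert{i}\theta\xactive{a}\theta'$. So \Cref{cor:rel_seg} already gives the equivalence as a colimit in $\stn_n$; what needs proof is that the same diagram presents $\theta'$ as a colimit in $\cat_n$.

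The first step is to recall that for any $\zeta\in\Theta_n$ the Segal condition defining $\cat_n$ supplies a canonical equivalence
\[\zeta\cong \underset{(e'\xinert{}\zeta)\in\Theta^\el_{n,/\zeta}}{\colim}e'\]
in $\cat_n$. Applied to $\theta'$ and to each $\ima_a(i)$, this lets me replicate the computation from the proof of \Cref{cor:rel_seg} inside $\cat_n$:
\[\underset{e\xinert{i}\theta}{\colim}\ima_a(i)\;\cong\; \underset{e\xinert{i}\theta}{\colim}\;\underset{e'\xinert{}\ima_a(i)}{\colim}e'\;\cong\;\underset{(e\xinert{i}\theta,\,e'\xinert{}\ima_a(i))}{\colim}e'\;\cong\;\underset{e'\xinert{}\theta'}{\colim}e'\;\cong\;\theta'.\]
The third equivalence is the substantive step: the forgetful functor from the category of pairs $(e\xinert{i}\theta,\,e'\xinert{}\ima_a(i))$ to $\Theta^\el_{n,/\theta'}$ sends such a pair to the composition $e'\xinert{}\ima_a(i)\xinert{a_!i}\theta'$, and I need its fibers to be contractible.

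For the fiber-contractibility I invoke the saturation property \eqref{it:sat2} established in \Cref{prop:stn_sat}: the analogous fibers inside $\stn_n$ are contractible, and because the morphisms in question lie in $\Theta_n$ (a full subcategory of $\stn_n$ stable under active/inert factorization) the same categories appear when one restricts to $\Theta_n^\el$. The main obstacle is making sure that this transfer is clean -- that is, that the pair categories appearing when one replaces $\stn^\el_{n,/{-}}$ by $\Theta^\el_{n,/{-}}$ are literally the same, so that the cofinality argument from \eqref{it:sat2} applies verbatim. Once that identification is recorded, the chain of colimit manipulations above takes place entirely in $\cat_n$ and delivers the desired equivalence.
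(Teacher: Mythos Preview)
Your proof is correct and follows the same strategy as the paper: both arguments reduce to the fact that the Segal colimit $\zeta\cong\colim_{e'\xinert{}\zeta}e'$ holds in $\cat_n$, so the computation from \Cref{cor:rel_seg} can be rerun there verbatim. The paper compresses this into the single sentence ``it remains a colimit in $\cat_n$ since the isomorphism $\theta\cong\colim_{e\xinert{}\theta}e$ is preserved by $\Theta_n\hookrightarrow\cat_n$,'' whereas you spell out the chain of colimit identifications and the fiber-contractibility step explicitly; your observation that $\stn^\el_{n,/\zeta}=\Theta^\el_{n,/\zeta}$ for $\zeta\in\Theta_n$ (so that \eqref{it:sat2} transfers directly) is the only point the paper leaves entirely to the reader.
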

	\begin{proof}
		That \eqref{eq:rel_seg_theta} is a colimit in $\Theta_n$ follows immediately from \Cref{cor:rel_seg} since $\Theta$ is a full subcategory of $\stn_n$ by \Cref{rem:theta}, that it remains a colimit in $\cat_n$ follows since the isomorphism
		\[\theta\cong\underset{e\xinert{}\theta}{\colim}\;e\]
		is preserved by the inclusion $\Theta_n\hookrightarrow\cat_n$ -- this is essentially a reformulation of the Segal condition.
	\end{proof}
	\section{An alternative model for $\stab(\cat_{n,/\cE})$}\label{sect:twar_theta}
	In \Cref{prop:seg} we have identified the stabilization of $\cat_{n,/\cE}$ for $\cE\in\cat_n$ with the category of spectrum-valued Segal objects for a certain algebraic pattern, however later in the same section we have proved in \Cref{thm:main_theta} that $\stab(\cat_{n,/\theta})$ is isomorphic to the category of spectrum-valued presheaves. This fact is not exclusive to the objects of $\Theta_n$ -- in this section we will construct a category $\twar_\theta(\cE)$ such that
	\[\mor_\cat(\twar_\theta(\cE),\spc)\cong \stab(\cat_{n,/\cE}),\]
	more specifically this result is contained in \Cref{prop:main_theta_stab}.\par 
	Unfortunately, the model of $\twar_\theta(\cE)$ presented here does not allow for a direct computation of its space of objects and morphisms since it is presented as a localization $\cL_\cat(\twar^C_\theta(\cE))$ of a certain simplicial space $\twar^C_\theta(\cE)$, where $\cL_\cat:\psh_\cS(\Theta_n)\rightarrow \cat_n$ is the left adjoint to inclusion. The reason we ultimately need this intermediate model is \Cref{lem:twar_theta1} which allows us to express it in terms of $\twar_\theta(\theta')$ for $\theta'\in\Theta_n$ -- the equivalent result of course holds for the category $\twar(\cE)$ defined in \Cref{sect:twar}, however we do not know how to prove it directly except by constructing an isomorphism $\twar_\theta(\cE)\cong \twar(\cE)$.\par 
	In the first part of this section we define $\twar_\theta(\cE)$ and prove some auxiliary results culminating in the isomorphism \Cref{prop:main_theta_stab}, the second part is dedicated to the computation of $\twar_\theta(x)$ for $x\in\Stn_n$ in \Cref{prop:stn_inrt}, which will later be used in \Cref{sect:twar}. Finally, we use the results obtained so far to prove the first major theorem of this paper -- \Cref{thm:stein}. We also note in passing that a generalization of \Cref{cor:stein_push} to torsion-free complexes has been obtained in \cite{campion2023infty} using different methods.
	\begin{construction}\label{constr:theta_twar}
		Note that for any morphism $f:[q]\rightarrow[p]$ we have an induced functor $f_*:\Delta^\inrt_{[q]}\rightarrow\Delta^\inrt_{/[p]}$ sending $[l]\xinert{i}[q]$ to the inert part of $f\circ i$, this makes $[q]\mapsto\Delta^\inrt_{/[q]}$ into a functor $\Delta\xrightarrow{\Delta^\inrt_{/-}}\cat$. For any category $\cC$ we then get a functor $\mor_\cat(\Delta^\inrt_{/-},\cC):\Delta^\op\rightarrow\cat$.\par
		Given $\cE\in\cat_n$ denote by $\twar'_\theta(\cE)$ the subfunctor of 
		\[\Delta^\op\xrightarrow{\mor_\cat(\Delta^\inrt_{/-},\Theta_{n,/\cE})} \cat\]
		sending $[q]$ to the subcategory of $\mor_\cat(\Delta^\inrt_{/[q]},\Theta_{n,/\cE})$ such that:
		\begin{enumerate}
			\item for $i:[l]\xinert{}[q]$ denote by $\theta_i$ the source of $F(i)\in\Theta_{n,/\cE}$, then for $k\in[q]$ we have $\theta_{\{k\}}\cong c_n$, where $[0]\xinert{\{k\}}[q]$ is the inclusion of the element $\{k\}$;
			\item given any morphism $[l]\xinert{i'}[m]$ between $i_0:[l]\xinert{}[q]$ and $i_1:[m]\xinert{}[q]$, if $i'$ preserves the minimal element, the corresponding morphism $h_{i'}:\theta_{i_0}\rightarrow \theta_{i_1}$ is inert, and if $i'$ preserves the maximal element, then the morphism $h_{i'}$ is an active morphism;
			\item any natural transformation $\alpha:F\rightarrow G$ in $\mor_\cat([1]\times \Delta^\inrt_{/[q]},\Theta_{n,/\cE})$ that lies in $\twar'_\theta(\cE)$ satisfies $\alpha_{[0]\xinert{\{k\}}[q]}\cong \id$ for all $k\in[q]$.
		\end{enumerate}
		Note that this is indeed a subfunctor since both inert and active morphisms are closed under composition. Finally, denote
		\[\twar_\theta(\cE)\bydef \cL_\cat(|\twar'_\theta(\cE)|),\]
		where $|-|:\cat\rightarrow\cS$ is the functor of geometric realization and $\cL_\cat:\psh_\cS(\Delta)\rightarrow \cat$ is the left adjoint to the natural inclusion.
	\end{construction}
	\begin{lemma}\label{lem:twar_theta1}
		We have 
		\[\twar_\theta(\cE)\cong \underset{\theta\xrightarrow{f}\cE}{\colim}\twar_\theta(\theta).\]
	\end{lemma}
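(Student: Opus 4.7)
The strategy is to reduce the lemma to a colimit-preservation property of $\twar'_\theta$. Since $\twar_\theta = \cL_\cat \circ |-| \circ \twar'_\theta$ and the outer functors are left adjoints, it suffices to show that $\twar'_\theta: \cat_n \to \psh_\cS(\Delta)$ sends the canonical colimit $\cE \cong \colim_{\theta \to \cE} \theta$ to a colimit. Colimits in $\psh_\cS(\Delta)$ are computed levelwise, so the problem becomes, at each $[q]$, identifying $|\twar'_\theta(\cE)_{[q]}|$ with $\colim_{\theta \to \cE} |\twar'_\theta(\theta)_{[q]}|$.

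The key observation is that $\Delta^\inrt_{/[q]}$ is the poset of subintervals of $[q]$ with $\id_{[q]}$ as its terminal object. Consequently, for any $F \in \twar'_\theta(\cE)_{[q]}$, the value $F(\id_{[q]})$ is terminal in the essential image of $F$, and $F$ factors canonically through $\Theta_{n,/F(\id_{[q]})} \hookrightarrow \Theta_{n,/\cE}$. The assignment $\pi: F \mapsto F(\id_{[q]})$ thus gives a functor $\pi: \twar'_\theta(\cE)_{[q]} \to \Theta_{n,/\cE}$, and the preimage of $(\theta \to \cE)$ is naturally identified with the full subcategory $T(\theta)_{[q]} \subseteq \twar'_\theta(\theta)_{[q]}$ of diagrams $G$ with $G(\id_{[q]}) = \id_\theta$.

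Granting a realization formula of the form $|\twar'_\theta(\cE)_{[q]}| \cong \colim_{(\theta \to \cE)} |T(\theta)_{[q]}|$ for $\pi$ (and similarly $|\twar'_\theta(\theta)_{[q]}| \cong \colim_{(\theta' \to \theta)} |T(\theta')_{[q]}|$), the lemma follows by iterating and applying cofinality of the forgetful functor $\{(\theta' \to \theta \to \cE)\} \to \{(\theta' \to \cE)\}$, whose fibers are categories of factorizations admitting the initial object $(\theta' \xrightarrow{\id} \theta' \to \cE)$: the double colimit $\colim_{\theta \to \cE} \colim_{\theta' \to \theta} |T(\theta')_{[q]}|$ collapses to $\colim_{\theta' \to \cE} |T(\theta')_{[q]}| \cong |\twar'_\theta(\cE)_{[q]}|$.

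The main technical obstacle is establishing the realization formula for $\pi$. The naive coCartesian lift along a morphism $\phi: (\theta \to \cE) \to (\theta' \to \cE)$ in $\Theta_{n,/\cE}$ may fail to preserve the activity/inertness conditions imposed on the structural morphisms of $F$, so $\pi$ is not a coCartesian fibration in the standard sense. Handling this requires decomposing $\phi$ into its inert and active parts using the factorization system on $\Theta_n$ and either treating these parts separately or using a Quillen theorem A-type argument with contractibility of the appropriate slice categories to deduce the colimit decomposition for $\pi$ directly.
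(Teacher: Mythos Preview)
Your key observation is exactly the paper's: since $\Delta^\inrt_{/[q]}$ has terminal object $\id_{[q]}$, every diagram $F$ factors canonically through $\Theta_{n,/s(F(\id_{[q]}))}$, where $s(-)$ denotes the source. But you then wrap this in unnecessary machinery and leave the hardest step unfinished.

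The paper's proof is a single sentence after the reduction you correctly make. Having reduced to showing $\twar'_\theta(\cE)\cong\colim_{\theta\to\cE}\twar'_\theta(\theta)$, the paper works directly at every bisimplicial level: for each $m$-simplex $F:[m]\times\Delta^\inrt_{/[q]}\to\Theta_{n,/\cE}$ of $\twar'_\theta(\cE)([q])$, the category of factorizations $F'$ through some $\Theta_{n,/\theta}\to\Theta_{n,/\cE}$ has the \emph{initial} object given by taking $\theta$ to be the source of $F(\id_{[q]},m)$. That contractibility statement is precisely what identifies the colimit, and it is immediate---no fibration analysis of $\pi$, no double-colimit bookkeeping, no decomposition of morphisms into inert and active parts is needed.

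Your route via $\pi:F\mapsto F(\id_{[q]})$ and the fibers $T(\theta)_{[q]}$ is not wrong, but the ``realization formula'' you flag as the main obstacle is exactly the content of the paper's one-line argument, and your proposed fix (``Quillen theorem A with contractibility of slice categories'') would, once unwound, reproduce that argument verbatim. The detour through fibers of $\pi$, the iterated colimit, and the cofinality of $\{(\theta'\to\theta\to\cE)\}\to\{(\theta'\to\cE)\}$ are all correct but redundant: they reassemble a decomposition that was never needed. Note also that the paper treats $m$-simplices for all $m$ uniformly (the terminal object is $(\id_{[q]},m)$), whereas your $\pi$ is described only on objects; handling morphisms in $\twar'_\theta(\cE)_{[q]}$ requires the same extension, which again collapses to the paper's direct argument.
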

	\begin{proof}
		Since $|-|$ and $\cL_\cat$ preserve colimits it suffices to show that 
		\[\twar'_\theta(\cE)\cong \underset{\theta\xrightarrow{f}\cE}{\colim}\twar'_\theta(\theta).\]
		To prove this it suffices to show that for $[m]\times\Delta^\inrt_{/[q]}\xrightarrow{F}\Theta_{n,/\cE}$ in $\twar'_\theta(\cE)([q])([m])$ the space of factorizations 
		\[\Delta^\inrt_{/[q]}\times[m]\xrightarrow{F'}\Theta_{n,/\theta}\xrightarrow{f_!}\Theta_{n,/\cE}\]
		of $F$ for $x\in\stn_n$ is contractible, however note that it in fact has an initial object given by 
		\[\Delta^\inrt_{/[q]}\times[m]\xrightarrow{F'}\Theta_{n,/s(F([q]\eq[q],m))}\xrightarrow{F([q]\eq[q],m)_!}\Theta_{n,/\cE},\]
		where $s(F([q]\eq[q]))$ denotes the source of the corresponding object of $\Theta_{n,/\cE}$ viewed as a morphism in $\cat_n$.
	\end{proof}
	\begin{construction}\label{constr:twar_C}
		We will define certain objects $C_n(k)\in \Theta_n$ by induction on $n$ and $k$: we set $C_n(0)\bydef c_n$ for all $n$, for $k=n=1$ we set $C_1(1)\bydef [3]\in \Delta$, also denote by $i_1(1): c_1\xinert{}C_1(1)$ the inclusion of thew subinterval $[1,2]$. Assume we have defined $i_{n-1}(1): c_{n-1}\xinert{} C_{n-1}(1)$, define $C_n(1)$ to be the object of $\Theta_n$ with $\obj(C_n(1))\bydef \{0,1,2,3\}$ such that $\mor_{C_n(1)}(1,2)\bydef C_{n-1}(1)$ and $\mor_{C_n(1)}(i,i+1)\bydef c_n$ for $i\in \{0,2\}$, define $i_n(1):c_n\xinert{} C_n(1)$ to be the morphism sending the object $i$ to $(i+1)$ for $i\in \{0,1\}$ and restricting to $i_{n-1}(1):c_{n-1}\xinert{} C_{n-1}(1)$ on $c_{n-1}\cong \mor_{c_n}(0,1)$. Finally, assume we have defined $i_{n}(k-1): c_n\xinert{} C_n(k-1)$, we define $C_n(k)$ to be the pushout
		\begin{equation}\label{eq:C_n}
			\begin{tikzcd}[sep=huge]
				& {c_n} & {C_n(k-1)} \\
				{c_n} & {C_n(1)} & {C_n(k)}
				\arrow["{i_{n}(k-1)}", tail, from=1-2, to=1-3]
				\arrow[two heads, from=1-2, to=2-2]
				\arrow[two heads, from=1-3, to=2-3]
				\arrow["{i_n(1)}", tail, from=2-1, to=2-2]
				\arrow["{i'}", tail, from=2-2, to=2-3]
				\arrow["\ulcorner"{anchor=center, pos=0.125, rotate=180}, draw=none, from=2-3, to=1-2]
			\end{tikzcd}
		\end{equation}
		taken in the category $\Theta_n$ (which exists by \Cref{lem:theta_push}), define $i_n(k)$ to be the composition $i'\circ i_n(1)$ in the notation of \eqref{eq:C_n}.\par
		Denote by $C^\el_q:\Delta^\el_{/[q]}\rightarrow \Theta_n$ the morphism sending $[0]\xinert{\{i\}}[q]$ to $c_n$, $[1]\xinert{i<i+1}[q]$ to $C_n(1)$, the inclusion $[0]\xinert{\{0\}}[1]$ over $[q]$ to the inert morphism $c_n\xinert{i_n(1)} C_n(1)$ and $[0]\xinert{\{1\}}[1]$ to the unique active morphism $c_n\xactive{} C_n(1)$. By construction there exists a left Kan extension $C^\inrt_q\bydef i^\el_! C_q^\el$ along $i^\el: \Delta^\el_{/[q]}\hookrightarrow \Delta^\inrt_{/[q]}$ that sends $[l]\xinert{i'}[q]$ to $C_n(l)$.
	\end{construction}
	\begin{remark}\label{rem:C_n}
		By untangling the definitions we see that $C_1(k) = [2k+1]$ and inductively $C_n(k)$ has objects given by the set $\{0,1,...2k+1\}$ such that $\mor_{C_n(k)}(i,i+1) = C_{n-1}(i)$ for $i\leq k$ and $\mor_{C_n(k)}(i',i'+1) = C_{n-1}(2k+1-i)$ for $i\geq k+1$. In particular, we see that the natural inclusion $c_n\xinert{i_n^k} C_n(k)$ admits a section $s_n^k: C_n(k)\xactive{} c_n$ that we also define by induction: for $n=1$ we set $s_1^k(i)\bydef 0$ if $i\leq k$ and $s_1^k(i')\bydef 1$ otherwise, for general $n$ we define $s_n^k$ to act by the same formula on objects and define the morphism
		\[C_{n-1}(k) = \mor_{C_n(k)}(k,k+1)\rightarrow c_{n-1} = \mor_{c_n}(0,1)\]
		to be $s^k_{n-1}$.
	\end{remark}
	\begin{prop}\label{prop:twar_C}
		$|\twar_\theta'(\cE)|([q])$ is isomorphic to the space of morphisms $\Delta^\inrt_{/[q]}\xrightarrow{F}\Theta_{n,/\cE}$ for which the composition $\Delta^\inrt_{/[q]}\xrightarrow{F}\Theta_{n,/\cE}\xrightarrow{p}\Theta_n$ with the forgetful functor is isomorphic to $C_q^\inrt$ in the notation of \Cref{constr:twar_C}.
	\end{prop}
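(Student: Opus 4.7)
The plan is to identify both sides with the mapping space $\mor_{\cat_n}(\colim(C_q^\inrt),\cE)$. For the right-hand side this follows immediately from the Grothendieck correspondence: since the forgetful $\Theta_{n,/\cE}\to\Theta_n$ is a left fibration, lifts of $C_q^\inrt$ into $\Theta_{n,/\cE}$ are in natural bijection with morphisms $\colim(C_q^\inrt)\to\cE$ in $\cat_n$. For the left-hand side I will exhibit a coreflective inclusion $\cA_q\hookrightarrow\twar'_\theta(\cE)([q])$, where $\cA_q$ is the full subcategory on those $F$ with $p\circ F$ strictly equal to $C_q^\inrt$, and then identify $|\cA_q|$ with the same mapping space; condition (3) will precisely encode the compatibility needed.

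The coreflector $R\colon\twar'_\theta(\cE)([q])\to\cA_q$ will assign to each $F$ satisfying (1), (2) a canonical active natural transformation $\alpha_F\colon C_q^\inrt\Rightarrow p\circ F$, with $R(F)$ given by the composite $[l]\mapsto(C_n(l)\xrightarrow{\alpha_F([l])}p\circ F([l])\to\cE)$ and counit $\iota R(F)\to F$ furnished by $\alpha_F$ itself. I construct $\alpha_F$ by induction on $l$. On $[0]$ the component is the identity forced by condition (1). On $[1]$ I invoke a universal property of $C_n(1)$: given $\theta\in\Theta_n$ equipped with an inert $c_n\xinert{i}\theta$ (from the min-preserving $\{0\}\xinert{}[1]$) and an active $c_n\xactive{a}\theta$ (from the max-preserving $\{1\}\xinert{}[1]$), there is a unique active morphism $C_n(1)\xactive{}\theta$ restricting along $i_n(1)$ to $i$ and along $a_n(1)$ to $a$. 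This is proved by a secondary induction on $n$ using the inductive definition of $C_n(1)$, whose only nontrivial morphism category is $\mor_{C_n(1)}(1,2)\cong C_{n-1}(1)$. For $l\geq 2$, the iterated pushout \eqref{eq:C_n} defining $C_n(l)$ together with \Cref{lem:theta_push} allows me to assemble the components $\alpha_F([i,i+1])$ into $\alpha_F([l])$; the commuting-square constraints arising from the two factorizations of the non-endpoint inclusions $\{k\}\xinert{}[l]$ (through $[0,k]$ or through $[k,l]$, yielding an inert-then-active morphism equal to an active-then-inert one) are automatic from the universal property. The resulting $\alpha_F$ has identity components at the elementary objects $\{k\}\xinert{}[q]$, so the counit defines a valid morphism in $\twar'_\theta(\cE)([q])$ satisfying condition (3).

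Having established the coreflection, the inclusion $\cA_q\hookrightarrow\twar'_\theta(\cE)([q])$ induces a homotopy equivalence on classifying spaces, giving $|\twar'_\theta(\cE)([q])|\simeq|\cA_q|\simeq\mor_{\cat_n}(\colim(C_q^\inrt),\cE)$, which is exactly the space asserted in the proposition. The main technical obstacle is establishing the universal property of $C_n(1)$: existence and uniqueness of the active morphism $C_n(1)\xactive{}\theta$ must be checked against the possibly intricate morphism-category structure of $\theta$. An additional subtlety is that the pushout \eqref{eq:C_n} is taken in $\Theta_n$ but need not be preserved by $\Theta_n\hookrightarrow\cat_n$ (per the warning after \Cref{lem:theta_push}); one therefore must verify that the specific pushouts arising in the inductive construction are preserved when viewed in $\Theta_{n,/\cE}$, which should follow from the active/inert nature of their legs combined with \Cref{lem:act}.
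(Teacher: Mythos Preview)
Your proposal is correct and follows the same strategy as the paper: both show that each $F\in\twar'_\theta(\cE)([q])$ admits a unique morphism from a $C_q^\inrt$-shaped object, by constructing $C_n(l)\xactive{}p\circ F([l])$ via induction on $n$ (for $l=1$) and then the pushout \eqref{eq:C_n} (for $l\geq 2$). The paper phrases this as the inclusion $j_q\colon X([q])\hookrightarrow\twar'_\theta(\cE)([q])$ being coinitial; your coreflection is the same statement, since the comma categories $j_q/F$ turn out to be singletons.

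Your intermediate identification with $\mor_{\cat_n}(\colim C_q^\inrt,\cE)$ is unnecessary and is the sole source of the pushout-preservation worry you flag at the end. The paper sidesteps it entirely: the target $p\circ F(\id_{[q]})$ of $\alpha_F$ lies in $\Theta_n$, so building $\alpha_F$ uses only the universal property of the pushout \eqref{eq:C_n} in $\Theta_n$, and the question of whether $\Theta_n\hookrightarrow\cat_n$ preserves it never arises. If you drop that detour your argument becomes exactly the paper's.
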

	\begin{proof}
		Denote by $X([q])$ the space described in the statement of the proposition, note that there is a natural morphism $j_q:X([q])\rightarrow \twar'_\theta(\cE)([q])$, we need to prove that $X_q$ induces an isomorphism upon taking geometric realizations. We will in fact show that $j_q$ is coinitial, we will do so by induction on $q$ starting with the case $q=1$. The objects of $\twar'_\theta(\cE)([1])$ are given by cospans
		\begin{equation}\label{eq:theta_obj}
			\begin{tikzcd}[sep=huge]
				{c_n} & \theta & {c_n} \\
				& \cE
				\arrow["i", from=1-1, to=1-2]
				\arrow["f"{description}, from=1-1, to=2-2]
				\arrow["h"{description}, from=1-2, to=2-2]
				\arrow["a"', two heads, from=1-3, to=1-2]
				\arrow["g"{description}, from=1-3, to=2-2]
			\end{tikzcd}
		\end{equation}
		with morphisms given by 
		\begin{equation}\label{eq:theta_mor}
			\begin{tikzcd}[sep=huge]
				& {\theta'} \\
				{c_n} & \theta & {c_n} \\
				& \cE
				\arrow["b"{description}, two heads, from=1-2, to=2-2]
				\arrow["{i'}", tail, from=2-1, to=1-2]
				\arrow["i", tail, from=2-1, to=2-2]
				\arrow["f"{description}, from=2-1, to=3-2]
				\arrow["h"{description}, from=2-2, to=3-2]
				\arrow["{a'}"', two heads, from=2-3, to=1-2]
				\arrow["a"', two heads, from=2-3, to=2-2]
				\arrow["g"{description}, from=2-3, to=3-2]
			\end{tikzcd},
		\end{equation}
		note that the morphism $b$ in \eqref{eq:theta_mor} is automatically active. We need to show that for any cospan
		\begin{equation}\label{eq:theta_cosp}
			c_n\xinert{i}\theta\overset{a}{\twoheadleftarrow}c_n
		\end{equation}
		over $\cE$ there is a unique morphism of the form 
		\[\begin{tikzcd}[sep=huge]
			& {C_n(1)} \\
			{c_n} & \theta & {c_n}
			\arrow["{b_0}"{description}, two heads, from=1-2, to=2-2]
			\arrow["{i_n(1)}", tail, from=2-1, to=1-2]
			\arrow["i", tail, from=2-1, to=2-2]
			\arrow[two heads, from=2-3, to=1-2]
			\arrow["a"', two heads, from=2-3, to=2-2]
		\end{tikzcd},\]
		we will prove this by induction on $n$. For $n=1$ we are given a cospan $[1]\xinert{i}[m]\overset{a}{\twoheadleftarrow}[1]$ and we need to define an active morphism $a_0:[3]\xactive{}[m]$ such that $a_0\circ i_1(1)\cong i$, clearly there is a unique such morphism given by $a_0(1)\bydef i(0)$, $a_0(2)\bydef i(1)$. Assume we have constructed it in dimensions $\leq (n-1)$ and we are given \eqref{eq:theta_cosp}, we need to construct a unique morphism $C_n(1)\xactive{a_0}\theta$ such that $a_0\circ i_n(1)\cong i$. Suppose that $\obj(\theta) = \{0,1,...,m\}$ and denote $\theta'\bydef \mor_\theta(i(0), i(1))\in\Theta_{n-1}$, note that $i$ induces an inert morphism $i':c_{n-1}\xinert {}\theta'$. Using $a_0\circ i_n(1)\cong i$ and the fact that $a_0$ is active we see that 
		\begin{align*}
			&a_0(0) = 0,\\
			&a_0(1) = i(0),\\
			&a_0(2) = i(1),\\
			&a_0(3) = m,
		\end{align*}
		the morphisms $c_{n-1}\cong \mor_{C_n(1)}(0,1)\xactive{}\mor_\theta(0,i(0))$ and $c_{n-1}\cong \mor_{C_n(1)}(2,3)\xactive{}\mor_\theta(i(1),m)$ are the unique active morphisms and $C_{n-1}(1)\cong \mor_{C_n(1)}(1,2)\xactive{}\theta'\cong \mor_\theta(i(0),i(1))$ is the unique morphism corresponding to the cospan $c_{n-1}\xinert{i'}\theta'\overset{a}{\twoheadleftarrow}c_{n-1}$, we see that these conditions uniquely define $a_0$.\par
		Now, assume we have proved the claim for $(q-1)$ and we are given a morphism $F:\Delta^\inrt_{/[q]}\rightarrow\Theta_{n,/\cE}$, by induction we may assume that we have unique morphisms $\alpha_i: C_n(l)\xactive{}F([l]\xinert{i}[q])$ for all $i\neq \id_{[q]}$ defining a natural transformation between functors from $\Delta^\inrt_{/[q]}\setminus\{\id_{[q]}\}$, we need to define a morphism $\alpha_\id: C_n(q)\xactive{} F(\id_{[q]})$. Denote $\theta_0\bydef F([q-1]\xinert{\delta_q}[q])$, $\theta_1\bydef F([1]\xinert{q-1<1}[q])$ and $\theta\bydef F(\id_{[q]})$ so that we have a commutative square 
		\[\begin{tikzcd}[sep=huge]
			{c_n} & {\theta_0} \\
			{\theta_1} & \theta
			\arrow[two heads, from=1-1, to=1-2]
			\arrow["{i'_0}"', tail, from=1-1, to=2-1]
			\arrow["{i_0}", tail, from=1-2, to=2-2]
			\arrow["{a_1}"', two heads, from=2-1, to=2-2]
		\end{tikzcd},\]
		then by induction we have all solid arrows in the commutative diagram 
		\[\begin{tikzcd}[sep=huge]
			{c_n} & {C_n(q-1)} & {\theta_0} \\
			{C_n(1)} & {C_n(q)} \\
			{\theta_1} && \theta
			\arrow[two heads, from=1-1, to=1-2]
			\arrow["{i_{n}(q-1)}"', tail, from=1-1, to=2-1]
			\arrow["{a''_0}", two heads, from=1-2, to=1-3]
			\arrow[from=1-2, to=2-2]
			\arrow["{i_0}", tail, from=1-3, to=3-3]
			\arrow[from=2-1, to=2-2]
			\arrow["{a'_0}"', from=2-1, to=3-1]
			\arrow["\ulcorner"{anchor=center, pos=0.125, rotate=180}, draw=none, from=2-2, to=1-1]
			\arrow[dashed, from=2-2, to=3-3]
			\arrow["{a_1}"', two heads, from=3-1, to=3-3]
		\end{tikzcd},\]
		we can then uniquely fill in the dashed arrow by the universal property of the pushout \eqref{eq:C_n}.
	\end{proof}
	\begin{notation}
		Denote by $\twar^C_\theta(\cE)$ the functor $\Delta^\op\rightarrow\cS$ described in \Cref{prop:twar_C}, so that we have $\twar_\theta(\cE)\cong \cL_\cat(\twar^C_\theta(\cE))$. 
	\end{notation}
	\begin{construction}\label{constr:C_delta}
		We will define a functor $(k_0,...,k_{n-1})\mapsto C_n(k_0,...,k_{n-1})$ from $\Delta^n$ to $\Theta_n$, we will do so by induction on $n$: for $n=1$ we define $C_1(k)$ to be the object $C_1(k)\cong [2k+1]\in \Delta$ of \Cref{constr:twar_C}, given a morphism $f:[k]\rightarrow[l]$ we set $C_1(f)(i) \bydef f(i)$ for $i\leq k$ and $C_1(f)(i')\bydef 2l+1-f(i')$ for $i'\geq k+1$. Note that $C_1(0) = c_1$.\par
		Assume we have defined $C_{n-1}(k_0,...,k_{n-2})$ and moreover that $C_{n-1}(0,...,0) = c_{n-1}$, we first describe the value of $C_n(-)$ on objects: define $C_n(k_0,...,k_{n-1})$ to be the category with objects $\{0,1,...,2k_0+1\}$ such that $\mor_{C_n(k_0,...,k_{n-1})}(i,i+1) \bydef c_n$ if $i\neq k_0$ and $\mor_{C_n(k_0,...,k_{n-1})}(k,k+1) \bydef C_{n-1}(k_1,...,k_{n-1})$. Given a morphism $f = (f_0,...,f_{n-1})$ such that $f_i:[k_i]\rightarrow [l_i]$ we define $C_n(f)$ on objects by $C_n(f)(i) \bydef f_0(i)$ for $i\leq k$ and $C_n(f)(i')\bydef 2l+1-f_0(i')$ for $i'\geq k+1$, on morphism $(n-1)$-categories we define the morphism
		\[\mor_{C_n(k_0,...,k_{n-1})}(i,i+1) = c_n\rightarrow \prod_{f_0(i)<j\leq f_0(i+1)}c_n = \mor_{C_n(l_0,...,l_{n-1})}(f(i),f(i+1))\]
		for $i\neq k$ to be the diagonal morphism (where the indexing set for the product might be empty) and 
		\begin{align*}
			&\mor_{C_n(k_0,...,k_{n-1})}(i,i+1) = C_{n-1}(k_1,...,k_{n-1})\rightarrow\\ \rightarrow&\prod_{f_0(k)<j\leq l}c_n\times  C_{n-1}(l_1,...,l_{n-1})\times \prod_{f_0(k)<j\leq l}c_n = \mor_{C_n(l_0,...,l_{n-1})}(f(k),f(k+1))
		\end{align*}
		\[\]
		to have components $C_{n-1}(f_1,...,f_{n-1}): C_{n-1}(k_1,...,k_{n-1})\rightarrow C_{n-1}(l_1,...,l_{n-1})$ and 
		\[C_{n-1}(s,...,s):C_{n-1}(k_1,...,k_{n-1})\rightarrow C_{n-1}(0,...,0) = c_{n-1}\]
		(using the inductive assumption on $C_{n-1}(0,...,0)$), where $s:[k_i]\rightarrow[0]$ is the unique morphism. The functoriality of this construction easily follows from the functoriality of $C_{n-1}(f_1,...,f_{n-1})$. Finally, note that by construction $C_n(0,...,0)$ is the category with objects $\{0,1\}$ such that $\mor_{C_n(0,...,0)}(0,1) = c_{n-1}$, i.e. $C_n(0,...,0) = c_n$, meaning that all the inductive assumptions are verified for $n$.\par
		Finally, we will define certain morphisms $S_n^i(k):C_n(k)\rightarrow C_n^i(k)\bydef C_n(0,...,k,...,0)$, where on the right the only nontrivial value $k$ is in position $i$ for $0\leq i\leq n-1$. We prove the claim by induction on $n$: for the case $n=1$ we define $S_1^0(k)\bydef \id_{[2k+1]}$. Assume we have defined the required morphism up to dimension $(n-1)$: if $i=0$ then it is easy to see from the definitions that $C^0_n(k)$ is a category with objects $\{0,1,...,2k+1\}$ such that $\mor_{C^0_n(k)}(i,i+1) = c_{n-1}$, using the explicit description of $C_n(1)$ in \Cref{rem:C_n} we define $S_n^0(1)$ to be identity on objects and restrict to $s^j_{n-1}: C_{n-1}(j)\rightarrow c_{n-1}$ on morphism categories, where the morphism $s^j_{n-1}$ is also defined in \Cref{rem:C_n}. For general $i>0$ the category $C_n^i(k)$ has object $\{0,1\}$ such that $\mor_{C_n^i(k)}(0,1) = C_{n-1}^{i-1}(k)$ and we define $S_n^i(k)$ to sends objects $j\leq k$ to 0, $j\geq (k+1)$ to 1 and to restrict to act by
		\[\mor_{C_n(k)}(k,k+1)\cong C_{n-1}(k)\xrightarrow{S^{i-1}_{n-1}(k)}C^{i-1}_{n-1}(k)\cong \mor_{C_n^i(k)}(k,k+1)\]
		on the morphism category.\par
		Finally, we can define $C_q^{\inrt,i}:\Delta^\inrt_{/[q]}\rightarrow\Theta_n$ by sending $[l]\xinert{i}[q]$ to $C_n^i(l)$ and a morphism $[l]\xinert{j'}[m]$ over $[q]$ to the induced morphism $C_n^i(j'): C_n^i(l)\xinert{} C_n^i(m)$. It is easy to see with these notations that the morphisms $S_n^i(k)$ defined above define a natural transformation $S^i_n: C_q^\inrt\rightarrow C_q^{\inrt,i}$ of functors in $\mor_\cat(\Delta^\inrt_{/[q]},\Theta_n)$. Denote by $\twar^i_\theta(\cE)$ for $\cE\in\cat_n$ the functor $\Delta^\op\rightarrow \cS$ sending $[q]$ to the space of functors $\Delta^\inrt_{/[q]}\xrightarrow{F}\Theta_{n,/\cE}$ for which the composition $\Delta^\inrt_{/[q]}\xrightarrow{F}\Theta_{n,/\cE}\xrightarrow{p}\Theta_n$ with the forgetful functor is isomorphic to $C_q^{\inrt,i}$, then precomposing with $S^i_n$ defines a natural transformation $\twar^i_\theta(\cE)\rightarrow \twar^C_\theta(\cE)$, we will often implicitly use it to identify $\twar^i_\theta(\cE)$ with a simplicial subspace of $\twar^C_\theta(\cE)$.
	\end{construction}
	\begin{lemma}\label{lem:C_push}
		Assume we have a collection $(k_0,...,k_l,0,...,0)$ of non-negative integers such that $k_j = 0$ for $j>l$ and a collection $(0,...,0,h_l,...,h_{n-1})$ such that $h_i = 0$ for $i<l$, then we have a pushout diagram 
		\begin{equation}\label{eq:C_push}
			\begin{tikzcd}[sep=huge]
				{c_n\cong C_n(0,...,0)} & {C_n(0,...,h_l,...,h_{n-1})} \\
				{C_n(k_0,...,k_l,...,0)} & {C_n(k_0,...,k_{l-1},k_l+h_l,h_{l+1},...,h_{n-1})}
				\arrow["{C_n(i'_1)}", two heads, from=1-1, to=1-2]
				\arrow["{C_n(i_0)}"', tail, from=1-1, to=2-1]
				\arrow["{C_n(i'_0)}", tail, from=1-2, to=2-2]
				\arrow["{C_n(i_1)}"', two heads, from=2-1, to=2-2]
				\arrow["\ulcorner"{anchor=center, pos=0.125, rotate=225}, draw=none, from=2-2, to=1-1]
			\end{tikzcd}
		\end{equation}
		in $\cat_n$, where $i_0$ and $i'_0$ correspond to the unique collections of inert maximal-element-preserving morphisms while $i_1$ and $i'_1$ to the collections of inert minimal-element-preserving morphisms.
	\end{lemma}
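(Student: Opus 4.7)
The plan is to proceed by induction on $n$. The base case $n=1$ forces $l=0$ and reduces to the pushout $[2k_0+1]\coprod_{[1]}[2h_0+1]\cong [2(k_0+h_0)+1]$ in $\Delta$, which identifies the edge $\{k_0,k_0+1\}$ of the left factor with the spanning pair $\{0,2h_0+1\}$ of the right factor. A direct calculation verifies that the combinatorial description of the pushout yields the linear order $[2(k_0+h_0)+1]$, and this pushout is preserved by $\Delta\hookrightarrow\cat$ as it is an instance of the defining colimits of $\cat_1\hookrightarrow\psh_\cS(\Delta)$ given by the Segal condition.

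For the inductive step, I split into two cases. When $l>0$, both top corners have $h_0=0$ and hence object set $\{0,1\}$, while both bottom corners share the object set $\{0,\dots,2k_0+1\}$; unpacking Construction~\ref{constr:C_delta}, the horizontal maps act by identities in every coordinate except the $0$-th, and the morphism $(n-1)$-categories $\mor(i,i+1)$ agree across the four corners for every $i\neq k_0$. At position $k_0$ the four hom $(n-1)$-categories form exactly the $(n-1)$-dimensional version of the target square with parameter $l'=l-1\ge 0$ and shifted indices $(k_1,\dots,k_l,0,\dots,0)$ and $(0,\dots,h_l,\dots,h_{n-1})$. The inductive hypothesis identifies this as a pushout in $\cat_{n-1}$; applying the Segal decomposition, which presents any $n$-category with a fixed $(m+1)$-element object set as an iterated pushout of suspensions $\Sigma(\mor(i,i+1))$ over $[0]$, together with the colimit-preservation of the suspension functor $\Sigma:\cat_{n-1}\to\cat_n$ (which is a left adjoint to evaluation on the distinguished pair of objects), lifts this to a pushout in $\cat_n$.

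When $l=0$, the pushout is in the outer direction. The map $C_n(i_0):c_n\to C_n(k_0,0,\dots,0)$ is the inclusion as the middle edge between objects $k_0$ and $k_0+1$, while $C_n(i'_1):c_n\to C_n(h_0,h_1,\dots,h_{n-1})$ is the spanning cell from object $0$ to object $2h_0+1$. Via the Segal condition, $C_n(k_0,0,\dots,0)$ is the iterated pushout along $[0]$ of $2k_0+1$ copies of $c_n$, and $C_n(h_0,h_1,\dots,h_{n-1})$ is the iterated pushout of $h_0$ copies of $c_n$ followed by $\Sigma C_{n-1}(h_1,\dots,h_{n-1})$ followed by $h_0$ further copies of $c_n$; the analogous decomposition holds for $C_n(k_0+h_0,h_1,\dots,h_{n-1})$. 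The remaining step is to verify directly that gluing these two chains along the specified cells (middle edge on one side, spanning cell on the other) concatenates them correctly, producing the chain of $2(k_0+h_0)+1$ factors with $\Sigma C_{n-1}(h_1,\dots,h_{n-1})$ at position $k_0+h_0$; this reduces, via the colimit-preservation of suspension, to the base case pushout of intervals in $\Delta$.

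The main obstacle is that the map $c_n\to C_n(h_0,\dots,h_{n-1})$ given by $i'_1$ picks out the spanning cell rather than a single factor of the Segal decomposition, so the outer-gluing in the case $l=0$ requires some care to reformulate in Segal-decomposable terms. Compounding this, the Warning following \Cref{lem:theta_push} reminds us that $\Theta_n$-pushouts are not in general preserved by the inclusion into $\cat_n$, so I must work with the Segal condition in $\cat_n$ throughout; the saving fact is that our Segal decompositions are precisely the defining colimits of $\cat_n\hookrightarrow\psh_\cS(\Theta_n)$ and are therefore computed correctly in $\cat_n$, and that $\Sigma$ preserves colimits.
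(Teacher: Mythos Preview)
Your approach is genuinely different from the paper's and is essentially sound, but the justification in the $l=0$ case is mis-stated.

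The paper does not induct on $n$ at all. Instead it writes the upper-left cospan of \eqref{eq:C_push} as a cospan of three functors $F_1 \leftarrow F_0 \rightarrow F'_1$ on the elementary-cell category $\Theta^{\el}_{n,/C_n(k_0,\dots,0)}$ (the first is the identity inclusion, the other two are supported on the single distinguished cell $C_n(i_0)$), takes colimits, and then observes that the resulting pushout diagram is the colimit of the functor $G$ that leaves every elementary cell alone except for replacing the distinguished one by $C_n(0,\dots,h_{n-1})$. The point is that $G$ is exactly the image-functor $i \mapsto \ima_{C_n(i_1)}(i)$ along the active morphism $C_n(i_1)$, so \Cref{cor:rel_seg_theta} finishes the argument in one stroke. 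This handles all values of $l$ simultaneously and never needs to look inside the suspensions.

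Your induction on $n$ with the $l>0$ case is fine: the square is levelwise a suspension of the $(n-1)$-dimensional square at the single hom position $k_0$, and suspension preserves pushouts. The problem is your $l=0$ case. The concatenation you need, namely
\[
(A \amalg_{[0]} c_n \amalg_{[0]} B)\amalg_{c_n} X \;\cong\; A \amalg_{[0]} X \amalg_{[0]} B,
\]
is a purely formal pushout-pasting statement (iterate colimits over the evident zigzag diagram, absorbing $c_n$ into $X$); it has nothing to do with suspension, and it does not reduce to the $n=1$ base case. Suspension is irrelevant here because the gluing happens at the object level, not inside any hom $(n-1)$-category. Your base case has the same issue: the pushout $[2k_0+1]\amalg_{[1]}[2h_0+1]$ is not itself a spine colimit but a consequence of spine colimits via the same pasting lemma. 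Once you replace ``reduces via suspension to the base case'' with this two-line pasting argument, your proof is complete; but as written the reader is pointed to the wrong mechanism. The paper's route via \Cref{cor:rel_seg_theta} is shorter precisely because that corollary packages this pasting once and for all for arbitrary active morphisms.
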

	\begin{proof}
		Denote by $F_1:\Theta^\el_{n,/C(k_0,...,0)}\rightarrow \cat_n$ the functor sending $c_s\xinert{i}C(k_0,...,0)$ to $c_s$, by $F_0:\Theta^\el_{n,/C(k_0,...,0)}\rightarrow \cat_n$ the functor sending all $c_s\xinert{i}C_n(k_0,...,0)$ to $\varnothing$ except for $c_n\xinert{C_n(i_0)}C_n(k_0,...,0)$ which it sends to $c_n$ and by $F'_1:\Theta^\el_{n,/C(k_0,...,0)}\rightarrow \cat_n$ the functor sending all $c_s\xinert{i}C_n(k_0,...,0)$ to $\varnothing$ except for $c_n\xinert{C_n(i_0)}C_n(k_0,...,0)$ which it sends to $C_n(0,...,h_{n-1})$. In that case we have a cospan $F_1\xleftarrow{f_0}F_0\xrightarrow{f_1}F_1'$ of natural transformations, where the only non-trivial component of $f_0$ is $C_n(i_0)$ and the only non-trivial component of $f_1$ is $C_n(i'_1)$, moreover it is easy to see that upon taking colimits this cospan induces the upper left corner of \eqref{eq:C_push}. Using the commutativity of colimits, we see that the pushout of \eqref{eq:C_push} is given by 
		\[\underset{(c_s\xinert{i}C_n(k_0,...,0))\in\Theta^\el_{n,/C_n(k_0,...,0)}}{\colim}G(i),\] 
		where $G:\Theta^\el_{n,/C_n(k_0,...,0)}\rightarrow \cat_n$ is the functor sending all $c_s\xinert{i}C_n(k_0,...,0)$ to $c_s$ except for $c_n\xinert{C_n(i_0)}C_n(k_0,...,0)$ which it sends to $C_n(0,...,h_{n-1})$. However by direct inspection we see that $G$ coincides with the functor $i\mapsto \ima_{C_n(i_1)}(i)$, so the claim now follows from \Cref{cor:rel_seg_theta}.
	\end{proof}
	\begin{cor}\label{cor:i_seg}
		The simplicial spaces $\twar^i_\theta(\cE)$ for $0\leq i<n$ described in \Cref{constr:C_delta} satisfy the Segal condition.
	\end{cor}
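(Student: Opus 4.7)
The plan is to identify $\twar^i_\theta(\cE)([q])$ with the morphism space $\mor_{\cat_n}(C^i_n(q),\cE)$, and then deduce the Segal condition from an iterated pushout decomposition of $C^i_n(q)$ obtained via \Cref{lem:C_push}.

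First, I would observe that the forgetful functor $\Theta_{n,/\cE}\to\Theta_n$ is the right fibration classified by the presheaf $\mor_{\cat_n}(-,\cE):\Theta_n^\op\to\cS$, so the space of lifts of any functor $G:I\to\Theta_n$ to $\Theta_{n,/\cE}$ is naturally identified with $\mor_{\cat_n}(\colim_I G,\cE)$ (a lift being the same datum as a natural transformation from $G$ to the constant functor at $\cE$). Applying this to $G=C_q^{\inrt,i}$ and using that the latter is defined as the left Kan extension of its restriction to $\Delta^\el_{/[q]}$, we reduce the computation of $\twar^i_\theta(\cE)([q])$ to taking the colimit over $\Delta^\el_{/[q]}$ of the diagram sending $[0]\xinert{\{k\}}[q]$ to $c_n$ and $[1]\xinert{k<k+1}[q]$ to $C^i_n(1)$. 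This colimit is precisely the iterated pushout of $q$ copies of $C^i_n(1)$ glued pairwise along $c_n$.

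Second, I would apply \Cref{lem:C_push} with both input collections having only the $i$-th entry nonzero: with $l=i$, $k_i=1$ on the left and $h_i=q-1$ on the right, the lemma produces the pushout square
\[
C^i_n(q)\cong C^i_n(1)\coprod_{c_n} C^i_n(q-1)
\]
in $\cat_n$, and iterating this identifies $C^i_n(q)$ with the iterated pushout from the previous paragraph. Combining the two steps yields a natural equivalence $\twar^i_\theta(\cE)([q])\cong\mor_{\cat_n}(C^i_n(q),\cE)$, and applying $\mor_{\cat_n}(-,\cE)$ to the pushout decomposition converts the iterated pushout into the $q$-fold iterated fiber product over $\mor_{\cat_n}(c_n,\cE)=\twar^i_\theta(\cE)([0])$ of copies of $\mor_{\cat_n}(C^i_n(1),\cE)=\twar^i_\theta(\cE)([1])$, which is exactly the Segal condition.

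The only subtle point is the initial identification of lifts with morphisms out of the colimit, but this is a standard consequence of the straightening of right fibrations together with the fact that $\Delta^\inrt_{/[q]}$ has $\id_{[q]}$ as terminal object, making the coherence data for a lift automatic. No serious obstacle remains beyond bookkeeping for the iteration of \Cref{lem:C_push}.
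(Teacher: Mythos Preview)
Your argument is correct and follows essentially the same route as the paper. Both proofs rest on \Cref{lem:C_push} to obtain the iterated pushout decomposition $C^i_n(q)\cong C^i_n(1)\coprod_{c_n}\cdots\coprod_{c_n}C^i_n(1)$ in $\cat_n$; the paper phrases this as showing that the functor $F_0:[l]\mapsto C^i_n(l)$ is the left Kan extension of its restriction to $\Delta^\el_{/[q]}$ and then uses the decomposition $\Delta^\el_{/[q]}\cong\colim_{[e]\xinert{}[q]}\Delta^\el_{/[e]}$, whereas you first collapse everything to a single mapping space $\mor_{\cat_n}(C^i_n(q),\cE)$ via the terminal object of $\Delta^\inrt_{/[q]}$ and then decompose the source. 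One small inaccuracy: $C_q^{\inrt,i}$ is \emph{not} defined as a left Kan extension in \Cref{constr:C_delta} (unlike $C_q^\inrt$ in \Cref{constr:twar_C}); rather, it \emph{is} one, and that is precisely the content of the iterated application of \Cref{lem:C_push}---so your first paragraph's appeal to this is either circular or redundant given your second paragraph, but the argument still goes through.
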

	\begin{proof}
		By construction we may identify $\twar^i_\theta(\cE)([q])$ with the space of natural transformations $\alpha:F_0\rightarrow F_1$ in $\mor_\cat(\Delta^\inrt_{/[q]},\cat_n)$, where $F_0$ is the functor sending $[l]\xinert{}[q]$ to $C_n^i(l)$ and $F_1$ is the constant functor with value $\cE$. It follows from \Cref{lem:C_push} that both of those functors are left Kan extensions of their restriction along $i^\el:\Delta^\el_{/[q]}\hookrightarrow\Delta^\inrt_{/[q]}$, so we have
		\[\mor_{\mor_\cat(\Delta^\inrt_{/[q]},\cat_n)}(F_0,F_1)\cong \mor_{\mor_\cat(\Delta^\inrt_{/[q]},\cat_n)}(i^\el_!F'_0,i^\el_!F'_1)\cong\mor_{\mor_\cat(\Delta^\el_{/[q]},\cat_n)}(F'_0,F'_1) ,\]
		where $F'_i$ denotes the restriction of $F_i$ along $i^\el$. The claim now follows since
		\begin{equation}\label{eq:delta_col}
			\Delta^\el_{/[q]}\cong \underset{([e]\xinert{i}[q])\in\Delta^\el_{/[q]}}{\colim}\Delta^\el_{/[e]}.
		\end{equation}
	\end{proof}
	\begin{remark}
		Given $(\cE:\Theta_n^\op\rightarrow\cS)\in\cat_n$ we can consider a category $\cE_\cat$ given by pullback along $P:\Delta\rightarrow\Theta_n$ which sends $[n]$ to $P([n])$ such that $\obj(P([n])) = \{0,1,...,n\}$ and $\mor_{P([n])}(i,i+1) = c_{n-1}$. More explicitly, $\cE_\cat$ can be described as the category with the same space of objects as $\cE$ and with morphisms given by $n$-morphisms of $\cE$ with composition given by 0-composition of $n$-morphisms. With this notation it is easy to see that $\twar^0_\theta(\cE)\cong \twar(\cE_\cat)$, where on the right $\twar(-)$ denotes the ordinary twisted arrows category. More generally, given $f:c_n\rightarrow \cE$ and $0<l<n$ we can view $f$ as an $(n-l)$-morphism in an $(n-l)$-category 
		\[\cE(d^-_{l-1}f, d^+_{l-1}f)\bydef \mor_\cE(d^-_{l-1}f, d^+_{l-1}f),\]
		then $\twar^l_\theta(\cE)$ is the category with objects given by $n$-morphisms $f:c_n\rightarrow\cE$ and morphisms given by morphisms in $\twar(\cE(d^-_{l-1}f, d^+_{l-1}f)_\cat)$ (so that the space of morphisms between $f$ and $g$ is non-empty only if $d^\pm_{l-1}f\cong d^\pm_{l-1}g$).
	\end{remark}
	\begin{lemma}\label{lem:theta_twar}
		For $\theta_0\in\Theta$ we have 
		\begin{equation}\label{eq:theta_twar}
			\twar_\theta^C(\theta_0)\cong \Theta^\inrt_{n,/\theta_0}.
		\end{equation}
	\end{lemma}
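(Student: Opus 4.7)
The plan is to exhibit a natural equivalence of simplicial spaces between $\twar_\theta^C(\theta_0)$ and the nerve of $\Theta^\inrt_{n,/\theta_0}$ by constructing compatible bijections on $[0]$- and $[1]$-simplices and then promoting them via the Segal condition on each side. The nerve of a $1$-category is automatically Segal. The left-hand side is Segal because, by the left Kan extension formula $C^\inrt_q \cong i^\el_! C^\el_q$ from \Cref{constr:twar_C} together with the pushout description in \Cref{lem:C_push}, a point of $\twar_\theta^C(\theta_0)([q])$ is determined by its restriction to the elementary objects of $\Delta^\el_{/[q]}$ and equals the iterated fibre product of the level-$1$ data over level-$0$.

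At level $0$, I would identify $\twar_\theta^C(\theta_0)([0]) = \mor_{\Theta_n}(c_n,\theta_0)$ with $\obj(\Theta^\inrt_{n,/\theta_0})$ via the active/inert factorization of \Cref{prop:stein_fact}, sending $g$ to $\ima(g)\xinert{}\theta_0$; the inverse sends an inert $\theta'\xinert{}\theta_0$ to $c_n\xactive{}\theta'\xinert{}\theta_0$, where the first arrow is the unique active morphism into $\theta'\in\Theta_n\subseteq\stn_n$ provided by \Cref{def:act}. At level $1$, I send $f:C_n(1)\to\theta_0$ to the chain $\ima(f\circ i_n(1))\xinert{}\ima(f)\xinert{}\theta_0$; the key observation is that $\ima(f\circ s) = \ima(f)$ because $s:c_n\xactive{}C_n(1)$ is active and hence surjective on basis elements, and this exhibits the inclusion $\ima(f\circ i_n(1))\subseteq \ima(f)$ as an inert morphism with source and target matching the level-$0$ identification.

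The main obstacle is inverting the level-$1$ map: given an inert morphism $\theta'\xinert{\mu}\theta''\xinert{}\theta_0$ in $\Theta^\inrt_{n,/\theta_0}$, I need to produce a unique $f:C_n(1)\to\theta_0$ inducing it. Forcing $\ima(f)=\theta''$ makes $f$ factor as $C_n(1)\xactive{\tilde f}\theta''\xinert{}\theta_0$, so the task reduces to producing a unique active $\tilde f$ with $\tilde f\circ i_n(1) = c_n\xactive{}\theta'\xinert{\mu}\theta''$ and $\tilde f\circ s$ the canonical active $c_n\xactive{}\theta''$. My approach is to adapt the inductive argument in the proof of \Cref{prop:twar_C} to cospans $c_n\to\theta''\xactive{}c_n$ whose left leg need not be inert: the explicit formulas on objects still force $\tilde f$ uniquely, and the induction on $n$ on the morphism categories goes through (when $\mu$ identifies endpoints the induced maps land in a possibly trivial endomorphism category, which is harmless). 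Once the bijections on $[0]$- and $[1]$-simplices are checked to be compatible with source, target and degeneracy, the Segal property on both sides upgrades them to the desired equivalence $\twar_\theta^C(\theta_0)\cong\Theta^\inrt_{n,/\theta_0}$.
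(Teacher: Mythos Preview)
Your approach is correct and essentially the same as the paper's: both establish the Segal condition for $\twar^C_\theta(\theta_0)$ and then reduce to showing that morphism spaces are singletons. You invoke the inductive argument from \Cref{prop:twar_C} (extended to cospans whose left leg need not be inert) to obtain uniqueness of $C_n(1)\to\theta''$, whereas the paper cites \Cref{lem:C_cont}; your reference is the more direct one, and your observation that the induction still goes through when the induced map lands in a trivial endomorphism category is exactly what is needed to handle the non-inert case.
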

	\begin{proof}
		We first note that $\twar^C_\theta(\theta_0)$ satisfies the Segal condition -- this follows by the same argument as in the proof of \Cref{cor:i_seg}: $\twar_\theta^C(\theta_0)([q])$ is the space of natural transformations $\alpha:F_0\rightarrow F_1$ between functors $F_i:\Delta^\inrt_{/[q]}\rightarrow\Theta_n$, where $F_1$ is the constant functor with value $\theta_0$ and $F_0:([l]\xinert{i}[q])\mapsto C_n(l)$, note that we have $F_i \cong i^\el_! i^{\el,*}F_i$ for $i\in\{0,1\}$ (where the left Kan extension is taken with respect to the category $\Theta_n$), so the claim again follows by \eqref{eq:delta_col}. To complete the proof it now suffices to show that the spaces of morphisms in $\twar^C_\theta(\theta_0)$ are contractible, i.e. that for any $i':\theta'\xinert{}\theta_0$ and $i'':\theta''\xinert{}\theta_0$ such that $i'$ factors through $i''$ there is a unique diagram of the form 
		\[\begin{tikzcd}[sep=huge]
			{c_n} & {C_n(1)} & {c_n} \\
			{\theta'} & {\theta_0} & {\theta''}
			\arrow["{i_n(1)}", tail, from=1-1, to=1-2]
			\arrow[two heads, from=1-1, to=2-1]
			\arrow[from=1-2, to=2-2]
			\arrow[two heads, from=1-3, to=1-2]
			\arrow[two heads, from=1-3, to=2-3]
			\arrow["{i'}"', tail, from=2-1, to=2-2]
			\arrow["{i''}", tail, from=2-3, to=2-2]
		\end{tikzcd},\]
		which follows from \Cref{lem:C_cont}.
	\end{proof}
	\begin{prop}\label{prop:main_theta_stab}
		There is an isomorphism
		\[\mor_\cat(\twar_\theta(\cE),\spc)\cong \stab(\cat_{n,/\cE}).\]
	\end{prop}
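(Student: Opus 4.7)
The plan is to assemble this from two ingredients already in place: the colimit presentation of $\twar_\theta(\cE)$ over $\theta\to \cE$ given in \Cref{lem:twar_theta1}, and the limit presentation of $\stab(\cat_{n,/\cE})$ given in the second half of \Cref{thm:main_theta}. Since $\mor_\cat(-,\spc)$ sends colimits in $\cat$ to limits in $\mor_\cat(-,\spc)$-values, applying it to \Cref{lem:twar_theta1} yields
\[\mor_\cat(\twar_\theta(\cE),\spc)\cong\underset{\theta\xrightarrow{f}\cE}{\lim}\;\mor_\cat(\twar_\theta(\theta),\spc).\]
It therefore suffices to produce a natural isomorphism $\mor_\cat(\twar_\theta(\theta),\spc)\cong\psh_\spc(\Theta^{\inrt,\op}_{n,/\theta})$ for $\theta\in\Theta_n$, since then taking the limit over $\Theta_{n,/\cE}$ recovers $\stab(\cat_{n,/\cE})$ by \Cref{thm:main_theta}.

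For the pointwise statement I would first identify $\twar_\theta(\theta)$ with $\Theta^\inrt_{n,/\theta}$. By definition $\twar_\theta(\theta)\cong\cL_\cat(\twar^C_\theta(\theta))$, and \Cref{lem:theta_twar} shows that the simplicial space $\twar^C_\theta(\theta)$ coincides, under the nerve, with the category $\Theta^\inrt_{n,/\theta}$: indeed that lemma verifies both that $\twar^C_\theta(\theta)$ satisfies the Segal condition and that its mapping spaces are contractible, so its associated $(\infty,1)$-category is simply $\Theta^\inrt_{n,/\theta}$ (a 1-category, hence already complete Segal), and $\cL_\cat$ acts trivially on it. Then by the definition $\psh_\spc(\cC)\bydef\mor_\cat(\cC^\op,\spc)$ we have
\[\mor_\cat(\twar_\theta(\theta),\spc)\cong\mor_\cat(\Theta^\inrt_{n,/\theta},\spc)\cong\psh_\spc(\Theta^{\inrt,\op}_{n,/\theta}),\]
which matches $\stab(\cat_{n,/\theta})$ as required.

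The final step is naturality: the identifications $\twar_\theta(\theta)\cong\Theta^\inrt_{n,/\theta}$ and $\stab(\cat_{n,/\theta})\cong\psh_\spc(\Theta^{\inrt,\op}_{n,/\theta})$ must be compatible with the pullback/pushforward morphisms indexing the limits above, so that the equivalences glue. Concretely, a morphism $\theta\xrightarrow{g}\theta'$ in $\Theta_{n,/\cE}$ induces $g_!:\twar_\theta(\theta)\to\twar_\theta(\theta')$ (via postcomposition in the construction of $\twar'_\theta$), and under the identification of \Cref{lem:theta_twar} this functor is precisely the active/inert factorization pushforward $\Theta^\inrt_{n,/\theta}\to\Theta^\inrt_{n,/\theta'}$; dually, \Cref{thm:main_theta} uses the same transition functors on the stabilization side (via the left adjoints $g_!$ used in the proof of \Cref{lem:globn}). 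The main technical obstacle, and the only step that is not purely formal, is this compatibility check — one needs to trace through \Cref{prop:seg_inj} and \Cref{constr:seg_aux} to verify that the functor $F$ of \Cref{prop:seg_inj} agrees up to natural isomorphism with $\mor_\cat(\twar_\theta(-),\spc)$ on objects of $\Theta_n$ and on morphisms. Once that compatibility is established, passing to the limit over $\Theta_{n,/\cE}$ gives the desired equivalence.
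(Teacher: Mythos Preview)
Your outline is correct and matches the paper's approach: both reduce to $\Theta_n$ via the colimit/limit presentations of \Cref{lem:twar_theta1} and \Cref{lem:globn}, invoke \Cref{lem:theta_twar} for the pointwise identification, and isolate naturality as the nontrivial step. The paper carries out that last step explicitly by writing down the comparison map $\beta_\theta:\Theta^\inj_{n,/\theta}\to\psh_\cS(\Theta^{\inrt,\op}_{n,/\theta})$ underlying \Cref{prop:seg_inj} and verifying, after passing to left adjoints and using that everything factors through $\psh_{\mathrm{Set}}$, that $f_!\circ\beta_{\theta,!}\cong\beta_{\theta',!}\circ f_!$ for each $f:\theta\to\theta'$; this is exactly the ``trace through \Cref{prop:seg_inj}'' you flag but do not perform.
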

	\begin{proof}
		Denote by $F:\cat^\op_n\rightarrow\cat$ the functor $\cE\mapsto \stab(\cat_{n,/\cE})$ and by $G:\cat_n^\op\rightarrow\cat$ the functor $\cE\mapsto \mor_\cat(\twar_\theta(\cE)\spc)$, it follows from \Cref{lem:globn} for $F$ and \Cref{lem:twar_theta1} for $G$ that both of those functors are the right Kan extensions of their restriction to $\Theta_n$, so it suffices to construct a natural transformation $\beta:G|_{\Theta_n}\rightarrow F|_{\Theta_n}$. It follows from \Cref{lem:theta_twar} that the functor $G|_\theta$ is equivalent to the functor $\theta\mapsto \mor_\cat(\Theta^\inrt_{n,/\theta},\spc)$ and from \Cref{prop:theta_comp} that $F|_\theta$ is isomorphic to the functor $\theta\mapsto\seg_\spc(\Theta^\inj_{n,/\theta})$, so it suffices to extend the isomorphism
		\begin{equation}\label{eq:theta_inj}
			\mor_\cat(\Theta^\inrt_{n,/\theta},\spc)\cong \seg_\spc(\Theta^\inj_{n,/\theta})
		\end{equation}
		of \Cref{prop:seg_inj} to a natural transformation of functors. Denote by $\beta_\theta:\Theta^\inj_{n,/\theta}\rightarrow \psh_\cS(\Theta^{\inrt,\op}_{n,/\theta})$ the functor sending $j:\theta_j\rightarrow\theta$ to $\underset{(e\xinert{i}\theta_j)\in\Theta^\el_{n,/\theta_j}}{\colim}\ima(j\circ i)$ and a morphism $g:\theta_j\rightarrow\theta_t$ over $\theta$ to the morphism $\beta(\theta_t)\rightarrow\beta(\theta_j)$ induced by morphisms $\ima(i_0):\ima(t\circ i')\xinert{}\ima(t\circ g\circ i) = \ima(j\circ i)$ indexed by diagrams 
		\[\begin{tikzcd}[sep=huge]
			{c_k} && {\ima(g\circ i)} & {c_l} \\
			{\theta_j} && {\theta_t} \\
			& \theta
			\arrow[two heads, from=1-1, to=1-3]
			\arrow["i"', tail, from=1-1, to=2-1]
			\arrow[tail, from=1-3, to=2-3]
			\arrow["{i_0}"', tail, from=1-4, to=1-3]
			\arrow["{i'}", tail, from=1-4, to=2-3]
			\arrow["g", from=2-1, to=2-3]
			\arrow["j"', from=2-1, to=3-2]
			\arrow["t", from=2-3, to=3-2]
		\end{tikzcd}.\]
		Denote $\beta_\theta^\spc\bydef \Sigma^\infty\beta$, then the isomorphism \eqref{eq:theta_inj} is induced by the functor \[\beta_\theta^{\spc,*}:\psh_\spc(\Theta^{\inrt,\op}_{n,/\theta})\rightarrow\seg_\spc(\Theta^\inj_{n,/\theta})\]
		sending $\cF:\Theta^\inrt_{n,/\theta}\rightarrow\spc$ to $\cF(\beta_\theta^\spc(\theta_j)):\Theta^{\inj,\op}_{n,/\theta}\rightarrow\spc$. Note that the functor $F|_{\Theta_n}$ is the stabilization of the functor $F_0:\Theta^\op_n\rightarrow\cat$ given by $F_0(\theta)\cong \seg_\cS(\Theta^\inj_{n,/\theta})$ and similarly $G|_{\Theta_n}$ is the stabilization of $G_0:\Theta^\op_n\rightarrow\cat$ given by $G_0(\theta)\bydef \psh_\cS(\Theta^{\inrt,\op}_{n,/\theta})$, so it suffices to show that $\beta^*_\theta:G_0(\theta)\rightarrow F_0(\theta)$ defines a natural transformation of functors. Denote by $F'_0:\Theta_n^\op\rightarrow \cat$ the functor $\theta\mapsto\psh_\cS(\Theta^\inj_{n,/\theta})$, then $F_0$ is a subfunctor of $F'_0$, so it suffices to show that $\beta^*_\theta$ defines a natural transformation $G_0\rightarrow F'_0$. Note that all the morphisms $\beta^*_\theta$ as well as all the functors $F'_0(f)$ and $G_0(f)$ for $f:\theta\rightarrow\theta'$ admits left adjoints, denote by $F_0^{',L}$ and $G_0^L$ the functors $\Theta_n\rightarrow\cat$ the functors obtained by taking $\theta$ to $F_0'(\theta)$ and $G_0(\theta)$ respectively and a morphism $f:\theta\rightarrow\theta'$ to the left adjoints $F'_0(f)^L$ and $G_0(f)^L$ respectively and by $\beta_{\theta,!}$ the left adjoints of $\beta^*_\theta$, it suffices to prove that $\beta_{\theta,!}$ defines a natural transformation between $F_0^{',L}$ and $G_0^L$. Finally, note that all the categories $\Theta^\inj_{n,/\theta}$ and $\Theta^{\inrt,\op}_{n,/\theta}$ are discrete and all the objects $\beta_\theta(\theta_j)\in\psh_\cS(\Theta^{\inrt,\op}_{n,/\theta})$ factor through the discrete subcategory $\psh_\mathrm{Set}(\Theta^{\inrt,\op}_{n,/\theta})$, so in order to prove that $\beta_{\theta,!}$ defines a natural transformation it suffices to prove that for any morphism $f:\theta\rightarrow\theta'$ we have a commutative diagram
		\[\begin{tikzcd}[sep=huge]
			{\psh_{\mathrm{Set}}(\Theta^\inj_{n,/\theta})} & {\psh_{\mathrm{Set}}(\Theta^\inj_{n,/\theta'})} \\
			{\psh_{\mathrm{Set}}(\Theta^{\inrt,\op}_{n,/\theta})} & {\psh_{\mathrm{Set}}(\Theta^{\inrt,\op}_{n,/\theta'})}
			\arrow["{f_!}", from=1-1, to=1-2]
			\arrow["{\beta_{\theta,!}}"', from=1-1, to=2-1]
			\arrow["{\beta_{\theta',!}}", from=1-2, to=2-2]
			\arrow["{f_!}"', from=2-1, to=2-2]
		\end{tikzcd}.\]
		In other words, we need to show that for $\theta_j\xrightarrow{j}\theta$ we have
		\[\underset{(e\xinert{i}\theta_j)\in\Theta^\el_{n,/\theta_j}}{\colim}\ima((f\circ j)\circ i)\cong \underset{(e\xinert{i}\theta_j)\in\Theta^\el_{n,/\theta_j}}{\colim}f_!\ima(j\circ i),\]
		which follows since $f_!\ima(j\circ i)\cong \ima(f\circ j\circ i)$ by construction of $f_!$.
	\end{proof}
	The rest of this section will be dedicated to the proof of \Cref{prop:stn_inrt}, before that we will need some auxiliary results and constructions.
	\begin{construction}\label{constr:sqs}
		Assume we have a string $S = l_{i_N}l_{i_{N-1}}...l_{i_1}$ containing terms $l_i$ with $0\leq i\leq n-1$. Denote by $m_q$ the total number of terms $l_q$ for $0\leq q\leq n-1$ in $S$, for $1\leq t\leq m_q$ denote by $1\leq s_t^q\leq N$ the index such that $i_{s^q_t} = q$ and there are $(t-1)$ terms $l_q$ among $l_{i_k}$ with $k<s^q_t$ and set $s_0^q=0$, for $0\leq k\leq n-1$ denote by $p_q^k(t)$ the number of $i_s = k$ with $s\leq s_t^q$ (so that $p_q^q(t) = t$). Finally, define $\sq(S)$ to be the full subcategory of $\prod_{q=0}^{n-1} [m_q]$ containing $(x_0,...,x_{n-1})$ such that for all $k$ we have 
		\[x_k\leq \max_{q\leq k}(p_q^k(x_q)).\]
		For $0\leq i\leq (n-1)$ Call a substring of $S$ an $i$-segment if it only contains the term $l_i$ and is maximal by inclusion. Denote by $\widehat{S}$ the string obtained from $S$ by replacing every $i$-segment for every $i$ as above with a single appearance of the term $l_i$, denote by $\widehat{m}_i$ the number of terms $l_i$ in $\widehat{S}$. Note that we have an active morphism $a_i:[\widehat{m}_i]\xactive{}[m_i]$ which sends $k\in [\widehat{m}_i]$ to the maximal index of the term $l_i$ appearing in the $k$th segment, taken together they define a morphism $a:\prod_i[\widehat{m}_i]\xactive{}\prod_i[m_i]$, it is easy to see that $a$ takes $\sq(\widehat{S})$ to $\sq(S)$. Finally, for some subset $\Sigma\in\{0,1,...,n-1\}$ we define a $\Sigma$\textit{-quadrant} of $\sq(S)$ to be the subcategory of $\sq(S)$ containing $y\bydef (y_0,...,y_{n-1})\in \sq(S)$ such that for some $x\bydef (x_0,...,x_{n-1})\in \sq(\widehat{S})$ we have $y_k = a_k(x_k)$ for $k\notin \Sigma$ and $a_s(x_s)\leq y_s\leq a_s(x_s+1)$ for $s\in\Sigma$. We will denote by $\sq_0(S)\hookrightarrow\sq(S)$ the simplicial subset (which is generally not a category) containing simplices $[m]\xrightarrow{f}\sq(S)$ whose image lies entirely in some quadrant. Finally, denote by $[N]^0$ the \textit{spine} of $[N]$, i.e. the simplicial subset containing all 0-simplices and the 1-simplices of the form $(i<i+1)$, then there is a canonical morphism $i_S:[N]^0\rightarrow\sq(S)$ that factors through $\sq^0(S)$ taking $k\leq N$ to $(i_0^k(S),i_1^k(S),...,i_{n-1}^k(S))$, where $i_j^k(S)$ denotes the number of terms $l_{i_s}$ with $i_s=j$ and $s\leq k$.
	\end{construction}
	\begin{lemma}\label{lem:bruh}
		Given a string $S = l_{i^S_N}l_{i^S_{N-1}}...l_{i^S_1}$ as in \Cref{constr:sqs} denote by $P$ the set of injective morphisms $[N]\xrightarrow{j} \prod_{q=0}^{n-1} [m_q]$ such that $j(0) = (0,...,0)$ and $j(N) = (m_0,...,m_{n-1})$, then there is a partial order on $P$ such that $j$ factors through $\sq(S)\hookrightarrow\prod_{q=0}^{n-1} [m_q]$ if and only if $j\leq i_S$.
	\end{lemma}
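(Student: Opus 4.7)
The plan is to construct the partial order on $P$ explicitly and then verify the biconditional by a direct combinatorial argument. Observe first that every $j\in P$ is a monotone injective path from $(0,\dots,0)$ to $(m_0,\dots,m_{n-1})$ of length $N=\sum_q m_q$, so each step $j(k-1)\to j(k)$ must increment exactly one coordinate by one, and hence $P$ is in bijection with the shuffles of the multiset $\{0^{m_0},1^{m_1},\dots,(n-1)^{m_{n-1}}\}$. Under this bijection $i_S$ corresponds to the shuffle $(i^S_1,i^S_2,\dots,i^S_N)$ read off $S$ itself. I propose to take for $\le$ the weak Bruhat order on shuffles: declare $j\le j'$ iff $j$ may be obtained from $j'$ by a finite sequence of elementary moves that replace an adjacent pair $\cdots l_q l_{q'}\cdots$ in the shuffle with $q<q'$ by $\cdots l_{q'} l_q\cdots$. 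Equivalently, $j\le j'$ iff for every pair $q<q'$ the number of inversions $\#\{s<t:j_s=q',\,j_t=q\}$ for $j$ is at most that for $j'$.

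Next, I would verify that $i_S$ itself factors through $\sq(S)$: at the point $x=i_S(k)$ the defining inequality $x_l\le\max_{q\le l}p_q^l(x_q)$ is trivially witnessed by $q=l$ since $p_l^l(x_l)=x_l$. Thus $i_S$ is one of the paths factoring through $\sq(S)$, which sets up the target of the $\le$-comparison.

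For the easy direction, I would show that $\sq(S)$ is a down-set for $\le$ by analyzing a single elementary move: if $j'$ factors through $\sq(S)$ and $j$ differs from $j'$ by swapping an adjacent pair $l_q l_{q'}$ with $q<q'$, the only point that changes is $j(k)$ for the step at which the swap occurs, and one checks directly from the definition of $p_q^l$ and the indexing $s_t^q$ that the new lattice point still satisfies $x_l\le\max_{q\le l}p_q^l(x_q)$ for every $l$. Iterating this descent, any $j\le i_S$ factors through $\sq(S)$.

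The hard direction is the converse: suppose $j\in P$ factors through $\sq(S)$; I would produce an increasing chain of elementary moves from $j$ up to $i_S$. The plan is by induction on $N$, locating the smallest $k$ where $j$ and $i_S$ disagree and showing that the $\sq(S)$ condition at $j(k)$ forces the coordinate incremented by $j$ at step $k$ to be no larger (in the Bruhat sense) than the one incremented by $i_S$; an elementary move then brings $j$ one step closer to $i_S$ while keeping the path inside $\sq(S)$ by the down-set property just established, and the inductive hypothesis applies to the resulting shorter disagreement. The main obstacle is precisely this step: translating the arithmetic inequality $x_l\le\max_{q\le l}p_q^l(x_q)$ at the corner where $j$ first deviates from $i_S$ into the assertion that the needed swap is indeed a legal elementary move of the weak Bruhat order, which will require a careful bookkeeping of the quantities $p_q^k$ in terms of the prefix of $S$ already traversed.
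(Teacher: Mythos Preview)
Your setup is the same as the paper's: identify $P$ with shuffles, take the weak Bruhat order generated by adjacent transpositions, prove the down-set property, then build a chain for the converse.  The easy direction is fine.  The gap is in your hard direction.

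You write that ``an elementary move then brings $j$ one step closer to $i_S$ while keeping the path inside $\sq(S)$ by the down-set property''.  This is where the argument breaks.  If you move $j$ \emph{upward} toward $i_S$, the down-set property gives you nothing: going up can leave $\sq(S)$.  The correct move is the opposite one --- modify the \emph{reference string} $S$ downward toward the shuffle $S'$ of $j$ --- and this forces a different inductive invariant.  Concretely, the paper builds a chain $S=S'_0>_P S'_1>_P\cdots>_P S'_N=S'$ where $S'_l$ agrees with $S'$ on positions $1,\dots,l$, and the invariant carried through the induction is that $j_{S'}$ factors through $\sq(S'_l)$, not through $\sq(S)$.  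Verifying this invariant after each batch of swaps is a separate computation (the last paragraph of the paper's proof) and is not a consequence of the down-set property.  Without it, at the second disagreement you can no longer deduce that the coordinate incremented by $S'$ is $\le$ the one incremented by the current reference string, because that deduction uses membership in $\sq(S'_{l-1})$, not in $\sq(S)$.

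There is a second, smaller omission: matching a single position generally cannot be done by one elementary move.  If at the first disagreement $l$ the symbol $p=i^{S'}_l$ occurs in $S'_{l-1}$ only at some position $x>l$, you must bubble it to position $l$ by $\sigma_l\sigma_{l+1}\cdots\sigma_{x-1}$, and each of these is a legal down-move only if every intermediate symbol at positions $l+1,\dots,x-1$ is strictly larger than $p$.  The paper checks this explicitly (again using $j_{S'}\in\sq(S'_{l-1})$); your sketch does not.

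Finally, your two descriptions of $\le$ are inconsistent as written: with the shuffle read in step order $(i^S_1,\dots,i^S_N)$, the move $l_q l_{q'}\mapsto l_{q'}l_q$ for $q<q'$ \emph{creates} an inversion, so going down would increase inversions, contradicting your ``at most as many inversions'' clause.  One of the two conventions needs to be flipped; the paper's convention has $\sigma_k S'<_P S'$ exactly when $i^{S'}_{k+1}<i^{S'}_k$, i.e.\ going down removes an inversion in step order.
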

	\begin{proof}
		Note that the set $P$ can be identified with the set of strings $S' = l_{i^{S'}_N}...l_{i^{S'}_0}$ with $0\leq i'_k\leq n-1$, where to such a string corresponds a morphism $[N]\rightarrow \prod_{q=0}^{n-1}[m_q]$ sending $k$ to $(i_0^k(S'),...,i^{k}_{n-1}(S'))$ , where $i_j^k(S)$ denotes the number of terms $l_{i^{S'}_s}$ with $i^{S'}_s=j$ and $s\leq k$. Assume we have $k\in [N]$ such that $p=i^{S'}_{k+1}<i^{S'}_k=q$, define $\sigma_k S'$ to be the string such that $i^{\sigma_k S'}_u = i^{S'}_u$ for $u\notin\{k,k+1\}$, $i^{\sigma_k S'}_{k+1}=q$ and $i^{\sigma_k S'}_{k}=p$, we declare $\sigma_k S' <^0_P S'$ and in general define $<_P$ to be the transitive closure of $<_P^0$, we claim that this satisfies the conditions of the lemma.\par
		Assume that $S'<_P S$, we need to prove that the corresponding morphism $j_S:[N]\rightarrow \prod_{q=0}^{n-1}[m_q]$ factors through $\sq(S)$. It follows from the definition of $<_P$ that it suffices to prove the following claim: assume $S'=l_{i^{S'}_N}...l_{i^{S'}_1}$ is such that $j_{S'}$ factors through $\sq(S)$ and $k\in[N]$ is such that $p=i^{S'}_{k+1}<i^{S'}_k=q$, then $j_{\sigma_k S'}$ also factors through $\sq(S)$. To prove this we will first introduce some notation: for $u\in[n-1]$ and $t\in [m_u]$ denote by $\tau^u_{\leq t}S$ the maximal substring $S_0$ of $S$ of the form $l_{i^S_v}l_{i^S_{v-1}}...l_{i^S_0}$ containing $\leq t$ terms $l_u$, then it is easy to see that $(x_0,...,x_{n-1})$ belongs to $\sq(S)$ if and only if for every $u$ as above and all $s>u$ the string $\tau_{\leq x_u}^u S$ contains $\geq x_s$ terms $l_s$. It follows easily from the definitions that for $x\in[N]$ we have $j_{\sigma_k S'}(x) = j_{S'}(x)$ for $x<k$, for $x=k$ we have
		\[j_{S'}(k)_s=\begin{cases}
			j_{S'}(k-1)_s\text{ if $s\notin\{p,q\}$}\\
			j_{S'}(k-1)_p\text{ if $s=p$}\\
			j_{S'}(k-1)_q+1\text{ if $s=q$}
		\end{cases}, 
		j_{\sigma_k S'}(k)_s = \begin{cases}
			j_{S'}(k-1)_s\text{ if $s\notin\{p,q\}$}\\
			j_{S'}(k-1)_p+1\text{ if $s=p$}\\
			j_{S'}(k-1)_q\text{ if $s=q$}
		\end{cases}\]
		and for $x>k$ we again have $j_{S'}(x) = j_{\sigma_k S'}(x)$. We need to show that $(x_0,...,x_{n-1})\bydef (j_{S'}(k-1)_0,...,j_{S'}(k-1)_p+1,...,j_{S'}(k-1)_{n-1})$ lies in $\sq(S)$, note that the conditions on the coordinates $x_s$ with $s>p$ are vacuous since for $t\geq s$ the coordinates $x_s$ coincide with $j_{S'}(k-1)_s$ and $j_{S'}(k-1)\in\sq(S)$, for $s=p$ we see that $\tau^p_{\leq j_{S'}(k-1)_p}S$ contains at least $x_t$ instances of $l_t$ for $t>p$ since $j_{S'}(k-1)\in\sq(S)$, hence so does $\tau^p_{\leq j_{S'}(k-1)_p+1}S$, and for $s<p$ we see that $\tau^s_{\leq x_s}S'$ contains at least $x_t$ instances of $l_t$ for $t>s$ since $x_s = j_{S'}(k+1)_s$, $j_{S'}(k+1)\in \sq(S)$ and $j_{S'}(k+1)_t\geq x_t$, which concludes the proof of the claim.\par
		Now, assume that $S'$ is such that $j_{S'}$ factors through $\sq(S)$, we need to prove that $S' \leq_P S$, for that we will construct a sequence of strings $S'_l$ for $l\in [N]$ such that $S'_l\leq_p S$, $j_{S'}$ factors through $\sq(S'_l)$ and the first $l$ terms of $S'_l$ and $S'$ coincide (so that $S'_N =S'$), starting with $S'_0\bydef S$. Assume we have constructed $S'_{l-1}$, we first claim that $p = i^{S'}_l \leq i^{S'_{l-1}}_l = q$: indeed, if we have $q<p$ then since $j_{S'}(l)\in \sq(S'_l)$ that would mean (since $j_{S'}(l-1)_q = j_{S'}(l)_q$) that $\tau^q_{\leq j_{S'}(l-1)_q}S_{l-1}'$ contains at least $j_{S'}(l)_p = j_{S'_{l-1}}(l-1)_p +1$ terms $l_p$, however by construction $\tau^q_{\leq j_{S'}(l-1)_q}S_{l-1}' = \tau^q_{\leq j_{S_{l-1}'}(l-1)_q}S_{l-1}'$ consists of the first $(l-1)$ terms of $S'_{l-1}$ and contains exactly $j_{S'}(l-1)_p = j_{S'_{l-1}}(l-1)_p$ terms $l_p$. Denote by $x>l$ the first index for which $i^{S'_l}_x = p$, we claim that for all $l<y<x$ we have $i^{S'_l}_y > p$. Indeed, we cannot have $i^{S'_l}_y = p$ by definition, so assume that some $i^{S'_l}_y = z < p$, then it follows that $\tau^z_{\leq j_{S'_{l-1}}(l-1)_z}S'_{l-1}$ contains exactly $j_{S'_{l-1}}(l-1)_p$ terms $l_p$, however since $j_{S'}(l)\in\sq(S'_l)$, $j_{S'}(l)_z = j_{S'_{l-1}}(l-1)_z$ and $z<p$ it must contain at least $j_{S'}(l)_p = j_{S'_{l-1}}(l-1)_p +1$ terms $l_p$. It follows that we can define $S'_l\bydef \sigma_l\sigma_{l+1}...\sigma_x S'_{l-1}$, so that $S'_l \leq_P S'_{l-1}$ and $S'_l$ coincides with $S'$ up to the $l$th term, it remains to show that $j_{S'}$ also factors through $\sq(S'_l)$. Observe that it suffices to prove that $j_{S'}(k)\in \sq(S'_l)$ for $k>l$ since $j_{S'}(k\leq l)$ lie in $\sq(S'_l)$ by construction. Note that for $z<p$ the substrings $\tau^z_{\leq t}S'_l$ contain the same number of terms $l_i$ as $\tau^z_{\leq t}S'_{l-1}$ for all $i$ by construction, for $z>p$ the substrings $\tau^z_{\leq t}S'_l$ contain the same number of terms $l_{i'}$ as $\tau^z_{\leq t}S'_{l-1}$ for $i'>z$ (since $z>p$), and for $z=p$ we see that $\tau^p_{\leq t}S'_l$ contain the same number of terms $l_{i}$ as $\tau^p_{\leq t}S'_{l-1}$ for all $i$ and $t>j_{S'_{l-1}}(l-1)_p$. The claim follows immediately from these observations since for all $j_{S'}(k>l)$ we have $j_{S'}(k)_p \geq  j_{S'}(l)_p > j_{S'_{l-1}}(l-1)_p$.
	\end{proof}
	\begin{lemma}\label{lem:sq_ext}
		Assume we have a string $S = l_{i_N}l_{i_{N-1}}...l_{i_1}$ as in \Cref{constr:sqs} and a morphism $F:[N]^0\rightarrow \twar^C_\theta(\cE)$ such that the 1-simplex $F(j<j+1)$ factors through $\twar_\theta^{i_{j+1}}(\cE)\rightarrow \twar^C_\theta(\cE)$ of \Cref{constr:C_delta}, then we can extend $F$ to a morphism $\widetilde{F}:\sq_0(S)\rightarrow\twar^C_\theta(\cE)$ such that $\widetilde{F}\circ i_S\cong F$.
	\end{lemma}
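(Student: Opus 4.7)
The plan is to construct $\widetilde F$ by extending $F$ cube by cube over the quadrants of $\sq_0(S)$, with the Segal condition for the $\twar^i_\theta(\cE)$ from Corollary \ref{cor:i_seg} doing the work in each coordinate direction and the pushout decomposition of Lemma \ref{lem:C_push} assembling the directions.

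First I would unpack the combinatorics of the quadrants: a $\Sigma$-quadrant $Q$ is a cube of shape $\prod_{s \in \Sigma}[k_s^\Sigma]$ based at some vertex of $\sq(\widehat S)$, where $k_s^\Sigma$ records the length of the corresponding $s$-th segment in $S$. The portion of the spine $i_S$ inside $Q$ is a path whose edges carry types precisely in $\Sigma$, with all edges of a fixed type $i \in \Sigma$ appearing as a single maximal consecutive group of length $k_i^\Sigma$. I would then induct on $|\Sigma|$, treating each $\Sigma$-quadrant once. The zero-dimensional case is trivial, handled by $F$ on vertices of $[N]^0$. For the inductive step, each consecutive group of type-$i$ edges is already a $1$-simplex of $\twar^i_\theta(\cE)$, and by the Segal condition of Corollary \ref{cor:i_seg} these compose into a higher simplex $\sigma_i$ of $\twar^i_\theta(\cE)$, which unpacks to a morphism $C_n^i(k_i^\Sigma) \to \Theta_{n,/\cE}$ over $\cE$. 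Iteratively applying the pushout decomposition of Lemma \ref{lem:C_push} expresses $C_n(k_0^\Sigma, \ldots, k_{n-1}^\Sigma)$ as a colimit of the axes $C_n^i(k_i^\Sigma)$ glued along copies of $c_n$, so the $\sigma_i$ (which all share a common base vertex coming from $F$) assemble into a unique morphism $C_n(k_0^\Sigma, \ldots, k_{n-1}^\Sigma) \to \Theta_{n,/\cE}$ over $\cE$. This morphism is the desired cube-filling for $Q$, from which the value of $\widetilde F$ on each simplex of $\sq_0(S)$ inside $Q$ is read off by precomposition.

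The main obstacle is the coherence between quadrants: cube-fillings on adjacent $\Sigma$- and $\Sigma'$-quadrants must agree on their common face, which is itself a lower-dimensional quadrant already determined by induction. By the universal property of the pushouts in Lemma \ref{lem:C_push}, the restriction of a cube-filling to a face is determined by the restrictions of the axis-morphisms $\sigma_i$ to their sub-simplices, so this coherence reduces to the observation that Segal composition of consecutive type-$i$ edges is functorial in the chosen sub-range, hence independent of which enclosing quadrant is used to perform it. Vertex-agreement is already given by $F$, and the identity $\widetilde F \circ i_S \cong F$ is forced by construction since the cube-filling of each quadrant was built from (and thus restricts along $i_S$ to) the groups of type-$i$ edges provided by $F$. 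Once these compatibilities are in place, the values $\widetilde F$ on all simplices of $\sq_0(S)$ glue into a morphism of simplicial spaces by standard descent along the cover of $\sq_0(S)$ by its quadrants.
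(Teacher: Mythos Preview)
Your inductive scheme breaks already at its base case. You write that the zero-dimensional case is ``handled by $F$ on vertices of $[N]^0$,'' but the $0$-dimensional quadrants are the images $a(x)$ of vertices $x \in \sq(\widehat S)$, and in general these are not all in the image of the spine $i_S$. The same problem recurs at every level: your claim that ``the portion of the spine $i_S$ inside $Q$ is a path whose edges carry types precisely in $\Sigma$'' presupposes that the spine visits every $\Sigma$-quadrant, which is false. For a concrete instance take $n=2$ and $S = l_0 l_1 l_0 l_1$ (so $\widehat S = S$ and $a = \id$); the spine runs through $(0,0),(0,1),(1,1),(1,2),(2,2)$, while the unit square based at $(1,0)$, with vertices $(1,0),(2,0),(1,1),(2,1)$, is a legitimate $\{0,1\}$-quadrant that meets the spine only at the single vertex $(1,1)$. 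There are no spine edges there to feed into the Segal composition of \Cref{cor:i_seg}, so your filling procedure has nothing to work with.

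This is exactly why the paper's argument is iterative rather than quadrant-by-quadrant. It first treats the special case where $S$ is a single sorted block $l_{k_0}^{q_0}\cdots l_{k_m}^{q_m}$ with $k_0 < \cdots < k_m$; there $\sq(S)$ is one cube which the spine does span, and the filling goes essentially as you outline via \Cref{lem:C_push}, producing a single morphism $C_n(q_0,\ldots,q_m,0,\ldots,0)\to\cE$ through which every path in that cube then factors. For general $S$ the paper decomposes $S$ into maximal sorted substrings, fills each resulting cube to obtain a region $X_1(S)\subset\sq_0(S)$, and then repeats: every path $j_{S'}$ already lying in $X_m(S)$ serves as a new spine for a further round of sorted-block fillings, enlarging the region to $X_{m+1}(S)$. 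That the union $\bigcup_m X_m(S)$ exhausts $\sq_0(S)$ is where \Cref{lem:bruh} enters; it identifies the paths in $\sq(S)$ with a down-set for a Bruhat-type order $\leq_P$, and one checks the iteration eventually reaches every element below $S$. Your proposal invokes neither this iterative mechanism nor \Cref{lem:bruh}, and without them the off-spine quadrants cannot be filled.
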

	\begin{proof}
		Assume first that $S$ has the form $l^{q_0}_{k_0}l^{q_1}_{k_1}...l^{q_m}_{k_m}$ for some $0\leq k_0<k_1<...<k_m\leq n-1$ and $i_j>0$, in that case we need to extend $F:[\sum q_j]^0\rightarrow \twar^C_\theta(\cE)$ to $\widetilde{F}:\prod_{0\leq j\leq m} [q_j]\rightarrow \twar_\theta^C(\cE)$. By possibly reindexing the terms we may assume that $k_i = i$, denote $N\bydef \sum_{j=0}^m q_j$ and for $0\leq i\leq m$ denote $s_i\bydef \sum_{j=i}^m q_j$ and set $s_{m+1}=0$, so that we have $0=s_{m+1} < s_m<...< s_0=N$. First, we will define a morphism $F_0:[m]\rightarrow \twar_\theta^C(\cE)$ extending $F$: note that we can identify $F$ with a natural transformation $\alpha:F'\rightarrow F''$ between functors $\Delta^\el_{/[N]}\rightarrow \cat_n$, where $F''$ is the constant functor with value $\cE$ and $F'$ sends $[0]\xinert{\{i\}}[n]$ to $c_n\cong C_n(0,...,0)$ and $(v<v+1)$ for $0\leq v<N$ to $C_n^{i(v)}(1)$ in the notation of \Cref{constr:C_delta}, where $i(v)$ denotes the index $0\leq k\leq m$ such that $s_{k+1}\leq v<v+1\leq s_{k}$. By \Cref{lem:C_push} both $F'$ and $F''$ admit left Kan extensions $i^\el_!F'$ and $i^\el_!F''$ to $\Delta^\inrt_{/[N]}$ such that $i^\el_! F''$ is still constant, while $i^\el_! F'$ sends $[l]\xinert{i}[N]$ to $C_n(j_0,...,j_{m},0,...,0)$, where $j_k\bydef \max(0,|i([l])\bigcap[s_{k-1}, s_k]|-1)$. It follows that $i^\el_!\alpha$ defines an object of $|\twar'_\theta(\cE)|([N])\cong \twar^C_\theta(\cE)([N])$. Denote by $\alpha_\id$ the component $C_n(i_0,...,i_m,0,...,0)\rightarrow\cE$, by construction we see that the components $\alpha_i:C_n(j_0,...,j_{m},...,0)\rightarrow\cE$ for $i:[l]\xinert{}[N]$ as above are given by $C_n(j_0,...,j_{m},...,0)\xrightarrow{C(u)}C_n(i_0,...,i_m,...,0)\xrightarrow{\alpha_\id}\cE$, where $u$ has components $u_k:i([l])\bigcap[s_{k+1},s_k]\xinert{}[s_{k+1},s_k]$.\par    
		Now assume that we have an injective morphism $f:[N]\rightarrow\prod_{j=0}^m [i_j]$, note that those correspond to strings $S_\sigma = l_{i_N}...l_{i_1}$ with $0\leq i_k\leq m$ where the total number of terms $l_j$ equals $q_j$. Given an inert morphism $i:[l]\xinert{}[N]$ and $0\leq x\leq m$, denote by $j^0_x(i)$ (resp. $j^1_x(i)$) the minimal (resp. maximal) $t$ such that $s_t^x$ lies in the image of $i$. We need to construct a natural transformation $\alpha_\sigma:F'_\sigma\rightarrow i^\el_!F''$ of functors from $\Delta^\inrt_{/[N]}$ to $\cat_n$ such that $F'_\sigma$ lands in the full subcategory $\Theta_n$ and $i^\el_!F''$ is the constant functor with value $\cE$ as above. Denote by $F_0:\Delta^\inrt_{/[N]}\rightarrow\Theta_n$ the constant functor with value $C_n(i_0,...,i_m,...,0)$, we will define $\alpha_\sigma$ to be given by the composition \[F'_\sigma\xrightarrow{\alpha'}F_0\xrightarrow{\alpha_\id}i^\el_!F'',\]
		where $\alpha_\id:C_n(i_0,...,i_m,...,0)\rightarrow\cE$ is defined above. Given $i:[l]\xinert{}[N]$ we define $F'_\sigma(i)$ to be the left Kan extension of $F_\sigma:\Delta^\el_{/[N]}$ sending $i_0:[e]\xinert{}[N]$ for $e\in\{0,1\}$ to $C_n(j^1_0(i_0) - j^0_0(i_0),..., j^1_m(i_0) - j^0_m(i_0),0,...,0)$: it follows from \Cref{lem:C_push} that this left Kan extension exists and is given by the functor sending $[l]\xinert{i}[N]$ to $C_n(j^1_0(i) - j^0_0(i),..., j^1_m(i) - j^0_m(i),0,...,0)$. Finally, to define the natural transformation $\alpha'$ as above it suffices (since $F_0$ is constant) to define its component $\alpha'_{\id_{[N]}}$, note that $F'_\sigma(\id_{[N]}) = C_n(i_0,...,i_m,...,0) = F_0(\id_{[N]})$, so we define $\alpha'_{\id_{[N]}}$ to be the identity morphism.\par
		Assume that we are now given a morphism $f:[m]\rightarrow [1]^n$, to define $\widetilde{F}(f)$ pick some $f_\sigma:[N]\rightarrow \prod[q_j]$ such that $f$ factors as $[m]\xrightarrow{f'}[n]\xrightarrow{f_\sigma}[1]^n$ and define $\widetilde{F}(f):\Delta^\inrt_{/[m]}\rightarrow\Theta_{n,/\cE}$ to be the composition 
		\[\Delta^\inrt_{/[m]}\xrightarrow{f'_*}\Delta^\inrt_{/[N]}\xrightarrow{\widetilde{F}(f_\sigma)}\Theta_{n,/\cE}.\]
		We now need to prove that this is correctly defined, i.e. that it does not depend on $f_\sigma$, for this note that by definition all $\widetilde{F}(f_\sigma)$ factor through $\Theta_{n,/C_n(i_0,...,i_m,...,0)}\xrightarrow{\alpha_{\id,!}}\Theta_{n,/\cE}$, so we may assume $\cE\cong C_n(i_0,...,i_m,...,0)\in\Theta_n$, in which case the claim follows from \Cref{lem:theta_twar} since a morphism in $\Theta^\inrt_{n,/C(i_0,...,i_m,...,0)}$ is uniquely determined by its endpoints.\par
		Assume now that we have a morphism $j':[m]\rightarrow [q_j]$ taking $(i<i+1)$ to a morphism of the form $(a_0,...,a_i,...,a_m)< (a_0,..., a_i+b_i,...,a_m)$. Factor it as $[n]\xactive{b}[N']\xrightarrow{j}\prod [q_j]$, where $j$ is an injective morphism such that $\sum_{i=0}^m(j_i(i+1) - j_i(i))=1$, $N'\bydef \sum b_i$ and $b(i)\bydef \sum_{k=0}^i b_i$, it is clear that such a factorization is unique, then the composition $F':[N']^0\hookrightarrow[N']\xrightarrow{j}\prod[q_i]\xrightarrow{\widetilde{F}}\twar^C_\theta(\cE)$ corresponds to the string $S' = l_{0}^{q'_0}...l_m^{q'_m}$ of the same form as $S$, hence by our previous construction it defines a morphism $\widetilde{F}':\prod [q'_i]\rightarrow \twar_\theta^C(\cE)$. Using the construction of $\widetilde{F}'$ and the fact that $F'$ factors through $\prod[q'_i]$ we see that $\widetilde{F}'$ is isomorphic to the composition $\prod [q'_i]\rightarrow\prod [q_i]\xrightarrow{\widetilde{F}}\twar_\theta^C(\cE)$.\par
		Assume now that we have an arbitrary string $S = l_{i_N}...l_{i_0}$, we can subdivide as $S = S_kS_{k-1}...S_1$ such that $S_j = l_{k^j_0}^{q^j_0}...l_{k^j_{m_j}}^{q_{m_j}^j}$ for $1\leq j\leq k$ is of the form considered at the start of the proof and is a maximal by inclusion substring of $S$ with that property, then we can extend $F:[N]^0\rightarrow\twar^C_\theta(\cE)$ to 
		\[\widetilde{F}: X_1(S)\bydef(\prod [q^1_i])\coprod_{[0]}(\prod [q^2_i])\coprod_{[0]}...\coprod_{[0]}(\prod [q^k_i])\rightarrow\twar^C_\theta(\cE).\]
		Note that every term $\prod [q^k_i]$ above is in the image of a quadrant in $\sq^0(S)$, so $X_1(S)$ is a simplicial subset of $\sq^0(S)$, moreover note that an injective morphism $[N]\xrightarrow{j}\sq(S)$ corresponding to a string $S'$ factors through $X_1(S)$ if and only if $S' = S'_k S'_{k-1}...S'_{1}$ for some strings $S'_j$ such that $S'_j <_P S_j$ in the notation of \Cref{lem:bruh}. Given any such string $S'$ we can apply the same construction to it in order to extend $\widetilde{F}$ to $X_1(S')\hookrightarrow\sq^0(S')\hookrightarrow\sq^0(S)$, note that if we have another $S''$ such that $X_1(S')\bigcap X_1(S'')\neq \varnothing$, then by the observations from the previous paragraph the extensions $\widetilde{F}|_{X_1(S')}$ and $\widetilde{F}|_{X_1(S'')}$ agree on the intersection, so we can extend $\widetilde{F}$ to 
		\[X_2(S)\bydef \bigcup_{j:[N]^0\xrightarrow{}{X_1(S)}} X_1(S_j)\hookrightarrow\sq^0(S),\]
		where $S_j$ denotes the string corresponding to the morphism $j$, iterating this construction we may extend $\widetilde{F}$ to $X_{m}(S)\bydef \bigcup_{j:[N]^0\xrightarrow{}{X_{m-1}(S)}} X_1(S_j)$ for any $m$, so it suffices to prove that for some $M$ we have $X_M(S) = \sq^0(S)$. Observe that if some string $S'$ admits an index $k$ such that $i^{S'}_{k+1}< i^{S'}_k$, then $\sigma_kS'$ factors through $X_1(S')$, so it follows from \Cref{lem:bruh} every $j_S$ factors through some $X_M(S)$ for large enough $M$, meaning that we may extend $\widetilde{F}$ to the entirety of $\sq^0(S)$.
	\end{proof}
	\begin{lemma}\label{lem:sqs_cat}
		The natural inclusion $\sq^0(S)\hookrightarrow\sq(S)$ induces an isomorphism
		\[\cL_\cat\sq^0(S)\cong \sq(S).\]
	\end{lemma}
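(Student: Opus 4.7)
The plan is to verify that the natural functor $\cL_\cat \sq^0(S) \to \sq(S)$ induced by the inclusion is an isomorphism of categories. Observe first that $\sq(S)$ is a full sub-poset of $\prod_{q=0}^{n-1} [m_q]$, and is therefore a (discrete) $1$-category whose mapping spaces are all either empty or contractible; in particular it coincides with its own $\cL_\cat$-localization. The induced functor is bijective on objects, since every $0$-simplex of $\sq(S)$ is contained in at least one quadrant and hence belongs to $\sq^0(S)$. So the task reduces to showing that for every morphism $y \le y'$ of $\sq(S)$ there is exactly one preimage in $\cL_\cat \sq^0(S)$.

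For existence of a preimage, I would factor $y \le y'$ as a chain of covering relations $y = z_0 \prec z_1 \prec \dots \prec z_N = y'$ in $\sq(S)$, where each $z_i = z_{i-1} + e_{q_i}$. That such a chain exists inside $\sq(S)$ is a short direct check: if $z \in \sq(S)$ and $z_q < m_q$, then $z + e_q \in \sq(S)$, because $p^q_q(t) = t$ and each $p^k_q$ is non-decreasing. It then suffices to show that every single covering relation $z \prec z + e_q$ lies in some full quadrant (the one attached to some $x \in \sq(\widehat{S})$ with $\Sigma = \{0,\dots,n-1\}$). Taking $x_s$ to be the largest value in $[0, \widehat{m}_s]$ with $a_s(x_s) \le z_s$, maximality combined with monotonicity of $a_s$ gives $a_s(x_s+1) \ge z_s + \delta_{s,q}$, so both $z$ and $z+e_q$ lie in the product of intervals $\prod_s [a_s(x_s), a_s(x_s+1)]$. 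Granting that $x = (x_0,\dots,x_{n-1}) \in \sq(\widehat{S})$ (the main combinatorial point; see below), this product of intervals is the full quadrant in question, and its nerve is a simplicial subset of $\sq^0(S)$.

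For uniqueness of the preimage, I observe that any two covering-chain factorizations of $y \le y'$ are connected by a finite sequence of elementary swaps, each one exchanging consecutive increments $e_q$ and $e_r$ with $q \ne r$ at some intermediate vertex $z$. The four corners $\{z, z+e_q, z+e_r, z+e_q+e_r\}$ of the associated square lie in a single full quadrant by exactly the same argument as above (the same $x$ works, since all four coordinate-$s$ values lie in $[a_s(x_s), a_s(x_s+1)]$). The nerve of that quadrant, being a product of simplices, contains the diagonal edge $z \to z + e_q + e_r$ together with both triangulations of the square; the resulting 2-simplices in $\sq^0(S)$ identify both composites $(z \to z + e_q \to z+e_q+e_r)$ and $(z \to z + e_r \to z+e_q+e_r)$ with the diagonal edge in $\cL_\cat \sq^0(S)$, so both sides of the swap agree there. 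Iterating yields the desired uniqueness.

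The main obstacle is the combinatorial verification that the tuple $x$ produced above actually lies in $\sq(\widehat{S})$, i.e.\ that $x_k \le \max_{q \le k} \widehat{p}^k_q(x_q)$ for every $k$. This should follow from the inequality $z_k \le \max_{q \le k} p^k_q(z_q)$ defining $\sq(S)$ via the collapse map from $S$ to $\widehat{S}$: since $a_s(x_s) \le z_s < a_s(x_s+1)$ with $x_s$ chosen maximal, a chain of counts witnessing $z_k \le p^k_q(z_q)$ by tracking $l_k$'s in the prefix of $S$ determined by $z_q$ should descend to a chain witnessing $x_k \le \widehat{p}^k_q(x_q)$ tracking $l_k$-segments in the corresponding prefix of $\widehat{S}$. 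This is the only part of the argument that is not formal.
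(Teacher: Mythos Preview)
Your approach is elementary and genuinely different from the paper's, but it has a real gap at the $\infty$-categorical level. In this paper $\cL_\cat$ denotes the left adjoint $\psh_\cS(\Delta)\to\cat$ to the nerve, where $\cat$ means $(\infty,1)$-categories; thus $\cL_\cat\sq^0(S)$ is an $\infty$-category, and showing that the comparison functor to $\sq(S)$ is an equivalence requires its mapping \emph{spaces} to be contractible, not merely its hom-\emph{sets} to be singletons. Your existence argument produces a morphism in $\cL_\cat\sq^0(S)$ over each $y\le y'$, and your swap argument shows that any two chain-factorisations become equal in the homotopy category $h_1(\cL_\cat\sq^0(S))$; but this only pins down $\pi_0$ of the mapping space. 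Nothing you have written rules out higher homotopy in $\mor_{\cL_\cat\sq^0(S)}(y,y')$, and there is no general principle saying that a simplicial subset of the nerve of a poset which contains all vertices, all edges, and enough commuting squares has the same $\cL_\cat$. Concretely, you would need either an inner-anodyne argument for the inclusion $\sq^0(S)\hookrightarrow N(\sq(S))$, or an independent proof that $\cL_\cat\sq^0(S)$ is itself a poset; neither is supplied.

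The paper handles exactly this point by a different mechanism. It rewrites both sides as colimits indexed over $\Delta^\el_{/\sq(\widehat S)}$ (equations~\eqref{eq:S0_inrt} and~\eqref{eq:S_inrt}), runs an induction on $n$ by slicing along the last coordinate, and reduces the inductive step to the colimit identity~\eqref{eq:colim_simp} in $\cat$. That identity is then verified via deformation theory: by the Postnikov criterion of Harpaz--Nuiten--Prasma it suffices to check the identity in $\cat_{(2,1)}$---where an argument in your style does work---and to show that the relative cotangent complex $L_f$ vanishes, which the paper does by computing the relevant left Kan extensions on $\twar(\sq(S))$. So your argument is morally the $\cat_{(2,1)}$ half of the paper's proof; what is missing is precisely the control of higher homotopy that the cotangent-complex step provides. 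The combinatorial point you flag (that your tuple $x$ lies in $\sq(\widehat S)$) is a genuine obligation as well, but it is secondary to the structural gap above.
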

	\begin{proof}
		Consider $\sq(S')$ as a subcategory of some $\prod_{i=0}^{m'}[q'_i]$, then denote by $p^S_i:\sq(S')\rightarrow[q'_i]$ the composition $\sq(S)\hookrightarrow\prod_{k=0}^{m'} [q'_k]\xrightarrow{p_i}[q'_i]$, where the last morphism is the projection to the $i$th coordinate. Call a morphism $\sq(S')\xrightarrow{j}\sq(S)$ with target $\sq(S)\hookrightarrow\prod_{i=0}^m [q_i]$ \textit{inert} if for any morphism $f:[1]\rightarrow\sq(S')$ such that $p_k\circ f:[1]\rightarrow [q'_k]$ are constant for $k\neq i'$ and inert for $k=i'$ the composition $j\circ f:[1]\rightarrow\sq(S)$ is such that $p_k\circ j\circ f:[1]\rightarrow[q_k]$ is constant for $k\neq i$ and inert for $k=i$ (with $i\neq i'$ in general), it is easy to see that inert morphisms are closed under composition and include identities, hence they form a subcategory of $\cat$. Call an object $\sq(S)$ \textit{elementary} if it is isomorphic to $[1]^l$ for some $l\leq (n-1)$, denote by $\Delta^\el_{/\sq(S)}$ the full subcategory of $\cat_{/\sq(S)}$ on inert morphisms from elementary objects $[1]^l\xinert{i}\sq(S)$. Denote by $\widehat{S}$ the reduced string of $S$ defined in \Cref{constr:sqs}, denote $a:\sq(\widehat{S})\rightarrow \sq(S)$ the morphism defined therein. Given $[1]^l\xinert{i}\sq(\widehat{S})$ denote by $\ima(i)\xinert{}\sq(S)$ the full subcategory containing all objects $(x_0,...,x_m)$ such that $a\circ i(0,...,0)\leq (x_0,...,x_m)\leq a\circ i(1,...,1)$, note that the subcategories of the form $\ima(i)$ are the same thing as quadrants described in \Cref{constr:sqs}. It follows from this and the definition of $\sq^0(S)$ that we have an isomorphism
		\begin{equation}\label{eq:S0_inrt}
			\sq^0(S)\cong \underset{[1]^l\xinert{i}\sq(\widehat{S})}{\colim}\ima(i),
		\end{equation}
		as simplicial sets, since $\cL_\cat$ preserves colimits to prove the claim it suffices to show that
		\begin{equation}\label{eq:S_inrt}
			\sq(S)\cong \underset{[1]^l\xinert{i}\sq(\widehat{S})}{\colim}\ima(i).
		\end{equation}
		We will prove \eqref{eq:S_inrt} by induction on $n$, the case $n=1$ being trivial. Assume we have proved the claim for $(n-1)$, we may assume that $S=l_{i'_{N'}}...l_{i'_1}$ contains all the terms $l_k$ for $0\leq k\leq n-1$, since otherwise the claim follows by induction, denote $\widehat{S}\bydef l_{i_N}...l_{i_1}$. Denote by $m_q$ the total number of terms $l_q$ in $\widehat{S}$ for $0\leq q\leq n-1$ and $s^{q}_t$ for $1\leq t\leq m_s$ the index of the $t$th appearance of $l_q$. For $i:[e]\xinert{}[m_{n-1}]$ in $\Delta^\el_{/[m_{n-1}]}$ denote by $\sq(\widehat{S})_i\xinert{}\sq(\widehat{S})$ the full subcategory of $\sq(\widehat{S})$ containing $(x_0,...,x_{n-1})$ with $x_{n-1}\in[m_{n-1}]$ in the image of $i$. For $t\in [m_{n-1}]$ denote by $\widehat{S}_t\bydef l_{i_{j_k}}l_{i_{j_{k-1}}}...l_{i_{j_0}}$ the substring of $\widehat{S}$ containing terms $l_{i_s}$ with $s\geq s^{n-1}_t$ and $i_s\neq n-1$, then it is easy to see that $\sq(\widehat{S})_{\{t\}}\cong \sq(\widehat{S}_t)$, where $\{t\}:[0]\xinert{}[m_{n-1}]$ is the inclusion of the object $t$, and $\sq(\widehat{S})_{t<t+1}\cong [1]\times \sq(\widehat{S}_t)$. Note that for $t_0 < t_1$ the string $\widehat{S}_{t_1}$ is a substring of $\widehat{S}_{t_0}$, hence we have an inert inclusion $\sq(\widehat{S}_{t_1})\xinert{} \sq(\widehat{S}_{t_0})$. From these considerations we see that there is a functor $\sq(\widehat{S})_{(-)}:\Delta^\el_{/[m_{n-1}]}\rightarrow \cat$ sending $\{t\}$ to $\sq(\widehat{S}_t)$, $(t<t+1)$ to $[1]\times \sq(\widehat{S}_t)$, the inclusion $\{t\}\xinert{}(t<t+1)$ to $\sq(\widehat{S}_t)\xinert{\{0\}\times\id}[1]\times \sq(\widehat{S}_t)$ and $\{t+1\}\xinert{}(t<t+1)$ to the composition $\sq(\widehat{S}_{t+1})\xinert{}\sq(\widehat{S}_t)\xinert{\{1\}\times\id}[1]\times \sq(\widehat{S}_t)$. Since all morphisms in $\Delta^\el_{/[m_{n-1}]}$ are sent to inert morphisms, it follows that we have an induced functor $\Delta^\el_{/\sq(\widehat{S})_{(-)}}$ sending $i:[e]\xinert{}[m_{n-1}]$ to $\Delta^\el_{/\sq(\widehat{S})_{i}}$, we claim that we have an isomorphism:
		\begin{equation}\label{eq:inrt_sq}
			\underset{([e]\xinert{i}[m_{n-1}])\in\Delta^\el_{/[m_{n-1}]}}{\colim}\Delta^\el_{/\sq(\widehat{S})_i}\cong \Delta^\el_{/\sq(\widehat{S})}.
		\end{equation}
		Indeed, we will show that \eqref{eq:inrt_sq} holds already at the level of simplicial sets: given $[1]^m\xinert{i_0}\sq(\widehat{S})$ we need to show that the space of factorizations $[1]^m\xinert{i'_0}\sq(\widehat{S})_{i}\xinert{}\sq(\widehat{S})$ is contractible, however note that it is a singleton if $i_0$ does not factor through any $\sq(\widehat{S})_{\{t\}}$ for $t\in[m_{n-1}]$ and is isomorphic to the walking cospan $\Lambda^0_2$ otherwise, both os those simplicial sets are contractible.\par
		Using \eqref{eq:inrt_sq} and commutativity of colimits, we see that the right hand side of \eqref{eq:S_inrt} can be rewritten as
		\begin{equation}\label{eq:colim_iter}
			\underset{[e]\xinert{i_0}[m_{n-1}]}{\colim}\underset{[1]^l\xinert{i}\sq(\widehat{S})_i}{\colim}\ima(\sq(i_0)\circ i).
		\end{equation}
		For $i_0:[e]\xinert{}[m_{n-1}]$ as above denote by $\sq(S)_{i_0}$ the image of $\sq(\widehat{S})_{i_0}$ in $\sq(S)$ under $a$, we claim that
		\begin{equation}\label{eq:colim_iter1}
			\sq(s)_{i_0}\cong \underset{([1]^l\xinert{i}\sq(\widehat{S})_{i_0})\in\Delta^\el_{/\sq(\widehat{S})_{i_0}}}{\colim}\ima(i).
		\end{equation}
		Indeed, if $i_0$ is of the form $[0]\xinert{\{t\}}[m_{n-1}]$, then \eqref{eq:colim_iter1} follows immediately from the inductive assumption. If $i_0$ has the form $[1]\xinert{t<t+1}[m_{n-1}]$, then 
		\begin{equation}\label{eq:S_aux1}
			\sq(S)_{t<t+1}\cong [k_t]\times \sq(S)_{\{t\}}
		\end{equation}
		for some $k_t>0$ and 
		\begin{equation}\label{eq:S_aux}
			\Delta^\el_{/\sq(\widehat{S})_{t<t+1}}\cong \Delta^\el_{/[1]}\times \Delta^\el_{/\sq(\widehat{S})_t},
		\end{equation}
		using this and the inductive assumption we get
		\begin{align*}
			\underset{([1]^l\xinert{i}\sq(\widehat{S})_{t<t+1})\in\Delta^\el_{/\sq(\widehat{S})_{t<t+1}}}{\colim}\ima(i)&\cong \underset{([e]\xinert{i_0}[1])\in\Delta^\el_{/[1]}}{\colim}\underset{([1]^{l'}\xinert{i_1}\sq(\widehat{S})_{t})\in\Delta^\el_{/\sq(\widehat{S})_t}}{\colim}\ima(i_0)\times \ima(i_1)\\
			&\cong \underset{([1]^{l'}\xinert{i_1}\sq(\widehat{S})_{t})\in\Delta^\el_{/\sq(\widehat{S})_t}}{\colim}[k_t]\times \ima(i_1)\\
			&\cong [k_t]\times \underset{([1]^{l'}\xinert{i_1}\sq(\widehat{S})_{t})\in\Delta^\el_{/\sq(\widehat{S})_t}}{\colim}\ima(i_1)\\
			&\cong [k_t]\times \sq(S)_t\cong \sq(S)_{t<t+1},
		\end{align*}
		where the first isomorphism follows from \eqref{eq:S_aux}, he second since $\Delta^\el_{/[1]}$ has a final object, the third since $-\times-$ commutes with colimits, the fourth by the inductive assumption and the fifth by \eqref{eq:S_aux1}. \par
		It now follows from \eqref{eq:colim_iter} and \eqref{eq:colim_iter1} that it suffices to prove 
		\begin{equation}\label{eq:colim_simp}
			\sq(S)\cong \underset{([e]\xinert{i}[m_{n-1}])\in\Delta^\el_{/[m_{n-1}]}}{\colim}\sq(S)_i,
		\end{equation}
		we will prove it using deformation theory: denote by $X$ the colimit in the right hand side of \eqref{eq:colim_simp}, we need to show that the natural morphism $f:X\rightarrow\sq(S)$ is an isomorphism. By \cite[Corollary 2.6.2.]{harpaz2018abstract} and \cite[Theorem 5.2.]{harpaz2020k} it suffices to prove that \eqref{eq:colim_simp} holds in the subcategory $\cat_{(2,1)}$ of $(2,1)$-categories, which is immediate using the explicit description of colimits of discrete categories, and that the relative cotangent complex $L_f\cong 0$. By the results of \cite{nuiten2019quillen} we see that for $\cC\in\cat$ $\stab(\cat_{/\cC})\cong \mor_\cat(\twar(\cC),\spc)$, the absolute cotangent complex $L_\cC$ is isomorphic (up to a shift) to the constant functor with value $\bS$ and for $f:\cC\rightarrow\cD$ we have
		\[L_f\cong \cok(f_!\bS\rightarrow\bS).\]
		Applying this to our situation, we need to show that in $\mor_\cat(\twar(\sq(S)),\spc)$ we have
		\begin{equation}\label{eq:rel_cot}
			\bS\cong  \underset{([e]\xinert{i}[m_{n-1}])\in\Delta^\el_{/[m_{n-1}]}}{\colim}\sq(i)_!\bS,
		\end{equation}
		where $\sq(i):\twar(\sq(S)_i)\rightarrow\twar(\sq(S))$ denotes the functor induced by the natural inclusion $\sq(S)_i\xinert{}\sq(S)$. We now need to calculate $\sq(i)_!\bS$, note that for $f\in\twar(\sq(S))$ the value $\sq(i)_!\bS(f)$ it is isomorphic to $|\sq(i)/f|\otimes \bS$, we will first show that $\sq(i)/f$ are either empty or contractible. Fix some $i:[e]\xinert{}[m_{n-1}]$ and $f$ as above, note that $f$ corresponds to a pair $((x_0,...,x_{n-1}),(y_0,...,y_{n-1}))$ with $y_k\geq x_k$. First, note that if the image of $i$ does not intersect $[x_{n-1}, y_{n-1}]$, then $\sq(i)/f$ is empty. Assume now that $t$ is the minimal element of that intersection, in that case denote by $i_t:\sq(S)_{\{t\}}\hookrightarrow\sq(S)$ the corresponding inclusion, it is easy to see that we either have $\sq(i)/f\cong \sq(i_t)/f$ (if $[x_{n-1},y_{n-1}]\cap\ima(i) = \{t\}$) or $\sq(i)/f\cong \twar([1])\times \sq(i_t)/f$ (if $[x_{n-1}, y_{n-1}]$ contains $(t<t+1)$), in either case it suffices to prove that $\sq(i_t)/f$ is contractible, so we assume $i=i_t$ from now on.\par
		Note that an object of $\sq(i_t)/f$ is given by a quadruple $((x_0,...,x_{n-1}),(a_0,...,t),(b_0,...,t),(y_0,...,y_{n-1}))$ such that $x_i\leq a_i\leq b_i\leq y_i$, with a morphism from $(x,a,b,y)$ to $(x,a',b',y)$ given by $a'_i\leq a_i\leq b_i\leq b'_i$. Note that it contains a final object given by $((x_0,...,x_{n-2},x_{n-1}),(x_0,...,x_{n-2},t),(y_0,...,y_{n-2},t),(y_0,...,y_{n-2},y_{n-1}))$, meaning that it is indeed contractible. Applying these observations we get
		\begin{align*}
			\underset{([e]\xinert{i}[m_{n-1}])\in\Delta^\el_{/[m_{n-1}]}}{\colim}\sq(i)_!\bS(f)&\cong \underset{([e]\xinert{i}[m_{n-1}])\in\Delta^\el_{/[m_{n-1}]}}{\colim}|\sq(i)/f|\otimes\bS\\
			&\cong \underset{([e]\xinert{i}[y_{n-1}-x_{n-1}])\in\Delta^\el_{/[y_{n-1}-x_{n-1}]}}{\colim}\bS\\
			&\cong |\Delta^\el_{/[y_{n-1}-x_{n-1}]}|\otimes \bS\cong \bS,
		\end{align*}
		which proves \eqref{eq:rel_cot} and concludes the proof.
	\end{proof}
	\begin{prop}\label{prop:stn_inrt}
		For $x\in\stn_n$ we have
		\[\twar_\theta(x)\cong \stn^\inrt_{n,/x}.\]
	\end{prop}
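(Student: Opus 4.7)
The plan is to generalize \Cref{lem:theta_twar} from $\theta_0\in\Theta_n$ to arbitrary $x\in\stn_n$, employing the same two-step strategy: establish the Segal condition for the simplicial space $\twar^C_\theta(x)$, then show that $\cL_\cat\twar^C_\theta(x)$ matches $\stn^\inrt_{n,/x}$ on $0$- and $1$-cells plus coherences. The Segal step is essentially identical to the opening of the proof of \Cref{lem:theta_twar} together with \Cref{cor:i_seg}: the space $\twar^C_\theta(x)([q])$ consists of natural transformations between two functors $\Delta^\inrt_{/[q]}\rightarrow\cat_n$, namely the constant functor at $x$ and the functor sending $[l]\xinert{i}[q]$ to $C_n(l)$, and both are left Kan extensions along $\Delta^\el_{/[q]}\hookrightarrow\Delta^\inrt_{/[q]}$ by \Cref{lem:C_push}; combined with the Segal decomposition \eqref{eq:delta_col} this yields the Segal condition for $\twar^C_\theta(x)$.

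Next I would construct a natural comparison $\psi:\twar^C_\theta(x)\rightarrow\stn^\inrt_{n,/x}$ (the target viewed as a simplicial space via its nerve) by active/inert factorization in $\stn_n$. A $0$-simplex is a morphism $f:c_n\rightarrow x$ in $\cat_n$; viewing $f$ as an ADC morphism and applying \Cref{prop:stein_fact}, write $f=i\circ a$ with $c_n\xactive{a}y\xinert{i}x$ and set $\psi(f):=(y\xinert{i}x)$. This is a bijection on $0$-simplices, as every $y\in\stn_n$ admits a unique active morphism from $c_n$ and active/inert factorizations are unique. For a $1$-simplex corresponding to $h:C_n(1)\rightarrow x$, factor as $C_n(1)\xactive{a}y_1\xinert{i}x$ and send the simplex to the inert morphism $\ima(a\circ i_n(1))\xinert{}y_1$ in $\stn^\inrt_{n,/x}$. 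Fully faithfulness on $1$-simplices then requires a Steiner-complex analogue of \Cref{lem:C_cont}: for a composable pair of inerts $y_0\xinert{}y_1$ over $x$, the space of active morphisms $C_n(1)\xactive{}y_1$ inducing this data on the boundary is contractible. This should follow by the active/inert bookkeeping used in \Cref{lem:C_cont}, now using \Cref{prop:stein_fact} and \Cref{prop:stn_cont} as substitutes for the $\Theta_n$-specific tools.

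The main obstacle is matching higher simplex data after applying $\cL_\cat$. This is precisely what \Cref{lem:sq_ext} and \Cref{lem:sqs_cat} are designed for: given a spine diagram $[N]^0\rightarrow\twar^C_\theta(x)$ whose $1$-simplices factor through the appropriate $\twar^i_\theta(x)$, \Cref{lem:sq_ext} extends it to a diagram on $\sq^0(S)$, and \Cref{lem:sqs_cat} identifies $\cL_\cat\sq^0(S)\cong\sq(S)\cong\prod_i[q_i]$. Combined with the Segal condition already established, this shows $\cL_\cat\twar^C_\theta(x)$ is determined by its $0$- and $1$-cells together with composition relations that precisely encode composable chains of inert morphisms in $\stn^\inrt_{n,/x}$. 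The delicate point will be checking that every higher coherence on the $\twar^C_\theta(x)$ side corresponds to a coherence in the nerve of $\stn^\inrt_{n,/x}$; this ultimately reduces to the uniqueness of active/inert factorizations in $\stn_n$ from \Cref{prop:stein_fact}.
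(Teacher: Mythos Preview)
Your Segal-condition step contains a genuine gap that makes the rest of the argument collapse. You invoke \Cref{lem:C_push} to assert that the functor $[l]\mapsto C_n(l)$ is a left Kan extension in $\cat_n$ along $\Delta^\el_{/[q]}\hookrightarrow\Delta^\inrt_{/[q]}$, but \Cref{lem:C_push} concerns the objects $C_n(k_0,\dots,k_{n-1})$ of \Cref{constr:C_delta}, not the objects $C_n(q)$ of \Cref{constr:twar_C}; these are different (compare \Cref{rem:C_n}: in $C_n(q)$ the morphism categories are $C_{n-1}(i)$, not all $c_{n-1}$). The pushouts \eqref{eq:C_n} defining $C_n(q)$ are taken in $\Theta_n$, and the proof of \Cref{lem:theta_twar} explicitly notes that the left Kan extension is taken \emph{with respect to $\Theta_n$}, which is why that argument only works when the target lies in $\Theta_n$. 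For general $x\in\stn_n$ you would need these pushouts to hold in $\cat_n$, but that is precisely \Cref{cor:stein_push}, which is proved \emph{after} \Cref{prop:stn_inrt} (indeed, as a consequence of it via \Cref{thm:stein}). So the Segal condition for $\twar^C_\theta(x)$ is not available at this point, and your reduction to $0$- and $1$-cells does not go through.

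The paper's proof avoids this circularity by never claiming the Segal condition for $\twar^C_\theta(x)$. Instead it constructs explicit mutually inverse functors: the easy direction $G$ sends a $q$-simplex to the chain of images $\ima(f_0)\xinert{}\cdots\xinert{}\ima(f_q)$, while the hard direction $F$ is built on morphisms by iterating \Cref{lem:jact} to produce a spine $[n]^0\to\twar^C_\theta(x)$ whose edges lie in the various $\twar^k_\theta(x)$. The real work is proving $F$ respects composition; this is where \Cref{lem:sq_ext} and \Cref{lem:sqs_cat} enter, allowing the spine to be extended over $\sq(S_m)$, after which everything factors through a morphism to $D_n(m,0,\dots,0)$ and an explicit cell-by-cell computation in that concrete object. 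Finally $F\circ G\cong\id$ is checked by reducing to the $\Theta_n$ case and invoking \Cref{lem:theta_twar}. Your proposed ``Steiner-complex analogue of \Cref{lem:C_cont}'' is not proved in the paper and is not obviously accessible with the cited tools; the paper's route through \Cref{lem:jact} and the $D_n$ objects is what replaces it.
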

	\begin{proof}
		We will define functors $F:\stn^\inrt_{n,/x}\rightarrow \twar_\theta(X)$ and $G:\twar^C_\theta(x)\rightarrow\stn^\inrt_{n,/x}$ (which necessarily factors through $\cL_\cat\twar^C_\theta(x)\cong \twar_\theta(x)$) and prove $G\circ F\cong \id$ and $F\circ G\cong \id$. We will start with $G$: note that any morphism $f:\Delta^\inrt_{/[q]}\rightarrow\Theta_{n,/x}$ in $\twar^C_\theta(X)([q])$ such that the values at $([0]\xinert{\{i\}}[q])$ are given by $\theta_{f_i}\xrightarrow{f_i}x$ for $i\in[q]$ induces a string of inclusions $\ima(f_0)\xinert{}\ima(f_1)\xinert{}...\xinert{}\ima(f_q)$ in $\stn^\inrt_{n,/x}$, we define $G(f)$ to be this string of inert morphisms, it is easy to see that this indeed defines a functor.\par
		We will now define $F$, we start by describing its value on objects and morphisms.
		Given $y_i\xinert{i}x$ define its image to be the composition $c_n\xactive{}y_i\xinert{i}x$ (where the first morphism exists since by definition $y\in \stn^c_n$). Assume now that we are given an inclusion $y_{i'_0}\xinert{i'}y_{i'_1}$ in $\stn^\inrt_{n,/x}$, define $g:c_n\xactive{}y_{i'_0}\xinert{i'}y_{i'_1}$, then by an iterated application of \Cref{lem:jact} for $k=0,1,...,n-1$ we get a diagram
		\begin{equation}\label{eq:el_coc}
			\begin{tikzcd}[sep=huge]
				&&& x \\
				&&& {y_{i'_1}} \\
				& {C_n^0(1)} && {C_n^1(1)} && {C_n^{n-1}(1)} \\
				{c_n} && {c_n} && {c_n} && {c_n}
				\arrow["{i'_1}"{description}, from=2-4, to=1-4]
				\arrow["{g_0}"{description}, from=3-2, to=2-4]
				\arrow["{g_1}"{description}, from=3-4, to=2-4]
				\arrow["{g_{n-1}}"{description}, from=3-6, to=2-4]
				\arrow["{i_0}"', tail, from=4-1, to=3-2]
				\arrow["{a_0}", two heads, from=4-3, to=3-2]
				\arrow["{i_1}"', tail, from=4-3, to=3-4]
				\arrow["{...}"{description}, two heads, from=4-5, to=3-4]
				\arrow["{i_{n-1}}"', tail, from=4-5, to=3-6]
				\arrow["{a_{n-1}}", two heads, from=4-7, to=3-6]
			\end{tikzcd}
		\end{equation}
		such that $g_k\circ a_k$ is $k$-active in the sense of \Cref{def:jact} and $\ima(g_0\circ i_0)\cong y_{i'_0}$. This defines a morphism $g_{i'}:[n]^0\rightarrow\twar^C_\theta(x)$ such that moreover $g_{i'}(k<k+1)$ lies in $\twar_\theta^k(x)$, since $\cL_\cat[n]^0\cong [n]$ we can extend $g_{i'}$ to $g^0_{i'}:[n]\rightarrow\twar_\theta(x)$ and we define $F(i')$ to be the composition $[1]\xactive{}[n]\xrightarrow{g^0_{i'}}\twar_\theta(x)$. More generally, given a morphism $f:[m]\rightarrow\stn^\inrt_{n,/x}$ we can use the same reasoning to define a morphism $g_f:[m*n]^0\xrightarrow{}\twar^C_\theta(x)$ and again define $F(f)$ to be the composition $[m]\xactive{a_n}[m*n]\xrightarrow{g_f^0}\twar_\theta(x)$, where $a_n(j)\bydef n*j$. We now need to prove that this actually defines a natural transformation of functors, i.e. that for any $h:[l]\rightarrow [m]$ we have a commutative diagram
		\begin{equation}\label{eq:comm_diag}
			\begin{tikzcd}[sep=huge]
				{[l]} & {[m]} \\
				& {\twar_\theta(x)}
				\arrow["h", from=1-1, to=1-2]
				\arrow["{F(f\circ h)}"', from=1-1, to=2-2]
				\arrow["{F(f)}", from=1-2, to=2-2]
			\end{tikzcd}.
		\end{equation}
		Note that \eqref{eq:comm_diag} clearly commutes for inert $h$, so we may assume $h=a$ is active, moreover by construction it suffices to check that for any inert morphism $i:[1]\xinert{}[l]$ we have $F(f|_{\ima(a\circ i)})\circ a\circ i\cong F(f\circ a\circ i)$, so we may assume $l=1$. Denote by $S_m\bydef l_{n-1}l_{n-2}...l_0 l_{n-1}...l_0$ of length $n*m$ that contains each term $l_i$ $m$ times, then by construction, \Cref{lem:sq_ext} and \Cref{lem:sqs_cat} we can extend $g^0_f:[m*n]\rightarrow \twar_\theta(x)$ to $g^S_f:\sq(S_m)\rightarrow\twar_\theta(x)$ such that $g^0_f$ equals the composition $[m]\xrightarrow{\Delta_m}\sq(S_m)\xrightarrow{g^S_f}\twar_\theta(x)$, where $\Delta_m(i) = (i,...,i)\in\sq(S_m)$. Note that for the active morphism $a:[1]\xactive{}[m]$ the induced morphism $a^{\times n}:[1]^{\times n}\rightarrow[m]^{\times n}$ restricts to a morphism $a^S:\sq(S_l)\rightarrow\sq(S_m)$ between the respective subcategories, also note that $\sq(S_1)\cong [n]$. Denote by $S^0_m$ the string $l^n_{n-1}l^n_{n-2}...l^n_0$ and by $i_{S^0_m}:[n*m]\rightarrow\sq(S_m)$ the corresponding injective morphism defined in the proof of \Cref{lem:bruh}, then the morphism $a^S:[n]\cong \sq(S_1)\rightarrow\sq(S_m)$ can be identified with $[n]\xactive{a_m}[n*m]\xrightarrow{i_{S_m^0}}\sq(S_m)$, where the restriction of $a_m$ to every elementary interval $[1]\xinert{(i<i+1)}[n]$ is isomorphic to $[1]\xactive{}[m]$, so it suffices to prove that for each $i$ the following diagram 
		\begin{equation}\label{eq:comm_j}
			\begin{tikzcd}[sep=huge]
				{\sq(S_1)\cong [n]} & {\twar_\theta(x)} \\
				{[1]} \\
				{\sq(S_1)\cong [n]} & {\sq(S_m)}
				\arrow["{g^S_{f\circ a}}", from=1-1, to=1-2]
				\arrow["{j_i}", tail, from=2-1, to=1-1]
				\arrow["{j_i}"', tail, from=2-1, to=3-1]
				\arrow["{a^S}"', from=3-1, to=3-2]
				\arrow["{g^S_f}"', from=3-2, to=1-2]
			\end{tikzcd}
		\end{equation}
		\[\]
		commutes, where $j_i:[1]\xinert{}[n]$ denotes the inclusion of $(i<i+1)$.\par
		Given $f:[m]\rightarrow \stn^\inrt_{n,/\theta}$ the corresponding morphism $g_f:[m*n]^0\rightarrow \twar^C_\theta(x)$ can be identified (as in the diagram \eqref{eq:el_coc}) with a natural transformation $\alpha:H_f\rightarrow H_x$ between functors $\Delta^\el_{/[n*m]}\rightarrow \stn_n$, where $H_x$ is the constant functor with value $x$ and $H_f$ is a functor sending $[0]\xinert{}[n*m]$ to $c_n$ and $[1]\xinert{k,k+1}[n*m]$ to $C^{k\mod m}_n(1)$, note that both of those functors admit left Kan extensions along $i^\el:\Delta^\el_{/[n*m]}\hookrightarrow\Delta^\inrt_{/[n*m]}$ such that $i^\el_! H_x$ is a constant functor with value $x$ and $i^\el_! H_f([l]\xinert{i'}[n*m])$ is the object of $\ctree_n^h$ corresponding to the linear tree of length $l$ where the $k$th node is marked with $H_f([1]\xinert{i'(k)<i'(k+1)}[n*m])$, in particular $i^\el_! H_f(\id_{[n*m]})\cong D_n(m,0,...,0)$ in the notation of \Cref{constr:D_obj}. It then follows by construction that the morphism $g^S_f:\sq(S_m)\rightarrow \twar_\theta(x)$ factors as 
		\[\sq(S_m)\xrightarrow{g^{S,'}_f}\twar_\theta(D_n(m,...,0))\xrightarrow{\alpha_{\id_{[n*m]},!}}\twar_\theta(x),\]
		hence to prove the commutativity of \eqref{eq:comm_j} we may assume $x=D_n(m,...,0)$ and $f:[m]\rightarrow \stn^\inrt_{n,/D_n(m,0,...,0)}$ is given by the string
		\begin{equation}\label{eq:D_string}
			c_n\cong D_n(0,...,0)\xinert{D(i_+^0)}D_n(1,...,0)\xinert{D(i_+^1)}...\xinert{D(i_+^m)}D_n(m,...,0),
		\end{equation}
		where $i^k_+:[k]\xinert{}[k+1]$ is the unique inert morphism that preserves the maximal element (see \Cref{prop:D_func} for the functoriality of $D_n$).\par
		Denote by $g_D:[m]\rightarrow \stn^\inrt_{n,/D_n(m,...,0)}$ the morphism corresponding to the string \eqref{eq:D_string}. We can identify $e\in[n*m]$ with a pair $(i,j)$ with $i\in[n]$ and $j\in[m]$, in that case the value of $g_D^0:\Delta^\el_{/[n*m]}\rightarrow \Theta_{n,/D_n(m,...,0)}$ on $(k,j)<(k+1,j)$ is given by the morphism $C_n^k(1)\rightarrow D_n(m,...,0)$ sending the elementary cells $i^\pm_k:c_n\xinert{} C_n^k(1)$ to $((\delta_1,...,\delta_n),(j,0))$, where $\delta_{k+1}= \pm$ and $\delta_i=*$ for $i\neq k+1$. Denote by $g^0_{D_0}:\Delta^\el_{/[n*m]}\rightarrow \Theta_{n,/D_n(m,...,0)}$ the morphism corresponding to the composition $[n*m]^0\xrightarrow{i_{S_m^0}}\sq(S_m)\xrightarrow{g^S_D}\twar_\theta(D_n(m,...,0))$, then we need to prove that the value of $g^0_{D_0}$ on $(k,j)<(k+1,j)$ is given by $C_n^j(1)\rightarrow D_n(m,...,0)$ sending the elementary cells $i^\pm_j:c_n\xinert{} C_n^j(1)$ to $((\delta_1,...,\delta_n),(k,0))$, where $\delta_{n-j}= \pm$ and $\delta_i=*$ for $i\neq n-j$. Finally, note that by induction on $m$ we may assume $m=2$.\par
		Observe that $\sq(S_2)$ is isomorphic to the full subcategory of $[n]\times[n]$ containing $(i,j)$ where $i\geq j$. We can represent all the morphisms involved by a diagram
		\begin{equation}\label{eq:sq_twar}
			\begin{tikzcd}[sep=huge]
				&&&& {f_{n,n}} \\
				&& {} & {f_{n-1,n-1}} & {f_{n,n-1}} \\
				& {f_{1,1}} & {f_{2,1}} & {f_{n-1,1}} & {f_{n,1}} \\
				{f_{0,0}} & {f_{1,0}} & {f_{2,0}} & {f_{n-1,0}} & {f_{n,0}}
				\arrow["{g_{n,n-1}}", from=2-4, to=2-5]
				\arrow["{h_{n,n}}"', from=2-5, to=1-5]
				\arrow["{g_{2,1}}", from=3-2, to=3-3]
				\arrow["{...}"{description}, draw=none, from=3-3, to=2-3]
				\arrow["{...}"{description}, from=3-3, to=3-4]
				\arrow["{...}"{description}, from=3-4, to=2-4]
				\arrow["{g_{n,1}}", from=3-4, to=3-5]
				\arrow["{...}"{description}, from=3-5, to=2-5]
				\arrow["{g_{1,0}}", from=4-1, to=4-2]
				\arrow["{h_{1,1}}", from=4-2, to=3-2]
				\arrow["{g_{2,0}}", from=4-2, to=4-3]
				\arrow["{h_{2,1}}"{description}, from=4-3, to=3-3]
				\arrow["{...}"{description}, from=4-3, to=4-4]
				\arrow["{h_{n-1,1}}"{description}, from=4-4, to=3-4]
				\arrow["{g_{n,0}}", from=4-4, to=4-5]
				\arrow["{h_{n,1}}"', from=4-5, to=3-5]
			\end{tikzcd},
		\end{equation}
		where the original morphism $g_D^0:[2n]\rightarrow \twar^C_\theta(x)$ gives us the morphisms $g_{s,0}$ and $h_{n,k}$ such that $g_{s,0}$ is a morphism in $\twar^{s-1}_\theta(x)$ that corresponds to the data of a cospan $c_n\xinert{i^*_{s-1}} C^{s-1}_n(1)\twoheadleftarrow c_n$ over $D_n(2,...,0)$ and a morphism $C^{s-1}_n(1)\xrightarrow{g'_{i,0}}D_n(2,...,0)$ that takes the cells $i^\pm_{s-1}$ to $((\delta_1,...,\delta_n),(1,0))$ with $\delta_{n+1-s}=\pm$ and $\delta_i=*$ for $i\neq n+1-s$ and the cell $i^*_{s-1}$ to the composition of cells $((\delta'_1,...,\delta'_n),(t,0))$ with $t\in \{1,2\}$ and such that for $t=1$ all $\delta'_i=*$ for $i\neq v$ and $\delta_v=\pm$ for all values of $v> n+1-s$. Similarly, $h_{n,s}$ corresponds to the data of a cospan $c_n\xinert{i^*_{s-1}} C^{s-1}_n(1)\twoheadleftarrow c_n$ over $D_n(2,...,0)$ and a morphism $C^{s-1}_n(1)\xrightarrow{g'_{i,0}}D_n(2,...,0)$ that takes the cells $i^\pm_{s-1}$ to $((\delta_1,...,\delta_n),(0,0))$ with $\delta_{n+1-s}=\pm$ and $\delta_i=*$ for $i\neq n+1-s$ and the cell $i^*_{s-1}$ to the composition of cells $((\delta'_1,...,\delta'_n),(t,0))$ with $0\leq t\leq 2$ and such that $\delta'_i = *$ for $i\neq v$ that can take any values if $t=1$ or $v> n+1-s$ if $t=0$. We need to prove that the morphism $g'_{k,k-1}: C^{k-1}_n(1)\rightarrow D_n(2,...,0)$ corresponding to $g_{k,k-1}$ is such that it takes the cells $i^\pm_{k-1}$ to $((\delta_1,...,\delta_n),(1,0))$ with $\delta_{n+1-k}=\pm$ and $\delta_i=*$ for $i\neq n+1-k$ and the cell $i^*_{k-1}$ to the composition of cells $((\delta'_1,...,\delta'_n),(t,0))$ with $t\leq 2$ such that $\delta'_{i\neq v}=*$, $\delta_v=\pm$ with $v> n+1 -k$ and the morphism $h_{k,k}: C^{k-1}_n(1)\rightarrow D_n(2,...,0)$ corresponding to $h_{k,k}$ is such that it takes the cells $i^\pm_{k-1}$ to $((\delta_1,...,\delta_n),(0,0))$ with $\delta_{n+1-k}=\pm$ and $\delta_i=*$ for $i\neq n+1-k$ and the cell $i^*_{k-1}$ to the composition of cells $((\delta'_1,...,\delta'_n),(t,0))$ with $t\leq 2$ such that $\delta'_{i\neq v} = \pm$, $\delta_v=\pm$ for $v> n+1 -k$ or $v=n+1-k$ and $t=1$.\par
		We first prove the following claim: $h'_{i,1}:C^{0}_n(1)\rightarrow D_n(2,...,0)$ sends the cells $i^\pm_0$ to $((*,*,...,\pm),(0,0))$ and $i^*_0$ to the composition of cells $((\delta'_1,...,\delta'_n),(t,0))$ with $0<t\leq 2$ such that $\delta'_{i\neq v}=*$, $\delta_v=\pm$ with $v> n+1-i$ while $g'_{s,1}:C_n^{s-1}\rightarrow D_n(2,...,0)$ sends the cells $i^\pm_s$ to $((\delta_1,...,\delta_n),(1,0))$ with $\delta_{n+1-s}=\pm$ and $\delta_i=*$ otherwise and the cell $i^*_s$ to the composition of cells $((\delta'_1,...,\delta'_n),(t,0))$ with $0\leq t\leq 2$ such that $\delta'_{i\neq v}=*$, $\delta_v=\pm$ for $v>n+1-s$ if $t=1$ or $v=n$ for $t=0$. Assume first that $i=n$, in that case the composable pair $(h_{n,1}, g_{n,0})$ gives rise to a morphism $C_n(1,0,...,0,1)\rightarrow D_n(2,...,0)$ such that it takes $i^\pm_0$ to $((*,...,\pm),(0,0))$, $i^\pm_{n-1}$ to $((\pm,...,*),(1,0))$ and $c_n\cong C_n(0,...,0)\xinert{} C_n(1,0,...,0,1)$ to the composition $c_n\xinert{i_{n-1}^*} C_n^{n-1}(1)\xrightarrow{g'_{n,0}}D_n(2,...,0)$, by construction $h'_{n-1,1}: C^0_n(1)\rightarrow D_n(2,...,0)$ is given by the composition $C_{n}(1,0,....,0)\xinert{(\id,...,\id,\{1\})}C_{n}(1,0,....,1)$ and $g'_{n,1}$ is given by the composition $C_n(0,...,0,1)\xactive{(\{0\},\id,...,\id)}C_n(1,...,0,1)$, from this the required description follows. Assume we have now proved the required description for $h_{i,1}$ with $i\geq u$ and $g_{i,1}$ with $i\geq (u+1)$, then the composable pair $(h_{u-1,1}, g_{u,1})$ again defines a morphism $C_n(1,...,1,...,0)\rightarrow D_n(2,...,0)$ (where the only non-zero terms are in positions 1 and $u$) such that it takes $i^\pm_0$ to $((*,...,\pm),(0,0))$, $i^\pm_{u-1}$ to $((*,...,\pm,...,*),(1,0))$ with the only non-trivial term in position $(n-u)$ and $c_n\cong C_n(0,...,0)\xinert{} C_n(1,0,...,0,1)$ to the composition $c_n\xinert{i_{n-1}^*} C_n^{n-1}(1)\xrightarrow{g'_{u,0}}D_n(2,...,0)$, by construction $h'_{u-1,1}: C^0_n(1)\rightarrow D_n(2,...,0)$ is given by the composition $C_{n}(1,0,....,0)\xinert{(\id,...,\{1\},\id,...,\id)}C_n(1,...,1,...,0)$ and $g'_{u,1}$ is given by the composition $C_n(0,...,1,...,0)\xactive{(\id,...,\{0\},...,\id)}C_n(1,...,1,...,0)$, from this the required description follows.\par
		We now claim that $g'_{q,k};C^{q-1}_n(1)\rightarrow D_n(2,...,0)$ with $q>k$ takes the cells $i^\pm_q$ to $((\delta_1,...,\delta_n),(1,0))$ with $\delta_{n+1-q}=\pm$ and $\delta_i=*$ otherwise and the cell $i^*_q$ to the composition of cells $((\delta'_1,...,\delta'_n),(t,0))$ with $0\leq t\leq 2$ such that $\delta'_{i\neq v}=*$, $\delta_v=\pm$ for $v>n+1-q$ if $t=1$ or $v\geq n+1-k$ for $t=0$ and that $h'_{k+1,q}:C^{k}_n(1)\rightarrow D_n(2,...,0)$ with $q\geq k+1$ sends the cells $i^\pm_k$ to $((*,...,\pm,...,*),(0,0))$ with the only non-trivial $\delta$ in position $(n-k)$ and $i^*_0$ to the composition of cells $((\delta'_1,...,\delta'_n),(t,0))$ with $t\leq 2$ such that $\delta'_{i\neq v}=\pm$, $\delta_v=\pm$ with either $t=1$ and $v> n+1-q$ or $t=0$ and $v>k+1$. Indeed, since we have already proved the claim for $k=0$, we may assume that it has been proved for $(k-1)$, so that we have the description of morphisms $g'_{q,k-1}$ and $h_{n,k}$, the proof then follows by the same argument described in the previous paragraph by changing the indices. This concludes the proof that $F$ is well-defined.\par
		Now, it is immediate that $G\circ F= \id$, we need to show that $F\circ G\cong \id$, so assume we have $f:\Delta^\inrt_{/[q]}\rightarrow \Theta_{n,/x}$ in $\twar^C_\theta(x)([q])$, denote $\alpha:C_n(q)\rightarrow x$ the value $f(\id_{[q]})$, then it follows from the construction that the morphism $g_{G(f)}^S:\sq(S_q)\rightarrow \twar_\theta(x)$ appearing in the construction of $F$ factors as $\sq(S_q)\xrightarrow{g'} \twar_\theta(C_n(q))\xrightarrow{\alpha_!}\twar_\theta(x)$, hence we may assume $x= C_n(q)\in \Theta_n$, in which case the result follows immediately from \Cref{lem:theta_twar}.
	\end{proof}
	\begin{cor}\label{cor:stein_inrt}
		We have an isomorphism
		\[\stn^\inrt_{n,/x}\cong \underset{(\theta\xrightarrow{f} x)\in \Theta_{n,/x}}{\colim}\Theta^\inrt_{n,/\theta}.\]
	\end{cor}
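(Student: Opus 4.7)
The plan is to chain together the three results already proved in this section. First, by Proposition \ref{prop:stn_inrt} we may rewrite the left-hand side as
\[\stn^\inrt_{n,/x}\cong \twar_\theta(x),\]
where $x$ is viewed as an object of $\cat_n$ via the functor $x\mapsto x^*$ of \Cref{constr:stein} (note $\stn_n$ embeds into $\cat_n$ as described in \Cref{rem:theta}).

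Next, I would apply \Cref{lem:twar_theta1} to $\cE = x^*$ to obtain
\[\twar_\theta(x)\cong \underset{(\theta\xrightarrow{f} x)\in \Theta_{n,/x}}{\colim}\twar_\theta(\theta).\]
This is the main content: the global twisted arrow category is built out of those of $\theta \in \Theta_n$ mapping into $x$. The fact that this colimit is taken in $\cat_n$ (rather than in simplicial spaces) matches with the colimit on the right-hand side of the statement.

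Finally, I would identify each $\twar_\theta(\theta)$ with $\Theta^\inrt_{n,/\theta}$. By definition $\twar_\theta(\theta) = \cL_\cat(\twar^C_\theta(\theta))$, and \Cref{lem:theta_twar} gives $\twar^C_\theta(\theta)\cong \Theta^\inrt_{n,/\theta}$ as a simplicial space. Since $\Theta^\inrt_{n,/\theta}$ is already a (discrete) category, the localization $\cL_\cat$ acts as the identity on it, yielding $\twar_\theta(\theta)\cong \Theta^\inrt_{n,/\theta}$. Splicing this into the colimit from the previous step produces the desired equivalence. There is no real obstacle here; the result is a direct consequence of the three preceding statements, and the only mild subtlety is ensuring compatibility of the colimit over $\Theta_{n,/x}$ with the identification $\twar_\theta(\theta)\cong \Theta^\inrt_{n,/\theta}$, which is functorial in $\theta$ by inspection of the constructions.
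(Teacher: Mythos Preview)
Your proposal is correct and follows exactly the same approach as the paper's proof, which simply says ``Combine \Cref{lem:twar_theta1}, \Cref{lem:theta_twar} and \Cref{prop:stn_inrt}.'' You have correctly identified the three ingredients and the order in which they chain together, and your remark about $\cL_\cat$ acting as the identity on $\Theta^\inrt_{n,/\theta}$ makes explicit what the paper leaves implicit.
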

	\begin{proof}
		Combine \Cref{lem:twar_theta1}, \Cref{lem:theta_twar} and \Cref{prop:stn_inrt}.
	\end{proof}
	\begin{prop}\label{prop:def}
		Given $\cE\in\cat_n$ denote by $L_\cE\bydef \Sigma^\infty (*)\in\stab(\cat_{n,/\cE})$ and for $f:\cE\rightarrow \cD$ denote 
		\[L_{\cE/\cD}\bydef \cok(f_! L_{\cE}\rightarrow L_\cD),\]
		then $f$ is an isomorphism if and only if the following conditions hold:
		\begin{enumerate}
			\item\label{it:tau} $\tau_{\leq n+1}f:\tau_{\leq n+1}\cE\rightarrow\tau_{\leq n+1}\cD$ is an isomorphism, where $\tau_{\leq n+1}:\cat_n\rightarrow\cat_{(n+1,n)}$ is the left adjoint to the inclusion of $(n+1,n)$-categories;
			\item\label{it:cot} $L_{\cE/\cD}\cong 0$.
		\end{enumerate}
	\end{prop}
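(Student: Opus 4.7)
The ``only if'' direction is immediate: an isomorphism $f$ induces an isomorphism on every truncation $\tau_{\leq n+1}f$ and the induced map $f_!L_\cE \to L_\cD$ is an isomorphism, so in particular its cokernel vanishes. The content of the proposition is therefore the converse, which is a derived Nakayama-type statement and should follow from the formal deformation theory recalled in the introduction, built on \cite{harpaz2020k} and \cite[Section 12]{lurie2018spectral}.

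The plan is as follows. First, condition \ref{it:tau} lets us replace $\cD$ by the pullback of $\cE$ along $\tau_{\leq n+1}f^{-1}$ at the level of $(n+1,n)$-truncations; more precisely, after identifying $\tau_{\leq n+1}\cE\cong \tau_{\leq n+1}\cD$ we may view $f$ as a morphism in $\cat_{n,/\tau_{\leq n+1}\cE}$. By the main result of \cite{harpaz2020k}, both $\cE$ and $\cD$ are Artinian objects of $\cat_{n,/\tau_{\leq n+1}\cE}$, i.e.\ each is realized as a limit of a tower of small extensions of the terminal object by spectra in $\stab(\cat_{n,/\tau_{\leq n+1}\cE})$. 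Second, for a morphism $f$ between two such Artinian objects over a common base, a standard argument (see \cite[Section 12]{lurie2018spectral}) shows that $f$ is an isomorphism as soon as the induced map of cotangent data $f_!L_\cE\to L_\cD$ is an isomorphism in $\stab(\cat_{n,/\cD})$. Thus it remains to upgrade the condition $\cok(f_!L_\cE\to L_\cD)\cong 0$ from surjectivity to full invertibility.

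This last step is the main obstacle. The trick is to run the tower argument more carefully: proceeding by induction up the Artinian tower of $\cD$, at each stage one obtains a small extension $\cD_{k+1}\to\cD_k$ classified by some $M_k\in \stab(\cat_{n,/\cD_k})$. Using \Cref{thm:main_theta} to describe stabilizations explicitly as presheaves on the twisted-arrow categories, one can pull the surjectivity of $f_!L_\cE\to L_\cD$ back through the tower to deduce that at each stage the map $(f_k)_!L_{\cE_k}\to L_{\cD_k}$ is surjective; combined with the inductive hypothesis that $f_k$ is an isomorphism, the stable-category short exact sequence $L_{\cE_{k+1}} \to L_{\cE_k} \to M_k$ and its analogue for $\cD$ force $M_k^\cE \cong M_k^\cD$ via $f_k$, and hence $\cE_{k+1}\to \cD_{k+1}$ is an isomorphism. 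Taking the limit over $k$ and combining with condition \ref{it:tau} at the base of the tower yields that $f$ itself is an isomorphism.

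The key technical point I expect to be painful is verifying that the pushforward of the cotangent sequence along the Artinian tower is well-behaved: one must show that the pushout squares defining small extensions are compatible with the formation of $f_!L_\cE$, so that vanishing of $L_{\cE/\cD}$ globally propagates to vanishing of $L_{\cE_k/\cD_k}$ at each stage. This is where the explicit description of $\stab(\cat_{n,/\cE})$ via presheaves on $\Theta^{\inrt,\op}_{n,/\cE}$ obtained in \Cref{thm:main_theta} will be essential, since it reduces the compatibility check to a pointwise statement about spectrum-valued presheaves.
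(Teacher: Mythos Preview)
Your overall framing is correct, but you have introduced a phantom obstacle. In a stable $\infty$-category, a morphism has vanishing cofiber if and only if it is an equivalence: the cofiber sequence $f_!L_\cE \to L_\cD \to L_{\cE/\cD}$ with $L_{\cE/\cD}\cong 0$ already forces $f_!L_\cE \xrightarrow{\sim} L_\cD$. There is nothing to ``upgrade from surjectivity to full invertibility''; that distinction belongs to abelian categories, not stable ones. Consequently the entire second half of your proposal---the inductive climb up the Artinian tower of $\cD$, the compatibility check for pushforwards of cotangent sequences, and the appeal to the explicit presheaf model of $\stab(\cat_{n,/\cE})$ from \Cref{thm:main_theta}---is unnecessary.

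The paper's argument is also structurally different from what you propose. Rather than decomposing $\cE$ and $\cD$ into their Artinian towers over $\tau_{\leq n+1}\cE$ and trying to match these towers stage by stage, the paper fixes $f$ and shows by Yoneda that $f^*:\mor_{\cat_n}(\cD,A)\to\mor_{\cat_n}(\cE,A)$ is an isomorphism for every $A$. This is done by induction on $m$ for $A\in\cat_{(m,n)}$: the base case $m=n+1$ is condition~\ref{it:tau}, and the inductive step uses the Postnikov pullback square for $A$ from \cite{harpaz2020k} together with $f_!L_\cE\cong L_\cD$ to handle the obstruction term $\Omega^\infty(\Sigma^{m+1}\mathrm{H}\pi_m(A))$. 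Convergence of Postnikov towers finishes the proof. This avoids entirely the delicate question of whether $f$ is compatible with any particular choice of Artinian presentations of $\cE$ and $\cD$, which your approach would have to confront. The argument is purely formal and makes no use of the explicit identification of $\stab(\cat_{n,/\cE})$.
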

	\begin{proof}
		The conditions are clearly necessary, so we need to prove they are sufficient. It suffices to prove that $f^*: h_\cD\rightarrow h_\cE$ is an isomorphism, where $h_\cE:\cat_n\rightarrow\cS$ is a functor corepresented by $\cE$ and similarly for $h_{\cD}$. For any $C\in\cat_{(n+1,n)}$ and $\cA\in\cat_n$ any morphism $\cA\rightarrow C$ factors through $\tau_{\leq n+1}\cA$, so condition \eqref{it:tau} implies that the restrictions of $h_\cD$ and $h_{\cE}$ to $\cat_{(n+1,n)}$ are isomorphic. Assume we have proved that $f^*$ restricts to an isomorphism on $\cat_{(m,n)}$ for some $m>n+1$, we will prove that $f^*$ also defines an isomorphism on $\cat_{(m+1,n)}$; since Postnikov towers converge for $\cat_n$ this will conclude the proof. \par
		Given $A\in\cat_{(m,n)}$ using \cite[Theorem 5.2.]{harpaz2020k} we can form a pullback square
		\[\begin{tikzcd}[sep=huge]
			A & {\tau_{\leq n+1}A} \\
			{\tau_{\leq m}A} & {\Omega^\infty(\Sigma^{m+1} \mathrm{H}\pi_m(A))}
			\arrow[from=1-1, to=1-2]
			\arrow[from=1-1, to=2-1]
			\arrow["\ulcorner"{anchor=center, pos=0.125}, draw=none, from=1-1, to=2-2]
			\arrow[from=1-2, to=2-2]
			\arrow[from=2-1, to=2-2]
		\end{tikzcd}.\]
		using our inductive assumption (and the adjunction $\Sigma^\infty\dashv \Omega^\infty$) we see that it remains to show that for any morphism $p:\cD\rightarrow\tau_{\leq n+1}A$ we have an isomorphism
		\[\mor_{\stab(\cat_{n,/\tau_{\leq n+1}A})}(p_! L_\cE,\Sigma^{m+1} \mathrm{H}\pi_m(A))\cong \mor_{\stab(\cat_{n,/\tau_{\leq n+1}A})}(p_! L_\cD,\Sigma^{m+1} \mathrm{H}\pi_m(A)),\]
		which clearly follows from \eqref{it:cot}.
	\end{proof}
	\begin{theorem}\label{thm:stein}
		For $x\in\stn_n$ we have an isomorphism
		\begin{equation}\label{eq:stein_free}
			x\cong \underset{(e\xinert{i}x)\in\stn^\el_{n,/x}}{\colim}e.
		\end{equation}
	\end{theorem}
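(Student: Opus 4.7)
The plan is to apply the deformation-theoretic criterion of \Cref{prop:def} to the canonical comparison morphism $f:\cD\to x$, where $\cD\bydef \underset{(e\xinert{}x)\in\stn^\el_{n,/x}}{\colim} e$, the colimit being taken in $\cat_n$. By \Cref{prop:def} it then suffices to verify the two conditions that $\tau_{\leq n+1}f$ is an isomorphism and that the relative cotangent complex $L_{\cD/x}\cong 0$ in $\stab(\cat_{n,/x})$.

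For the truncation condition I appeal to the freeness result for strict $\omega$-categories of \cite{ara2023categorical} (or equivalently its $\infty$-categorical counterpart \cite{campion2023infty}): since $\tau_{\leq n+1}$ is a left adjoint it commutes with the colimit defining $\cD$, and the colimit of the elementary cells of $x$ taken in $\cat_{(n+1,n)}$ recovers $\tau_{\leq n+1}x\cong x$ by those cited results.

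For the cotangent condition I use the identification $\stab(\cat_{n,/x})\cong \mor_\cat(\stn^\inrt_{n,/x},\spc)$ obtained by combining \Cref{thm:main_theta}, \Cref{prop:main_theta_stab}, \Cref{prop:stn_inrt}, and \Cref{cor:stein_inrt}. Under this identification $L_x$ corresponds (up to a fixed shift, in analogy with the $n=1$ computation of \cite{nuiten2019quillen} recalled in the proof of \Cref{lem:sqs_cat}) to the constant presheaf $\bS$. Since $\Sigma^\infty$ and $f_!$ commute with colimits and $\cD\cong\underset{e}{\colim}\,e$, one has $f_! L_\cD\cong \underset{e}{\colim}\,(i_e)_! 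L_e$, and a direct computation of the relevant left Kan extension, using that inert morphisms in $\stn_n$ are determined by their action on basis elements so that the category of factorizations of $i_e$ through a given $y\xinert{}x$ is either empty or contractible, shows that $(i_e)_! L_e$ evaluates at $y\xinert{}x$ to $\bS$ when $i_e$ factors through $y$ and to $0$ otherwise. Hence $(f_! L_\cD)(y\xinert{}x)\cong \Sigma^\infty|\stn^\el_{n,/y}|$ and the comparison map to $L_x(y)\cong\bS$ is the suspension of $|\stn^\el_{n,/y}|\to\ast$, which is an equivalence by the final clause of \Cref{prop:stn_cont}. The main obstacle will be the precise identification of $L_x$ and of the various $(i_e)_! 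L_e$ with the presheaves claimed above: although morally parallel to the $n=1$ case, this requires carefully unwinding the chain of equivalences \Cref{thm:main_theta}--\Cref{prop:stn_inrt} and may warrant a separate preliminary lemma generalising the cotangent-complex formula of \cite{nuiten2019quillen} to higher $n$.
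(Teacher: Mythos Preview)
Your overall strategy is exactly the paper's: apply \Cref{prop:def}, dispose of the truncation condition via \cite{ara2023categorical}, and reduce the cotangent condition to the contractibility statement of \Cref{prop:stn_cont}. The paper's proof is essentially a three-line version of what you wrote.

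There is, however, a genuine error in your intermediate step. You claim that $(i_e)_! L_e(y)$ equals $\bS$ when $e\subseteq y$ and $0$ otherwise, justifying this by the fact that the space of factorisations of $i_e$ through $y$ is either empty or a point. But that is not the category computing the left Kan extension: one has $(i_e)_! L_e(y)\cong |i_e/y|\otimes\bS$ where $i_e/y=\{z\in\stn^\inrt_{n,/e}:z\subseteq y\}$, and this is typically nontrivial even when $e\not\subseteq y$. For instance with $x=c_1$, $e=c_1$, $y=\{0\}$ one gets $i_e/y=\{\{0\}\}$, hence $(i_e)_! L_e(y)\cong\bS$, not $0$. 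Worse, if one uses your formula then $f_!L_\cD(\{0\})$ becomes the colimit over $\stn^\el_{n,/c_1}$ (which has terminal object $c_1$) of the values $(\bS,0,0)$, yielding $0$ rather than the correct answer $\bS$.

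The fix is short and still lands on your stated conclusion. Writing $D$ for the Grothendieck construction of $e\mapsto i_e/y$ over $\stn^\el_{n,/x}$, Thomason's theorem gives $f_!L_\cD(y)\cong |D|\otimes\bS$. The functor $\stn^\el_{n,/y}\to D$, $e'\mapsto (e',e')$, satisfies the hypothesis of Quillen's Theorem~A: for $(e,z)\in D$ the relevant comma category is $\stn^\el_{n,/z}$, contractible by \Cref{prop:stn_cont} since $z\in\stn_n$. Hence $|D|\simeq|\stn^\el_{n,/y}|$ and you recover the formula the paper asserts ``by definition''. With this correction your argument is complete and coincides with the paper's.
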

	\begin{proof}
		Denote by $X$ the colimit in the right-hand-side of \eqref{eq:stein_free}, we have a natural morphism $f:X\rightarrow x$, by \Cref{prop:def} it suffices to prove that $L_f\cong 0$ and that $\tau_{\leq n+1}f$ is an isomorphism. By definition the value of $L_f$ at $y\xinert{i}x$ is given by
		\[\cok(|(\stn^\el_{n,/x})_{/y}|\otimes\bS\rightarrow\bS),\]
		so it suffices to prove that $(\stn^\el_{n,/x})_{/y}\cong \stn^\el_{n,/x}$ is contractible, however this follows from \Cref{prop:stn_cont} since $y\in\stn_n$. That $\tau_{\leq n+1}f$ is an isomorphism follows from the freeness property of \cite{ara2023categorical}.
	\end{proof}
	\begin{cor}\label{cor:stein_push}
		A pushout diagram in $\stn_n$ of the form 
		\[\begin{tikzcd}[sep=huge]
			y & x \\
			z & w
			\arrow["i", tail, from=1-1, to=1-2]
			\arrow["a"', two heads, from=1-1, to=2-1]
			\arrow[two heads, from=1-2, to=2-2]
			\arrow[tail, from=2-1, to=2-2]
			\arrow["\ulcorner"{anchor=center, pos=0.125, rotate=180}, draw=none, from=2-2, to=1-1]
		\end{tikzcd}\]
		remains a pushout in $\cat_n$.
	\end{cor}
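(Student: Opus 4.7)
Let $w'$ denote the pushout of the span $z \lxactive{a} y \xinert{i} x$ computed in $\cat_n$, and let $\phi\colon w' \to w$ be the canonical comparison map induced by the universal property in $\cat_n$ (using that the pushout diagram in $\stn_n$ is, in particular, a cocone under $y$ in $\cat_n$ since $\stn_n$ embeds fully faithfully in $\cat_n$ via the $n$-category associated to an ADC by Construction 3.2). The plan is to show that $\phi$ is an isomorphism by verifying the two hypotheses of Proposition 3.22: (i) $\tau_{\leq n+1}\phi$ is an isomorphism in $\cat_{(n+1,n)}$, and (ii) $L_{w'/w}\cong 0$ in $\stab(\cat_{n,/w})$.

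For condition (i), I will appeal to the freeness property from \cite{ara2023categorical} that was already used in the proof of Theorem 3.28. That property identifies the image of a Steiner complex in $\cat_{(n+1,n)}$ with the underlying strict $n$-category built freely on its basis, and it behaves well with respect to pushouts of inert maps along active maps in $\stn_n$ because such pushouts correspond to adjoining the basis elements of $x$ lying outside the image of $i$ to $z$ along the active map $a$. Consequently the pushout in $\stn_n$ is also the pushout in $\cat_{(n+1,n)}$, giving $\tau_{\leq n+1}\phi\cong\id$.

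For condition (ii), I will use Theorem 2.13 together with Corollary 3.27 to identify $\stab(\cat_{n,/w})\cong\psh_\spc(\Theta^{\inrt,\op}_{n,/w})$, in which the cotangent complex $L_w$ is computed from the elementary inert cells of $w$. Writing each of $x,y,z$ as a colimit over its elementary cells via Theorem 3.28, and using that $\phi_!$ preserves colimits, the object $\phi_!L_{w'}$ becomes the pushout of the $\phi_!L_x$, $\phi_!L_y$, $\phi_!L_z$ in $\stab(\cat_{n,/w})$. Matching basis elements of the Steiner pushout $w$ with basis elements of $x$ not in the image of $i$, together with basis elements of $z$ (which receive the images under $a$ of the basis of $y$), one checks that this pushout is exactly the colimit presentation $\colim_{e\xinert{} w}\Sigma^\infty(e)\cong L_w$ afforded by Theorem 3.28. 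Hence $\phi_!L_{w'}\to L_w$ is an isomorphism and its cokernel vanishes.

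The main obstacle is step (ii): one has to verify carefully that the bookkeeping of basis elements in the pushout Steiner complex $w$ matches the colimit presentation of $L_w$ as a functor on $\stn^\el_{n,/w}$. This will rely crucially on the fact that $i$ is inert (so the basis of $y$ embeds as a subset of the basis of $x$) and $a$ is active (so every basis element of $y$ distributes as a sum of distinct basis elements of $z$ by Proposition 3.9), ensuring no over-counting or cancellation occurs when comparing the pushout decomposition of $\phi_!L_{w'}$ with the elementary cell colimit for $L_w$ at the level of presheaves on $\Theta^{\inrt,\op}_{n,/w}$.
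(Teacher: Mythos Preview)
Your approach via \Cref{prop:def} is genuinely different from the paper's, which argues directly by colimits: the paper observes that the argument of \Cref{lem:C_push} goes through verbatim once one replaces \Cref{cor:rel_seg_theta} by \Cref{cor:rel_seg} and uses \Cref{thm:stein} to know that the elementary-cell colimit presentation of a Steiner complex is valid in $\cat_n$. Concretely, writing $G:\stn^\el_{n,/x}\to\cat_n$ for $G(e)=a'_!e$ (where $a':x\xactive{}w$ is the induced active map), one has $\colim G\cong w$ in $\cat_n$ by \Cref{cor:rel_seg} together with \Cref{thm:stein}, and a commutativity-of-colimits argument identifies $\colim G$ with the pushout $x\coprod_y z$ computed in $\cat_n$. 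No passage through stabilization or cotangent complexes is needed.

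Your route is valid in outline, but step (i) is doing more work than you acknowledge. Showing that $\tau_{\leq n+1}\phi$ is an isomorphism amounts to proving that the pushout is preserved by $\stn_n\hookrightarrow\cat_{(n+1,n)}$, which is exactly the statement of the corollary one truncation level down. The freeness result of \cite{ara2023categorical} gives you the elementary-cell colimit decomposition in the strict setting, but to convert that into preservation of this particular pushout you still need the \Cref{lem:C_push}-style manipulation (express each of $x,y,z$ as a colimit over $\stn^\el_{n,/x}$, take the pointwise pushout, and identify the result with $e\mapsto a'_!e$). So you have not bypassed the paper's argument; you have deferred it to $\cat_{(n+1,n)}$ and then added a second layer of work in step (ii). Your step (ii) is also correct in principle, but the ``bookkeeping'' you flag is precisely the content of \Cref{cor:rel_seg} combined with property \ref{it:sat2} of \Cref{prop:stn_sat}, so again you are re-deriving the paper's ingredients inside $\stab(\cat_{n,/w})$ rather than avoiding them. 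The paper's proof is shorter because it uses those ingredients once, directly in $\cat_n$.
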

	\begin{proof}
		This follows by the same argument as in \Cref{lem:C_push} using \Cref{cor:rel_seg} and the fact that the colimit \eqref{eq:stein_free} is preserved by $\stn_n\hookrightarrow\cat_n$.
	\end{proof}
	\section{$\Theta_n$-trees}
	In \Cref{sect:stein} we have introduced the formalism of strong Steiner complexes and proved a number of results regarding them. In practice, however, it is not easy to prove that a particular ADC defines a strong Steiner complex since it is difficult to demonstrate that its basis is loop-free. In the present section we introduce $\Theta_n$-trees -- essentially they are given by iterated active/inert pushouts of objects of $\Theta_n$. Not all such pushouts lie in $\stn_n$, so we need additional constraints to ensure this is the case. We accomplish this by introducing a subcategory of \textit{healthy trees} in \Cref{def:tree_health}, the notion that was inspired by the healthy objects of $\Theta_n$ defined in \cite{ayala2014configuration}. The main results of this section are \Cref{prop:strong_tree} proving that healthy trees indeed define objects of $\stn_n$ and \Cref{prop:health_push} which shows that they are closed under certain pushouts in $\cat_n$ -- the fact that will be used in the next section in construction of $\twar(\cE)$
	\begin{construction}\label{constr:tree}
		By a \textit{tree} we mean a finite rooted tree, each node except for the root admits one incoming edge and a finite number of outgoing edges, nodes with no outgoing edges are called \textit{leaves}. We will denote nodes of the tree by $c$ and edges by $e$, for a given edge $e$ we will denote by $s(e)$ its source and by $t(e)$ its target (which are nodes of $t$). We will define a certain category $\ctree'_n$ using \Cref{constr:stein}, but first we define its underlying set as follows: the elements of $\ctree'_n$ are given by trees $t$ such that each node $c$ is marked with $\theta_c$ and to each edge $e$ with $s(e)=c$ and $t(e)=c'$ corresponds an inert morphism $c_j\xinert{i_e}\theta_c$ for some $j\leq n$ and moreover $\theta_{c'}\in \Theta_j$, we would refer to cells of the form $c_j\xinert{i_e}\theta_c$ for $c\in T_0(x)$ as \textit{marked} with $\theta_{t(e)}$. These markings are required to satisfy an additional condition, however to introduce it we will need some notation. Given a marked tree $x$ we will denote by $T(x)$ the associated tree, by $T(x)_0$ its set of nodes and by $L(x)\subset T_0(x)$ the subset of leaves, also given $c\in T_0(x)$ we will denote $\tau_{\geq c}$ the object whose underlying tree is $T(x)_{c/}$ and whose decorations are induced from $x$ and similarly we will denote by $\tau_{<c}x$ the object corresponding to the subtree of $T(x)_{/c}$ obtained by removing the maximal element with induced decorations. With these notations we also require that the following holds:
		\begin{enumerate}
			\item[(*)]\label{it:tree_cond} if $c_j\xinert{i_e}\theta_c$ is marked with $\theta_{t(e)}$ and $d^\pm_q \tau_{\geq t(e)}x\neq c_q$, then, if we denote by $i_{e'}$ the composition $c_q\xinert{i_q^\pm} c_j\xinert{i_e}\theta_c$, $i_{e'}$ is marked with $d^\pm_q \theta_{t(e)}$ and moreover $\tau_{\geq t(e')}x = d^\pm_q\tau_{\geq t(e)}x$.
		\end{enumerate}
		Define $T_m(x)$ to be the subtree of $T(x)$ that only contains nodes $c$ such that if $c\in \tau_{\geq t(e')}x$ for some marked edge $e'$ with $i_{e'}:c_k\xinert{}\theta_{c'}$, then there does not exist a marked edge $e$ such that $i_{e'}$ factors through $i_e:c_l\xinert{}\theta_{c'}$. We associate to $x\in\ctree'_n$ an ADC $C(x)$ with basis as follows: the basis elements of dimension $j\leq n$ are given by \textit{unmarked} cells $c_j\xinert{i_0}\theta_c$ for $c\in T_m(x)$, we will denote an element corresponding to such an inclusion by $[i_0, c]$. Given such an element we set 
		\begin{equation}\label{eq:unmarked}
			d^\pm [i_0,c]\bydef [i_0\circ i^\pm_{j-1},c]
		\end{equation}
		if $i_0\circ i_{j-1}^\pm$ is also unmarked. If it is marked denote by $e'$ the corresponding edge and pick some $i_e:c_k\xinert{}\theta_{c'}$ such that $t(e')$ lies in $d^\pm_{j-1}\tau_{\geq t(e)}x$, then by \eqref{it:tree_cond} we can view $\tau_{\geq t(e')}x$ as a subobject of $\tau_{\geq t(e)}x$ which does not depend on the choice of $e$, so in particular we can view unmarked edges of $\tau_{\geq t(e')}x$ as unmarked edges of $\tau_{\geq t(e)}x$, hence as basis elements of $C(x)$. With these considerations in mind, we set
		\begin{equation}\label{eq:marked}
			d^\pm [i_0,c]\bydef \sum_{c_0\in T_0(\tau_{\geq t(e')} x)}\sum_{c_{j-1}\xinert{i'}\theta_{c_0}} [i', c_0],
		\end{equation}
		where the first sum is taken over all nodes of $T(\tau_{\geq t(e')}x)$ and the second over all \textit{unmarked} $(j-1)$-cells. We will define augmentation by setting $e(c_0\xinert{}\theta_c)\bydef 1$. We will prove in \Cref{prop:adc_tree} below that this defines an ADC with basis, denote it by $C(x)$, we define morphisms from $x$ to $x'$ in $\ctree'_n$ to be the morphisms of $ADCs$ between $C(x)$ and $C(x')$. 
	\end{construction}
	\begin{notation}\label{not:tree_basis}
		Given $x\in\ctree'_n$ we will denote by $B(x)_\bullet$ the basis of the corresponding ADC $C(x)$, i.e. $B(x)_j$ for $0\leq j\leq n$ is the set of elementary unmarked $j$-cells in some $\theta_c$ for $c\in T_0(x)$.
	\end{notation}
	\begin{prop}\label{prop:adc_tree}
		The objects $C(x)$ defined in \Cref{constr:tree} for $x\in \ctree'_n$ are ADCs with basis.
	\end{prop}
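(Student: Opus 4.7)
The plan is to verify three conditions characterizing an ADC with basis: that the boundary formulas $d^\pm$ of \eqref{eq:unmarked}--\eqref{eq:marked} are well-defined and land in the free commutative monoid on $B(x)_{j-1}$, that $\partial \circ \partial = 0$ where $\partial := d^+ - d^-$, and that the augmentation defined by $e([i_0, c]) := 1$ on $0$-cells satisfies $e \circ \partial_0 = 0$. By construction $K_j^+$ is the free commutative monoid on $B(x)_j$ and $K_j$ its group completion, so nothing beyond these three conditions needs checking. Well-definedness in the marked case \eqref{eq:marked} amounts to showing that the sum is independent of the auxiliary choice of marked edge $e$ through which $i_{e'}$ factors; this is exactly condition $(*)$ of \Cref{constr:tree}, which forces $\tau_{\geq t(e')} x = d^\pm_q \tau_{\geq t(e)} x$, an object determined by $i_{e'}$ alone. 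A short check also confirms that every unmarked $(j-1)$-cell appearing in \eqref{eq:marked} is attached to a node that lies in $T_m(x)$, so the expression lies in the claimed free monoid.

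For $\partial^2 = 0$ I would fix $[i_0, c] \in B(x)_j$ and compute $d^\sigma d^\tau [i_0, c]$ for $\sigma, \tau \in \{-,+\}$, then verify the equivalent identity $d^+ d^+ + d^- d^- = d^+ d^- + d^- d^+$ by case analysis on which of the iterated boundary cells $i_0 \circ i^\pm_{j-1}$ and their further $(j-2)$-boundaries are marked. When none of them is marked, the identity reduces to the globular cancellation $i^\sigma_{j-1} \circ i^+_{j-2} = i^\sigma_{j-1} \circ i^-_{j-2}$ inside $c_j$, which gives $d^+ d^\sigma = d^- d^\sigma$ for each $\sigma$. When one or more of the intermediate cells are marked, condition $(*)$ again plays the central role: it guarantees that the marked cells encountered along either path produce expanding sums indexed by the same subtrees, so the globular cancellation lifts to an equality of sums. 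The augmentation identity is then a short induction on tree depth: $d^+_0 [i_0, c]$ and $d^-_0 [i_0, c]$ each produce a collection of unmarked $0$-cells whose total $e$-value is $1$, so $e(d^+ - d^-) = 0$.

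The main obstacle will be the bookkeeping for nested markings, where a marked edge leads into a subtree $\tau_{\geq t(e')} x$ that itself contains further marked edges. I would address this by an outer induction on the total number of marked edges in $x$, together with an inner induction on $j$, and by extracting as an auxiliary lemma the statement that $d^\pm$ commutes with restriction to subtrees --- that is, the boundary of $[i', c_0]$ computed inside $C(\tau_{\geq c} x)$ agrees with its boundary computed inside $C(x)$. Once this compatibility is in place, both the chain condition and the augmentation vanishing reduce to identities inside individual $\theta_c$'s and inside strictly smaller subtrees, where they follow from the globular identities and the inductive hypothesis respectively.
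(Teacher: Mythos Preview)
Your plan is correct in outline and matches the paper's approach: both reduce the claim to $\partial^2=0$ on basis elements, treat the unmarked case by globular identities inside $c_j$, and in the marked case use condition~$(*)$ to ensure that the expanded sums \eqref{eq:marked} coming from the two $(j-1)$-faces have matching $(j-2)$-boundaries. The paper's argument is, however, considerably more direct than what you propose. It does not set up an induction on the number of marked edges, nor does it isolate a compatibility-with-subtrees lemma; instead it simply computes $\partial^2[i,c]$ in the worst case (both $i\circ i^-_{j-1}$ and $i\circ i^+_{j-1}$ marked), observes that all interior $(j-2)$-cells in the resulting double sum cancel in pairs, and notes that the surviving boundary terms coming from the two marked edges $e'$ and $e''$ agree because $e^{',\pm}_{j-2}=e^{'',\pm}_{j-2}$ with identical markings, which is exactly what condition~$(*)$ guarantees. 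Your induction and auxiliary lemma are not wrong, but they are packaging for a cancellation that can be read off in one step.

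One genuine slip: the globular identity you wrote is transposed. In $c_j$ one has $i^+_{j-1}\circ i^\tau_{j-2}=i^-_{j-1}\circ i^\tau_{j-2}$ (source and target of a $j$-cell share the same $(j-2)$-boundary), hence $d^\tau d^+=d^\tau d^-$ for each $\tau$; the composite is independent of the \emph{inner} sign. Your stated identity $i^\sigma_{j-1}\circ i^+_{j-2}=i^\sigma_{j-1}\circ i^-_{j-2}$ would say each $(j-1)$-face of $c_j$ has coinciding source and target, which is false for $j\geq 2$. The correct identity still yields $\partial^2=0$, so this is a local error rather than a gap in the strategy.
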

	\begin{proof}
		The components $C(x)_i$ are obviously free and the basis is unital, so all we need to show is that $\partial\circ \partial\cong 0$. Fix some unmarked $c_j\xinert{i}\theta_c$ in $T_m(x)$, we will assume that both $i\circ i^-_{j-1}$ and $i\circ i^+_{j-1}$ are marked since all other cases are similar but easier, denote the corresponding edges by $e'$ and $e''$ respectively. By definition
		\begin{equation}
			\partial[i,c] = \sum_{c_0\in T_0(\tau_{\geq t(e')} x)}\sum_{c_{j-1}\xinert{i'}\theta_{c_0}} [i', c_0] - \sum_{c_1\in T_0(\tau_{\geq t(e'')} x)}\sum_{c_{j-1}\xinert{i'}\theta_{c_1}} [i', c_1],
		\end{equation}
		where the inner sums are taken over unmarked cells and as in \Cref{constr:tree} we have identified $\tau_{\geq t(e')}x$ and $\tau_{\geq t(e'')}x$ with subobjects of some $\tau_{\geq t(e)}x$ with $t(e)\in T_m(x)$. Also by definition we have
		\begin{equation}\label{eq:d2}
			\partial(\sum_{c_0\in T_0(\tau_{\geq t(e')} x)}\sum_{c_{j-1}\xinert{i'}\theta_{c_0}} [i', c_0]) = \sum_{c_0\in T_0(\tau_{\geq t(e')} x)}\sum_{c_{j-1}\xinert{i'}\theta_{c_0}} (d^+[i', c_0] - d^-[i', c_0]).
		\end{equation}
		Note that all the terms corresponding to $(j-2)$-cells that are both the positive and the negative boundary of some $(j-1)$-cells cancel out, so we can rewrite \eqref{eq:d2} as 
		\begin{equation}
			\sum_{c_0\in T_0(\tau_{\geq t(e^{',+}_{j-2})} x)}\sum_{c_{j-2}\xinert{i'}\theta_{c_0}}[i',c_0] - \sum_{c_1\in T_0(\tau_{\geq t(e^{',-}_{j-2})} x)}\sum_{c_{j-2}\xinert{i''}\theta_{c_1}}[i'',c_1].
		\end{equation}
		The same reasoning also applies to $e''$, the claim now follows from the fact that $e^{',\pm}_{j-2} = e^{'',\pm}_{j-2}$ and the markings on this cell induced from $e'$ and $e''$ agree by definition of $\ctree'_n$.
	\end{proof}
	\begin{prop}\label{prop:down_tree}
		Given $x\in\ctree'_n$ and $c\in T_m(x)$, denote by $\tau_{<c}x$ the decorated tree whose underlying tree is $T(x)_{<c}$ with decorations induced from $x$, then $\tau_{<c}x\in \ctree'_n$.
	\end{prop}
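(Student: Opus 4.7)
The plan is to verify condition $(*)$ of the definition of $\ctree'_n$ for the decorated tree $\tau_{<c}x$. Since the underlying tree $T(x)_{<c}$ is a subtree of $T(x)$ obtained by removing $c$ together with all nodes above it, and since the node decorations $\theta_{c'}$ for $c'<c$ as well as the inert markings $c_j\xinert{i_e}\theta_{s(e)}$ on edges of $\tau_{<c}x$ are inherited verbatim from $x$, the only thing that needs careful checking is how the truncation affects $\tau_{\geq t(e)}$ for edges $e$ of $\tau_{<c}x$.

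Fix a marked edge $e$ of $\tau_{<c}x$ with associated inert inclusion $c_j\xinert{i_e}\theta_{s(e)}$. There are two cases. In the first case, $c$ is not a descendant of $t(e)$ in $T(x)$; then $\tau_{\geq t(e)}(\tau_{<c}x) = \tau_{\geq t(e)}x$ on the nose, so condition $(*)$ for $x$ applied to $e$ gives the required boundary markings and the identity $\tau_{\geq t(e')}x = d^\pm_q\tau_{\geq t(e)}x$ directly, and the edges $e'$ thus produced still lie in $\tau_{<c}x$. In the second case $c$ lies strictly above $t(e)$, and we must argue that passing from $\tau_{\geq t(e)}x$ to its truncation at $c$ does not change the objects $d^\pm_q$ appearing in the condition. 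This is where the hypothesis $c\in T_m(x)$ enters: by definition of $T_m$, the marked ancestry of $c$ up to $t(e)$ is non-redundant, so the edges reached by the recursive boundary procedure \eqref{eq:marked} of Construction~\ref{constr:tree}, starting from the marking of $e$, never pass through $c$ itself. Consequently $d^\pm_q\tau_{\geq t(e)}(\tau_{<c}x) = d^\pm_q\tau_{\geq t(e)}x$, and the boundary edges $e'$ produced by condition $(*)$ in $x$ are again edges of $\tau_{<c}x$, with $\tau_{\geq t(e')}(\tau_{<c}x) = \tau_{\geq t(e')}x = d^\pm_q\tau_{\geq t(e)}x$.

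The main obstacle is the bookkeeping in the second case, namely showing that the iterated boundary cells produced by $(*)$ in $x$ remain inside $T(x)_{<c}$. The cleanest way is to use induction on $j-q$: the condition $c\in T_m(x)$ is used precisely to rule out the situation in which a marked edge above $e$ forces a boundary to pass through the node $c$, and this in turn is the content of the definition of $T_m(x)$ in Construction~\ref{constr:tree}. Once this is checked, both clauses of $(*)$ for $\tau_{<c}x$ follow automatically from those for $x$, completing the proof.
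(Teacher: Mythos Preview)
Your approach is correct and essentially the same as the paper's: both verify condition $(*)$ by observing that $c\in T_m(x)$ guarantees $c$ lies outside the boundary subtrees $\tau_{\geq t(e')}x = d^\pm_q\tau_{\geq t(e)}x$, so truncating at $c$ leaves these unchanged. The paper does this in one line by noting directly that the boundary edge $e'$ produced by $(*)$ has $i_{e'}$ factoring through $i_e$, so the definition of $T_m(x)$ immediately gives $c\notin\tau_{\geq t(e')}x$; your case split, the reference to the formula \eqref{eq:marked}, and the proposed induction on $j-q$ are all unnecessary elaborations of this single observation, but they do not introduce any error.
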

	\begin{proof}
		We need is to prove that condition \eqref{it:tree_cond} holds for $\tau_{<c}x$, however note that by definition for any marked cell $e$ with a $q$-boundary $e'$ we have $\tau_{\geq t(e')}x\cong \tau_{\geq t(e')}\tau_{<c}x$ since by construction $c$ does not belong to $\tau_{\geq t(e')}x$ and similarly $d^\pm_q\tau_{\geq t(e)}x = d^\pm_q\tau_{\geq t(e)}\tau_{<c}x$ since $c$ does not belong to $d^\pm_q\tau_{\geq t(e)}x$.
	\end{proof}
	\begin{remark}
		Note that it is \textit{not} true in general that $T_m(\tau_{<c}x)\cong T_m(x)_{<c}$.
	\end{remark}
	\begin{defn}\label{def:tree}
		Denote by $\ctree_n\hookrightarrow\ctree'_n$ the full subcategory of $\ctree'_n$ containing objects $x\in \ctree_n'$ as above such that the bases of all $C(\tau_{<c}x)$ for $c\in T_m(x)$ are strongly loop-free in the sense of \Cref{def:stein}, so that $\tau_{<c}x$ are strong Steiner complexes.
	\end{defn}
	\begin{ex}\label{eq:counter_tree}
		The condition that all $C(\tau_{< c}x)$ are strongly loop-free does not follows from only $C(x)$ being strongly loop-free: for example consider the following 2-composable pair of 3-morphisms:
		\[\begin{tikzcd}[sep=huge]
			0 && 1
			\arrow[""{name=0, anchor=center, inner sep=0}, "g"', curve={height=30pt}, from=1-1, to=1-3]
			\arrow[""{name=0p, anchor=center, inner sep=0}, phantom, from=1-1, to=1-3, start anchor=center, end anchor=center, curve={height=30pt}]
			\arrow[""{name=1, anchor=center, inner sep=0}, "f", curve={height=-30pt}, from=1-1, to=1-3]
			\arrow[""{name=1p, anchor=center, inner sep=0}, phantom, from=1-1, to=1-3, start anchor=center, end anchor=center, curve={height=-30pt}]
			\arrow[""{name=2, anchor=center, inner sep=0}, "\beta"{description}, shorten <=8pt, shorten >=8pt, Rightarrow, from=1, to=0]
			\arrow[""{name=3, anchor=center, inner sep=0}, "\gamma"{description}, shift left=2, curve={height=-18pt}, shorten <=10pt, shorten >=10pt, Rightarrow, from=1p, to=0p]
			\arrow[""{name=4, anchor=center, inner sep=0}, "\alpha"{description}, curve={height=24pt}, shorten <=11pt, shorten >=11pt, Rightarrow, from=1, to=0]
			\arrow["B", shorten <=4pt, shorten >=4pt, Rightarrow, scaling nfold=3, from=2, to=3]
			\arrow["A", shorten <=5pt, shorten >=5pt, Rightarrow, scaling nfold=3, from=4, to=2]
		\end{tikzcd}.\]
		We can view it as an object of $\ctree_3$, mark both $A$ and $B$ with the object
		\[\begin{tikzcd}
			0 & 1 && 2 & 3
			\arrow["a"', from=1-1, to=1-2]
			\arrow[""{name=0, anchor=center, inner sep=0}, "g"', curve={height=30pt}, from=1-2, to=1-4]
			\arrow[""{name=0p, anchor=center, inner sep=0}, phantom, from=1-2, to=1-4, start anchor=center, end anchor=center, curve={height=30pt}]
			\arrow[""{name=1, anchor=center, inner sep=0}, "f", curve={height=-30pt}, from=1-2, to=1-4]
			\arrow[""{name=1p, anchor=center, inner sep=0}, phantom, from=1-2, to=1-4, start anchor=center, end anchor=center, curve={height=-30pt}]
			\arrow["b"', from=1-4, to=1-5]
			\arrow[""{name=2, anchor=center, inner sep=0}, "\gamma"{description}, shift left=2, curve={height=-18pt}, shorten <=10pt, shorten >=10pt, Rightarrow, from=1p, to=0p]
			\arrow[""{name=3, anchor=center, inner sep=0}, "\alpha"{description}, curve={height=24pt}, shorten <=11pt, shorten >=11pt, Rightarrow, from=1, to=0]
			\arrow["X"{description}, shorten <=9pt, shorten >=9pt, Rightarrow, scaling nfold=3, from=3, to=2]
		\end{tikzcd}\]
		and denote the resulting object of $\ctree_3$ by $x$ and the corresponding nodes of $T(x)$ by $c_A$ and $c_B$. Then the 3-category corresponding to $x$ is an object of $\Theta_3$ which we may depict as 
		\[\begin{tikzcd}[sep=huge]
			0 & 1 && 2 & 3
			\arrow["a"', from=1-1, to=1-2]
			\arrow[""{name=0, anchor=center, inner sep=0}, "g"', curve={height=30pt}, from=1-2, to=1-4]
			\arrow[""{name=0p, anchor=center, inner sep=0}, phantom, from=1-2, to=1-4, start anchor=center, end anchor=center, curve={height=30pt}]
			\arrow[""{name=1, anchor=center, inner sep=0}, "f", curve={height=-30pt}, from=1-2, to=1-4]
			\arrow[""{name=1p, anchor=center, inner sep=0}, phantom, from=1-2, to=1-4, start anchor=center, end anchor=center, curve={height=-30pt}]
			\arrow["b"', from=1-4, to=1-5]
			\arrow[""{name=2, anchor=center, inner sep=0}, "\beta"{description}, shorten <=8pt, shorten >=8pt, Rightarrow, from=1, to=0]
			\arrow[""{name=3, anchor=center, inner sep=0}, "\gamma"{description}, shift left=2, curve={height=-18pt}, shorten <=10pt, shorten >=10pt, Rightarrow, from=1p, to=0p]
			\arrow[""{name=4, anchor=center, inner sep=0}, "\alpha"{description}, curve={height=24pt}, shorten <=11pt, shorten >=11pt, Rightarrow, from=1, to=0]
			\arrow["B", shorten <=4pt, shorten >=4pt, Rightarrow, scaling nfold=3, from=2, to=3]
			\arrow["A", shorten <=5pt, shorten >=5pt, Rightarrow, scaling nfold=3, from=4, to=2],
		\end{tikzcd}\]
		while $\tau_{< c_A}$ corepresents composable pair of 3-morphisms of the form
		\[\alpha\xrightarrow{A} b*\beta *a\xrightarrow{B}b*\gamma *a.\]
		In particular, $a$ and $b$ lie in both the positive and the negative boundary of $\alpha$, meaning that $C(\tau_{< c_A}x)$ cannot be strongly loop-free. 
	\end{ex}
	\begin{warning}
		An object $x\in \ctree_n$ may admit many presentations as a decorated tree as in \Cref{constr:tree}, for example if $x$ is such that all elementary cells in $\theta_c$ are marked, then such an object corresponds to a string of active morphisms $\theta^0\xactive{a^1}...\xactive{a^m}\theta^m$ in $\Theta$ and $C(x)$ is isomorphic to $C(\theta^m)$, where $\theta^m$ is identified with an object of $\ctree_n$ as in \Cref{rem:theta}. In particular, every object of $\ctree_1$ is isomorphic to one of this form, so we have $\ctree_1\cong \Delta$.
	\end{warning}
	\begin{notation}\label{not:elem}
		Assume we have a cell $i:c_j\xinert{}\theta_c$ in some $x\in \ctree_n$, if it is unmarked denote by $C(\{i,c\})$ the subcomplex of $C(x)$ containing the basis element $[i,c]$ and basis elements appearing in various $d^\pm ...d^\pm[i, c]$, if it is marked denote by $e$ the corresponding edge and by $\{i,c\}$ the subcomplex of $C(x)$ generated by the basis elements $[i', c']\in \tau_{\geq t(e)}x$. Note that in both cases it can be identified with an object of $\ctree_n$ corresponding to the subtree of $T(x)$ containing $c$ and the nodes in $\tau_{\geq t(e')}x$ for all marked $i_{e'}:c_k\xinert{}\theta_c$ that factor through $i$ such that $t(e')$ has markings induced from $x$ and $c$ is marked with $c_j$. It follows that the natural inclusion $C(\{i, c\})\hookrightarrow C(x)$ can be identified with an inert morphism $\{i_e,c\}\xinert{} x$ in $\ctree_n$. We will call such inert morphisms \textit{elementary} if the cell $i$ is unmarked.
	\end{notation}
	\begin{prop}\label{prop:up_tree}
		The objects $\{i,c\}$ of \Cref{not:elem} lie in $\ctree_n$.
	\end{prop}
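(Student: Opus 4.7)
We must verify that $y := \{i,c\}$ satisfies the two requirements to lie in $\ctree_n$: first, that $y \in \ctree'_n$ (i.e.\ condition (*) of \Cref{constr:tree} holds), and second, that for every $c' \in T_m(y)$ the ADC $C(\tau_{<c'}y)$ is strongly loop-free.

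For the first requirement, condition (*) is a local statement at each marked edge relating its underlying cell to the boundaries of the attached subtree. At nodes $c^*\neq c$ of $y$, decorations and markings are inherited directly from $x$, so condition (*) carries over unchanged. At the new root $c$, now decorated by $c_j$ via the inert inclusion $i:c_j\xinert{}\theta_c$, the marked edges correspond bijectively to marked edges of $x$ at $c$ whose associated cells factor through $i$, and for these edges $\tau_{\geq t(e')}y$ coincides with $\tau_{\geq t(e')}x$. Since positive and negative boundaries of cells commute with $i$, condition (*) at $c$ in $y$ transfers directly from condition (*) at $c$ in $x$.

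For the second requirement, the case $c'=c$ is trivial since $\tau_{<c}y$ is empty. When $c'\neq c$ the plan is to realize $C(\tau_{<c'}y)$ as a sub-ADC with induced basis of $C(\tau_{<c^{**}}x)$ for some $c^{**}\in T_m(x)$; since $\leq_\bN$ on a sub-ADC with induced basis is the restriction of the corresponding order on the ambient basis, strong loop-freeness is inherited, and the hypothesis $x\in\ctree_n$ then provides the conclusion. In the simplest case $c'\in T_m(x)$ we take $c^{**}=c'$: the inclusion is the identity on descendants of $c$ (where decorations agree) and is induced by $i:c_j\xinert{}\theta_c$ at $c$ itself. One verifies that this map sends unmarked cells at $c$ in $y$ bijectively onto the unmarked cells at $c$ in $\tau_{<c'}x$, using that a cell $i'$ at $c$ in $y$ is marked precisely when $i\circ i'$ is marked in $x$, and that the map is compatible with the boundary formulas \eqref{eq:unmarked} and \eqref{eq:marked}.

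The principal obstacle arises when $c'\in T_m(y)$ but $c'\notin T_m(x)$: this can occur precisely when there exist marked cells $i_e, i_{e'}$ at $c$ in $x$ with $i_{e'}$ factoring through $i_e$ and $c'\in \tau_{\geq t(e')}x$, yet $i_e$ fails to factor through $i$ --- so $e$ is deleted in passing from $x$ to $y$, ``un-obstructing'' $c'$. In this case we locate the appropriate $c^{**}\in T_m(x)$ of which $c'$ is a weak descendant, namely the target of a maximal marked cell at the relevant ancestor through which $c'$ descends, and verify that $C(\tau_{<c'}y)$ still embeds into $C(\tau_{<c^{**}}x)$ as a sub-ADC with induced basis compatible with both boundary operators. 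The bookkeeping here --- tracking how markings at $c$ change between $x$ and $y$, how the decoration of $c$ changes from $\theta_c$ to $c_j$, and how the path from $c$ to $c'$ in $y$ fits inside the larger root-to-$c^{**}$ path in $x$ --- is the main technical burden of the argument.
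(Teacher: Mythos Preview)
Your overall strategy coincides with the paper's: verify condition (*) by noting that markings are inherited from $x$, then for each $c_0\in T_m(\{i,c\})$ exhibit $C(\tau_{<c_0}\{i,c\})$ as a subcomplex of an ADC known to be strongly loop-free. The paper's argument is much terser: it simply asserts the embedding into $C(\tau_{<c_0}x)$ and invokes \Cref{def:tree}, without distinguishing the cases you separate out.

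You correctly identify a subtlety that the paper glosses over: when $c_0\in T_m(\{i,c\})$ but $c_0\notin T_m(x)$, the appeal to \Cref{def:tree} for $C(\tau_{<c_0}x)$ is not literally justified, since that definition only guarantees strong loop-freeness for $c_0\in T_m(x)$. Your diagnosis of how this case arises---a marked $i_e$ at $c$ subsuming $i_{e'}$ but not factoring through $i$, so that the obstruction disappears in $y$---is accurate. However, your proposed repair, locating an alternative $c^{**}\in T_m(x)$ and embedding into $C(\tau_{<c^{**}}x)$, remains only a sketch. The embedding you need is genuinely delicate: it must transport basis elements of $C(\tau_{<c_0}\{i,c\})$ supported over nodes in $\tau_{\geq t(e')}x$ across the condition-(*) identification $\tau_{\geq t(e')}x\cong d^\pm_q\tau_{\geq t(e)}x$ into $C(\tau_{<c^{**}}x)$, and you must check compatibility with both boundary formulas \eqref{eq:unmarked} and \eqref{eq:marked} under this identification. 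You label this the ``main technical burden'' but do not carry it out. So you have located a real gap in the paper's terse proof, but your resolution is not yet a proof either---it stops exactly where the work beyond the paper's argument would begin.
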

	\begin{proof}
		Note that $\{i,c\}$ satisfies \eqref{it:tree_cond} since its marking is induced from the one on $x$, hence it suffices to show that for any $c_0\in T_m(\{i,c\})$ the object $C(\tau_{<c_0}\{i,c\})$ has a strongly loop-free basis. For this note that we can identify $C(\tau_{<c_0}\{i,c\})$ with a subcomplex of $C(\tau_{<c_0}x)$, it has a strongly loop-free basis by \Cref{def:tree}, hence the basis of $C(\tau_{<c_0}\{i,c\})$ must also be loop-free since any loop in it would also be a loop in $C(\tau_{<c}x)$, and hence must be trivial.
	\end{proof}
	\begin{lemma}\label{lem:push_tree}
		Given $x\in \ctree_n$ and an edge $e$ of $T(x)$ corresponding to $i_e:c_j\xinert{}\theta_c$ such that $t(e)\in T_m(x)$, then we can also identify $i_e$ with an unmarked cell in $\tau_{< t(e)}x$, denote it by $i_0$. With these notations we have a pushout diagram
		\begin{equation}\label{eq:tree_push}
			\begin{tikzcd}[sep=huge]
				{\{i_0,c\}} & {\tau_{<t(e)}x} \\
				{\{i,c\}} & x
				\arrow[tail, from=1-1, to=1-2]
				\arrow[two heads, from=1-1, to=2-1]
				\arrow[two heads, from=1-2, to=2-2]
				\arrow[tail, from=2-1, to=2-2]
				\arrow["\ulcorner"{anchor=center, pos=0.125, rotate=180}, draw=none, from=2-2, to=1-1]
			\end{tikzcd}
		\end{equation}
		in $\ctree_n$ which remains a pushout diagram in $\cat_n$.
	\end{lemma}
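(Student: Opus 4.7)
The plan is to split the proof into two steps: first verify that the square \eqref{eq:tree_push} is a pushout in $\ctree_n$ by a direct computation at the level of augmented directed chain complexes with basis, and then upgrade this to a pushout in $\cat_n$ by reducing to \Cref{cor:stein_push}.

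For the first step, I would argue that passing from $\tau_{<t(e)}x$ to $x$ has exactly the following effect on the associated ADC: the basis element $[i_0, c]$, which is unmarked in $\tau_{<t(e)}x$ but becomes marked in $x$, is removed; the new unmarked basis elements coming from $T_m(\tau_{\geq t(e)}x)$ are adjoined; and the differentials of cells adjacent to $[i_0, c]$ are recomputed via the marked boundary formula \eqref{eq:marked}. The object $\{i, c\}$ is obtained from $\{i_0, c\}$ by exactly the same modification, so the square realises $C(x)$ as the amalgamation of $C(\tau_{<t(e)}x)$ with $C(\{i,c\})$ over $C(\{i_0,c\})$. I would then check the universal property by noting that a morphism of ADCs out of $x$ is determined by its values on each unmarked basis element, and the compatibility condition along $\{i_0, c\}$ matches exactly how a morphism out of the pushout decomposes.

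For the second step I would combine \Cref{cor:stein_push} with the observation that each vertex of \eqref{eq:tree_push} can be built as an iterated active/inert Steiner-complex pushout. Both $\{i_0, c\}$ and $\tau_{<t(e)}x$ lie in $\stn_n$: the former because it is elementary in the sense of \Cref{not:elem}, the latter by the defining condition of $\ctree_n$ in \Cref{def:tree}. For the other two vertices, I would proceed by induction on the cardinality of $T_m(x) \setminus T_m(\tau_{<t(e)}x)$. In the base case, $\{i, c\}$ has no marked edges beyond $i_e$ itself, so $\{i, c\}$ amounts to a single object of $\Theta_n$ glued to $c_j$ along an elementary inert face; here $\{i,c\}$ and $x$ both lie in $\stn_n$ and \Cref{cor:stein_push} applies directly. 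For the inductive step, I would pick a marked edge of $\{i, c\}$ closest to the leaves, apply the lemma recursively to peel off the corresponding elementary subtree, and compose the resulting pushouts using the pasting lemma for pushout squares.

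The main obstacle will be this inductive argument. Since generic objects of $\ctree_n$ are not strong Steiner complexes, \Cref{cor:stein_push} cannot be invoked directly on \eqref{eq:tree_push}, and the delicate point is to organise the induction so that at every stage both the top and the bottom row of the pushout are presented as amalgams of objects that genuinely lie in $\stn_n$, and then to verify that pasting these Steiner-complex pushouts in $\cat_n$ actually reproduces the square in question rather than some weaker approximation. I expect this to reduce to careful combinatorial bookkeeping, made tractable by the fact that each inductive step modifies $T(x)$ only at a single marked edge, so that the inert face along which one glues remains elementary throughout.
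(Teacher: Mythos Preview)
Your first step matches the paper exactly: one defines the active morphism $\{i_0,c\}\to\{i,c\}$ by sending $[i_0,c]$ to the sum of unmarked $j$-cells in $\{i,c\}$ and acting as the identity on boundary basis elements, and then observes that the square is a pushout of the underlying ADCs, hence a pushout in $\ctree_n$ since that category is full in ADCs.

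For the second step the paper is considerably more direct than your proposal: it simply applies \Cref{cor:stein_push} once to the square \eqref{eq:tree_push} as it stands. Your premise that ``generic objects of $\ctree_n$ are not strong Steiner complexes'' is mistaken---\Cref{cor:tree_stn}, stated immediately after this lemma, shows that every object of $\ctree_n$ lies in $\stn_n$. There is a mild forward reference, but it is harmless: the proof of \Cref{cor:tree_stn} uses only the active morphism $\tau_{<t(e)}x\to x$ produced in the first step of the present argument, not the $\cat_n$ pushout property, so the two results are really a single induction on $|T(x)|$. Once all four vertices are known to lie in $\stn_n$, \Cref{cor:stein_push} disposes of the whole square in one shot.

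Your inductive peeling-off of marked edges and pasting of smaller Steiner-complex pushouts would also work, but it amounts to reproving \Cref{cor:tree_stn} inside the lemma and then reconstructing the square from the pieces---more bookkeeping than the paper requires. What your organisation buys is self-containment (no forward reference); what the paper's buys is brevity.
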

	\begin{proof}
		That all objects in the diagram \Cref{eq:tree_push} lie in $\ctree_n$ follows from \Cref{prop:up_tree} and \Cref{prop:down_tree}. We now need to define the active morphism $\{i_0,c\}\xactive{a}\{i,c\}$, we do so by sending each basis element in $d_j^\pm\{i_0,c\}$ to the corresponding element of $B_\bullet(\{i,c\})$ and the basis element $[i_0,c]$ to the sum of all unmarked $j$-cells in $\{i,c\}$. It is easy to see that \eqref{eq:tree_push} is then a pushout square of the corresponding complexes, hence defines a pushout square in $\ctree_n$, that it remains a pushout square in $\cat_n$ follows from \Cref{cor:stein_push}.
	\end{proof}
	\begin{cor}\label{cor:tree_stn}
		Any $x\in\ctree_n$ admits an active morphism $c_n\xactive{}x$, so that $x\in\stn_n$.
	\end{cor}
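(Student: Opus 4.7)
The plan is to argue by induction on the total number of nodes of $T(x)$. For the base case $|T(x)|=1$, there are no edges and hence no marked cells, so $C(x)$ coincides with the ADC of the unique decoration $\theta\in\Theta_n$ under the identification of \Cref{rem:theta}, and the required active morphism $c_n\xactive{}\theta$ exists because $\Theta_n\hookrightarrow\stn_n$.

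For the inductive step, I would pick a leaf $c$ of the subtree $T_m(x)$ distinct from the root, so that the incoming edge $e$ satisfies $t(e)=c\in T_m(x)$. Then \Cref{lem:push_tree} supplies the pushout square
\[
\begin{tikzcd}[sep=large]
\{i_0,s(e)\}\ar[r,tail]\ar[d,two heads] & \tau_{<c}x \ar[d,two heads,"a'"]\\
\{i,s(e)\}\ar[r,tail] & x
\end{tikzcd}
\]
in $\cat_n$. By \Cref{prop:down_tree}, $\tau_{<c}x\in\ctree_n$ has strictly fewer nodes than $x$, so the inductive hypothesis produces an active morphism $a:c_n\xactive{}\tau_{<c}x$ and witnesses $\tau_{<c}x\in\stn_n$. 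Since $a'$ is the pushout of the active morphism $\{i_0,s(e)\}\xactive{}\{i,s(e)\}$ along an inert morphism, it is itself active in $\stn_n$, and the composite $a'\circ a:c_n\to x$ is active by closure of active morphisms under composition established in \Cref{prop:stein_fact}. The membership $x\in\stn_n$ then follows from \Cref{prop:adc_tree} together with the strong loop-freeness of $C(x)$, which is built into the definition of $\ctree_n$ via the condition imposed on the $C(\tau_{<c}x)$ (\Cref{def:tree}), combined with the observation that any hypothetical loop in the basis of $C(x)$ must already live in one of the $C(\tau_{<c}x)$ since cells sit inside such subtrees.

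The main obstacle is combinatorial: guaranteeing the existence of a leaf of $T_m(x)$ distinct from the root, and dealing with the degenerate configuration $T_m(x)=\{\mathrm{root}\}$ while $|T(x)|>1$. In that degenerate case one cannot directly invoke \Cref{lem:push_tree}, and I would instead choose a maximal marked cell $i:c_k\xinert{}\theta_{\mathrm{root}}$ at the root and run a preliminary induction on the subtree $\{i,\mathrm{root}\}\in\ctree_n$ of \Cref{prop:up_tree}, then paste back via the pushout description before applying the leaf-removal step. Assembling strong loop-freeness of $C(x)$ from that of the $C(\tau_{<c}x)$ is the other delicate point, but it reduces to the observation that the differentials in $C(x)$ supported on basis elements in a single $\tau_{<c}x$ coincide with those of the subcomplex.
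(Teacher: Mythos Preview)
Your core argument is the same as the paper's: induct on $|T(x)|$, treat $x\cong\theta\in\Theta_n$ as the base case, and in the inductive step use the pushout of \Cref{lem:push_tree} to write $x$ as $\tau_{<t(e)}x$ glued to $\{i,c\}$, then compose the active morphism $c_n\xactive{}\tau_{<t(e)}x$ supplied by induction with the active top map of the pushout. The paper's proof is literally two sentences to this effect.

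Where you diverge is in the two extra worries you append, and both are unnecessary. First, the degenerate configuration $T_m(x)=\{\text{root}\}$ with $|T(x)|>1$ does not occur: whenever the root carries a marked cell, a maximal one among them has its target in $T_m(x)$ by the definition of $T_m$, so there is always an edge $e$ with $t(e)\in T_m(x)$ and \Cref{lem:push_tree} applies directly. (You also do not need $t(e)$ to be a leaf of $T_m(x)$; any non-root node in $T_m(x)$ works, since the induction is on $|T(x)|$ and removing the subtree above $t(e)$ strictly decreases it.) Your proposed workaround via $\{i,\text{root}\}$ is therefore superfluous.

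Second, the corollary is only supplying the active morphism from $c_n$; membership in $\Stn_n$ is already part of the setup, as the paper treats $\ctree_n$ as sitting inside $\Stn_n$ throughout (this is how \Cref{lem:push_tree} can invoke \Cref{cor:stein_push}). Your attempt to re-derive strong loop-freeness of $C(x)$ from that of the $C(\tau_{<c}x)$ is not only unneeded here but also not correct as written: a $\leq_\bN$-loop in $C(x)$ can a priori traverse basis elements drawn from several branches of the tree, so it need not be confined to a single $C(\tau_{<c}x)$. The actual argument for this kind of confinement (when it is needed, e.g.\ in \Cref{prop:strong_tree}) requires the healthiness hypothesis and a careful lifting of the loop across a single glued node, not the blanket claim you make.
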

	\begin{proof}
		If $x\cong \theta\in\Theta_n$, then the claim follows from the elementary properties of $\Theta_n$, in the general case we may use induction on the size of $T(x)$ and represent $x$ as a pushout of the form \eqref{eq:tree_push}. By induction there is an active morphism $c_n\xactive{}\tau_{<t(e)}x$ and we can define the required morphism to be the composition $c_n\xactive{}\tau_{< t(e)}x\xactive{}x$. 
	\end{proof}
	\begin{defn}\label{def:health}
		We will call an object $\theta\in\Theta_n$ \textit{healthy} if every elementary $i$-cell in $\theta$ for $i<n$ lies in the boundary of a non-trivial elementary $n$-cell. We will call a morphism $f:\theta\rightarrow\theta'$ between healthy objects \textit{healthy} if the image of any non-trivial $n$-cell of $\theta$ in $\theta'$ is a healthy object.
	\end{defn}
	\begin{lemma}\label{lem:health}
		Assume $\theta$ is healthy in the sense of \Cref{def:health} with $n>1$, then there are no elementary cells $c_j\xinert{i}\theta$ that lie both in $d_j^-\theta$ and $d_j^+\theta$.
	\end{lemma}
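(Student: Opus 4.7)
My plan is to argue by induction on $n \geq 2$, handling $j=0$ separately and reducing the case $j \geq 1$ to a question about the $(n{-}1)$-hom categories of $\theta$.

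For $j=0$: the combination of $n>1$ and healthiness forces $\theta$ to contain at least one non-trivial elementary $n$-cell, so the underlying interval of $\theta$ is $[m]$ with $m\geq 1$. A short unwinding via the active morphism $c_n \xactive{} \theta$ identifies $d^-_0\theta = \{0\}$ and $d^+_0\theta = \{m\}$, which are disjoint.

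For $j\geq 1$: the inert map $c_j \xinert{} \theta$ factors through a unique 1-hom $(n{-}1)$-category $\theta_k$ of $\theta$ (its underlying interval $[1]$ maps to a unique edge $k<k+1$), producing a $(j{-}1)$-cell $c'$ in $\theta_k$. The inductive step will rest on two compatibility statements. First, $c \in d^\epsilon_j\theta$ if and only if $c' \in d^\epsilon_{j-1}\theta_k$: I plan to verify this by writing the top $n$-chain of $\theta$ as $\sum_k X_k$, where $X_k$ is the top $(n{-}1)$-chain of $\theta_k$ (shifted one dimension up), and checking that the only cancellations in the iterated $\partial^\epsilon$ computation occur at the $0$-level between adjacent slabs, so that the restriction of $d^\epsilon_j\theta$ to $\theta_k$ coincides with $d^\epsilon_{j-1}\theta_k$. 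Second, $\theta_k$ is itself healthy in $\Theta_{n-1}$: any $(i{-}1)$-cell of $\theta_k$ corresponds to an $i$-cell of $\theta$, which by healthiness lies in the boundary of some non-trivial elementary $n$-cell $\alpha$; since elementary $n$-cells in $\Theta_n$ have underlying interval $[1]$, the cell $\alpha$ must live in the same slab $\theta_k$, giving the required non-trivial $(n{-}1)$-cell of $\theta_k$.

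With these in place, the case $n>2$ is disposed of by applying the induction hypothesis to $\theta_k$ and $c'$. The residual base case $n=2$ has to be handled directly: there $\theta_k \in \Delta$ is $[m_k]$, and $c'\in d^-_0\theta_k\cap d^+_0\theta_k$ forces $m_k=0$; but then $\theta_k$ houses no non-trivial 1-cell, so no non-trivial 2-cell of $\theta$ lives in $\theta_k$, contradicting the healthiness of $\theta$ applied to the 1-cell $c$. The main obstacle will be the first compatibility statement; the rest is routine bookkeeping once this identification is in hand.
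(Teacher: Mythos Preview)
Your proposal is correct and follows essentially the same induction on $n$ as the paper: handle $j=0$ directly via the endpoints of the underlying interval, and for $j\geq 1$ factor the cell through a hom-$(n{-}1)$-category $\theta_k$, check that $\theta_k$ is again healthy, and descend. The only real difference is organizational: the paper starts the induction at $n=1$ (where healthy simply means $[m]$ with $m>0$, so the endpoints are distinct), which renders your separate treatment of the case $n=2$ unnecessary --- your $n=2,\ j=1$ argument is then just the inductive step applied to $\theta_k\in\Theta_1$. The paper also treats your ``first compatibility statement'' as implicit in the phrase ``from which the claim follows by induction''; your plan to verify it via the slab decomposition of the top chain is a reasonable unwinding of the Steiner-complex structure of $\Theta_n$ rather than a genuine obstacle.
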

	\begin{proof}
		We will prove the claim by induction on $n$: in the case $n=1$ healthy objects correspond to object $[m]$ with $m>0$ and their boundaries are just endpoints, which are distinct since $m\neq 0$. In the general case for $j>0$ observe that it necessarily factors through some $\theta_k\bydef \mor_\theta(k,k+1)$ (which are easily seen to also be healthy), from which the claim follows by induction, and for $j=0$ we need to show that the endpoints of $\theta$ are distinct, which follows since it contains at least one elementary $n$-cell.
	\end{proof}
	\begin{prop}\label{prop:strong_tree}
		Assume that $x\in \ctree_n'$ is such that:
		\begin{enumerate}
			\item\label{it:h1} the objects $\theta_*$ corresponding to the root node $*\in T_0(x)$ is healthy;
			\item\label{it:h2} for any edge $e$ corresponding to an inert morphism of the form $c_l\xinert{i_e}\theta_c$ we have $\theta_{t(e)}\in\Theta^h_l$,
		\end{enumerate}
		then $x\in \ctree_n$.
	\end{prop}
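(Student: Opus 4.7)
\textbf{The plan} is to induct lexicographically on $(n,|T_0(x)|)$, strengthening the claim to assert that $C(x)$ itself has strongly loop-free basis. The strengthening implies the stated conclusion, because for any $c\in T_m(x)$ the tree $\tau_{<c}x$ again satisfies (1)--(2) — its root decoration is unchanged and the remaining marked edges still carry healthy decorations — and has strictly fewer nodes, so the strengthened inductive hypothesis applied to $\tau_{<c}x$ yields strong loop-freeness of $C(\tau_{<c}x)$, whence $x\in\ctree_n$.

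The base case $|T_0(x)|=1$ is immediate: $x\cong\theta_*\in\Theta_n^h$, and strong loop-freeness of $C(\theta_*)$ is the content of the Steiner theorem recorded in \Cref{rem:theta}. For the inductive step, choose a marked edge $e$ whose target $t(e)$ is a leaf of $T(x)$, writing $p=s(e)$ and $c_l\xinert{i_e}\theta_p$ for the marked cell. Unwinding \Cref{constr:tree} and \Cref{not:elem}, the complex $C(x)$ decomposes as a pushout of ADCs
\begin{equation*}
C(x)\;\cong\; C(\tau_{<t(e)}x)\coprod_{C(c_l)}C(\{i_e,p\}),
\end{equation*}
where the left map sends the top cell of $c_l$ to $[i_e,p]$ and the right is the inert inclusion. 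The factor $\tau_{<t(e)}x$ has one fewer node and still satisfies (1)--(2), so its complex is strongly loop-free by induction on $|T_0|$. The factor $\{i_e,p\}$ is a two-node tree with root decoration $c_l\in\Theta_l^h$ and single marked edge carrying $\theta_{t(e)}\in\Theta_l^h$; when $l<n$ the outer induction in $n$ applies, while when $l=n$ the cell $i_e$ must be an isomorphism, so $C(\{i_e,p\})\cong C(\theta_{t(e)})$ and strong loop-freeness follows directly from Steiner.

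It remains to show that strong loop-freeness is preserved by this pushout of ADCs. Suppose for contradiction there is a cyclic chain $b_1<_\bN b_2<_\bN\cdots<_\bN b_k=b_1$ in $C(x)$. The plan is to use healthiness of $\theta_{t(e)}$ together with \Cref{lem:health} applied inside $\theta_{t(e)}$ — which forbids any basis cell from sitting in both $d^-_j\theta_{t(e)}$ and $d^+_j\theta_{t(e)}$ for any $j<l$ — to collapse every maximal contiguous sub-chain of the loop lying in the new piece $\theta_{t(e)}$ into a single $<_\bN$-step between boundary cells of $i_e$ in $\theta_p$. The resulting shorter loop lives entirely in $C(\tau_{<t(e)}x)$, contradicting its inductively established strong loop-freeness.

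\textbf{The main obstacle} is exactly this final collapse step: propagating the $<_\bN$ relation across the gluing requires careful case analysis of how cells of $\theta_{t(e)}$ project onto the boundary of $i_e$ in $\theta_p$, and healthiness is essential precisely to rule out the pathology of \Cref{eq:counter_tree}, where a basis cell of the attached piece lies in both $d^-$ and $d^+$ of a higher cell and defeats the projection.
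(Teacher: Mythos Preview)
Your approach is essentially the paper's: both reduce to showing $C(x)$ itself is strongly loop-free, induct on the size of the underlying tree (with the $\Theta_n$ base case handled by \Cref{rem:theta}, which the paper instead reproves directly by induction on $n$), and for the inductive step collapse each maximal segment of a putative loop lying in the newly attached $\theta_{t(e)}$ back into a segment in $\tau_{<t(e)}x$ passing through $[i',c]$, using healthiness via \Cref{lem:health} to guarantee the lift is well-defined --- the paper executes this collapse explicitly where you flag it as the main obstacle. Two small corrections: the pushout of \Cref{lem:push_tree} is over $\{i_0,c\}$ rather than $C(c_l)$ (the boundary cells of $i_0$ may already be marked in $\tau_{<t(e)}x$), and your outer induction on $n$ is unnecessary since $C(\{i_e,p\})\cong C(\theta_{t(e)})$ regardless of whether $l<n$, so its loop-freeness follows directly from Steiner.
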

	\begin{proof}
		Note that all the subcomplexes $\tau_{<c}x$ also satisfy the conditions of the proposition, so it suffices to show that $x$ itself is loop-free. We will prove this by induction on the size of the underlying tree, starting with the case of a tree with a single root node. Such an objects is equivalent to an object of $\Theta_n$ and we again use induction, this time on $n$, note that the case $n=0$ is trivial. Assume we have shown that all $\theta'\in\Theta_{n-1}$ correspond to strong Steiner complexes and take $\theta\in\Theta_n$, then observe that for two elementary cells $a$ and $b$ in $\theta$ we have $a<_\bN b$ either if both $a$ and $b$ lie in $\theta'\bydef \mor_\theta(i,i+1)\in\Theta_{n-1}$ for some object $i$ of $\theta$ and $a<_\bN b$ as cells in $\theta'$ or if $a$ and $b$ are both 1-dimensional and $a$ belongs to $\mor_\theta(k-1,k)$ while $b$ belongs to $\mor_\theta(k,k+1)$ for some object $k$ of $\theta$. It follows that any potential loop $a_0<_\bN a_1<_\bN...<_\bN a_0$ should either lie entirely in some $\mor_\theta(i,i+1)$, which is impossible by induction, or induce a sequence $k_0<k_0+1<...<k_0$ of objects of $\theta$, which is also clearly impossible.\par
		Assume now that $x\in \ctree_n$ is obtained from $x'\bydef \tau_{\leq c}x$, for which we have shown that its basis is strongly loop-free, by marking a cell $c_j\xinert{i'}\theta_c$ for some $c\in L(x')$, adding an extra edge $e$ with source $c$ and decorating its target $t(e)$ with $\theta\in\Theta^h_j$ -- it is easy to see that all object of $\ctree'_n$ of the type described in the statement of the proposition can be inductively constructed like this starting with objects of $\Theta_n$, denote by $f:x'\xactive{} x$ the induced morphism. Assume we have a string $L\bydef (a_0<_\bN a_1<_\bN...<_\bN a_0)$ of basis elements of $x$, note that if all $a_j$ and $[a_j]^\pm_i$ lie in $x'$ or $\theta$, then the string can be identified with a string in the corresponding subcomplex and hence must be trivial by inductive assumption. Without loss of generality we may therefore assume that $a_0$ lies in the image of $x'$ and the string intersects the interior of $\theta$. In that case we may choose a minimal $k$ such that $a_{k+1}$ lies in $d^-_{j-1}\theta$, while $a_{k+1}$ does not, and similarly we can pick a minimal $l>k$ such that $a_{l+1}$ does not lie in the image of $d_{j-1}^+\theta$, while $a_l$ does. In this case we can write the segment between $a_k$ and $a_{l+1}$ as
		\begin{equation}\label{eq:str_h}
			a_k <_\bN a_{k+1} <_\bN... <_\bN a_s<_\bN... <_\bN a_t <_\bN... a_l <_\bN a_{l+1},
		\end{equation}
		where all elements in the segment between $a_{k+1}$ and $a_{s-1}$ lie in $d_{j-1}^-$, the segment from $a_s$ and $a_t$ lies in the interior of $\theta$ and the segment between $a_{s+1}$ and $a_l$ lies in $d^+_{j-1}\theta$. Note that for any $a_i$ with $k+1\leq i\leq s-1$ there is a unique basis element $a'_i$ in $x'$ such that $a_i$ appears in the decomposition of $f(a'_i)$ which moreover lies in $d^-_{j-1} \{i',c\}$. Indeed, if $i'\circ i^-_{j-1}$ is marked then we may identify $a_i$ with a basis element of $x'$, which would satisfy our requirements, and if it is unmarked then $a_i$ appears in the decomposition of $a([i'\circ i^-_{j-1}])$ by construction. In both cases there are clearly no other basis elements in $d^-_{j-1} \{i',c\}$ that satisfy these requirements, so the only way there may be another basis element $a''_i$ of $x$ satisfying them is if $a_i$ also lies in $d^+_{j-1}\theta$, however this is impossible by \Cref{lem:health} since $\theta$ was assumed to be healthy. Using a similar statement for $a_q$ with $t+1\leq q\leq l$ and $d^+_{j-1}\{i',c\}$ we can lift the string \eqref{eq:str_h} to a string
		\[a_k <_\bN a'_{k+1} <_\bN... <_\bN a'_s< [i',c] <_\bN... a'_l <_\bN a_{l+1}\]
		in $x'$, concatenating it with the remainder of the loop $a_0 <_\bN... <_\bN a_0$ we may produce a loop that intersects $\theta$ one less time. Using this process for all other intersections of the original loop with $\theta$ we can produce a loop $ L'\bydef (a_0 <_\bN a'_1 <_\bN... <_\bN a_0)$ that lies entirely in $x'$, however it must be then trivial by our inductive assumption. Note that by construction the restriction of $L'$ to the complement of $\{i', c\}$ coincides with the corresponding restriction of $L$, which means that it must be trivial, hence $L$ must lie entirely in the image of $\theta$, which once again violates out inductive assumption.
	\end{proof}
	\begin{defn}\label{def:tree_health}
		Call an object $x\in\ctree_n$ \textit{healthy} if it satisfies the conditions of \Cref{prop:strong_tree}, call a morphism $f:x\rightarrow y$ between healthy objects \textit{healthy} if for every elementary $n$-cell $i:\{i_e,c\}\xinert{}x$ (in the sense of \Cref{not:elem}) the object $x'$ appearing in the factorization square in $\stn_n$ (which exists by \Cref{prop:stein_fact}) 
		\[\begin{tikzcd}[sep=huge]
			{\{i,c\}} & {x'} \\
			x & y
			\arrow["a", two heads, from=1-1, to=1-2]
			\arrow["i"', tail, from=1-1, to=2-1]
			\arrow["{i'}", tail, from=1-2, to=2-2]
			\arrow["f"', from=2-1, to=2-2]
		\end{tikzcd}\]
		lies in $\ctree_n$ and is healthy.
	\end{defn}
	\begin{lemma}\label{lem:health_fact}
		Assume that we have a healthy morphism $f:x\rightarrow y$ and an inert morphism $\{i_l,c\}\xinert{}x$ for some $c\in T_0(x)$ and an unmarked cell $i_l:c_l\xinert{}\theta_c$, then the morphism $a_{i_l}$ appearing in the factorization diagram 
		\[\begin{tikzcd}[sep=huge]
			{\{i_l,c\}} & x \\
			{y_{i_l}} & y
			\arrow["i_l", tail, from=1-1, to=1-2]
			\arrow["{a_{i_l}}"', two heads, from=1-1, to=2-1]
			\arrow["f", from=1-2, to=2-2]
			\arrow["{i'_l}"', tail, from=2-1, to=2-2]
		\end{tikzcd}\]
		is also a healthy morphism in $\ctree_l^h$.
	\end{lemma}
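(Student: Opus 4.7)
The plan is to reduce the claim to showing that the codomain $y_{i_l}$ is itself a healthy tree in $\ctree_l^h$. Since $\{i_l,c\}$ has dimension $l$ and is generated by the single top-dimensional basis element $[i_l,c]$ (all other basis elements arise as iterated boundaries and therefore have strictly smaller dimension, including those contributed by subtrees $\tau_{\geq t(e')}x$ hanging off marked boundary cells, which by condition \eqref{it:tree_cond} have dimension $<l$), one checks directly from \Cref{constr:tree} and \Cref{not:elem} that the only elementary $l$-cell in $\{i_l,c\}$ is $\{i_l,c\}$ itself. Hence the condition in \Cref{def:tree_health} for $a_{i_l}$ collapses to the requirement that $y_{i_l}$ be healthy; that $\{i_l,c\}$ is already healthy follows routinely from \Cref{prop:up_tree} combined with the observation that its decorations are inherited from $x$.

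To prove healthiness of $y_{i_l}$, I would produce an elementary $n$-cell $\{i_n,\tilde c\}\xinert{}x$ whose associated subcomplex contains $\{i_l,c\}$. The construction proceeds by induction on the depth of $c$ in $T(x)$: when $c$ is the root, healthiness $\theta_*\in\Theta^h_n$ directly supplies an elementary $n$-cell of $\theta_*$ whose boundary contains $i_l$; when $c$ is deeper, one iteratively climbs back up the tree, and at each step uses the condition $\theta_{c'}\in\Theta^h_{l_{c'}}$ on intermediate nodes to enlarge the chosen cell to the next higher dimension, eventually reaching an elementary $n$-cell of $\theta_*$. Along the way one verifies that the subtrees of $x$ attached to the boundary of $i_l$ via marked cells remain attached to the boundary of the enlarging cells, yielding the desired inert inclusion $\{i_l,c\}\xinert{}\{i_n,\tilde c\}$.

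Now apply healthiness of $f$ to $\{i_n,\tilde c\}$: this furnishes a factorization $\{i_n,\tilde c\}\xactive{\tilde a}\tilde z\xinert{}y$ with $\tilde z$ healthy in $\ctree_n^h$. By uniqueness and functoriality of active/inert factorizations in $\stn_n$ (\Cref{prop:stein_fact}), restricting along $\{i_l,c\}\xinert{}\{i_n,\tilde c\}$ identifies $y_{i_l}$ with an elementary subcomplex of $\tilde z$, specifically $\{\tilde a(i_l),\tilde c\}$, which again follows because $\{i_l,c\}$ is generated by its single top cell. Since elementary subcomplexes of healthy trees are healthy (the root of $\{\tilde a(i_l),\tilde c\}$ is decorated with the walking cell $c_l$, which is trivially in $\Theta^h_l$, and the decorations on descendants are inherited from the healthy tree $\tilde z$), we conclude $y_{i_l}\in\ctree_l^h$, completing the argument.

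The main obstacle will be the inductive construction of the enclosing $n$-cell when $c$ is deep in the tree, which requires careful bookkeeping about how markings interact with cell boundaries as one climbs: at each stage we must ensure that enlarging the chosen cell to higher dimension does not disturb the subtrees already hanging off its boundary, and that the healthiness hypothesis on the next decoration $\theta_{c'}$ is strong enough to provide a containing cell in the right dimension. Once this enclosing cell is in hand, the remaining steps — applying healthiness of $f$, identifying $y_{i_l}$ as an elementary subcomplex of $\tilde z$, and inheriting healthiness — follow routinely from the established categorical properties of $\stn_n$ and the tree formalism.
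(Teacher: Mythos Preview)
Your reduction to showing that $y_{i_l}$ is healthy is correct, and the idea of locating an elementary $n$-cell $\{i_n,\tilde c\}$ containing $\{i_l,c\}$ is reasonable. However, the step where you conclude that $y_{i_l}$ is an \emph{elementary} subcomplex of $\tilde z$---``specifically $\{\tilde a(i_l),\tilde c\}$''---does not go through. The active morphism $\tilde a:\{i_n,\tilde c\}\xactive{}\tilde z$ sends the single top basis element $[i_l,c]$ to a \emph{sum} of $l$-dimensional basis elements of $\tilde z$, so $y_{i_l}$ will in general have several top-dimensional cells and is not of the form $\{j,c''\}$ for a single unmarked cell. Consequently your final justification (``root decorated with $c_l$, descendants inherited from $\tilde z$'') describes an object that need not exist, and the healthiness of $y_{i_l}$ is left unproved. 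The notation $\{\tilde a(i_l),\tilde c\}$ is itself ill-formed: $\tilde a(i_l)$ is an $l$-morphism in $\tilde z$, not an inert inclusion of a cell into some $\theta_{c''}$.

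The paper takes a different and more direct route that avoids this problem. Rather than climbing the tree, it stays at the same node $c$: since $\theta_c$ is healthy one picks a top-dimensional cell $i_n$ in $\theta_c$ with $i_l=d^\pm_l i_n$, and because both cells live in the \emph{same} $\theta_c$ one gets the exact identity $\{i_l,c\}=d^\pm_l\{i_n,c\}$, hence $y_{i_l}\cong d^\pm_l y_{i_n}$. The problem then becomes: show that the $l$-boundary of a healthy tree is again a healthy tree. This is established by explicitly describing the tree presentation of $d^\pm_l y$ (root decorated with $d^\pm_l\theta_*$, subtrees attached at those marked cells of $\theta_*$ that factor through $d^\pm_l\theta_*$) and checking conditions \eqref{it:h1}--\eqref{it:h2} directly. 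The point is that $y_{i_l}$ is identified not as an elementary subcomplex but as a \emph{boundary}, and boundaries of healthy trees admit an explicit healthy tree presentation; this is the missing idea in your argument.
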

	\begin{proof}
		We can assume without the loss of generality that $f$ is an active morphism $a:x\xactive{}y$. If $l=n$, then the claim follows immediately from the definition, so we may assume that $l<n$. In that case, since $x$ was assumed to be healthy, we can find some $i_n:c_n\xinert{}\theta_c$ such that $i_l = d^\pm_l i_n$. In that case we also have $y_{i_l}\cong d^\pm_l y_{i_n}$, so it suffices to show that a boundary $d_l^\pm y$ of a healthy object $y$ is a healthy object. Note that the subtree $T(d^\pm_l y)\subset T(y)$ contains the root node $*$ marked with $d^\pm_l \theta_*$, where $\theta_*$ is the marking of the root node in $y$, as well as all the nodes in $T(\tau_{\geq t(e)})$ for all edges corresponding to $i_e:c_q\xinert{}\theta_*$ that factor as $c_q\xinert{}d^\pm_l\theta_*\xinert\theta_*$ with markings induced from $y$. It follows immediately from this that $d^\pm_l$ satisfies condition \eqref{it:h2} of \Cref{prop:strong_tree}, so it suffices to show that a boundary of a healthy object $\theta_*\in\Theta^h_n$ lies in $\Theta^h_l$. So assume that for some $q<l$ we have $c_q\xinert{i_q}d^+_l\theta_*$, pick some $c_n\xinert{i_n}\theta_*$ such that $i_q=d^\pm i_n$. Note that if $i'_n:c_n\xinert{}\theta_*$ is such that $d^=_l i_n = d^-_l i'_n$, then we still have $i_q = d^\pm_q i'_n$, so we may assume that $i_l\bydef d^+_l i_n$ lies in $d^+_l\theta_*$, but then $i_q = d^\pm_q i_l$, meaning that $d^+_l\theta_*$ is healthy.
	\end{proof}
	\begin{notation}
		Given $x\in\ctree^h_n$ denote $\act^h_{\ctree_n}(x)$ the set of healthy active morphisms in the sense of \Cref{def:tree_health} with source $x$.
	\end{notation}
	\begin{prop}\label{prop:health_seg}
		For $x\in \ctree_n^h$ we have
		\begin{equation}\label{eq:health_seg}
			\act^h_{\ctree_n}(x)\cong \underset{(e\xinert{}x)\in\ctree^\el_{n,/x}}{\lim}\act_{\ctree_n}^h(e).
		\end{equation}
	\end{prop}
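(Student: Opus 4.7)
The plan is to construct mutually inverse maps between the two sides, modeled on the proof of \Cref{prop:stn_sat}\eqref{it:sat1} and enhanced to track tree structure and healthiness.

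For the forward map $F$: given a healthy active morphism $a \colon x \xactive{} y$ and $(e \xinert{i} x) \in \ctree^\el_{n,/x}$, the active/inert factorization in $\stn_n$ from \Cref{prop:stein_fact} yields $e \xactive{i^{*}a} a_{!}e \xinert{a_{!}i} y$. By \Cref{lem:health_fact} applied to the composite $a \circ i$, the morphism $i^{*}a$ is healthy with $a_{!}e \in \ctree_n^{h}$. These restrictions are compatible under further inert restriction by the universal property of the factorization square, producing $F \colon \act^h_{\ctree_n}(x) \to \underset{e}{\lim}\, \act^h_{\ctree_n}(e)$.

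For the inverse map $G$: given a compatible system $\{a_{e} \colon e \xactive{} y_{e}\}$, view it as a compatible system in $\stn_n$ via the inclusion $\ctree^\el_{n,/x} \hookrightarrow \stn^\el_{n,/x}$ and apply \Cref{prop:stn_sat}\eqref{it:sat1} to obtain an active morphism $a \colon x \xactive{} y$ in $\stn_n$ whose restrictions recover the $a_{e}$. It remains to equip $y$ with a healthy tree structure. I would do this by induction on the tree $T(x)$: for each node $c$ of $T(x)$ decorated by $\theta_{c}$, the healthy active morphisms $a_{e}$ ranging over elementary $n$-cells $e \xinert{} \theta_{c}$ amalgamate along their shared lower-dimensional boundaries into a new healthy object $\theta'_{c} \in \Theta^h_n$ decorating the corresponding node of $T(y)$; the marked subtrees $\tau_{\geq t(e')}x$ are processed recursively. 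Conditions \eqref{it:h1} and \eqref{it:h2} of \Cref{prop:strong_tree} are verified locally at each node, whence $y \in \ctree_n^{h}$ by \Cref{prop:strong_tree}. Healthiness of $a$ is then immediate: its restriction to any elementary $n$-cell $\{i,c\} \xinert{} x$ recovers the healthy $a_{\{i,c\}}$ by construction.

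The main obstacle will be the amalgamation step: verifying that the healthy objects $y_{e}$ for the various elementary $n$-cells $e$ at a fixed node $c$ of $T(x)$ glue consistently into a single object in $\Theta^h_n$. The compatibility of the system on lower-dimensional cells, together with \Cref{lem:health_fact} (which forces $y_{f} \cong d^{\pm}_{k} y_{e}$ for each elementary $k$-cell $f$ in the boundary of an elementary $n$-cell $e$), supplies the necessary gluing data, and the proof of \Cref{lem:health_fact} itself shows that boundaries of healthy objects are healthy, so the amalgamated object remains in $\Theta^h_n$. Once this combinatorial gluing is in place, the identities $FG \cong \id$ and $GF \cong \id$ reduce to the corresponding identities for the underlying Steiner active morphisms via \Cref{prop:stn_sat}\eqref{it:sat1}, using that a tree structure compatible with both $T(x)$ and the local decorations $\theta'_{c}$ is unique.
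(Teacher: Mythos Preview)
Your forward map $F$ is correct and matches the paper's $F_h$.

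The inverse map $G$ has a genuine gap. You propose that at each node $c$ of $T(x)$, the healthy active morphisms $a_e$ for elementary $n$-cells $e\xinert{}\theta_c$ amalgamate into a new decoration $\theta'_c\in\Theta^h_n$. But the targets $y_e$ of these morphisms lie in $\ctree^h_n$, not in $\Theta_n$: a healthy active morphism out of an elementary cell can hit an arbitrary healthy \emph{tree}, and there is no reason the amalgamation of such trees along their boundaries would collapse to a single object of $\Theta_n$. Your recursion on the marked subtrees $\tau_{\geq t(e')}x$ does not address this, since it only revisits the existing subtrees of $x$ rather than incorporating the genuinely new tree structure carried by each $y_e$.

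The paper's construction of $G_h$ avoids this issue by not changing the decorations at all: the tree $T(z)$ has as nodes the disjoint union of the nodes of $T(x)$ and the nodes of every $z_{i,c}$, with the obvious order (a node of $z_{i,c}$ sits above $c$). The decoration at an old node $c$ remains $\theta_c$; the decorations at the new nodes are inherited from the $z_{i,c}$. Since all ingredients are healthy, conditions \eqref{it:h1} and \eqref{it:h2} are immediate and \Cref{prop:strong_tree} gives $z\in\ctree^h_n$ directly. The active morphism $a\colon x\xactive{}z$ sends each unmarked cell $\{i,c\}$ to the composite of all cells of the appropriate dimension in the attached subtree $z_{i,c}$. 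This is both simpler than your route through \Cref{prop:stn_sat}\eqref{it:sat1} and sidesteps the need to recognise a tree structure on an abstractly constructed Steiner complex after the fact. The verification that $F_h\circ G_h\cong\id$ is then immediate, and $G_h\circ F_h\cong\id$ follows because $F_h$ is visibly injective (an active morphism is determined by its values on elementary cells) and admits $G_h$ as a section.
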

	\begin{proof}
		Note that we have a function
		\begin{equation}\label{eq:Fh}
			F_h:\act^h_{\ctree_n}(x)\rightarrow\underset{(e\xinert{}x)\in\ctree^\el_{n,/x}}{\lim}\act^h_{\ctree_n}(e)
		\end{equation}
		that sends an active morphism $x\xactive{a}z$ to the family of morphisms $a_i:e\xactive{}z_i$ appearing in the factorization square
		\[\begin{tikzcd}[sep=huge]
			e & x \\
			{z_i} & z
			\arrow["i", tail, from=1-1, to=1-2]
			\arrow["{a_i}"', two heads, from=1-1, to=2-1]
			\arrow["a", two heads, from=1-2, to=2-2]
			\arrow["{i'}"', tail, from=2-1, to=2-2]
		\end{tikzcd}\]
		indexed over all inert morphisms $i$ from elementary objects to $x$, the fact that $F$ lands in $\underset{(e\xinert{}x)\in\ctree^\el_{n,/x}}{\lim}\act^h_{\ctree_n}(e)$ follows directly from \Cref{lem:health_fact}.\par
		Our goal is to construct an inverse $G_h$ to the morphism $F_h$ of \eqref{eq:Fh}, so assume we have a compatible family of active morphisms $a_{i,c}:\{i,c\}\xactive{}z_{i,c}$ indexed over unmarked cells $c_l\xinert{i}\theta_c$ over all $c\in T_0(x)$, we first define a tree $T'$ to be a tree whose nodes are either nodes of $T(x)$ or nodes of some $z_{i,c}$, we set $c_1 <_{T'} c_2$ if either both belong to $T_0(x)$ and $c_0 <_{T_0(x)}c_1$, both lie in some $T(z_{i,c})$ and $c_0 <_{T(z_{i,c})} c_1$ or $c_0\in T_0(x)$, $c_1\in T(z_{i,c})$ and $c_0 <_{T_0(x)}c$, it is easy to see that this defines a rooted tree. It admits a decorations of nodes and edges induced from $x$ and various $z_{i,c}$, since all of them were assumed to be healthy it is immediate that $T'$ with these decorations also satisfies \eqref{it:h1} and \eqref{it:h2}, hence defines an object $z\in \ctree_n^h$. Moreover, we have a natural active morphism $a:x\xactive{}z$ taking an unmarked cell $\{i_l:c_l\xinert{}\theta_c,c\}$ to the composition of all $l$-morphisms in $z_{i_l,c}\cong T(\tau_{\geq t_{i_l,c}}z)$, where $t_{i_l,c}$ denotes the target of the edge in $T(z)$ corresponding to $c_l\xinert{i_l}\theta_c$, where $c$ is viewed as a node in $T(z)$. This defines the required morphism $G_h$. immediately from construction we have $F_h\circ G_h$, to prove $G_h\circ F_h\cong \id$ note that an active morphism $a:x\xactive{}y$ is uniquely determined by the images of the elementary cells $\{i,c\}$, hence $F_h$ is injective, meaning that it is an isomorphism since it admits a section.
	\end{proof}
	\begin{cor}\label{cor:health_fact}
		For any healthy active morphism $a:x\xactive{}y$ and any inert $x'\xinert{i}x$, the morphism $a'$ appearing in the factorization diagram
		\[\begin{tikzcd}[sep=huge]
			{x'} & x \\
			{y'} & y
			\arrow["i", tail, from=1-1, to=1-2]
			\arrow["{a'}"', two heads, from=1-1, to=2-1]
			\arrow["a", two heads, from=1-2, to=2-2]
			\arrow["{i'}"', tail, from=2-1, to=2-2]
		\end{tikzcd}\]
		is also healthy.
	\end{cor}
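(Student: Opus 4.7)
The plan is to reduce the statement to the definition of healthy morphism (\Cref{def:tree_health}), which requires checking, for each elementary $n$-cell $j:\{i_e,c\}\xinert{}x'$, that the object $z'$ appearing in the factorization
\[\{i_e,c\}\xactive{\alpha'}z'\xinert{\beta'}y'\]
of $a'\circ j$ lies in $\ctree_n^h$. The essential tool will be the uniqueness of the active/inert factorization in $\stn_n$ established in \Cref{prop:stein_fact}, which lets one identify $z'$ with the object obtained by factoring $a$ through the composite elementary inert morphism $j\circ i:\{i_e,c\}\xinert{}x$.

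More precisely: given such a $j$, the composite $j\circ i:\{i_e,c\}\xinert{}x$ is again inert from an elementary object, so by the hypothesis that $a$ is healthy the factorization $\{i_e,c\}\xactive{\alpha}z\xinert{\beta}y$ of $a\circ j\circ i$ satisfies $z\in\ctree_n^h$. On the other hand, first factoring $a\circ i$ as $i'\circ a'$ and then factoring $a'\circ j$ yields a second active/inert decomposition $\{i_e,c\}\xactive{\alpha'}z'\xinert{i'\circ\beta'}y$ of the same morphism $a\circ j\circ i$, since inert morphisms compose. By uniqueness in \Cref{prop:stein_fact} this forces $z'\cong z$, so $z'$ is healthy. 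Since this argument applies to every elementary $n$-cell of $x'$, the morphism $a'$ is healthy.

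The main obstacle lies in justifying the setup: verifying that both $x'$ and $y'$ in fact lie in $\ctree_n^h$, so that the notion of a healthy morphism between them is defined at all. For $x'$, one uses that an inert morphism in $\stn_n$ with target $x\in\ctree_n^h$ identifies $x'$ with a subtree whose node and edge decorations are inherited from $x$; the conditions \eqref{it:h1} and \eqref{it:h2} of \Cref{prop:strong_tree} pass immediately to this subtree, giving $x'\in\ctree_n^h$. For $y'$, one invokes \Cref{prop:health_seg}: the healthy family $\{a_{i,c}\}$ corresponding to $a$ restricts, along the elementary inert morphisms factoring through $i$, to a compatible healthy family whose glued colimit (in the sense of the construction of $G_h$ in the proof of \Cref{prop:health_seg}) is canonically isomorphic to $a':x'\xactive{}y'$; in particular $y'\in\ctree_n^h$. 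Once these preliminaries are verified, the uniqueness argument above completes the proof.
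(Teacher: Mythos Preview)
Your argument is correct and rests on the same mechanism as the paper's, namely that the active/inert factorization of $a'\circ j$ agrees (by uniqueness, \Cref{prop:stein_fact}) with that of $a\circ(i\circ j)$, so healthiness passes from $a$ to $a'$ cell by cell. The organization differs slightly: the paper cites \Cref{lem:health_fact} to see that every elementary restriction $a'_{\{i',c'\}}$ is healthy and then applies \Cref{prop:health_seg} once to conclude that $a'$ itself is healthy. You instead spend the invocation of \Cref{prop:health_seg} on establishing $y'\in\ctree_n^h$, and then separately re-verify the defining condition for $a'$. This is valid but redundant: the gluing step you describe already exhibits $a'$ as the image of $G_h$ applied to a compatible healthy family, hence already shows $a'\in\act^h_{\ctree_n}(x')$, so the subsequent direct check adds nothing new. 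The paper's ordering is more economical precisely because it uses \Cref{prop:health_seg} as the concluding step rather than as a preliminary.

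One point to be careful about: your assertion that an arbitrary inert $x'\xinert{}x$ in $\stn_n$ identifies $x'$ with a ``subtree'' inheriting the decorations of $x$ is not justified by the definitions as stated---a general inert subcomplex of a tree-shaped Steiner complex need not itself carry a tree structure in the sense of \Cref{constr:tree}. The paper does not address this either; in the intended applications (e.g.\ \Cref{prop:health_cat}) the object $x'$ is already known to lie in $\ctree_n^h$, being the image of an elementary cell under a prior healthy morphism, so the issue does not arise.
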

	\begin{proof}
		We have already proved a special case of this claim for $x'\cong \{i,c\}$ in \Cref{lem:health_fact}, so in particular for any $\{i',c'\}\xinert{}x'$ we have healthy active morphisms $a'_{i',c'}:\{i',c'\}\xactive{}y_{i',c'}$. These morphisms define an element of $\underset{\{i',c'\}\xinert{}x'}{\lim}\act^h_{\ctree_n}(\{i',c'\})$, hence we conclude by \Cref{prop:health_seg}.
	\end{proof}
	\begin{prop}\label{prop:health_cat}
		Healthy morphisms are closed under composition.
	\end{prop}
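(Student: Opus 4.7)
The plan is to show that if $f\colon x \to y$ and $g\colon y \to z$ are healthy, then for every elementary $n$-cell $\{i,c\}\xinert{} x$ the intermediate object in the active/inert factorization of $(g\circ f)\circ i_{\{i,c\}}$ is a healthy tree in $\ctree_n^h$. My strategy is to first restrict along $f$, then assemble the restriction of $g$ on the resulting intermediate via \Cref{prop:health_seg}.

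First, by healthiness of $f$, factor $f\circ i_{\{i,c\}}$ as $\{i,c\}\xactive{a_1} w_1 \xinert{j_1} y$ with $w_1 \in \ctree_n^h$. Next factor $g\circ j_1$ in $\stn_n$ as $w_1 \xactive{a_2} w_2 \xinert{j_2} z$. Since active and inert morphisms are each closed under composition by \Cref{prop:stein_fact}, the active/inert factorization of $(g\circ f)\circ i_{\{i,c\}}$ is $\{i,c\} \xactive{a_2\circ a_1} w_2 \xinert{j_2} z$, and the task reduces to showing $w_2 \in \ctree_n^h$.

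To apply \Cref{prop:health_seg} to $w_1$, I would for each elementary subobject $e \xinert{i_e} w_1$ consider the composite elementary inert $j_1\circ i_e\colon e \xinert{} y$ and apply \Cref{lem:health_fact} to $g$ along this morphism, obtaining a healthy active morphism $e \xactive{} v_e$. Verifying that the system $(e \xactive{} v_e)_e$ is compatible is the main bookkeeping step: given a factoring $e' \xinert{} e \xinert{} w_1$, the restriction of $e \xactive{} v_e$ to $e'$ must coincide with $e' \xactive{} v_{e'}$, which is immediate from the uniqueness of active/inert factorization in $\stn_n$. By \Cref{prop:health_seg} the family then assembles to a healthy active morphism $w_1 \xactive{\tilde a} \tilde w$ with $\tilde w$ healthy.

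Finally, I would identify $\tilde a$ with $a_2$: restricting either morphism to an elementary $e \xinert{} w_1$ produces the active part of the factorization of $g\circ j_1 \circ i_e = g\circ i_{(j_1\circ i_e)}$, which is precisely $e \xactive{} v_e$ in both cases. Since active morphisms out of $w_1$ are determined by their elementary restrictions (this is the content of the equivalence in \Cref{prop:health_seg} combined with the uniqueness of factorization), we conclude $\tilde a = a_2$ and $\tilde w = w_2$, so $w_2 \in \ctree_n^h$. The principal obstacle is the compatibility verification in the preceding step, together with the implicit check that $j_1\circ i_e$ is of the elementary form required by \Cref{lem:health_fact}; both should follow from the fact that inert morphisms in $\ctree_n$ preserve the tree structure and its decorations.
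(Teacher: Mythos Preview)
Your proof is correct and follows essentially the same approach as the paper: the key ingredients are \Cref{lem:health_fact} and \Cref{prop:health_seg}, which you apply directly. The paper streamlines things slightly by first reducing to the case of active morphisms (noting that a morphism is healthy iff its active part is) and then invoking \Cref{cor:health_fact}—which is exactly the combination of those two results you reconstruct inline—while treating stability under factorization as a separate easy step; your two-step factorization of $(g\circ f)\circ i_{\{i,c\}}$ absorbs both parts at once.
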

	\begin{proof}
		Observe that all inert morphisms are healthy and more generally a morphism $f\cong i\circ a$ is healthy if and only if its active part $a$ is healthy. So it suffices to prove that the healthy active morphisms are closed under composition and stable under active/inert factorization.\par
		Assume first that we have a composable pair $x\xactive{a}x'\xactive{a''}x''$ where both morphisms are healthy, we must prove that $a'\circ a$ is also healthy. For that we need to show that for any elementary cell $i:c_n\xinert{}x$ it image $x_i''$ in $x''$ is healthy. Denote by $x'_i$ the image of $i$ in $x'$, then $x''_i$ is isomorphic to the image of $x'_i$ under $a''$, which is healthy by \Cref{cor:health_fact}.\par
		Finally, we need to prove that for any factorization square
		\[\begin{tikzcd}[sep=huge]
			{x} & {x_1} \\
			{x_1'} & {x_{2}}
			\arrow["{i_0}", tail, from=1-1, to=1-2]
			\arrow["{a_0}"', two heads, from=1-1, to=2-1]
			\arrow["{a_1}", two heads, from=1-2, to=2-2]
			\arrow["{i_1}"', tail, from=2-1, to=2-2]
		\end{tikzcd}\]
		if $a_1$ is healthy, then so is $a_0$, however this is easy to see since the restriction of $a_0$ to any elementary $n$-cell of $x_0$ coincides with the restriction of $a_1$ to its image in $x_1$.
	\end{proof}
	\begin{notation}\label{not:health_cat}
		We will denote by $\ctree^h_n$ the subcategory (which is well-defined by \Cref{prop:health_cat}) of $\ctree_n$ on healthy objects and healthy morphisms. 
	\end{notation}
	\begin{prop}\label{prop:health_push}
		Assume that we have a span $y\lxinert{i}x\xactive{a}z$ in $\ctree_n^h$, then there is a pushout diagram
		\[\begin{tikzcd}[sep=huge]
			x & y \\
			z & w
			\arrow["i", tail, from=1-1, to=1-2]
			\arrow["a"', two heads, from=1-1, to=2-1]
			\arrow["{a'}", two heads, from=1-2, to=2-2]
			\arrow["{i'}"', tail, from=2-1, to=2-2]
			\arrow["\ulcorner"{anchor=center, pos=0.125, rotate=180}, draw=none, from=2-2, to=1-1]
		\end{tikzcd}\]
		in $\ctree_n$ which remains a pushout in $\cat_n$.
	\end{prop}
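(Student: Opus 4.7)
The approach is to construct $w$ explicitly as an object of $\ctree_n^h$ using the Segal-type description of healthy active morphisms from \Cref{prop:health_seg}, and then invoke \Cref{cor:stein_push} to transfer the pushout property from $\stn_n$ to $\cat_n$.

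First, I would construct $w$ and the active morphism $a':y\xactive{}w$ simultaneously via \Cref{prop:health_seg}: a healthy active morphism out of $y$ is determined by, and can be assembled from, a compatible family of healthy active morphisms out of the elementary cells $e\xinert{}y$. For an elementary cell $e$ of $y$ that factors through the inert inclusion $i:x\xinert{}y$, I set the corresponding piece of $a'$ to be the restriction of $a$ along $e\xinert{}x$, which is again a healthy active morphism by \Cref{cor:health_fact}; for an elementary cell of $y$ not lying in the image of $i$, I use the identity. Compatibility on shared boundary cells is guaranteed by \Cref{cor:health_fact} applied to $a$ together with the Segal condition of \Cref{prop:health_seg} inside $x$. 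Inspecting the explicit construction of the target of an assembled family in the proof of \Cref{prop:health_seg}, $T(w)$ is obtained by grafting, for each elementary cell of $y$ coming from $x$, the tree of its image under $a$ in place of the cell itself. The inert morphism $i':z\xinert{}w$ now arises naturally: by \Cref{prop:health_seg} applied to $x$, the tree $T(z)$ is built from the same pieces --- images under $a$ of elementary cells of $x$ --- that appear in $T(w)$, so $T(z)$ embeds as a subtree of $T(w)$. Healthiness of $w$ follows from \Cref{prop:strong_tree}, since all node decorations are healthy.

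Second, I would verify the universal property in $\ctree_n^h$. Given a competing cocone $f:y\rightarrow w'$, $g:z\rightarrow w'$ with $f\circ i\cong g\circ a$, we may reduce to the case where $f$ is active and $g$ is inert by active/inert factorization. Decomposing $f$ into its pieces on elementary cells of $y$ via \Cref{prop:health_seg}, the compatibility $f\circ i\cong g\circ a$ forces those pieces of $f$ lying in the image of $i$ to factor through $g$, while the remaining pieces correspond bijectively to elementary cells of $w$ outside $z$ and, combined with the pieces of $g$ on elementary cells of $z$, assemble via \Cref{prop:health_seg} into a unique morphism $w\rightarrow w'$ fitting into the diagram.

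Finally, to pass from $\ctree_n$ to $\cat_n$, I observe that by \Cref{cor:tree_stn} the square lies inside $\stn_n$, and by construction the underlying ADC $C(w)$ is the pushout of $C(y)\leftarrow C(x)\rightarrow C(z)$ with its strongly loop-free basis supplied by \Cref{prop:strong_tree}; hence it is already a pushout in $\stn_n$. Since the span consists of an active and an inert morphism, \Cref{cor:stein_push} applies, and the pushout is preserved in $\cat_n$. The main obstacle is the combinatorial bookkeeping in the first step, particularly verifying that the family of healthy active morphisms on elementary cells of $y$ is compatible in the sense of \Cref{prop:health_seg} and that the resulting target carries a well-defined healthy tree structure; both ultimately reduce to the healthiness of $a$ combined with \Cref{cor:health_fact}, which ensures that restrictions of $a$ along inert morphisms into $x$ remain healthy with the expected gluing behavior at the interface where the image of $x$ under $a$ meets the complement of $T(x)$ in $T(y)$.
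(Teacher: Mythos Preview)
Your approach is correct in outline but considerably more elaborate than what the paper does, and your step~2 contains an unjustified reduction that is fortunately made redundant by step~3.

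The paper's proof is essentially three lines: take the pushout of the underlying ADCs in $\ctree'_n$; observe that all node decorations of $w$ are healthy (inherited from $y$ and $z$), so \Cref{prop:strong_tree} places $w$ in $\ctree_n$; finally, iterate \Cref{lem:push_tree} to see the square remains a pushout in $\cat_n$. There is no explicit construction of $a'$ via the Segal decomposition and no separate verification of the universal property---both come for free from the ADC pushout and the full faithfulness of $\ctree_n\hookrightarrow\ctree'_n$.

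Your route through \Cref{prop:health_seg} and \Cref{cor:health_fact} gives a more hands-on description of $w$ and $a'$, which is informative but not needed. The claim in your step~2 that ``we may reduce to the case where $f$ is active and $g$ is inert by active/inert factorization'' is not justified: factoring $f$ and $g$ separately does not force this particular shape on the cocone. However, your step~3 already contains the paper's argument---you observe that $C(w)$ is the ADC pushout, which immediately gives the universal property in $\ctree'_n$ and hence in $\ctree_n$---so the flawed step~2 is never actually used. Your invocation of \Cref{cor:stein_push} for the passage to $\cat_n$ is equivalent to the paper's iterated use of \Cref{lem:push_tree}.
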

	\begin{proof}
		Note that we can always take the required pushout in the category $\ctree'_n$ by taking the pushout of the corresponding ADCs, we need to prove that the resulting object lies in $\ctree_n$, however under the conditions of the lemma this follows from \Cref{prop:strong_tree} since all nodes of $w$ are marked with healthy objects by construction. Finally, the fact that this remains a pushout in $\cat_n$ follows by iterated application of \Cref{lem:push_tree}.
	\end{proof}
	\section{Twisted arrow categories}\label{sect:twar}
	In this section we finally complete the definition of $\twar(\cE)$ for $\cE\in\cat_n$ and prove that it coincides with the model $\twar_\theta(\cE)$ defined in \Cref{sect:twar_theta}. Accordingly, the first part of the section is dedicated to the definition of $\twar(\cE)$ culminating in \Cref{constr:twar_final}, while the second half is dedicated to computing $\twar(x)$ for $x\in\stn_n$, which is the key computation in the proof of the comparison isomorphism $\twar(\cE)\cong \stab(\cat_{n,/\cE})$. Finally, in \Cref{thm:main_twar} and \Cref{prop:main_prop} we use the results of \cite{harpaz2020k} on Postnikov towers of $(\infty,n)$-categories to connect our theorem to the deformation theory of $(\infty,n)$-categories.
	\begin{construction}\label{constr:twar'}
		Given $\cE\in\cat_n$ denote by $\twar'(\cE)$ the subfunctor of 
		\[\Delta^\op\xrightarrow{\mor_\cat(\Delta^\inrt_{/-},\cat_{n,/\cE})} \cat\]
		sending $[q]$ to the subcategory of $\mor_\cat(\Delta^\inrt_{/[q]},\cat_{n,/\cE})$ such that:
		\begin{enumerate}
			\item For any morphism $F:\Delta^\inrt_{/[q]}\rightarrow \cat_{n,/\cE}$ in $\twar'(\cE)([q])$ the values $F([l]\xinert{i_0}[q])$ have the form $x_{i_0}\xrightarrow{f_{i_0}}\cE$ for some $x_{i_0}\in\ctree_n^h$ of \Cref{not:health_cat}, moreover for $i\in[q]$ we have $x_{\{i\}}\cong c_n$, where $[0]\xinert{\{i\}}[q]$ is the inclusion of the element $\{i\}$;
			\item given any morphism $[l]\xinert{i'}[m]$ between $i_0:[l]\xinert{}[q]$ and $i_1:[m]\xinert{}[q]$, if $i'$ preserves the minimal element, the corresponding morphism $h_{i'}:x_{i_0}\rightarrow x_{i_1}$ is inert, and if $i'$ preserves the maximal element, then the morphism $h_{i'}$ is an active healthy morphism in the sense of \Cref{def:tree_health};
			\item any natural transformation $\alpha:F\rightarrow G$ in $\mor_\cat([1]\times \Delta^\inrt_{/[q]},\cat_{n,/\cE})$ that lies in $\twar'(\cE)$ satisfies $\alpha_{[0]\xinert{\{i\}}[q]}\cong \id$ for all $i\in[q]$.
		\end{enumerate}
		Note that this is indeed a subfunctor since both inert and active healthy morphisms are closed under composition (the latter by \Cref{prop:health_cat}). Finally, denote by $\twar'(\cE):\Delta^\op\rightarrow\cS$ the composition $\Delta^\op\xrightarrow{\twar'(\cE)}\cat\xrightarrow{|-|}\cS$, where the second functor is the geometric realization.
	\end{construction}
	\begin{prop}\label{prop:twar'_seg}
		$\twar'(\cE)$ satisfies the Segal condition.
	\end{prop}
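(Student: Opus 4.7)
The plan is to verify the Segal condition at the level of the underlying simplicial category $\twar'(\cE)\colon\Delta^{\op}\to\cat$ (before geometric realization) and then pass to spaces. For each $q\geq 2$, restriction along the inclusions of the elementary subintervals $[k,k+1]\hookrightarrow [q]$ together with the shared singletons yields a functor
\[
r_q\colon\twar'(\cE)([q])\longrightarrow\twar'(\cE)([1])\times_{\twar'(\cE)([0])}\cdots\times_{\twar'(\cE)([0])}\twar'(\cE)([1]),
\]
and I would aim to show that $r_q$ becomes an equivalence after realization.

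The crux is constructing an inverse on the category side. Given a compatible tuple of elementary data -- consecutive cospans $c_n\xinert{i_k} x_k\lxactive{a_k} c_n$ in $\ctree^h_n$ over $\cE$ for $k=0,\ldots,q-1$ -- I would define a functor $F\colon\Delta^\inrt_{/[q]}\to\cat_{n,/\cE}$ by iterated pushouts
\[
F([a,a+l])\bydef x_a\coprod_{c_n}x_{a+1}\coprod_{c_n}\cdots\coprod_{c_n}x_{a+l-1},
\]
where each pushout glues an active healthy leg of $x_j$ with an inert leg of $x_{j+1}$ along a common copy of $c_n$. \Cref{prop:health_push} guarantees that such pushouts exist in $\ctree^h_n$ and are preserved by the inclusion $\ctree^h_n\hookrightarrow\cat_n$, so the augmentations to $\cE$ assemble by the universal property. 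Subinterval inclusions preserving the minimal (resp.\ maximal) element are sent to inert (resp.\ active healthy) morphisms, being composites of the corresponding legs of the iterated pushouts; inclusions preserving neither endpoint are not constrained by the definition of $\twar'(\cE)$. For an arbitrary object of $\twar'(\cE)([q])$, the universal property of the pushouts produces a canonical comparison morphism from the pushout model to that object, and I would argue that this comparison becomes an equivalence after $|-|$ -- either by contracting the space of extensions above a fixed elementary datum directly, or by bootstrapping from the analogous $\Theta_n$-valued assertion \Cref{lem:theta_twar} together with the fact that every $x\in\ctree^h_n$ admits a resolution by objects of $\Theta_n$.

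Finally, to pass from the category-level Segal equivalence to the space-level statement, observe that $\twar'(\cE)([0])$ is a \emph{discrete} category: its objects are morphisms $c_n\to\cE$, and the defining condition that natural transformations be the identity on singletons forces all morphisms to be identities. Hence $|\twar'(\cE)([0])|\cong \mor_{\cat_n}(c_n,\cE)$ is a space, and the iterated pullbacks decompose fiberwise over tuples of basepoints, where geometric realization commutes with the fibered product. The main obstacle is the equivalence between an arbitrary $F$ and its pushout model: since no Segal-type condition is imposed directly on $F$ in \Cref{constr:twar'}, the content of the argument lies in showing that the ``extra'' data in $F$ above its elementary restriction is contractible after realization, and this is precisely where the theory of healthy trees and the pushout stability \Cref{prop:health_push} are used essentially.
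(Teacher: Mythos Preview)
Your approach is essentially the paper's. Both arguments restrict to the elementary subcategory $\Delta^\el_{/[q]}\hookrightarrow\Delta^\inrt_{/[q]}$, reconstruct via iterated healthy pushouts using \Cref{prop:health_push}, and use that $\twar'(\cE)([0])$ is a groupoid to commute geometric realization with the fibered products.

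The one place where you are vaguer than necessary is the step ``this comparison becomes an equivalence after $|-|$''. The paper handles this in one line: the pushout construction you describe is precisely the left Kan extension $j_!$ along $j\colon\Delta^\el_{/[q]}\hookrightarrow\Delta^\inrt_{/[q]}$, and it is \emph{left adjoint} to the restriction $j^*$. Since adjoint functors induce equivalences on geometric realizations, $|j^*|$ is an equivalence with no further work. Your route (a), ``contracting the space of extensions above a fixed elementary datum'', amounts to the same thing once you observe that the fiber of $j^*$ over a fixed elementary datum has $j_!$ of that datum as an initial object. Your route (b), bootstrapping from \Cref{lem:theta_twar}, is a detour: that lemma concerns $\twar^C_\theta$ rather than $\twar'$, and resolving healthy trees by $\Theta_n$-objects would introduce complications that the adjunction argument avoids entirely.
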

	\begin{proof}
		Temporarily denote by $\twar'_0(\cE)([q])$ the subcategory of $\mor_\cat(\Delta^\el_{/[n]},\cat_{n,/\cE})$ satisfying the same conditions as in \Cref{constr:twar'}. In other words, the objects of $\twar'_0(\cE)([q])$ are given by strings of cospans 
		\[\begin{tikzcd}[sep=huge]
			& {x_{0,1}} && {...} && {x_{q-1,q}} \\
			{c_n} && {c_n} && {c_n} && {c_n}
			\arrow["{i_0}", tail, from=2-1, to=1-2]
			\arrow["{a_1}"', two heads, from=2-3, to=1-2]
			\arrow[tail, from=2-3, to=1-4]
			\arrow[two heads, from=2-5, to=1-4]
			\arrow["{i_{q-1}}", tail, from=2-5, to=1-6]
			\arrow["{a_q}"', two heads, from=2-7, to=1-6]
		\end{tikzcd}\]
		over $\cE$ and morphisms are given by commutative diagrams
		\[\begin{tikzcd}[sep=huge]
			& {x_{0,1}} && {...} && {x_{q-1,q}} \\
			{c_n} && {c_n} && {c_n} && {c_n} \\
			& {x'_{0,1}} && {...} && {x'_{q-1,q}}
			\arrow["{f_1}"', from=1-2, to=3-2]
			\arrow[from=1-4, to=3-4]
			\arrow["{f_q}", from=1-6, to=3-6]
			\arrow["{i_0}", tail, from=2-1, to=1-2]
			\arrow["{i'_0}"', tail, from=2-1, to=3-2]
			\arrow["{a_1}"', two heads, from=2-3, to=1-2]
			\arrow[tail, from=2-3, to=1-4]
			\arrow["{a'_1}", two heads, from=2-3, to=3-2]
			\arrow[tail, from=2-3, to=3-4]
			\arrow[two heads, from=2-5, to=1-4]
			\arrow["{i_{q-1}}", tail, from=2-5, to=1-6]
			\arrow[two heads, from=2-5, to=3-4]
			\arrow["{i'_{q-1}}"', tail, from=2-5, to=3-6]
			\arrow["{a_q}"', two heads, from=2-7, to=1-6]
			\arrow["{a'_q}", two heads, from=2-7, to=3-6]
		\end{tikzcd}.\]
		It follows that $\twar'(\cE)([q])$ satisfies the Segal condition and moreover we have
		\begin{align}\label{eq:twar_seg}
			\twar_0'(\cE)([q])&\cong |\twar_0''(\cE)([1])\times_{\twar_0''(\cE)([0])}...\times_{\twar'_0(\cE)([0])}\twar'_0(\cE)([1])|\\
			&\cong |\twar_0''(\cE)([1])|\times_{|\twar_0''(\cE)([0])|}...\times_{|\twar'_0(\cE)([0])|}|\twar'_0(\cE)([1])|\\
			&\cong \twar_0'(\cE)([1])\times_{\twar_0'(\cE)([0])}...\times_{\twar'_0(\cE)([0])}\twar'_0(\cE)([1]),
		\end{align}
		where the first isomorphism follows from the Segal condition, the second from iterated application of \cite[Lemma 5.17.]{ayala2017fibrations} (which applies in our case since $\twar'_0(\cE)([0])$ is an $\infty$-groupoid). \par
		Denote by $j:\Delta^\el_{/[q]}\hookrightarrow\Delta^\inrt_{/[q]}$ the natural inclusion, note that we have a restriction functor $j^*:\twar'(\cE)([q])\rightarrow\twar'_0(\cE)([q])$ given by a precomposition with $j$. We claim that it has a left adjoint which is given by $j_!$ -- the left Kan extension along $j$. Indeed, it suffices to prove that for $F:\Delta^\el_{/[q]}\rightarrow\cat_{n,/\cE}$ in $\twar'_0(\cE)([q])$ the value
		\begin{equation}\label{eq:col_twar}
			j_!F([l]\xinert{i}[q])\cong \underset{[e]\xinert{i'}[l]}{\colim}F(i\circ i')\in \cat_{n,/\cE}
		\end{equation}
		belongs to $\ctree^h_{n,/\cE}$, however this follows since the colimit \eqref{eq:col_twar} is an iterated pushout of healthy active morphisms along inert morphisms, and those exist by \Cref{prop:health_push}. Since left adjoints induce isomorphisms on geometric realizations, it follows that $|\twar'(\cE)([q])|\cong |\twar'_0(\cE)([q])|$, and the claim follows from \eqref{eq:twar_seg}.
	\end{proof}
	\begin{lemma}\label{lem:twar_col1}
		We have 
		\[\twar'(\cE)\cong \underset{(x\xrightarrow{f}\cE)\in\stn_{n,/\cE}}{\colim}\twar'(x).\]
	\end{lemma}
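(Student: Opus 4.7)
The plan is to adapt the proof of \Cref{lem:twar_theta1} to this setting. Since the geometric realization functor $|-|\colon\cat\to\cS$ preserves colimits, it suffices to establish the colimit formula at the level of simplicial categories, i.e.\ to show
\[\twar'(\cE)\cong \underset{(x\xrightarrow{f}\cE)\in\stn_{n,/\cE}}{\colim}\twar'(x)\]
viewing $\twar'(-)$ as a functor $\Delta^\op\to\cat$ before composing with $|-|$.

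To do this, I will show that for every $[q]\in\Delta$ and every object $F\in\twar'(\cE)([q])([m])$ (viewed as a morphism $F\colon\Delta^\inrt_{/[q]}\times[m]\to\cat_{n,/\cE}$ satisfying the conditions of \Cref{constr:twar'}), the space of factorizations of the form
\[\Delta^\inrt_{/[q]}\times[m]\xrightarrow{F'}\cat_{n,/x}\xrightarrow{f_!}\cat_{n,/\cE}\]
with $x\in\stn_n$ and $F'\in\twar'(x)([q])([m])$ is contractible. The candidate for the initial object is obtained by taking $x$ to be the source of the top-level value $F([q]\eq[q],m)\in\cat_{n,/\cE}$. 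By the first condition of \Cref{constr:twar'} this source lies in $\ctree^h_n$, and by \Cref{cor:tree_stn} every healthy tree admits an active morphism from $c_n$, so $x$ can be viewed as an object of $\stn_n$ with a structure map $f\colon x\to\cE$. All other values $F([l]\xinert{i_0}[q],k)$ receive an inert map into $x$ (coming from the cospans in the definition of $\twar'(\cE)$), so $F$ canonically factors as $f_!\circ F'$ with $F'$ landing in $\cat_{n,/x}$, and this factorization is manifestly initial among all such factorizations.

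The main obstacle is verifying that the factorization $F'$ actually lies in $\twar'(x)([q])([m])$, i.e.\ that its various components still satisfy the three bullet points of \Cref{constr:twar'}. The first two conditions transfer directly since the underlying objects and morphisms in $\ctree^h_n$ are unchanged (only the target category of the structure map changes from $\cE$ to $x$), and the healthiness of active morphisms is a property of $\ctree^h_n$, not of the target $\cE$. The naturality condition on transformations also transfers because $f_!$ is faithful on underlying data. Once this is established, \Cref{prop:health_push} and the compatibility of active/inert factorizations in $\ctree^h_n$ guarantee that the factorization is well-defined on all of $\Delta^\inrt_{/[q]}\times[m]$, and the universal property is immediate from the construction of $x$ as the source of the top-level object. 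The conclusion then follows by the standard fact that a colimit-preserving functor out of a Grothendieck construction on a contractible-fiber diagram equals the colimit of its restrictions, exactly as in the proof of \Cref{lem:twar_theta1}.
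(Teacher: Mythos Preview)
Your proposal is correct and follows essentially the same approach as the paper: reduce to the pre-realization level, then show that for each simplex the space of factorizations through some $x\in\stn_n$ has an initial object given by the source of the value at the terminal vertex $F([q]\eq[q],m)$. The paper's proof is terser (it only mentions objects, not the $[m]$-direction, and omits the verification that $F'$ lands in $\twar'(x)$), but the argument is the same; your invocation of \Cref{prop:health_push} in the last paragraph is unnecessary, since once the initial factorization is identified the colimit formula follows directly without any further pushout arguments.
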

	\begin{proof}
		It suffices to show that 
		\[\twar'(\cE)\cong \underset{x\xrightarrow{f}\cE}{\colim}\twar'(x).\]
		To prove this it suffices to show that for $\Delta^\inrt_{/[q]}\xrightarrow{F}\cat_{n,/\cE}$ in $\twar'(\cE)([q])$ the space of factorizations 
		\[\Delta^\inrt_{/[q]}\xrightarrow{F'}\cat_{n,/x}\xrightarrow{f_!}\cat_{n,/\cE}\]
		of $F$ for $x\in\ctree_n$ is contractible, however note that it in fact has an initial object given by 
		\[\Delta^\inrt_{/[q]}\xrightarrow{F'}\cat_{n,/s(F([q]\eq[q]))}\xrightarrow{F([q]\eq[q])_!}\cat_{n,/\cE},\]
		where $s(F([q]\eq[q]))$ denotes the source of the corresponding object of $\cat_{n,/\cE}$ viewed as a morphism in $\cat_n$.
	\end{proof}
	\begin{construction}\label{constr:D_obj}
		We will describe a certain functor $D_n^p:\Delta^p\rightarrow \ctree_n^\act$ for $p\leq n$, we will denote $D^n_n$ simply by $D_n$, we will start by describing its value on objects. For that we will first need some notation: recall from \Cref{constr:C_delta} the categories $C_n^j(1)$ for $0\leq j\leq n-1$: they can be described as a sequence of 3 $n$-cells composable along their $j$-boundaries, we will denote those cells $\{-,*,+\}$ in order of composition and define $i_{j,n}^\sigma:c_n\xinert{} C^j_n(1)$ to be the inclusion of the corresponding cell.\par
		Define $D_n^p(0,q_2,...,q_p)\bydef c_n$ and if $q_1>0$ define $T_m(D_n^p(q_1,...,q_p))$ to be the poset of strings of symbols $S\bydef l^{\sigma_0}_{i_0}...l^{\sigma_{N-1}}_{i_{N-1}}l_{i_N}$ of finite length (including the empty string) such that for $0\leq k\leq N$ we have $0\leq i_k\leq p-1$ and $\sigma_k\in\{-,*,+\}$ and such that the following conditions are satisfies:
		\begin{enumerate}
			\item\label{it:f1} $i_0 = 0$;
			\item\label{it:f2} if $\sigma_k\neq *$, then $i_s < i_k$ for $s>k$;
			\item\label{it:f3} $i_{k+1} = i_k+1$ unless $i_k = \underset{s<k, \sigma_s\neq *}{\inf}i_s - 1$ in which case $i_{k+1} = 0$;
			\item\label{it:f4} call a term $l^{\sigma_k}_{i_k}$ with $i_k=j$ \textit{extremal} if either $i_k=p-1$ or there exists a term $l^\pm_{i_s}$ with $s<k$ and $i_s = j+1$, then the number of extremal terms $l^{\sigma_k}_j$ is $\leq q_{p-j}$, if $q_{p-j} = 0$ we interpret this to mean that the string immediately terminates after the term $l^\pm_{j+1}$.
		\end{enumerate}
		We declare that $S <_D S'$ is the string $S'$ can be obtained from $S$ by adding symbols on the right. It is clear that $T_m(D_n^p(q_1,...,q_p))$ admits a minimal element (namely, the empty string) and that for all $S\in T_m(D_n^p(q_1,...,q_p))$ the category $T_m(D_n^p(q_1,...,q_p))_{/S}$ is a linearly ordered set $[\mathrm{len}(S)]$, so $T_m(D_n^p(q_1,...,q_p))$ is indeed a tree.\par
		It remains to define markings on the nodes and edges of $T_m(D_n^p(q_1,...,q_p))$, we mark the empty string with $c_n$ and a string $l^{\sigma_0}_{i_0}...l_{i_N}$ with $C^{p-i_N-1}_n(1)$, the edge $\varnothing<l^{\sigma_0}_0$ is marked with $c_n\eq c_n$ and an edge $l^{\sigma_0}_{i_0}...l_{i_N}< l^{\sigma_0}_{i_0}...l_{i_N}^{\sigma_N}l_{i_{N+1}}$ is marked with $i^{\sigma_N}_{p-i_N-1,n}:c_n\xinert{}C^{p-i_N-1}_n(1)$.
	\end{construction}
	\begin{prop}\label{prop:D_tree}
		The object $D_n^p(q_1,...,q_p)\in\ctree'_n$ described in \Cref{constr:D_obj} belongs to $\ctree^h_n$.
	\end{prop}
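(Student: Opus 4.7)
The plan is to verify that the combinatorial data in \Cref{constr:D_obj} defines a valid element of $\ctree'_n$ satisfying condition $(*)$, and then apply \Cref{prop:strong_tree} to conclude healthiness. I would proceed by induction on $p$ (with an inner induction on $q_1+\cdots+q_p$), using the fact that the tree is obtained by successive extensions corresponding to adding a symbol $l_i^\sigma$ to a string.

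First, I would check that $D_n^p(q_1,\ldots,q_p)$ defines a valid object of $\ctree'_n$, i.e.\ that condition $(*)$ of \Cref{constr:tree} holds. The only marked cells in the construction are top-dimensional: each edge is marked with an inclusion of the form $c_n\xinert{i^\sigma_{j,n}}C_n^{j}(1)$ where $j=p-i_N-1$. Thus I must verify that for every such marked edge $e$ and every $q<n$ one has $d^\pm_q\tau_{\geq t(e)}x\cong c_q$, so that the boundary conditions of $(*)$ are automatically vacuous. This reduces to the following combinatorial claim: the subtree $\tau_{\geq t(e')}x$ obtained by restricting to descendants of a boundary edge $e'$ coincides with $c_q$ as a Steiner complex. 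Using condition \eqref{it:f2} in the construction (any extremal symbol $l_i^\pm$ blocks all future occurrences of indices $\geq i$), together with the recursive structure of $C_n^j(1)$, the subtrees descending from the $\pm$-marked cells always correspond to compositions with matching boundaries, while $*$-marked branches only continue in strictly lower $i$. This allows me to identify the boundary subtrees explicitly and check the equality with $c_q$.

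Second, I would check the two hypotheses of \Cref{prop:strong_tree}. For \ref{it:h1}: the root is marked with $c_n$, which is healthy. For \ref{it:h2}: every edge is marked with an inert $n$-cell $c_n\xinert{}C_n^{j}(1)$ whose target is $C_n^{j}(1)\in\Theta_n$, and $C_n^{j}(1)$ is healthy because it is a three-fold $j$-composition of $n$-cells, so every lower-dimensional elementary cell appears as a boundary of one of those three $n$-cells. Once these are established, \Cref{prop:strong_tree} gives $D_n^p(q_1,\ldots,q_p)\in\ctree_n$, and by the same verification the healthiness conditions of \Cref{def:tree_health} hold at every node, so the object lies in $\ctree_n^h$.

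The main obstacle is the first step. The conditions \eqref{it:f1}--\eqref{it:f4} on strings are intricate, and verifying that the descending subtree $\tau_{\geq t(e')}x$ associated to a formal $q$-boundary of some marked $n$-cell actually collapses to $c_q$ requires untangling how the bookkeeping of extremal symbols corresponds to the partial composition structure of $C_n^{j}(1)$. I expect to handle this by proving a parallel recursive identity: the boundary operation $d^\pm_q$ on the marked tree corresponds precisely to restricting the set of admissible strings to those which terminate within the allowed extremal budget at each level. This can be phrased as a bijection between admissible strings of $\tau_{\geq t(e')}x$ and those of the expected boundary object, proved by simultaneous induction on $p$ and string length, with the base case $p=1$ handled by recognizing $D_n^1(q_1)\cong D_n(q_1,0,\ldots,0)$ from the notation of \Cref{cor:D_ext}. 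Once this bijection is in place, both condition $(*)$ and healthiness are routine consequences.
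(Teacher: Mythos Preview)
Your proposal follows the same two–step outline as the paper: first check condition $(*)$ of \Cref{constr:tree} so that the decorated tree lies in $\ctree'_n$, then observe that the root marking $c_n$ and every node marking $C^{p-i_N-1}_n(1)$ are healthy, so \Cref{prop:strong_tree} gives membership in $\ctree_n$ and hence in $\ctree^h_n$. Both you and the paper recognise that $(*)$ becomes vacuous once one shows the relevant $d^\pm_q\tau_{\geq t(e)}x\cong c_q$, and both point to condition \ref{it:f2} as the key combinatorial input.

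The difference lies in how $(*)$ is verified. You propose an induction on $p$ with an inner induction on $q_1+\cdots+q_p$, together with an explicit bijection between the admissible strings of $\tau_{\geq t(e')}x$ and those of the expected boundary object. The paper dispenses with all of this: it simply records the boundary pattern of the three $n$-cells in $C^{p-i_N-1}_n(1)$ (for $j<p-1-i_N$ they share a common $c_j$; at $j=p-1-i_N$ they match pairwise; for $j>p-1-i_N$ they are distinct), and then applies \ref{it:f2} in one stroke: after a symbol $l^\pm_{i_N}$ every subsequent index is strictly below $i_N$, so every node of $\tau_{\geq t([S,i^\pm])}$ is marked with some $C^{p-k-1}_n(1)$ for $k<i_N$, i.e.\ with superscript strictly above $p-1-i_N$. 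That single inequality forces all these markings to have common $j$-boundary $c_j$ for $j\leq p-1-i_N$, which is precisely what is needed. No induction on $p$ or bijection of string sets is required; the argument is a one-line consequence of \ref{it:f2} once the boundary structure of $C^m_n(1)$ is written down. Your route would work, but the ``recursive identity'' and ``bijection between admissible strings'' you flag as the main obstacle are more machinery than the problem needs.
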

	\begin{proof}
		The fact that $D_n^p(q_1,...,q_p)\in\ctree^h_n$ would follow immediately if we show $D_n^p(q_1,...,q_p)\in\ctree_n$ since all $C_n^j(1)$ lie in $\Theta_n^h$. To prove $D_n^p(q_1,...,q_p)\in\ctree_n$ we need to show that it satisfies the condition \eqref{it:tree_cond}, for that we first need figure out which marked cells in $T_m(D_n^p(q_1,...,q_p))$ share a boundary. By definition such a marked cell corresponds to a string $S\bydef l^{\sigma_0}_{i_0}...l_{i_N}$ with $0\leq i_j\leq p-1$ and $\sigma_i\in \{-,*,+\}$ together with an $n$-cell in $C_n^{p-i_N-1}(1)$, by definition $C_n^{p-i_N-1}(1)$ contains three non-trivial $n$-cells $i^\sigma_{p-i_N-1, n}$ for $\sigma\in \{-,*,+\}$ such that
		\begin{equation}\label{eq:d_bound1}
			d^\pm_{p-1-i_N}i^\mp_{i_N + n - p, n} = d^\mp_{p-1-i_N}i^*_{i_N + n - p, n},
		\end{equation}
		cells in the $j$-boundary of $i^\sigma_{i_N + n - p, n}$ are distinct for $j>p-1-i_N$ and for $j<p-1-i_N$ we have
		\begin{equation}\label{eq:d_bound2}
			d^\pm_{j}i^-_{i_N + n - p, n} = d^\pm_{j}i^*_{i_N + n - p, n} = d^\pm_{j}i^+_{i_N + n - p, n}.
		\end{equation} 
		Consequently, the claim would be proved if we could show that 
		\[d^\pm_j \tau_{\geq t([S,i^\pm_{p-i_N-1,n}])} D_n^p(q_1,...,q_p)\cong c_j\]
		for $j\leq p-1-i_N$, since then the condition \eqref{it:tree_cond} would be vacuous. Note however that all the nodes in $T_m(\tau_{\geq t([S,i^\pm_{p-i_N-1,n}])} D_n^p(q_1,...,q_p))$ are marked with $C^{p-k-1}_n(1)$ for $k<i_N$, so in particular they all share a common $j$ boundary for $j\leq p-1-i_N< p-1-k$ by the observations above.
	\end{proof}
	\begin{construction}\label{constr:str_nodes}
		Given a string $S = l^{\sigma_0}_{i_0},...,l_{i_N}$ marked with $C_n^{n-i_N-1}(1)$ and a cell $i_{\sigma_N}:c_n\xinert{} C_n^{n-i_N-1}(1)$, which together define an edge in $T_m(D_n(q_1,...,q_n))$, denote by $\widetilde{S}\bydef l^{\sigma_0}_{i_0},...,l_{i_N}^{\sigma_N}$ the extended string and associate to it a triple $((\delta_1,...,\delta_n),(s_0,...,s_{t_v}), M)$, where $\delta_i\in \{-,*,+\}$ and $\delta_i=\pm$ if and only if $\widetilde{S}$ contains the term $l^\pm_{n-i}$, $t_0 = 0$ and the indices $t_k$ for $1\leq k\leq v$ are all the indices for which $\delta_{t_k}\neq *$, $s_i\leq q_{i+1}$ counts the number of extremal terms $l^*_{n-i-1}$ (note that this number is greater than 0 only if $\delta_i\neq *$) and $M\bydef \max(t_v-i_N-1,0)$. Given $m<n$ and $x\bydef ((\delta_1,...,\delta_n),(s_0,...,s_{t_v}))$ as above denote by $\tau_{\leq m}x$ to be the pair $((\delta_1,...,\delta_m),(s_0,...,s_{t_{v'}}), M')$, where $v'$ is the maximal index for which $t_{v'}\leq m$ and $M'=\max(M-m,0)$.
	\end{construction}
	\begin{prop}\label{prop:cell_el}
		\begin{enumerate}
			\item\label{it:str1} \Cref{constr:str_nodes} establishes an isomorphism between the set of edges of $T_m(D_n(q_1,...,q_n))$ and the set of triples $((\delta_1,...,\delta_n),(s_0,...,s_{t_v}), M)$ described in \Cref{constr:str_nodes} such that additionally if for some $i$ we have $s_i=q_i$, then all $\delta_j=*$ for $j>i$ and $M=0$;
			\item\label{it:str2} a pair $((\delta_1,...,\delta_n),(s_0,...,s_{t_v}))$ corresponds to an elementary cell of $D_n(q_1,...,q_n)$ if $M=0$ and either $s_k=q_{k+1}$ for some $k$ or $\delta_n\neq *$;
			\item\label{it:str3} the set of elementary $(n-1)$-cells of $D_n(q_1,...,q_n)$ is isomorphic to the set of triples $(n-1, x, \sigma)$, where $x=((\delta_1,...,\delta_n),(s_0,...,s_{t_v}))$ corresponds to an elementary $n$-cell and $\sigma\in \{-,+\}$ is such that $\sigma = \delta_n$ if $\delta_n\neq *$;
			\item\label{it:str4} more generally, the set of elementary $l$-cells of $D_n(q_1,...,q_n)$ is isomorphic to the set of elementary $l$-cells in $D_{l+1}(q_1,...,q_{l+1})$;
			\item\label{it:str5} given an elementary $l$-cell $y$ and ann elementary $k$-cell $z$ for $l<k$ we have $y< z$ if and only if $y$ belongs to $d_l^\pm(\tau_{\leq l+1}z)$, where $\tau_{\leq l+1}z$ denotes the subcomplex $\tau_{\geq t(\tau_{\leq l+1}z)}D_{l+1}(q_1,...,q_{l+1})$ of $D_{l+1}(q_1,...,q_{l+1})$.
		\end{enumerate}
	\end{prop}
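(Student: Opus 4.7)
The plan is to prove all five assertions by a single inductive analysis of the combinatorics of the poset $T_m(D_n(q_1,\ldots,q_n))$ described in \Cref{constr:D_obj}, starting with the bijection in \ref{it:str1} and then reading off \ref{it:str2}--\ref{it:str5} as corollaries of the explicit encoding.

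For \ref{it:str1} I would first verify that the assignment of \Cref{constr:str_nodes} lands in the claimed set of admissible triples: the position conditions \ref{it:f2}--\ref{it:f3} in \Cref{constr:D_obj} translate directly into the requirement that the nonzero entries of $(\delta_1,\ldots,\delta_n)$ are recorded at strictly increasing indices $t_1<\cdots<t_v$, while condition \ref{it:f4} translates into the saturation constraint ``if $s_i=q_i$ then $\delta_j=*$ for $j>i$ and $M=0$''. The inverse is constructed algorithmically: given an admissible triple, reconstruct $\widetilde S$ by walking down the indices $0,1,2,\ldots$ until hitting the next index $t_k$ where $\delta_{t_k}\neq *$, inserting a symbol $l^{\sigma}_{i}$ with $\sigma=\delta_{t_k}$ and the bookkeeping of the $s_i$ extremal stars in between, then restarting at $i=0$ per condition \ref{it:f3}, and appending $M$ additional plain terms at the end. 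Condition \ref{it:f3} (together with $i_0=0$ from \ref{it:f1}) makes the $i_k$'s uniquely determined by the positions where $\sigma\neq *$, which shows this inverse is well defined and two-sided.

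For \ref{it:str2} I would characterize leaves of the extended-string poset: by \ref{it:f4} together with \ref{it:f3}, the unique admissible continuation of $\widetilde S$ fails to exist precisely when either $s_i=q_{i+1}$ has been achieved for some $i$ (no further extremal term at level $n-i-1$ is available), or $\delta_n\neq *$ (we have reached the innermost level $i_N=n-1$ and cannot descend further). Combined with $M=0$ (which says $\widetilde S$ ends immediately after the last $\sigma\neq *$, so there are no ``free'' plain continuations left), this is exactly the condition for the cell $[i^{\sigma_N}_{n-i_N-1,n},S]$ to be unmarked in the sense of \Cref{not:tree_basis}, i.e.\ to be an elementary $n$-cell. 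For \ref{it:str3} I would use the boundary relations \eqref{eq:d_bound1}--\eqref{eq:d_bound2} inside each $C_n^j(1)$ plus the formula \eqref{eq:marked} for boundaries of marked cells to identify the $(n-1)$-cells of $D_n(q_1,\ldots,q_n)$ with pairs (elementary $n$-cell $x$, sign $\sigma$), with the understanding that when $x$ has $\delta_n\neq *$ only one sign is available because the other boundary of $i^{\delta_n}_{0,n}$ is identified with $i^{*}_{0,n}$ via \eqref{eq:d_bound1}.

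For \ref{it:str4} I would observe that every elementary $l$-cell lives in some $\theta_c$ marked by $C_n^{j}(1)$, and the inclusion $c_l\xinert{}C_n^{j}(1)$ factors through $C_l^{j}(1)\xinert{}C_n^{j}(1)$; one then checks that the sub-tree of $T_m(D_n(q_1,\ldots,q_n))$ consisting of nodes reachable without ever incrementing $i_k$ past $l$ is canonically isomorphic to $T_m(D_{l+1}(q_1,\ldots,q_{l+1}))$, with matching markings in dimensions $\le l$. The assertion \ref{it:str5} will then follow from \ref{it:str4} and the explicit description of boundaries: $y<z$ in the $\le_\bN$ order forces $y$ to appear either in $\partial^\pm$ of $z$ or of some marked cell whose target contains $z$, and by unwinding \eqref{eq:marked} and \eqref{eq:unmarked} this is exactly membership in $d^\pm_l(\tau_{\le l+1}z)$.

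I expect the main obstacle to be the bookkeeping in \ref{it:str1}: keeping track simultaneously of the interleaving of the ``descent'' symbols $l^*$, the ``extremal'' symbols $l^\pm$, the saturation constraints from \ref{it:f4}, and the tail length $M$, while verifying by direct computation that both composites of the string-to-triple and triple-to-string maps are the identity. Once this bijection is nailed down, \ref{it:str2}--\ref{it:str5} are comparatively short computations using the boundary formulas established in \Cref{prop:adc_tree} and the explicit shape of $C_n^j(1)$ recalled at the start of \Cref{constr:D_obj}.
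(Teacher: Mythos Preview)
Your outline for \ref{it:str1}--\ref{it:str3} is essentially the paper's own argument: the paper also writes down an explicit inverse string from a given triple (as a concatenation of blocks of $*$-runs and sign-terminated runs), characterizes elementary $n$-cells as non-extendable strings via conditions \ref{it:f2} and \ref{it:f4}, and encodes $(n-1)$-cells using the boundary identifications \eqref{eq:d_bound1}--\eqref{eq:d_bound2} inside each $C_n^j(1)$. On these points there is nothing to add.

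The gap is in your treatment of \ref{it:str4}. Your claim that the ``subtree of $T_m(D_n(q_1,\ldots,q_n))$ consisting of nodes reachable without ever incrementing $i_k$ past $l$'' is isomorphic to $T_m(D_{l+1}(q_1,\ldots,q_{l+1}))$ does not hold as stated: in the subtree you describe, a term $l^{\sigma}_l$ is \emph{never} extremal in the sense of \ref{it:f4} (since extremality requires either $i_k=n-1$ or an earlier $l^{\pm}_{i_k+1}$, neither of which occurs if all indices stay $\le l<n-1$), whereas in $D_{l+1}$ every term $l^{\sigma}_l$ is automatically extremal. So the two trees have genuinely different shapes. The paper instead argues via the identifications
\[
\mor_{\Theta^\inrt_n}(c_l, C_n^i(1))\cong\begin{cases}\mor_{\Theta^\inrt_{l+1}}(c_l,c_{l+1})&i>l,\\ \mor_{\Theta^\inrt_{l+1}}(c_l, C^i_{l+1}(1))&i\le l,\end{cases}
\]
replacing each marking $C_n^{n-i_N-1}(1)$ by $C_{l+1}^{n-i_N-1}(1)$ when $i_N\ge n-l-1$ and by $c_{l+1}$ otherwise, and then \emph{contracting} the resulting identity-marked edges. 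The upshot is the same bijection you are after, but it is obtained by a quotient of the full tree, not by restriction to a subtree; the nodes that survive are those with $i_N\ge n-l-1$ (equivalently $j=n-i_N-1\le l$), which is the opposite inequality from the one in your description.

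This also affects \ref{it:str5}: the paper leverages exactly these two displayed identifications to show $d^\pm_l z = d^\pm_l(\tau_{\le l+1}z)$, rather than unwinding \eqref{eq:marked}--\eqref{eq:unmarked} directly. Your direct-unwinding approach could in principle work, but once \ref{it:str4} is repaired along the paper's lines, \ref{it:str5} falls out more cleanly from the same re-marking argument.
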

	\begin{proof}
		We start by proving \eqref{it:str1}: note that any $x\bydef ((\delta_1,...,\delta_n),(s_0,...,s_{t_v}), M)$ corresponding to an edge in $T_m(D_n(q_1,...,q_n))$ satisfies the condition of \eqref{it:str1} by \eqref{it:f4}. Conversely, given $x$ we can define a string 
		\[S\bydef s_0*S_{n-1} + \sum_{j=1}^k(S^{\delta_{t_j}}_{n-t_j} + s_{t_j}*S_{n-t_j-1}) + S_+,\]
		where $+$ denotes the concatenation of strings, $m*S_k$ denotes the string $l_0^*l_!^*...l_k^*$ repeated $m$ times, $S^\pm_{k}$ denotes the string $l_0^*...l_k^\pm$ and the term $S_+$ is either $l_0^\pm$ if $\delta_n=\pm$ or the string $l_0^*...l^*_{t_v-M}$ if $\delta_n =*$, it is easy to see that this defines an inverse to the map of \Cref{constr:str_nodes}.\par
		To prove \eqref{it:str2} note that a string $\widetilde{S}\bydef l_{i_0}^{\sigma_0}...l_{i_N}^{\sigma_N}$ corresponds to an elementary cell if and only if it cannot be extended to a string $S' = l_{i_0}^{\sigma_0}...l_{i_N}^{\sigma_N} l_{i_{N+1}}$ satisfying the conditions of \Cref{constr:D_obj}, note that this is only the case if either $i_N = 0$ and $\sigma_N\in\{-,+\}$ (since then \ref{it:f2} can not be satisfied) or $l_{i_N}^{\sigma_N}$ is the $q_{n-i_N}$th extremal term $l_{i_N}$ (in which case the condition \ref{it:f4} fails), and in both cases we have $M=0$, under the isomorphism of \eqref{it:str1} those conditions clearly correspond to the ones described in \eqref{it:str2}. In what follows we will drop $M=0$ from the notation for an elementary $n$-cell and denote it simply $((\delta_1,...,\delta_n),(s_),...,s_{t_v}))$.\par
		For \eqref{it:str3} note that by construction every unmarked $(n-1)$-cell in $D_n(q_1,...,q_n)$ lies in the boundary of an unmarked $n$-cell, recall from the proof of \Cref{prop:D_tree} that $C_n^{n-i_k-1}(1)$ contains 3 elementary $n$-cells $\{-,*,+\}$ and for $i_k>0$ their $(n-1)$-boundaries are all distinct, while for $i_k=0$ we have $d^+_{n-1}c_n^- = d^-_{n-1}c_n^*$ and $d^-_{n-1}c_n^+ = d^+_{n-1}c_{n}^*$. We may encode this information by a pair $(\sigma_k, \sigma)$, where $\sigma\in\{-,+\}$ such that $\sigma = \sigma_k$ if $i_k=0$ and $\sigma_k\in\{-,+\}$. Combining this with the previous description of $n$-cells we see that an elementary $(n-1)$-cell can be identified with the data $(n-1,(\delta_1,...,\delta_n),(s_0,...,s_{t_v}),\sigma)$ satisfying the conditions of \eqref{it:str3}.\par
		To prove the claim about the $l$-cells observe that 
		\begin{equation}\label{eq:l_label1}
			\mor_{\Theta^\inrt_{n}}(c_l, C_n^i(1))\cong \mor_{\Theta^\inrt_n}(c_l,c_{n})\cong \mor_{\Theta^\inrt_{l+1}}(c_l,c_{l+1})\text{ for $i>l$}
		\end{equation}
		and
		\begin{equation}\label{eq:l_label2}
			\mor_{\Theta^\inrt_n}(c_l, C^i_n(1))\cong \mor_{\Theta^\inrt_{l+1}}(c_l, C^i_{l+1}(1))\text{ for $i\leq l$}.
		\end{equation}
		Given an object $D_n(q_1,...,q_n)$ denote by $X_l\in\ctree_n^h$ the object with the same underlying tree such that the string $l^{\sigma_0}_{i_0}...l^{\sigma_k}_{l_k}$ is marked with $C^{n-i_k-1}_{l+1}(1)$ if $i_k\geq n-l-1$ and with $c_{l+1}$ otherwise, then it follows from \eqref{eq:l_label1} and \eqref{eq:l_label2} that 
		\[\mor_{\stn_n^\inrt}(c_l,D_n(q_1,...,q_n))\cong \mor_{\stn_{l+1}^\inrt}(c_l,X).\]
		Note that by contracting all the edges of $X_l$ marked with $c_{l+1}=c_{l+1}$ we arrive at the isomorphism $X_l\cong D_{l+1}(q_1,...,q_{l+1})$, which proves \eqref{it:str4}.\par
		Finally, given an elementary $(k+1)$ cell $z\bydef ((\delta_1,...,\delta_{k+1}),(s_0,...,s_{t_v}))$ denote by $z_{l+1}$ the object $((\delta'_1,...,\delta'_{k+1}),(s_0,...,s_{t_{v'}}),0)$, where by definition $\delta'_i = \delta_i$ for $i\leq l+1$ and $\delta'_i=*$ for $i>(l+1)$ and $v'$ is the maximal index for which $t_{v'}\leq l+1$, then $z_{l+1}$ defines a subcomplex $\tau_{\geq z_{l+1}}D_{k+1}(q_1,...,q_{k+1})$ and it follows from \eqref{eq:l_label1} that $d^\pm_{l}z = d^\pm_{l}\tau_{\geq z_{l+1}}D_{k+1}(q_1,...,q_{k+1})$. It remains to prove that $d^\pm_{l+1}\tau_{\geq z_{l+1}}D_{k+1}(q_1,...,q_{k+1}) = d^\pm_l\tau_{\leq l+1}z$, which follows from \eqref{eq:l_label2}.
	\end{proof}
	\begin{lemma}\label{lem:str_bound}
	Given $x\bydef ((\delta_1,..,\delta_n),(s_0,...,s_{t_v}), M)$ encoding an edge in $T_m(D_n(q_1,...,q_n))$ we have an isomorphism
	\begin{equation}\label{eq:edge_iso}
		d^\sigma_{n-1}(\tau_{\geq t(x)} D_n(q_1,...,q_n))\cong D^{M}_{n-1}(1,0,...,0)
	\end{equation}
	such that the elementary $(n-1)$-cells appearing in $d^\sigma_{n-1}(\tau_{\geq t(x)} D_n(q_1,...,q_n))$ are of the form $d^\sigma_{n-1}i_n$ for elementary $n$-cells $i_n:c_n\xinert{} D_{n}(q_1,...,q_n)$ corresponding to pairs $((\delta'_1,..,\delta'_n),(s'_0,...,s'_{t'_{v'}}))$ such that either $\delta'_i = \delta_i$ for all $i\neq \sigma$, $s'_{t_v} = s_{t_v}+1$ and $\delta_n=\sigma$ if $s_{t_v}+1<q_{t_v}$ and $\delta_n=*$ otherwise or $v' = v+1$, $\delta'_i = \delta_i$ for $i\notin \{t'_{v+1},n\}$, $n-1-M\leq t'_{v+1}\leq n-1$, $s_{t'_{v+1}}=0$ and $\delta_n=\sigma$.
	\end{lemma}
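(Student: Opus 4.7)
The plan is to unwind the tree/string encoding of \Cref{constr:D_obj} and \Cref{constr:str_nodes}, then match the combinatorics of the $\sigma$-boundary directly with that of $D^{M}_{n-1}(1,0,\dots,0)$.

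First I would recall that if $x$ corresponds to the extended string $\widetilde S = l^{\sigma_0}_{i_0}\dots l^{\sigma_N}_{i_N}$ (where $i_N$ and the integer $M = \max(t_v - i_N - 1, 0)$ are recovered from the triple encoding $x$), then by \Cref{constr:tree} the subcomplex $\tau_{\geq t(x)}D_n(q_1,\dots,q_n)$ has underlying tree $T_m(D_n(q_1,\dots,q_n))_{t(x)/}$; in particular its elementary $n$-cells are, by \Cref{prop:cell_el}\eqref{it:str2}, precisely the maximal strings extending $\widetilde S$ that satisfy \eqref{it:f4}. Combining \Cref{lem:n-1act} with \Cref{cor:tree_stn}, an elementary $(n-1)$-cell $c$ lies in $d^\sigma_{n-1}\tau_{\geq t(x)}$ iff $c$ is the $\sigma$-face of some such elementary $n$-cell $i_n$ and fails to occur as the $(-\sigma)$-face of any other one.

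Next, I would use the encoding from \Cref{prop:cell_el}\eqref{it:str3} of elementary $(n-1)$-cells by triples $(n-1, z, \epsilon)$ to transcribe the cancellation condition. A careful case analysis of the rules \eqref{it:f2}--\eqref{it:f4} (together with the boundary identifications \eqref{eq:d_bound1}--\eqref{eq:d_bound2} used in the proof of \Cref{prop:D_tree}) shows that two elementary $n$-cells adjacent in the linear order $\leq_{\mathbb N}$ (cf. \Cref{lem:lin}) on $\tau_{\geq t(x)}$ share an $(n-1)$-face in exactly two possible shapes: either one is obtained from the other by bumping the extremal counter $s_{t_v}$ by $1$ (corresponding to Case 1 of the statement), or by appending a new $\pm$-decorated face whose height lies in the admissible window $n-1-M \le t'_{v+1}\le n-1$ (corresponding to Case 2); all other pairs of elementary $n$-cells share no $(n-1)$-boundary. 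Hence the cells surviving in $d^\sigma_{n-1}\tau_{\geq t(x)}$ are exactly the two families listed.

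To identify the resulting $(n-1)$-complex with $D^{M}_{n-1}(1,0,\dots,0)$, I would set up an explicit bijection between the surviving elementary cells and the elementary $(n-1)$-cells of $D^{M}_{n-1}(1,0,\dots,0)$ enumerated via \Cref{prop:cell_el}\eqref{it:str4}: the Case 1 family corresponds to incrementing the single extremal counter of the $q_1=1$ slot, and the Case 2 family corresponds to the allowed fresh decorations in positions indexed by $\{n-1-M,\dots,n-1\}$, which is exactly the set of admissible heights in $D^{M}_{n-1}$. Compatibility of lower faces then follows by applying the same analysis inductively inside each $\tau_{\geq t(-)}$ subcomplex of the boundary, using \Cref{lem:str_bound} in dimension $(n-1)$ as an inductive hypothesis.

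The main obstacle is the bookkeeping in Case 1: one must detect correctly when $s_{t_v}+1$ saturates $q_{t_v}$, because this toggles $\delta_n = \sigma$ to $\delta_n = *$ (so that no further terms can be appended, per \eqref{it:f4}), and similarly handle the boundary cases $M=0$, $t_v=0$, and $i_N=0$ (where \eqref{eq:d_bound1}--\eqref{eq:d_bound2} merge some of the faces of $C_n^{n-i_N-1}(1)$). Once the string-level bijection is in place, both sides manifestly carry the same strong Steiner structure by \Cref{prop:D_tree}, so the resulting map is forced to be an isomorphism in $\stn_{n-1}$.
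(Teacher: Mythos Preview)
Your approach is correct in spirit but takes a substantially more laborious route than the paper. You work bottom-up at the level of individual elementary cells: enumerate the elementary $n$-cells of $\tau_{\geq t(x)}$, perform a cancellation analysis to extract the surviving $(n-1)$-cells of the $\sigma$-boundary, and then build a cell-by-cell bijection with $D^{M}_{n-1}(1,0,\dots,0)$, finishing by induction on $n$. This can be made to work, but the case analysis you outline (the two ``shapes'' of shared faces, plus the saturation and $M=0$ edge cases) is exactly the bookkeeping the paper avoids.

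The paper instead works top-down at the level of the tree decoration. The single observation driving the argument is that $d^\sigma_{n-1}C_n^{j}(1)\cong C_{n-1}^{j}(1)$ for $j<n-1$, while $d^\sigma_{n-1}C_n^{n-1}(1)\cong c_{n-1}$, and moreover the $(n-1)$-boundary of the marked cell $i^*_n\colon c_n\xinert{}C_n^{n-1}(1)$ is disjoint from $d^\sigma_{n-1}C_n^{n-1}(1)$. Consequently, any node of $T_m(\tau_{\geq t(x)}D_n(q_1,\dots,q_n))$ marked with $C_n^{n-1}(1)$ contributes nothing to the $(n-1)$-boundary and can simply be pruned. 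After pruning, one is left with a \emph{linear} tree with exactly $M$ nodes, the $s$-th marked with $C_n^{M-s}(1)$; this is $D^M_{n-1}(1,0,\dots,0)$ on the nose. No induction on $n$ and no explicit cell bijection are needed---the identification of the surviving $(n-1)$-cells (your two Cases) then falls out for free from the description of $D^M_{n-1}(1,0,\dots,0)$.

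So: your proof would go through, but the paper's structural argument replaces your entire cancellation analysis with a two-line computation on the node labels. If you want to shorten your write-up, the key step to import is the pruning observation about nodes marked $C_n^{n-1}(1)$.
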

	\begin{proof}
	Note that $d^\sigma_{n-1} C_n^{n-i_s-1}(1) = C^{n-i_s-1}_{n-1}(1)$ if $i_s>0$ and $d^\sigma_{n-1} C_n^{n-1}(1) = c_{n-1}$, moreover note that if a node $c$ is marked with $C_n^{n-1}(1)$, then 
	\begin{equation}\label{eq:bound_n-1}
		d^\sigma_{n-1}\tau_{\geq c}D_n(q_1,...,q_n)\cong d^\sigma_{n-1} C_n^{n-1}(1)\cong c_{n-1}
	\end{equation}
	since the boundary $d^\sigma_{n-1} i_n^*$ of the marked cell $i_n^*: c_n\xinert{} C_n^{n-1}(1)$ is disjoint from the boundary $d^\sigma_{n-1} C_n^{n-1}(1)$. Denote by $X\in\ctree_n$ the object whose underlying subtree is obtained from $T\bydef T_m(\tau_{\geq t(e_S)} D_n(q_1,...,q_n))$ by removing all nodes in $T_m(\tau_{\geq x} T)$ marked with $C_n^{n-1}(1)$ and whose markings are induced from $\tau_{\geq t(e_S)} D_n(q_1,...,q_n)$, then it follows from \eqref{eq:bound_n-1} that $d^\sigma_{n-1}(\tau_{\geq t(e_S)} D_n(q_1,...,q_n))\cong d^\sigma_{n-1}X.$
	Finally, observe that the underlying tree of $X$ is a linear tree with $M$ nodes $\{1,...,M\}$ such that the node $s$ is marked with $C_n^{M-s}(1)$, from which the claim easily follows.
	\end{proof}
	\begin{prop}\label{prop:pos}
	The partially ordered set $\stn^\el_{n,/D_n(q_1,...,q_n)}$ has objects given by triples $(l,x,\sigma)$, where $x=((\delta_1,...,\delta_{r(l)}),(s_0,...,s_{t_v}))$ and $r(l)\bydef \min(n,l+1)$, given $(k,y,\sigma')$ with $k>l$ and $y=((\delta'_1,...,\delta'_{r(k)}),(s'_0,...,s'_{t'_{v'}}))$, denote by $u$ the maximal index $j$ such that $t'_j\leq r(l)$, then $x<y$ if and only if one of the following mutually exclusive conditions holds:
	\begin{enumerate}
		\item $x=(l,\tau_{\leq r(l)}y,\sigma)$;
		\item $\delta_{r(l)}=\sigma$, $v= u+1$, $\delta'_i = \delta_i$ for $i\neq t_v$ and $s_{t_i} = s'_{t_i}$ for $i\leq u$
		\item $v=u$, $\delta'_{i} = \delta_{i}$ for $i\leq r(l)$, $s'_{t_i} = s_{t_i}$ for $i<v$ and $s_{t_v} = s'_{t_v}+1$.
	\end{enumerate}
	Moreover, for $p\leq n$ we have $\stn^\el_{n,/D^p_n(q_1,...,q_p)}$ is a Cartesian fibration over $\stn^\el_{n,/D_p(q_1,...,q_p)}$ whose fiber over $c_i\xinert{}D_p(q_1,...,q_p)$ is $*$ if $i<p$ and $\Theta^\el_{n-p/c_{n-p}}$ if $i=p$.
	\end{prop}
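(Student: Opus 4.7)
The plan is to derive the claim in three steps, using \Cref{prop:cell_el} and \Cref{lem:str_bound} as the primary tools.

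First, I would identify the set of objects. By \Cref{prop:cell_el}(2), an elementary $n$-cell of $D_n(q_1,\ldots,q_n)$ is a pair $((\delta_1,\ldots,\delta_n),(s_0,\ldots,s_{t_v}))$; iterating \Cref{prop:cell_el}(4) produces a bijection between elementary $l$-cells of $D_n(q_1,\ldots,q_n)$ and elementary $l$-cells of $D_{l+1}(q_1,\ldots,q_{l+1})$, which explains the convention $r(l)=\min(n,l+1)$. For $l<n$ the third coordinate $\sigma\in\{-,+\}$ comes from \Cref{prop:cell_el}(3), distinguishing the two $l$-boundaries of the $(l+1)$-cell encoded by $x$.

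Next, I would derive the order relation. By \Cref{prop:cell_el}(5), $(l,x,\sigma) < (k,y,\sigma')$ holds precisely when the $l$-cell underlying $(l,x,\sigma)$ lies in $d^{-}_{l}(\tau_{\leq l+1}z) \cup d^{+}_{l}(\tau_{\leq l+1}z)$, where $z$ is the $(l+1)$-cell corresponding to $\tau_{\leq r(l)}y$. The elementary cells of $d^{\pm}_{l}\tau_{\geq z_{l+1}} D_{l+1}(q_1,\ldots,q_{l+1})$ are enumerated by \Cref{lem:str_bound} into two families, together with a residual case where the boundary cell is inherited directly from $\tau_{\leq r(l)}y$ itself. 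The residual case corresponds to condition (1); the branch of \Cref{lem:str_bound} that introduces a new extremal symbol at a position $n-1-M\leq t'_{v+1}\leq n-1$ corresponds to condition (2); and the branch that increments an existing extremal counter $s_{t_v}$ corresponds to condition (3). Matching the three cases to the statement is a bookkeeping exercise; the main subtlety is to ensure that the $\sigma$-slot in the triple notation is aligned with the $\pm$-choice in \Cref{lem:str_bound} via the condition $\delta_n = \sigma$ appearing at the end of that lemma. The mutual exclusivity of the three cases follows automatically because each comes from a different structural modification of the underlying string.

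For the Cartesian fibration structure of $\stn^\el_{n,/D_n^p(q_1,\ldots,q_p)}$ over $\stn^\el_{n,/D_p(q_1,\ldots,q_p)}$, the key observation is that \Cref{constr:D_obj} specifies the tree $T_m(D_n^p(q_1,\ldots,q_p))$ and its edge-markings using strings $l^{\sigma_0}_{i_0}\ldots l_{i_N}$ with $0\leq i_k\leq p-1$: the combinatorics of the tree depends only on $p$ and $(q_1,\ldots,q_p)$, while $n$ enters only through the ambient dimension of each marking $C_n^{p-i_k-1}(1)$. An elementary cell of $D_n^p$ of dimension $i<p$ matches its projection uniquely since $C_n^{p-j-1}(1)$ and $C_p^{p-j-1}(1)$ share their sub-$p$-dimensional structure, giving the singleton fibers. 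An elementary $p$-cell of $D_p$ corresponds to one of the elementary $p$-cells of some $C_p^{p-i-1}(1)$, and its lifts in $D_n^p$ are exactly the elementary cells of the ambient $c_{n-p}$ thickening sitting over it, giving the fiber $\Theta^\el_{n-p,/c_{n-p}}$. Cartesianness then reduces to checking that pullback along a boundary inclusion $c_j\xinert{}c_p$ corresponds to taking the $j$-boundary inside $c_{n-p}$, which follows from the inductive definition of the $C_n^{p-i-1}(1)$. The main technical obstacle lies in step 2, where reconciling the three branches of \Cref{lem:str_bound} with the three conditions in the statement requires careful index gymnastics, though each individual identification is mechanical.
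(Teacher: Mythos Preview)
Your proposal is correct and follows essentially the same approach as the paper: the first claim is deduced from \Cref{prop:cell_el} (particularly item \eqref{it:str5}) together with \Cref{lem:str_bound}, and the second claim is obtained by observing that $T_m(D^p_n(q_1,\ldots,q_p))\cong T_m(D_p(q_1,\ldots,q_p))$ with only the markings $C_n^{p-i_N-1}(1)$ versus $C_p^{p-i_N-1}(1)$ differing, so that the Cartesian fibration reduces to the local statement for each $C_n^k(1)\to C_p^k(1)$. Your write-up simply makes explicit the bookkeeping that the paper leaves implicit.
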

	\begin{proof}
	The first claim follows immediately from \Cref{lem:str_bound} and \eqref{it:str5} of \Cref{prop:cell_el}, to prove the second claim note that by definition we have $T_m(D^p_n(q_1,...,q_p))\cong T_m(D_p(q_1,...,q_p))$, but in $D^p_n(q_1,...,q_p)$ the cell $l_{i_0}^{\sigma_0}...l_{i_n}^{\sigma_N}$ is marked with $C_n^{p-i_N-1}(1)$ while in $D_p(q_1,...,q_p)$ it is marked with $C_p^{p-i_N-1}(1)$, the claim now follows since the natural surjective morphism $C_n^{k}(1)\rightarrow C_p^k(1)$ for $k<p$ induces a Cartesian fibration $\stn^\el_{n,/C_n^{k}(1)}\rightarrow\stn^\el_{p,/C_p^{k}(1)}$ whose fiber over $c_i\xinert{}C_p^k(1)$ is $*$ if $i<p$ and $\Theta^\el_{n-p/c_{n-p}}$ if $i=p$.
	\end{proof}
	\begin{defnprop}\label{prop:D_func}
	The assignment $(q_1,...,q_p)\mapsto D^p_n(q_1,...,q_p)$ extends to a functor $\Delta^{\times n}\rightarrow\stn_n$, moreover $D_n(f_1,...,f_n)$ is active unless $f_1(0)>0$ and if $i^r$ is an inert morphism preserving the maximal element, then $D_n(i^r,\id,...,\id)$ is inert.
	\end{defnprop}
	\begin{proof}
	Our strategy for defining the morphism $D(f)$ for any morphism $f\bydef (f_1,...,f_p):(q_1,...,q_p)\rightarrow(d_1,...,d_p)$ would be to first define a functor
	\[D(f):\stn^\el_{/D^p_n(q_1,...,q_p)}\rightarrow\psh_\Omega(\stn^\el_{/D^p_n(d_1,...,d_p)})\]
	and then to prove it lies in the image of $\stn^\inrt_{n,//D^p_n(d_1,...,d_p)}\hookrightarrow\psh_\Omega(\stn^\el_{/D^p_n(d_1,...,d_p)})$. It follows from the second claim of \Cref{prop:pos} that we may assume $n=p$, which we will do from now on.\par
	Note that any morphism $f$ in $\Delta^{\times n}$ decomposes uniquely as $f\cong j\circ s$ for a componentwise injective $j$ and surjective $s$. We will start by describing $D_n(j)$ for some injective $j$ with components $j_i:[q_i]\rightarrow [p_i]$. 
	We will start by describing the image of an elementary $n$-cell $x\bydef ((\delta_1,...,\delta_n),(s_0,...,s_{t_v}))$: assume first that $\delta_n=*$, then there is some index $k$ such that $s_k=q_{k+1}$, we define its image under $D(j)_\Omega$ to contain all $n$-cells $((\delta'_1,...,\delta'_n),(s'_0,...,s'_{t_{v'}}))$ such that $\delta'_i = \delta_i$ for $i\leq k$, $s'_i = j(s_i)$ for $i<k$ and $s'_{k}\geq j(s_{k})$, as well as their $t$-boundaries for $t<n$. If $\delta_n=\pm$ (so in particular $s_i<q_{i+1}$ for all $i$) we define the image of $x$ to contain all $n$-cells $((\delta'_1,...,\delta'_n),(s'_0,...,s'_{t_{v'}}))$ and their $t$-boundaries for $t<n$ such that $\delta'_{t_i} = \delta_{t_i}$ and $j_{t_i}(s_{t_i})\leq s'_{t_i}<j_{t_i}(s_{t_i}+1)$ for $0\leq i\leq v$ and $s'_{i'}< j_{i'}(0)$ (if $j_{i'}(0)=0$, we take it to mean $\delta_{i'}=*$) for $i'\neq t_i$. We will now describe the image of an $(n-1)$-cell $x'\bydef(n-1,(\delta_1,...,\delta_n),(s_0,...,s_{t_{v}}),\sigma)$: assume first that $\delta_n = *$ and denote by $k$ the index for which $s_k = q_{k+1}$, in that case if $j(q_{k+1}) = p_{k+1}$ we define $j(x')$ to be $(n-1,(\delta'_1,...,\delta'_n),(s'_0,...,s'_{t_{v'}}),\sigma)$, where $\delta'_i = \delta_i$ and $s'_i = j(s_i)$ for all $i$; if $j(q_{k+1})<p_{k+1}$ then we define its image to be $(n-1,(\delta'_1,...,\delta'_n),(s'_0,...,s'_{t_{v'}}),\sigma)$ with $s'_i = j(s_i)$ for all $i$ and $\delta'_i = \delta_i$ for $i<n$ with $\delta'_n = \sigma$. Assume now that $\delta_n = \sigma$, in that case we define its image to contain $(n-1,(\delta'_1,...,\delta'_n),(s'_0,...,s'_{t_{v'}}),\sigma)$ and their $t$-boundaries for $t<(n-1)$ such that $\delta'_{t_i} = \delta_{t_i}$, $s'_{t_v} = j(s_{t_v})$, $j_{t_i}(s_{t_i})\leq s'_{t_i}< j_{t_i}(s_{t_i}+1)$ for $0\leq i<v$ and $s'_{i'}< j_{i'}(0)$ for $i'\neq t_i$. Finally, for $l<(n-1)$ an $l$-cell is defined by the data $(l, (\delta_1,...,\delta_{l+1}),(s_0,...,s_{t_v}),\sigma)$, this can be identified with an $l$-cell in $D_n(q_1,...,q_{l+1})$, so we can use the formulas above wo define its image in $D_{l+1}(p_1,...,p_{l+1})$, which has the same $l$-cells as $D_n(p_1,...,p_n)$.\par
	We first need to show that this indeed defines a morphism 
	\[D(j):\stn^\el_{n,/D_n(q_1,...,q_n)}\rightarrow\psh_\Omega(\stn^\el_{n,/D_n(p_1,...,p_n)}),\]
	in other words we need to show that if some $l$-cell $i_l$ for $l<k$ lies in the boundary of a $k$-cell $i_k$, then $D(j)(i_l)< D(j)(i_k)$. By construction it suffices to prove this for $l=(n-1)$ and $k=n$, assume first that we have an $n$-cell $x=((\delta_1,...,\delta_n),(s_0,..,s_{t_v}))$ as above such that $\delta_n = *$, its boundary consists of two $(n-1)$-cells $(n-1,(\delta_1,...,\delta_n),(s_0,..,s_{t_v}),\sigma)$, by construction $D(j)(x)$ is just $\tau_{\geq j(x)}D_n(p_1,...,p_n)$, where $j(x)$ denotes the object $((\delta_1,...,\delta_n),(j_0(s_0),...,j_{t_v}(s_{t_v})))$ viewed as a node in $T_n(D(p_1,...,p_n))$, then it follows from \Cref{lem:str_bound} that $D(j)(n-1,(\delta_1,...,\delta_n),(s_0,..,s_{t_v}),\sigma)$ is exactly the $\sigma$-boundary of $D(j)(x)$. Now assume that $\delta_n \neq * $ and $\sigma = \delta_n$, in that case note that the $(n-1)$-cell $(n-1,(\delta'_1,...,\delta'_n),(s'_0,...,s'_{t_{v'}}),\sigma)$  such that $\delta'_{t_i} = \delta_{t_i}$, $s'_{t_v} = j(s_{t_v})$, $j_{t_i}(s_{t_i})\leq s'_{t_i}< j_{t_i}(s_{t_i}+1)$ for $i< v$ and $s'_{i'}<j_{i'}(0)$ for $i'\neq t_i$ that lies in the image of $(n-1,(\delta_1,...,\delta_n),(s_0,..,s_{t_v}),\sigma)$ is isomorphic to $d^\sigma_{n-1} y'$ for the $n$-cell $y'\bydef (\delta'_1,...,\delta'_n),(s'_0,...,s'_{t_{v'}})$ which lies in the image of $x$ by construction. Finally, by \Cref{prop:pos} it remains to consider the cases of cells $(n-1,(\delta''_1,...,\delta''_n),(s''_0,..,s''_{t_{v''}}),\sigma)$ such that either $v''=v+1$, $\delta''_i = \delta_i$ for $i\neq t_{v''}$, $s''_{t_i} = s_{t_i}$ for $i\leq v$ and $s''_{t_{v''}}=0$ or $v''=v$, $\delta''_i = \delta_i$ for all $i$ and $s''_{t_i} = s_{t_i}$ for $i<v$ and $s''_{t_v} = s_{t_v}+1$ with $s''_{t_v} = q_{t_v}$ or $\delta_n = \sigma$. Consider the first case, then the image of $(n-1,(\delta''_1,...,\delta''_n),(s''_0,..,s''_{t_{v''}}),\sigma)$ would contain $(n-1)$-cells $(n-1,(\delta'_1,...,\delta'_n),(s'_0,...,s'_{t_{v'}}),\sigma)$  such that $\delta'_{t_i} = \delta_{t_i}$ for $i\leq v+1$, $j_{t_k}(s_{t_k})\leq s'_{t_k}< j_{t_k}(s_{t_k}+1)$ for $k\leq v$, $s'_{t_{v+1}} = j_{t_{v+1}}(0)$ and $s'_{i'}<j_{i'}(0)$ for $i'\neq t_{i}$, note that such an $(n-1)$-cell lies in the boundary of an $n$-cell $((\delta'_1,...,\delta'_n),(s'_0,...,s'_{t_{v'}}))$ which lies in the image of $x$. In the last case the image of $(n-1,(\delta''_1,...,\delta''_n),(s''_0,..,s''_{t_{v''}}),\sigma)$ contains $(n-1)$-cells $(n-1,(\delta'_1,...,\delta'_n),(s'_0,...,s'_{t_{v'}}),\sigma)$  such that $\delta'_{t_i} = \delta_{t_i}$ for $i\leq v$, $j_{t_i}(s_{t_i})\leq s'_{t_i}< j_{t_i}(s_{t_i}+1)$ for $i< v$, $s'_{t_{v}} = j_{t_{v}}(s_{t_v}+1)$ and $s'_{i'}< j_{i'}(0)$ for $i'\neq t_{i}$, note that such an $(n-1)$-cell lies in the boundary of an $n$-cell $((\delta'_1,...,\delta'_n),(s'_0,...,s'_{t_{v'}}))$ which lies in the image of $x$.\par
	We have thus shown that $j$ defines a morphism $D(j):\stn^\el_{/D_n(q_1,...,q_n)}\rightarrow\psh_\Omega(\stn^\el_{n,/D_n(p_1,...,p_n)})$, denote by $D(j)_\Omega:\psh_\Omega(\stn^\el_{/D_n(q_1,...,q_n)})\rightarrow\psh_\Omega(\stn^\el_{n,/D_n(p_1,...,p_n)})$ its unique pushout-preserving extensions, we will now show that $D(j)_\Omega$ is functorial injective morphisms, i.e. that for a composable pair $(q_1,...,q_n)\xrightarrow{j_0}(p_1,...,p_n)\xrightarrow{j_1}(d_1,...,d_n)$ we have $D(j_1\circ j_0)_\Omega\cong D(j_1)_\Omega\circ D(j_0)_\Omega$. More explicitly, we need to show that for any $l$-cell $x$ we have 
	\[D(j_1\circ j_0)(x)\cong \bigcup_{y\in D(j_0)(x)} D(j_1)(y),\]
	where the union is taken over all elementary cells in $D(j_0)(x)$. By construction it suffices to prove the claim for $l=n$ and $l=(n-1)$, assume first that $x$ is an $n$-cell $((\delta_1,...,\delta_n),(s_0,...,s_{t_v}))$, by construction it suffices to show that every $n$-cell in $D(j_1\circ j_0)(x)$ is in the image of some $n$-cell in $D(j_0)(x)$. Assume first that $\delta_n = *$ and fix $k$ such that $s_k = q_{k+1}$, then the image of $x$ contains the $n$-cells $z\bydef ((\delta'_1,...,\delta'_n),(s'_0,...,s'_{t_{v'}}))$ such that $\delta'_i = \delta_i$ for $i\leq k$, $s'_i = j_1\circ j_0(s_i)$ for $i<k$ and $s'_{k}\geq j_1\circ j_0(s_{k})$, given such a cell define $y$ to be the $n$-cell $((\delta_1,...,\delta_n),(s''_0,...,s''_{t(v)}))$ such that $s''_i = j_0(s_i)$ for $i\leq k$, then clearly $z$ lies in the image of $y$. Now assume that $\delta_n=\pm$, then by construction the image of $x$ under $j_1\circ j_0$ contains $z$ as above such that $\delta'_{t_i} = \delta_{t_i}$ and $(j_1\circ j_0)_i(s_{t_i})\leq s'_{t_i}< (j_1\circ j_0)_i(s_{t_I}+1)$ for $i\leq v$ and $s'_{i'}< (j_1\circ j_0)_{i'}(0)$ for $i'\neq t_i$. In that case denote by $t'_k$ for $0\leq k\leq v'$ the indices for which $s'_{t'_k}\geq j_1(0)$ (in particular they include all $t_i$), note that for each $k$ we also have $s'_{t'_k}\leq j_1(p_{t'_k})$, so in particular we can find $s''_{t'_k}$ such that $j_1(s''_{t'_k})_{t'_k}\leq s'_{t'_k}< j_1(s''_{t'_k}+1)_{t'_k}$, we can then define $y\bydef ((\delta''_1,...,\delta''_n),(s''_0,...,s''_{t'_{v'}}))$, where $s''_{t'_k}$ are as defined above and $\delta''_{t'_k}=\delta'_{t'_k}$ and $\delta''_{i'}=*$ for $i'\neq t'_k$, then $z$ lies in the image of $y$. In the case of $(n-1)$-cell $x'\bydef (n-1,x,\sigma)$, where $x=((\delta_1,...,\delta_n),(s_0,...s_{t_v}))$ and $z'\bydef (n-1,z,\sigma)$ in the image of $x'$ under $D(j_1\circ j_0)$, it is easy to see that $y'\bydef (n-1,y,\sigma)$, where $y$ is an $n$-cell constructed above such that $D(j_1)(y)$ contains $z$, defines an $(n-1)$-cell such that $z'$ belongs to $D(j_1(y')$.\par
	We now need to prove that $D(j)$ factors through $\stn^\inrt_{n,/D_n(p_1,...,p_n)}$, for which we will represent its image as an iterated pushout of representables. Note that we can decompose $j$ as $j_1\circ ...\circ j_n$, where each $j_k$ denotes the morphism $(\id,...,j_k,...,\id)$ with the unique non-trivial component in position $k$, by the functoriality proved in the previous paragraph it suffices to prove the claim for each $j_i$ individually. We can further decompose $j_k\cong i^l_k\circ i^t_k\circ a_k$, where $a_k$ is an active morphism, $i_k^r$ is the inert morphism preserving the maximal element and $i_k^l$ an inert morphism preserving the minimal element, again it suffices to prove the claim for each of them. We will start with $i^l_k$, in that case by definition the image of an elementary $l$-cell is either an elementary $l$-cell or $\tau_{\geq c}D_n(p_1,...,p_n)$ for some node $c$ of $T_m(D_n(p_1,...,p_q))$, in both cases those lie in  $\stn^\inrt_{n,/D_n(p_1,...,p_n)}$ -- in the first case this is immediate, in the second follows from \Cref{cor:tree_stn}. Also note that the induced morphism is active: it suffices to prove that every elementary $n$-cell lies in the image of $D(i^l)$, for that note that $((\delta_1,...,\delta_n),(s_0,...,s_{t_v}))$ lies in the image of $((\delta_1,...,\delta_n),(s_0,...,s_{t_v}))$ if $k\neq t_i$ for any $i$ and in the image of $((\delta_1,...,\delta_n),(s_0,...,\min(s_k,q_k),...,s_{t_v}))$ otherwise. Now consider the case of an active morphism: given $x=((\delta_1,...,\delta_n),(s_0,...,s_{t_v}))$ with $\delta_n=*$, its image is again representable, so we may assume $\delta_n\in\{-,+\}$. In that case its image contains $n$-cells $((\delta_1,...,\delta_n),(s'_0,...,s'_{t_v}))$ such that $s'_{t_i} = a(s_{t_i})$ unless $t_i=k$, in which case $a(s_{k})\leq s'_{k}<a(s_k+1)$, as well as all their boundaries. Note that this object is representable if $a(s_k+1) = a(s_k)+1$, in general we may argue by induction on $(a(s_k+1) - a(s_k))$: denote $y\bydef a(x)$ and $y'$ the subset of $y'$ containing $((\delta_1,...,\delta_n),(a(s_0),...,s'_k,...,a(s_{t_v})))$ with $a(s_{k})\leq s'_{k}<a(s_k+1)-1$, then $y'\in\stn^\inrt_{n,/D_n(p_1,...,=_n)}$ by induction and $y\cong x'\coprod_{d^\sigma_{n-1}x'}y'$, where $x'$ denotes the $n$-cell $((\delta_1,...,\delta_n),(a(s_0),...,a(s_k+1)-1,...,a(s_{t_v})))$, hence $y\in\stn^\inrt_{n,/D_n(p_1,...,=_n)}$ as well. Given an $(n-1)$-cell $(n-1,x,\delta_n)$, note that its image is representable by $(n-1,((\delta_1,...,\delta_1),(a(s_0),...,a(s_k),...,a(s_{t_v}))),\sigma)$. Note again that the image of $D(a)$ contains all the elementary $n$-cells: $((\delta_1,...,\delta_n),(s_0,...,s_{t_v}))$ lies in the image of $((\delta_1,...,\delta_n),(s_0,...,s_{t_v}))$ if $k\neq t_i$ for any $i$, if not then we can find some index $s'_k$ such that $a(s'_k)\leq s_k<a(s_k+1)$ (since $a$ is active), and then $((\delta_1,...,\delta_n),(s_0,...,s_{t_v}))$ lies in the image of $((\delta_1,...,\delta_n),(s_0,...,s'_k,...,s_{t_v}))$. Finally, consider the case of $i^r$, then the image of $x$ is again representable if $\delta_n=*$, so we may assume $\delta_n=\pm$. In that case the image $y$ of $x$ contains $n$-cells $((\delta_1,...,\delta_n),(s_0,...,s'_k,...,s_{t_v}))$ with $s'_k< i^r(0)$, once again we can prove the claim by induction on $i^r(0)$: if it is 0, then this object is representable, if not we can consider $y'$ containing $((\delta_1,...,\delta_n),(s_0,...,s''_k,...,s_{t_v}))$ with $s''_k < i^r(0)-1$, so that we have $y\cong x'\coprod_{d^\sigma_{n-1}x'}y'$, where $x'$ denotes the cell $((\delta_1,...,\delta_n),(s_0,...,i^r(0),...,s_{t_v}))$. Similarly, for the $(n-1)$-cell $(n-1,x,\delta_n)$ we see that its image under $D(i^r)$ is again representable. Note also that if $k>0$, then $D(i^r)$ is active: $((\delta_1,...,\delta_n),(s_0,...,s_{t_v}))$ lies in the image of $((\delta_1,...,\delta_n),(s_0,...,s_{t_v}))$ if either $k\neq t_i$ for any $i$ or $s_k\geq i^r(0)$, if not then it lies in the image of $((\delta'_1,...,\delta'_n),(s'_0,...,s'_{t_{v-1}}))$, where $\delta'_i = \delta_i$ for $i\neq k$, $\delta_k = *$ and $s'_{t_j} = s_{t_j}$ for all $t_j\neq k$. Finally, note that if $k=0$, then $D(i^r)$ is isomorphic to the inert inclusion $\tau_{\geq x}D_n(q_1,q_2,...,q_n)\xinert{}D_n(p_1,q_2,...,q_n)$, where $x=((*,...,*),(i^r(0),0,...,0))$.\par
	This concludes the proof in the special case of injective morphisms, assume now that $f=b=(b_1,...,b_n)$ with surjective $b_i$, once again it suffices to define the image of $n$ and $(n-1)$-cells. Given an $n$-cell $x=((\delta_1,...,\delta_n),(s_0,...,s_{t_v}))$ denote by $k$ the minimal index for which $(s_{t_k}+1)\leq q_{t_k}$ and $b_{t_k}(s_{t_k}+1) = b_{t_k}(s_{t_k})$ - if such an index does not exist we define the image of $x$ to be $((\delta_1,...,\delta_n),(b_0(s_0),...,b_{t_v}(s_{t_v})))$. Assuming it does, note that there must be some index $r>k$ such that $\delta_r\neq *$, define $t$ to be the minimal such index, we define the image of $x$ to be the $(t-1)$-cell $(t-1,((\delta_1,...,\delta_{t}),(b_0(s_0),...,b_k(s_k)+1)),\delta_t)$. Similarly, for the $(n-1)$-cell $(n-1,x,\sigma)$ we define its image to be $(n-1,b(x),\sigma)$ if there is no $k$ such that $b_{t_k}(s_{t_k}+1) = b_{t_k}(s_{t_k})$ and $b(x)$ otherwise. We need to prove that it is well-defined: it is clear from construction that for an $(n-1)$-cell $(n-1,x,\sigma)$ we have $b(n-1,x,\sigma)\leq b(x)$, so by \Cref{prop:pos} it suffices to consider two other cases: if $\delta_n=\sigma$ and $y=(n-1,x',\sigma)$ with $((\delta'_1,...,\delta'_n),(s'_0,..,s'_{t_{v'}}))$ such that $v'=v+1$, $\delta'_i = \delta_i$ for $i\neq t_{v'}$, $s'_{t_i} = s_{t_i}$ for $i\leq v$ and $s'_{t_{v'}}=0$, then by construction $b(y)=b(x)$ since $t_{v'}> t_v \geq t$. In the other case we have $v'=v$, $\delta_i = \delta'_i$ and $s'_{t_v} = s_{t_v}+1$, in that case we immediately have $b(y) = b(x)$ if $t_n>k$, in the case $k= t_v$ we must consider two separate cases: if $b_k(s_k+1) = b_k(s_k+2)$, then immediately by construction we see that $b(x) = b(x') = b(y)$, if on the other hand $b_k(s_k+2) = b_{k}(s_k)+1$ we see that $b(x') = ((\delta_1,...,\delta_n),(b_0(s_0),..., b_k(s_k)+1))$ and $b(x) = b(y) = (n-1,(\delta_1,...,\delta_n),(b_0(s_0),..., b_k(s_k)+1), \delta_n)$. Since the images of elementary cells are representable, it automatically follows that $b$ factors through $\stn^\inrt_{n,/D_n(p_1,...,p_n)}$ and it is easy to see that $D(b\circ b') = D(b')\circ D(b)$. Finally, it is easy to see by construction that $D(b)$ is active.\par
	It remains to complete the proof of functoriality of $D(f)$, to do that it remains to prove that for any factorization $s\circ j= j'\circ s'$ we have $D(s\circ j) = D(j'\circ s')$, however using the construction of $D(s)$ and $D(j)$ we see that this claim follows from the commutativity of the following square for all $l<n$ and any injective $j$:
	\[\begin{tikzcd}[sep=huge]
		{\stn^\inrt_{n,/D_n(q_1,...,q_n)}} & {\stn^\inrt_{n,/D_n(p_1,...,p_n)}} \\
		{\stn^\inrt_{n,/D_n(q_1,...,q_l)}} & {\stn^\inrt_{n,/D_n(p_1,...,p_l)}}
		\arrow["{D(j)_\Omega}", from=1-1, to=1-2]
		\arrow["{\tau_{\leq l}}"', from=1-1, to=2-1]
		\arrow["{\tau_{\leq l}}", from=1-2, to=2-2]
		\arrow["{D(j)_\Omega}"', from=2-1, to=2-2]
	\end{tikzcd}.\]
	\end{proof}
	\begin{construction}\label{constr:twar_final}
	Denote by $F_{s_1}(s_2,...,s_n);\Delta^\inrt_{/[s_1]}\rightarrow \stn_n$ the functor sending $i:[l]\xinert{}[s_1]$ to $D_n(l,s_2,...,s_n)$ and an inclusion $[l]\xinert{i}[m]$ to the induced functor $D(i): D_n(l,...,s_n)\rightarrow D_n(l,...,s_n)$, note that it follows from \Cref{prop:D_func} that $D(i)$ is inert if $i$ preserves the minimal element. Given a morphism $f:[s_1]\rightarrow [s'_1]$ we have an induced functor $f_!:\Delta^\inrt_{/[s_1]}\rightarrow \Delta^\inrt_{/[s'_1]}$ together with a natural transformation $\alpha_f:F_{s_1}(s_1,...,s_n)\rightarrow f^* F_{s'_1}(s'_1,...,s_n)$ whose component at $F_{s_1}(s_1,...,s_n)([l]\xinert{i}[m])\cong D_n(l,...,s_n)$ is $D(a):D(l,...,s_n)\rightarrow D(l',...,s_n)$, where $a$ appears in the factorization square
	\[\begin{tikzcd}[sep=huge]
		{[l]} & {[l']} \\
		{[s_1]} & {[s'_1]}
		\arrow["a", two heads, from=1-1, to=1-2]
		\arrow["i"', tail, from=1-1, to=2-1]
		\arrow["{i'}", tail, from=1-2, to=2-2]
		\arrow["f"', from=2-1, to=2-2].
	\end{tikzcd}\]
	We can view $F_{s_1}(s_2,...,s_n)$ as a value at $(s_2,...,s_n)$ of a functor $F_{s_1}:\Delta^{\times(n-1)}\rightarrow \mor_\cat(\Delta^\inrt_{/[s_1]},\stn_n)$, where a morphism $g:(s_2,...,s_n)\rightarrow(s'_2,...,s'_n)$ gets sent to the natural transformation $\beta_g$ with components $D_n(l,s_2,...,s_n)\xrightarrow{D_n(\id,g_2,...,g_n)}D_n(l,s'_2,...,s'_n)$ - that this defines a natural transformation follows easily from the functoriality of \Cref{prop:D_func}. Similarly. it is easy to see that for any $f$ and $g$ as above and $i:[l]\xinert{}[s_1]$ we have a commutative diagram
	\[\begin{tikzcd}[sep=huge]
		{D_n(l,s_2,...,s_n)} & {D_n(l',s_2,...,s_n)} \\
		{D_n(l,s'_2,...,s'_n)} & {D_n(l',s'_2,...,s'_n)}
		\arrow["{\alpha_f}", from=1-1, to=1-2]
		\arrow["{\beta_g}"', from=1-1, to=2-1]
		\arrow["{\beta_g}", from=1-2, to=2-2]
		\arrow["{\alpha_f}"', from=2-1, to=2-2]
	\end{tikzcd}\]
	Given $\cE\in\cat_n$ and $[q]\in\Delta$ denote by $*_\cE$ the constant functor $\Delta^\inrt_{/[q]}\rightarrow\cat_n$ with value $\cE$, then we define $\twar_D(\cE)([q])\in\mor_\cat(\Delta^{\times(n-1),\op}, \cS)$ to be the functor sending $(s_2,...,s_n)$ to $\mor_{\mor_\cat(\Delta^\inrt_{/[q]}.\cat_n)}(F_q(s_2,...,s_n), *_\cE)$, where $\Delta^{\times(n-1),\op}$ acts by precomposition. By functoriality in the first variable described above we can view $\twar_D(\cE)$ as a functor
	\[\twar_D(\cE):\Delta^\op\rightarrow \mor_\cat(\Delta^{\times(n-1)}, \cS),\]
	we define $\twar(\cE):\Delta^\op\rightarrow\cS$ as a composition
	\[\Delta^\op\xrightarrow{\twar_D(\cE)} \mor_\cat(\Delta^{\times(n-1)}, \cS)\xrightarrow{\underset{\Delta^{\times(n-1),\op}}{\colim}}\cS.\]
	\end{construction}
	\begin{prop}\label{prop:twar_seg}
	$\twar(\cE)$ satisfies the Segal condition.
	\end{prop}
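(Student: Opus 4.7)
The plan is to reduce the Segal condition for $\twar(\cE)$ to the analogous condition for $\twar_D(\cE)(-)(s_2,\ldots,s_n)$ at each fixed $(s_2,\ldots,s_n)\in\Delta^{\times(n-1),\op}$, and then to commute the outer colimit over $\Delta^{\times(n-1),\op}$ with the resulting Segal fiber product.

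First I would establish that for any $l\geq 0$ and fixed $(s_2,\ldots,s_n)$ there is an iterated pushout presentation
\[
D_n(l,s_2,\ldots,s_n) \;\cong\; \underbrace{D_n(1,s_2,\ldots,s_n)\coprod_{c_n}\cdots\coprod_{c_n}D_n(1,s_2,\ldots,s_n)}_{l\text{ copies}}
\]
taken in $\ctree_n^h$. This is straightforward from the combinatorial description of $T_m(D_n(l,s_2,\ldots,s_n))$ in \Cref{constr:D_obj}: strings split naturally into $l$ blocks according to the extremal $l^{\sigma}_0$ terms appearing, each block encoding a copy of $T_m(D_n(1,s_2,\ldots,s_n))$, and the blocks are joined at the root $c_n\cong D_n(0,s_2,\ldots,s_n)$. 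All the gluing maps are healthy since the objects involved are healthy by \Cref{prop:D_tree}, so by \Cref{prop:health_push} this is a pushout in $\ctree_n^h$ which remains a pushout in $\cat_n$.

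Second, I would conclude from the previous step that the functor $F_q(s_2,\ldots,s_n):\Delta^\inrt_{/[q]}\to\cat_n$ is the left Kan extension along $j:\Delta^\el_{/[q]}\hookrightarrow\Delta^\inrt_{/[q]}$ of its restriction, since the Segal decomposition $[l]\cong [1]\coprod_{[0]}\cdots\coprod_{[0]}[1]$ of each $[l]\xinert{}[q]$ in $\Delta^\el_{/[l]}$ exactly matches the pushout decomposition of $D_n(l,s)$ just established. By adjunction this gives
\[
\twar_D(\cE)([q])(s) \;\cong\; \mor_{\mor_\cat(\Delta^\el_{/[q]},\cat_n)}\bigl(j^*F_q(s),\, j^*\, *_\cE\bigr),
\]
and proceeding as in the proof of \Cref{prop:twar'_seg} using the Segal cover of $\Delta^\el_{/[q]}$ by $\Delta^\el_{/[e]}$ for $e\in\{0,1\}$ this yields
\[
\twar_D(\cE)([q])(s) \;\cong\; \twar_D(\cE)([1])(s)\times_{\twar_D(\cE)([0])(s)}\cdots\times_{\twar_D(\cE)([0])(s)}\twar_D(\cE)([1])(s),
\]
so that $\twar_D(\cE)(-)(s)$ satisfies the Segal condition in the simplicial direction for each fixed $s$.

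Finally, I would commute the colimit over $\Delta^{\times(n-1),\op}$ with this Segal fiber product. The crucial point is that $D_n(0,s_2,\ldots,s_n)\cong c_n$ independently of $(s_2,\ldots,s_n)$, so the base of the fiber product, $\twar_D(\cE)([0])(s)\cong \mor_{\cat_n}(c_n,\cE)$, is a constant functor in $s$; the (contractible-indexed) colimit therefore gives $\twar(\cE)([0])\cong \mor_{\cat_n}(c_n,\cE)$. Since $\Delta^{\times(n-1),\op}$ is sifted as a finite product of sifted categories, and sifted colimits in $\cS$ commute with finite products and hence with fiber products over a constant base, pulling the colimit through the Segal fiber product yields the desired Segal condition for $\twar(\cE)$. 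The main obstacle is the first step: identifying the pushout decomposition of $D_n(l,s_2,\ldots,s_n)$ from the combinatorial bookkeeping of \Cref{constr:D_obj} and verifying that the gluing maps are healthy so that \Cref{prop:health_push} applies. Once this is in place the remainder of the argument is essentially formal, using the left Kan extension description of $F_q$ and the interchange of sifted colimits with fiber products over a constant base in $\cS$.
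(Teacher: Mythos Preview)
Your proposal is correct and follows essentially the same strategy as the paper's proof: establish the iterated pushout decomposition of $D_n(l,s_2,\ldots,s_n)$, deduce that $F_q(s_2,\ldots,s_n)$ is left Kan extended from $\Delta^\el_{/[q]}$, obtain the Segal condition pointwise in $(s_2,\ldots,s_n)$, and then pass the colimit through the fiber product using that the base is constant. The only minor differences are in the auxiliary results invoked: the paper obtains the pushout \eqref{eq:D_push} directly from \Cref{lem:push_tree} rather than via \Cref{prop:health_push}, and for the final commutation step it simply refers back to the argument of \Cref{prop:twar'_seg} (which in turn cites \cite[Lemma 5.17]{ayala2017fibrations}) rather than spelling out the siftedness argument as you do.
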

	\begin{proof}
	It follows from \Cref{lem:push_tree} and the construction of $D_n(q_1,...,q_n)$ that we have pushout diagrams 
	\begin{equation}\label{eq:D_push}
		\begin{tikzcd}[sep=huge]
			{c_n\cong D_n(0,q_1,...,q_n)} & {D_{n}(1,q_2,...,q_n)} \\
			{D_{n}(q_1-1,q_2,...,q_n)} & {D_n(q_1,...,q_n)}
			\arrow["{D_n(\{0\},\id,...,\id)}", two heads, from=1-1, to=1-2]
			\arrow["{D_n(\{q_1-1\},\id,...,\id)}"', tail, from=1-1, to=2-1]
			\arrow[tail, from=1-2, to=2-2]
			\arrow[two heads, from=2-1, to=2-2]
			\arrow["\ulcorner"{anchor=center, pos=0.125, rotate=180}, draw=none, from=2-2, to=1-1]
		\end{tikzcd}.    
	\end{equation}
	By an iterated application of \eqref{eq:D_push} we see that the functor $F_{q}(s_2,...,s_n):\Delta^\inrt_{/[q]}\rightarrow\cat_n$ described in \Cref{constr:twar_final} is the left Kan extension of its restriction to $\Delta^\el_{/[q]}$, since that also holds for $*_\cE$ we get
	\begin{align*}
		\twar_D(\cE)([q])(s_2,...,s_n)&\cong \mor_{\mor_\cat(\Delta^\inrt_{/[q]},\cat)}(i^\el_!i^{\el,*}F_q(s_2,...,s_n),i^\el_!i^{\el,*}*_\cE)\\
		&\cong \mor_{\mor_\cat(\Delta^\el_{/[q]},\cat)}(i^{\el,*}F_q(s_2,...,s_n),i^{\el,*}*_\cE)\\
		&\cong \underset{([e]\xinert{i}[q])\in\Delta^\el_{/[q]}}{\lim}\mor_{\mor_\cat(\Delta^\el_{/[q]},\cat)}(i^*i^{\el,*}F_q(s_2,...,s_n),i^*i^{\el,*}*_\cE)\\
		&\cong \underset{([e]\xinert{i}[q])\in\Delta^\el_{/[q]}}{\lim}\twar_D(\cE)([e])(s_2,...,s_n),
	\end{align*}
	where the first isomorphism follows by definition and our preceding observations, the second since $i^\el$ is fully faithful, the third since 
	\[\Delta^\el_{/[q]}\cong \underset{([e]\xinert{i}[q])\in\Delta^\el_{/[q]}}{\colim}\Delta^\el_{/[e]}\]
	and the last again by definition. Finally, note that $\twar_D(\cE)([0]):\Delta^{\times(n-1)}\rightarrow\cS$ is a constant functor, so we can conclude by the same argument as in \Cref{prop:twar'_seg}.
	\end{proof}
	\begin{prop}
	For any pair of morphisms $f:c_n\rightarrow\cE$ and $g:c_n\rightarrow\cE$ the space $\mor_{\twar(\cE)}(f,g)$ is the geometric realization of an $(n-1)$-tuple Segal space.
	\end{prop}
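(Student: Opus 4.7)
The plan is to produce an $(n-1)$-fold simplicial space $M_{f,g}$ whose geometric realization is $\mor_{\twar(\cE)}(f,g)$. Define
\[
M_{f,g}(s_2,\ldots,s_n)\bydef \mor_{\cat_n}(D_n(1,s_2,\ldots,s_n),\cE)\times_{\mor_{\cat_n}(c_n,\cE)^{\times 2}}\{(f,g)\},
\]
where the map to the product of endpoints is induced by the two inert inclusions $D_n(\{0\},\id,\ldots,\id)$ and $D_n(\{1\},\id,\ldots,\id)$ from $c_n\cong D_n(0,s_2,\ldots,s_n)$. Functoriality in $(s_2,\ldots,s_n)$ comes from \Cref{prop:D_func}, so $M_{f,g}$ defines an object of $\mor_\cat(\Delta^{\times(n-1),\op},\cS)$.

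First I would check that $|M_{f,g}|\cong\mor_{\twar(\cE)}(f,g)$. By construction $\twar(\cE)([1])$ is the colimit of $\twar_D(\cE)([1])(s_2,\ldots,s_n)$ over $\Delta^{\times(n-1),\op}$; similarly $\twar(\cE)([0])$ is the colimit of the functor $\twar_D(\cE)([0])$, but the latter is the constant functor with value $\mor_{\cat_n}(c_n,\cE)$ since $D_n(0,s_2,\ldots,s_n)\cong c_n$. Because colimits of spaces commute with pullbacks along maps to constant functors, the colimit of the pointwise fibers computes the fiber of the colimit. This gives the desired comparison.

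Next I would show $M_{f,g}$ satisfies the Segal condition in each variable. Fix $2\leq k\leq n$, fix all other $s_j$, and let $s_k=a+b$ with $a,b\geq 0$. The heart of the argument is a pushout decomposition
\begin{equation*}
	\begin{tikzcd}[sep=large]
		D_n(1,\ldots,0,\ldots,s_n) & D_n(1,\ldots,b,\ldots,s_n)\\
		D_n(1,\ldots,a,\ldots,s_n) & D_n(1,\ldots,a+b,\ldots,s_n)
		\arrow[tail, from=1-1, to=1-2]
		\arrow[tail, from=1-1, to=2-1]
		\arrow[tail, from=1-2, to=2-2]
		\arrow[tail, from=2-1, to=2-2]
		\arrow["\ulcorner"{anchor=center, pos=0.125, rotate=180}, draw=none, from=2-2, to=1-1]
	\end{tikzcd}
\end{equation*}
in $\cat_n$, analogous to the first-variable pushout \eqref{eq:D_push}. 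Granting this, applying $\mor_{\cat_n}(-,\cE)$ and passing to fibers over $(f,g)$ turns it into the required pullback of spaces; iteration in each variable yields the full Segal condition. The Segal condition at $s_k=0$ (i.e.\ that $M_{f,g}(\ldots,0,\ldots)$ is a suitable limit of lower-dimensional pieces) is similarly reduced to pushout decompositions of $D_n$ where the $k$-th coordinate collapses to $c_n$ in an appropriate factor; an inspection of the tree description in \Cref{constr:D_obj} shows these reductions correspond to contracting the relevant marked edges.

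The main obstacle is establishing these pushout decompositions for $D_n$ in the non-first simplicial direction. In dimension $k=1$ they are immediate from \eqref{eq:D_push}, but for $k>1$ one has to go through the explicit tree presentation: the relevant gluing happens along a subtree marked with $C_n^{n-k}(1)$-nodes, and one must verify both that this gluing takes place in $\ctree_n^h$ and that the pushout is preserved by $\ctree_n^h\hookrightarrow\cat_n$. Both facts follow from \Cref{prop:health_push} (which provides pushouts of healthy active morphisms along inert ones in $\ctree_n$ that are preserved in $\cat_n$) combined with \Cref{lem:push_tree}; the combinatorial bookkeeping of which strings in \Cref{constr:D_obj} belong to each factor is what makes this step technical. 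Once these decompositions are in hand, all Segal conditions for $M_{f,g}$ follow formally, completing the proof.
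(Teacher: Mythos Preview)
Your approach is essentially the paper's: both identify $\mor_{\twar(\cE)}(f,g)$ with the realization of the $(n{-}1)$-fold simplicial space $M_{f,g}$ and reduce the Segal condition in each variable to a pushout decomposition of $D_n(1,\ldots)$ in $\cat_n$. The only real difference is the form of the pushout. The paper does not glue $D_n(1,\ldots,a,\ldots)$ and $D_n(1,\ldots,b,\ldots)$ along $D_n(1,\ldots,0,\ldots)$; instead it shows
\[
D_n(1,\ldots,s_t,\ldots,s_n)\;\cong\; D_n(1,\ldots,s_t{-}1,\ldots,s_n)\coprod_{c_n} D_n(1,\ldots,1,\ldots,s_n),
\]
which is exactly an instance of \Cref{lem:push_tree} (attaching one more subtree at a single marked $n$-cell), and then iterates. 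This buys a shorter argument: no bookkeeping of strings and no appeal to \Cref{prop:health_push} is needed.

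One correction worth noting: in your square the two maps out of $D_n(1,\ldots,0,\ldots,s_n)$ are \emph{active}, not inert. By \Cref{prop:D_func}, $D_n(f_1,\ldots,f_n)$ is active whenever $f_1(0)=0$, and here $f_1=\id_{[1]}$. Consequently your span is active--active, so \Cref{prop:health_push} (which requires an inert/active span) does not apply to it as written. This is precisely why the paper routes through $c_n$: the inclusion $c_n\xinert{} D_n(1,\ldots,s_t{-}1,\ldots)$ is inert while $c_n\xactive{} D_n(1,\ldots,1,\ldots)$ is active, so \Cref{lem:push_tree} applies directly. Your version can be recovered from the paper's by iteration, but as stated the justification you give does not cover it.
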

	\begin{proof}
	By construction $\mor_{\twar(\cE)}(f,g)$ is the colimit of a functor $\Delta^{\times(n-1)}\rightarrow\cS$ sending $(s_2,...,s_n)$ to the space of cospans 
	\[\begin{tikzcd}[sep=huge]
		{c_n\cong D_n(0,...,0)} & {D_n(1,s_2,...,s_n)} & {c_n\cong D_n(0,...,0)} \\
		& \cE
		\arrow["{D_n(\{0\},...,\{0\})}", tail, from=1-1, to=1-2]
		\arrow["f"', from=1-1, to=2-2]
		\arrow["F"{description}, from=1-2, to=2-2]
		\arrow["{D_n(\{1\},\{s_2\}...,\{s_n\})}"', two heads, from=1-3, to=1-2]
		\arrow["g", from=1-3, to=2-2]
	\end{tikzcd}.\]
	We need to prove that it satisfies the Segal condition separately in each variable, for that it suffices to prove that
	\[D_n(1,i_2,...,i_{k-1},s_k,...,s_n)\cong \underset{([e]\xinert{i}[s_k])\in\Delta^\el_{/[s_k]}}{\colim}D_n(1,i_2,...,i_{k-1},e,...,s_n)\]
	for every $k$ and all $i_j\in\{0,1\}$. This clearly follows by an iterated application of the following observation: for every $2\leq t\leq n$ we have a pushout diagram 
	\[\begin{tikzcd}[sep=huge]
		{c_n} & {D_n(1,s_2,...,1,...,s_n)} \\
		{D_n(1,s_2,...,s_t-1,...,s_n)} & {D_n(1,s_2,...,s_t,...,s_n)}
		\arrow[two heads, from=1-1, to=1-2]
		\arrow[tail, from=1-1, to=2-1]
		\arrow[from=1-2, to=2-2]
		\arrow[from=2-1, to=2-2]
		\arrow["\ulcorner"{anchor=center, pos=0.125, rotate=225}, draw=none, from=2-2, to=1-1]
	\end{tikzcd}\]
	in $\cat_n$, which in turn follows from the definition of $D_n(-)$ and \Cref{lem:push_tree}.
	\end{proof}
	\begin{lemma}\label{lem:inrt_cont}
	Assume we are given a $t$-active morphism $f:c_{t+1}\rightarrow x$ for some $x\in\stn_n$, denote by $C$ the subcategory of $\stn^\inrt_{n,/x}$ containing $y\xinert{i}x$ for which $d^-_{t+1}y = \ima(f)$ and $d^+_{t+1}y  = d^+_{t+1}x$, then $C$ is contractible.
	\end{lemma}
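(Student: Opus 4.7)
The strategy is to exhibit $C$ as a non-empty finite poset closed under pairwise joins, from which contractibility follows immediately because such a poset has a maximum, which serves as a terminal object.

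First, I would verify that $C$ is non-empty by producing an explicit $y_0$. A natural candidate is the subcomplex of $x$ generated by $\ima(f)$, $d^+_{t+1}x$, and all basis elements of $x$ of dimension $\geq t+1$ that are not strictly $<_{t+1}$ any $(t+1)$-cell of $\ima(f)$. The key verification is that $d^-_{t+1}y_0 = \ima(f)$: cells of $\ima(f)$ remain minimal by construction, and the $t$-activeness of $f$ (which ensures $d^\pm_t \ima(f) = d^\pm_t x$) prevents spurious smaller $(t+1)$-cells from appearing in $y_0$. Similarly $d^+_{t+1} y_0 = d^+_{t+1} x$ follows because $d^+_{t+1}x$ is included by design and its cells are automatically maximal.

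Next, I would show that $C$ is stable under pairwise unions inside $\stn^\inrt_{n,/x}$. Given $y_1, y_2 \in C$, form the subcomplex $y_1 \cup y_2 \subseteq x$. For the negative boundary: any $\leq_{t+1}$-minimal $(t+1)$-cell $b$ of $y_1 \cup y_2$ must already be minimal in whichever $y_i$ contains it (any strictly smaller cell in $y_i$ would persist in the union), hence $b$ lies in $\ima(f)$; conversely, every cell of $\ima(f)$ remains minimal in $y_1 \cup y_2$ since no $(t+1)$-cell of either $y_i$ lies strictly $<_{t+1}$ any element of $\ima(f)$ by hypothesis. Thus $d^-_{t+1}(y_1 \cup y_2) = \ima(f)$, and symmetrically for the positive side.

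Finally, $\stn^\inrt_{n,/x}$ is a finite poset since its objects correspond to subsets of the finite basis of $x$, so $C$ is a finite subposet. A non-empty finite poset closed under binary joins has a maximum element, which gives a terminal object in $C$ and thus contractibility.

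\textbf{The main obstacle} will be the first step, specifically verifying that the proposed $y_0$ genuinely lies in $\stn_n$, i.e.\ admits an active morphism from $c_n$. For this one would exploit the $t$-activeness of $f$ together with an active morphism $c_n \xactive{} x$ guaranteed by $x \in \stn_n$ (via an iterated application of \Cref{lem:jact} extended using \Cref{cor:D_ext}), modifying the latter in dimensions $\leq t+1$ to land in $\ima(f) \subseteq y_0$ while retaining the higher-dimensional structure. The same concern arises in the second step for $y_1 \cup y_2 \in \stn_n$, and is handled analogously, since the lower-dimensional structure of both $y_1$ and $y_2$ agrees via the coincidence of their $(t+1)$-boundaries.
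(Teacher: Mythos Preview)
Your approach is correct and reaches the same conclusion as the paper, but by a longer route. The paper's proof is a single sentence: it observes that the construction from the proof of \Cref{lem:jact} already produces a morphism $F:c_n\to x$ whose image consists of all elementary cells $b$ with $b\geq_{t+1} b'$ for some $b'\in\ima(f)$, and that $\ima(F)$ is directly a final object of $C$. Your candidate $y_0$ is essentially this same object (the ``upper set'' of $\ima(f)$ under $\leq_{t+1}$), and in fact it is already the maximum of $C$, so your join-closure argument is redundant once $y_0$ is constructed. The advantage of the paper's approach is that the hard verification you flag as the ``main obstacle''---that $y_0\in\stn_n$---is exactly what \Cref{lem:jact} establishes, so no new work is needed; your suggestion to use \Cref{lem:jact} and \Cref{cor:D_ext} for this step is on target, but once you invoke them you have the final object in hand and can stop. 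Your detour through unions would require separately verifying $y_1\cup y_2\in\stn_n$, which is not immediate and is avoided entirely by the direct argument.
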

	\begin{proof}
	We have seen in the proof of \Cref{lem:jact} that there is a morphism $F:c_n\rightarrow x$ whose image contains all elementary cells $b$ for which there exists a basis element $b'$ in the image of $f$ such that $b\geq_{t+1} b'$, it is easy to see that $\ima(F)\in C$ and that it is a final object of this category.  
	\end{proof}
	\begin{prop}\label{prop:twar_stein}
	For $x\in\stn_n$ we have
	\[\twar(x)\cong \stn^\inrt_{n,/x}.\]
	\end{prop}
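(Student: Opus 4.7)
The plan is to mimic the proof of \Cref{prop:stn_inrt}, replacing $\twar_\theta$ by $\twar$ and the objects $C_n(k)$ by the richer tree objects $D_n(q_1,\ldots,q_n)$ of \Cref{constr:D_obj}. Concretely I will construct mutually inverse functors $G\colon \twar(x) \to \stn^\inrt_{n,/x}$ and $F\colon \stn^\inrt_{n,/x} \to \twar(x)$ and check $G \circ F \cong \id$ directly, reducing $F \circ G \cong \id$ to the case $x \in \Theta_n$.

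The functor $G$ is the easy direction. Given a representative in $\twar_D(x)([q])(s_2,\ldots,s_n)$, i.e. a natural transformation $F_q(s_2,\ldots,s_n) \to *_x$ in $\mor_\cat(\Delta^\inrt_{/[q]},\cat_n)$, the values on the vertex objects $[0] \xinert{\{i\}} [q]$ produce morphisms $f_i\colon c_n \to x$ and, composing with the inert edges of $\Delta^\inrt_{/[q]}$, a string $\ima(f_0) \xinert{} \cdots \xinert{} \ima(f_q)$ in $\stn^\inrt_{n,/x}$. Since this reduction only remembers images of the vertex cells, it is independent of $(s_2,\ldots,s_n)$ and descends to the colimit defining $\twar(x)$.

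For $F$, given an inert morphism $y \xinert{i} x$ use \Cref{cor:tree_stn} to pick an active cover $c_n \xactive{} y$; its composition with $i$ represents an object of $\twar_D(x)([0])(0,\ldots,0)$. For an edge $y_0 \xinert{} y_1 \xinert{} x$ apply \Cref{cor:D_ext} to the active morphism $c_n \xactive{} y_0 \xinert{} y_1$ to produce $D_n(1,0,\ldots,0) \xactive{} y_1$ extending it, then postcompose with $y_1 \xinert{} x$; this represents a $1$-simplex of $\twar(x)$. For higher simplices one iterates \Cref{lem:jact} and \Cref{cor:D_ext} dimension by dimension, exactly as in the construction of the functor $F$ in the proof of \Cref{prop:stn_inrt}, with the larger parameters $(s_2,\ldots,s_n)$ arising when intermediate images cannot be realized by $D_n(q,0,\ldots,0)$ alone.

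The main obstacle is verifying that $F$ is well-defined on higher simplices and that $F \circ G \cong \id$. By \Cref{lem:twar_col1} (applied to $\twar$ in place of $\twar_\theta$, which can be verified from \Cref{constr:twar_final} exactly as \Cref{lem:twar_col1}) together with \Cref{cor:stein_inrt}, both sides of the desired isomorphism are colimits over $\Theta_{n,/x}$, reducing the verification to the case $x = \theta \in \Theta_n$. In that case the combinatorial description of the elementary cells of $D_n(q_1,\ldots,q_n)$ established in \Cref{prop:cell_el} and \Cref{prop:pos}, combined with the Cartesian fibration structure relating $\stn^\el_{n,/D_n^p}$ to $\stn^\el_{n,/D_p}$, furnishes analogues of \Cref{lem:sq_ext} and \Cref{lem:sqs_cat} for $D_n$-shaped diagrams: every $q$-simplex of $\twar(\theta)$ can be represented, after passing through the colimit over $\Delta^{\times(n-1)}$, by a spine of cospans of the form $c_n \xinert{} D_n(1,s_2,\ldots,s_n) \twoheadleftarrow c_n$. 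The comparison then matches these cospans bijectively with morphisms in $\Theta^\inrt_{n,/\theta}$, exactly as in \Cref{lem:theta_twar}; the role of the extra colimit over $\Delta^{\times(n-1)}$ is precisely to absorb the additional degrees of freedom of $D_n$ relative to $C_n$, so that after taking this colimit one recovers the same space of inert factorizations $c_n \xinert{} \theta_i \xinert{} \theta$ computed by \Cref{prop:stn_inrt}.
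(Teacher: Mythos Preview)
Your proposed reduction to $x=\theta\in\Theta_n$ is circular. The colimit formula $\twar(x)\cong\underset{\theta\to x}{\colim}\,\twar(\theta)$ over $\Theta_{n,/x}$ is precisely \Cref{cor:twar_col} in the paper, and that corollary is deduced from \Cref{cor:twar_iso}, whose proof \emph{uses} \Cref{prop:twar_stein}. The direct argument you allude to (``exactly as \Cref{lem:twar_col1}'') does not give what you need: in \Cref{lem:twar_theta1} the factorization argument works because the terminal value $F(\id_{[q]})$ of the diagram lies in $\Theta_n$, while in \Cref{constr:twar_final} the terminal value is $D_n(q,s_2,\ldots,s_n)\in\stn_n\setminus\Theta_n$. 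So the only colimit description you can extract directly is one indexed over $\stn_{n,/x}$, which is useless for reducing to $\Theta_n$. Even granting the reduction, your treatment of the case $x=\theta$ is a sketch: you assert that analogues of \Cref{lem:sq_ext} and \Cref{lem:sqs_cat} exist for $D_n$-shaped diagrams, but those lemmas are specific to the one-parameter family $C_n(k)$, and extending them to the $n$-parameter family $D_n(q_1,\ldots,q_n)$ with its additional $\Delta^{\times(n-1)}$ colimit is exactly the hard part of the proposition.

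The paper's argument is genuinely different and does not attempt a reduction to $\Theta_n$. It works directly with the morphism spaces $\mor_{\twar(x)}(f,g)$ for arbitrary $x\in\stn_n$, introducing auxiliary Segal spaces $\twar^k(x)$ built from suspended objects $\Sigma^k_\theta D_{n-k}(-)$, and proves by downward induction on $k$ that for $(k-1)$-parallel $f,g$ the space $\mor_{\twar^k(x)}(f,g)$ is empty or contractible. The base case $k=n-1$ identifies $\twar^{n-1}(x)$ with $\twar^{n-1}_\theta(x)$, already handled by \Cref{prop:stn_inrt}. The inductive step decomposes $\Sigma^k_\theta D_{n-k}(1,s_2,\ldots,s_{n-k})$ via \Cref{cor:rel_seg} into pieces $Y_{p-1}$ and $A^\pm_{n-k-p}$, then uses siftedness of $\Delta^{\op}$ to separate the colimit variables and \Cref{lem:inrt_cont} to contract the side terms. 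This is the substantive content you are missing: handling the extra $\Delta^{\times(n-1)}$ colimit requires this layered induction, not a formal colimit comparison.
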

	\begin{proof}
	First, note that for any pair of morphisms $f:c_n\rightarrow x$ and $g:c_n\rightarrow x$ such that $\ima(f)\leq\ima(g)$ in $\stn^\inrt_{n/x}$ the space $\mor_{\twar(x)}(f,g)$ is non-empty -- this follows from \Cref{cor:D_ext}. It remains to prove that it is contractible.\par
	We will first need to introduce some notation. Call two morphisms $f:c_n\rightarrow x$ and $g:c_n\rightarrow x$ $k$-parallel for $0\leq k<n$ if they have the same $k$-boundary, we will also call any two morphisms (-1)-parallel by convention. Next, define $\Sigma^k_\theta D^p_{l}(s_1,...,s_{p})$ for $(s_1,...,s_p)\in\Delta^{\times p}$ and $p\leq l$ by induction on $k$ as follows: for $k=0$ we set $\Sigma^0_\theta D^p_l(s_1,...,s_p)\bydef D^p_l(s_1,...,s_p)$, for $k\geq 1$ we define $\Sigma^k_\theta D^p_{l}(s_1,...,s_{p})$ to be the $(k+l)$-category with two objects $\{0,1\}$ such that
	\[\mor_{\Sigma^k_\theta D^p_l(s_1,...,s_{p})}(0,1)\bydef \Sigma^{k-1}_\theta D^p_{l}(s_1,..., s_{p}).\]
	Note that $\Sigma^k_\theta c_l\cong c_{k+l}$. It follows from the definition and \Cref{prop:D_func} that the assignment $(s_1,...,s_{p})\mapsto \Sigma^k_{\theta}D^p_l(s_1,...,s_{p})$ extends to a functor from $\Delta^{\times p}$ to $\cat_n$. Note also that $\Sigma^k_\theta D^p_l(s_1,...,s_{l})\in\ctree^h_n$: indeed, it can be represented by a tree with nodes given by strings $S=l_{i_0}^{\sigma_0}...l_{i_N}$ with $0\leq i_j< p$ and $\sigma_j\in \{-,*,+\}$ satisfying the conditions described in \Cref{constr:D_obj} such that the node corresponding to $S$ is marked with $C^{k+p-i_N-1}_n(1)$.\par
	Define 
	\[F^k_{s_1}([m]\xinert{i}[s_1],s_2,...,s_{n-k})\bydef \Sigma^k_\theta D_{n-k}(m,s_2,...,s_{n-k}),\] 
	then by the same argument as in \Cref{constr:twar_final} this extends to a functor $F^k_{s_1}:\Delta^{(n-k-1)}\times \Delta^\inrt_{/[s_1]}\rightarrow\stn_n$ and we define $\twar^k_D(x)([q])\in\mor_\cat(\Delta^{\times(n-k-1),\op}, \cS)$ to be the functor sending $(s_2,...,s_{n-k})$ to $\mor_{\mor_\cat(\Delta^\inrt_{/[q]}.\cat_n)}(F_q(s_2,...,s_{n-k}), *_x)$, where $\Delta^{\times(n-1),\op}$ acts by precomposition. By functoriality in the first variable we can view $\twar^k_D(x)$ as a functor
	\[\twar^k_D(x):\Delta^\op\rightarrow \mor_\cat(\Delta^{\times(n-k-1)}, \cS),\]
	we define $\twar^k(x):\Delta^\op\rightarrow\cS$ as a composition
	\[\Delta^\op\xrightarrow{\twar^k_D(x)} \mor_\cat(\Delta^{\times(n-k-1)}, \cS)\xrightarrow{\underset{\Delta^{\times(n-1),\op}}{\colim}}\cS.\] 
	The same argument as in \Cref{prop:twar_seg} then shows that this is a Segal space.\par
	It is easy to see by construction that for a pair of $n$-morphisms $f$ and $g$ as above we have 
	\[\mor_{\twar^k(x)}(f,g) = \varnothing\]
	unless they are $(k-1)$-parallel, we will prove by downward induction on $k$ that for any $(k-1)$-parallel morphisms $f$ and $g$ $\mor_{\twar^k(x)}(f,g)$ is either empty or contractible, and the latter holds if and only if $\ima(f)\leq\ima(g)$ in $\stn^\inrt_{n,/x}$ - for $k=0$ this will prove the proposition. We start with the case $k=n-1$, in that case it is easy to see that $\Sigma^{n-1}_\theta D_1(s)\cong C_n^{n-1}(s)$, so that we have $\twar^{n-1}(x) = \twar_\theta^{n-1}(x)$ in the notation of \Cref{constr:C_delta}. The claim now follows either by untangling the definitions or using \Cref{prop:stn_inrt}.\par
	Assume we have proved the claim for $(k+1)$, we will proceed to prove it for $k$. First, we will need some more notation: observe that $\Sigma^k_\theta D_{n-k}(1,0,...,0)$ has $(2(n-k)+1)$ elementary cells corresponding to the strings $l_0^*...l_t^\pm$ for $t\leq (n-k-1)$ and $l_0^*...l_{n-k-1}^*$, we will denote by $i_t^\sigma:e_t^\sigma\xinert{}\Sigma^{k}_\theta D_{n-k}(1,...,0)$ for $0\leq t\leq n-k-1$ with $\sigma\in\{-,+\}$ for $t<n-k-1$ and $\sigma\in \{-,*,+\}$ otherwise the inclusions of those elementary cells. It follows from \Cref{lem:str_bound} that for $t<(n-k-1)$ we have a pushout square
	\begin{equation}\label{eq:el_push}
		\begin{tikzcd}[sep=huge]
			{c_{n-1-t}} & {\Sigma^k_\theta D_{n-1-t}(1,0,...,0)} \\
			{c_n} & {e_t^\pm}
			\arrow[two heads, from=1-1, to=1-2]
			\arrow["{i_{n-k-1-t}^\mp}"', tail, from=1-1, to=2-1]
			\arrow[tail, from=1-2, to=2-2]
			\arrow[two heads, from=2-1, to=2-2]
			\arrow["\ulcorner"{anchor=center, pos=0.125, rotate=180}, draw=none, from=2-2, to=1-1]
		\end{tikzcd}
	\end{equation}  
	and for $t=(n-k-1)$ we have $e^\sigma_{n-k-1} = c_n$. For any $(s_2,...,s_{n-k})$ we have an active morphism
	\[a:\Sigma^k_\theta D_n(1,...,0)\xactive{(\id,\{0\},...,\{0\})}\Sigma^k_\theta D_{n-k}(1,s_2,...,s_{n-k}),\]
	it is easy to see from the construction in \Cref{prop:D_func} that the restriction of $a$ to the elementary cells of dimension $<n$ is equal to identity, denote by $A^\sigma_t(s_{n-k-t+1},...,s_{n-k})$ the image of the elementary cell $e^\sigma_t$ under $a$, we then have the pushout diagram
	\begin{equation}\label{eq:D_push1}
		\begin{tikzcd}[sep=huge]
			{c_{n-1-t}} & {\Sigma^k_\theta D_{n-1-t}(1,0,...,0)} \\
			{\Sigma^{n-t}_\theta D_t(s_{n-k-t+1},...,s_{n-k})} & {A_t^\pm(s_{n-k-t+1},...,s_{n-k})}
			\arrow[two heads, from=1-1, to=1-2]
			\arrow["{i_{n-k-1-t}^\mp}"', tail, from=1-1, to=2-1]
			\arrow[tail, from=1-2, to=2-2]
			\arrow[two heads, from=2-1, to=2-2]
			\arrow["\ulcorner"{anchor=center, pos=0.125, rotate=225}, draw=none, from=2-2, to=1-1]
		\end{tikzcd}
	\end{equation}
	By \Cref{cor:rel_seg} we see that 
	\begin{equation}
		\Sigma^k_\theta D_{n-k}(1,...,s_{n-k})\cong \underset{(e\xinert{i}\Sigma^k_\theta D_{n-k}(1,...,0))\in\stn^\el_{/\Sigma^k_\theta D_n(1,...,0)}}{\colim} X_e,
	\end{equation}
	where $X_e$ denotes the image of $e$ under $a$, so $X_e\cong e$ if $\dim(e)<n$ and $X_{e_t^\sigma} = A_t^\sigma(s_{n-k-t+1},...,s_{n-k})$. \par
	Note that for any $p\leq n-k$ we have an inert morphism $\cI_p: D^p_{n-k}(1,0,...,0)\xinert{} D_{n-k}(1,...,0)$ whose image contains the cells $e^\sigma_t$ with $t\geq n-k-p$ for $p\geq 1$ and just the cell $e^*_{n-k-1}$ for $p=0$, we will denote
	\[Y_p(1,...,s_{n-k})\bydef\underset{(e\xinert{i}\Sigma^k_\theta D^p_{n-k}(1,...,0))\in\stn^\el_{/\Sigma^k_\theta D^p_n(1,...,0)}}{\colim} X_e,\]
	where we identify $\stn^\el_{/\Sigma^k_\theta D^p_n(1,...,0)}$ with a full subcategory of $\stn^\el_{/\Sigma^k_\theta D_n(1,...,0)}$ by means of $\cI_p$ described above, note that any morphism $\Sigma^k_\theta D_{n-k}(\id,g):\Sigma^k_\theta D_{n-k}(1,...,s_{n-k})\rightarrow\Sigma^k_\theta D_{n-k}(1,...,s_{n-k})$ restricts to an $(k+p-1)$-active morphism $Y_p(\id,g):Y_p(1,...,s_{n-k})\rightarrow Y_p(1,...,s'_{n-k})$. By definition $\mor_{\twar^k(x)}(f,g)$ is a geometric realization of the category with objects given by diagrams 
	\[\begin{tikzcd}[sep=huge]
		{c_n} & {\Sigma^k_\theta D_{n-k}(1,s_2,...,s_{n-k})} & {c_n} \\
		& x
		\arrow[tail, from=1-1, to=1-2]
		\arrow["f"', from=1-1, to=2-2]
		\arrow["F"{description}, from=1-2, to=2-2]
		\arrow[two heads, from=1-3, to=1-2]
		\arrow["g", from=1-3, to=2-2]
	\end{tikzcd}\]
	with morphisms induced by morphisms $\Sigma^k_\theta D_{n-k}(\id,g)$ for $g:(s_2,...,s_{n-k})\rightarrow (s'_2,...,s'_{n-k})$ making the diagram commute. We will define $Z_p(f,g)$ to be the geometric realization of the category with objects given by commutative diagrams
	\[\begin{tikzcd}[sep=huge]
		{c_n} & {Y_p(1,s_2,...,s_{n-k})} \\
		& x
		\arrow[tail, from=1-1, to=1-2]
		\arrow["f"', from=1-1, to=2-2]
		\arrow["{F_p}"{description}, from=1-2, to=2-2]
	\end{tikzcd}\]
	for which the composition $c_n\xactive{}Y_p(1,s_2,...,s_{n-k})\xrightarrow{F}x$ is $(k+p-1)$-parallel to $g$, with morphisms given by $Y(\id,g)$ that make the diagram commute. Note that by definition $Z_{n-k}(f,g)\cong \mor_{\twar^k(x)}(f,g)$ and that the inclusion $Y_p(1,...,s_{n-k})\xinert{} Y_{p'}(1,...,s_{n-k})$ for $p'>p$ induce morphisms $Z_{p'}(f,g)\xrightarrow{\gamma_{p,p'}} Z_p(f,g)$. We will prove that all $Z_p(f,g)$ are isomorphic to $\mor_{\twar^k(x)}(f,g)$ by downward induction on $p$, starting with the trivial case $p=n-k$. Note that $Y_0(1,...,s_{n-k})\cong c_n$, meaning that $Z_0(f,g)$ is a singleton, so proving this claim would conclude the proof of the proposition.\par
	Assume we have proved the claim for $p$, observe that there is an active morphism $a_p:C_n^{k+p-1}\xactive{}\Sigma^k D^p_{n-k}(1,0,...,0)$ sending the cells $i^\pm_{k+p-1}$ to the elementary cells corresponding to the strings $l_0^\pm$ and $i^*_{k+p-1}$ to the composition of all other cells, composing it with $\Sigma^k D^p_{n-k}(1,0,...,0)\xactive{a\circ \cI_p} Y_p(1,...,s_{n-k})$ gives us a morphism $C_n^{k+p-1}(1)\xactive{} Y_p(1,...,s_{n-k})$, using \Cref{cor:rel_seg} we see that $Y_p(1,...,s_{n-k})$ is isomorphic to the colimit of the diagram 
	\begin{equation}\label{eq:A_push}
		\begin{tikzcd}
			& {D_{p+k-1}(1,0,...,0)} && {D_t(1,0,...,0)} \\
			{A^-_{n-k-p}(s_{p+1},...,s_{n-k})} && {Y_{p-1}(1,...,s_{n-k})} && {A^+_{n-k-p}(s_{p+1},...,s_{n-k})}
			\arrow[tail, from=1-2, to=2-1]
			\arrow[tail, from=1-2, to=2-3]
			\arrow[tail, from=1-4, to=2-3]
			\arrow[tail, from=1-4, to=2-5]
		\end{tikzcd}.
	\end{equation}
	Note that by definition $Z_p(f,g)$ is the colimit over $(s_2,...,s_{n-k})\in\Delta^{\times (n-k-1)}$ of $\mor_{\cat_n}^p(Y_p(1,...,s_{n-k}),x)$, where $\mor_{\cat_n}^p(Y_p(1,...,s_{n-k}),x)$ denotes the subset of $\mor_{\cat_n}(Y_p(1,...,s_{n-k}),x)$ containing morphisms $h$ for which the composition $c_n\xinert{}Y_p(1,...,s_{n-k})\xrightarrow{h}x$ is isomorphic to $f$ and $c_n\xactive{} Y_p(1,...,s_{n-k})\xrightarrow{h}x$ is $(k+p-1)$-parallel to $g$. Using \eqref{eq:A_push} we can rewrite it as
	\begin{equation}\label{eq:x_pull}
		\mor_{\cat_n}^-(A^-_{n-k-p},x)\times_{\mor_{\cat_n}(D_{p+k-1},x)} \mor_{\cat_n}^{p-1}(Y_{p-1},x)\times_{\mor_{\cat_n}(D_{p+k-1},x)}\mor_{\cat_n}^-(A^-_{n-k-p},x)
	\end{equation}
	(where we have omitted the variables $s_i$ for typographical reasons), where $\mor_{\cat_n}^\pm(A^\pm_{n-k-p},x)$ denotes the subset of morphisms 
	\[A^-_{n-k-p}(s_{p+1},...,s_{n-k})\xrightarrow{h_\pm} x\]
	whose image contains $d^\pm_{k+p-1}\ima(g)$ and $d^\mp_{k+p-1}\ima(Y_{p-1}(1,...,s_{n-k}))$. Note that for any morphism $v:(s_{p+1},..., s_{n-k})\rightarrow(s'_{p+1},..., s'_{n-k})$ the morphisms $A^\pm_{n-k-p}(v)$ and $Y_{p-1}(\id,v)$ define a natural transformation of diagrams \eqref{eq:A_push}. Denote $\overline{s}\bydef(s_2,...,s_p)$, $\widetilde{s}\bydef (s_{p+1},...,s_{n-k})$ and by $B^\pm$ the terms $\mor_{\cat_n}(D_{p+k-1}(1,...,0),x)$ appearing in \eqref{eq:x_pull}, we will now consider the variables $\overline{s}$ as fixed and calculate the colimit of \eqref{eq:x_pull} over $\widetilde{s}$:
	\begin{align*}
		&\underset{\widetilde{s}\in\Delta^{\times (n-k-p-1),\op}}{\colim}\mor_{\cat_n}^-(A^-_{n-k-p}(\widetilde{s}),x)\times_{B^-} \mor_{\cat_n}^{p-1}(Y_{p-1}(\overline{s}, \widetilde{s}),x)\times_{B^+}\mor_{\cat_n}^-(A^-_{n-k-p}(\widetilde{s}),x)\\
		\cong & \underset{(\widetilde{s^-}, \widetilde{s^*}, \widetilde{s^+})\in\Delta^{\times 3(n-k-p-1),\op}}{\colim}\mor_{\cat_n}^-(A^-_{n-k-p}(\widetilde{s^-}),x)\times_{B^-} \mor_{\cat_n}^{p-1}(Y_{p-1}(\overline{s}, \widetilde{s^*}),x)\times_{B^+}\mor_{\cat_n}^-(A^-_{n-k-p}(\widetilde{s^+}),x)\\
		\cong & \underset{\widetilde{s^-}}{\colim}\;\mor_{\cat_n}^-(A^-_{n-k-p}(\widetilde{s^-}),x)\times_{B^-} \underset{\widetilde{s^*}}{\colim}\;\mor_{\cat_n}^{p-1}(Y_{p-1}(\overline{s}, \widetilde{s^*}),x)\times_{B^=+}\underset{\widetilde{s^+}}{\colim}\;\mor_{\cat_n}^-(A^-_{n-k-p}(\widetilde{s^+}),x),
	\end{align*}
	where the first isomorphism follows since $\Delta^\op$ is sifted and the second since products distribute over colimits in the topos $\cS_{/B_0\times B_1}$. It now remains to prove that
	\[\underset{\widetilde{s^\pm}\in\Delta^{\times(n-k-p-1),\op}}{\colim}\;\mor_{\cat_n}^-(A^\pm_{n-k-p}(\widetilde{s^\pm}),x)\cong B^\pm,\]
	since then using the previous equation we would get
	\begin{align*}
		Z_p(f,g)&\cong \underset{(\widetilde{s},\overline{s})\in\Delta^{\times(n-k),\op}}{\colim} B^-\times_{B^-}\mor_{\cat_n}^{p-1}(Y_{p-1}(\overline{s}, \widetilde{s^*}),x)\times_{B^+} B^+\\
		&\cong \underset{(\widetilde{s},\overline{s})\in\Delta^{\times(n-k),\op}}{\colim} \mor_{\cat_n}^{p-1}(Y_{p-1}(\overline{s}, \widetilde{s^*}),x)\cong Z_{p-1}(f,g).
	\end{align*}
	Now fix some $g'_\pm\in\mor_{\cat_n}(D_{p+k-1}(1,0,...,0),x)$ and denote by $X^\pm$ the fiber of
	\[\underset{\widetilde{s^\pm}\in\Delta^{\times(n-k-p-1),\op}}{\colim}\;\mor_{\cat_n}^-(A^\pm_{n-k-p}(\widetilde{s^\pm}),x)\rightarrow\mor_{\cat_n}(D_{p+k-1}(1,0,...,0),x)\]
	over $g'_\pm$, so we now need to prove that $X^\pm$ is contractible. Denote by $g_\pm$ the composition
	\[c_{p+k-1}\xactive{}D_{p+k-1}(1,0,...,0)\xrightarrow{g'_\pm}x,\]
	note that $g_\pm$ is $(p+k-2)$-active. It follows from \eqref{eq:D_push1} and the definitions that $X^\pm$ is isomorphic to the geometric realization of the subcategory of $\twar^{k+p+1}(x)$ (which we identify with a subcategory of $\stn^\inrt_{n,/x}$ using the inductive assumption on $k$) containing morphisms $y\xinert{i}x$ for which $d^\pm_{p+k-1}y\cong d^\pm_{p+k-1}\ima(g)$ and $d^\mp_{p+k-1}y\cong \ima(g_\pm)$, however we have seen in \Cref{lem:inrt_cont} that this category is contractible.
	\end{proof}
	\begin{cor}\label{cor:twar_iso}
	There is an isomorphism
	\[\twar(\cE)\cong \twar'(\cE)\]
	for any $\cE\in\cat_n$.
	\end{cor}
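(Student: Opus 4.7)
The plan is to reduce the statement to the case $\cE = x \in \stn_n$ and then identify both sides with $\stn^\inrt_{n,/x}$. By Lemma \ref{lem:twar_col1} we already have $\twar'(\cE) \cong \underset{x \to \cE}{\colim}\,\twar'(x)$ over $\stn_{n,/\cE}$. I would first establish the analogous formula $\twar(\cE) \cong \underset{x \to \cE}{\colim}\,\twar(x)$ for the $D_n$-model by the same kind of factorization argument: any morphism $\Delta^\inrt_{/[q]} \times \Delta^{\times(n-1)} \to \cat_n$ presenting a $q$-simplex of $\twar(\cE)$ factors through the source $D_n(q, s_2, \dots, s_n) \in \stn_n$ of the top cell, and the resulting space of factorizations is contractible (it has an initial object). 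Combined with Lemma \ref{lem:twar_col1}, this reduces the corollary to producing a natural isomorphism $\twar'(x) \cong \twar(x)$ for $x \in \stn_n$.

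Second, by Proposition \ref{prop:twar_stein} we already have $\twar(x) \cong \stn^\inrt_{n,/x}$. I would construct the corresponding isomorphism $\twar'(x) \cong \stn^\inrt_{n,/x}$ directly, mirroring the proof of Proposition \ref{prop:twar_stein}. In one direction, I send $(y_i \xinert{i} x) \in \stn^\inrt_{n,/x}$ to the cospan $c_n \xactive{} y_i \xinert{i} x$ (using Corollary \ref{cor:tree_stn} to produce the active leg), and iterate to produce a morphism $\stn^\inrt_{n,/x} \to \twar'(x)$ as in the construction of $F$ in Proposition \ref{prop:twar_stein}. In the other direction, I send a functor $F \colon \Delta^\inrt_{/[q]} \to \ctree^h_{n,/x}$ in $\twar'(x)([q])$ to the string of inert inclusions $\ima(F(\{0\})) \xinert{} \cdots \xinert{} \ima(F(\{q\}))$ of images in $\stn_n$; this is well-defined by the active/inert factorization of Proposition \ref{prop:stein_fact} together with Corollary \ref{cor:tree_stn}.

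To see these are mutually inverse, I would use the natural comparison map $\twar(x) \to \twar'(x)$ obtained by viewing each $D_n(q, s_2, \dots, s_n)$ as an object of $\ctree^h_n$ (Proposition \ref{prop:D_tree}); both composites with the candidate $\twar'(x) \to \stn^\inrt_{n,/x}$ recover the isomorphism of Proposition \ref{prop:twar_stein}, so it suffices to check that the comparison $\twar(x) \to \twar'(x)$ is essentially surjective. This is where Proposition \ref{prop:health_push} enters: every healthy cospan $c_n \xinert{} y \xactive{} c_n$ in $\ctree^h_n$ can be produced as an iterated healthy pushout along inert morphisms, and the $D_n$-objects provide the universal such iteration once we colimit in the parameters $(s_2, \dots, s_n)$. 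The Segal property on both sides (Propositions \ref{prop:twar_seg} and \ref{prop:twar'_seg}) reduces the check to the mapping space between two $n$-morphisms, where the argument runs exactly as in the downward induction on $k$ carried out in the proof of Proposition \ref{prop:twar_stein}.

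The main obstacle is the essential surjectivity of $\twar(x) \to \twar'(x)$, i.e.\ showing that the special cospans $D_n(1, s_2, \dots, s_n)$ exhaust the homotopy type of arbitrary healthy cospans after taking the colimit over $(s_2, \dots, s_n)$. This is the analog of the fiber-contractibility argument of Lemma \ref{lem:inrt_cont} in the proof of Proposition \ref{prop:twar_stein}: one must verify that the category of healthy factorizations of a given cospan through some $D_n(1, s_2, \dots, s_n)$ is contractible, which in turn uses the healthy pushout closure of Proposition \ref{prop:health_push} together with the active/inert factorization in $\ctree^h_n$ implicit in Proposition \ref{prop:health_seg}.
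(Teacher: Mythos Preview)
Your approach is more roundabout than the paper's and misses its key observation. The paper defines the comparison map $F:\twar(\cE)\to\twar'(\cE)$ directly for arbitrary $\cE$ (sending a natural transformation $F_{s_1}(s_2,\dots,s_n)\to *_\cE$ to itself viewed in $\twar'(\cE)$), then uses the Segal conditions on both sides (Propositions~\ref{prop:twar_seg} and~\ref{prop:twar'_seg}) to reduce to the level of objects and mapping spaces. Objects agree trivially. For a pair $f,g:c_n\to\cE$, the space $\mor_{\twar'(\cE)}(f,g)$ is the realization of the category $C(f,g)$ of all healthy cospans $c_n\xinert{i}x\overset{a}{\twoheadleftarrow}c_n$ over $\cE$, while $\mor_{\twar(\cE)}(f,g)$ is the realization of the full subcategory $C'\hookrightarrow C$ on cospans with $x=D_n(1,s_2,\dots,s_n)$. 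The decisive point is that for a fixed object $G\in C$ with middle term $x$, the overcategory $C'/G$ is nothing but $\mor_{\twar(x)}(i,a)$, which is contractible by Proposition~\ref{prop:twar_stein}. Thus the inclusion is cofinal and the mapping spaces agree. The reduction to a Steiner complex happens \emph{locally}, inside the cofinality check, with no global colimit formula needed.

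Your plan instead first reduces to $x\in\stn_n$ via colimit formulas and then proves $\twar'(x)\cong\stn^\inrt_{n,/x}$ separately. This can in principle be made to work, but it costs you two things. First, the colimit formula $\twar(\cE)\cong\colim_{x\to\cE}\twar(x)$ is not available at this point in the paper---it appears only afterward as Corollary~\ref{cor:twar_col}, derived \emph{from} the present corollary together with Corollary~\ref{cor:stein_inrt}---so you would have to prove it independently. Second, your proposed direct proof of $\twar'(x)\cong\stn^\inrt_{n,/x}$ is where the real content lies, and your sketch is too thin: the downward induction on $k$ in Proposition~\ref{prop:twar_stein} is specifically organised around the $\Sigma^k_\theta D_{n-k}$-parametrisation, and it is not clear how to run the same argument when the cospans range over arbitrary healthy trees rather than the $D_n$-family. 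In effect you would be reproving Proposition~\ref{prop:twar_stein} in a harder setting, whereas the paper simply \emph{applies} it once inside a cofinality argument.
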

	\begin{proof}
	Sending the natural transformation $\alpha:F_{s_1}(s_2,...,s_n)\rightarrow*_\cE$ from $\twar(\cE)([s_1])$ to itself viewed as an object of $\twar'(\cE)([s_1])$ defines a functor $F:\twar(\cE)\rightarrow \twar(\cE')$, we need to prove that this defines an isomorphism. Since both the source and target of $F$ satisfy the Segal condition, it suffices to prove that it induces an isomorphism on the spaces of objects and of morphisms. That $F$ induces an isomorphism on the space of objects is clear, to prove the second claim recall that the space of morphisms $\mor_{\twar'(\cE)}(f,g)$ between $f,g:c_n\rightarrow\cE$ is isomorphic to the geometric realization of the category $C(f,g)$ of cospans 
	\begin{equation}\label{eq:twar_mor}
		\begin{tikzcd}[sep=huge]
			{c_n} & x & {c_n} \\
			& \cE
			\arrow["i", tail, from=1-1, to=1-2]
			\arrow["f"', from=1-1, to=2-2]
			\arrow["G"{description}, from=1-2, to=2-2]
			\arrow["a"', two heads, from=1-3, to=1-2]
			\arrow["g", from=1-3, to=2-2]
		\end{tikzcd}
	\end{equation}
	over $\cE$ with $x\in\ctree^h_n$ and morphisms are induced by $f:x\rightarrow y$ over $\cE$, while $\mor_{\twar(\cE)}(f,g)$ is the geometric realization of the subcategory $\cI:c'\hookrightarrow C$ containing objects given by diagrams \eqref{eq:twar_mor} with $x=D_n(1,s_2,...,s_n)$ and morphisms by $D(\id_{[1]},f_2,...,f_n)$ with $f_i:[s_i]\rightarrow[s'_i]$, to prove the claim it suffices to prove that $\cI$ is cofinal. Fix some object $G$ represented by diagram \eqref{eq:twar_mor}, then it is easy to see that $|(\cI/G)|$ is isomorphic to $\mor_{\twar(x)}(i,a)$, and this latter space is contractible by \Cref{prop:twar_stein}.
	\end{proof}
	\begin{cor}\label{cor:twar_col}
	We have 
	\[\twar(\cE)\cong \underset{(\theta\xrightarrow{f}\cE)\in\Theta_{n,/\cE}}{\colim}\;\twar(\theta).\]
	\end{cor}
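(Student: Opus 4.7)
The strategy is to chain together the earlier identifications and then reorganize a double colimit into a single colimit via a cofinality argument. Combining \Cref{cor:twar_iso} with \Cref{lem:twar_col1} and applying \Cref{cor:twar_iso} fiberwise gives
\[
\twar(\cE) \cong \twar'(\cE) \cong \underset{x \in \stn_{n,/\cE}}{\colim}\; \twar'(x) \cong \underset{x \in \stn_{n,/\cE}}{\colim}\; \twar(x).
\]
By \Cref{prop:twar_stein} we identify $\twar(x) \cong \stn^\inrt_{n,/x}$ for each $x \in \stn_n$, and \Cref{cor:stein_inrt} rewrites this further as $\colim_{(\theta \to x) \in \Theta_{n,/x}} \Theta^\inrt_{n,/\theta}$, so that
\[
\twar(\cE) \cong \underset{x \in \stn_{n,/\cE}}{\colim}\; \underset{(\theta \to x) \in \Theta_{n,/x}}{\colim}\; \Theta^\inrt_{n,/\theta}.
\]

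Next I would recognize this nested colimit as a single colimit over the total category $T$ of the functor $\stn_{n,/\cE} \to \cat$ sending $x$ to $\Theta_{n,/x}$; explicitly, objects of $T$ are factorizations $\theta \to x \to \cE$ with $\theta \in \Theta_n$ and $x \in \stn_n$. The projection $p\colon T \to \Theta_{n,/\cE}$ sending $(\theta \to x \to \cE)$ to the composite $\theta \to \cE$ is cofinal: for each $(\theta_0 \to \cE) \in \Theta_{n,/\cE}$ the slice $\theta_0/p$ has the initial object $(\theta_0 \xrightarrow{\id} \theta_0 \to \cE)$, since any other object $(\theta \to x \to \cE,\, \alpha\colon \theta_0 \to \theta)$ receives a unique morphism in $\theta_0/p$ obtained by setting the $\theta$-component to $\alpha$ and the $x$-component to the composition $\theta_0 \xrightarrow{\alpha} \theta \to x$. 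Cofinality yields
\[
\twar(\cE) \cong \underset{(\theta \to \cE) \in \Theta_{n,/\cE}}{\colim}\; \Theta^\inrt_{n,/\theta}.
\]

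To replace $\Theta^\inrt_{n,/\theta}$ with $\twar(\theta)$, I would apply the preceding chain of isomorphisms to $\cE = \theta_0 \in \Theta_n$: since $\Theta_{n,/\theta_0}$ has terminal object $\id_{\theta_0}$ (and a terminal object is cofinal for colimits), the colimit $\colim_{(\theta \to \theta_0)} \Theta^\inrt_{n,/\theta}$ collapses to $\Theta^\inrt_{n,/\theta_0}$, yielding $\twar(\theta_0) \cong \Theta^\inrt_{n,/\theta_0}$. Substituting this identification termwise into the previous display completes the proof.

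The main obstacle is verifying that the equivalences from \Cref{prop:twar_stein} and \Cref{cor:stein_inrt} assemble into natural transformations of presheaves on $\stn_{n,/\cE}$, so that the nested colimit and the reorganization via the Grothendieck construction are well-defined. This is routine from the explicit functors constructed in those proofs but requires a careful bookkeeping of the morphisms on the $x$-variable.
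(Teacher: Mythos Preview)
Your proof is correct and follows essentially the same route as the paper. The paper's proof is the one-line ``Combine \Cref{lem:twar_col1}, \Cref{cor:twar_iso} and \Cref{cor:stein_inrt}'', and you have unpacked precisely the implicit double-colimit reorganization and cofinality argument needed to turn the colimit over $\stn_{n,/\cE}$ into one over $\Theta_{n,/\cE}$; the naturality concerns you flag are handled by the explicit functors built in those proofs. Your step~7 is a slight detour---one can instead invoke \Cref{prop:twar_stein} directly for $\theta_0\in\Theta_n\subset\stn_n$ to get $\twar(\theta_0)\cong\stn^\inrt_{n,/\theta_0}\cong\Theta^\inrt_{n,/\theta_0}$---but your argument is valid as written.
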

	\begin{proof}
	Combine \Cref{lem:twar_col1}, \Cref{cor:twar_iso} and \Cref{cor:stein_inrt}.
	\end{proof}
	\begin{defn}\label{def:cot}
	For $\cE\in\cat$ define $L_\cE$ to be the constant functor $\twar(\cE)\rightarrow\spc$ with value $\bS$, given $f:\cE\rightarrow\cD$ define
	\[L_f\bydef \cok(f_!L_\cE\rightarrow L_\cD).\]
	\end{defn}
	\begin{theorem}\label{thm:main_twar}
	A morphism $f:\cE\rightarrow \cD$ is an isomorphism if and only if the following conditions hold:
	\begin{enumerate}
		\item\label{it:tau1} $\tau_{\leq n+1}f:\tau_{\leq n+1}\cE\rightarrow\tau_{\leq n+1}\cD$ is an isomorphism;
		\item\label{it:cot1} $L_{f}\cong 0$.
	\end{enumerate}
	\end{theorem}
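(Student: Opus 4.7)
The plan is to reduce the statement to \Cref{prop:def}, which is the identical assertion but phrased in terms of the stabilization $\stab(\cat_{n,/\cE})$ and its ``absolute cotangent complex'' $\Sigma^\infty(*)$, rather than in terms of the functor category $\mor_\cat(\twar(\cE),\spc)$. Concretely, it suffices to produce a natural (in $\cE$) equivalence
\[
\Phi_\cE\colon \mor_\cat(\twar(\cE),\spc)\;\xrightarrow{\;\sim\;}\;\stab(\cat_{n,/\cE})
\]
under which $L_\cE$ (the constant functor with value $\bS$) corresponds to $\Sigma^\infty(\id_\cE)$, and which intertwines the pushforward functors $f_!$ on the two sides. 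Granting this, $L_f$ as defined in \Cref{def:cot} is identified with $L_{\cE/\cD}$ of \Cref{prop:def}, and the theorem is immediate from \Cref{prop:def}.

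To construct $\Phi_\cE$, first use \Cref{cor:twar_iso} to identify $\twar(\cE)\cong \twar'(\cE)$, and then compare with the intermediate model $\twar_\theta(\cE)$ of \Cref{sect:twar_theta}. For $x\in\stn_n$ both models compute the same thing, namely $\stn^\inrt_{n,/x}$: this is \Cref{prop:stn_inrt} on the $\twar_\theta$ side and \Cref{prop:twar_stein} on the $\twar$ side. By \Cref{lem:twar_theta1} and \Cref{cor:twar_col}, each of $\twar_\theta(-)$ and $\twar(-)$ is the left Kan extension (along $\Theta_n\hookrightarrow \cat_n$) of its restriction to $\Theta_n$, and \Cref{lem:theta_twar} identifies that restriction with $\theta\mapsto \Theta^{\inrt}_{n,/\theta}$. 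Thus $\twar(\cE)\cong \twar_\theta(\cE)$ naturally in $\cE$. Now \Cref{prop:main_theta_stab} supplies a natural equivalence $\mor_\cat(\twar_\theta(\cE),\spc)\cong \stab(\cat_{n,/\cE})$, which we compose to define $\Phi_\cE$.

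It then remains to verify that $\Phi_\cE$ respects the cotangent-complex data. For the identification $\Phi_\cE(L_\cE)\cong \Sigma^\infty(\id_\cE)$, one reduces via \Cref{lem:globn} and \Cref{cor:twar_col} to the case $\cE=\theta\in\Theta_n$, where both sides are explicitly described: on the $\stab$ side the generator $\Sigma^\infty(\id_\theta)$ corresponds under \Cref{prop:seg_inj} to the terminal presheaf, and on the $\twar$ side it corresponds to the constant presheaf $\bS$ via the explicit inverse constructed from $\beta_\theta^*$ in the proof of \Cref{prop:main_theta_stab}. For compatibility with pushforward, one checks that $\Phi$ intertwines $f_!$ on either side; this reduces, again via \Cref{lem:globn}/\Cref{cor:twar_col}, to functoriality of the equivalence of \Cref{prop:main_theta_stab} in morphisms between objects of $\Theta_n$, which is already verified in its proof by comparing the discrete functors $\beta_{\theta,!}$.

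The routine part is the tracking of natural transformations through the chain of equivalences; the one step that needs real care is the compatibility of the two models $\twar$ and $\twar_\theta$ with the generators $L_\cE$ and $\Sigma^\infty(*)$ uniformly in $\cE$, since the representability of $\twar(\theta)$ and $\twar_\theta(\theta)$ by $\Theta^\inrt_{n,/\theta}$ proceeds via different constructions (one via Steiner complexes, the other via the simplicial model $C_n(q)$). I expect this naturality to be the main obstacle, but all the needed ingredients are contained in \Cref{sect:twar_theta} and \Cref{sect:twar}, in particular in the proofs of \Cref{prop:main_theta_stab} and \Cref{cor:twar_iso}. With $\Phi_\cE$ so identified, \Cref{prop:def} concludes the proof.
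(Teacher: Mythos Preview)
Your proposal is correct and follows essentially the same strategy as the paper: reduce to \Cref{prop:def} by establishing a natural equivalence $\mor_\cat(\twar(\cE),\spc)\cong\stab(\cat_{n,/\cE})$ that matches the two cotangent complexes, going through $\twar_\theta$ and \Cref{prop:main_theta_stab}.

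There is one tactical difference worth noting. You argue that $\twar(\cE)\cong\twar_\theta(\cE)$ by observing that both sides are colimits over $\Theta_{n,/\cE}$ (via \Cref{cor:twar_col} and \Cref{lem:twar_theta1}) of functors whose values at $\theta$ are each identified with $\Theta^{\inrt}_{n,/\theta}$; you then have to check that these pointwise identifications are natural in $\theta$, which you correctly flag as the delicate step. The paper sidesteps this naturality verification by constructing a concrete comparison map $F:\twar_\theta(\cE)\to\twar'(\cE)$ directly at the level of simplicial spaces: since the objects $C_n(q)\in\Theta_n$ used to define $\twar^C_\theta$ lie in $\ctree^h_n$, every simplex of $\twar^C_\theta(\cE)$ is literally a simplex of $\twar'(\cE)$, and this assignment is manifestly natural in $\cE$. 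One then only needs to check $F$ is an equivalence when $\cE=\theta$, which is \Cref{lem:theta_twar}. This is a cleaner resolution of exactly the naturality issue you anticipated, and you may want to adopt it. The treatment of the cotangent complex is the same in both arguments.
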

	\begin{proof}
	It follows from \Cref{prop:def} that it suffices to show that $\mor_\cat(\twar(\cE),\spc)\cong \stab(\cat_{n,/\cE})$ and that the cotangent complex $\cE$ of \Cref{def:cot} is isomorphic to the one defined in \Cref{prop:def}. To prove the first claim note that it follows from \Cref{prop:main_theta_stab} that it suffices to prove that
	\begin{equation}\label{eq:main_iso}
		\mor_\cat(\twar(\cE),\spc)\cong\mor_\cat(\twar_\theta(\cE),\spc),
	\end{equation}
	\Cref{cor:twar_iso} further implies that we may replace $\twar(\cE)$ with $\twar'(\cE)$ in \eqref{eq:main_iso}. Note that there is a natural morphism $F:\twar_\theta(\cE)\rightarrow \twar'(\cE)$ sending the natural transformation $\alpha:C_q^\inrt\rightarrow*_\cE$ of functors $\Delta^\inrt_{/[q]}\rightarrow\cat_n$, where $*_\cE$ is the constant functor with value $\cE$ and $C^\inrt_q$ is defined in \Cref{constr:twar_C}, corresponding to an element of $\twar^\theta(\cE)([q])$, to itself considered as an element of $\twar'(\cE)([q])$. We need to prove that this is an isomorphism, by the combination of \Cref{lem:twar_theta1}, \Cref{lem:twar_col1}, \Cref{prop:twar_stein} and \Cref{cor:stein_inrt} it suffices to prove this if $\cE\cong \theta$, in which case it follows from \Cref{lem:theta_twar}. To prove the second claim observe that under the isomorphism
	\[\mor_\cat(\twar(\cE),\spc)\cong\underset{(\theta\xrightarrow{f}\cE)\in\Theta_{n,/\cE}}{\lim}\mor_\cat(\twar(\theta),\spc)\]
	$L_\cE$ corresponds to the collection of $f^*L_\cE\cong L_\theta:\twar(\theta)\cong \Theta^\inrt_{n,/\theta}\rightarrow\spc$ of constant functors with values $\bS$, it is easy to see by tracing through various isomorphisms that this is isomorphic to $L_\theta$ defined in \Cref{prop:def}.
	\end{proof}
	\begin{cor}\label{cor:cot_coinit}
		Given a morphism $f:\cE\rightarrow \cD$ we have $L_f\cong 0$ if $f$ is coinitial.
	\end{cor}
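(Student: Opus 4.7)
The plan is first to reduce the vanishing of $L_f$ to a coinitiality statement about $\twar(f):\twar(\cE)\to\twar(\cD)$, and then to verify this coinitiality using the colimit formula for $\twar$. Unwinding \Cref{def:cot} via the isomorphism of \Cref{thm:main_twar}, the pushforward $f_!:\stab(\cat_{n,/\cE})\to\stab(\cat_{n,/\cD})$ corresponds to left Kan extension along $\twar(f)$, and both $L_\cE$ and $L_\cD$ become the constant sheaves with value $\bS$. Evaluating pointwise at $g\in\twar(\cD)$ gives
\[L_f(g)\cong\cok\bigl(|\twar(f)/g|\otimes\bS\to\bS\bigr),\]
where the map is induced by the projection $|\twar(f)/g|\to *$. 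Hence $L_f\cong 0$ is equivalent to $|\twar(f)/g|$ being contractible for every $g$, i.e., to $\twar(f)$ being coinitial as a functor of $\infty$-categories.

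To establish this, I would use \Cref{cor:twar_col} to represent an object $g\in\twar(\cD)$ by a triple $(\theta\in\Theta_n,\,\theta_0\xinert{i}\theta,\,q:\theta\to\cD)$. By the cospan description of morphisms from \Cref{constr:twar'} (identifying $\twar\cong\twar'$ via \Cref{cor:twar_iso}), a morphism $\twar(f)(h)\to g$ in $\twar(\cD)$ is encoded by an object $x\in\ctree_n^h$ equipped with inert $j:c_n\xinert{}x$, active $a:c_n\xactive{}x$, and $\chi:x\to\cD$ satisfying $\chi\circ a=g$, together with a lift $h:c_n\to\cE$ of $\chi\circ j$ through $f$. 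I would then describe $\twar(f)/g$ as fibered over the space of factorizations $q=f\circ\widetilde q$ of $q$ through $f$, with the fiber over $\widetilde q:\theta\to\cE$ controlled by the slice $(\twar(\theta))_{/i}\cong(\Theta^\inrt_{n,/\theta})_{/i}\cong\Theta^\inrt_{n,/\theta_0}$, which is contractible because it has terminal object $\id_{\theta_0}$.

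Coinitiality of $f$ then guarantees that the base of this fibration is contractible for every $q:\theta\to\cD$, this being essentially the defining property of coinitial maps in $\cat_n$ applied to the diagram $q$, and combining contractible base with contractible fibers yields $|\twar(f)/g|\simeq *$, as required. The main obstacle will be making the fibration structure in the second step rigorous: morphisms in $\twar(\cD)$ are built from the objects $D_n(q_1,\dots,q_n)$ of \Cref{constr:D_obj} and healthy active morphisms in $\ctree_n^h$, so the inert/active decomposition of a morphism is not tautological. I anticipate the key technical input will be \Cref{prop:health_seg}, which reduces healthy active morphisms to their values on elementary cells, together with the colimit description of \Cref{cor:twar_col}, allowing the desired fibration to be assembled from the analogous structure over each $\theta\to\cD$ in $\Theta_n$.
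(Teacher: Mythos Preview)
Your first paragraph is essentially the paper's entire proof, stated slightly differently. The paper observes that $L_\cE$ is the constant functor with value $\bS$, hence $L_\cE\cong f^*L_\cD$; therefore $L_f\cong\cok(f_!f^*L_\cD\to L_\cD)$, which vanishes because $f_!f^*\cong\id$ for coinitial morphisms. That is the whole argument: once you have written $L_f(g)\cong\cok(|\twar(f)/g|\otimes\bS\to\bS)$ and noted that this vanishes exactly when $\twar(f)$ is coinitial, you are done.

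Everything from your second paragraph onward is extra work coming from a misreading of the hypothesis. In this paper ``$f$ is coinitial'' is used precisely in the sense that makes $f_!f^*\cong\id$ on $\stab(\cat_{n,/\cD})\cong\mor_\cat(\twar(\cD),\spc)$; equivalently, $\twar(f)$ is coinitial. Look at how the corollary is invoked in the proof of \Cref{thm:lax_mon}: one first shows via \Cref{cor:init_mon} and \Cref{lem:twar_cont} that $\twar(\cI)$ is coinitial, and only then appeals to \Cref{cor:cot_coinit}. There is no independent $n$-categorical notion of coinitiality being assumed, and no need to prove that $\twar$ preserves such a notion.

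Your attempted proof of that preservation also has real gaps. The claimed fibration of $\twar(f)/g$ over the space of factorizations $q=f\circ\widetilde q$ is not justified: an object of $\twar(f)/g$ is a cospan $c_n\xinert{}x\overset{a}{\twoheadleftarrow}c_n$ in $\ctree^h_n$ over $\cD$ together with a lift of the \emph{single} $n$-morphism $c_n\xinert{}x\to\cD$ through $f$, and there is no mechanism producing a lift of the entire structure morphism $q:\theta\to\cD$ from this data. Moreover, ``coinitiality of $f$'' in any standard sense does not say that every $q:\theta\to\cD$ lifts contractibly through $f$; that would be a much stronger hypothesis, closer to $f$ being an equivalence. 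So drop the second half entirely; your first paragraph already is the proof.
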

	\begin{proof}
		By construction $L_\cE\cong f^*L_\cD$, hence 
		\[L_f\cong \cok(f_!f^*L_\cD\rightarrow L_\cD),\]
		the claim now follows because $f_!f^*\cong \id$ for coinitial morphisms.
	\end{proof}
	\begin{prop}\label{prop:main_prop}
		Assume that $f:\cE\rightarrow \cD$ is such that $L_f\cong 0$ and $\tau_{\leq n+1}f$ induces a monomorphism
		\[\tau_{\leq n+1}f^*:\mor_{\cat_{(n+1,n)}}(\tau_{\leq n+1}\cD,A)\rightarrow \mor_{\cat_{(n+1,n)}}(\tau_{\leq n+1}\cE,A)\]
		for any $A\in\cat_{(n+1,n)}$, then $f^*$ is also a monomorphism and moreover we have a pullback square
		\begin{equation}\label{eq:mono_pull}
			\begin{tikzcd}[sep=huge]
				{\mor_{\cat_n}(\cD,\cA)} & {\mor_{\cat_{(n+1,n)}}(\tau_{\leq n+1}\cD,\tau_{\leq n+1}\cA)} \\
				{\mor_{\cat_n}(\cE,\cA)} & {\mor_{\cat_{(n+1,n)}}(\tau_{\leq n+1}\cE,\tau_{\leq n+1}\cA)}
				\arrow["{\tau_{\leq n+1}}", from=1-1, to=1-2]
				\arrow["{f^*}"', from=1-1, to=2-1]
				\arrow["\ulcorner"{anchor=center, pos=0.125, rotate=45}, draw=none, from=1-1, to=2-2]
				\arrow["{\tau_{\leq n+1}f^*}", from=1-2, to=2-2]
				\arrow["{\tau_{\leq n+1}}"', from=2-1, to=2-2]
			\end{tikzcd}
		\end{equation}
		for any $\cA\in\cat_n$.
	\end{prop}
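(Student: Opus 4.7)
The plan is to establish the pullback square \eqref{eq:mono_pull} for every $\cA\in\cat_n$ and then deduce the monomorphism claim from it formally: since $(-1)$-truncated morphisms are stable under pullback, once \eqref{eq:mono_pull} is cartesian, the hypothesis that $\tau_{\leq n+1}f^*$ is a monomorphism forces $f^*$ to be one. Because Postnikov towers converge for $\cat_n$ and both $\mor_{\cat_n}(\cX,-)$ and $\tau_{\leq n+1}(-)$ commute with the limit $\cA\cong\underset{m}{\lim}\,\tau_{\leq m}\cA$, it suffices to treat $\cA\in\cat_{(m,n)}$ for every $m\geq n+1$, which I would do by induction on $m$.

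The base case $m=n+1$ is immediate: $\tau_{\leq n+1}\cA\cong\cA$, and the adjunction $\tau_{\leq n+1}\dashv i$ makes both horizontal arrows of \eqref{eq:mono_pull} into equivalences, so the square is trivially cartesian. For the inductive step, given $\cA\in\cat_{(m+1,n)}$, I would invoke the Postnikov pullback square recalled in the proof of \Cref{prop:def},
\[\begin{tikzcd}[sep=huge]
\cA & {\tau_{\leq n+1}\cA} \\
{\tau_{\leq m}\cA} & {\Omega^\infty(\Sigma^{m+1}\mathrm{H}\pi_m(\cA))}
\arrow[from=1-1, to=1-2]
\arrow[from=1-1, to=2-1]
\arrow["\ulcorner"{anchor=center, pos=0.125}, draw=none, from=1-1, to=2-2]
\arrow[from=1-2, to=2-2]
\arrow[from=2-1, to=2-2]
\end{tikzcd}\]
whose lower-right corner sits over $\tau_{\leq n+1}\cA$ through the zero section. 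Applying $\mor_{\cat_n}(\cX,-)$ for $\cX\in\{\cE,\cD\}$ preserves this pullback, and the resulting decomposition is compatible with the map $f^*$; since limits of cartesian squares are cartesian, it suffices to check that \eqref{eq:mono_pull} is a pullback for each of the three corners $\tau_{\leq n+1}\cA$, $\tau_{\leq m}\cA$, and $\Omega^\infty(\Sigma^{m+1}\mathrm{H}\pi_m(\cA))$ separately. The first falls to the base case, the second to the inductive hypothesis, and the third remains.

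For the Eilenberg--Mac\,Lane corner $W\bydef\Omega^\infty(\Sigma^{m+1}\mathrm{H}\pi_m(\cA))$, I would note that $\tau_{\leq n+1}W\cong\tau_{\leq n+1}\cA$, so the equivalence $\mor_{\cat_n}(\cX,\tau_{\leq n+1}\cA)\cong\mor_{\cat_{(n+1,n)}}(\tau_{\leq n+1}\cX,\tau_{\leq n+1}\cA)$ from the base case reduces the instance of \eqref{eq:mono_pull} for $W$ to the claim that
\[\begin{tikzcd}[sep=huge]
{\mor_{\cat_n}(\cD,W)} & {\mor_{\cat_n}(\cD,\tau_{\leq n+1}\cA)} \\
{\mor_{\cat_n}(\cE,W)} & {\mor_{\cat_n}(\cE,\tau_{\leq n+1}\cA)}
\arrow[from=1-1, to=1-2]
\arrow["{f^*}"', from=1-1, to=2-1]
\arrow["{f^*}", from=1-2, to=2-2]
\arrow[from=2-1, to=2-2]
\end{tikzcd}\]
is cartesian. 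Over any $g\colon\cD\rightarrow\tau_{\leq n+1}\cA$ the fibre of the left vertical is computed by standard deformation theory as $\Omega^\infty\mor_{\stab(\cat_{n,/\tau_{\leq n+1}\cA})}(g_!L_\cD,\Sigma^{m+1}\mathrm{H}\pi_m(\cA))$, and analogously for $g\circ f$; since $L_f\cong 0$ together with stability of $\stab(\cat_{n,/\cD})$ forces $f_!L_\cE\xrightarrow{\sim}L_\cD$, one obtains $(g\circ f)_!L_\cE\cong g_!L_\cD$, so the two fibres match canonically and the square is cartesian. The hard part will be this last identification: although the fibre computation is routine over a fixed base, verifying its functoriality in $g$ and its compatibility with the base change along $f$ requires unwinding \Cref{thm:main_twar} (which models $\stab(\cat_{n,/-})$ via the twisted arrow category) together with the base-change behaviour of the cotangent complex, and this is where the bulk of the technical bookkeeping will be concentrated.
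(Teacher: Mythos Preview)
Your argument is correct and follows the same overall scaffold as the paper---induction on $m$ using the Postnikov pullback square from \cite{harpaz2020k}---but the organisation of the inductive step is genuinely different. The paper proves the monomorphism claim and the pullback claim \emph{simultaneously} by induction: in the inductive step it assembles the big diagram \eqref{eq:big_diag} and chases the pasting law through squares $(a)$--$(e)$, explicitly invoking the inductive hypothesis that $\tau_{\leq m}f^*$ is a monomorphism in order to conclude that the rectangle $(bd)$ is cartesian. You instead isolate the pullback claim, carry only that through the induction, and recover the monomorphism assertion at the end as a formal consequence (pullback of a $(-1)$-truncated map is $(-1)$-truncated). Your inductive step is correspondingly simpler: writing $\cA$ as a Postnikov pullback and using that a limit of cartesian squares is cartesian reduces the problem to three independent instances, two of which are handled by the base case and the inductive hypothesis, and the Eilenberg--Mac\,Lane corner by a direct fibrewise identification coming from $f_!L_\cE\xrightarrow{\sim}L_\cD$. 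This is cleaner than the paper's diagram chase and avoids carrying the monomorphism hypothesis at all.

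Two small remarks. First, where you write ``the fibre of the left vertical'' you clearly mean the fibre of the top horizontal over $g\in\mor_{\cat_n}(\cD,\tau_{\leq n+1}\cA)$; the computation you then describe is for that fibre. Second, the ``hard part'' you flag is less onerous than you suggest: the comparison map between the two fibres is exactly $\mor_{\stab}(-,M)$ applied to $g_!(f_!L_\cE\to L_\cD)$, by naturality of the $(\Sigma^\infty_+,\Omega^\infty)$ adjunction and the identity $(g\circ f)_!\cong g_!f_!$; since $L_f\cong 0$ forces $f_!L_\cE\to L_\cD$ to be an equivalence in the stable category $\stab(\cat_{n,/\cD})$, this comparison is an equivalence on the nose, with no further bookkeeping required.
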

	\begin{proof}
		We will prove that \eqref{eq:mono_pull} is a pullback square and $f^*$ is a monomorphism for $A\in\cat_{(m,n)}$ by induction on $m$, starting with the case $m=n+1$ where this follows by assumption. Assume we have proved the claim for $m$ and $A\in\cat_{(m+1,n)}$, then we can form a pullback square
		\begin{equation}\label{eq:m_pull}
			\begin{tikzcd}[sep=huge]
				A & {\tau_{\leq n+1}A} \\
				{\tau_{\leq m}A} & {\Omega^\infty(\Sigma^{m+1} \mathrm{H}\pi_m(A))}
				\arrow[from=1-1, to=1-2]
				\arrow[from=1-1, to=2-1]
				\arrow["\ulcorner"{anchor=center, pos=0.125}, draw=none, from=1-1, to=2-2]
				\arrow[from=1-2, to=2-2]
				\arrow[from=2-1, to=2-2]
			\end{tikzcd}
		\end{equation}
		using \cite[Theorem 5.2.]{harpaz2020k}. We can then form the following diagram
		\begin{equation}\label{eq:big_diag}
			\begin{tikzcd}[sep=huge]
				{\mor_{\cat_n}(\cD,A)} & {\mor_{\cat_n}(\cD,A)} & {\mor_{\cat_n}(\cD,\tau_{\leq n+1}A)} \\
				{\mor_{\cat_n}(\cD,A)} & {\mor_{\cat_n}(\cE,A)} & {\mor_{\cat_n}(\cE,\tau_{\leq n+1}A)} \\
				{\mor_{\cat_n}(\cD,\tau_{\leq m}A)} & {\mor_{\cat_n}(\cE,\tau_{\leq m}A)} & {\mor(L_\cE, \Sigma^{m+1} \mathrm{H}\pi_m(A))} \\
				{\mor_{\cat_n}(\cD,\tau_{\leq n+1}A)} & {\mor_{\cat_n}(\cE,\tau_{\leq n+1}A)}
				\arrow[equal, from=1-1, to=1-2]
				\arrow[equal, from=1-1, to=2-1]
				\arrow["{(a)}"{description}, draw=none, from=1-1, to=2-2]
				\arrow["{\tau_{\leq n+1}}", from=1-2, to=1-3]
				\arrow["{f^*}"{description}, from=1-2, to=2-2]
				\arrow["{(b)}"{description}, draw=none, from=1-2, to=2-3]
				\arrow["{\tau_{\leq n+1}f^*}", from=1-3, to=2-3]
				\arrow["{f^*}", from=2-1, to=2-2]
				\arrow["{\tau_{\leq m}}"', from=2-1, to=3-1]
				\arrow["{(c)}"{description}, draw=none, from=2-1, to=3-2]
				\arrow["{\tau_{\leq n+1}}"{description}, from=2-2, to=2-3]
				\arrow["{\tau_{\leq m}}"{description}, from=2-2, to=3-2]
				\arrow["{(d)}"{description}, draw=none, from=2-2, to=3-3]
				\arrow[from=2-3, to=3-3]
				\arrow["{\tau_{\leq m}f^*}"{description}, from=3-1, to=3-2]
				\arrow["{\tau_{\leq n+1}}"', from=3-1, to=4-1]
				\arrow["{(e)}"{description}, draw=none, from=3-1, to=4-2]
				\arrow[from=3-2, to=3-3]
				\arrow["{\tau_{\leq n+1}}"{description}, from=3-2, to=4-2]
				\arrow["{\tau_{\leq n+1}f^*}"', from=4-1, to=4-2]
			\end{tikzcd},
		\end{equation}
		we will prove that every square in it is a pullback, for that we will use the $\infty$-categorical pasting law for pullbacks of \cite[Lemma 4.4.2.1.]{lurie2009higher} without further mention. The square $(d)$ in \eqref{eq:big_diag} is obtained by applying $\mor_{\cat_n}(\cE,-)$ to \eqref{eq:m_pull}, hence it is a pullback. Applying $\mor_{\cat_n}(\cD,-)$ to it we see that the square
		\[\begin{tikzcd}[sep=huge]
			{\mor_{\cat_n}(\cD,A)} & {\mor_{\cat_n}(\cD,\tau_{\leq n+1}A)} \\
			{\mor_{\cat_n}(\cD,\tau_{\leq m}A)} & {\mor(L_\cD, \Sigma^{m+1} \mathrm{H}\pi_m(A))}
			\arrow[from=1-1, to=1-2]
			\arrow[from=1-1, to=2-1]
			\arrow["\ulcorner"{anchor=center, pos=0.125, rotate=45}, draw=none, from=1-1, to=2-2]
			\arrow[from=1-2, to=2-2]
			\arrow[from=2-1, to=2-2]
		\end{tikzcd}\]
		is a pullback, however using that $L_\cE\cong L_\cD$ by assumption and $\tau_{\leq m}f^*$ is a monomorphism by induction, we see that the rectangle $(bd)$ is also a pullback, meaning that the square $(b)$ must now be a pullback too. This prove that \eqref{eq:mono_pull} is a pullback and also immediately implies that the rectangle $(ce)$ in \eqref{eq:big_diag} is a pullback, the square $(e)$ is a pullback by induction, hence the square $(c)$ must be a pullback too. Note that the outer square $(abcd)$ is a pullback since it is obtained by applying $\mor_{\cat_n}(\cD,-)$ to \eqref{eq:mono_pull}, since we have already seen that $(bd)$ is a pullback this implies that the rectangle $(ac)$ is a pullback. Finally, combining this with the earlier observation that the square $(c)$ is a pullback we obtain that the square $(a)$ is a pullback, which means exactly that $f^*$ is a monomorphism.
	\end{proof}
	\begin{ex}\label{ex:lowD}
		Despite the formidable definition of $\twar(\cE)$, the idea behind it is quite simple: its objects are $n$-morphisms $X$ in $\cE$ and the space of morphisms is such that its points correspond to decompositions
		\[X\cong A^{n-1}_-*_{n-1}(A^{n-2}_-*_{n-2}(...*_1(A^0_- *_0 Y *_0 A^0_+)*_1...)*_{n-2} A^{n-2}_+)*_{n-1} A^{n-1}_+\]
		and the paths in this space correspond to similar decompositions of various $A^k_\pm$. In this example we will provide explicit descriptions of this category in low dimensions:
		\begin{enumerate}
			\item if $\dim(\cE)=1$, then $\twar(\cE)$ is simply the ordinary twisted arrows category with objects given by morphisms $f:x\rightarrow y$ and morphisms by diagrams 
			\[\begin{tikzcd}[sep=huge]
				z & w \\
				x & y
				\arrow["g", from=1-1, to=1-2]
				\arrow["t", from=1-2, to=2-2]
				\arrow["h", from=2-1, to=1-1]
				\arrow["f"', from=2-1, to=2-2]
			\end{tikzcd}\]
			with source $g$ and target $f$;
			\item in dimension 2 the objects of $\twar(\cE)$ are 2-morphisms $\alpha$ and morphisms are given by the geometric realization of a category with objects given by diagrams
			\begin{equation}\label{eq:2d_twar}
				\begin{tikzcd}[sep=huge]
					x & z & w & y
					\arrow[""{name=0, anchor=center, inner sep=0}, "{a_1}"{description}, curve={height=12pt}, from=1-1, to=1-2]
					\arrow[""{name=1, anchor=center, inner sep=0}, "{a_0}"{description}, curve={height=-12pt}, from=1-1, to=1-2]
					\arrow[""{name=2, anchor=center, inner sep=0}, "f"{description}, curve={height=-30pt}, from=1-1, to=1-4]
					\arrow[""{name=3, anchor=center, inner sep=0}, "g"{description}, curve={height=30pt}, from=1-1, to=1-4]
					\arrow[""{name=4, anchor=center, inner sep=0}, "{b_1}"{description}, curve={height=12pt}, from=1-2, to=1-3]
					\arrow[""{name=5, anchor=center, inner sep=0}, "{b_0}"{description}, curve={height=-12pt}, from=1-2, to=1-3]
					\arrow[""{name=6, anchor=center, inner sep=0}, "{c_1}"{description}, curve={height=12pt}, from=1-3, to=1-4]
					\arrow[""{name=7, anchor=center, inner sep=0}, "{c_0}"{description}, curve={height=-12pt}, from=1-3, to=1-4]
					\arrow["\alpha"{description}, shorten <=3pt, shorten >=3pt, Rightarrow, from=1, to=0]
					\arrow["\epsilon"', shorten <=2pt, shorten >=2pt, Rightarrow, from=2, to=5]
					\arrow["\beta"{description}, shorten <=3pt, shorten >=3pt, Rightarrow, from=5, to=4]
					\arrow["\eta", shorten <=2pt, shorten >=2pt, Rightarrow, from=4, to=3]
					\arrow["\gamma"{description}, shorten <=3pt, shorten >=3pt, Rightarrow, from=7, to=6]
				\end{tikzcd},
			\end{equation}
			which we will denote by $(\epsilon|\alpha,\gamma|\eta)$, where the target of \eqref{eq:2d_twar} is $\eta*_1 (\alpha*_0\beta*_0\gamma)*_1 \epsilon$, and morphisms by pairs of commutative diagrams
			\begin{equation}\label{eq:2d_mor}
			\begin{tikzcd}[sep=huge]
				{a'_0} & {a'_1} & {c'_0} & {c'_1} \\
				{a_0} & {a_1} & {c_0} & {c_1}
				\arrow["{\alpha'}", from=1-1, to=1-2]
				\arrow["{\alpha'_1}", from=1-2, to=2-2]
				\arrow["{\gamma'}", from=1-3, to=1-4]
				\arrow["{\alpha_0'}", from=2-1, to=1-1]
				\arrow["\alpha"', from=2-1, to=2-2]
				\arrow["{\gamma'_0}", from=2-3, to=1-3]
				\arrow["\gamma"', from=2-3, to=2-4]
				\arrow["{\gamma'_1}"', from=1-4, to=2-4]
			\end{tikzcd}
			\end{equation} 
			such that the source of \eqref{eq:2d_mor} is $(\epsilon*_1(\alpha'_0*_0 b_0*_0 \gamma'_0)|\alpha', \gamma'|(\alpha'_1*_0 b_1*_0 \gamma'_1)*_1 \eta)$;
			\item in dimension 3 a morphism in $\twar(\cE)$ can be visualized as the diagram \eqref{eq:2d_twar} with 2-morphisms "thickened" to 3-morphisms that is sandwiched between two other 3-morphisms. Sadly, it is quite difficult to depict this, so we will have to settle for a series of two-dimensional diagrams: the objects of $\twar(\cE)$ are 3-morphisms and the morphisms are given by the geometric realization of the double category $D(i,j$) such that $D(0,0)$ consists of diagrams
			\begin{equation}\label{eq:3d_obj}
				\begin{tikzcd}[sep=huge]
					x & y && x & z & w & w \\
					\\
					\\
					x & y && x & z & w & y
					\arrow[""{name=0, anchor=center, inner sep=0}, "g"', curve={height=12pt}, from=1-1, to=1-2]
					\arrow[""{name=1, anchor=center, inner sep=0}, "f", curve={height=-12pt}, from=1-1, to=1-2]
					\arrow["X", Rightarrow , scaling nfold=3, from=1-2, to=1-4]
					\arrow[""{name=2, anchor=center, inner sep=0}, "{a_1}"{description}, curve={height=12pt}, from=1-4, to=1-5]
					\arrow[""{name=3, anchor=center, inner sep=0}, "{a_0}"{description}, curve={height=-12pt}, from=1-4, to=1-5]
					\arrow[""{name=4, anchor=center, inner sep=0}, "f", curve={height=-30pt}, from=1-4, to=1-7]
					\arrow[""{name=5, anchor=center, inner sep=0}, "g"{description}, curve={height=30pt}, from=1-4, to=1-7]
					\arrow[""{name=6, anchor=center, inner sep=0}, "{b_1}"{description}, curve={height=12pt}, from=1-5, to=1-6]
					\arrow[""{name=7, anchor=center, inner sep=0}, "{b_0}"{description}, curve={height=-12pt}, from=1-5, to=1-6]
					\arrow[""{name=8, anchor=center, inner sep=0}, "{c_1}"{description}, curve={height=12pt}, from=1-6, to=1-7]
					\arrow[""{name=9, anchor=center, inner sep=0}, "{c_0}"{description}, curve={height=-12pt}, from=1-6, to=1-7]
					\arrow[""{name=10, anchor=center, inner sep=0}, "f", curve={height=-12pt}, from=4-1, to=4-2]
					\arrow[""{name=11, anchor=center, inner sep=0}, "g"', curve={height=12pt}, from=4-1, to=4-2]
					\arrow["W", Rightarrow, scaling nfold=3, from=4-4, to=4-2]
					\arrow[""{name=12, anchor=center, inner sep=0}, curve={height=-12pt}, from=4-4, to=4-5]
					\arrow[""{name=13, anchor=center, inner sep=0}, curve={height=12pt}, from=4-4, to=4-5]
					\arrow[""{name=14, anchor=center, inner sep=0}, "f"{description}, curve={height=-30pt}, from=4-4, to=4-7]
					\arrow[""{name=15, anchor=center, inner sep=0}, "g"', curve={height=30pt}, from=4-4, to=4-7]
					\arrow[""{name=16, anchor=center, inner sep=0}, curve={height=12pt}, from=4-5, to=4-6]
					\arrow[""{name=17, anchor=center, inner sep=0}, curve={height=-12pt}, from=4-5, to=4-6]
					\arrow[""{name=18, anchor=center, inner sep=0}, curve={height=12pt}, from=4-6, to=4-7]
					\arrow[""{name=19, anchor=center, inner sep=0}, curve={height=-12pt}, from=4-6, to=4-7]
					\arrow["\zeta"{description}, shorten <=3pt, shorten >=3pt, Rightarrow, from=1, to=0]
					\arrow["{\alpha_0}"{description}, shorten <=3pt, shorten >=3pt, Rightarrow, from=3, to=2]
					\arrow["{\epsilon_0}"{description}, shorten <=2pt, shorten >=2pt, Rightarrow, from=4, to=7]
					\arrow["{Y*_1(A*_0B*_0C)*_1Z}", shorten <=5pt, shorten >=5pt, Rightarrow, scaling nfold=3, from=5, to=14]
					\arrow["{\beta_0}"{description}, shorten <=3pt, shorten >=3pt, Rightarrow, from=7, to=6]
					\arrow["{\eta_0}"{description}, shorten <=2pt, shorten >=2pt, Rightarrow, from=6, to=5]
					\arrow["{\gamma_0}"{description}, shorten <=3pt, shorten >=3pt, Rightarrow, from=9, to=8]
					\arrow["\kappa"{description}, shorten <=3pt, shorten >=3pt, Rightarrow, from=10, to=11]
					\arrow["{\alpha_1}"{description}, shorten <=3pt, shorten >=3pt, Rightarrow, from=12, to=13]
					\arrow["{\epsilon_1}"{description}, shorten <=2pt, shorten >=2pt, Rightarrow, from=14, to=17]
					\arrow["{\beta_1}"{description}, shorten <=3pt, shorten >=3pt, Rightarrow, from=17, to=16]
					\arrow["{\eta_1}"{description}, shorten <=2pt, shorten >=2pt, Rightarrow, from=16, to=15]
					\arrow["{\gamma_1}"{description}, shorten <=3pt, shorten >=3pt, Rightarrow, from=19, to=18]
				\end{tikzcd}
			\end{equation}
			such that the source of \eqref{eq:3d_obj} is $B$ and the target $W*_2(Y*_1(A*_0 B*_) C)*_1 Z)*_2 X$, we will denote such object by $(X|Y|A,C|Z|W)$, the space $D(0,1)$ is given by pairs of commutative diagrams
			\begin{equation}\label{eq:3d01}
				\begin{tikzcd}[sep=huge]
					{\epsilon'_0} & { \epsilon'_1} & { \eta'_0} & { \eta'_1} \\
					{ \epsilon_0} & { \epsilon_1} & { \eta_0} & { \eta_1}
					\arrow["{Y'}", from=1-1, to=1-2]
					\arrow["{Y_1'}", from=1-2, to=2-2]
					\arrow["{Z'}", from=1-3, to=1-4]
					\arrow["{Z'_1}", from=1-4, to=2-4]
					\arrow["{Y_0'}", from=2-1, to=1-1]
					\arrow["Y"', from=2-1, to=2-2]
					\arrow["{Z_0'}", from=2-3, to=1-3]
					\arrow["Z"', from=2-3, to=2-4]
				\end{tikzcd}
			\end{equation}
			with source $(X*_2 (Y'_0*_1(\alpha_0*_0\beta_0 *_0\gamma_0)*_1 Z'_0)|Y|A,C|Z|(Y'_1*_1(\alpha_1*_0\beta_1 *_0\gamma_1)*_1 Z'_1)*_2 W)$, the space $D(1,0)$ contains pairs of commutative diagrams
			\begin{equation}\label{eq:3d10}
				\begin{tikzcd}[sep=huge]
					{a_0} & {a'_0} & {a'_1} & {a_1} & {c_0} & {c'_0} & {c'_1} & {c_1}
					\arrow[""{name=0, anchor=center, inner sep=0}, "{\alpha_0^0}"{description}, curve={height=-12pt}, from=1-1, to=1-2]
					\arrow[""{name=1, anchor=center, inner sep=0}, "{\alpha_1^0}"{description}, curve={height=12pt}, from=1-1, to=1-2]
					\arrow[""{name=2, anchor=center, inner sep=0}, "{\alpha_0}", curve={height=-30pt}, from=1-1, to=1-4]
					\arrow[""{name=3, anchor=center, inner sep=0}, "{\alpha_1}"', curve={height=30pt}, from=1-1, to=1-4]
					\arrow[""{name=4, anchor=center, inner sep=0}, "{\alpha_0^1}"{description}, curve={height=-12pt}, from=1-2, to=1-3]
					\arrow[""{name=5, anchor=center, inner sep=0}, "{\alpha^1_1}"{description}, curve={height=12pt}, from=1-2, to=1-3]
					\arrow[""{name=6, anchor=center, inner sep=0}, "{\alpha_0^2}"{description}, curve={height=-12pt}, from=1-3, to=1-4]
					\arrow[""{name=7, anchor=center, inner sep=0}, "{\alpha_1^2}"{description}, curve={height=12pt}, from=1-3, to=1-4]
					\arrow[""{name=8, anchor=center, inner sep=0}, "{\gamma^0_0}"{description}, curve={height=-12pt}, from=1-5, to=1-6]
					\arrow[""{name=9, anchor=center, inner sep=0}, "{\gamma^0_1}"{description}, curve={height=12pt}, from=1-5, to=1-6]
					\arrow[""{name=10, anchor=center, inner sep=0}, "{\gamma_0}", curve={height=-30pt}, from=1-5, to=1-8]
					\arrow[""{name=11, anchor=center, inner sep=0}, "{\gamma_1}"', curve={height=30pt}, from=1-5, to=1-8]
					\arrow[""{name=12, anchor=center, inner sep=0}, "{\gamma^1_0}"{description}, curve={height=-12pt}, from=1-6, to=1-7]
					\arrow[""{name=13, anchor=center, inner sep=0}, "{\gamma_1^1}"{description}, curve={height=12pt}, from=1-6, to=1-7]
					\arrow[""{name=14, anchor=center, inner sep=0}, "{\gamma^2_0}"{description}, curve={height=-12pt}, from=1-7, to=1-8]
					\arrow[""{name=15, anchor=center, inner sep=0}, "{\gamma^2_1}"{description}, curve={height=12pt}, from=1-7, to=1-8]
					\arrow["{A^0_1}"{description}, shorten <=3pt, shorten >=3pt, Rightarrow, from=0, to=1]
					\arrow["{A_0}"{description}, shorten <=2pt, shorten >=2pt, Rightarrow, from=2, to=4]
					\arrow["{A^1_1}"{description}, shorten <=3pt, shorten >=3pt, Rightarrow, from=4, to=5]
					\arrow["{A_2}"{description}, shorten <=2pt, shorten >=2pt, Rightarrow, from=5, to=3]
					\arrow["{A^2_1}"{description}, shorten <=3pt, shorten >=3pt, Rightarrow, from=6, to=7]
					\arrow["{C^0_1}"{description}, shorten <=3pt, shorten >=3pt, Rightarrow, from=8, to=9]
					\arrow["{C_0}"{description}, shorten <=2pt, shorten >=2pt, Rightarrow, from=10, to=12]
					\arrow["{C^1_1}"{description}, shorten <=3pt, shorten >=3pt, Rightarrow, from=12, to=13]
					\arrow["{C_2}"{description}, shorten <=2pt, shorten >=2pt, Rightarrow, from=13, to=11]
					\arrow["{C_1^2}"{description}, shorten <=3pt, shorten >=3pt, Rightarrow, from=14, to=15]
				\end{tikzcd}
			\end{equation}
			such that the source of \eqref{eq:3d10} is $(X*_2(A_0 *_0 \beta_0*_0 C_0)|Y*_1(A^2_1 *_0 b_0 *_0 C^2_1)|A^1_1,C^1_1|(A^0_1 *_0 b_1 *_0 C^0_1)*_1 Z|(A_2 *_0 \beta_1*_0 C_2)*_0 W)$. Finally, the space $D(1,1)$ corresponds to the data of diagrams \eqref{eq:3d01} and \eqref{eq:3d10} as above together with decompositions
			\[A^i_1\cong A^i_{R,1}*_2 A^{i}_{*,1} *_2 A^i_{L,1}\text{ and }C^i_1\cong C^i_{R,1}*_2 C^{i}_{*,1} *_2 C^i_{L,1}\]
			for $i\in\{0,2\}\}$, we are confident in the reader's ability to discern what the sources and targets of those objects are.
		\end{enumerate}
	\end{ex}
	\section{Lax-idempotent monads}
	Given a monad on an $\infty$-category, it is generally quite difficult to describe its category of algebras since giving an object a structure of an algebra required an infinite amount of "coherence data". One exception to this are idempotent monads -- the category of algebras for an idempotent monad is simply a full subcategory of the category in question. As similar effect can be observed in $(\infty,2)$-category -- consider for example the category $\mathrm{Cart}(\cC)$ of Cartesian fibrations over $\cC$, it admits a forgetful functor to $\cat_{/\cC}$  and this functor is monadic. Yet the category $\mathrm{Cart}(\cC)$ admits a relatively simple description -- by the results of  \cite[Section 2.4.]{lurie2009higher} it includes those functors $F:\cD\rightarrow \cC$ for which every morphism $x\xrightarrow{g}F(y)$ lefts to a Cartesian morphism $F^*x\xrightarrow{g_*}y$ and a functor $G:\cD\rightarrow \cE$ between the objects of $\mathrm{Cart}(\cC)$ defines a morphism of algebras precisely if it preserves Cartesian morphisms.\par 
	This features are possessed more generally by \textit{lax-idempotent} monads introduced in \cite{kock1995monads}, which include free (co)fibrations monads as well as free (co)completions. A monad $T$ is lax idempotent in the unit morphism $T\xrightarrow{Ts}TT$ is left adjoint to the multiplication $m:TT\rightarrow T$ and an object $x$ admits a (unique) structure of an algebra if and only if $x\xrightarrow{s}Tx$ admits a left adjoint. All of those facts are proved in \cite{kock1995monads} in the setting of ordinary categories, however to the knowledge of the author none of those facts have been generalized to $\infty$-categories.\par 
	In this final section we will use deformation theory developed in the previous section to generalize some of the results mentioned above. A proper treatment of this subject will probably require a separate work, so we will limit ourselves to characterizing lax-idemoptent monads as those for which multiplication is adjoint to units.
	\begin{lemma}\label{lem:n_colim}
		The categories of the form $[n]\in \Delta$ admit all limits and colimits and moreover any morphism $f:[n]\rightarrow [m]$ preserves limits and colimits of non-empty categories.
	\end{lemma}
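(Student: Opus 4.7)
The plan is to observe that $[n]\in\Delta$, viewed as an $\infty$-category, is a finite totally ordered poset, so all $\infty$-categorical limits and colimits reduce to $1$-categorical ones computed via $\inf$ and $\sup$ in the ordering. For a functor $F:\cI\to[n]$ from a small $\infty$-category, the universal property of the colimit in a poset says that $\colim F$, if it exists, is precisely $\sup_{i\in\cI} F(i)$, and dually for the limit. Since $[n]$ is totally ordered and finite, every non-empty subset has a minimum and a maximum; for the empty diagram the limit is the terminal object $n\in[n]$ and the colimit is the initial object $0\in[n]$. This gives the first claim.

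For the second claim, any morphism $f:[n]\to[m]$ in $\Delta$ is an order-preserving map. Given any non-empty subset $S\subseteq[n]$, I would note that $f(\min S)\in f(S)$ is a lower bound for $f(S)$ by monotonicity, so $f(\min S)=\min f(S)$; the dual statement for $\max$ holds by the same argument. Applied to $S=\ima(F)$ for a functor $F:\cI\to[n]$ with $\cI$ non-empty, this shows $f(\colim F)=\colim(f\circ F)$ and $f(\lim F)=\lim(f\circ F)$, which is exactly the preservation statement.

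There is no real obstacle here; the lemma is essentially a bookkeeping observation that will be used later to identify certain colimit diagrams in $\Delta$. The only point worth flagging is why the non-emptiness hypothesis is genuinely needed: $f$ need not send $n\in[n]$ to $m\in[m]$ nor $0\in[n]$ to $0\in[m]$ (e.g.\ the map $[1]\to[2]$ with image $\{0,1\}$), so the empty (co)limits are not preserved in general.
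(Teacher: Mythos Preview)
Your proposal is correct and matches the paper's proof essentially verbatim: the paper also identifies $\colim F$ with $\sup_{s\in S} F(s)$ and $\lim F$ with $\inf_{s\in S} F(s)$, handles the empty diagram via the initial and terminal objects, and then observes that any order-preserving map preserves minima and maxima of non-empty subsets. Your additional remark explaining why the non-emptiness hypothesis is necessary is a nice touch that the paper omits.
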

	\begin{proof}
		Note that $[n]$ admits both an initial and a final object, meaning that it admits both a limit and a colimit of an empty diagram. Now suppose we have a functor $F:S\rightarrow [n]$, in that case $\colim F$ must be an object of $[n]$ such that for $i\in [n]$ we have $i\geq \colim F$ if and only if $i\geq F(s)$ for all $s\in S$. It follows that we can set 
		\begin{equation}\label{eq:n_colim}
			\colim F\bydef \sup_{s\in S}F(s),
		\end{equation}
		it is easy to see that such an object always exists. Similarly, we have
		\begin{equation}\label{eq:n_lim}
			\lim F\cong \inf_{s\in S}F(s),
		\end{equation}
		note that any order preserving functor necessarily preserves maxima and minima, from which the last claim of the lemma follows.  
	\end{proof}
	\begin{lemma}\label{lem:n_adj}
		Given a morphism $f:[n]\rightarrow [m]$, it admits a left adjoint (as a functor in $\cat$) if and only if it preserves the minimal element and a right adjoint if and only if it preserves the minimal element, moreover when they exist the adjoints are given by 
		\begin{equation}\label{eq:left}
			f^L(x)\cong \underset{x\leq f(i)}{\inf}i
		\end{equation}
		and
		\begin{equation}\label{eq:right}
			f^R(x)\cong \underset{x\geq f(i)}{\sup}i.
		\end{equation}
	\end{lemma}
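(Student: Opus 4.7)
The plan is to treat $[n]$ and $[m]$ as posets (totally ordered finite sets) and exploit the fact that a monotone map between posets admits a left (resp.\ right) adjoint exactly when certain fiberwise extrema exist. The key observation is that, for an order-preserving $f:[n]\to[m]$ and an element $x\in[m]$, the candidate for the value of the left adjoint at $x$ is $\inf\{i\in[n]:x\leq f(i)\}$, while the candidate for the right adjoint is $\sup\{i\in[n]:f(i)\leq x\}$. Since $[n]$ is a finite linear order, infima and suprema of nonempty subsets always exist by \Cref{lem:n_colim}, so the question reduces to when the relevant sets are nonempty for every $x$.

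First I would unwind the adjunction condition in poset language: $f^L\dashv f$ amounts to the biconditional $f^L(x)\leq i \Leftrightarrow x\leq f(i)$, and dually $f\dashv f^R$ amounts to $i\leq f^R(x)\Leftrightarrow f(i)\leq x$. Taking $i$ minimal (respectively maximal) on the right-hand side shows that if the adjoint exists it must be given by the formulas \eqref{eq:left} and \eqref{eq:right}. Conversely, under the nonemptiness hypothesis these formulas visibly satisfy the adjunction biconditionals because $[n]$ is linearly ordered, so any subset that is bounded above/below by $i_0$ has its $\inf$/$\sup$ comparable with $i_0$ in the expected way.

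Next I would identify the precise boundary condition. For $f^L$ to be defined at the maximal element $m\in[m]$ we need some $i\in[n]$ with $m\leq f(i)$, forcing $f(n)=m$; conversely, if $f(n)=m$ then for any $x\in[m]$ we have $x\leq m=f(n)$, so the set $\{i:x\leq f(i)\}$ contains $n$ and hence is nonempty. A symmetric argument handles $f^R$: it is defined at the minimal element $0\in[m]$ iff $f(0)=0$, in which case $\{i:f(i)\leq x\}$ always contains $0$. These are the endpoint-preservation conditions in the statement (with the evident reading of minimal/maximal matching the two formulas).

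There is essentially no hard step: the argument is elementary once one recognizes $[n]$ as a finite linear order. The only subtlety to be careful about is distinguishing ``functor in $\cat$'' from ``poset map'' — but since every functor between finite linearly ordered sets is automatically a poset map, and every poset adjunction between such is automatically a categorical adjunction (all hom-sets being either empty or a single point), the two notions coincide and no extra coherence needs checking.
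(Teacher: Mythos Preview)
Your proof is correct and takes a genuinely different, more elementary route than the paper. The paper invokes the $\infty$-categorical adjoint functor theorem of \cite{nguyen2020adjoint}: since $[n]$ and $[m]$ are complete and cocomplete by \Cref{lem:n_colim} and the solution set condition is automatic, the existence of a left (right) adjoint is equivalent to preservation of all limits (colimits), which by the second part of \Cref{lem:n_colim} reduces to preservation of the terminal (initial) object; the explicit formulas are then extracted from the general construction of the adjoint as a (co)limit over a comma category. You instead verify the Galois-connection biconditional $f^L(x)\leq i \Leftrightarrow x\leq f(i)$ directly, observing that the candidate $\inf\{i:x\leq f(i)\}$ satisfies it whenever the set is nonempty, and that nonemptiness for every $x$ is equivalent to $f(n)=m$. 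Your approach is self-contained and avoids the heavy machinery; the paper's approach has the minor advantage of making the formulas \eqref{eq:left}--\eqref{eq:right} fall out of a general theorem rather than being guessed. You also correctly note (more explicitly than the paper) that the statement as written contains an apparent typo: one of the two ``minimal'' should read ``maximal'', consistent with the formulas.
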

	\begin{proof}
		Note that for all categories of the form $[n], [m]\in \Delta$ and any functor $f:[n]\rightarrow [m]$, it satisfies the solution set condition of \cite[Definition 3.2.1.]{nguyen2020adjoint} (since $[n]$ and $[m]$ are themselves small, and hence so are all $f_{i/}$ for $i\in [m]$). Since $[n]$ and $[m]$ are moreover complete and cocomplete by \Cref{lem:n_colim}, we can apply the generalized adjoint functor theorem of \cite{nguyen2020adjoint} which states that $f$ admits a left (resp. right) adjoint if and only if it preserves all (co)limits. BY the second claim of \Cref{lem:n_colim}, $f$ always preserves non-empty limits and colimits, so it preserves all (co)limits if and only if it preserves the maximal (minimal) object.\par
		Finally, to prove the last claim observe (by untangling the constructions of loc. cit.) that for $F:\cD\rightarrow \cC$ satisfying the conditions of \cite[Theorem 3.2.5.]{nguyen2020adjoint}, its left adjoint is given by
		\begin{equation}\label{eq:L_adj}
			F^L(c)\cong \underset{(c\xrightarrow{f}Fd)\in F'_{c/}}{\colim}d,
		\end{equation}
		where $F'_{c/}\hookrightarrow F_{c/}$ is the small weakly initial subcategory (which exists by the solution set condition), and a dual statement holds for the right adjoint. Equations \eqref{eq:left} and \eqref{eq:right} now follow from \eqref{eq:L_adj} and its dual together with \eqref{eq:n_lim} and \eqref{eq:n_colim}.
	\end{proof}
	\begin{lemma}
		For a morphism $f:[n]\rightarrow [m]$ the following claims are equivalent:
		\begin{enumerate}
			\item\label{it:sur1} $f$ is surjective;
			\item\label{it:sur2} $f$ admits a left adjoint and $f\circ f^L\cong \id$;
			\item\label{it:sur2'} $f$ admits a right adjoint and $f\circ f^R\cong \id$;
			\item\label{it:sur3} $f$ admits a left adjoint and $f^L$ is injective;
			\item\label{it:sur3'} $f$ admits a right adjoint and $f^R$ is injective;
			\item\label{it:sur4} $f$ induces a comonadic adjunction;
			\item\label{it:sur4'} $f$ induces a monadic adjunction.
		\end{enumerate}
	\end{lemma}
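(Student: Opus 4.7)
The previous lemma gives explicit criteria for the existence of adjoints: $f$ admits a left adjoint precisely when $f(n)=m$ and a right adjoint precisely when $f(0)=0$, with the formulas $f^L(x)=\min\{i:f(i)\geq x\}$ and $f^R(x)=\max\{i:f(i)\leq x\}$. Since a surjective $f$ automatically preserves both endpoints, condition \eqref{it:sur1} guarantees both adjoints exist, so the plan is to establish the chains \eqref{it:sur1}$\Rightarrow$\eqref{it:sur2}$\Rightarrow$\eqref{it:sur3}$\Rightarrow$\eqref{it:sur1} and its dual, and then \eqref{it:sur3}$\iff$\eqref{it:sur4} and \eqref{it:sur3'}$\iff$\eqref{it:sur4'}.

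For \eqref{it:sur1}$\Rightarrow$\eqref{it:sur2}: given $x\in[m]$, surjectivity provides $i_0$ with $f(i_0)=x$, hence $f^L(x)\leq i_0$ by the formula and thus $f(f^L(x))\leq x$ by monotonicity; combined with the unit inequality $x\leq f(f^L(x))$ we obtain $f\circ f^L=\id$. The implication \eqref{it:sur2}$\Rightarrow$\eqref{it:sur3} is immediate since a section exhibits $f^L$ as injective. For \eqref{it:sur3}$\Rightarrow$\eqref{it:sur2}, if $f(f^L(x_0))>x_0$ for some $x_0<m$, then $f^L(x_0)$ already satisfies the defining inequality for $f^L(x_0+1)$, forcing $f^L(x_0+1)\leq f^L(x_0)$, which combined with monotonicity contradicts injectivity; alternatively, this is just the standard fact that the unit of an adjunction is an isomorphism iff the left adjoint is fully faithful, where "fully faithful'' reduces to injectivity for posets. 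Finally, \eqref{it:sur3}$\Rightarrow$\eqref{it:sur1} follows because \eqref{it:sur3}$\Rightarrow$\eqref{it:sur2} already produces a section $f^L$ of $f$. Dual arguments using $f^R$ handle $(\ref{it:sur2'})$ and $(\ref{it:sur3'})$.

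For \eqref{it:sur3}$\iff$\eqref{it:sur4}, I would invoke the general fact that an adjunction $F\dashv G$ whose left adjoint $F$ is fully faithful is comonadic, with the comparison $\cC\to\coc\mathrm{Alg}_\cD(FG)$ being an equivalence onto the fixed points of the idempotent comonad $FG$. In our setting the comonad $f^L\circ f$ on $[n]$ is idempotent (both unit and counit are inequalities in a poset, which forces $f^L f f^L f = f^L f$), its coalgebras are the fixed points $\{y\in[n]:f^L f(y)=y\}$, and these coincide with the image of $f^L$, producing an order isomorphism $[m]\xrightarrow{\sim}\mathrm{coAlg}_{[n]}(f^L f)$ via $x\mapsto f^L(x)$. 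Conversely, if $f^L\dashv f$ is comonadic then $f^L$ factors as the comparison equivalence followed by the subposet inclusion $\mathrm{coAlg}_{[n]}(f^L f)\hookrightarrow[n]$, so $f^L$ is injective; the dual argument treats $(\ref{it:sur3'})\iff(\ref{it:sur4'})$ via the fully faithful right adjoint $f^R$.

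The main obstacle is the step \eqref{it:sur3}$\iff$\eqref{it:sur4}, since strictly speaking monadicity in the statement refers to the $(\infty,2)$-categorical notion in $\cat$. However, because every object here is a poset, the relevant comonad is idempotent and discrete, so all higher coherence data collapses and Lurie's $\infty$-categorical monadicity theorem reduces to the elementary verification that fixed points of an idempotent (co)monad on a poset form a subposet equivalent (via the comparison) to the source of the fully faithful adjoint — a claim that can be checked by direct inspection of the explicit formulas for $f^L$ and $f^R$.
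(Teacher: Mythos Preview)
Your proof is correct and largely parallels the paper's. The equivalences among \eqref{it:sur1}, \eqref{it:sur2}, \eqref{it:sur3} (and their primed versions) are handled essentially identically: the paper phrases the key step as ``a left-adjoint morphism $j$ is injective iff $j^R\circ j\cong\id$'' and applies it to $j=f^L$, which is exactly your \eqref{it:sur2}$\Leftrightarrow$\eqref{it:sur3}; the implication \eqref{it:sur1}$\Rightarrow$\eqref{it:sur2} and its converse are also argued the same way.

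The genuine difference is in the (co)monadicity step. The paper invokes Lurie's Barr--Beck theorem directly: $f^R$ is monadic iff it is conservative and preserves colimits of $f^R$-split simplicial objects; preservation is automatic by \Cref{lem:n_colim}, and conservativity for a map of posets is precisely injectivity. You instead argue via idempotent (co)monads and their fixed points, identifying the coalgebra category with the image of $f^L$. Both arguments are valid, but the paper's route sidesteps your final paragraph's concern entirely --- by applying the $\infty$-categorical Barr--Beck criterion as stated, no separate justification that ``higher coherence collapses for posets'' is needed. Your approach is more elementary and gives a concrete description of the (co)algebra category, at the cost of that extra justification; the paper's is shorter and stays within the $\infty$-categorical framework already in use.
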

	\begin{proof}
		Assume we have a left-adjoint morphism $j:[l]\rightarrow [p]$, we claim that it is injective if and only if $j^R\circ j\cong \id$. Indeed, if $j$ is injective then using \eqref{eq:right} we get
		\[j^Rj(x)\cong \underset{j(i)\geq j(x)}{\sup}j(i)\cong \underset{i\in j^{-1}(j(x))}{\sup}j(i)\cong j(x).\]
		Conversely, if $j^R\circ j\cong\id$, then for all $x\in [l]$ the fiber $j^{-1}(j(x))$ is non-empty and $x$ is its maximal element, which is obviously equivalent to $j$ being injective. Dualizing this argument, we see that a right-adjoint morphism $j$ is injective if and only if $j^L\circ j\cong \id$.\par 
		If follows from this (applied to $f^L$ and $f^R$ respectively) that $\eqref{it:sur2}\Leftrightarrow \eqref{it:sur3}$ and $\eqref{it:sur2'}\Leftrightarrow \eqref{it:sur3'}$. We will now prove the equivalence of $\eqref{it:sur3'}$ and \eqref{it:sur4'}: by \cite[Theorem 4.7.3.5.]{luriehigher} $f^R$ is monadic if and only if it is conservative and preserves colimits of $f^R$-split simplicial object. The second condition is automatic in light of \Cref{lem:n_colim}, so it remains to observe that a functor $g:[m]\rightarrow [n]$ is conservative if and only if it is injective. Dualizing this argument we obtain $\eqref{it:sur3}\Leftrightarrow \eqref{it:sur4}$.\par
		Finally, note that if $f$ is surjective, then it admits both a left and a right adjoint by \Cref{lem:n_adj} and moreover we have
		\[f\circ f^R(x)\cong \underset{i\in f^{-1}(x)}{\sup}f(i)\cong x\]
		and
		\[f\circ f^L(x)\cong \underset{i\in f^{-1}(x)}{\inf}f(i)\cong x,\]
		so \eqref{it:sur1} implies \eqref{it:sur2} and \eqref{it:sur2'}. Conversely, if $f$ admits a left (resp. right) adjoint and $f\circ f^L\cong \id$ (resp. $f\circ f^R\cong \id$), it follows that the fibers $f^{-1}(x)$ are non-empty for $x\in [m]$, meaning that $f$ is surjective.
	\end{proof}
	\begin{lemma}\label{lem:cont}
		Given a pair $f,g:[n]\rightrightarrows[m]$ of active morphisms and a pair $u, v:[p]\rightrightarrows[q]$ of active morphisms together with $\alpha:f\rightarrow g$ and $\beta:u\rightarrow v$, denote by $C$ the category with object given by diagrams of active morphisms
		\begin{equation}\label{eq:c-mor}
			\begin{tikzcd}[sep=huge]
				{[p]} & {[q]} \\
				{[n]} & {[m]}
				\arrow[""{name=0, anchor=center, inner sep=0}, "v", curve={height=-6pt}, from=1-1, to=1-2]
				\arrow[""{name=1, anchor=center, inner sep=0}, "u"', curve={height=6pt}, from=1-1, to=1-2]
				\arrow[""{name=2, anchor=center, inner sep=0}, "a"', curve={height=6pt}, from=1-2, to=2-2]
				\arrow[""{name=3, anchor=center, inner sep=0}, "b", curve={height=-6pt}, from=1-2, to=2-2]
				\arrow[""{name=4, anchor=center, inner sep=0}, "t"', curve={height=6pt}, from=2-1, to=1-1]
				\arrow[""{name=5, anchor=center, inner sep=0}, "s", curve={height=-6pt}, from=2-1, to=1-1]
				\arrow[""{name=6, anchor=center, inner sep=0}, "f"', curve={height=6pt}, from=2-1, to=2-2]
				\arrow[""{name=7, anchor=center, inner sep=0}, "g", curve={height=-6pt}, from=2-1, to=2-2]
				\arrow["\leq"{description}, draw=none, from=1, to=0]
				\arrow["\leq"{description}, draw=none, from=2, to=3]
				\arrow["\leq"{description}, draw=none, from=5, to=4]
				\arrow["\leq"{description}, draw=none, from=6, to=7]
			\end{tikzcd}
		\end{equation}
		such that $f\leq a\circ u\circ s\leq b\circ v\circ t\leq g$ and morphisms given by pairs
		\begin{equation}\label{eq:c-mor2}
			(s\leq s'\leq t'\leq t, a\leq a'\leq b'\leq b),
		\end{equation}
		then the geometric realization of $C$ is either empty or contractible.
	\end{lemma}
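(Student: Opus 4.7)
The plan is to show that $C$ is a finite poset (a morphism in $C$ from $(s,t,a,b)$ to $(s',t',a',b')$ is determined by, and exists iff, the pointwise inequalities $s \leq s'$, $t' \leq t$, $a \leq a'$, $b' \leq b$ all hold), and that its classifying space is contractible when $C$ is non-empty, by exhibiting an initial object.

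The first simplification is to observe that the middle inequality $a \circ u \circ s \leq b \circ v \circ t$ is automatic from $s \leq t$, $a \leq b$, $u \leq v$ by pointwise monotonicity,
\[a \circ u \circ s \leq b \circ u \circ s \leq b \circ v \circ s \leq b \circ v \circ t,\]
so $C$ is cut out of the product of active-morphism posets by the conditions $s \leq t$, $a \leq b$, $f \leq a \circ u \circ s$, and $b \circ v \circ t \leq g$. Using \Cref{lem:n_adj}, every active morphism preserves both endpoints and hence admits both a left and a right adjoint; the outer conditions can therefore be rewritten pointwise as $s \geq u^L \circ a^L \circ f$ and $t \leq v^R \circ b^R \circ g$.

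I would then organize $C$ as a map $\pi \colon C \to B$ onto the poset $B$ of pairs $(a, b)$ with $a \leq b$ for which compatible $(s, t)$ exist, ordered by $(a, b) \leq (a', b')$ iff $a \leq a'$ and $b \geq b'$, so that morphisms of $C$ descend to $B$. The fiber over $(a, b)$ is the poset of pairs $(s, t)$ of active morphisms with $u^L \circ a^L \circ f \leq s \leq t \leq v^R \circ b^R \circ g$, which is the twisted arrow category of an interval in the distributive lattice of active morphisms $[n] \to [p]$; such twisted arrow categories are contractible since the interval itself has contractible classifying space and taking the twisted arrow of a poset preserves classifying spaces up to homotopy. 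The slice $\pi / (a, b)$, which is what Quillen's Theorem A requires to be contractible, admits the same analysis upon restricting the $a$- and $b$-ranges, so $|C| \simeq |B|$.

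It remains to show $|B|$ is contractible, and the plan is to exhibit an initial object $(a_0, b_0)$ of $B$ where $a_0$ is the pointwise minimum and $b_0$ the pointwise maximum of the $a$- and $b$-components appearing in elements of $C$; these are active by closure of active morphisms under pointwise min and max. The main obstacle is verifying that $(a_0, b_0)$ itself lies in $B$, i.e., admits compatible $(s, t)$: since decreasing $a$ enlarges $a^L$ and increasing $b$ enlarges $b^R$, the feasibility constraint $u^L \circ a_0^L \circ f \leq v^R \circ b_0^R \circ g$ is the worst case among all elements of $B$, and one must argue pointwise that it still holds. I expect the verification to proceed by picking, for each index $i$, witnesses $(s_1, t_1, a_1, b_1)$ and $(s_2, t_2, a_2, b_2) \in C$ realizing $a_0$ and $b_0$ at the relevant coordinates, then threading the inequalities
\[u^L(a_0^L(f(i))) \leq u^L(a_1^L(f(i))) \leq s_1(i) \leq t_1(i) \leq v^R(b_1^R(g(i))) \leq v^R(b_0^R(g(i)))\]
together with the analogous chain for $(s_2, t_2, a_2, b_2)$, and pointwise interpolating $s$ and $t$ using the lattice structure. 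Once this compatibility is established, $(a_0, b_0)$ is initial in $B$, so $|B|$ is contractible and therefore so is $|C|$.
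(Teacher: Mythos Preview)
Your overall architecture --- project to the $(a,b)$ coordinates, show the fibers are contractible twisted-arrow intervals, then reduce to the base $B$ --- is sound and is exactly the dual of what the paper does (the paper projects to the $(s,t)$ side instead). The observation that the middle inequality is automatic, the adjoint reformulation of the outer constraints, and the fiber analysis are all fine; in fact $\pi:C\to B$ is a coCartesian fibration with contractible fibers, so $|C|\simeq|B|$ follows.

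The gap is in the last step: $B$ need \emph{not} have an initial object. Your displayed chain has its first and last inequalities reversed: since $a_0\leq a_1$ you get $a_0^L\geq a_1^L$, hence $u^L a_0^L f(i)\geq u^L a_1^L f(i)$, and likewise $v^R b_0^R g(i)\leq v^R b_1^R g(i)$. So the ``threading'' does not go through, and there is no way to relate the $s$-witness coming from one element of $C$ to the $t$-witness coming from another. Concretely, take $n=m=2$, $p=q=4$, $f=g=\id_{[2]}$, $u=v=\id_{[4]}$. Then membership in $B$ reduces to $a^L(1)\leq b^R(1)$. The pairs
\[
(a_1,b_1)=\bigl((0,1,1,1,2),(0,1,2,2,2)\bigr),\qquad (a_2,b_2)=\bigl((0,0,0,1,2),(0,0,0,1,2)\bigr)
\]
both lie in $B$ (with $a_1^L(1)=b_1^R(1)=1$ and $a_2^L(1)=b_2^R(1)=3$), but $(a_1\wedge a_2,\,b_1\vee b_2)=(a_2,b_1)$ has $a_2^L(1)=3>1=b_1^R(1)$, so it is not in $B$. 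Any initial object $(a^*,b^*)$ of $B$ would have to satisfy $a^*\leq a_2$ and $b^*\geq b_1$, forcing $(a^*)^L(1)\geq 3>1\geq (b^*)^R(1)$, a contradiction. So $B$ has no initial object here, even though $C$ is nonempty.

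The paper handles the analogous difficulty on the $(s,t)$ side not by finding an extremal element but by an explicit filtration: it introduces numerical invariants $N_F,N_G$ (built from $F=u^Rv$ and $G=g^Lf$), produces a specific element $s_0$ of $D$, and shows $|D|\simeq|D_i|$ for a chain of subcategories $D_i$ shrinking to $\{s_0\}$, each inclusion being a one-sided adjoint. Your $B$ would require a comparable argument; the lattice-min/max shortcut does not suffice.
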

	\begin{proof}
		Note that $C$ admits a natural forgetful functor to $\twar(\mor_\cat([n],[p]))$ given by sending the diagram \eqref{eq:c-mor} to $s\leq t$ and a morphism \eqref{eq:c-mor2} to $s\leq s'\leq t'\leq t$, we claim that it is a Cartesian fibration. Indeed, given \eqref{eq:c-mor} and $(s\leq s'\leq t'\leq t)$, it is clear that it admits a Cartesian lifting with source
		\[\begin{tikzcd}[sep=huge]
			{[p]} & {[q]} \\
			{[n]} & {[m]}
			\arrow[""{name=0, anchor=center, inner sep=0}, "v", curve={height=-6pt}, from=1-1, to=1-2]
			\arrow[""{name=1, anchor=center, inner sep=0}, "u"', curve={height=6pt}, from=1-1, to=1-2]
			\arrow[""{name=2, anchor=center, inner sep=0}, "a"', curve={height=6pt}, from=1-2, to=2-2]
			\arrow[""{name=3, anchor=center, inner sep=0}, "b", curve={height=-6pt}, from=1-2, to=2-2]
			\arrow[""{name=4, anchor=center, inner sep=0}, "{t'}"', curve={height=6pt}, from=2-1, to=1-1]
			\arrow[""{name=5, anchor=center, inner sep=0}, "{s'}", curve={height=-6pt}, from=2-1, to=1-1]
			\arrow[""{name=6, anchor=center, inner sep=0}, "f"', curve={height=6pt}, from=2-1, to=2-2]
			\arrow[""{name=7, anchor=center, inner sep=0}, "g", curve={height=-6pt}, from=2-1, to=2-2]
			\arrow["\leq"{description}, draw=none, from=1, to=0]
			\arrow["\leq"{description}, draw=none, from=2, to=3]
			\arrow["\leq"{description}, draw=none, from=5, to=4]
			\arrow["\leq"{description}, draw=none, from=6, to=7]
		\end{tikzcd}.\]
		We now claim that the fibers of this fibration are either empty or contractible. Indeed, assume we are given a diagram \eqref{eq:c-mor}, note first that since all morphisms are presumed active, they admit left and right adjoints by \Cref{lem:n_adj}. In particular, we can consider $f\circ s^R\circ u^R:[q]\rightarrow[m]$, however it is not in general active, so we define 
		\[u'(i)\bydef 
		\begin{cases}
			0\text{ if $i=0$}\\
			u^R(i)\text{ otherwise}
		\end{cases}\]
		and
		\[s'(i)\bydef 
		\begin{cases}
			0\text{ if $i=0$}\\
			s^R(i)\text{ otherwise}
		\end{cases}\]
		(note that $s^R$ and $u^R$ preserve maximal elements as right adjoints). In that case $a_0\bydef f\circ s'\circ u'$ is an active morphism, moreover if for $x\in [n]$ we have $u\circ s(x)=0$, then
		\[f(x)\leq a\circ u\circ s(x) = 0 = a_0\circ s\circ u(x),\]
		and if $u\circ s(x)>0$, then
		\[f(x)\leq f\circ (u\circ s)^R\circ (u\circ s)(x) = f\circ s'\circ u'\circ u\circ s(x),\]
		where the first inequality follows by adjointness and the second by definition, so in general $f\leq a_0\circ u\circ s$. Similarly we have $a_0(0) = 0 = a(0)$ and for $i>0$ we have
		\[a_0(i) = f\circ s^R\circ u^R(i)\leq a\circ u\circ s\circ s^R\circ u^R\leq a,\]
		where the first equality follows by definition, the first inequality since $f\leq a\circ u\circ s$ by assumption and the last inequality by adjointness. Similarly, we can define 
		\[v'(i)\bydef 
		\begin{cases}
			p\text{ if $i=q$}\\
			v^L(i)\text{ otherwise}
		\end{cases},\]
		\[t'(i)\bydef 
		\begin{cases}
			n\text{ if $i=p$}\\
			t^L(i)\text{ otherwise}
		\end{cases}\]
		and set $b_0\bydef g\circ t'\circ v'$, then by dual arguments one can show $g\geq b_0\circ v\circ t$ and $b\leq b_0$. It follows that the diagram
		\[\begin{tikzcd}[sep=huge]
			{[p]} & {[q]} \\
			{[n]} & {[m]}
			\arrow[""{name=0, anchor=center, inner sep=0}, "v", curve={height=-6pt}, from=1-1, to=1-2]
			\arrow[""{name=1, anchor=center, inner sep=0}, "u"', curve={height=6pt}, from=1-1, to=1-2]
			\arrow[""{name=2, anchor=center, inner sep=0}, "{a_0}"', curve={height=6pt}, from=1-2, to=2-2]
			\arrow[""{name=3, anchor=center, inner sep=0}, "{b_0}", curve={height=-6pt}, from=1-2, to=2-2]
			\arrow[""{name=4, anchor=center, inner sep=0}, "t"', curve={height=6pt}, from=2-1, to=1-1]
			\arrow[""{name=5, anchor=center, inner sep=0}, "s", curve={height=-6pt}, from=2-1, to=1-1]
			\arrow[""{name=6, anchor=center, inner sep=0}, "f"', curve={height=6pt}, from=2-1, to=2-2]
			\arrow[""{name=7, anchor=center, inner sep=0}, "g", curve={height=-6pt}, from=2-1, to=2-2]
			\arrow["\leq"{description}, draw=none, from=1, to=0]
			\arrow["\leq"{description}, draw=none, from=2, to=3]
			\arrow["\leq"{description}, draw=none, from=5, to=4]
			\arrow["\leq"{description}, draw=none, from=6, to=7]
		\end{tikzcd}\]
		defines the final object of the fiber over $s\leq t$, so in particular it is contractible.\par
		The geometric realization of $C$ is then isomorphic to the geometric realization of the full subcategory $D\hookrightarrow\twar(\mor_\cat([n],[p]))$ on such $s\leq t$ that there exist some diagram of the form \eqref{eq:c-mor}, our next goal is to determine the conditions on $s\leq t$ for this fiber to be non-empty. The above considerations imply that if such a diagram exists, then there is one with $a\leq b$ substituted with $a_0\leq b_0$ in the notation above, so $(s\leq t)\in D$ if and only if $f\leq a_0\circ u\circ s$, $g\geq b_0\circ v\circ t$ and $a_0\leq b_0$. We claim that this is equivalent to the condition
		\begin{equation}\label{eq:cond_main}
			g^Lfs^Ru^Rv\leq t^L\leq s^L.
		\end{equation}
		Indeed, by definition $a_0\leq b_0$ is equivalent to the condition
		\begin{equation}\label{eq:cond_aux}
			f s^R u^R(i)\leq g t^L v^L(i)
		\end{equation}
		for $0<i<q$. Additionally, to ensure $f\leq a_0\circ u\circ s$ we must have $f(x) = 0$ for $x\leq s^R u^R(0)$ and to ensure $g\geq b_0\circ v\circ t$ we similarly need $g(y) = m$ for $y\geq g t^L v^L(q)$, however those two conditions are equivalent to requiring that \eqref{eq:cond_aux} holds for all $i$. Finally, note that \eqref{eq:cond_aux} is equivalent to the first inequality in \eqref{eq:cond_main} by properties of adjoints and $s\leq t$ is equivalent to the second.\par
		We first focus on the condition for the category of $(s,t)$ satisfying \eqref{eq:cond_main} to be non-empty, note that this is equivalent to
		\begin{equation}\label{eq:cond_nonempt}
			g^Lfs^Ru^Rv\leq s^L
		\end{equation}
		since if \eqref{eq:cond_nonempt} holds, then the pair $(s,s)$ satisfies \eqref{eq:cond_main}. Denote $F\bydef u^Rv:[p]\rightarrow [p]$ and $G\bydef g^L f:[n]\rightarrow [n]$, note that $F = u^Rv\geq u^Ru\geq \id$ and $G = g^Lf\leq f^lf\leq \id$. Define a sequence $a_i\in[p]$ inductively by $a_0 \bydef 0$ and $a_{i+1}\bydef \min(p, F(a_i)+1)$; since $F\geq \id$ we have $a_{i+1}>a_i$ unless $a_i = p$, define $N_F$ to be the least index such that $F(a_{N_F}) = p$. Similarly, define $b_0 = 0\in [n]$ and $b_{i+1}\bydef \min(n, G^R(b_i) + 1)$ (note that $G\leq \id$ preserves the minimal element, hence admits a right adjoint by \Cref{lem:n_adj}), again define $N_G$ to be the least index such that $G^R(b_{N_G}) = n$. We claim that:
		\begin{enumerate}
			\item[(*)]\label{it:*} Category $D$ is non-empty if and only if $N_G\leq N_F$.
		\end{enumerate}
		We will also need the following observation: assume we have 
		\begin{equation}\label{eq:xy_ineq}
			s^R(x)\leq y
		\end{equation}
		for some $x\in[p]$ and $y\in [n]$, if $s^R(x)=p$ then \eqref{eq:xy_ineq} is equivalent to $y = n$, we claim that for $s^R(x) < p$ \eqref{eq:xy_ineq} is equivalent to
		\begin{equation}\label{eq:xy2}
			x < s(y+1) \Leftrightarrow x+1\leq s(y+1).
		\end{equation}
		Indeed, using \eqref{eq:right} we immediately see that $s^R(x) < p$ is the least $z\in [n]$ such that $s(z+1)>x$, from which the claim follows.\par
		We claim that we have
		\begin{equation}\label{eq:ab_ineq}
			a_i \leq s(b_i)
		\end{equation}
		if $s^R F(a_{i-1}) < n$: we will prove the claim by induction, it holds for $i=0$ since $s$ is active. Assume we have proved \eqref{eq:ab_ineq} for some $i$, then we have a series of inequalities
		\begin{equation}\label{eq:series_ineq}
			s^R F(a_i)\leq s^R(F(s(b_i)))\leq G^R s^Ls(b_i)\leq G^R b_i,
		\end{equation}
		where the first inequality follows from \eqref{eq:ab_ineq}, the second from \eqref{eq:cond_nonempt} (and the definition of $G$ and $F$) and the last one is the counit for $s$. Since we have assumed that $s^R F(a_{i}) < n$, this is equivalent to 
		\[a_{i+1} = F(a_i) + 1 \leq s(G^R( b_i) + 1) = s(b_{i+1})\]
		by \eqref{eq:xy2}. Assume now that $s^R F(a_i) = n$, note that this is equivalent to $F(a_i) = p$, then we still have \eqref{eq:series_ineq}, but now they imply $G^R(b_i) = n$. It follows that $N_G\leq N_F$, so the "only if" part of \eqref{it:*} holds.\par
		Conversely, assume $N_G\leq N_F$, then define $\epsilon_F\in \{0,1\}$ to be 0 if $a_{N_F} = p$ and 1 otherwise and denote $m_F\bydef N_F + \epsilon_F$, define $j_F: [m_F]\xactive{} [p]$ to be the morphism such that $j_F(k)\bydef a_k$, note in particular that it is injective and active, and similarly define $j_G:[N_G + \epsilon_G]\xactive{}[n]$, also denote $i_0:[m_G]\xactive{}[m_F]$ the morphism such that $i_0(k)\bydef k$ for $k< m_G$ and $i_0(m_G)\bydef m_F$, so in particular it is injective and active; we claim that $s_0\bydef j_F\circ i_0\circ j^R_G$ belongs to $D$ (note that $j_G^R(0) = j_G^R (j_G(0)) = 0$ since $j_G$ is injective, so in particular $s_0$ is active). We need to check that
		\begin{equation}\label{eq:s0cond}
			G\circ s_0^R\circ F(x)\leq s_0^L(x)
		\end{equation}
		for all $x\in[p]$. Denote $k\bydef j_F^R(F(x))$, then $i_0^R\circ j_F^R(x) \leq k$ and hence 
		\[j_G^{RR}\circ i_0^R\circ j_F^R(x)\leq j_G^{RR}(k) = b_{k+1} - 1 = G^R(b_k),\]
		so
		\begin{equation}\label{eq:G1}
			G\circ s_0^R\circ F(x) \leq G(G^R(b_k))\leq b_k.
		\end{equation}
		We claim that $x > a_{k-1}$: indeed, if $x\leq a_{k-1}$, then 
		\[F(x)\leq F(a_{k-1}) < F(a_{k-1}) + 1 =a_k,\]
		which contradicts $j_F^R(F(x)) = k$. It follows that $j_F^L(x)\geq k$, hence 
		\[s_0^L(x) = j_G\circ i_0^L\circ j_F^L(x)\geq j_G\circ i_0^L(k)\geq  j_G(k) = b_k,\]
		combined with \eqref{eq:G1} this implies that \eqref{eq:s0cond} holds, proving the "if" direction of \eqref{it:*}.\par
		We conclude the proof by showing that $D$ is contractible whenever it is non-empty. Assume we have $(s\leq t)\in D$, then in particular we have $s(0) = t(0) = 0 = s_0(0)$ in the notation above, denote by $D_i$ for $i\in [n]$ the full subcategory of $D$ on $s$ and $t$ such that $s(k) = t(k) = s_0(k)$ for $k\leq i$, we will prove by induction that $|D_i|\cong |D|$ - we have seen that this holds for $i=0$, observe that $D_n\cong \{s_0\}$, so this will prove that $D$ is contractible. Assume first that $b_k < i < b_{k+1}$ for some $k$, denote by $D_i^S$ the subcategory of $D_{i-1}$ on $(s,t)$ such that additionally $s(i) = s_0(i)$, we claim that $D_i^s\hookrightarrow D_{i-1}$ admits a right adjoint. Indeed, define $s'\leq t$ by setting $s'(x) = s(x)$ for $x\neq i$ and $s'(i) = a_k$, we need to show that it still satisfies \eqref{eq:cond_main}. we have $t^L\leq s^{',L}$ since $s'\leq t$ and $s^{',R}(x) = s^R(x)$ unless $x\in [a_k, s(i)]$, in which case $s^{',R}(x) = i \geq s^R(x)$, where $s^R(x)$ is either $i$ or $i-1$. Assume now $F(y)\in [a_k,s(i)]$, then $y> a_{k-1}$, so in particular $t^L(y)\geq i > b_k$, while $G s^{',R} F(y) = G(i)\leq G(b_{k+1} - 1) = G(G^R(b_k))\leq b_k$, where the first inequality follows from $i< b_{k+1}$ and the second is the counit. This implies $(s'\leq t)\in D$, by construction it further belongs to $D^s_i$, we have a morphism $s'\leq s\leq t = t$ and it is clearly a final morphism in $D^s_{i,/(s,t)}$, so $(s,t)\mapsto (s',t)$ is indeed a right adjoint in this case. Assume now that $i = b_{k+1}$, by \eqref{eq:ab_ineq} we have $s(i)\geq a_{k+1}$, once again define $s'(i) = a_{k+1}$ and $s'(x) = s(x)$ for $x\neq i$. We have $s'^R(x) = s^R(x)$ unless $x\in [a_{k+1}, s(i)]$, in which case $s^{',R}(x) = b_{k+1} \geq s^R(x)$, where $s^R(x)$ is either $i$ or $i-1$. Assume now $F(y)\in [a_{k+1},s(i)]$, then $y> a_{k}$, so in particular $t^L(y)\geq b_{k+1}$, while $G s^{',R} F(y) = G(b_{k+1})\leq b_{k+1}$ since $G\leq \id$. By the same reasoning, we see that $(s,t)\mapsto (s',t)$ defines the required right adjoint.We will now prove that $D_i^s\hookrightarrow D_i$ admits a left adjoint: given $(s\leq t)$ define $t'(i) = s_0(i)$ and $t'(x) = t(x)$ for $x\neq i$, then we have $t'(i)\leq t(i)$, so in particular $t^{',L}\geq t^L\geq G\circ s^R\circ F$, but we also have $t'\geq s$ since $s(i) = s_0(i)$ because $s\in D_i^s$. It follows that $(s\leq t')\in D_i$ and we have a morphism $s=s\leq t'\leq t$, it is easy to see this defines a left adjoint. Finally, since left and right adjoints induce isomorphisms on geometric realizations, we have $|D|\cong |D^s_i|\cong |D_i|$ concluding the proof.
	\end{proof}
	\begin{notation}\label{not:mon}
		Denote by $\rB \Delta^\act$ the monoidal category $\Delta^\act$ with monoidal structure given by $[n]\otimes [m]\bydef [n+m]$ viewed as a bicategory with one object and by $\rB \Delta^\act_\lax$ the monoidal \textit{bicategory} $\Delta^\act$ with the same monoidal structure and bicategory structure induced from $\cat$ viewed as a tricategory with one object.
	\end{notation}
	\begin{prop}\label{prop:twar_mon}
		The category $\twar(\rB \Delta^\act_\lax)$ is a singleton.
	\end{prop}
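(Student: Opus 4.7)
My plan is to identify the objects and morphism spaces of $\twar(\rB\Delta^\act_\lax)$ via \Cref{ex:lowD}(3), reduce those morphism spaces to the ones described in \Cref{lem:cont}, and conclude by exhibiting a terminal object.

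First, I would unwind the tricategorical structure of $\rB\Delta^\act_\lax$. Its unique object makes $1$-morphisms correspond to objects $[n]\in\Delta^\act$, $2$-morphisms to active morphisms $f:[n]\xactive{}[m]$, and $3$-morphisms to the pointwise inequalities $f\leq g$ between parallel $2$-morphisms. Since $\twar(\cE)([0])\cong\mor_\cat(c_3,\cE)$, the set of objects of $\twar(\rB\Delta^\act_\lax)$ identifies with $\coprod_{n,m}\{(f,g):f,g:[n]\xactive{}[m],\;f\leq g\}$. In particular, the objects form a discrete set; the nontrivial content must therefore sit in the morphism spaces.

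Next, I would analyze these morphism spaces via \Cref{ex:lowD}(3). The pasting diagrams appearing in the double category $D(\bullet,\bullet)$ (the cells \eqref{eq:3d_obj}, \eqref{eq:3d01}, \eqref{eq:3d10}) assemble $2$-morphism data $(a_i,b_i,c_i,f,g,\ldots)$ into composites and record $3$-morphisms $(X,Y,Z,W,A,B,C,\ldots)$ between them. Specializing to $\cE=\rB\Delta^\act_\lax$, every $2$-morphism datum becomes an active morphism in $\Delta$ and every $3$-morphism datum reduces to a $(-1)$-truncated inequality. So the higher cells of $D(\bullet,\bullet)$ collapse to propositions, and $|D(\bullet,\bullet)|$ coincides with the geometric realization of a plain $1$-category. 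Matching this category against \Cref{lem:cont}, the morphism space from $(u\leq v:[p]\xactive{}[q])$ to $(f\leq g:[n]\xactive{}[m])$ is precisely $|C|$, with the whiskering morphisms $s,t,a,b$ of \eqref{eq:c-mor} playing their natural roles. By \Cref{lem:cont}, every morphism space in $\twar(\rB\Delta^\act_\lax)$ is therefore either empty or contractible.

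Finally, I would show that $\pt:=(\id_{[0]}\leq\id_{[0]}:[0]\xactive{}[0])$ is a terminal object. For any object $(u\leq v:[p]\xactive{}[q])$, the category $C$ of \Cref{lem:cont} with $f=g=\id_{[0]}$ is nonempty: take $s=\{0\},t=\{p\}:[0]\to[p]$ and $a=b$ the unique active morphism $[q]\to[0]$; the chain $\id_{[0]}\leq aus\leq bvt\leq\id_{[0]}$ holds automatically in $[0]$. Combined with the previous step, $\mor_{\twar(\rB\Delta^\act_\lax)}((u\leq v),\pt)$ is contractible, so $\pt$ is terminal and $\twar(\rB\Delta^\act_\lax)$ is contractible, i.e., a singleton.

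The main technical obstacle will be the second step: carefully unwinding the rather elaborate description of morphisms in $\twar(\cE)$ for $\cE\in\cat_3$, verifying that the higher cells of $D(\bullet,\bullet)$ collapse when $\cE=\rB\Delta^\act_\lax$, and identifying the resulting $1$-category of data with the category $C$ of \Cref{lem:cont}, paying careful attention to which pieces of the pasting diagrams correspond to the maps $s,t,u,v,a,b$ and to the interleaved inequalities $f\leq aus\leq bvt\leq g$.
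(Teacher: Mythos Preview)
Your identification in step 2 is the crucial gap. The morphism spaces of $\twar(\cE)$ for $\cE\in\cat_3$ are geometric realizations of a genuinely \emph{bisimplicial} object $D(\bullet,\bullet)$ (\Cref{ex:lowD}(3)): one simplicial direction varies the $2$-whiskers $X,W,Y,Z$, while the other records decompositions of the $0$-whiskers $A,C$ via diagrams of the form \eqref{eq:3d10}. Specializing to $\rB\Delta^\act_\lax$ does make each individual cell discrete (since $3$-morphisms are inequalities), but the second simplicial direction is still nontrivial: it parametrizes decompositions of the $2$-morphisms $\alpha_i,\gamma_i$ underlying $A,C$, and these are honest active maps in $\Delta$, not propositions. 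Concretely, an object of $D(0,0)$ carries extra data $\alpha_0\leq\alpha_1:[r_0]\rightrightarrows[r_2]$ and $\gamma_0\leq\gamma_1:[r_1]\rightrightarrows[r_3]$, and the maps $s,t$ land in $[r_0+p+r_1]$, not in $[p]$. The category $C$ of \Cref{lem:cont} with the given $u,v$ only captures the fibre over the trivial choice $r_i=0$, not the full morphism space.

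This omission also breaks your terminal-object argument: in \Cref{lem:cont} all morphisms in \eqref{eq:c-mor} are required to be active, and your proposed $s=\{0\},\,t=\{p\}:[0]\to[p]$ are not active once $p>0$. In fact, for target $(\id_{[0]}\leq\id_{[0]})$ the bare category $C$ is empty unless $p=q=0$; what makes the full morphism space nonempty is precisely the freedom to enlarge the $r_i$. The paper handles this by applying \Cref{lem:cont} fibrewise over the $(\alpha,\gamma)$-data to collapse one bisimplicial direction, obtaining a poset $B'$ indexed by tuples $(c_0\leq d_0,\,c_1\leq d_1)$ of parallel active maps, and then observing that $B'$ is filtered because concatenation $(c_i+c'_i,\,d_i+d'_i)$ furnishes upper bounds.
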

	\begin{proof}
		By using \eqref{ex:lowD} in dimension 3 we see that $\twar(\rB \Delta^\act_\lax)$ has objects given by parallel morphisms $f,g:[n]\rightrightarrows [m]$ with $f\leq g$ and that $\mor_{\twar(\rB \Delta^\act_\lax)}((u,v),(f,g))$ is the geometric realization of a double category $B(\bullet,\bullet):\Delta^\op\times \Delta^\op\rightarrow \cD$ for which $B(\bullet, 0)$ is the category with objects given by diagrams 
		\begin{equation}\label{eq:b_mor0}
			\begin{tikzcd}[sep=huge]
				{[r^0_0 + p + r^1_0]} & {[r^2_0 + q + r^3_0]} \\
				{[n]} & {[m]}
				\arrow[""{name=0, anchor=center, inner sep=0}, "{c_{1,0}^0 + v + c_{1,0}^1}", curve={height=-6pt}, from=1-1, to=1-2]
				\arrow[""{name=1, anchor=center, inner sep=0}, "{c_{0,0}^0 + u + c_{0,0}^1}"', curve={height=6pt}, from=1-1, to=1-2]
				\arrow[""{name=2, anchor=center, inner sep=0}, "a"', curve={height=6pt}, from=1-2, to=2-2]
				\arrow[""{name=3, anchor=center, inner sep=0}, "b", curve={height=-6pt}, from=1-2, to=2-2]
				\arrow[""{name=4, anchor=center, inner sep=0}, "s", curve={height=-6pt}, from=2-1, to=1-1]
				\arrow[""{name=5, anchor=center, inner sep=0}, "t"', curve={height=6pt}, from=2-1, to=1-1]
				\arrow[""{name=6, anchor=center, inner sep=0}, "f"', curve={height=6pt}, from=2-1, to=2-2]
				\arrow[""{name=7, anchor=center, inner sep=0}, "g", curve={height=-6pt}, from=2-1, to=2-2]
				\arrow["\leq"{description}, draw=none, from=0, to=1]
				\arrow["\leq"{description}, draw=none, from=2, to=3]
				\arrow["\leq"{description}, draw=none, from=5, to=4]
				\arrow["\leq"{description}, draw=none, from=6, to=7]
			\end{tikzcd}
		\end{equation}
		such that 
		\[f\leq a\circ (c_{0,0}^0 + u + c_{0,0}^1) \circ s \leq b\circ (c_{1,0}^0 + v + c_{1,0}^1) \circ t \leq g.\]
		Morphisms in $B(\bullet, 0)$ are given by pairs of strings $(s'\leq s\leq t\leq t',a'\leq a\leq b\leq b')$. More generally, the category $B(\bullet, l)$ for $l>0$ has objects given by diagrams \eqref{eq:b_mor0} together with diagrams
		\begin{equation}\label{eq:b_morl}
			\begin{tikzcd}[sep=huge]
				{[r^i_l]} & {[r^{i+2}_l]} \\
				{[r_1^i]} & {[r_1^{i+2}]} \\
				{[r^i_0]} & {[r_0^{i+2}]}
				\arrow[""{name=0, anchor=center, inner sep=0}, "{c^i_{1,l}}", curve={height=-6pt}, from=1-1, to=1-2]
				\arrow[""{name=1, anchor=center, inner sep=0}, "{c^i_{0,l}}"', curve={height=6pt}, from=1-1, to=1-2]
				\arrow["{...}"{description}, curve={height=6pt}, from=1-2, to=2-2]
				\arrow["{...}"{description}, curve={height=-6pt}, from=1-2, to=2-2]
				\arrow["{...}"{description}, curve={height=-6pt}, from=2-1, to=1-1]
				\arrow["{...}"{description}, curve={height=6pt}, from=2-1, to=1-1]
				\arrow[""{name=2, anchor=center, inner sep=0}, "{c^i_{0,1}}"', curve={height=6pt}, from=2-1, to=2-2]
				\arrow[""{name=3, anchor=center, inner sep=0}, "{c^i_{1,1}}", curve={height=-6pt}, from=2-1, to=2-2]
				\arrow[""{name=4, anchor=center, inner sep=0}, "{k^i_{1,1}}", curve={height=-6pt}, from=2-2, to=3-2]
				\arrow[""{name=5, anchor=center, inner sep=0}, "{k^i_{1,0}}"', curve={height=6pt}, from=2-2, to=3-2]
				\arrow[""{name=6, anchor=center, inner sep=0}, "{j^i_{1,0}}", curve={height=-6pt}, from=3-1, to=2-1]
				\arrow[""{name=7, anchor=center, inner sep=0}, "{j^i_{1,1}}"', curve={height=6pt}, from=3-1, to=2-1]
				\arrow[""{name=8, anchor=center, inner sep=0}, "{c^i_{0,0}}"', curve={height=6pt}, from=3-1, to=3-2]
				\arrow[""{name=9, anchor=center, inner sep=0}, "{c^i_{1,0}}", curve={height=-6pt}, from=3-1, to=3-2]
				\arrow["\leq"{description}, draw=none, from=1, to=0]
				\arrow["\leq"{description}, draw=none, from=2, to=3]
				\arrow["\leq"{description}, draw=none, from=5, to=4]
				\arrow["\leq"{description}, draw=none, from=6, to=7]
				\arrow["\leq"{description}, draw=none, from=8, to=9]
			\end{tikzcd}
		\end{equation}
		such that $c^i_{0,x}\leq k^i_{0,x}\circ c^i_{0,x+1}\circ j^i_{0,x} \leq k^i_{1,x}\circ c^i_{1,x+1}\circ j^i_{1,x}\leq c^{i}_{1,x}$. Morphisms in $B(\bullet, l)$ are given by collections of strings $(s'\leq s\leq t\leq t',a'\leq a\leq b\leq b')$ and $(j^{i,'}_{0,x}\leq j^i_{0,x}\leq j^i_{1,x}\leq j^{i,'}_{1,x},k^{i,'}_{0,x}\leq k^i_{0,x}\leq k^i_{1,x}\leq k^{i,'}_{1,x})$. \par
		Denote by $p_1:\Delta^\op\times \Delta^\op\rightarrow \Delta^\op$ the projection to the second factor, in order to describe $B'\bydef p_{1,!}B:\Delta^\op\rightarrow \cS$ we will first need to introduce some notation: given $f,g:[n]\rightrightarrows[m]$ with $f\leq g$, denote $F_{f,g}\bydef f^R\circ g:[n]\rightarrow [n]$ and set $a_0^{f,g}\bydef 0$ and $a^{f,g}_{i}\bydef \min(n, F_{f,g}(a^{f,g}_{i-1}) + 1)$, define $N_{F_{f,g}}$ to be the minimal $i$ such that $F(a_i^{f,g}) = n$. In that case using \Cref{lem:cont} and specifically condition \eqref{it:*} we see that $B'$ is in fact a poset whose objects are pairs $(c_0, d_0:[r_0]\rightrightarrows[w_0], c_1,d_1:[r_1]\rightrightarrows[w_1])$ such that $c_i\leq d_i$ and $N_{f,g}\leq N_{c_0 + u + d_0, c_1 + v + d_1}$ and such that $(c_0, d_0:[r_0]\rightrightarrows[w_0], c_1,d_1:[r_1]\rightrightarrows[w_1])\leq (c'_0, d'_0:[r'_0]\rightrightarrows[w'_0], c'_1,d'_1:[r'_1]\rightrightarrows[w'_1])$ if and only if $N_{c_0,d_0}\leq N_{c'_0, d'_0}$ and $N_{c_1,d_1}\leq N_{c'_1, d'_1}$.\par
		To complete the proof of the claim it suffices to show that $B'$ is contractible: indeed, this would imply that the space of morphisms between any two objects in $\twar(\rB \Delta^\act_\lax)$ is contractible, which proves that it is isomorphic to $*$. To show this we will in fact prove that $B'$ is filtered. Since $B'$ is a poset, it suffices to show that for any two objects $x_0\bydef (c_0, d_0:[r_0]\rightrightarrows[w_0], c_1,d_1:[r_1]\rightrightarrows[w_1])$ and $x_1\bydef (c'_0, d'_0:[r'_0]\rightrightarrows[w'_0], c'_1,d'_1:[r'_1]\rightrightarrows[w'_1])$ there is some $y$ such that $y\geq x_0$ and $y\geq x_1$, however it is easy to see that we can take $y$ to be $(c_0 + c'_0, d_0 + d'_0:[r_0 + r'_0]\rightrightarrows[w_0 + w'_0], c_1 + c'_1, d_1 + d'_1:[r_1 + r'_1]\rightrightarrows[w_1 + w'_1])$.
	\end{proof}
	\begin{cor}\label{cor:init_mon}
		For any functor $F:\cC\rightarrow \rB \Delta^\act_\lax$ the induced functor $\twar(F):\twar(\cC)\rightarrow \twar(\rB \Delta^\act_\lax)$ is coinitial if and only if $\twar(\cC)$ is contractible.\qed
	\end{cor}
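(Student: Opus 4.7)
The strategy is to reduce the corollary to an immediate consequence of Proposition \ref{prop:twar_mon}. That proposition establishes that $\twar(\rB\Delta^\act_\lax)$ is a singleton, hence equivalent to the terminal $\infty$-category $*$. Under this identification, $\twar(F):\twar(\cC)\rightarrow\twar(\rB\Delta^\act_\lax)$ becomes essentially the unique map $p:\twar(\cC)\rightarrow *$, so the whole content of the corollary is the assertion that $p$ is coinitial if and only if $\twar(\cC)$ is contractible.

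To finish I would invoke the characterization of coinitial morphisms used in the proof of Corollary \ref{cor:cot_coinit}, namely $p_!p^*\cong\id$. Since $p_!p^*(S)\cong|\twar(\cC)|\otimes S$ for any $S\in\spc$, this holds for all $S$ precisely when $|\twar(\cC)|$ is contractible. Equivalently one can use the comma-category formulation: for $p:\cX\rightarrow *$ the unique slice $p_{/\ast}$ is $\cX$ itself, so coinitiality of $p$ amounts by definition to contractibility of $\cX$.

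Both directions of the corollary then come for free. If $\twar(\cC)$ is contractible, the slice of $\twar(F)$ over the unique object of $\twar(\rB\Delta^\act_\lax)$ is contractible and so $\twar(F)$ is coinitial; conversely, coinitial functors induce equivalences on geometric realizations, so if $\twar(F)$ is coinitial then $|\twar(\cC)|\simeq|\twar(\rB\Delta^\act_\lax)|\simeq *$. Since all substantial work has already been absorbed into Proposition \ref{prop:twar_mon} (itself reduced to a filteredness argument on the poset $B'$ produced there), there is no real obstacle left to overcome; the corollary is essentially a formal consequence.
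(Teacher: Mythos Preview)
Your proposal is correct and matches the paper's approach exactly: the corollary is marked with a terminal \qed because it is an immediate consequence of Proposition~\ref{prop:twar_mon}, and you have spelled out precisely that immediate consequence---a functor to a singleton is coinitial if and only if its source is contractible.
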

	\begin{lemma}\label{lem:twar_cont}
		For any $\cD\in\cat_n$ we have
		\[|\twar(\cD)|\cong \twar(|\cD|)\cong |\cD|.\]
	\end{lemma}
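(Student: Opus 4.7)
The plan is to exploit the colimit decomposition of \Cref{cor:twar_col}, namely
\[
\twar(\cD)\cong\underset{(\theta\to\cD)\in\Theta_{n,/\cD}}{\colim}\twar(\theta),
\]
together with the fact that $|-|\colon\cat_n\to\cS$ is a left adjoint and therefore preserves colimits. For $\theta\in\Theta_n$, combining \Cref{prop:twar_stein} with the identification $\stn^\inrt_{n,/\theta}\cong\Theta^\inrt_{n,/\theta}$ (which holds because $\theta\in\Theta_n$, so every inert morphism into it lives in $\Theta_n$) yields $\twar(\theta)\cong\Theta^\inrt_{n,/\theta}$. The inclusion $\Theta^\el_{n,/\theta}\hookrightarrow\Theta^\inrt_{n,/\theta}$ is coinitial by the same argument that appears in the proof of \Cref{lem:verd}: the comma category over $(\theta_{i'}\xinert{}\theta)$ is $\Theta^\el_{n,/\theta_{i'}}$, whose realization is $|\theta_{i'}|\simeq *$. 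Hence $|\twar(\theta)|\cong|\Theta^\el_{n,/\theta}|\cong|\theta|$, where the last step uses the Segal colimit $\theta\cong\colim_{e\xinert{}\theta}e$ and $|c_k|\cong *$. Since $\cD\cong\colim_{\theta\to\cD}\theta$ in $\cat_n$ and $|-|$ commutes with colimits, we obtain
\[
|\twar(\cD)|\cong\underset{\theta\to\cD}{\colim}|\twar(\theta)|\cong\underset{\theta\to\cD}{\colim}|\theta|\cong|\cD|.
\]

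For the isomorphism $\twar(|\cD|)\cong|\cD|$, the plan is to first establish the stronger claim that $\twar(X)\cong X$ whenever $X\in\cat_n$ is a space. Using \Cref{constr:twar_final}, $\twar_D(X)([q])(s_2,\ldots,s_n)$ is the space of natural transformations $F_q(s_2,\ldots,s_n)\to *_X$ in $\mor_\cat(\Delta^\inrt_{/[q]},\cat_n)$. Because $X$ is a space, $\mor_{\cat_n}(Y,X)\cong\mor_\cS(|Y|,X)$, so this space equals the corresponding space of natural transformations after postcomposing the source with $|-|\colon\cat_n\to\cS$. Each $D_n(l,s_2,\ldots,s_n)\in\ctree_n^h$ has contractible realization: this follows by induction on the size of the underlying tree using \Cref{lem:push_tree}, since each step glues a contractible $\theta\in\Theta_n$ to the rest along a contractible inert subcomplex, and such a pushout of contractibles in $\cS$ remains contractible. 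Consequently $|F_q(s_2,\ldots,s_n)|$ is the constant functor at $*$, giving
\[
\twar_D(X)([q])(s_2,\ldots,s_n)\cong\underset{\Delta^\inrt_{/[q]}}{\lim}X\cong X,
\]
where the second isomorphism uses that $\Delta^\inrt_{/[q]}$ has the terminal object $\id_{[q]}$. Taking the colimit over the contractible category $\Delta^{\times(n-1),\op}$ shows $\twar(X)([q])\cong X$ for every $[q]$, so $\twar(X)$ is the constant simplicial space at $X$, which as an object of $\cat_n$ is $X$ itself. Specializing to $X=|\cD|$ gives $\twar(|\cD|)\cong|\cD|$.

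The main obstacle is justifying the contractibility of $|\theta|$ and more generally of $|x|$ for $x\in\ctree_n^h$: the former is the basic statement that every disk-shaped pasting diagram has contractible realization (which one proves by induction on $n$ using the Segal presentation of $\theta$), and the latter is deduced inductively via \Cref{lem:push_tree} using that homotopy pushouts of contractibles in $\cS$ are contractible. Everything else is formal manipulation of colimits and adjunctions.
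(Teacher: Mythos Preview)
Your argument for $|\twar(\cD)|\cong|\cD|$ is essentially the paper's: both invoke \Cref{cor:twar_col}, identify $\twar(\theta)\cong\Theta^\inrt_{n,/\theta}$, observe this has contractible realization, and pass the colimit through $|-|$.

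For $\twar(|\cD|)\cong|\cD|$ you take a genuinely different route. The paper reuses \Cref{cor:twar_col} a second time: it writes $\twar(|\cD|)\cong\colim_{(\theta\to|\cD|)\in\Theta_{n,/|\cD|}}\twar(\theta)$, then observes that the full subcategory on morphisms $c_0\to|\cD|$ is cofinal in $\Theta_{n,/|\cD|}$ (because for any $\theta\to|\cD|$ the relevant comma category is the based path space of $|\cD|$, hence contractible), and since this subcategory is just the space $|\cD|$ the colimit becomes $|\cD|\otimes\twar(c_0)\cong|\cD|$. Your approach instead unwinds \Cref{constr:twar_final} directly: because $X$ is a space, mapping $F_q(s_2,\ldots,s_n)$ into $*_X$ only sees the realizations $|D_n(l,s_2,\ldots,s_n)|$, which are contractible, so $\twar_D(X)([q])(s_2,\ldots,s_n)\cong X$ and the remaining colimit over $\Delta^{\times(n-1),\op}$ is over a weakly contractible category. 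This is correct and arguably more elementary, since it avoids the cofinality check; on the other hand the paper's version is shorter and reuses the same lemma for both halves. One small point: your justification of $|D_n(l,s_2,\ldots,s_n)|\cong*$ via induction on the tree and \Cref{lem:push_tree} is a bit informal (the pieces $\{i,c\}$ in that pushout are not themselves objects of $\Theta_n$). A cleaner route, entirely within the paper, is to note $D_n(l,s_2,\ldots,s_n)\in\stn_n$ and combine \Cref{thm:stein} with the last claim of \Cref{prop:stn_cont}: the Segal decomposition gives $|D_n(\ldots)|\cong|\stn^\el_{n,/D_n(\ldots)}|$, and the latter is contractible.
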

	\begin{proof}
		Note that $|-|$ preserves colimits as a left adjoint, hence it follows from \Cref{cor:twar_col} that
		\begin{align*}
			|\twar(\cD)|&\cong |\underset{(\theta\xrightarrow{f}\cD)\in\Theta_{n,/\cD}}{\colim} \twar(\theta)|\\
			&\cong\underset{(\theta\xrightarrow{f}\cD)\in\Theta_{n,/\cD}}{\colim} |\twar(\theta)|\\
			&\cong \underset{(\theta\xrightarrow{f}\cD)\in\Theta_{n,/\cD}}{\colim} |\Theta^\inrt_{n,/\theta}|\\
			&\cong \underset{(\theta\xrightarrow{f}\cD)\in\Theta_{n,/\cD}}{\colim} *\cong |\cD|,
		\end{align*}
		where the non-trivial implication uses \Cref{thm:main_theta}. Applying \Cref{cor:twar_col} to $|\cD|$ we also get
		\[\twar(|\cD|)\cong \underset{(\theta\xrightarrow{f}|\cD|)\in\Theta_{n,/|\cD|}}{\colim}\twar(\theta).\]
		Note that $\Theta_{n,/|\cD|}$ contains a cofinal subcategory on $c_0\rightarrow|\cD|$ which is isomorphic to $|\cD|$, so we can rewrite the expression above as
		\[|\cD|\otimes \twar(c_0)\cong |\cD|.\]
	\end{proof}
	\begin{theorem}\label{thm:lax_mon}
		For $\cE\in\cat_3$ the space of morphisms $F:\rB\Delta^\act_\lax\rightarrow\cE$ is isomorphic to the subspace of $\rB\Delta^\act\rightarrow\cE$ for which the image of the 2-morphism $\delta^2_1:[1]\xactive{}[2]$ is left adjoint to $\sigma_0^1:[2]\xactive{}[1]$.
	\end{theorem}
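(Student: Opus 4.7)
The plan is to apply the deformation-theoretic result Theorem~\ref{thm:C} (\Cref{prop:main_prop}) to the inclusion $\cI:\rB\Delta^\act\hookrightarrow\rB\Delta^\act_\lax$, with $n=3$. This reduces the theorem to verifying two hypotheses: (a) $L_\cI\cong 0$, and (b) the map $\tau_{\leq 4}\cI^*$ is a monomorphism on $\mor_{\cat_{(4,3)}}(-,A)$ for every $A\in\cat_{(4,3)}$, with image cut out by the stated adjoint condition.

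For (a), \Cref{cor:cot_coinit} reduces the vanishing to coinitiality of $\twar(\cI)$. Since \Cref{prop:twar_mon} establishes that $\twar(\rB\Delta^\act_\lax)\cong\pt$, \Cref{cor:init_mon} tells us this coinitiality is equivalent to contractibility of $\twar(\rB\Delta^\act)$. Now \Cref{lem:twar_cont} gives $|\twar(\rB\Delta^\act)|\cong|\rB\Delta^\act|$, and the latter is the classifying space of the delooping of the monoidal category $(\Delta^\act,+)$. The underlying monoid on $\pi_0|\Delta^\act|$ is idempotent (since $\Delta^\act_{\geq 1}$ is contractible via the initial object $[1]$, while $[0]$ is the monoidal unit), and therefore collapses to a point upon group completion; hence $|\rB\Delta^\act|\cong B(|\Delta^\act|^{gp})\cong \pt$, as required.

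For (b), observe that $\tau_{\leq 4}\rB\Delta^\act_\lax\cong\rB\Delta^\act_\lax$ and $\tau_{\leq 4}\rB\Delta^\act\cong\rB\Delta^\act$, since both are $3$-categories. The statement thus reduces to the classical, discrete-tricategorical question: given a trifunctor $F:\rB\Delta^\act\to A$ (i.e.\ a comonad in the tricategory $A$), when does $F$ extend to a trifunctor $\rB\Delta^\act_\lax\to A$, and is such an extension essentially unique? This is the Kock--Z\"oberlein theory of lax-idempotent comonads in tricategorical form: an extension exists precisely when the image of the comultiplication $\delta^2_1:[1]\xactive{}[2]$ is left adjoint to the $2$-cell $\sigma^1_0:[2]\xactive{}[1]$, and it is then unique up to contractible choice. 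Uniqueness yields that $\tau_{\leq 4}\cI^*$ is a monomorphism, while the existence criterion identifies its image.

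Granted (a) and (b), \Cref{prop:main_prop} produces a pullback square
\[
\begin{tikzcd}[sep=huge]
{\mor_{\cat_3}(\rB\Delta^\act_\lax,\cE)} & {\mor_{\cat_{(4,3)}}(\rB\Delta^\act_\lax,\tau_{\leq 4}\cE)} \\
{\mor_{\cat_3}(\rB\Delta^\act,\cE)} & {\mor_{\cat_{(4,3)}}(\rB\Delta^\act,\tau_{\leq 4}\cE)}
\arrow[from=1-1, to=1-2]
\arrow[from=1-1, to=2-1]
\arrow["\ulcorner"{anchor=center, pos=0.125, rotate=45}, draw=none, from=1-1, to=2-2]
\arrow[from=1-2, to=2-2]
\arrow[from=2-1, to=2-2]
\end{tikzcd}
\]
in which the horizontal maps are monomorphisms with image described by the adjointness condition on $\delta^2_1$ and $\sigma^1_0$. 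The fibered product then recovers exactly the subspace of $\mor_{\cat_3}(\rB\Delta^\act,\cE)$ picked out in the statement of the theorem. The main obstacle will be step~(b): the classical lax-idempotent theory is well-known in strict $2$-categorical language, but its clean $\infty$-tricategorical formulation — in particular, showing that $\rB\Delta^\act_\lax$ is the \emph{walking} lax-idempotent comonad in a form directly useful for the monomorphism statement — requires a careful generators-and-relations argument on the tricategory $\rB\Delta^\act_\lax$ together with a direct construction of the extension once the adjunction is given.
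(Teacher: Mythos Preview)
Your approach is essentially identical to the paper's: apply \Cref{prop:main_prop} to $\cI$, with $L_\cI\cong 0$ deduced from \Cref{cor:cot_coinit}, \Cref{cor:init_mon}, \Cref{lem:twar_cont}, and the contractibility of $|\rB\Delta^\act|$, then cite the classical Kock--Z\"oberlein result for the $(4,3)$-categorical case. One correction: your argument for $|\rB\Delta^\act|\simeq *$ via group completion is both unnecessary and suspect (the identification $|\rB M|\cong B(|M|^{gp})$ fails in general, and your decomposition into $[0]$ and $\Delta^\act_{\geq 1}$ ignores that these are connected); the simpler observation is that $[0]$ is \emph{terminal} in $\Delta^\act$ (the unique map $[n]\to[0]$ is endpoint-preserving), so $|\Delta^\act|\simeq *$ and hence $|\rB\Delta^\act|\simeq B|\Delta^\act|\simeq *$.
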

	\begin{proof}
		Denote by $\cI:\rB\Delta^\act\rightarrow \rB\Delta^\act_\lax$ the natural inclusion, it now follows from \Cref{cor:cot_coinit}, \Cref{cor:init_mon}, \Cref{lem:twar_cont} and the contractibility of $\rB\Delta^\act$ that $L_\cI\cong 0$. It now follows from \Cref{prop:main_prop} that to prove the claim it suffices to prove it for $\cE\in\cat_{(4,3)}$, in which case the result is classical -- see \cite{kock1995monads}.
	\end{proof}
	\newpage
	\appendix
	\section{Dold-Kan correspondence for $\Theta_n$}\label{sect:DK}
	Dold-Kan correspondence is a classical result of \cite{dold1958homology} and \cite{kan1958functors} providing an isomorphism between the category of simplicial abelian groups and chain complexes. Numerous generalizations of this result have been described since, in this paper we will be interested in describing the generalization of this result to stable $\infty$-categories.\par
	The principal idea behind the Dold-Kan correspondence is the observation that the simplex category $\Delta$ admits many split idempotents, and in an abelian category $A$ every such idempotent on an object $X$ gives rise to a direct sum decomposition $X=X_0\oplus X_1$. Using this observation then allows us to locate a smaller subcategory $C\hookrightarrow\mor_\cat(\Delta^\op,A)$ isomorphic to the category of chain complexes such that every object in the original category decomposes as a direct sum of objects in $C$. Exploiting this insight, the paper \cite{lack2015combinatorial} introduced a combinatorial structure that gives rise to equivalences of similar type and provided a number of examples. Lastly, this notion has been generalized to $\infty$-categories in \cite{walde2022homotopy} under the name of \textit{DK-triples}.\par 
	The paper \cite{walde2022homotopy} provides a number of examples of DK-triples, however the one we need in the present work is missing, namely the structure of a DK-triple on $\Theta_n$ and more generally on $\Theta_{n,/\theta}$ for $\theta\in\Theta_n$. The goal of the present section is to provide an explicit description of this structure and the resulting DK-equivalence which will take the form
	\[\psh_\spc(\Theta_{n,/\theta})\cong \psh_\spc(\widetilde{\Theta}_{n,/\theta})\]
	whose terms will be defined below. Sadly, as the reader will soon find out, working with $\psh_\spc(\widetilde{\Theta}_{n,/\theta})$ is hardly any easier than with the original category $\psh_\spc(\Theta_{n,/\theta})$. Nevertheless it does help in our particular case since as will be demonstrated in \Cref{sect:stab} the subcategory $\stab(\cat_{n,/\theta})\hookrightarrow\psh_\spc(\Theta_{n,/\theta})$ admits a much simpler description after the application of the Dold-Kan equivalence.
	\begin{notation}
		Given $\theta\in \Theta_n$ and $i\in \obj(\theta)$ we will denote in this section $\theta_i\bydef \mor_\theta(i,i+1)\in \Theta_{n-1}$. Additionally, for a morphism $f:\theta\rightarrow \theta'$ and $f(i)\leq k< f(i+1)$ we will denote $f_i^k:\theta_i\rightarrow \theta'_k$ the induced functor of morphism categories.
	\end{notation}

	\begin{defn}\label{def:inj}
		Call a morphism $f:\theta\rightarrow\theta'$ in $\Theta_n$ \textit{injective} if it is injective on the set of $n$-morphisms and \textit{surjective} if it is surjective on the set of $n$-morphisms.
	\end{defn}
	\begin{prop}\label{prop:fact_inj}
		\begin{enumerate}
			\item If $f:\theta\rightarrow\theta'$ is injective, then it is also injective on the set of $j$-morphisms for all $j<n$;
			\item if $f:\theta\rightarrow\theta'$ is surjective, then it is also surjective on the set of $j$-morphisms for all $j<n$;
			\item injective and surjective morphisms form a factorization system on $\Theta_n$.
		\end{enumerate}
	\end{prop}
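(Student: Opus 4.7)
The first two claims reduce to a single trick: for any $j$-morphism $a$ in an $n$-category $\theta \in \Theta_n$ with $j<n$, one can iteratively form the identity $n$-morphism $\mathrm{id}^{n-j}_a$ on $a$. This assignment is injective (since taking the iterated $(n-j)$-fold source of $\mathrm{id}^{n-j}_a$ recovers $a$) and natural in $n$-functors: $f(\mathrm{id}^{n-j}_a)=\mathrm{id}^{n-j}_{f(a)}$. For claim (1), if $f$ is injective on $n$-morphisms and $f(a)=f(b)$, then $\mathrm{id}^{n-j}_{f(a)}=\mathrm{id}^{n-j}_{f(b)}$ implies $\mathrm{id}^{n-j}_a=\mathrm{id}^{n-j}_b$ in $\theta$, and taking iterated source gives $a=b$. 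For claim (2), given a $j$-morphism $c$ in $\theta'$, lift $\mathrm{id}^{n-j}_c$ to an $n$-morphism $\alpha$ of $\theta$; then its iterated $(n-j)$-fold source $s^{n-j}(\alpha)$ is a $j$-morphism of $\theta$ mapping to $c$.

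For claim (3), the existence of a functorial factorization is proved by induction on $n$, using the wreath-product presentation $\Theta_n\cong\Delta\wr\Theta_{n-1}$. Given $f:\theta\to\theta'$ with underlying $\bar f:[p]\to[q]$ in $\Delta$, factor $\bar f=\bar\jmath\circ\bar s$ with $\bar s$ surjective and $\bar\jmath$ injective (the $n=1$ case in $\Delta$ being standard). The fibers of $\bar s$ are subintervals $[a_l,b_l]\subset[p]$; on an elementary interval $(i,i+1)$ lying strictly inside a fiber one has $\bar f(i)=\bar f(i+1)$, so the corresponding component map $\theta_i\to\mathrm{Mor}_{\theta'}(\bar f(i),\bar f(i))$ lands in the terminal object of $\Theta_{n-1}$ and can be collapsed without loss. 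The remaining data consists, for each $l\in[r]$, of a morphism $\theta_{b_l}\to \mathrm{Mor}_{\theta'}(\bar\jmath(l),\bar\jmath(l+1))$, whose components $\theta_{b_l}\to \theta'_k$ for $\bar\jmath(l)\le k<\bar\jmath(l+1)$ admit factorizations by the inductive hypothesis; the surjective parts paste together to produce the morphism category $\theta''_l\in\Theta_{n-1}$, and the injective parts assemble into the inclusion into $\mathrm{Mor}_{\theta'}(\bar\jmath(l),\bar\jmath(l+1))$. This defines an intermediate $\theta''\in\Theta_n$ together with a factorization $\theta\xrightarrow{s}\theta''\xrightarrow{j}\theta'$; the surjectivity/injectivity of $s,j$ on $n$-morphisms follows from claims (1), (2) applied dimensionwise.

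It remains to establish the orthogonality condition: given a commutative square with a surjection $s:A\twoheadrightarrow B$ on the left and an injection $j:C\hookrightarrow D$ on the right, produce a unique diagonal lift $B\to C$. Uniqueness is immediate because $s$ is an epimorphism (any two lifts agree after composing with $s$, hence by surjectivity of $s$ on $k$-morphisms for all $k$ they agree) and $j$ is a monomorphism. For existence, define the lift $h:B\to C$ on $n$-morphisms by sending $\beta\in B$ to the unique $\gamma\in C$ with $j(\gamma)$ the image of $\beta$ in $D$, where $\gamma$ exists because $\beta$ lifts along $s$ to some $\alpha\in A$ and one takes $\gamma$ to be the image of $\alpha$ in $C$; one then verifies this is independent of the choice of $\alpha$ using the injectivity of $j$.

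\textbf{Main obstacle.} The technical heart is verifying in claim (3) that the combinatorial object $\theta''$ assembled from the inductive factorizations actually defines an object of $\Theta_n$, and that the resulting $s$ and $j$ are genuine morphisms there; this requires tracking carefully how composition and source/target operations interact with the factorization, which is largely bookkeeping but has to be done rigorously. Verifying orthogonality is conceptually easier once the surjective/injective dichotomy is in hand, but the lifting construction must also be shown to be compatible with the $n$-categorical structure (compositions and identities), which again rests on the identity-$n$-morphism trick used in claims (1) and (2).
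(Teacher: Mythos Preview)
Your arguments for claims (1) and (2) via the identity-$n$-morphism trick are correct and essentially identical to the paper's.

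For claim (3), however, there is a genuine gap in your inductive step. You propose to factor each component $\theta_{b_l}\to\theta'_k$ separately using the inductive hypothesis and then say ``the surjective parts paste together to produce the morphism category $\theta''_l\in\Theta_{n-1}$''. But these surjective parts need not have a common target: different components can induce different quotients of $\theta_{b_l}$. Concretely, already for $n=2$ take $\theta_{b_l}=[2]$ with two components $[2]\to[1]$ given by $\sigma_0$ and $\sigma_1$; each is surjective, so each individual factorization has surjective part the map itself, yet the combined map $[2]\to[1]\times[1]$ is injective and the correct intermediate object is $[2]$, not $[1]$. There is no way to ``paste'' the two surjections $\sigma_0,\sigma_1$ into a single surjection onto a common $\theta''_l$.

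The fix, which is what the paper does, is to strengthen the inductive hypothesis: prove that every morphism $\theta\to\prod_{j}\theta'_j$ from an object of $\Theta_{n-1}$ into a finite product of such objects factors uniquely as a surjection in $\Theta_{n-1}$ followed by a map injective on $(n-1)$-morphisms. This stronger statement is exactly what is needed in the inductive step, since $\mor_{\theta'}(\bar\jmath(l),\bar\jmath(l+1))$ is such a product. Your orthogonality argument is a reasonable alternative to the paper's direct uniqueness proof, but it does not bypass this issue: you still need to construct the intermediate object $\theta''$ first.
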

	\begin{proof}
		Assume that a morphism $f:\theta\rightarrow\theta'$ is injective on the set of $n$-morphisms, but not on the set of $j$-morphisms for some $j<n$, then there is a pair of $j$-morphisms $(a,a')$ such that $f(a)=f(a')$, but then we also have $f(\id^n_a)=f(\id^n_{a'})$, where $\id^n_a$ denotes the identity $n$-morphism on $a$, which means $\id^n_a=\id^n_{a'}$ by injectivity of $f$, which in turn implies $a=a'$, contradicting our assumption. This proves the first claim, the second one follows since for any $j$-morphism $b$ the morphism $\id^n_b$ must lie in the image of $f$, which implies that $b$ also lies in the image of $f$.\par
		Finally, we will prove the last claim by induction on $n$. More specifically, we will show that any morphism $f:\theta\rightarrow\prod_{0\leq j\leq m} \theta'_j$ factors uniquely as
		\[\theta\xrightarrow{s}\theta_0\xrightarrow{q}\prod_{0\leq j\leq m}\theta'_j,\]
		where $s$ is surjective and $q$ is injective (by which we mean that it is injective on $n$-morphisms). We start with $n=1$, in this case a morphism $[n]\xrightarrow{f}\prod_j [m_j]$ is injective (resp. surjective) if and only if it is injective (resp. surjective) on the set of objects. We can factor the induced morphism of sets $|n+1|\rightarrow \prod_j |m_j+1|$ as
		\[|n+1|\xrightarrow{s_)} |l+1|\xrightarrow{i_0} \prod_j|m_j+1|,\]
		where $s_0$ is surjective and $i_0$ is injective. Note that the linear order on $|n+1|$ induces one on $|l+1|$ and hence we can factor $f$ as
		\[[n]\xrightarrow{s}[l]\xrightarrow{i}\prod_j [m_j],\]
		where $s$ is surjective and $i$ injective, the uniqueness of such factorization follows from the uniqueness of the surjective/injective factorization of sets. In the general case, We can again factor the induced morphism on objects as
		\begin{equation}\label{eq:fact_sur}
			\obj(\theta)\xrightarrow{s_0}S\xrightarrow{i_0}\prod_{0\leq j\leq m} \obj(\theta'_j),
		\end{equation}
		\[\]
		where $s_0$ is surjective and $i_0$ is injective; note that the linear order on $\obj(\theta)$ induces one on $S$. We define an object $\theta_s\in \Theta_n$ as follows: its set of objects is the set $S$ of \eqref{eq:fact_sur}, for $j\in S$ we define $\mor_{\theta_s}(j,j+1)$ to be $\mor_\theta(i_m, i_m+1)$, where $i_m$ is the maximal element in $s^{-1}(j)$. With this definition we have a factorization
		\[\theta\xrightarrow{s'}\theta_s\xrightarrow{j'}\prod_j \theta'_j,\]
		where $s'$ is surjective on objects and $j'$ is injective on objects. Now, fix $i\in \obj(\theta_s)$, then the morphism $j'$ induces a morphism
		\[j'_i:\mor_{\theta_s}(i,i+1)\rightarrow \mor_{\prod_j\theta'_j}(j'(i), j'(i+1))\cong \prod_{k}\theta''_k\]
		for some $\theta_k''\in \Theta_{n-1}$. By induction, $j'_i$ admits an injective/surjective factorization of the form 
		\[\mor_{\theta_s}(i,i+1)\xrightarrow{s_i}\theta_i^0\xrightarrow{q_i}\prod_{k}\theta''_k,\]
		we then define $\theta'_s$ to have the same set of objects as $\theta_s$ and set $\mor_{\theta'_s}(i,i+1)\bydef \theta_i^0$, then the morphisms $s_i$ and $q_i$ induce a factorization of $j'$ as
		\[\theta_s\xrightarrow{s}\theta'_s\xrightarrow{j}\prod_j \theta'_j,\]
		where $s$ is surjective and identity-on-objects and $j$ is injective. Precomposing it with $\theta\xrightarrow{s}\theta_s$ gives us the required factorization, the fact that it is unique again follows from the uniqueness of the injective/surjective factorization on sets.
	\end{proof}
	\begin{notation}\label{not:inj}
		Denote by $\Theta^\inj_{k,/\theta}$ the subcategory of $\Theta_{k,/\theta}$ on injective morphisms and by $p_\inj:\Theta_{k,/\theta}\rightarrow\Theta^\inj_{k,/\theta}$ the functor obtained by sending $f:\theta'\rightarrow \theta$ to the injective part of its injective/surjective factorization of \Cref{prop:fact_inj}. 
	\end{notation}
	\begin{construction}\label{constr:dk_fact}
		Denote by $E_n([0])$ the subcategory of $\Theta_n$ on surjective morphisms (in the sense of \Cref{def:inj}). We will define the subcategory $E^\lor_n([0])$ of $\Theta_n$ by induction on $n$ as follows: for $n=1$ we define $E_1^\lor([0])$ to be the category of injective morphisms preserving the minimal element, for general $n$ we define a morphism $j:\theta\rightarrow \theta'$ to be in $E_n^\lor([0])$ if $j(0)=0$ and the following conditions are satisfied:
		\begin{enumerate}[label=$(\arabic*)_j$]
			\item\label{it:j1} $j$ is injective;
			\item\label{it:j2} $j$ preserves the minimal element;
			\item\label{it:j3} for $k<j(i+1)-1$ the morphism $j_i^k$ factors as
			\[\theta_i\xactive{}[0]\xhookrightarrow{\{0\}}\theta'_k,\]
			where the second morphism is the inclusion of the minimal element, and the morphism $j_i^{j(i+1)-1}$ lies in $E_{n-1}^\lor([0])$.
		\end{enumerate}
		Finally, we define the set of morphisms $M_n$ by induction, starting with $M_1$ which contains all identity morphisms as well as $[n-1]\xinert{\lambda^n_0}[n]$ - the inert morphism preserving the maximal element. In general, define a morphism $u:\theta\rightarrow\theta''$ to be in $M_n$ if the following conditions are satisfied:
		\begin{enumerate}[label=$(\arabic*)_M$]
			\item\label{it:u1} $u$ is injective;
			\item\label{it:u2} either $u(0)=0$ or $u(0)=1$, in the latter case $\mor_{\theta''}(0,1)=[0]$, in the former $\dim(\theta)>1$;
			\item\label{it:u3} the induced morphisms $u_i^k:\theta_i\rightarrow \theta''_k$ for $u(i)\leq k<u(i+1)-1$ all have the form $v_i^k\circ s_i^k$, where $s_i^k$ are surjective and $v_i^k$ lie in $M_{n-1}$, additionally either none of $\theta''_k$ equal $[0]$ or $u(i+1)=u(i)+1$ and the morphism $u_i^i$ is $[0]\eq [0]$.
		\end{enumerate}
		We will now define the subcategories $E_n(\theta)$ and $E^\lor_n(\theta)$ of $\Theta_{n,/\theta}$ as well as a set of morphisms $M_n(\theta)$ in $\Theta_{n,/\theta}$ for all $\theta\in \Theta_n$, we will do so by induction on $\dim(\theta)$ with the base case $\theta=[0]$ having been treated above. Now, assume that we have defined $F(\theta')$ for $F\in \{E_n, E_n^\lor, M_n\}$ and $\dim(\theta')<m$, define $F(\prod_{k\in K}\theta'_k)$ with $\dim(\theta'_k)<m$ for a finite set $K$ to be the subcategory of $\Theta_{n,/\prod_{K} \theta'_k}$ containing all objects and only those morphisms whose image in $\Theta_{n,/\theta'_k}$ for all $k\in K$ lies in $F(\theta'_k)$. We will now define $F(\theta)$ assuming $\dim(\theta)=m$. We define $E_n(\theta)$ to be the subcategory of surjective morphisms, we define $E_n^\lor(\theta)$ to consists of injective morphisms $j:\theta_f\rightarrow\theta_g$ such that:
		\begin{enumerate}[label=$(\arabic*)_j^\theta$]
			\item\label{it:1} viewed as a morphism in $\Theta_n$, $j$ lies in $E_n^\lor([0])$;
			\item\label{it:2} for any $i\in\obj(\theta_f)$ we have $g(j(i))=g(j(i+1)-1)$;
			\item\label{it:3} for any $i\in\obj(\theta_f)$ the morphism $j_i^{j(i+1)-1}$ belongs to $E^\lor_{n-1}(\prod_{s=f(i)}^{f(i+1)-1} \theta_s)$.
		\end{enumerate}
		Finally, define $M_n(\theta)$ to be the set of morphisms $u:\theta_f\rightarrow\theta_g$ such that:
		\begin{enumerate}[label=$(\arabic*)_M^\theta$]
			\item\label{it:u'1} $u$ is injective;
			\item\label{it:u'2} either $u(0)=0$ or $u(0)=1$, in the latter case $\mor_{\theta_g}(0,1)=[0]$, in the former $\dim(\mor_{\theta_g}(0,1))>1$;
			\item\label{it:u'3} the induced morphisms $u_i^k:\theta_{f,i}\rightarrow \theta_{k,g}$ for $u(i)\leq k<u(i+1)-1$ all have the form $v_i^k\circ s_i^k$, where $s_i^k$ are surjective and $v_i^k$ lie in $M_{n-1}(\Theta_{n,/\prod_{s=g(k)}^{g(k+1)-1}\theta_s})$;
			\item\label{it:u'4} if the target of some $u_i^k$ equals $[0]$, then either $g(k)<g(k+1)$ or $u(i+1)=u(i)+1$ and and the morphism $u_i^i$ is $[0]\eq [0]$.
		\end{enumerate}
		We will also denote by $\mathrm{Mono}_n(\theta)$ the category of injective morphisms, by $\mathrm{Reg}_n(\theta)$ the set of morphisms of the form $m\circ e$, where $m\in M_n(\theta)$ and $e\in E_n(\theta)$ and by $\mathrm{Sing}_n(\theta)$ the category of morphisms of the form $e'\circ g$ for some $e'\in E^\lor_n(\theta)$ that is not an isomorphism.
	\end{construction}
	\begin{lemma}\label{lem:sur_sect}
		There are inverse equivalences 
		\[G_n^\theta:E_n(\theta)\overset{\sim}{\rightleftarrows} E_n^\lor(\theta)^\op:H_n^\theta.\]
	\end{lemma}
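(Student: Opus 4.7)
The plan is to construct the functors $G_n^\theta$ and $H_n^\theta$ explicitly as the identity on objects, with their action on morphisms given by the inductive ``minimum-section'' construction, and then to verify that these yield mutually inverse equivalences. I would proceed by a double induction on the pair $(\dim(\theta),n)$, beginning with the basic case $\theta=[0]$ and $n=1$: given a surjection $s\colon [m]\twoheadrightarrow [l]$, set $G_1^{[0]}(s)(i)\bydef \min \bar s^{-1}(i)$. This is injective, preserves the minimal element, and hence lies in $E_1^\lor([0])$. The inverse $H_1^{[0]}$ sends a min-preserving injection $j\colon [l]\hookrightarrow[m]$ to the unique surjection whose preimage of $i$ is the interval $[j(i),j(i+1)-1]$ (with the last preimage ending at $m$). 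Contravariant functoriality reduces to the identity $\min(s'\circ s)^{-1}(i)=\min s^{-1}(\min (s')^{-1}(i))$, which is immediate from monotonicity.

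Next I would handle the inductive step in $n$ (still $\theta=[0]$). Given a surjection $s\colon \theta_f\twoheadrightarrow\theta_g$ in $\Theta_n$, define $G_n^{[0]}(s)\colon \theta_g\hookrightarrow\theta_f$ on objects by the base case min-section $\bar\jmath$ of $\bar s$, and on each hom-$(n{-}1)$-category $\theta_g(i,i+1)\to\theta_f(k,k+1)$ (for $\bar\jmath(i)\le k<\bar\jmath(i+1)$) by
\[
G_n^{[0]}(s)_i^k \;=\;
\begin{cases}
\theta_g(i,i+1)\twoheadrightarrow [0]\xhookrightarrow{\{0\}}\theta_f(k,k+1),& k<\bar\jmath(i+1)-1,\\
G_{n-1}^{?}(s_i^{k}),& k=\bar\jmath(i+1)-1,
\end{cases}
\]
where in the second case one applies the inductive equivalence to the surjection of hom-$(n{-}1)$-categories induced by $s$ at position $\bar\jmath(i+1)-1$ (which is the ``last'' position mapping to $i$, and so carries the non-degenerate content of $s$ over the edge $(i,i+1)$). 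One then checks by unwinding the Segal description of a morphism in $\Theta_n$ that this data assembles into a well-defined morphism in $\Theta_n$ satisfying \ref{it:j1}--\ref{it:j3}, hence an object of $E_n^\lor([0])$-morphisms. The inverse $H_n^{[0]}$ is defined symmetrically: the data of an $E_n^\lor$-morphism is precisely what one needs to reconstruct a surjection layer by layer, using the inductive $H_{n-1}^{?}$ on the unique non-degenerate hom-position.

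For the inductive step in $\dim(\theta)>0$, I use that the extra conditions \ref{it:2}--\ref{it:3} for $E_n^\lor(\theta)$ and the parallel surjectivity conditions for $E_n(\theta)$ are compatible with the previous constructions: condition \ref{it:2} $g(j(i))=g(j(i+1)-1)$ follows from $\bar f=\bar g\circ\bar s$ on objects together with the min-section formula, while condition \ref{it:3} is arranged by applying the inductive $G_{n-1}^{\prod\theta_s}$ on the relevant product of hom-categories of $\theta$, not on $[0]$. So the construction simply parametrises over the objects of $\theta$, and no new phenomena appear.

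Finally, functoriality of $G_n^\theta$ and mutual inverseness with $H_n^\theta$ are verified layer by layer: on objects the min-section is contravariantly functorial by monotonicity, and on each hom-layer it is so by the inductive hypothesis, while the degenerate factorisations through $[0]$ prescribed at positions $k<\bar\jmath(i+1)-1$ leave no choice and therefore compose trivially. The computations $G_n^\theta\circ H_n^\theta\cong\id$ and $H_n^\theta\circ G_n^\theta\cong\id$ both reduce to the already verified bijection on morphisms, together with the observation that the data of an $E_n^\lor(\theta)$-morphism is exactly the data needed to recover the surjection from its min-section. The main obstacle is purely bookkeeping: one must carefully match the conditions \ref{it:u1}--\ref{it:u3}, \ref{it:u'1}--\ref{it:u'4} that distinguish $M_n(\theta)$ and $\mathrm{Sing}_n(\theta)$ from the full set of surjections and injections with the inductive data, and in particular verify that the degenerate factorisations forced by \ref{it:j3} are precisely what is needed so that composition of min-sections commutes with composition of the underlying surjections in $\Theta_n$.
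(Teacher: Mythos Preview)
Your proposal is correct and follows essentially the same approach as the paper: double induction on $(n,\dim\theta)$, with the base case given by the min-section of a surjection in $\Delta$, and the inductive step placing the nontrivial $G_{n-1}$ at the last hom-position $\bar\jmath(i+1)-1$ while using the degenerate factorization through $[0]\xhookrightarrow{\{0\}}\theta_k$ elsewhere. The paper phrases the base case in the language of adjoints (the min-section is the left adjoint of $s$), which gives functoriality for free, whereas you verify it by the explicit identity $\min(s'\circ s)^{-1}(i)=\min s^{-1}(\min (s')^{-1}(i))$; these are the same argument.

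Two small remarks. First, the paper includes as part of this proof the verification that $E_n^\lor([0])$ is closed under composition (needed for $G_n^\theta$ to land in a category); you gesture at this under ``functoriality'' but should make it explicit. Second, your closing paragraph about matching the conditions \ref{it:u1}--\ref{it:u3}, \ref{it:u'1}--\ref{it:u'4} is off target: those are the conditions defining $M_n(\theta)$, which plays no role in this lemma. The relevant conditions to track are \ref{it:j1}--\ref{it:j3} and \ref{it:1}--\ref{it:3} for $E_n^\lor$, together with surjectivity for $E_n$.
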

	\begin{proof}
		We will prove the claim by double induction on $n$ and the dimension of $\theta$. For the base case of $n=1$ and $\theta=[0]$ note that any surjective $s:[l]\xactive{} [q]$ admits a left adjoint $j(k)\bydef \min_{s(t)=k} t$ that preserves the minimal element and is injective and conversely any injective minimal-element-preserving morphism $j$ admits a left adjoint $s(k)\bydef \max_{j(t)<k} t$, which is surjective, this correspondence is functorial by functoriality of adjoints and defines the required equivalence.\par
		To prove the equivalence for general $n$ we first need to show that $E_n^\lor([0])$ is a category: given a composable pair $\theta\xrightarrow{j}\theta'\xrightarrow{w}\theta''$ we need to show that $(w\circ j)_i^k$ satisfies conditions \ref{it:j1}, \ref{it:j2} and \ref{it:j3}. The first two are easy since injective and minimum-preserving morphisms are closed under composition, to prove \ref{it:j3} note that for $k<w\circ j(i+1)-1$ it is either given by the composition 
		\[\theta_i\rightarrow[0]\xhookrightarrow{\{0\}}\theta'_s\xrightarrow{w_s^k}\theta''_k,\]
		where $w_s^k$ preserves the minimal element or by 
		\[\theta_i\xrightarrow{j_i^{j(i+1)-1}}\theta'_{j(i+1)-1}\rightarrow[0]\xhookrightarrow{\{0\}}\theta''_k,\]
		in either case it factors through $[0]\xhookrightarrow{\{0\}}\theta''_k$, and for $k=w\circ j(i+1)-1$ it is given by the composition $\theta_i\xrightarrow{j_i^{j(i+1)-1}}\theta'_{j(i+1)-1}\xrightarrow{w_{j(i+1)-1}^{w(j(i+1))-1}}\theta''_{w(j(i+1))-1}$ in which both morphisms lie in $E^\lor_{n-1}([0])$, hence the composition itself also belongs to it by inductive assumption. Assume now that we have a surjective morphism $\theta\xactive{s}\theta'$, assume $|\obj(\theta)|=(l+1)$ and $|\obj(\theta')|=m+1$, then we get an induced morphism $\widetilde{s}:[l]\xactive{}[m]$ which admits a section $\widetilde{j}\bydef G_1^{[0]}(\widetilde{s})$. Note that for $\widetilde{j}(i)\leq k<\widetilde{j}(i+1)$ we have $\widetilde{s}(k)=i$, denote by $s_i$ the surjective morphism $\theta_{\widetilde{j}(i+1)-1}\xactive{} \theta'_i$ induced by $s$. To upgrade $\widetilde{j}$ to an element of $E^\lor_n([0])$ we need to define morphisms $j_i^k:\theta'_i\rightarrow\theta_k$ for $\widetilde{j}(i)\leq k<\widetilde{j}(i+1)$, we define it to be the composition $\theta'_i\rightarrow[0]\xhookrightarrow{\{0\}}\theta_k$ for $k<\widetilde{j}(i+1)-1$ and for $k=\widetilde{j}(i+1)-1$ we define it to be the section $G^{[0]}_{n-1}(s_i)$. Conversely, given a morphism $j:\theta\rightarrow\theta''$ which induces $\widetilde{j}$ on objects, denote $\widetilde{s}\bydef H^{[0]}_1(\widetilde{j})$, in order to define a surjective section $s$ of $j$ it suffices to define surjective morphisms $s_i:\theta''_{\widetilde{j}(i+1)-1}\xactive{} \theta_i$, we define them to be the image of $H_n^{[0]}(j_i^{j(i+1)-1})$ (note that those morphisms lie in $E^{\lor}_{n-1}([0])$ by definition). It now immediately follows from the inductive assumptions and definition of $E_n^\lor([0])$ that those functors are inverse to each other.\par
		Assume now that we have constructed the equivalence $G_n^{\theta'}$ for all $\theta'$ with $\dim(\theta')<m$ and moreover assume that the underlying morphism of $G_n^{\theta'}(s)$ coincides with $G_n^{[0]}(s)$ for any $s\in E_n(\theta)$ and similarly for $H^{\theta'}_n$. Assume we have $\theta\in \Theta_n$ with $\dim(\theta)=m$ and a surjective $s:\theta_f\xactive{}\theta_g$ over $\theta$ we define $j\bydef G_n^\theta(s)$ to be $G_n^{[0]}(s)$, note that $f\circ j\cong g\circ s\circ j\cong g$ since $j$ is a section of $s$, hence $j$ is automatically a morphism over $\theta$, it remains to show that it lies in $E^\lor_n(\theta)$. \ref{it:1} follows since $j$ lies in $E_n^\lor([0])$ by construction, for \ref{it:2} note that $f(j(i+1)-1)=g(s(j(i+1)-1))=g(s(j(i)))=g(i)=f(j(i))$ and \ref{it:3} follows from the inductive assumption. Conversely, given $j:\theta_f\rightarrow\theta_h$ with $j\in E_n^\lor(\theta)$ we need to prove that $s\bydef H^{[0]}_n(j)$ lies in $E_n(\theta)$. Since $s$ is surjective by construction, it suffices to show that it is a morphism over $\theta$, which follows immediately from \ref{it:2}, \ref{it:3} and the inductive assumption.
	\end{proof}
	\begin{prop}\label{prop:dk_fact}
		Every morphism $\theta_f\xrightarrow{h}\theta_g$ in $\Theta_{n,/\theta}$ factors uniquely as
		\[\theta_f\xrightarrow{e\in E_n(\theta)}\theta_s\xrightarrow{m\in M_n(\theta)}\theta_t\xrightarrow{e'\in E^\lor_n(\theta)}\theta_g.\]
	\end{prop}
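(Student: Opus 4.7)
The strategy is to first extract the surjective piece using \Cref{prop:fact_inj} and then, by a double induction on $n$ and $\dim(\theta)$, to factor the remaining injective morphism as an element of $E_n^\lor(\theta)$ postcomposed with one of $M_n(\theta)$. More precisely, given $h:\theta_f\to\theta_g$, apply \Cref{prop:fact_inj} to get a unique factorization $h\cong j\circ e$ with $e\in E_n(\theta)$ and $j$ injective; it then suffices to produce a unique factorization of any such $j$ as $e'\circ m$ with $m\in M_n(\theta)$ and $e'\in E_n^\lor(\theta)$.

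For the injective step, I would induct on $n$ (with the auxiliary induction on $\dim(\theta)$ once $n$ is fixed). The construction of the intermediate object $\theta_t$ is driven by the dichotomy in condition \ref{it:u'2}: either $m$ preserves $0$ or $m(0)=1$ (which, as in the $\Delta$-case sketched by $j:[2]\xinert{}[5]$ with $j(0)=2$, amounts to prepending one extra object so that $e'$ can start at $0$). Concretely, examine $j(0)$ and the image morphism categories $\mor_{\theta_g}(0,1),\ldots,\mor_{\theta_g}(j(0)-1,j(0))$; if all of these have dimension zero one can absorb the shift into $m$ via $u(0)=1$, otherwise $m$ must preserve $0$ and the ``shift'' gets built into $e'$ instead. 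Given this global choice, build $\theta_t$ layer by layer: for each $i\in\obj(\theta_f)$ apply the inductive factorization to $j_i^{j(i+1)-1}:\theta_{f,i}\to\theta_{g,j(i+1)-1}$ to obtain an intermediate $\theta_{t,i}^{\text{top}}$ together with an $M_{n-1}$-morphism and an $E^\lor_{n-1}$-morphism; for the remaining indices $k<j(i+1)-1$ use either that $j_i^k$ factors as $\theta_{f,i}\xactive{}[0]\xhookrightarrow{\{0\}}\theta_{g,k}$ (this, together with injectivity of $j$ on $n$-morphisms, is what forces the rigid form required by \ref{it:j3} and \ref{it:3} and simultaneously produces the surjective-then-$M_{n-1}$ factorization demanded by \ref{it:u'3}) or insert identity layers as appropriate. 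The morphism categories of $\theta_t$ are then defined so that $m$ packages the nontrivial injections and $e'$ inserts the identity-valued layers that must be present to make $e'$ belong to $E_n^\lor(\theta)$.

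The main obstacle is verifying that the object $\theta_t$ so constructed actually lies in $\Theta_n$ over $\theta$ and that the two morphisms satisfy the long lists of conditions \ref{it:u'1}--\ref{it:u'4} and \ref{it:1}--\ref{it:3} rather than merely being injective. This is essentially a bookkeeping argument: at each index $i$ one must check that the dimension constraints in \ref{it:u'4} (non-$[0]$ targets force $u(i+1)=u(i)+1$) are compatible with the constraints in \ref{it:2} and \ref{it:3} (constancy of $g\circ j$ on fibers and the $E^\lor_{n-1}$-condition on the final factor), and this forces exactly one choice of where to ``break'' each layer. Uniqueness then follows by inspection: the underlying injection on objects determines the shift bit in \ref{it:u'2} by the above dichotomy, and once that is fixed the factorization on each morphism category is dictated by the inductive uniqueness in $\Theta_{n-1}$ applied to each $j_i^{j(i+1)-1}$, with the remaining layers uniquely forced to be identity-filled as required by \ref{it:j3}. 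Combining the two steps yields the claimed unique triple factorization $e\in E_n(\theta)$, $m\in M_n(\theta)$, $e'\in E_n^\lor(\theta)$.
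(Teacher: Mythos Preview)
There is a genuine gap in your construction of the intermediate object $\theta_t$. You apply the inductive factorization only to the single component $j_i^{j(i+1)-1}$ and then assert that for $k<j(i+1)-1$ the component $j_i^k$ factors as $\theta_{f,i}\xactive{}[0]\xhookrightarrow{\{0\}}\theta_{g,k}$, justifying this by injectivity of $j$ on $n$-morphisms. That assertion is false: factoring through the minimal element is precisely the defining condition \ref{it:j3} of $E_n^\lor$, not a consequence of injectivity. Already for $n=2$ one can take $\theta_f$ with objects $\{0,1\}$ and $\mor(0,1)=[2]$, $\theta_g$ with objects $\{0,1,2\}$ and $\mor(0,1)=\mor(1,2)=[1]$, and $j$ sending $\{0,1\}\mapsto\{0,2\}$ with $j_0^0,j_0^1:[2]\to[1]$ the two distinct surjections; the product $(j_0^0,j_0^1):[2]\to[1]\times[1]$ is injective, so $j$ is injective, yet neither $j_0^k$ factors through $[0]$. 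Your alternative clause ``insert identity layers as appropriate'' does not address this case either.

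The paper's construction is genuinely different: it factors \emph{every} component $j_i^k$ inductively, and builds an intermediate object $\widehat\theta$ whose set of objects is the set of \emph{all} pairs $(i,k)$ with $j(i)\le k<j(i+1)$ (plus possibly $*$), with $\mor_{\widehat\theta}((i,k),(i,k)+1)$ given by the intermediate object of the inductive factorization of $j_i^k$. This immediately yields $j=j''\circ j'$ with $j''\in E_n^\lor$; the cost is that $j'$ can now fail the second clause of \ref{it:u3}/\ref{it:u'4} (some of these morphism categories may be $[0]$), so a further collapsing step along a surjection $s'$ is needed to extract the genuine $M_n$-part. Your one-index-per-$i$ scheme skips the first step entirely, and this is also why you are missing the correct uniqueness argument: the quantity $m(i+1)-m(i)$ is determined not by a single ``shift bit'' at $i=0$ but by the number of indices $k$ for which $j_i^k$ does \emph{not} factor through the minimal element.
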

	\begin{proof}
		We have already seen in \Cref{prop:fact_inj} that any morphism uniquely factors as $j\circ s$ with surjective $s$ and injective $j$, so it remains to show that any injective morphism uniquely factors as $e'\circ m$ as above. We will once again prove it by induction on $n$ and $\dim(\theta)$. For the base case $n=1$, $\theta=[0]$ observe that any injective morphism $j:[p]\rightarrow[l]$ that does not lie in $E^\lor_1([0])$ (i.e. does not preserve the minimal element) can be uniquely factored as
		\[[p]\xinert{\lambda_0^p}[p+1]\xrightarrow{j'}[l],\]
		where $j'(0)=0$ and $j'(k)=j(k-1)$ for $k>0$, so the claim holds in this case. For the general $n$ we will first construct a factorization and then show that it is unique: given $j:\theta\rightarrow\theta'$ denote by $j_i^k:\theta_i\rightarrow \theta'_k$ the induced functors on morphism categories. By induction we can factor  each $j_k^i$ as
		\begin{equation}\label{eq:fact_ind}
			\theta_i\xrightarrow{e_i^k}\widetilde{\theta}_i^k\xrightarrow{m_i^k}\widehat{\theta}_i^k\xrightarrow{e_i^{',k}}\theta'_k.
		\end{equation}
		Denote by $\widehat{\theta}$ the object of $\Theta_n$ constructed as follows: if $j$ preserves the minimal object, then the objects of $\widehat{\theta}$ are pairs $(i,k)$ with $i\in \obj(\theta)$ and $j(i)\leq k < j(i+1)$, otherwise we add an additional minimal object $*$, we also set $\widetilde{\theta}_{(i,k)}\bydef \widehat{\theta}_i^k$ in the notation of \eqref{eq:fact_ind} and $\widehat{\theta}_* = [0]$ if applicable, then $j$ factors as
		\begin{equation}\label{eq:fact1}
			\theta\xrightarrow{j'}\widehat{\theta}\xrightarrow{j''}\theta',
		\end{equation}
		where $j'(i)\bydef (i,j(i))$ and $j_i^{',k}$ is the composition $m_i^k\circ e_i^k$ while $j''(i,k)=k$ and $j_{(i,k)}^k$ is $e_i^{',k}$ in the notation of \eqref{eq:fact_ind}, if $j$ does not preserve the minimal element we also set $j''(0) = 0$ and define all the morphisms $j_*^{'',r}$ to be the inclusions $[0]\hookrightarrow\theta'_r$ of the minimal element. The morphism $j''$ lies in $E^\lor_n([0])$, but $j'$ does not belong to $M_n([0])$ since it does not satisfy the second part of \ref{it:u3}, so we will factor it further. Assume that $|\obj(\widehat{\theta})\setminus \{*\}|=m+1$, then we can define a surjective morphism $s':[m]\xactive{}[q]$ as follows: to define such a morphism it suffices to describe which elementary intervals $q<q+1$ are sent to identity morphisms, we define $s'(i,k) = s'(i,k+1)$ if $\widetilde{\theta}_{(i,k)}=[0]$, denote by $l:[q]\rightarrow[m]$ the left adjoint of $s'$. Now, denote by $\widetilde{\theta}\in\Theta_n$ the $n$-category with the set of object given by $\{0,...,q\}$ or $\{*,0,...,q\}$ such that $\widetilde{\theta}_i = \widehat{\theta}_{l(i+1)-1}$ and $\widetilde{\theta}_* = [0]$, note that $j'$ factors as $\theta\xrightarrow{m'}\widetilde{\theta}\xrightarrow{w}\widehat{\theta}$, where $m'$ sends $i$ to $s'(j'(i))$ and $m_i^{',k} = j_i^{', l(k+1)-1}$, while $w$ sends $k$ to $l(k)$ and $w_k^{t}$ are the unique morphisms $\widetilde{\theta}_k\rightarrow[0]$ for $t\neq l(k+1)-1$ and the identity morphism otherwise (and sends $*$ to the minimal element if it is an object of $\widetilde{\theta}$). It is easy to see from this description that the morphism $m'$ now satisfies the second part of \ref{it:u3} and that $w$ lies in $E_n^\lor([0])$, concluding our construction.\par
		We now need to show that such a factorization is unique, so assume we have a different factorization $\theta\xrightarrow{n'}\overline{\theta}\xrightarrow{w'}\theta'$ of $j$. We start by showing that $n'$ and $w'$ agree with $m'$ and $w'$ on objects: first, note that $n'$ preserves the minimal object if and only if $j$ does (since $w'$ preserves it by definition) and if it does not, then $w'(0)=0$ and the components $w_0^{'',k}$ are all inclusions of the minimal element since $w'$ lies in $E-n^\lor([0])$. Now fix some $i\in \theta$, we claim that 
		\[n'(i+1) - n'(i) = \max(1, |k:\;\theta_i\xrightarrow{j^k_i}\theta'_k\text{ does not factor through }[0]\xhookrightarrow{\{0\}}\theta'_k|).\]
		indeed, if all $j_i^k$ factor through $[0]$, then by \ref{it:u3} we must have $n'(i+1) = n'(i) + 1$, so assume this is not the case. Note that $j_i^k = w_t^{',k}\circ n_i^{',t}$, none of the $n_i^{',t}$ factor through $[0]$ and by \ref{it:j3} for a given $t$ there is exactly one $k$ such that $w_t^{'',k}$ does not factor through $[0]$, from which the claim follows. This also uniquely defines the value of $n'$ on objects and the value of $w'$ on objects is now uniquely determined by \ref{it:j3}. It remains to show that the values of $n'$ and $w'$ on morphism categories are uniquely defined, however this follows from \ref{it:u3}, \ref{it:j3} and the inductive hypothesis.\par
		It remains to construct the factorization over general $\theta$, assuming it has been constructed over $\theta'$ with $\dim(\theta')<\dim(\theta)$. First, note that the factorization of \Cref{prop:fact_inj} also exists in $\Theta_{n,/\theta}$, so it suffices to provide a factorization of an injective morphism. The construction in the relative case will be very similar to the construction in the absolute case described above, so we will give slightly less details. First, we can factor  the underlying morphism of $j:\theta_f\rightarrow\theta_g$ as $j''\circ j'$ as in \eqref{eq:fact1}, it is easy to see by construction that $j''$ lies in $E_n^\lor(\theta)$, so it remains to factor $j'$. Assume that $|\obj(\widehat{\theta})\setminus \{*\}|=m+1$, then we define a surjective morphism $s:[m]\rightarrow[q]$ by sending $(i,k)<(i,k+1)$ to identity if $\theta_{g\circ j,(i,k)}=[0]$ and $g\circ j(i,k) = g\circ j(i,k+1)$. After this the construction goes through unchanged, the arguments proving the uniqueness of the factorization also works upon replacing references to \ref{it:u3} with \ref{it:u'4}.
	\end{proof}
	\begin{prop}\label{prop:dk_triple}
		The data of $(E_n(\theta), E^\lor_n(\theta), M_n(\theta))$ of \Cref{constr:dk_fact} defines a DK-triple in the sense of \cite{walde2022homotopy}.
	\end{prop}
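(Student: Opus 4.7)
The plan is to verify the axioms of a DK-triple from \cite{walde2022homotopy} one by one, relying on the preparatory material proved in \Cref{constr:dk_fact}, \Cref{lem:sur_sect} and \Cref{prop:dk_fact}. Recall that a DK-triple on a category $\cC$ consists of two wide subcategories $\cC_-$ (degeneracies) and $\cC_+$ (codegeneracies), together with a distinguished set $M$ of morphisms called \emph{generic}, satisfying (i) a three-fold orthogonal factorization $e^\lor\circ m\circ e$ for every morphism, (ii) an equivalence $\cC_-\simeq \cC_+^\op$ compatible with the factorization, and (iii) certain Beck-Chevalley/compatibility conditions encoding that $M$ behaves well under the equivalence and under pullback along degeneracies. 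In our case $\cC_-=E_n(\theta)$, $\cC_+=E^\lor_n(\theta)$ and the generic morphisms are $M_n(\theta)$.

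First, the factorization axiom (i) is precisely the content of \Cref{prop:dk_fact}, so nothing new is needed there. For axiom (ii), \Cref{lem:sur_sect} already produces the anti-equivalence $G^\theta_n\colon E_n(\theta)\overset{\sim}{\to} E^\lor_n(\theta)^\op$ with inverse $H^\theta_n$. What remains is to check that this equivalence is compatible with the ``slicing'' data: namely, that for a generic morphism $m\in M_n(\theta)$ and a degeneracy $e\in E_n(\theta)$ with matching endpoints, the middle piece of the factorization of $m\circ e$ lies in $M_n(\theta)$ and corresponds via $G^\theta_n$ to the factorization of the whisker with $G^\theta_n(e)$ on the codegeneracy side. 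I would prove this by unwinding the inductive definition of $M_n(\theta)$, $E_n(\theta)$ and $E^\lor_n(\theta)$ object-by-object on $\obj(\theta_g)$ and then on the morphism categories $\theta_{g,i}$, using the functoriality of the adjunction $G^\theta_{n-1}\dashv H^\theta_{n-1}$ at each level; the base case is the classical fact that surjections and minimal-element-preserving injections in $\Delta$ satisfy this condition.

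For axiom (iii), one needs to show two closure properties of $M_n(\theta)$: first that $M_n(\theta)$ is stable under the operation which, given $m\in M_n(\theta_g\to\theta_{g'})$ and $e\in E_n(\theta_{g'}\to\theta_{g''})$, returns the middle component of the factorization of $e\circ m$; and second the dual statement for $e^\lor\in E^\lor_n$. Both are inductive verifications: the outer shape is controlled by conditions \ref{it:u'1}--\ref{it:u'2} of \Cref{constr:dk_fact}, which are visibly preserved, while \ref{it:u'3}--\ref{it:u'4} reduce via the morphism-category $u_i^k\colon \theta_{f,i}\to \theta_{k,g}$ to the same claim in dimension $n-1$ over a product of smaller disks, which is covered by the inductive hypothesis and the absolute-case analysis already performed for $M_n([0])$. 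The key observation that makes the induction go through is condition \ref{it:2} of the definition of $E^\lor_n(\theta)$, which ensures that an injective element of $E^\lor_n(\theta)$ collapses the ``extra'' objects introduced over $\theta$ into a single $g$-fiber, so that the morphism-category analysis can be performed over $\prod_{s=g(k)}^{g(k+1)-1}\theta_s$ without interference from the slice structure.

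The main obstacle will be bookkeeping in step three: the conditions \ref{it:u'3} and \ref{it:u'4} are delicate because they encode \emph{both} what happens in the morphism categories \emph{and} a global constraint about when the target of $u_i^k$ is allowed to be $[0]$, and one has to verify that neither composition with a degeneracy nor transport across $G^\theta_n$ introduces a violation of this constraint. I expect this to require a careful case split based on whether, in the composite $e\circ m$, a fiber of $e$ over some object of the target collapses a non-identity morphism of $\theta_s$, and a separate case for the edge phenomenon where $u(0)=1$. Once this combinatorics is dispatched, assembling the verified axioms into the statement that $(E_n(\theta),E^\lor_n(\theta),M_n(\theta))$ is a DK-triple is formal.
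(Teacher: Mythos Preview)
Your proposal misidentifies the axioms of a DK-triple in \cite{walde2022homotopy}. Walde's Definition~3.1.1 does not ask for an equivalence $E\simeq (E^\lor)^\op$ as an axiom, nor for Beck--Chevalley-type closure of $M$ under ``middle-of-factorization'' operations. The actual conditions, reproduced in the paper's proof, are (T1) the three-fold factorization, (T2) a \emph{triangular matrix} condition on the pairing $E^\lor_{n}(\theta)_{/\theta_f}\times E_n(\theta)_{\theta_f/}\to\arr(\Theta_{n,/\theta})$, (T3) closure of the set $E^\lor_n(\theta)\circ E_n(\theta)$ under composition, (T4) $M_n(\theta)\circ M_n(\theta)\subset\mathrm{Mono}_n(\theta)$, and (T5) $\mathrm{Mono}_n(\theta)\circ\mathrm{Sing}_n(\theta)\subset\mathrm{Sing}_n(\theta)$.

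Of these, (T4) and (T5) are immediate from closure of injections under composition, and (T1) is \Cref{prop:dk_fact} as you say. But your sketch entirely omits (T2), which is the substantive remaining condition: one must exhibit, for each $\theta_f$, a linear order on $E^\lor_n(\theta)_{/\theta_f}$ such that the composition pairing with $E_n(\theta)_{\theta_f/}$ is upper-triangular with isomorphisms on the diagonal and non-isomorphisms below. The paper does this by a lexicographic ordering built inductively in $n$, using \Cref{lem:sur_sect} only to identify the two indexing sets. Your axiom~(ii) does not capture this: having the anti-equivalence $G^\theta_n$ tells you nothing about where the compositions $e\circ e'$ land for $e'$ and $e$ \emph{not} paired by $G^\theta_n$. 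Likewise, your axiom~(iii) addresses the wrong closure: (T3) concerns factoring a composite $e\circ e'$ with $e\in E_n(\theta)$ and $e'\in E^\lor_n(\theta)$, not the interaction of $M_n(\theta)$ with degeneracies, and is handled in the paper by an inductive analysis of the surjective/injective factorization square. The ``bookkeeping'' you anticipate for conditions \ref{it:u'3}--\ref{it:u'4} is therefore aimed at a verification that is not required.
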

	\begin{proof}
		Our construction of the subcategories $M_n(\theta)$, $\mathrm{Mono}_n(\theta)$ and $\mathrm{Reg}_n(\theta)$ is different from the one given in \cite{walde2022homotopy}, however it follows from \Cref{prop:dk_fact} that they coincide. More specifically, for a category $B$ endowed with subcategories $E$ and $E^\lor$ $\mathrm{Mono}$ is defined as a subset of arrows not of the form $f\circ e$ for some arrow $f$ and $e\in E$, $\mathrm{Sing}$ as a set of arrows of the form $e'\circ g$ for $e'\neq \id\in E^\lor$, $\mathrm{Reg}$ as the complement to $\mathrm{Sing}$  and $M\bydef \mathrm{Reg}\bigcap \mathrm{Mono}$, it is easy to see from the existence of the factorization \Cref{prop:dk_fact} that this coincides with our definition.\par
		It follows that we need to prove the conditions outlined in \cite[Definition 3.1.1.]{walde2022homotopy}, we reproduce them here in our notation for convenience of the reader:
		\begin{enumerate}
			\item[(T1)]\label{it:t1} every morphism $f$ in $\Theta_{n,/\theta}$ uniquely decomposes as
			\[\theta_f\xrightarrow{e\in E_n(\theta)}\theta_s\xrightarrow{m\in M_n(\theta)}\theta_t\xrightarrow{e'\in E^\lor_n(\theta)}\theta_g;\]
			\item[(T2)]\label{it:t2} for any $\theta_f\in \Theta_{,/\theta}$ the pairing $E^\lor_n(\theta)_{/\theta_f}\times E_n(\theta)_{\theta_f/}\rightarrow\arr(\Theta_{n,/\theta})$ given by sending $\theta_g\xrightarrow{e'}\theta_f\xrightarrow{e}\theta_h$ can be described by a square matrix of the form 
			\[\left( 
			\begin{array}{cccc}
				=    & ?  & \dots & ?\\
				\neq & \ddots & \ddots  & \vdots \\
				\vdots & \ddots & \ddots  & ? \\
				\neq & \neq & \neq & = 
			\end{array}
			\right)\]
			with isomorphisms on the diagonal and non-isomorphisms below it;
			\item[(T3)]\label{it:t3} the set $(E^\lor_n(\theta)\circ E_n(\theta))$ is closed under composition;
			\item[(T4)]\label{it:t4} the set $M_n(\theta)\circ M_n(\theta)$ belongs to $\mathrm{Mono}_n(\theta)$;
			\item[(T5)]\label{it:t5} $\mathrm{Mono}_n(\theta)\circ\mathrm{Sing}_n(\theta)\subset \mathrm{Sing}_n(\theta)$. 
		\end{enumerate}
		The claims \ref{it:t4} and $\ref{it:t5}$ are immediate since injective morphisms are closed under composition and \ref{it:t1} is just \Cref{prop:dk_fact}, so it remains to prove \ref{it:t2} and \ref{it:t3}. For \ref{it:t3} it suffices to show that given a surjective/injective factorization square
		\[\begin{tikzcd}[sep=huge]
			{\theta_f} & {\theta_g} \\
			{\theta_s} & {\theta_h}
			\arrow["{e_0'}", from=1-1, to=1-2]
			\arrow["{e_1}"', two heads, from=1-1, to=2-1]
			\arrow["{e_0}", two heads, from=1-2, to=2-2]
			\arrow["{e_1'}"', from=2-1, to=2-2]
		\end{tikzcd}\]
		with $e_0\in E_n(\theta)$ and $e'_0\in E^\lor_n(\theta)$ we also have $e_1\in E_n(\theta)$ and $e_1'\in E^\lor_n(\theta)$. That $e_1\in E_n(\theta)$ follows immediately from the definition, so it remains to prove $e_1'\in E^\lor_n(\theta)$, which we will do by induction on $n$ and $\dim(\theta)$. For $n=1$ the claim follows since the composition of a surjective and minimum-preserving morphism preserves minimum, so assume we have proved it for $n-1$. In this case $e_1'$ satisfies \ref{it:j1} by construction and \ref{it:j2} by earlier observation, so it remains to show \ref{it:j3}. Take an object $i\in \theta_f$ such that $e_0\circ e'_0(i<i+1)$ is not an identity, in this case $e_1(i+1) = e_1(i)+1$, denote $\theta''_i\bydef \mor_{\theta_s}(e_1(i), e_1(i)+1)$. Denote by 
		\[\theta_i\xrightarrow{\prod w_k}\prod^{e_0\circ e'_0(i+1)-1}_{k = e_0\circ e'_0(i)}\theta'_k\]
		the morphism induced by $e_0\circ e'_0$ on morphism categories, so by construction and \ref{it:j3} we have that $w_k$ factor through $[0]\xrightarrow{\{0\}}\theta'_k$ for $k<e_0\circ e'_i(i+1)-1$ and $w_{e_0\circ e'_i(i+1)-1}\cong s_i\circ j_i$ with surjective $s_i$ and $j_i\in E^\lor_{n-1}([0])$. We can factor $\prod_k w_k$ as 
		\[\theta_i\xrightarrow{s'}\theta''_i\xrightarrow{\prod w'_k}\prod^{e_0\circ e'_0(i+1)-1}_{k = e_0\circ e'_0(i)}\theta'_k,\]
		where $s'$ is surjective and $w'_i$ are injective, so to prove \ref{it:j3} it remains to show that $w'_k$ factors through $[0]\xrightarrow{\{0\}}\theta'_k$ for $k<e_0\circ e'_0(i+1)-1$ and lies in $E^\lor_{n-1}([0])$ otherwise. For that note that $w_k$ factors through the inclusion of the minimal element for $k<e_0\circ e'_0(i+1)-1$, which implies the first part of the claim, and the second follows by inductive assumption since $w'_{e_0\circ e'_0(i+1)-1}\circ s'$ is the injective/surjective factorization of $w_{e_0\circ e'_i(i+1)-1}\cong s_i\circ j_i$. Finally, in the case of general $\theta$ \ref{it:1} follows by what we just proved, \ref{it:3} by induction and \ref{it:2} follows since for any object $i\in \theta_g$ such that $e_0(i)=e_0(i+1)$ we also have $g(i)=g(i+1)$.\par
		It remains to prove \ref{it:t2}. Given $\theta_f\xrightarrow{e}\theta_g$ we will denote by $e^\lor$ the image of $e$ under the isomorphism of \Cref{lem:sur_sect}. We will prove the claim by induction on $n$, starting with $n=1$; the object $\theta$ over which the construction is performed will play no role, so we will suppress mentioning it. Note that if we have two morphisms $f,g:[m]\rightarrow[l]$ in $E_1$ such that for some $i$ we have $f(i)<g(i)$, then $f^\lor \circ g(i)>i$, so in particular it is not an isomorphism. It follows that if we order $E^\lor_{1,/[l]}$ lexicographically (which is a linear order), then the matrix of the pairing from \ref{it:t2} has the required form. More generally, assume we have provided a linear order on $E^\lor_{n-1,/\theta''}$ with the required property. Any $\theta'\xrightarrow{e'}\theta''$ in $E_n^\lor$ such that $|\obj(\theta')|=\{0,...,m\}$ and $|\obj(\theta'')|=\{0,...,l\}$ is uniquely determined by the underlying morphism $e'_\obj:[m]\rightarrow[l]$ on objects together with morphisms $e'_i:\theta'_i\rightarrow \theta''_{e_\obj'(i+1)-1}$, we have a lexicographical order on $E^\lor_{1,/[l]}$ and linear orders on $E^\lor_{n-1,/\theta''_{e_\obj'(i+1)-1}}$ by induction, so we can order the set of tuples $(e'_\obj, e'_0,...,e'_{m-1})$ lexicographically, it follows by induction that with this ordering the matrix in \ref{it:t2} has the required property.
	\end{proof}
	\begin{notation}\label{not:dk}
		Given $\theta_f\in \Theta_{n,/\theta}$ denote 
		\[\widetilde{\theta}_f\bydef \cok(\underset{\theta_g\xrightarrow{e'}\theta_f}{\colim}\theta_g\rightarrow\theta_f),\]
		where the colimit is taken over all non-identity morphisms $\theta_g\xrightarrow{e'}\theta_f$ in $E^\lor_n(\theta)$. Also denote by $\widetilde{\Theta}_{n,/\theta}$ the pointed category with the same objects as $\Theta_{n,/\theta}$ such that $\mor_{\widetilde{\Theta}_{n,/\theta}}(\theta_f,\theta_g)\cong \{0\}\bigcup\{M_n(\theta)(\theta_f,\theta_g)\}$, where $M_n(\theta)(\theta_f,\theta_g)$ denotes the set of morphisms in $M_n(\theta)$ with source $\theta_f$ and target $\theta_g$ such that for a composable pair $m'\circ m''\cong m$ if their composition in $\Theta_{n,/\theta}$ equals $m$ and $m\in M_n(\theta)$ and $m'\circ m''\cong 0$ otherwise.
	\end{notation}
	\begin{cor}\label{cor:dk_theta}
		There is an equivalence
		\[\widetilde{\theta}_f\cong \ker(\theta_f\rightarrow\underset{\theta_f\xrightarrow{e}\theta_h}{\lim}\theta_h),\]
		where the limit is taken over non-identity morphisms $\theta_f\xrightarrow{e}\theta_h$ in $E_n(\theta)$, moreover
		\begin{equation}\label{eq:dk_sum}
			\theta_f\cong \bigoplus_{\theta_f\xrightarrow{e}\theta_h}\widetilde{\theta}_h,
		\end{equation}
		where the sum is taken over all morphisms in $E_n(\theta)$ (including the identity morphism). Additionally, there exists a morphism $F:\widetilde{\Theta}_{n,/\theta}\rightarrow \psh_\spc(\Theta_{n,/\theta})$ sending $\theta_f$ to $\widetilde{\theta}_f$ which induces an isomorphism
		\[\dk:\psh_\spc(\Theta_{n,/\theta})\leftrightarrows \psh_\spc(\widetilde{\Theta}_{n,/\theta}):\dk'\]
	\end{cor}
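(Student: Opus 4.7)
The plan is to deduce the corollary from the general Dold--Kan machinery of \cite{walde2022homotopy} applied to the DK-triple $(E_n(\theta), E^\lor_n(\theta), M_n(\theta))$ on $\Theta_{n,/\theta}$ established in \Cref{prop:dk_triple}. Concretely, I will invoke the main theorem of \cite{walde2022homotopy} for pointed presentable stable targets, which converts the three structural properties of a DK-triple into precisely the three statements we need.

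First I would address the equivalence of the two descriptions of $\widetilde\theta_f$. By definition $\widetilde\theta_f$ is the cofiber of the canonical map $\colim_{e'\neq\id}\theta_g\rightarrow\theta_f$ indexed by the non-identity arrows of $E^\lor_n(\theta)_{/\theta_f}$. To identify this with the kernel of $\theta_f\rightarrow\lim_{e\neq \id}\theta_h$ indexed by non-identity arrows of $E_n(\theta)_{\theta_f/}$, I would use the inverse equivalence $E_n(\theta)\simeq E^\lor_n(\theta)^{\op}$ of \Cref{lem:sur_sect} together with the pairing (T2) proved in \Cref{prop:dk_triple}: the matrix description in (T2) shows that the comparison between the two complexes is via a unipotent (upper-triangular with identities on the diagonal) transformation, so both the kernel and cokernel descriptions compute the same object. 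This is a standard consequence of the axioms of a DK-triple; the fact that $\spc$ is stable is what lets me pass between kernels and cokernels freely.

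For the direct sum decomposition \eqref{eq:dk_sum}, the key input is the factorization property (T1) together with (T3). The stratification argument of \cite[Theorem A]{ayala2019stratified} already used in the proof of \Cref{prop:seg_dk} combined with the inductive splitting coming from (T2) produces the decomposition $\theta_f\cong\bigoplus_{\theta_f\xrightarrow{e}\theta_h}\widetilde\theta_h$, where the $\widetilde\theta_h$-summand is extracted by composing with the idempotent $e^\lor\circ e$ (using the inverse equivalence of \Cref{lem:sur_sect}). These idempotents are pairwise orthogonal by (T2) and sum to the identity because every morphism in $\Theta_{n,/\theta}$ factors uniquely through an $E_n(\theta)$-morphism followed by something in the complement.

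Finally, to produce the functor $F:\widetilde\Theta_{n,/\theta}\rightarrow \psh_\spc(\Theta_{n,/\theta})$ and the equivalence of presheaf categories, I would check that the multiplication rules defining composition in $\widetilde\Theta_{n,/\theta}$ (where a composable pair $m'\circ m''$ equals $m$ when their composition lies in $M_n(\theta)$ and is $0$ otherwise) match precisely the composition that naturally acts on the family $\{\widetilde\theta_f\}_{\theta_f}$: indeed, given $m:\theta_f\rightarrow\theta_g$ in $M_n(\theta)$, post-composition with the idempotent corresponding to $\id_{\theta_g}$ in the decomposition \eqref{eq:dk_sum} produces a map $\theta_f\rightarrow\widetilde\theta_g$ which factors through $\widetilde\theta_f$ exactly when $m$ remains in $M_n(\theta)$ after any further precomposition, and is zero otherwise. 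The resulting functor $F$ then fits into the framework of \cite{walde2022homotopy}, whose main theorem gives the adjoint equivalence $\dk\dashv\dk'$. The main obstacle I anticipate is verifying the bookkeeping for this last step: namely, checking that the composition law in $\widetilde\Theta_{n,/\theta}$ as defined in \Cref{not:dk} agrees on the nose with the composition induced by the idempotent decomposition, which requires carefully unwinding the matrix shape of (T2) in the presence of several consecutive morphisms.
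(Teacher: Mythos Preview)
Your approach is essentially the same as the paper's: both deduce the corollary by applying the main result of \cite{walde2022homotopy} to the DK-triple established in \Cref{prop:dk_triple}. The paper's proof is a one-line citation, whereas you elaborate on how each clause follows from the DK-triple axioms; this elaboration is mostly sound (though the appeal to \cite{ayala2019stratified} is unnecessary, since the direct sum decomposition is already part of Walde's output), but it adds nothing beyond what Walde's theorem delivers directly.
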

	\begin{proof}
		In light of \Cref{prop:dk_triple}, this follows from the main result of \cite{walde2022homotopy}.
	\end{proof}
	\newpage
	\bibliographystyle{plain}
	\bibliography{ref.bib}
	
\end{document}